\theoremstyle{plain}
\newtheorem{thm}{Theorem}[section]
\newtheorem*{thm*}{Theorem}
\newtheorem{lem}[thm]{Lemma}
\newtheorem{prop}[thm]{Proposition}
\newtheorem{cor}[thm]{Corollary}
\theoremstyle{definition}
\newtheorem{rmk}[thm]{Remark}
\newtheorem{defn}[thm]{Definition}
\newtheorem{ex}[thm]{Example}
\newtheorem{conj}[thm]{Conjecture}
\newtheorem{construction}[thm]{Construction}
\numberwithin{equation}{section}
\newcommand{\overbar}[1]{\mkern 1.5mu\overline{\mkern-1.5mu#1\mkern-1.5mu}\mkern 1.5mu}
\newcommand{\ob}{\overbar}
\newcommand{\mr}{\mathrm}
\newcommand{\wt}{\widetilde}
\newcommand{\trdeg}{\mathrm{trdeg}}
\newcommand{\diag}{\operatorname{diag}}
\newcommand{\Supp}{\operatorname{Supp}}
\newcommand{\Int}{\operatorname{Int}}
\newcommand{\red}{\mathrm{red}}
\newcommand{\rk}{\mathrm{rk}}
\newcommand{\Sch}{\operatorname{Sch}}
\newcommand{\Hom}{\operatorname{Hom}}
\newcommand{\End}{\operatorname{End}}
\newcommand{\Lie}{\operatorname{Lie}}
\newcommand{\Spec}{\operatorname{Spec}}
\newcommand{\Spf}{\operatorname{Spf}}
\newcommand{\codim}{\operatorname{codim}}
\newcommand{\len}{\operatorname{len}}
\newcommand{\tensor}{\otimes}
\newcommand{\iso}{\cong}
\newcommand{\mbA}{\mathbb{A}}
\newcommand{\mbC}{\mathbb{C}}
\newcommand{\mbD}{\mathbb{D}}
\newcommand{\mbF}{\mathbb{F}}
\newcommand{\mbG}{\mathbb{G}}
\newcommand{\mbP}{\mathbb{P}}
\newcommand{\mbQ}{\mathbb{Q}}
\newcommand{\mbR}{\mathbb{R}}
\newcommand{\mbX}{\mathbb{X}}
\newcommand{\mbZ}{\mathbb{Z}}
\newcommand{\mcC}{\mathcal{C}}
\newcommand{\mcD}{\mathcal{D}}
\newcommand{\mcE}{\mathcal{E}}
\newcommand{\mcH}{\mathcal{H}}
\newcommand{\mcI}{\mathcal{I}}
\newcommand{\mcK}{\mathcal{K}}
\newcommand{\mcM}{\mathcal{M}}
\newcommand{\mcN}{\mathcal{N}}
\newcommand{\mcO}{\mathcal{O}}
\newcommand{\mcS}{\mathcal{S}}
\newcommand{\mcT}{\mathcal{T}}
\newcommand{\mcU}{\mathcal{U}}
\newcommand{\mcX}{\mathcal{X}}
\newcommand{\mcY}{\mathcal{Y}}
\newcommand{\mcZ}{\mathcal{Z}}
\newcommand{\mfa}{\mathfrak{a}}
\newcommand{\mfm}{\mathfrak{m}}
\newcommand{\mfp}{\mathfrak{p}}
\newcommand{\lr}{\longrightarrow}
\begin{document}

\title{$δ$-Forms on Lubin--Tate spaces}
\author{Andreas Mihatsch}
\date{September 7, 2024}

\begin{abstract}
We extend Gubler--Künnemann's theory of $δ$-forms from algebraic varieties to good Berkovich spaces. This is based on the observation that skeletons in such spaces satisfy a tropical balancing condition. Our first theorem in this formalism is a construction of Green $δ$-forms for complete intersection cycles. Our second theorem is a formula for artinian intersection numbers on formal models in terms of integrals of $δ$-forms on their generic fibers. These two results generalize known results from divisors to higher codimension. Finally, we illustrate our intersection number formula in the context of an intersection problem for Lubin--Tate spaces.
\end{abstract}

\maketitle

\tableofcontents

\section{Introduction}

There is a well-known \emph{analytic} formulation of non-archimedean Arakelov geometry for curves. It goes back to Chinburg--Rumely \cite{CR} and S.-W. Zhang \cite{SWZha_Admissible}, and has been developed in terms of Berkovich spaces by A. Thuillier \cite{Thu}.  A brief sketch of this theory is as follows. Let $(k,v)$ be an algebraically closed non-archimedean valued field and $C/k$ a proper smooth Berkovich curve. Its type (2) and (3) points form an infinite metrized graph $Γ$ which is the union of the reduction graphs $Γ(\mcC)$ of all strictly semi-stable formal models $\mcC/\mcO_k$. Intersection theory of divisors on such models may be described completely in terms of $Γ$: A formal model $\mcD\in \mr{Div}(\mcC)$ of a divisor $D\in \mr{Div}(C)$ gives rise to a Green function $g_{\mcD}$ for $D$. (This is a piecewise linear function on $C\setminus \Supp D$ with a certain kind of singularity along $\Supp D$.) The intersection number $\mcD_1\cdot \mcD_2$ of two divisors with disjoint generic fibers may be recovered as
\begin{equation}\label{eq:intro_intersection_identity}
\mcD_1 \cdot \mcD_2 = \int_{D_1} g_{\mcD_2} + \int_{Γ} g_{\mcD_1}\cdot c_1(D_2,g_{\mcD_2}).
\end{equation}
The curvature term $c_1(D_2,g_{\mcD_2})$ here is the finite sum of Dirac measures on $Γ$ obtained by applying the following Laplace operator to $g = g_{\mcD_2}$,
\begin{equation}\label{eq:intro_laplace}
d'd''g = \sum_{x\in Γ} \left(\sum_{p\, =\, [x,x+ε]\,\subseteq\, Γ} (g\vert_p)'(x)\right) \cdot δ_x.
\end{equation}
The inner sum is over all germs $p$ of isometric paths emanating from $x$. Put in words, the multiplicity of $δ_x$ in \eqref{eq:intro_laplace} is the sum of all outgoing slopes of $g$ in $x$.

A natural expectation is that this picture should generalize (in some form) from curves to higher dimensions. In the present article, we propose some ideas towards such a theory. Our approach builds on A. Ducros' results on skeletons of Berkovich spaces \cites{Duc_early, Duc_local}, takes up the theories of smooth real-valued $(p,q)$-forms developed by A. Lagerberg \cite{Lag} as well as Chambert-Loir--Ducros \cite{CLD}, and extends Gubler--Künnemann's theory of $δ$-forms \cite{GK}. The following provides a brief summary.

Let $X$ be a Berkovich space over some non-archimedean valued field $(k, v)$. The analog of the set of type (2) and (3) points $Γ$ from above is the set of Abhyankar points in $X$. It is the union of all skeletons $\Sigma \subseteq X$ in the sense of A. Ducros \cites{Duc_early, Duc_local} and hence has an ind-piecewise linear structure. Our observation is that many of these $\Sigma$ have a natural structure as \emph{tropical space}, which constitutes a generalization of the above metrized graph property.
In this context, we introduce a theory of $δ$-forms on tropical spaces, which relies on the theory of $δ$-forms on $\mbR^r$ from our previous article \cite{Mih_trop_inter}. Then we define $δ$-forms on non-archimedean spaces as certain compatible families of $δ$-forms on all their skeletons. These $δ$-forms come with a $\wedge$-product and various differential operators. In particular, there is a generalization of the Laplacian from \eqref{eq:intro_laplace}. One of our two main results is a higher codimension variant of the intersection number identity \eqref{eq:intro_intersection_identity} for certain complete intersection situations.

Our results were originally motivated by an article of Q. Li \cite{Li_LT}. There he proves an analytic expression for intersection numbers of certain quadratic cycles on Lubin--Tate spaces. His method is remarkable because it involves passage to higher level which surprisingly simplifies the problem. Our starting question was whether his proof could be understood more naturally at infinite level in the sense of Scholze--Weinstein \cites{SW_pdiv, SW_Berkeley}. This infinite level space is an analytic object, so we developed the analytic methods of the present article. In the last section, we come back to Li's intersection problem and illustrate our results in his context.

We mention some related works. The idea of a balancing condition for higher-dimensional skeletons has occurred before, for example in works of Cartwright \cite{Cartwright_tropical}, Yu \cite{Yu_balancing} and Gubler--Rabinoff--Werner \cites{GRW1}. The skeletons in those articles are constructed from strictly semi-stable formal models. Generalizations to more general skeletons were given by Gubler--Rabinoff--Werner in \cite{GRW2} and by Gubler--Jell--Rabinoff in their very recent \cite{GJR}. The latter is especially relevant for us because we work with their definition of weighted skeletons. Their work is moreover related to ours because they, too, give a generalization of Chambert-Loir--Ducros' differential forms. This relies on a definition of so-called \emph{harmonic} tropicalization maps, which are more general than the smooth ones we consider. It seems that harmonic tropicalizations could also be applied to further enhance our definition of $δ$-forms.

Another theme we take up is that of extending tropical intersection theory from $\mbR^r$ to more general situations. Such generalizations have been considered before, for example by K. Shaw \cite{Shaw_matroid} and Fran\c{c}ois--Rau \cite{FR} for matroidal fans or by Jell--Shaw--Smacka \cite{Jell_Shaw_Smacka} and Jell--Rau--Shaw \cite{Jell_Rau_Shaw} for partial compactifications of $\mbR^r$. Our theory of $δ$-forms on tropical spaces presents a different approach to this question.

We now describe our results in more detail.

\subsection{Tropical Spaces}

We define tropical spaces as triples $(X,μ,L)$ of the following kind, cf. Def. \ref{def:trop_space} for all details.
\begin{enumerate}[wide, labelindent=0pt, labelwidth=!, label=(\arabic*), topsep=4pt, itemsep=4pt]
\item $X$ denotes a piecewise linear space, purely of some dimension $n$. By our definition this means that $X$ is a Hausdorff space together with a sheaf of continuous real valued functions $Λ_X$ such that $(X,Λ_X)$ is locally isomorphic to an open subset of a polyhedral set in some $\mbR^r$ together with its piecewise linear functions.
\item $μ$ is a weight for $X$. Roughly, this means that $μ$ is the datum of a weight for every small enough $n$-dimensional polyhedron $σ\subseteq X$.
\item $L$ is a \emph{balanced sheaf of linear functions} for $(X,μ)$, meaning it is subsheaf of $\mbR$-vector spaces of $Λ_X$ that contains the locally constant functions and satisfies the following conditions:
\item For every point $x\in X$, there exist a compact polyhedral neighborhood $x\in K$ and functions $ϕ \in L(K)^r$ such that $ϕ:K\to \mbR^r$ has finite fibers. We also impose that $L$ locally has a subordinate polyhedral complex of definition, cf. Def. \ref{def:sheaf_of_linear_functions}.
\item Finally, $μ$ and $L$ are compatible in the following sense. Whenever $K\subseteq X$ is a polyhedral subset and $\mcK$ a structure of polyhedral complex for $K$ that is subordinate to $μ$ and $L$, then for every $(n-1)$-dimensional $τ\in \mcK$ that is contained in the inner of $K$ and for every $ϕ\in L(K)$ such that $ϕ\vert_τ$ is constant,
\begin{equation}\label{eq:intro_balanced}
\sum_{σ\in \mcK,\ \dim σ = n,\ τ\subseteq σ} \frac{\partial ϕ\vert_σ}{\partial n_{σ,τ}} = 0.
\end{equation}
Here, the $\{n_{σ,τ}\}_{τ\subseteq σ}$ are a family of so-called normal vectors for the inclusions $τ\subseteq σ$. Roughly, these are vectors in the affine linear space $N_σ$ spanned by $σ$ that point away from $τ$ and have length proportional to the weight on $σ$.
\end{enumerate}

For example, $(\mbR^r, μ_{\mr{std}}, \mr{Aff})$ is a tropical space where $μ_{\mr{std}}$ is the standard weight on $\mbR^r$ and $\mr{Aff}$ the sheaf of affine linear functions. More generally, every tropical cycle $(X,μ) \subseteq \mbR^r$ becomes a tropical space when endowed with the sheaf $L = \mr{Aff}\vert_X$. This essentially describes the tropical spaces with the property that $L(X)$ \emph{embeds} $X$ into some $\mbR^r$. Axioms (4) and (5) are just slightly weaker; general tropical spaces locally map finitely onto tropical cycles. In fact, this is an equivalent way to phrase (4) and (5), cf. Cor. \ref{cor:equiv_char_trop_space}.

The motivating property of tropical spaces is that they allow a formulation of the balancing and harmonicity condition. Namely let $K$, $\mcK$ and $τ$ be as in (5) above. Assume additionally that the functions $L(K)$ embed $τ$ into some $\mbR^r$ as in (4) and let $φ:K\to \mbR$ be any piecewise linear function. Then there exists $ϕ_τ\in L(K)$ such that $(φ - ϕ_τ)\vert_τ$ is constant and we call $φ$ harmonic in $τ$ if
\begin{equation}\label{eq:intro_balanced_II}
\sum_{σ\in \mcK,\ \dim σ = n,\ τ\subseteq σ} \frac{\partial (φ-ϕ_τ)\vert_σ}{\partial n_{σ,τ}} = 0.
\end{equation}
This condition is independent of the choice $ϕ_τ$ precisely by (5). A variant of \eqref{eq:intro_balanced_II} moreover gives the notion of balanced polyhedral currents, cf. Def. \ref{def:balanced_polyhedral_current}. (A polyhedral current on $X$ is a locally finite linear combination $\sum_{i\in I} α_i\wedge [σ_i, μ_i]$ where the $(σ_i, μ_i)$ are weighted polyhedra and the $α_i$ smooth $(p,q)$-forms in the sense of Lagerberg \cite{Lag}.)

Consider again the tropical space $(\mbR^r, μ_{\mr{std}}, \mr{Aff})$ which we usually denote just by $\mbR^r$. Balanced polyhedral currents on $\mbR^r$ are called $δ$-forms and are denoted by $B(\mbR^r)$. Examples of $δ$-forms on $\mbR^r$ are piecewise smooth $(p,q)$-forms in the sense of Lagerberg \cite{Lag} and tropical cycles in the sense of tropical geometry. A theory of such $δ$-forms has been worked out in \cite{Mih_trop_inter}. For example, they come with a $\wedge$-product that extends the $\wedge$-product of piecewise smooth $(p,q)$-forms and the intersection product of tropical cycles. They furthermore come with various differential operators $d'$, $d''$, $d'_P$, $d''_P$, $\partial'$ and $\partial''$ that generalize the differential operators $d'$ and $d''$ of $(p,q)$-forms and the corner locus construction of tropical geometry.

Consider again a general tropical space $(X, μ, L)$, together with a tuple $f\in L(X)^r$ and a $δ$-form $γ\in B(\mbR^r)$. We prove in §\ref{ss:tropical_delta_forms} that there is a natural way to define a pullback $f^\star(γ)$ of $γ$ to $X$. This pullback is a polyhedral current on $X$ that is characterized by a certain type of projection formula identity, cf. \eqref{eq:realization}.
\begin{defn}[cf. Def. \ref{def:delta_form_tropical_space}]\label{def:intro_delta_tropical}
A $δ$-form on $(X,μ,L)$ is a polyhedral current on $X$ that is locally of the form $f^\star(γ)$ for some $r\geq 1$, some $f\in L^r$ and some $γ\in B(\mbR^r)$. The sheaf of $δ$-forms on $X$ is denoted by $B$ or $B_X$.
\end{defn}
It is not hard to see that the $\wedge$-product and the differential operators for $δ$-forms on $\mbR^r$ transfer to analogous operators on $B_X$. The existence of pullbacks $h^*:h^{-1}B_Y\to B_X$ along linear maps $h:(X,μ_X,L_X)\to (Y,μ_Y,L_Y)$ is subtle, however. We introduce a combinatorial condition of \emph{flatness} to resolve this, cf. Def. \ref{def:flat}. It turns out that this condition is always satisfied for maps between skeletons in Berkovich spaces, cf. Thm. \ref{thm:intro_flatness} below.

\subsection{Skeletons}
Fix a non-archimedean valued field $(k,v)$. We take the additive point of view throughout our work, meaning $v:k^\times \to \mbR$. We consider $k$-analytic spaces in the sense of Berkovich \cites{Ber1, Ber2} and we always assume our spaces to be good and Hausdorff. Whenever we write $\mbG_m^r$ in the following, we really mean the torus over $k$ as $k$-analytic space.

Let $X$ be a $k$-analytic space that is in addition purely of some dimension $n$. By toric coordinates on $X$, we mean a map $f = (f_1,\ldots,f_r):X\to \mbG_m^r$ to some such torus. The tropicalization map of $f$ is its composition with taking valuation,
\begin{equation}\label{eq:intro_tropicalization_map}
t_f:X\lr \mbR^r,\ x\longmapsto (v(f_1(x)),\ldots, v(f_r(x))).
\end{equation}
Assuming that $X$ is compact, the $n$-dimensional locus $T(X,f)$ of the image $t_f(X)$ was made into a tropical cycle (away from $t_f(\partial X)$) by Chambert-Loir--Ducros \cite{CLD}. It is called the \emph{tropicalization of $X$ by $f$}.

The datum $T(X, f)$ can be refined by considering skeletons: First, Ducros \cite{Duc_local} proved that the union
$$\Sigma'(X,f) := \bigcup_{1\leq i_1, \ldots, i_n \leq r} (f_{i_1},\ldots,f_{i_n})^{-1} \Sigma(\mbG_m^n)$$
of inverse images of the standard skeleton $\Sigma(\mbG_m^n)$ is itself a piecewise linear space. Second, the $n$-dimensional locus $\Sigma(X,f)$ of $\Sigma'(X,f)$ has been endowed with weights by Gubler--Jell--Rabinoff \cite{GJR}. Third, we may give it the sheaf of linear functions generated by all components $t_{f_i}$ of \eqref{eq:intro_tropicalization_map}. The restriction of these data to the open subset $\Sigma(X\setminus \partial X, f)$ then defines a tropical space in the sense of \S1.1. Note that if $\partial X = \emptyset$, then simply $\Sigma(X,f) = \Sigma'(X,f)$, see Prop. \ref{prop:skeleton_pure_dim}.

We prove two results on skeletons and tropicalizations. The first is the following homogeneity property. It extends the analogous results of Chambert-Loir--Ducros \cite{CLD}*{Prop. 4.6.6} from divisors to higher codimension.
\begin{prop}[cf. Prop. \ref{prop:reg_sequence_statement}]\label{prop:intro_reg_sequence}
Assume that $X$ is compact and that $Z = V(f_1,\ldots,f_r)\subseteq X$ is defined by a regular sequence $f_1,\ldots,f_r\in \mcO_X(X)$. Let $g:X\to \mbG_m^s$ be a tuple of toric coordinates. Writing $U = X\setminus V(f_1\cdots f_r)$, there exists a constant $C$ such that
$$T\big(\{x\in U,\ v(f_i(x)) > C\ \forall i\},\ f\times g\big) = (C,\infty)^r\times T(Z,g)$$
as weighted polyhedral sets.
\end{prop}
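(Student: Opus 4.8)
The plan is to induct on $r$, the base case $r=1$ being the divisor statement \cite{CLD}*{4.6.6}. At each step I split off one of the functions together with a factor $(C,\infty)$ of the tropicalization; since \cite{CLD}*{4.6.6} and the inductive hypothesis are both statements about \emph{weighted} polyhedral sets, and the split-off factors $(C,\infty)$ carry weight $1$, the weights simply propagate through the argument and the ``weighted'' refinement requires nothing extra.

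For the inductive step, first choose $C_0$ large enough that $\hat X := \{x\in X : v(f_i(x))\ge C_0,\ 2\le i\le r\}$ is a compact Weierstrass domain in $X$ on which $f_2,\dots,f_r$ are units; enlarging $C_0$ we may also assume $f_1$ restricts to a non-zero-divisor on $\hat X$, since $\hat X$ is then a neighborhood of $V(f_2,\dots,f_r)$, and a regular sequence may be reordered in the Noetherian local rings $\mcO_{X,x}$ for $x\in Z$, so $f_1$ is a non-zero-divisor near $Z$. Put $Z_1 := V(f_1)\cap\hat X$, a compact analytic space pure of dimension $n-1$ on which $f_2,\dots,f_r$ restrict to a regular sequence with zero locus $Z$ (again by reordering, and using $Z\subseteq\hat X$ as all $f_i$ vanish on $Z$). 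Apply \cite{CLD}*{4.6.6} to $\hat X$ with the non-zero-divisor $f_1$ and the auxiliary toric coordinates $(f_2,\dots,f_r)\times g$: it produces $C_1\ge C_0$ such that, over the locus where the $f_1$-coordinate exceeds $C_1$,
\[
T\big(\{x\in\hat X : f_1(x)\ne 0,\ v(f_1(x))>C_1\},\ f_1\times f_2\times\cdots\times f_r\times g\big)\ =\ (C_1,\infty)\times T\big(Z_1,\ (f_2,\dots,f_r)\times g\big).
\]
Apply the inductive hypothesis to $Z_1$ with the regular sequence $f_2|_{Z_1},\dots,f_r|_{Z_1}$ and toric coordinates $g|_{Z_1}$: it yields $C_2$ with $T(\{y\in Z_1 : f_i(y)\neq 0,\ v(f_i(y))>C_2\ \forall i\ge 2\},(f_2,\dots,f_r)\times g) = (C_2,\infty)^{r-1}\times T(Z,g)$. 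Taking $C:=\max(C_1,C_2)$, restricting the first identity to the preimage of $\{$all coordinates $>C\}$ (legitimate because tropicalization commutes with passing to the preimage of a polyhedral region, multiplicities being local) and substituting the second identity for the factor $T(Z_1,(f_2,\dots,f_r)\times g)$, one gets $T(\{x\in U : v(f_i(x))>C\ \forall i\},f\times g) = (C,\infty)\times(C,\infty)^{r-1}\times T(Z,g) = (C,\infty)^r\times T(Z,g)$, as claimed; here one uses that for $C\ge C_0$ the region $\{x\in U : v(f_i(x))>C\ \forall i\}$ equals $\{x\in\hat X : f_1(x)\neq 0,\ v(f_i(x))>C\ \forall i\}$.

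The main obstacle is the bookkeeping forced by compactness and analytic boundaries. Since $T(-,-)$ and \cite{CLD}*{4.6.6} require compact spaces and only output genuine tropical cycles away from the tropicalized boundary, one must (i) pass from the open regions $\{v(f_i)>C\}$ to their compact closures $\{v(f_i)\ge C\}$, checking that the resulting polyhedral sets differ only in a lower-dimensional locus of boundary faces that is irrelevant over the region of interest; and (ii) verify, for $C$ large, that the obstruction loci $t_{f\times g}(\partial\hat X)$, $t_{f\times g}(\partial Z_1)$ and the hyperplanes $\{v(f_i)=C\}$ avoid the region $(C,\infty)^r\times\mbR^s$ over which equality is asserted. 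This last point is where compactness of $X$ is genuinely used: all these loci are bounded away from $Z$, whereas $\{v(f_i)>C\ \forall i\}$ shrinks to an arbitrarily small neighborhood of $Z$ as $C\to\infty$, so for $C$ large enough they cannot interfere. A minor additional point, already flagged above, is that the regular-sequence hypothesis descends to $\hat X$ and $Z_1$, which follows from permutability of regular sequences in the Noetherian local rings $\mcO_{X,x}$ together with persistence of the non-zero-divisor property on a small enough $\hat X$.
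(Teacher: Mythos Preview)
Your induction has a genuine gap at its first move. You define $\hat X := \{x\in X : v(f_i(x))\ge C_0,\ 2\le i\le r\}$ and assert that $f_2,\dots,f_r$ are units on $\hat X$. This is false: the condition $v(f_i(x))\ge C_0$ is satisfied whenever $f_i(x)=0$ (since $v(0)=\infty$), so $\hat X$ contains $V(f_j)$ for every $j\ge 2$, and in particular contains $Z$. No choice of $C_0$ makes the $f_i$ invertible on $\hat X$.

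This is fatal for the application of \cite{CLD}*{4.6.6}, which requires the auxiliary moment map to land in $\mbG_m^s$, i.e.\ to consist of invertible functions on the ambient space. You need $(f_2,\dots,f_r)\times g$ as toric coordinates on $\hat X$ so that the right-hand side $T(Z_1,(f_2,\dots,f_r)\times g)$ makes sense; but $f_2,\dots,f_r$ vanish on $Z_1\cap V(f_j)\supseteq Z$, so this tropicalization is not even defined. The natural attempt to remedy this by further restricting $\hat X$ to a Laurent domain where the $f_i$ are bounded away from zero either destroys affinoidness or excludes $Z_1$, and one is back to the obstruction the paper explicitly flags before its proof: ``the space $U_{\geq C}$ that intervenes is not affinoid anymore so such an induction does not seem to be straightforward.'' The paper therefore abandons induction and argues directly for general $r$: it shows that the map $(f,\,q\circ g):X\to \mbA^r\times\mbG_m^{n-r}$ is finite flat of some degree $e$ over a neighborhood of $0\times\Sigma(q(\sigma)^\circ)$, using that the local rings $\mcO_{X,z}$ at Abhyankar points $z\in Z$ are Cohen--Macaulay of Krull dimension $r$ together with Miracle Flatness, and then compares this degree with the one computed over $\Sigma((C,\infty)^r\times q(\sigma)^\circ)$.
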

Our second result is the already mentioned flatness of maps between skeletons. We assume that $\partial X = \emptyset$ and that $Y$ is a further $k$-analytic space, purely of some dimension $m$ with $\partial Y = \emptyset$. Recall that a point of a $k$-analytic space is called Abhyankar if its graded residue field has maximal possible transcendence degree.
\begin{thm}[cf. Thm. \ref{thm:skeleton_flat}]\label{thm:intro_flatness}
Let $π:X\to Y$ be a morphism that is locally on $X$ dominant in the sense that it takes Abhyankar points to Abhyankar points. Given toric coordinates $g:X\to \mbG_m^r$ and $x\in \Sigma(X,g)$, there exist an open neighborhood $U$ of $y = π(x)$ in $Y$, an open neighborhood $V$ of $x$ in $π^{-1}(U)$ and toric coordinates $f:U\to \mbG_m^s$ such that $π$ induces a flat map of skeletons
\begin{equation}\label{eq:intro_map_skeletons}
\Sigma(V, g\times (f\circ π))\lr \Sigma(U, f).
\end{equation}
\end{thm}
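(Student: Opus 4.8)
The statement is local around $x$ and $y=\pi(x)$, so the plan is to first arrange, by a suitable choice of $f$ and after shrinking neighbourhoods, that \eqref{eq:intro_map_skeletons} becomes the restriction of a tropicalization map to $\Sigma(V,g\times(f\circ\pi))$, and then to establish flatness by proving that its fibres are pure of dimension $n-m$. Since $x$ is Abhyankar and $\pi$ takes Abhyankar points to Abhyankar points, $y$ is Abhyankar in $Y$; comparing graded residue fields along $\mcH(y)\hrighta\mcH(x)$ forces $m\leq n$, and shows moreover that $\pi$ is dominant in a neighbourhood of $x$ in the analytic sense — its image cannot lie in a subset of dimension $<m$ through $y$, as that would send $x$ into a locus of too small transcendence degree. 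Using the structure theory of skeletons of Ducros \cite{Duc_local} and the weighted skeletons of Gubler--Jell--Rabinoff \cite{GJR}, I would shrink $Y$ to a compact neighbourhood $U$ of $y$ and choose toric coordinates $f\colon U\to\mbG_m^s$ so that a neighbourhood of $y$ in $\Sigma(U,f)$ coincides with a neighbourhood of $y$ in the Abhyankar locus of $U$ and so that $t_f$ restricts there to an isomorphism of piecewise linear spaces onto an open subset of $T(U,f)$.

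Next one checks that $\pi$ carries $\Sigma(V,g\times(f\circ\pi))$ into $\Sigma(U,f)$, which is how \eqref{eq:intro_map_skeletons} arises: the source skeleton is pure of dimension $n$ since $\partial X=\emptyset$ (Prop.~\ref{prop:skeleton_pure_dim}), the Abhyankar points of $V$ lying in it are dense in it (cf.~\cite{Duc_local}), they are sent by $\pi$ into the Abhyankar locus of $U$ and hence into $\Sigma(U,f)$ by the choice of $f$, and one concludes by continuity since $\Sigma(U,f)$ is closed. Via the identification of $\Sigma(U,f)$ with an open part of $T(U,f)$, the map \eqref{eq:intro_map_skeletons} is then given over a neighbourhood of $x$ by the restriction to $\Sigma(V,g\times(f\circ\pi))$ of the tropicalization $t_{f\circ\pi}$ of the last $s$ toric coordinates, landing in $T(U,f)$. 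By the definition of the weights of \cite{GJR} this is a morphism of weighted piecewise linear spaces of dimensions $n$ and $m$, and flatness in the sense of Def.~\ref{def:flat} reduces to showing that every fibre of it is pure of dimension $n-m$; the accompanying compatibility of weights is then a formal consequence of the multiplicativity of the \cite{GJR} weights under restriction to fibres.

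The heart of the argument is this equidimensionality. Since $t_f$ is an isomorphism near $y$ on $\Sigma(U,f)$, the fibre of \eqref{eq:intro_map_skeletons} over a point $q$ near $t_f(y)$ in $T(U,f)$ corresponds, near $x$, to $\Sigma(V,g\times(f\circ\pi))\cap\pi^{-1}(y_0)$ for the unique $y_0\in\Sigma(U,f)$ near $y$ with $t_f(y_0)=q$; note $y_0$ is Abhyankar in $Y$, so $\trdeg(\wt{\mcH(y_0)}/\wt k)=m$. For the upper bound, this set is contained in the analytic fibre $\pi^{-1}(y_0)$, and for any $w\in\pi^{-1}(y_0)$ near $x$ the additivity of graded transcendence degree in the tower $\wt k\subseteq\wt{\mcH(y_0)}\subseteq\wt{\mcH(w)}$ gives
\begin{equation*}
\trdeg\!\big(\wt{\mcH(w)}/\wt{\mcH(y_0)}\big)=\trdeg\!\big(\wt{\mcH(w)}/\wt k\big)-m\leq n-m,
\end{equation*}
since $X$ is pure of dimension $n$; hence $\dim_x\pi^{-1}(y_0)\leq n-m$. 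For the lower bound, $\pi$ being dominant near $x$ with $X$, $Y$ pure of dimensions $n$, $m$ yields $\dim_w\pi^{-1}(y_0)\geq\dim_w X-\dim_{y_0}Y=n-m$ at every point $w$ of the fibre near $x$, so $\pi^{-1}(y_0)$ is in fact pure of dimension $n-m$ near $x$. It then remains to see that the skeleton $\Sigma(V,g\times(f\circ\pi))$ meets this fibre in the full dimension $n-m$; this is a slicing statement — fixing the values of the toric coordinates $f\circ\pi$, which span an $m$-dimensional direction in the image — of the same nature as Prop.~\ref{prop:intro_reg_sequence}, and follows from the piecewise linear structure of the skeleton together with the purity just established. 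Combining the two bounds, all fibres of \eqref{eq:intro_map_skeletons} are pure of dimension $n-m$, so the map is flat, and shrinking $U$ and $V$ completes the proof.

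I expect the slicing step in the last paragraph — showing that intersecting the skeleton of $V$ with an analytic fibre of $\pi$ again yields a piecewise linear set of the expected pure dimension — to be the main technical obstacle, since it is exactly here that one must control how the chosen toric coordinates interact with $\pi$ on the fibres. Everything preceding it is bookkeeping with graded transcendence degrees and with the known structure theory of skeletons, and the hypothesis that $\pi$ is locally dominant enters precisely to prevent $\pi$ from collapsing part of $\Sigma(X,g)$ onto a lower-dimensional subset of the target, which is what would destroy both the upper and the lower bound above.
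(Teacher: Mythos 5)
Your proposal contains a genuine gap at its very center: you reduce flatness of the skeleton map to purity of its fibre dimensions, writing that all fibres being pure of dimension $n-m$ suffices and that ``the accompanying compatibility of weights is then a formal consequence of the multiplicativity of the \cite{GJR} weights under restriction to fibres.'' This is not true. Flatness in Def.~\ref{def:flat} is, by Prop.~\ref{prop:flatness_characterization}, equivalent to having well-defined fibre weights in the sense of Def.~\ref{def:fiber_weight}, and that is a quantitative multiplicity condition — a sum over top-dimensional polyhedra $\sigma$ above a given facet $\tau$ must be independent of the choice of maximal $\ob\sigma$ containing $f(\tau)$ downstairs — which is strictly stronger than pure fibre dimension. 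Example~\ref{ex:push_forward_does_not_preserve_pws}~(2) in the paper is exactly a counterexample to your criterion: the map $\{x_1=0\}\sqcup\{x_2=0\}\to\{x_1x_2=0\}$ has all fibres of the correct dimension $n-m=0$, yet it is not flat because the fibre over the origin has no well-defined weight. The weight compatibility is not a formality; it is precisely the thing one has to prove, and it is why the hypothesis is \emph{dominance} (Abhyankar points to Abhyankar points) rather than just fibre-dimension control.

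Because of this, your ``slicing step'' is not merely the main technical obstacle — it is pointed at the wrong target. Even if one could show the skeleton meets each analytic fibre in a polyhedral set of the expected pure dimension, one still has to match degrees and weights across the facets of a polyhedral complex structure, locally around a codimension-one stratum, for \emph{all} maximal polyhedra downstairs. The paper's proof deals with exactly this by factoring: it uses Ducros' Théorème~4.6 to write $\pi$ (locally, integrally) as $X\to\mbA^{n-m}_Y\to Y$ with $x$ isolated in the fibre of the first arrow; the first arrow becomes finite over a neighbourhood by Lem.~\ref{lem:automatic_finite_flatness_II}, and the finite case (Prop.~\ref{prop:fin_maps_flat_on_skeletons}, built on Lem.~\ref{lem:finite}) carries weights explicitly through a degree computation; the relative $\mbA^1$ case is then isolated and handled by Prop.~\ref{prop:flatness_in_explicit_rel_dim_1}, which is a genuinely hard explicit analysis (base change to a field with $v(K^\times)=\mbR$, passage to Gauss points, Newton-polygon control of the $a_{ij}$, and an explicit gluing construction of the skeleton of $\mbA^1_U$ as a quotient of copies of $\mbR\times\Sigma(U,f)$ so that its projection to $\Sigma(U,f)$ is \emph{manifestly} flat). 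Your graded transcendence degree accounting is fine as far as it goes — it correctly shows $\pi$ carries the skeleton into the skeleton and bounds fibre dimensions — but that part is the easy part, and it is where your proposal stops.

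Two smaller issues: first, you want to choose $f$ so that $t_f$ restricts to a \emph{pwl isomorphism} of a neighbourhood of $y$ in $\Sigma(U,f)$ onto an open part of $T(U,f)$; this can be arranged set-theoretically by shrinking $U$, but it is not obviously an isomorphism of \emph{weighted} pwl spaces, and the whole point is to compare weights. Second, the cited inequality $\dim_w\pi^{-1}(y_0)\geq\dim_wX-\dim_{y_0}Y$ is an algebraic fact that requires a flatness or dominance input at the analytic level that you haven't supplied at the point $w$ in the fibre, as opposed to at $x$. The paper sidesteps both issues by the factorization.
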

Flatness essentially means that every fiber is itself a tropical space in a natural way. In the case at hand, the fibers of \eqref{eq:intro_map_skeletons} will be the skeletons of $g$ in fibers of $π$.

The above two results play a fundamental role in the definition of pullbacks of $δ$-forms, cf. Thm. \ref{thm:intro_pull_back_delta} below.

\subsection{$δ$-Forms on Non-Archimedean Spaces}
We continue to assume that $X$ is purely of dimension $n$ with $\partial X = \emptyset$. In general, since the tropical cycle property of $T(X,f)$ and the tropical space property of $\Sigma(X,f)$ only hold away from $t_f(\partial X)$ (resp. $\partial X$), our whole theory of $δ$-forms only applies to spaces without boundary. (A similar restriction is made in \cite{CLD}*{§4}, where currents are defined on smooth $(p,q)$-forms with compact support away from the boundary.)

Motivated by a similar definition of Ducros \cite{Duc_local}, we call a subset $\Sigma\subseteq X$ a skeleton if it is locally a closed piecewise linear subspace of some $\Sigma(U, f)$. (Here, $U\subseteq X$ is open and $f:U\to \mbG_m^r$ a tuple of toric coordinates.) 

\begin{defn}[cf. Def. \ref{def:delta_form_non_arch}]\label{def:intro_delta_non_arch}
A $δ$-form on $X$ is the datum of a polyhedral current $ω_{\Sigma}$ on every skeleton $\Sigma \subseteq X$ such that
\begin{enumerate}[wide, labelindent=0pt, labelwidth=!, label=(\arabic*), topsep=4pt, itemsep=4pt]
\item Whenever $\Sigma'\subseteq \Sigma$, then $ω_{\Sigma}\vert_{\Sigma'} = ω_{\Sigma'}$.

\item Every point $x\in X$ has an open neighborhood $x\in U$ that admits toric coordinates $(f_1,\ldots,f_r):U\to \mbG_m^r$ and a $δ$-form $γ\in B(\mbR^r)$ such that for all open subsets $V\subseteq U$ and all toric coordinates $(g_1,\ldots,g_s):V\to \mbG_m^s$,
$$ω_{\Sigma(V,f\times g)} = t_f^\star(γ).$$
Here, $t_f\in L(\Sigma(V,f\times g))^r$ is viewed as a tuple of linear functions on a tropical space and the pullback $t_f^\star(γ)$ is the one from Def. \ref{def:intro_delta_tropical}.
\end{enumerate}
\end{defn}
We write $B$ or $B_X$ for the sheaf of $δ$-forms on $X$. They come with a $\wedge$-product and the same kind of differential operators $d',d'',d'_P,d''_P,\partial'$ and $\partial''$ as before. For example, the smooth differential forms of Chambert-Loir--Ducros \cite{CLD}, the weakly smooth forms of Gubler--Jell--Rabinoff \cite{GJR} and the $δ$-forms of Gubler--Künnemann \cite{GK} are all also $δ$-forms in our sense (\S4.2).

A further piece of structure that is common to $δ$-forms on $\mbR^r$, to $δ$-forms on tropical spaces and to $δ$-forms on analytic spaces is the existence of a trigrading $B = \bigoplus_{p,q,c} B^{p,q,c}$. In all three situations, a $δ$-form is of tridegree $(p,q,c)$ if it can be (locally) written as (a pullback of) a balanced polyhedral current $\sum_{i\in I} α_i \wedge [σ_i, μ_i]$ where the coefficients $α_i$ are of bidegree $(p,q)$ and the polyhedra $σ_i$ of codimension $c$.

We also require the notion of a piecewise smooth form on an analytic space. Those are defined in the same way as the smooth forms in \cite{CLD}, but allowing piecewise smooth coefficients. We denote the sheaf of piecewise smooth forms of bidegree $(p,q)$ on $X$ by $PS^{p,q}_X$.
\begin{prop}[cf. Prop. \ref{prop:pws_are_delta_of_codim_0}]\label{prop:intro_pws_is_delta}
There is an isomorphism $PS_X^{p,q}\iso B^{p,q,0}_X$.
\end{prop}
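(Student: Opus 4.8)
The plan is to construct the isomorphism $PS_X^{p,q}\iso B_X^{p,q,0}$ by a local argument, patching over the charts of Definition \ref{def:intro_delta_non_arch}. Both sides are sheaves, so it suffices to produce the isomorphism locally and check it is independent of the chosen toric coordinates. First I would recall the analogous statement on $\mbR^r$: by the theory of \cite{Mih_trop_inter}, piecewise smooth $(p,q)$-forms on $\mbR^r$ are exactly the codimension-$0$ part $B^{p,q,0}(\mbR^r)$ of the space of $δ$-forms (they are the balanced polyhedral currents $\sum_i α_i\wedge[σ_i,μ_i]$ all of whose $σ_i$ are top-dimensional, and the balancing condition on such a current is automatic once one sums up matching top cells). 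Next I would transfer this to a general tropical space $(X,μ,L)$: a $δ$-form of tridegree $(p,q,0)$ on $X$ is locally $f^\star(γ)$ for $γ\in B^{p,q,0}(\mbR^r)$, i.e.\ $f^\star$ of a piecewise smooth top-degree current; since $f:X\to\mbR^r$ has finite fibers and is linear on each polyhedron of a complex of definition, such a pullback is again a piecewise smooth $(p,q)$-form on $X$ in the Lagerberg sense, and conversely every piecewise smooth $(p,q)$-form on the tropical space arises this way after refining the complex of definition. This gives $PS^{p,q}_{\mathrm{trop}}\iso B^{p,q,0}_{\mathrm{trop}}$ on tropical spaces.

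Now for the analytic space $X$, fix $x\in X$ and a chart $U$ with toric coordinates $f:U\to\mbG_m^r$ as in Definition \ref{def:intro_delta_non_arch}. A $δ$-form $ω$ of tridegree $(p,q,0)$ is on every refinement $\Sigma(V,f\times g)$ given by $t_f^\star(γ)$ with $γ\in B^{p,q,0}(\mbR^r)$. By the tropical-space case above, $t_f^\star(γ)$ is, on each such skeleton, a piecewise smooth $(p,q)$-form; the compatibility condition (1) of Definition \ref{def:intro_delta_non_arch} under passage $\Sigma'\subseteq\Sigma$ is precisely the cocycle condition needed to see that these piecewise smooth forms glue to a well-defined piecewise smooth $(p,q)$-form on $U$ in the sense of Chambert--Loir--Ducros with piecewise smooth coefficients — this is where one invokes that piecewise smooth forms on an analytic space are, by definition, compatible systems of piecewise smooth forms on the skeletons $\Sigma(V,f\times g)$ together with the transition rule along tropicalization maps. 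Conversely, a piecewise smooth $(p,q)$-form on $U$ restricts to a piecewise smooth, hence balanced codimension-$0$, polyhedral current on each skeleton, and these restrictions satisfy (1) and (2) by functoriality of restriction and of $t_f^\star$; so it defines a $δ$-form of tridegree $(p,q,0)$. The two constructions are mutually inverse by construction, and independence of the chart follows because on an overlap the two descriptions agree on the common refinement $\Sigma(V,f\times f'\times g)$, again by the tropical-space isomorphism applied with the combined coordinates.

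The main obstacle I anticipate is the compatibility bookkeeping in the middle step: one must check that the local data $t_f^\star(γ)$ on the various skeletons $\Sigma(V,f\times g)$ genuinely assemble into a single piecewise smooth form on the analytic space, i.e.\ that the CLD-style and $δ$-form-style notions of ``compatible family over all toric charts'' literally coincide in codimension $0$. Concretely, given two charts $f,f'$ over $V$ one needs that $t_f^\star(γ)$ and $t_{f'}^\star(γ')$ induce the same piecewise smooth form, which reduces to a statement on the skeleton $\Sigma(V,f\times f')$: there both are pulled back along the coordinate projections from $\mbR^r$ and $\mbR^{r'}$ respectively, and one checks the forms agree because the original $ω$ is a single $δ$-form, so its value on $\Sigma(V,f\times f')$ is unambiguous. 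Everything else — the $\mbR^r$ case, the finite-fiber pullback of piecewise smooth top currents, the behaviour of $B^{p,q,0}$ under $(\cdot)^\star$ — is either quoted from \cite{Mih_trop_inter} or a direct unwinding of definitions, and I would not belabor it. The upshot is that Proposition \ref{prop:intro_pws_is_delta} is essentially a tautology modulo the $\mbR^r$ result, once one has matched up the two definitions of ``piecewise smooth form on a $k$-analytic space''.
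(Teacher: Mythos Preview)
Your proposal has a genuine gap at the injectivity step, and this is precisely where the content of the proposition lies. You write that ``piecewise smooth forms on an analytic space are, by definition, compatible systems of piecewise smooth forms on the skeletons $\Sigma(V,f\times g)$'', but this is not the definition the paper (following Chambert-Loir--Ducros) uses. A pws form $t_f^*(\eta)$ on $X$ is determined by the restrictions $\eta\vert_{T'(K,f)}$ to the \emph{full} tropicalization images $T'(K,f)$, which in general contain polyhedra of dimension strictly less than $n=\dim X$. By contrast, a $\delta$-form of codimension $0$ is determined by its restrictions to skeletons, and these only see the purely $n$-dimensional locus $T(K,f)\subseteq T'(K,f)$. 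So the nontrivial assertion is exactly that a pws form which vanishes on every $T(K,f)$ already vanishes on every $T'(K,f)$; your ``compatibility bookkeeping'' argument on the common skeleton $\Sigma(V,f\times f')$ is circular because it again only probes the top-dimensional part.

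The paper resolves this in two steps. First, for $(p,q)=(n,n)$, if $t_f^\star(\eta)=0$ then $\eta$ vanishes on $T(K,f)\setminus t_f(\partial K)$ for every affinoid $K$, so the support of $\eta\vert_{T'(K,f)}$ is contained in a polyhedral set of dimension $\leq n-1$; since $\eta$ has bidegree $(n,n)$, this forces $\eta\vert_{T'(K,f)}=0$, hence $t_f^*(\eta)=0$. Second, for general $(p,q)$, one wedges with arbitrary pws $(n-p,n-q)$-forms $\alpha$ to reduce to the top-degree case, and then invokes the nondegeneracy statement of \cite{CLD}*{Prop.\ 3.2.8} (extended to pws coefficients) to conclude $t_f^*(\eta)=0$. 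Neither step is a tautology, and both are missing from your outline.
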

It is a general phenomenon that constructions by formal geometry often give rise to only piecewise smooth (as opposed to smooth) functions and differential forms. The significance of Prop. \ref{prop:intro_pws_is_delta} is that the formalism of $δ$-forms is sufficiently general to handle such functions and forms.

Finally, we mention the existence of pullbacks for $δ$-forms. Let $Y$ be a further $k$-analytic space, purely of some dimension $m$ and with $\partial Y = \emptyset$.
\begin{thm}[cf. Thm. \ref{thm:pull_backs_exist}]\label{thm:intro_pull_back_delta}
Given a morphism $π:X\to Y$, there is a well defined pullback map $π^*:π^{-1}B_Y\to B_X$ that satisfies the relation
$$π^*(t_f^\star(γ)) = t_{f\circ π}^\star(γ).$$
\end{thm}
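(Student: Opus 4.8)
The plan is to reduce the existence of $\pi^*$ on $k$-analytic spaces to the flatness of maps of skeletons (Thm.~\ref{thm:intro_flatness}) together with the existence of pullbacks of $\delta$-forms along flat linear maps of tropical spaces. First I would recall that a $\delta$-form $\omega$ on $Y$ is, by Def.~\ref{def:intro_delta_non_arch}, a compatible family $(\omega_\Sigma)$ of polyhedral currents on all skeletons $\Sigma\subseteq Y$, and that locally it has the shape $t_f^\star(\gamma)$. So to define $\pi^*\omega$ it suffices, by the compatibility condition (1) and the sheaf property, to produce a compatible family of polyhedral currents on the skeletons of $X$ in a way that is local on $X$ and that has the required local shape $t_{g\times(f\circ\pi)}$-pullback of $\gamma$. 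Since any skeleton $\Sigma\subseteq X$ is locally a closed piecewise linear subspace of some $\Sigma(V, g\times(f\circ\pi))$ with $f$ toric coordinates on a neighborhood $U$ of $\pi(V)$ in $Y$ (one can always enlarge $g$ and adjoin $f\circ\pi$), the restriction maps let us reduce to defining $\pi^*\omega$ on such standard skeletons.

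Next I would fix $x\in X$, put $y=\pi(x)$, and invoke Thm.~\ref{thm:intro_flatness}: after shrinking, $\pi$ induces a \emph{flat} map of tropical spaces $p:\Sigma(V,g\times(f\circ\pi))\to\Sigma(U,f)$, and on the target we may assume $\omega$ restricts to $t_f^\star(\gamma)$ for some $\gamma\in B(\mbR^r)$ by Def.~\ref{def:intro_delta_non_arch}(2). The key algebraic input is then the pullback of $\delta$-forms along flat linear maps of tropical spaces, which exists by Def.~\ref{def:flat} and the discussion following Def.~\ref{def:intro_delta_tropical}; applying it gives $p^*(t_f^\star\gamma)$, a $\delta$-form on $\Sigma(V,g\times(f\circ\pi))$. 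Because the tropicalization map factors as $t_{f\circ\pi}=t_f\circ p$ on this skeleton, the functoriality of $\star$-pullback on tropical spaces yields $p^*(t_f^\star\gamma)=t_{f\circ\pi}^\star(\gamma)$, which is exactly the local shape demanded in Def.~\ref{def:intro_delta_non_arch}(2) and forced by $\omega$; this also shows the defining relation $\pi^*(t_f^\star\gamma)=t_{f\circ\pi}^\star\gamma$ in the theorem statement. One then sets $(\pi^*\omega)_\Sigma$ to be the restriction of this form for $\Sigma$ a general skeleton of $X$.

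The main obstacle is \emph{well-definedness and independence of choices}: one must check that the polyhedral current produced on a skeleton of $X$ does not depend on the choice of toric coordinates $g$ on $X$, on the choice of $f$ on $Y$, on the shrinkings, or on the chosen local presentation $\omega|_{\Sigma(U,f)}=t_f^\star\gamma$; and that the resulting currents are compatible under inclusions $\Sigma'\subseteq\Sigma$ of skeletons in $X$. For two different sets of toric coordinates, one passes to a common refinement $\Sigma(V, g\times g'\times(f\circ\pi)\times(f'\circ\pi))$, where both constructions restrict to the same current because on that larger skeleton both equal $t_{(f\times f')\circ\pi}^\star$ of the (uniquely determined) local shape of $\omega$ on $\Sigma(U\cap U', f\times f')$; this uses the transitivity of $\star$-pullback along linear maps of tropical spaces and the fact that $\delta$-forms form a sheaf in the sense that their local presentations glue. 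Compatibility under $\Sigma'\subseteq\Sigma$ follows similarly from $t_{f\circ\pi}^\star\gamma$ restricting correctly, which is ultimately a statement about $\star$-pullbacks on $\mbR^r$ and their behavior under restriction to subspaces, already available from \cite{Mih_trop_inter} and the tropical-space formalism of \S1.1.

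Finally I would record that $\pi^*$ so defined is $\mbR$-linear and a map of sheaves $\pi^{-1}B_Y\to B_X$, and note that it automatically commutes with $\wedge$-products and with the differential operators $d', d'', d'_P, d''_P, \partial', \partial''$, since each of these was (by construction in the earlier sections) compatible with $\star$-pullback on tropical spaces and with restriction to subskeletons; hence the same holds for $\pi^*$ on analytic spaces. Functoriality $(\pi\circ\rho)^*=\rho^*\circ\pi^*$ for a composable pair then drops out of the transitivity of $\star$-pullback together with the relation $t_{f\circ\pi\circ\rho}=t_{(f\circ\pi)\circ\rho}$, completing the proof.
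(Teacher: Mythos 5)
Your proposal covers only half of the paper's argument, and unfortunately it is the easier half. You invoke Thm.~\ref{thm:intro_flatness} to obtain a flat map of skeletons $p:\Sigma(V,g\times(f\circ\pi))\to\Sigma(U,f)$, but that theorem requires $\pi$ to be \emph{locally dominant}, i.e.\ to take Abhyankar points of $X$ to Abhyankar points of $Y$. The statement of Thm.~\ref{thm:intro_pull_back_delta} imposes no such hypothesis, and in fact one of its main use cases is the restriction $\omega\vert_Z$ to a Zariski-closed $Z\subseteq Y$ of positive codimension --- exactly the situation where dominance fails and where the Abhyankar points of $Z$ are \emph{not} Abhyankar points of $Y$, so that no skeleton of $Z$ maps to a skeleton of $Y$ and there is no flat map of tropical spaces to pull back along. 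Your argument simply does not apply in that case.

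The paper's proof addresses this by proving a local factorization statement (Lem.~\ref{lem:dominant_factorization}): after shrinking around an Abhyankar point $x\in X$, one can factor $\pi$ as a composite $X\to Z\hookrightarrow Y$ where $X\to Z$ is dominant and $Z\hookrightarrow Y$ is a closed immersion. The dominant leg is then handled exactly as you propose, via Thm.~\ref{thm:skeleton_flat} and Prop.~\ref{prop:pullback_delta_form}. The closed immersion leg is the new difficulty and needs a separate argument: the paper reduces by induction on codimension to the Cartier divisor case $Z = V(u)$, and uses the homogeneity of tropicalizations near a regular sequence (Prop.~\ref{prop:reg_sequence_statement}, generalizing Chambert-Loir--Ducros' divisorial case) to show that the product structure $(C,\infty)\times T(Z\cap K,g\times f)$ propagates the vanishing of $t_f^\star(\gamma)$ on $Y\setminus Z$ to the vanishing of $t_{f\vert Z}^\star(\gamma)$ on $Z$. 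There is also a clean way to state the well-definedness you need: it is equivalent to ``if $t_f^\star(\gamma)=0$ on $Y$ then $t_{f\circ\pi}^\star(\gamma)=0$ on $X$,'' which the paper verifies directly; your ``pass to a common refinement'' approach can be made to work for this but needs more care about whether the common refinement still lands in a situation where the flatness theorem applies. To complete your proof you would need to add the closed immersion case, which is where the genuine work is.
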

For example, this defines the restriction $ω\vert_Z$ of a $δ$-form to a Zariski closed subspace $Z\subseteq X$ even though the skeletons in Def. \ref{def:intro_delta_non_arch} do not intersect Zariski closed subspaces of codimension $\geq 1$. Thm. \ref{thm:intro_pull_back_delta} also constitutes an improvement over the definition of $δ$-forms in \cite{GK}. There, the $δ$-forms $t_f^\star(γ)$ were considered up to an equivalence relation involving all possible maps $X'\to X$. The theorem here states that the much more concrete Def. \ref{def:intro_delta_non_arch} in terms of skeletons leads to an equivalent notion.


\subsection{Intersection Theory}

We still assume that $X$ is purely of dimension $n$ with $\partial X = \emptyset$. Further assume that $Z\subseteq X$ is a Zariski closed subspace that is a complete intersection in the sense that $Z = V(f_1, \ldots, f_r)$ for a regular sequence $f_1,\ldots,f_r\in \mcO_X(X)$. In particular, $Z$ is purely of dimension $n-r$. The function $ϕ = \min\{v(f_1), \ldots, v(f_r)\}$ on $X\setminus Z$ is piecewise linear and hence a $δ$-form by Prop. \ref{prop:intro_pws_is_delta}. Thus we may define a $δ$-form of bidegree $(r-1, r-1)$ on $X\setminus Z$ by
$$γ = (-1)^{r-1}ϕ\, (d'd'' ϕ)^{r-1}.$$
This is a non-archimedean analog of a construction in complex geometry, cf. Ex. \ref{ex:complex_geom}. Our main result on $γ$ is that it gives rise to the following Poincaré--Lelong equation for $Z$.
\begin{thm}[cf. Thm. \ref{thm:Green_form_complete_inter}]\label{thm:intro_Green}
For every $δ$-form $η\in B^{n-r, n-r}(X)$ with compact support,
$$\int_{X\setminus Z} γ \wedge d'd''η = -\int_Z η\vert_Z.$$
\end{thm}
Thm. \ref{thm:intro_Green} generalizes the Poincaré--Lelong formula for divisors of Chambert-Loir--Ducros \cite{CLD}*{Théorème 4.6.5} and Gubler--Künnemann \cite{GK}*{Theorem 7.2} to our notion of $δ$-forms and to higher codimension. It in particular states that $γ$ defines a Green current for $Z$. In the given situation, meaning for complete intersection cycles $Z$, Green currents were already constructed in \cites{CLD, GK} by writing $Z$ as a successive divisor intersection and then using $\star$-products. What is new here, however, is that the Green current is given by a $δ$-form. This potentially has applications to the definition of $\star$-products for cycles of higher codimension.

We come to our second main result. Assume for this that $v(k^\times)\neq \{0\}$ but drop the assumption $\partial X = \emptyset$. Let $\mcX$ be a special formal scheme over the ring of integers $\Spf \mcO_k$. (Special formal schemes are defined in \cite{Ber_vanishing_II}, we review the definition in \S5.2.) Assume that $X$ is the generic fiber of $\mcX$ and denote by $\mr{sp}:X\to \mcX$ the specialization map. Let $\mcZ\subseteq \mcX$ be a closed formal subscheme with generic fiber $Z$. Define a function $ϕ$ by
\begin{equation}
\label{eq:intro_distance_function}
ϕ: X\setminus Z \lr \mbR_{\geq 0},\ x\longmapsto \min\{v(f(x)),\ f\in \mcI_{\mr{sp}(x)}\}.
\end{equation}
Here $\mcI = \ker(\mcO_\mcX \to \mcO_\mcZ)$ denotes the ideal sheaf defining $\mcZ$ and $\mcI_{\mr{sp}(x)}$ its stalk in the specialization of $x$. Then $ϕ$ can be thought of as measuring the distance from $Z$ with respect to the given formal models. As before, $ϕ$ is piecewise linear and hence a $δ$-form, and we may again define
\begin{equation}
\label{eq:def_mysterious_delta_form}
γ = (-1)^{r-1}ϕ\, (d'd'' ϕ)^{r-1} \in B^{r-1, r-1}(X\setminus (Z\cup \partial X)).
\end{equation}
The following theorem is the promised analog of \eqref{eq:intro_intersection_identity} for higher codimension.
\begin{thm}[cf. Thm. \ref{thm:int_num_identity}]\label{thm:intro_intersection}
Assume that $\mcO_k$ is a DVR with uniformizer $π$ and that $v$ is normalized by $v(π) = 1$. Assume that $\mcX$ is flat over $\mcO_k$ and purely of formal dimension $n+1$. Assume further that $\mcZ\subseteq \mcX$ is a local complete intersection that is purely of codimension $r$. Let $\mcY\subseteq \mcX$ be a closed formal subscheme that is Cohen--Macaulay and purely of dimension $r$. Also assume that $\mcY$ is flat over $\mcO_k$ and that the intersection $\mcZ\cap \mcY$ is artinian. Let $Y$ denote its generic fiber. Then $γ\vert_{Y\setminus \partial Y}$ has compact support and
$$\int_{Y\setminus \partial Y} γ\vert_{Y\setminus \partial Y} = \mr{len}_{\mcO_k} \mcO_{\mcY\cap \mcZ}.$$
\end{thm}

We end this section with two questions that arise from Thms. \ref{thm:intro_Green} and \ref{thm:intro_intersection}. First, let $(E, v)$ be a metrized vector bundle of rank $r$ on $X$ and let $s\in E(X)$ be a regular section. Let $Z = V(s)$ be its vanishing locus which is of codimension $r$. Then it would be interesting to know if Thm. \ref{thm:intro_Green} extends to a construction of a Green $δ$-form for $Z$ from the datum $(E, v, s)$. In the archimedean setting, such a construction can be obtained from the work of Bismut--Gillet--Soulé \cite{BGS}. We refer to the work of Garcia--Sankaran \cite{Garcia_Sankaran} for a description and for applications to the arithmetic intersection theory on Shimura varieties.

Second, one might try to extend Thm. \ref{thm:intro_intersection} to improper intersections. This will require a new definition of $γ$. If $\mcZ = V(s)$ is the vanishing locus of a regular section $s \in \mcE(\mcX)$ of a vector bundle $\mcE$ on $\mcX$, then such a definition is probably related to the Koszul complex of $(\mcE, s)$ which would be analogous to the archimedean theory in \cite{Garcia_Sankaran}.

\subsection{$δ$-Forms on Lubin--Tate space}

In the last section, we apply the previous results in the context of the Lubin--Tate intersection problem of Q. Li \cite{Li_LT}. Let $F$ be a non-archimedean local field and $\breve F$ the completion of its maximal unramified extension. Given an unramified quadratic extension $E/F$ and integers $h\geq 1$, $n\geq 0$, we consider the deformation space $\mcM_n$ of a height $2h$, dimension $1$, strict formal $\mcO_F$-module with a level $π^n$-structure. It is a regular local formal scheme of dimension $2h$, flat over $\mcO_{\breve F}$. There is an analogous deformation space $\mcN_n$ of a height $h$, dimension $1$, strict formal $\mcO_E$-module with level $π^n$-structure. It is regular of dimension $h$ and there is a closed immersion $\mcN_n\hookrightarrow \mcM_n$. The intersection number in question takes the form
$$\langle \mcN_n,\ (γ,g)\mcN_n\rangle_{\mcM_n} = \mr{len}_{\mcO_k} \mcO_{\mcN_n\cap (γ,g)\mcN_n} \in \mbZ,$$
where $(γ,g)\mcN_n$ is a certain translate of $\mcN_n$. Since all involved formal schemes are regular and local, the situation is that of an artinian complete intersection like in Thm. \ref{thm:intro_intersection}. Hence
$$\langle \mcN_n,\ (γ,g)\mcN_n\rangle = \int_{N_n} (γ,g)^*h_n\vert_{N_n}$$
where $N_n$ is the generic fiber of $\mcN_n$ and $h_n$ the $δ$-form from \eqref{eq:def_mysterious_delta_form} for $\mcN_n$. We evaluate this integral for large $n$ in an essentially analytic way, cf. Prop. \ref{prop:evaluation_integrals}, and reprove the intersection number formula of Li \cite{Li_LT}*{Thm. 1.3} in the considered situation, cf. Thm. \ref{thm:LT_main}. The evaluation depends on the fact that the distance function \eqref{eq:intro_distance_function} for $\mcN_n\subset \mcM_n$ can be described explicitly in terms of Newton polygons of formal $\mcO_F$-modules.

\subsection{Acknowledgements}

I heartily thank J. Anschütz, A. Ducros, W. Gubler, K. Künnemann, Q. Li, P. Scholze and M. Temkin for valuable discussions and correspondence during the preparation of the present article. I am especially thankful for comments by W. Gubler and P. Scholze on earlier versions of this text. I am similarly grateful to the two referees for providing numerous valuable suggestions for improvement.

Furthermore, I would like to thank the Massachussetts Institute of Technology and its staff for offering such a hospitable working environment during my stay in 2019/20, where parts of this project were completed. I also thank the DFG (Deutsche Forschungsgemeinschaft) for making my stay possible through grant MI 2591/1-1.

\section{Tropical Spaces}
\subsection{Piecewise Linear Spaces}
\label{ss:pwl_spaces}

This section fixes some definitions and terminology related to piecewise linear spaces. The exposition will be similar to the ones in \cite{Ber4} and \cite{Duc_local}; in fact, our notion of piecewise linear space is a simplified version of the ones in \cite{Ber4} and \cite{Duc_local}. We will comment on the difference in Rmk. \ref{rmk:relation_with_Berkovich} below.

Given a topological space $X$, let $C^0_X$ denote the sheaf of $\mbR$-valued continuous functions on $X$.

\begin{enumerate}[wide, labelindent=0pt, labelwidth=!, label=(\arabic*), topsep=4pt, itemsep=6pt]

\item \label{item:def_space_with_functions} We define the category of \emph{spaces with functions} as follows. Its objects are pairs $(X, Λ_X)$ where $X$ is a topological space and $Λ_X\subseteq C^0_X$ an $\mbR$-vector space subsheaf. A morphism $f:(X, Λ_X) \to (Y, Λ_Y)$ in this category is a continuous map $f:X\to Y$ such that pullback by composition $f^{-1}C^0_Y \to C^0_X$ takes $f^{-1}Λ_Y$ to $Λ_X$.

\item \label{item:def_restriction_space_with_functions} Let $(Y, Λ_Y)$ be a space with functions and $f:X \to Y$ a continuous map. Then $\left(X, \mr{Im}(f^{-1}Λ_Y \to C^0_X)\right)$ is a space with functions. We will mostly use this construction for inclusions $i:X\hookrightarrow Y$ of subsets (endowed with the induced topology). In this case, we write $Λ_Y\vert_X := \mr{Im}\left(i^{-1}Λ_Y\to Λ_X\right)$ for the resulting sheaf on $X$. Put differently, $Λ_Y\vert_X$ is the sheaf of those continuous functions on $X$ that locally extend to a section of $Λ_Y$.
We use the notation $Λ_Y(X) := (Λ_Y\vert_X)(X)$ and, in this way, extend the domain of definition of $Λ_Y$ from open subsets of $Y$ to all subsets.

\item \label{item:def_affine_lin_polyhedron} By a \emph{polyhedron} in $\mbR^r$, we mean a subset $σ$ that is the intersection of finitely many (not necessarily rational) closed half-spaces. A function $φ:σ→\mbR$ on a polyhedron $σ\subseteq \mbR^r$ is called \emph{affine linear} if it is the restriction of an affine linear function $(x_1,\ldots,x_r)\mapsto a_1x_1+\ldots+a_rx_r + c$. We write $\partial σ$ and $σ^\circ = σ\setminus \partial σ$ for the relative boundary resp. interior of $σ$.

\item \label{item:def_pwl_polyhedron} Let $σ\subseteq \mbR^r$ be a polyhedron. A function $φ:σ\to \mbR$ is called \emph{piecewise linear} (pwl, for short) if $σ$ can be written as a locally finite union $σ = \bigcup_{i\in I} σ_i$ of polyhedra $σ_i\subseteq \mbR^r$ such that each restriction $φ\vert_{σ_i}$ is affine linear. Here and in the following, a set of polyhedra $\{σ_i\}_{i\in I}$ or a union thereof is called \emph{locally finite} if there exists an open covering $\mbR^r = \bigcup_{j\in J} U_j$ such that for each $j\in J$, we have $U_j\cap σ_i\neq \emptyset$ only for finitely many $i$. A pwl function is necessarily continuous.

\item Let $U \subseteq \mbR^r$ be open. A function $φ:U\to \mbR$ is called \emph{piecewise linear} (or pwl, as before) if the restriction $φ\vert_σ$ is pwl for every polyhedron $σ\subseteq U$. We denote by $Λ_{\mbR^r}$ the so defined sheaf of pwl functions on $\mbR^r$. We consider $(\mbR^r, Λ_{\mbR^r})$ as a space with functions.

\item A subset $C \subseteq \mbR^r$ is \emph{polyhedral} if it can be written as a locally finite (in the sense of \ref{item:def_pwl_polyhedron} above) union of polyhedra. Note that a polyhedral subset is necessarily closed. The restriction $Λ_{\mbR^r}\vert_C$ is called the sheaf of pwl functions on $C$. Note that $φ:C\to \mbR$ lies in $Λ_{\mbR^r}(C)$ if and only if $C$ can be written as a locally finite union $C = \bigcup_{i\in I}σ_i$ of polyhedra such that each restriction $φ\vert_{σ_i}$ is affine linear. In fact, there is the stronger statement that the restriction map from global sections $Λ_{\mbR^r}(\mbR^r) \to Λ_{\mbR^r}(C)$ is surjective, see Lem. \ref{lem:extend_pwl} below.
\end{enumerate}
%

\begin{enumerate}[wide, labelindent=0pt, labelwidth=!, label=(\arabic*), topsep=4pt, itemsep=6pt, resume]
\item A \emph{piecewise linear space} (pwl space, for short) is a space with functions $(X, Λ_X)$ with the following two properties. First, $X$ is a Hausdorff space. Second, every point $x\in X$ has a (not necessarily open) neighborhood $V$ such that $(V, Λ_X\vert_V)$ is isomorphic to $(C, Λ_{\mbR^r}\vert_C)$ for some $r$ and some polyhedral subset $C\subseteq \mbR^r$. The datum $Λ_X$ will usually be omitted from the notation.

\item Let $X$ and $Y$ be pwl spaces. A \emph{pwl map} $X\to Y$, or \emph{map of pwl spaces}, is simply a map of spaces with functions.

Given two polyhedral subsets $C\subseteq \mbR^r$ and $D\subseteq \mbR^s$ as well as a map $φ:C→D$, it is equivalent for $φ$ to be defined by an $s$-tuple of pwl functions on $C$ or to define a map of pwl spaces $(C, Λ_{\mbR^r}\vert_C) \to (D, Λ_{\mbR^s}\vert_D)$, cf. \cite{Ber4}*{Lem. 1.2.1}.

\item A pwl space is \emph{polyhedral} if it is isomorphic to the pwl space $(C, Λ_{\mbR^r}\vert_C)$ of a polyhedral set. (This is sometimes called abstract polyhedral set in the literature.)
\end{enumerate}

\begin{lem}\label{lem:nice_neighborhoods}
Let $X$ be a pwl space. Every point $x\in X$ has an open polyhedral neighborhood.
\end{lem}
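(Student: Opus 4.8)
The plan is to reduce to the local model and, in that model, to ``rescale'' a small open box onto all of $\mbR^r$ by a piecewise linear homeomorphism. By the definition of a pwl space, $x$ has a neighborhood $V$ together with an isomorphism $\psi\colon (V,Λ_X|_V)\xrightarrow{\sim}(C,Λ_{\mbR^r}|_C)$ for some $r$ and some polyhedral $C\subseteq\mbR^r$; set $c=\psi(x)$. Since $V$ is a neighborhood, it contains an open set $U_0\ni x$ of $X$, and $\psi(U_0)$ is an open subset of $C$ containing $c$, hence contains $C\cap(c+(-ε,ε)^r)$ for some $ε>0$. It therefore suffices to show that the open subset $W:=C\cap(c+(-ε,ε)^r)$ of $C$, with its induced structure, is isomorphic to the pwl space of a polyhedral set: for then $\psi^{-1}(W)$ is open in $V$ and contained in $U_0$, hence open in $X$, and it is an open polyhedral neighborhood of $x$. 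Translating $c$ to the origin — an isomorphism of pwl spaces, being the restriction of a linear automorphism of $\mbR^r$ — we may assume $c=0$.

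I would then build an isomorphism of pwl spaces $h\colon(-ε,ε)^r\xrightarrow{\sim}\mbR^r$ as a product $h=h_1\times\cdots\times h_1$ of a single one–dimensional map $h_1\colon(-ε,ε)\xrightarrow{\sim}\mbR$. For $h_1$ one takes the odd, continuous, strictly increasing function that is affine between consecutive breakpoints $\pm ε(1-2^{-n})$ ($n\geq0$) and unbounded in both directions (e.g.\ sending $ε(1-2^{-n})\mapsto n$ for $n\geq 1$). Since any compact interval inside $(-ε,ε)$, resp.\ inside $\mbR$, meets only finitely many of the affine pieces of $h_1$, resp.\ of $h_1^{-1}$, both $h_1$ and $h_1^{-1}$ are pwl, so $h_1$ — and hence $h$ — is an isomorphism of pwl spaces. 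The coordinate hyperplanes through the breakpoints cut $(-ε,ε)^r$ into a family $\mcQ$ of compact boxes $Q\subseteq(-ε,ε)^r$ that is locally finite in $(-ε,ε)^r$ and on each of which $h$ is affine.

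It remains to check that $h(W)$ is a polyhedral set and that $h$ restricts to an isomorphism $W\xrightarrow{\sim}h(W)$ of the induced pwl structures; this is the main obstacle, the rest being essentially formal. As $C$ is closed in $\mbR^r$, $W$ is closed in $(-ε,ε)^r$, so $h(W)$ is closed in $\mbR^r$. Fixing a presentation $C=\bigcup_{i\in I}σ_i$ as a locally finite union of polyhedra, one gets $W=\bigcup_{i\in I,\,Q\in\mcQ}(σ_i\cap Q)$, a locally finite union of compact polyhedra on each of which $h$ is affine, so $h(W)=\bigcup_{i,Q}h(σ_i\cap Q)$ is a union of compact polyhedra in $\mbR^r$. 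That this family is locally finite in $\mbR^r$ follows by the usual compactness argument: a point $p\in h(W)$ has $q=h^{-1}(p)$ with a compact neighborhood $N\subseteq(-ε,ε)^r$ meeting only finitely many $σ_i$ and finitely many $Q\in\mcQ$, so $h(N)$ is a neighborhood of $p$ meeting $h(σ_i\cap Q)$ only for those finitely many pairs (using that $h$ is injective); points off the closed set $h(W)$ are trivial. Hence $h(W)$ is polyhedral, and since both $h$ and $h^{-1}$ are pwl, $h$ carries $Λ_{\mbR^r}|_W$ onto $Λ_{\mbR^r}|_{h(W)}$, giving the desired isomorphism. Composing it with $\psi$ exhibits $\psi^{-1}(W)$ as an open polyhedral neighborhood of $x$, completing the proof.
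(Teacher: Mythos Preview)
Your proof is correct and follows essentially the same approach as the paper: take a polyhedral chart around $x$, intersect with a small open box, and use a product of one–dimensional pwl isomorphisms $(-ε,ε)\xrightarrow{\sim}\mbR$ to stretch the box onto all of $\mbR^r$. The paper's proof simply asserts the last step (``we see that $U$ is also a polyhedral set''), whereas you carefully verify that $h(W)$ is a locally finite union of polyhedra; this extra care is appropriate. One minor remark: your case distinction for local finiteness is unnecessary, since $h$ is a homeomorphism onto all of $\mbR^r$, so the argument via $q=h^{-1}(p)$ works for every $p\in\mbR^r$, not just those in $h(W)$.
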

\begin{proof}
There exist pwl isomorphisms $(-1,1) \overset{\sim}{\to} \mbR$. An example is given by the pwl function $f$ on $(-1,1)$ that satisfies $f(0) = 0$ and that is linear on the intervals $\pm (1-2^{-n}, 1 - 2^{-n-1})$ of slope $2^n$. Taking products, there also exist pwl isomorphisms $(-1,1)^r\overset{\sim}{\to} \mbR^r$.

Given a point $x\in X$, pick any compact polyhedral set neighborhood $x\in K$. Choose any realization $ι:K\hookrightarrow \mbR^r$ of $K$ as a polyhedral subset such that $ι(x) = 0$. Replacing $ι$ by $λι$ for some $λ>0$, we may arrange that $U = ι^{-1}((-1,1)^r)$ is an open neighborhood of $x$. Composing $ι\vert_U$ with any isomorphism as above, we see that $U$ is also a polyhedral set.
\end{proof}

\begin{rmk}\label{rmk:relation_with_Berkovich}
The notion of pwl space in \cite{Ber4} or \cite{Duc_local} also allows to restrict the slopes $(a_1,\ldots,a_r)$ and abscissae $c$ in the definition of pwl functions \ref{item:def_affine_lin_polyhedron} above. Pwl spaces are then defined as spaces with a suitable $G$-covering by polyhedral sets. In the present paper, we work with all polyhedra in which case there is no distinction between the two approaches, cf. \cite{Ber4}*{Prop. 1.4.1}. Then the required tropical intersection theory without rationality conditions has been developed in \cite{Mih_trop_inter}.
\end{rmk}

\begin{enumerate}[wide, labelindent=0pt, labelwidth=!, label=(\arabic*), topsep=2pt, itemsep=6pt, resume]
\item A \emph{polyhedral complex} in $\mbR^r$ is a locally finite set of polyhedra $\mcC$ which is stable under taking faces and such that $σ_1\cap σ_2$ is a face of both $σ_1$ and $σ_2$ for all $σ_1,σ_2\in \mcC$. Every polyhedral complex defines a polyhedral set by taking the union $|\mcC| := \bigcup_{σ\in \mcC} σ$, and every polyhedral set admits a polyhedral complex structure in this sense.
\end{enumerate}

\begin{lem}\label{lem:extend_pwl}
Let $C\subseteq \mbR^r$ be a polyhedral set and let $φ:C\to \mbR$ be a continuous function. Assume that $C$ can be written as a locally finite union $C = \bigcup_{i\in I} σ_i$ of polyhedra $σ_i\subseteq \mbR^r$ such that each restriction $φ\vert_{σ_i}$ is affine linear. Then there exists a pwl function $\wt{φ}:\mbR^r\to \mbR$ such that $φ = \wt{φ}\vert_C$.
\end{lem}
\begin{proof}
There exists a polyhedral complex structure $\mcD$ for $\mbR^r$ with the property that $\{σ\in \mcD\mid σ\subseteq C\}$ is a polyhedral complex structure for $C$. Subdividing the polyhedra of $\mcD$ if necessary, we may assume that $φ\vert_σ$ is affine linear for every $σ\in \mcD$ with $σ\subseteq C$. Subdividing further, we may also assume that every $σ\in \mcD$ is a simplex, i.e. the convex hull of $\dim(σ) + 1$ points. Then, given any function $ψ_0:\mcD_0\to \mbR$ on the vertices of $\mcD$, there is a unique pwl function $ψ:\mbR^r \to \mbR$ such that $ψ(x) = ψ_0(x)$ for every $x \in \mcD_0$ and such that $ψ\vert_σ$ is affine linear for every $σ\in \mcD$. The desired function $\wt{φ}$ can now be taken as the unique pwl function of this kind whose value $\wt{φ}(x)$ at a vertex $x \in \mcD_0$ is given by
$$\wt{φ}(x) = \begin{cases} φ(x) & \text{if $x\in C$}\\
0  & \text{otherwise}.\end{cases}$$
\end{proof}

\begin{enumerate}[wide, labelindent=0pt, labelwidth=!, label=(\arabic*), topsep=2pt, itemsep=6pt, resume]
\item Let $f:C\to D$ be a pwl map of polyhedral sets. We say that two polyhedral complex structures $\mcC$ and $\mcD$ for $C$ resp. $D$ are \emph{subordinate to $f$} if $f(\mcC)\subseteq \mcD$ and if $f\vert_σ:σ\to f(σ)$ is a linear map for all $σ\in \mcC$.
\end{enumerate}

\begin{lem}\label{lem:uniqueness_pwl_str_compact}
\begin{enumerate}[wide, labelindent=0pt, labelwidth=!, label=(\roman*), topsep=2pt, itemsep=2pt]
\item Let $C\subseteq \mbR^r$ and $D\subseteq \mbR^s$ be compact polyhedral subsets, and let $f:C\to D$ be a bijective pwl map. Then $f$ is a pwl isomorphism.

\item Let $C$ be a compact pwl space and $f:C\to \mbR^r$ an injective pwl map. Then $C$ is polyhedral and $f:C\to f(C)$ a pwl isomorphism.

\item Let $f:X\to Y$ be a pwl map of pwl spaces. If $f$ is a homeomorphism, then $f$ is a pwl isomorphism.
\end{enumerate}
\end{lem}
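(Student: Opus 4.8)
The plan is to prove (i) by a direct argument, to deduce (iii) from (i), and then to obtain (ii) from (iii).

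\emph{Part (i).} The map $f = (f_1,\ldots,f_s)$ is given by pwl functions $f_k \in Λ_{\mbR^r}(C)$. Since $C$ is compact, the decompositions witnessing the pwl-ness of the $f_k$ are finite, and intersecting their members across $k$ yields a finite family of polyhedra $σ_1,\ldots,σ_m$ with $C = \bigcup_i σ_i$ and $f\vert_{σ_i}$ affine linear for each $i$. Each $f(σ_i)$ is then a polyhedron (an affine image of a polyhedron is a polyhedron) and $\bigcup_i f(σ_i) = D$. The crucial point is that the global injectivity of $f$ forces each $f\vert_{σ_i}$ to be an injective affine map; testing injectivity on the nonempty relative interior $σ_i^{\circ}$ shows that the linear part of $f$ is injective on the affine span of $σ_i$, so that $(f\vert_{σ_i})^{-1} : f(σ_i) \to σ_i$ is the restriction of an affine linear map $\mbR^s \to \mbR^r$. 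As $f$ is bijective, $f^{-1}\vert_{f(σ_i)} = (f\vert_{σ_i})^{-1}$, so each coordinate of $f^{-1}$ is affine linear on each $f(σ_i)$; since these cover $D$, the tuple $f^{-1}$ consists of pwl functions on $D$ and hence defines a pwl map $D \to C$ (cf. \cite{Ber4}*{Lem. 1.2.1}). Thus $f$ is a pwl isomorphism.

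\emph{Part (iii).} Being pwl is local, so it suffices to show that $f^{-1}$ is pwl near each $y\in Y$; set $x = f^{-1}(y)$. Choose a compact polyhedral neighborhood $L$ of $y$ and, using the local structure of pwl spaces (Lemma \ref{lem:nice_neighborhoods}), a compact polyhedral neighborhood $K$ of $x$ with $K\subseteq f^{-1}(L^{\circ})$, together with pwl identifications $K\iso C$ and $L\iso D$ with polyhedral subsets $C\subseteq \mbR^r$, $D\subseteq \mbR^s$. Then $f\vert_K : K\to L$ corresponds to an injective pwl map $g : C\to D$, which (as in (i)) is affine linear on the pieces of a finite polyhedral cover of $C$; hence $D' := g(C)$ is a finite union of polyhedra, that is, a compact polyhedral subset of $\mbR^s$, and $g : C\to D'$ is bijective and pwl. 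By part (i), $g$ is a pwl isomorphism, so $f\vert_K : K\to f(K)$ is a pwl isomorphism. Since $f$ is a homeomorphism, $f(K)$ is a neighborhood of $y$, and the restriction of $f^{-1}$ to an open neighborhood of $y$ contained in $f(K)$ is therefore pwl as a map into $X$. As $y$ was arbitrary, $f^{-1}$ is pwl.

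\emph{Part (ii).} Cover the compact space $C$ by finitely many compact polyhedral neighborhoods $K_1,\ldots,K_m$. Each $f\vert_{K_j}$ is pwl, hence affine linear on the pieces of a finite polyhedral cover of $K_j$, so $f(K_j)$ is a finite union of polyhedra in $\mbR^r$ and $f(C) = \bigcup_j f(K_j)$ is a polyhedral subset of $\mbR^r$. The map $f : C\to f(C)$ is a continuous bijection from a compact space onto a Hausdorff space, hence a homeomorphism, and it is pwl since $f : C\to\mbR^r$ is pwl and $f(C)$ carries the sheaf $Λ_{\mbR^r}\vert_{f(C)}$. By part (iii), $f : C\to f(C)$ is a pwl isomorphism; in particular $C$ is polyhedral. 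The main obstacle I anticipate is the affine-geometric heart of (i): checking that an injective affine map restricted to a polyhedron has an affine inverse, and observing that the images $f(σ_i)$ already exhibit $f^{-1}$ as pwl, with no need to pass to a common polyhedral-complex refinement. The remaining ingredients are point-set topology (a continuous bijection from a compact space to a Hausdorff space is a homeomorphism), the elementary fact that affine images of polyhedra are polyhedra, and routine verifications with the structure sheaves $Λ$.
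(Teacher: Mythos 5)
Your proof is correct, and it does essentially what the paper does — use compactness of the source to get a finite polyhedral decomposition on which $f$ is affine in part (i), and then reduce (ii) and (iii) to (i) by localizing to compact polyhedral neighborhoods — but you take the route (i) $\Rightarrow$ (iii) $\Rightarrow$ (ii) while the paper proves (i) $\Rightarrow$ (ii) $\Rightarrow$ (iii). The paper also proves (i) slightly differently: it invokes a finite polyhedral \emph{complex} structure subordinate to $f$ and observes that the image $\{f(σ) : σ\in\mcC\}$ is again a complex subordinate to $f^{-1}$; you avoid the complex property entirely, working just with a finite common refinement of the covers witnessing that the coordinates of $f$ are pwl, and then checking by hand that each $f\vert_{σ_i}$ is an injective affine map with affine inverse on the image polyhedron $f(σ_i)$. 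Either variant is fine, and the affine-geometric check you singled out (injectivity on the relative interior implies the linear part is injective on the affine span) is a valid and elementary observation. One small point worth keeping in mind in your version of (iii): once you identify $L\iso D$, the set $g(C)$ is a polyhedral subset $D'\subseteq D$, and the induced sheaf $Λ_Y\vert_{f(K)}$ corresponds, under the identification, to $Λ_{\mbR^s}\vert_{D'}$ (equivalently to $(Λ_{\mbR^s}\vert_D)\vert_{D'}$ — these coincide by transitivity of restriction). This is what lets you apply (i) to $g:C\to D'$ and then transfer the conclusion back to $f\vert_K:K\to f(K)$; you use it implicitly and correctly.
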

\begin{proof}
In (i), since $C$ is compact, it admits a finite polyhedral complex structure $C = |\mcC|$ that is subordinate to $f$. Then $\mcD = \{f(σ)\mid σ\in \mcC\}$ is a polyhedral complex structure for $\mcD$ that is subordinate to $f^{-1}$.

We come to (ii). Every point $x\in C$ has a compact polyhedral set neighborhood $C_x$. Since $C$ is compact, it is covered by finitely many such sets. It follows that $f(C)\subseteq \mbR^r$ is a finite union of polyhedral subsets and hence polyhedral itself. Since $C$ is compact and Hausdorff, the map $f:C\to f(C)$ is a homeomorphism. Given $x\in C$, the restriction $f:C_x\to f(C_x)$ is then a pwl isomorphism by (i). Thus $f^{-1}$ is pwl because this property can be checked locally. It follows that $f:C\iso f(C)$, so $C$ is polyhedral as claimed.

In (iii), we have to check that the inverse $f^{-1}$ as again pwl. This can be done locally on compact polyhedral neighborhoods and thus reduces to (ii).
\end{proof}

\begin{enumerate}[wide, labelindent=0pt, labelwidth=!, label=(\arabic*), topsep=2pt, itemsep=6pt, resume]
\item Let $X$ be a pwl space and $Y\subseteq X$ a subset. We call $Y$ a \emph{pwl subspace} if $(Y, Λ_X\vert_Y)$ is again a pwl space. We say that $Y$ is an open, closed etc. pwl subspace if $Y$ is an open, closed etc. subset of $X$.

Note that the properties for $Y$ being open, closed or a pwl subspace are all local on $X$ in the following sense: $Y$ has the required property if and only if there is an open covering $X = \bigcup_{i\in I}U_i$ such that each $Y\cap U_i \subseteq U_i$ has that property.

We also mention that a closed subset $Y\subseteq X$ is a closed pwl subspace if and only if the following condition holds. For every polyhedral pwl subspace $K\subseteq X$, say realized as a polyhedral subset $f:K\overset{\sim}{\to} f(K)\subseteq \mbR^r$, the image $f(K\cap Y)$ is also a polyhedral subset of $\mbR^r$. In particular, it is clear that a locally finite intersection or locally finite union of closed pwl subspaces is again a closed pwl subspace.
\end{enumerate}

\subsection{Piecewise Smooth Forms}
\label{ss:pws_polyehdra}

We next discuss differential forms on pwl spaces. These are always meant in the sense of Lagerberg \cite{Lag}. We recall their definitions in \ref{item:def_smooth_form_polyhedron} and \ref{item:def_smooth_forms_differential_operators} below, but refer to the summary in \cite{Mih_trop_inter}*{§2} for some additional details that we will use freely.

\begin{enumerate}[wide, labelindent=0pt, labelwidth=!, label=(\arabic*), topsep=4pt, itemsep=6pt, resume]
\item For a polyhedron $σ\subseteq \mbR^r$, we let $N_σ$ denote the linear space spanned by the differences $x-y,\ x,y\in σ$. We denote by $M_σ = \Hom(N_σ, \mbR)$ the dual vector space.

\item \label{item:def_smooth_form_polyhedron} A function $φ:σ→\mbR$ on a polyhedron is \emph{smooth} if it is the restriction of a smooth function from an open neighborhood of $σ$ in $\mbR^r$. Write $C^\infty(σ)$ for the ring of smooth functions on $σ$ and $\Omega^i(σ) = C^\infty(σ) \tensor_{\mbR} \bigwedge^i_{\mbR} M_σ$ for usual smooth differential $i$-forms on $σ$. A \emph{smooth form} on $σ$ is an element of the exterior algebra
\begin{equation}\label{eq:def_smooth_form}
A(σ) := \bigwedge\nolimits^{\!\bullet}_{\, C^\infty(σ)} \big(\Omega^1(σ)\oplus \Omega^1(σ)\big).
\end{equation}
A form is said to be \emph{of bidegree $(p,q)$} if it lies in the direct summand
$$A^{p,q}(σ) = \Omega^p(σ) \tensor_{C^\infty(σ)} \Omega^q(σ).$$
The $\wedge$-product on $A(σ)$ is bihomogeneous with respect to bidegree.

\item \label{item:def_smooth_forms_differential_operators} Let $d_{\mr{std}}:C^\infty(σ)\to \Omega^1(σ)$ denote the usual differential. Define from this the two differentials
$$d', d'': C^\infty(σ) \lr \Omega^1(σ) \oplus \Omega^1(σ),\ \ \ d'f = (d_{\mr{std}}f, 0),\ \ d''f = (0, d_{\mr{std}}(f)).$$
They admit unique extensions to differentials $d',d'':A(σ)\to A(σ)$ that satisfy the Leibniz rule with respect to the $\wedge$-product and $(d')^2 = (d'')^2 = 0$. These extensions are bihomogeneous of bidegree $(1,0)$ and $(0,1)$, respectively.

A concrete description of $A^{p,q}(σ)$ is now as follows. Let $n = \dim σ$ and let $(x_1,\ldots,x_n):σ\to \mbR^n$ be an injective affine linear map. For $I\subseteq \{1,\ldots,n\}$, say $I = \{i_1,\ldots,i_p\}$ with $i_1<\ldots < i_p$, write $d'x_I = d'x_{i_1}\wedge \ldots \wedge d'x_{i_p}$. Define $d''x_I$ in a similar way. Then every element of $A^{p,q}(σ)$ has a unique expression as
\begin{equation}\label{eq:std_expression_smooth_form}
\sum_{I,J \subseteq \{1,\ldots,n\},\ |I| = p,\ |J| = q} φ_{I,J} d'x_I \wedge d''x_J
\end{equation}
with smooth functions $φ_{I,J}\in C^\infty(σ)$.

\item \label{item:def_pws_form} A \emph{piecewise smooth $(p,q)$-form} (pws $(p,q)$-form, for short) on a polyhedral set $C\subseteq \mbR^r$ is the datum of a locally finite decomposition $C = \bigcup_{i\in I}σ_i$ of $C$ as a union of polyhedra together with $(p,q)$-forms $ω_i\in A^{p,q}(σ_i)$ such that $ω_i\vert_{σ_i\cap σ_j} = ω_j\vert_{σ_i\cap σ_j}$ for all $i,j\in I$; up to refinement of the decomposition. Note that the intersections $σ_i\cap σ_j$ are again polyhedra, so definition \ref{item:def_smooth_form_polyhedron} applies. We write $PS^{p,q}_C$ or $PS^{p,q}$ for the resulting sheaf on $C$ as well as $PS = PS_C = \bigoplus_{p,q}PS^{p,q}_C$.

\item \label{item:def_polyhedral_deriv} The $\wedge$-product and the differentials $d'$, $d''$ of smooth forms immediately extend to pws forms; these extensions are simply computed piecewisely. We write $\wedge$, $d'_P$ and $d''_P$ for the resulting operators. The index $P$ stands for ``polyhedral''; the operators $d'_P$ and $d''_P$ are called the \emph{polyhedral derivatives}. (This definition and terminology was introduced in \cite{GK}.)
Given a pwl map of polyhedral sets $f:C\to D$, there is a pullback map $f^*:f^{-1}PS_D \to PS_C$ with a similarly piecewise definition.

\item It is clear that definitions \ref{item:def_pws_form} and \ref{item:def_polyhedral_deriv} extend to all pwl spaces $X$. We write $PS_X = \bigoplus_{p,q}PS^{p,q}_X$ for the resulting sheaves of pws forms. We also continue to denote the operators introduced so far by $\wedge$, $d'_P$, $d''_P$ and $f^*$ (assuming $f$ is some pwl map).
\end{enumerate}

%

Recall that a Hausdorff topological space is called paracompact if every open cover has a locally finite refinement. A Hausdorff space that is locally compact is paracompact if and only if it is a disjoint union of topological spaces that are countable at infinity \cite{Bourbaki_top_general_1_to_4}*{Chapitre I, \S9, No. 10, Thm. 5}. This equivalent characterization in particular applies to pwl spaces.

\begin{prop}[\cite{CLD}*{Prop. 3.3.6}]\label{prop:part_of_1_pws}
Let $X$ be a paracompact pwl space. Given an open covering $X = \bigcup_{i\in I} U_i$, there exists a family $(ρ_i)_{i\in I}$ of nonnegative pws functions, with $\Supp ρ_i\subseteq U_i$ for every $i$, and such that the sum $\sum_{i\in I} ρ_i$ is locally finite and equals $1$.
\end{prop}

The cited reference \cite{CLD}*{Prop. 3.3.6} is for smooth functions on non-archimedean spaces, but its arguments carry over to our setting.

\begin{lem}\label{lem:paracompact_neighborhood}
Let $X$ be a pwl space. Every compact subset $S\subseteq X$ has a paracompact open neighborhood.
\end{lem}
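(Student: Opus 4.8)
The plan is to cover $S$ by finitely many open polyhedral neighborhoods and then take their union, checking that a finite union of open polyhedral pwl spaces is paracompact. First I would invoke Lemma \ref{lem:nice_neighborhoods}: every point $x\in S$ has an open polyhedral neighborhood $U_x$, i.e. $(U_x,Λ_X\vert_{U_x})$ is isomorphic to $(C_x,Λ_{\mbR^{r_x}}\vert_{C_x})$ for some polyhedral set $C_x$. Since $S$ is compact, finitely many of these cover $S$, say $U_1,\ldots,U_N$, and I set $U = U_1\cup\cdots\cup U_N$, an open neighborhood of $S$ in $X$. It remains to show $U$ is paracompact.

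For this I would use the characterization recalled just before Proposition \ref{prop:part_of_1_pws}: a locally compact Hausdorff space is paracompact if and only if it is a disjoint union of spaces countable at infinity. A pwl space is locally compact (it is locally a polyhedral subset of some $\mbR^r$) and Hausdorff, so $U$ is locally compact Hausdorff. Now $U$ has only finitely many connected components, since each $U_i$, being homeomorphic to a polyhedral set in $\mbR^{r_i}$, has finitely many components, and $U$ is a union of $N$ of them (gluing can only decrease the count). So it suffices to see that each $U_i$ is countable at infinity: a polyhedral subset $C\subseteq\mbR^{r_i}$, being a closed subset of $\mbR^{r_i}$, is $σ$-compact (exhausted by $C\cap \overline{B}(0,m)$, $m\in\mbN$), hence countable at infinity, and an open subset of a $σ$-compact locally compact space is again $σ$-compact. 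Therefore each $U_i$, and hence $U$, is a finite union of $σ$-compact opens, so $σ$-compact, so countable at infinity; combined with local compactness and the Hausdorff property, $U$ is paracompact.

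I do not expect a serious obstacle here; the only point requiring a little care is the reduction to "countable at infinity", i.e. making sure that the finitely many polyhedral charts genuinely force $σ$-compactness of $U$. One could alternatively argue directly that the finite open cover $\{U_i\}$ of $U$ admits a locally finite refinement by working inside each chart $C_i\subseteq\mbR^{r_i}$, where $σ$-compactness makes paracompactness automatic, but routing through the stated Bourbaki characterization is cleaner and is exactly the tool the paper has already set up for use with pwl spaces.
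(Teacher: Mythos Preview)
Your approach is essentially the same as the paper's: cover $S$ by finitely many open polyhedral neighborhoods using Lemma \ref{lem:nice_neighborhoods}, note that each is paracompact (as a closed subset of some $\mbR^r$), and conclude that their finite union is paracompact. The paper simply asserts the last step, while you route it through $\sigma$-compactness and the Bourbaki characterization, which is fine.

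Two small inaccuracies to clean up, neither fatal. First, a polyhedral set need not have finitely many connected components (e.g.\ $\mbZ\subset\mbR$), so drop that claim; you do not need it anyway, since $\sigma$-compactness of each $U_i$ already gives $\sigma$-compactness of $U$ directly. Second, the aside ``an open subset of a $\sigma$-compact locally compact space is again $\sigma$-compact'' is false in general (one-point compactification of an uncountable discrete set), and in any case unnecessary: each $U_i$ \emph{is} a polyhedral set, not merely open in one, so you have $\sigma$-compactness outright.
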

\begin{proof}
Closed subsets of paracompact spaces are paracompact and $\mbR^n$ is paracompact, so every polyhedral set is paracompact. Thus every point $x\in S$ has a paracompact open neighborhood by Lem. \ref{lem:nice_neighborhoods}. Since $S$ is compact, it is covered by finitely many such open neighborhoods and this finite union is again paracompact.
\end{proof}

\begin{enumerate}[wide, labelindent=0pt, labelwidth=!, label=(\arabic*), topsep=2pt, itemsep=6pt, resume]
\item The \emph{dimension} of a polyhedron $σ$ is by definition the dimension of $N_σ$. The local dimension of a polyhedral set $C$ at a point $x\in C$ is
$$\dim_xC := \max_{x\in σ\subseteq C} \dim σ$$
where the maximum is taken over all polyhedra $σ\subseteq C$ containing $x$. We define the dimension of $C$ as $\dim C = \mr{sup}_{x\in C} \dim_x C$. We say that $C$ is purely of dimension $n$ if $\dim_xC = n$ for all $x\in C$. We similarly define $\dim_xX$ and $\dim X$ for any pwl space. For example, $PS^{p,q}_X = 0$ whenever $\max\{p,q\} > \dim X$.

\item Assume that $\mcC$ is a polyhedral complex in some real affine space. We write $\mcC_{d}$, $\mcC_{\leq d}$, etc. for the polyhedra of $\mcC$ of dimension $d$, $\leq d$, etc. If $|\mcC|$ is purely of dimension $n$, then we write $\mcC^c = \mcC_{n-c}$ for the polyhedra of codimension $c$ in $C$.

\item Assume that $\mcC$ is a polyhedral complex in some real affine space and $C = |\mcC|$. Let $η\in PS(C)$ be a pws form. We call $\mcC$ \emph{subordinate} to $η$ if $η\vert_σ$ is smooth for all $σ\in \mcC$.
\end{enumerate}

\subsection{Weights and Integrals}

Our next aim is to define the integral of top degree pws forms, say on a polyhedron or on a polyhedral set. This requires the notion of weights. We begin with a discussion of the case $\mbR^n$ which is the basis for everything that follows.

The integral of a compactly supported, top degree pws form $η\in PS_c^{n,n}(\mbR^n)$ is defined as follows. Let $x_1,\ldots,x_n$ denote the standard coordinate functions on $\mbR^n$. There is a unique way to write
$$η = φ\,d'x_1\wedge d''x_1 \wedge \ldots \wedge d'x_n \wedge d''x_n$$
with a pws function $φ$ which is necessarily compactly supported. Then put
\begin{equation}\label{eq:def_integral_R_n}
\int_{\mbR^n} η := \int_{\mbR^n} φ
\end{equation}
where the right hand side is defined with respect to the Lebesgue measure. Because of the doubling of the de Rham complex, it satisfies the following transformation rule under bijective affine linear maps $f:\mbR^n\to \mbR^n$,
\begin{equation}\label{eq:transformation_rule_integral}
\int_{\mbR^n} f^*(η) = |\det f| \int_{\mbR^n} η.
\end{equation}
(Here and in the following, the determinant of an affine linear map is defined as the determinant of its linear part.) In particular, such a map $f$ should be considered to be a covering map of degree $|\det f|$. This convention is standard in tropical geometry: Assume, for example, that $f$ restricts to a map $\mbZ^n\to \mbZ^n$. Then it has degree $[\mbZ^n:f(\mbZ^n)]$ when viewed as endomorphism of $\mbR^n$ as tropical variety.
\begin{defn}\label{def:weight_polyhedron}
\begin{enumerate}[wide, labelindent=0pt, labelwidth=!, label=(\arabic*), topsep=2pt, itemsep=6pt]
\item A \emph{weight} for a polyhedron $σ$ is the datum of a Haar measure $μ$ for $M_σ$. A weighted polyhedron is a pair $(σ, μ)$ consisting of a polyhedron $σ$ with a weight $μ$.

\item \label{item:def_weight_functoriality} Let $f:(σ, μ) \to (τ, ν)$ be a bijective affine linear map of weighted polyhedra. Let $f^\vee:M_τ\to M_σ$ be the natural dual map on dual linear spaces. The image and preimage weights of $μ$ resp. $ν$ are defined by
$$f^{-1}(ν) := (f^\vee)_*(ν),\ \ \ \ f(μ) := (f^{-1})^{-1}(μ).$$
Here, $(f^\vee)_*$ denotes the pushforward of measures. The degree of $f$ is defined as the ratio
$$\deg(f) := μ/f^{-1}(ν) = f(μ)/ν \in \mbR_{>0}.$$

\item The sum $μ_1 + μ_2$ of two weights $μ_1, μ_2$ (on some polyhedron $σ$, say) is the sum as Haar measures.
\end{enumerate}
\end{defn}
The passage to the dual space $M_σ$ is precisely to ensure that the transformation behavior comes out as $f(μ) = |\det f|\cdot μ$ in case $f:σ\to σ$ is an affine linear automorphism. We give an equivalent definition:

\begin{defn}[Variant of Def. \ref{def:weight_polyhedron}]\label{def:weight_polyhedron_variant}
\begin{enumerate}[wide, labelindent=0pt, labelwidth=!, label=(\arabic*), topsep=2pt, itemsep=6pt]
\item A \emph{weight} for a polyhedron $σ$ is the datum of a generator $μ \in \det N_σ$ up to sign. The convention in case $\dim σ = 0$ is that $\det N_σ = \mbR$.

\item \label{item:def_weight_functoriality_variant} Let $f:(σ, μ) \to (τ, ν)$ be a bijective affine linear map of weighted polyhedra. The image and preimage weights of $μ$ resp. $ν$ are defined as image and preimage along $\det f: \det N_σ\to \det N_τ$.

\item The sum $μ_1 + μ_2$ of two weights $μ_1, μ_2$ (on some polyhedron $σ$, say) is the class of the sum of representatives of the same sign.
\end{enumerate}
\end{defn}

The bijection with Def. \ref{def:weight_polyhedron} is as follows. If $n \geq 1$, then given a generator $μ\in \det(N_σ)$, pick a basis $e_1,\ldots,e_n \in N_σ$ such that $μ = \pm e_1\wedge\ldots \wedge e_n$. Let $x_1,\ldots,x_n \in M_σ$ be the dual basis. Then associate to $μ$ the Haar measure such that the parallelepiped spanned by $x_1,\ldots,x_n$ has volume $1$. If $n = 0$ however, then we associate to a scalar $μ\in \det(0) = \mbR$ the Dirac measure of volume $|μ|$.

Def. \ref{def:weight_polyhedron} seems more natural in that it does not require extra conventions for the $0$-dimensional case or for sums $μ_1+μ_2$. Def. \ref{def:weight_polyhedron_variant} however has the more obvious transformation behavior and will be our preferred viewpoint in the following.

We next recall from \cite{Mih_trop_inter}*{§2} the integral of pws forms on weighted polyhedra.

\begin{defn}\label{def:integral}
\begin{enumerate}[wide, labelindent=0pt, labelwidth=!, label=(\arabic*), topsep=2pt, itemsep=6pt]
\item Let $e_1,\ldots,e_n\in \mbR^n$ be the standard basis. The \emph{standard weight} $μ_{\mr{std}}$ on $\mbR^n$ is the Lebesgue measure on $(\mbR^n)^\vee = \mbR^n$, where the identification is with respect to the dual of the standard basis. Equivalently, it is the weight defined by $e_1\wedge \ldots \wedge e_n\in \det(\mbR^n)$. In particular, if $n = 0$, then the standard weight is the Dirac measure of volume $1$.

\item \label{item:integral_on_polyhedron_in_R_n} Let $τ\subseteq \mbR^n$ be an $n$-dimensional polyhedron and $η\in PS_c^{n,n}(τ)$. Then $N_τ = \mbR^n$, so the standard weight $μ_{\mr{std}}$ may be viewed as a weight for $τ$. Define $\int_{[τ, μ_{\mr{std}}]} η$ by \eqref{eq:def_integral_R_n} (resp. its straightforward extension to measurable coefficient functions.)

\item \label{item:integral_on_weighted_polyhedron} Let $(σ, μ)$ be an $n$-dimensional weighted polyhedron and $η\in PS_c^{n,n}(σ)$. Pick any $n$-dimensional polyhedron $τ\subseteq \mbR^n$ together with an affine linear bijection $f:τ \iso σ$. Define
$$\int_{[σ, μ]} η := (\deg f)^{-1} \int_{[τ, μ_{\mr{std}}]} f^*(η)$$
where the degree is with respect to $μ_{\mr{std}}$ on $τ$. This is independent of choices by \eqref{eq:transformation_rule_integral}.
\end{enumerate}
\end{defn}

The following was already used in Def. \ref{def:integral} \ref{item:integral_on_polyhedron_in_R_n}. If $τ\subseteq σ$ is an inclusion of polyhedra of the same dimension, then $N_τ = N_σ$. Hence a weight $µ_σ$ for $σ$ induces a weight $µ_σ\vert_τ$ for $τ$ (and conversely). This makes the next definition feasible.

\begin{defn}\label{def:weight_pwl_space}
A family of weights for a purely $n$-dimensional polyhedral subset $C\subseteq \mbR^r$ is the datum of a locally finite decomposition $C = \bigcup_{i\in I} σ_i$ into $n$-dimensional polyhedra together with weights $µ = (µ_i)_{i\in I}$ for all $σ_i$ such that $µ_i\vert_{σ_i\cap σ_j} = µ_j\vert_{σ_i\cap σ_j}$ whenever $\dim σ_i\cap σ_j = n$; up to subdivision of the presentation of $C$. The pair $(C,µ)$ is called a \emph{weighted polyhedral subset}. (In particular, all our weighted polyhedral subsets are pure-dimensional by convention.) Def. \ref{def:integral} \ref{item:integral_on_weighted_polyhedron} extends to an integral operator
$$\int_{[C,µ]}\colon PS^{n,n}_c(C) \to \mbR.$$
Given $(C,µ)$ as above, a polyhedral subset $D\subseteq \mbR^s$ and an isomorphism $f:C \iso D$, there is a unique way to define a weight $f(µ)$ for $D$ such that for all $η\in PS^{n,n}_c(D)$,
$$\int_{[D, f(µ)]} η = \int_{[C, µ]} f^*η.$$
Concretely, choosing the above decomposition $C = \bigcup_{i\in I}σ_i$ so fine that all $f\vert_{σ_i}$ are linear, the image weights are $f(µ) = (f(µ_i))_{i\in I}$. (The images $f(µ_i)$ were defined in Def. \ref{def:weight_polyhedron} above.) In this way, the definition of weight extends to pure-dimensional abstract polyhedral sets and then further to pure-dimensional pwl spaces.
\end{defn}

%

\begin{defn}\label{def:integral_weighted_pwl_space}
Let $(X,μ)$ be a purely $n$-dimensional weighted pwl space. The integral
$$\int_{[X,μ]}:PS^{n,n}_c(X) \lr \mbR$$
is defined as follows. Choose a finite open covering $\Supp η \subseteq \bigcup_{i\in I} U_i$ by polyhedral sets (Lem. \ref{lem:nice_neighborhoods}). Let $(ρ_i)_{i\in I}$ be a subordinate pws partition of unity for $\bigcup_{i\in I} U_i$ (Prop. \ref{prop:part_of_1_pws}). The support of each product $ρ_iη$ is contained in $U_i$ and compact by assumption on $η$. Then put
$$\int_{[X,μ]} := \sum_{i\in I} \int_{[U_i, μ\vert_{U_i}]} ρ_iη.$$
We omit the argument that this is well defined.
\end{defn}


Let $X$ be an arbitrary pwl space. For each open subset $U\subseteq X$, consider the vector space $E_{p,q}(U) := \Hom(PS^{p,q}_c(U), \mbR)$. Put $E(U) = \bigoplus_{p,q} E_{p,q}(U)$. Given an inclusion of open subsets $V\subseteq U$, there is an inclusion map $PS^{p,q}_c(V) \to PS^{p,q}_c(U)$ whose dual defines a restriction map
\begin{equation}
E_{p,q}(U)\to E_{p,q}(V),\ \ell\mapsto \ell\vert_V.
\end{equation}

\begin{lem}\label{lem:sheaf_property_E}
On every pwl space $X$, the so-defined presheaf $E$ is a sheaf.
\end{lem}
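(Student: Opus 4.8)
The plan is to verify the two sheaf axioms (separatedness and gluing) for an arbitrary open cover $U = \bigcup_{i\in I} U_i$ of an open subset $U\subseteq X$, with essentially all of the content packaged into a single decomposition statement for compactly supported piecewise smooth forms. Since restriction is homogeneous in bidegree, it suffices to treat each component $E_{p,q}$, $U\mapsto \Hom(PS^{p,q}_c(U),\mbR)$, where the restriction map is dual to the inclusion $PS^{p,q}_c(V)\to PS^{p,q}_c(U)$; concretely $(\ell\vert_{U_i})(η) = \ell(η)$ where on the right $η\in PS^{p,q}_c(U_i)$ is regarded as a form on $U$ with compact support contained in $U_i$. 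The one nontrivial input I would isolate is: for $η\in PS^{p,q}_c(U)$ there are a finite subset $F\subseteq I$ and forms $η_i\in PS^{p,q}_c(U_i)$ ($i\in F$) with $η = \sum_{i\in F}η_i$ in $PS^{p,q}_c(U)$; moreover one may take $η_i = ρ_iη$ for a suitable pws partition of unity $(ρ_i)$. This follows from Lem.~\ref{lem:paracompact_neighborhood} (choose a paracompact open $V$ with $\Supp η\subseteq V\subseteq U$) together with Prop.~\ref{prop:part_of_1_pws} (choose a pws partition of unity $(ρ_i)_{i\in I}$ on $V$ with $\Supp ρ_i\subseteq V\cap U_i$): compactness of $\Supp η$ forces $ρ_iη=0$ for all $i$ outside a finite set $F$, each $ρ_iη$ with $i\in F$ is a pws form whose compact support lies in $V\cap U_i\subseteq U_i$, and $\sum_{i\in F}ρ_iη = η$.

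Granting this, separatedness is immediate: if $\ell\in E_{p,q}(U)$ satisfies $\ell\vert_{U_i}=0$ for all $i$, then for every $η$ we get $\ell(η) = \sum_{i\in F}\ell(η_i) = \sum_{i\in F}(\ell\vert_{U_i})(η_i) = 0$, so $\ell = 0$. For the gluing axiom, given $\ell_i\in E_{p,q}(U_i)$ with $\ell_i\vert_{U_i\cap U_j} = \ell_j\vert_{U_i\cap U_j}$ for all $i,j$, I define $\ell\in E_{p,q}(U)$ by $\ell(η) := \sum_{i\in F}\ell_i(η_i)$, using a decomposition as above. The main obstacle is precisely that $\ell(η)$ must be shown independent of the auxiliary choices — the paracompact neighborhood and the partition of unity — after which linearity and the identities $\ell\vert_{U_j} = \ell_j$ are routine.

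To prove independence I would compare two choices $(ρ_i)$ and $(ρ'_k)$; restricting both to a common paracompact neighborhood of $\Supp η$ I may assume they are partitions of unity on the same $V$. Then $ρ_iρ'_kη$ is a pws form with compact support in $U_i\cap U_k$, defining $η_{ik}\in PS^{p,q}_c(U_i\cap U_k)$; since $η_i = \sum_kη_{ik}$ and $η'_k = \sum_iη_{ik}$ as finite sums in $PS^{p,q}_c(U_i)$ resp.\ $PS^{p,q}_c(U_k)$ (after extension by zero), the cocycle hypothesis gives
$$\sum_i\ell_i(η_i) = \sum_{i,k}(\ell_i\vert_{U_i\cap U_k})(η_{ik}) = \sum_{i,k}(\ell_k\vert_{U_i\cap U_k})(η_{ik}) = \sum_k\ell_k(η'_k),$$
which is the desired independence. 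Linearity of $\ell$ then follows by running the construction for $η$ and $η'$ with one partition of unity subordinate to a paracompact neighborhood of $\Supp η\cup\Supp η'$. Finally, for $η\in PS^{p,q}_c(U_j)$ regarded in $PS^{p,q}_c(U)$, running the construction with any partition of unity and noting $ρ_iη\in PS^{p,q}_c(U_i\cap U_j)$ gives $\ell(η) = \sum_i(\ell_i\vert_{U_i\cap U_j})(ρ_iη) = \sum_i(\ell_j\vert_{U_i\cap U_j})(ρ_iη) = \ell_j\big(\sum_iρ_iη\big) = \ell_j(η)$, i.e.\ $\ell\vert_{U_j} = \ell_j$. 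The bookkeeping needed to make precise ``$ρ_iη$ as a form on $U_i$'' and the compatibility of extension by zero with restriction and multiplication by pws functions is elementary; I would state these once at the start, and the only genuinely delicate point remains the independence argument above.
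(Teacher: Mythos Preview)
Your proof is correct and follows essentially the same approach as the paper: choose a paracompact neighborhood of $\Supp\eta$ via Lem.~\ref{lem:paracompact_neighborhood}, take a subordinate pws partition of unity via Prop.~\ref{prop:part_of_1_pws}, and set $\ell(\eta) = \sum_i \ell_i(\rho_i\eta)$. In fact you are more thorough than the paper, which explicitly omits the verification that this is well-defined; your double-partition argument for independence of choices is exactly what is needed there.
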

\begin{proof}
Let $U = \bigcup_{i\in I} U_i$ be an open covering of an open subset $U\subseteq X$ and let $\ell_i\in E(U_i)$ be a family of linear forms with $\ell_i\vert_{U_{ij}} = \ell_j\vert_{U_{ij}}$ for all $i,j\in I$. (We have put $U_{ij} = U_i \cap U_j$ here.) Our task is to show that there is a unique $\ell\in E(U)$ with $\ell\vert_{U_i} = \ell_i$. Given an element $η\in PS_c(U)$, let $V$ be a paracompact open neighborhood of $\Supp η$ (Lem. \ref{lem:paracompact_neighborhood}) and let $(ρ_i)_{i\in I}$ be a partition of unity that is subordinate to $(V\cap U_i)_{i\in I}$. Then $η = \sum_{i\in I} ρ_iη$, so there is at most one way to define $\ell(η)$, namely $\ell(η) = \sum_{i\in I} \ell_{i}(ρ_iη)$. We omit the argument that this is well defined.
\end{proof}

For later application, we also note here that a linear form $\ell \in E(X)$ extends naturally to the subspace of those $η\in PS(X)$ that have the property that $S := \Supp(\ell)\cap \Supp(η)$ is compact. In order to define this value $\ell(η)$, choose any compactly supported pws function $λ\in PS^{0,0}_c(X)$ that is constantly $1$ on a neighborhood of $S$ and set
\begin{equation}\label{eq:extension_ell}
\ell(η) := \ell(λη).
\end{equation}
This is independent of the choice of $λ$.

Coming back to Def. \ref{def:integral_weighted_pwl_space}, given a pwl space $X$, a purely $d$-dimensional weighted closed pwl subspace $(Y,μ)$ and a pws form $α\in PS^{p,q}(Y)$, we have defined an element of $E_{d-p,d-q}(X)$:
\begin{equation}\label{eq:def_polyhedral_current}
α\wedge [Y, µ]: PS^{d-p,d-q}_c(X) \lr \mbR,\ \ \ η \longmapsto \int_{[Y,µ]} α\wedge η\vert_Y.
\end{equation}
We call such linear forms \emph{integration currents} or \emph{currents of integration}. (A general notion of \emph{currents} will be introduced later, cf. Def. \ref{def:currents_pwl_space}.)

\begin{defn}\label{def:polyhedral_current}
\begin{enumerate}[wide, labelindent=0pt, labelwidth=!, label=(\arabic*), topsep=4pt, itemsep=4pt]
\item Let $X$ be a pwl space and $U\subseteq X$ an open subset. An element $\ell\in E(U)$ is called a \emph{polyhedral current} if it has the following property. There exists an open covering $U = \bigcup_{i\in I} U_i$ such that each $\ell\vert_{U_i}$ is a locally finite sum of integration currents. Polyhedral currents by definition form a sheaf which we denote by $P$ or $P_X$. There is a grading $P = \oplus_{p,q,d} P_d^{p,q}$, where $P_d^{p,q}$ is generated by those $α\wedge[Y,µ]$ with $Y$ of dimension $d$ and $α$ of bidegree $(p,q)$. If the ambient space $X$ is purely of dimension $n$, we prefer the grading $P^{p,q,c} := P^{p,q}_{n-c}$ by codimension.

\item For $ω\in PS^{p_1,q_1}(X)$ and $T = α\wedge [Y, μ] \in P^{p_2,q_2,c}(X)$, we define
\begin{equation}\label{eq:def_wedge_pws_form}
\begin{aligned}
ω\wedge T &\ := (-1)^{p_1+p_2+q_1+q_2} T \wedge ω\\
&\ := (ω\wedge α)\wedge [Y, μ] \in P^{p_1+p_2, q_1+q_2,c}(X).
\end{aligned}
\end{equation}
This definition extends to a $\wedge$-product $PS\times P \to P$ by linear extension.

\item The integral of a compactly supported polyhedral current $T \in P^{d,d}_{d}(X)$ is defined as
$$\int_X T := T(1)$$
where the right hand side is understood in the sense of \eqref{eq:extension_ell}. In other words, if $T = α\wedge [Y, μ]$ as in \eqref{eq:def_polyhedral_current} with $α$ of bidegree $(\dim Y, \dim Y)$ and $\Supp(α\vert_Y)$ compact, then
$$\int_X T = \int_{[Y, μ]} α.$$
\end{enumerate}
\end{defn}

Every locally finite sum of integration currents defines a polyhedral current and the above definition sheafifies the resulting sub-presheaf of $E$. This sheafification is however unnecessary for paracompact open subsets:

\begin{lem}\label{lem:paracompact_polyhedral_current}
Assume that $U\subseteq X$ is a paracompact open subset. Then $P(U)$ equals the set of all $\ell\in E(U)$ that are a locally finite sum of integration currents.
\end{lem}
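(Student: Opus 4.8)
The inclusion ``locally finite sums of integration currents $\subseteq P(U)$'' is immediate from Def.~\ref{def:polyhedral_current} (take the trivial covering $U = U$), so the content is the converse: on a paracompact $U$, every $\ell \in P(U)$ is \emph{globally} a locally finite sum of integration currents. The plan is a partition‑of‑unity argument. Fix $\ell$ and an open covering $U = \bigcup_i U_i$ on which $\ell$ is locally such a sum. Since $U$ is paracompact and pwl spaces are locally compact, I would first refine this to a locally finite covering $U = \bigcup_k V_k$ by relatively compact opens with $\overline{V_k} \subseteq U_{r(k)}$ for suitable $r(k)$, so that $\ell|_{V_k} = \sum_j \alpha_{kj} \wedge [Y_{kj}, \mu_{kj}]$ is a locally finite sum of integration currents on $V_k$. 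Then I would take a pws partition of unity $(\rho_k)$ subordinate to $(V_k)$ with $\sum_k \rho_k \equiv 1$ locally finitely (Prop.~\ref{prop:part_of_1_pws}). Because $\overline{V_k}$ is compact, $\Supp \rho_k$ is compact, hence only finitely many $Y_{kj}$ meet it, so $\rho_k \cdot (\ell|_{V_k}) = \sum_j (\rho_k \alpha_{kj}) \wedge [Y_{kj}, \mu_{kj}]$ is in fact a \emph{finite} sum of integration currents on $V_k$, supported in the compact set $\Supp \rho_k$.

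The heart of the argument is to extend each piece $(\rho_k \alpha_{kj}) \wedge [Y_{kj}, \mu_{kj}]$ by zero to an integration current on all of $U$. For this I would choose, using Lem.~\ref{lem:nice_neighborhoods} together with compactness, a compact polyhedral pwl subspace $K_k \subseteq V_k$ with $\Supp \rho_k \subseteq \Int K_k$. Since $K_k$ and $Y_{kj}$ are then both closed pwl subspaces of $V_k$, the intersection $Z_{kj} := Y_{kj} \cap K_k$ (or its pure‑dimensional part) is a compact, hence closed, pwl subspace of $U$ containing $\Supp(\rho_k \alpha_{kj}|_{Y_{kj}})$, and $\rho_k \alpha_{kj}$ extends by zero across $Z_{kj}$ because $\Supp \rho_k \subseteq \Int K_k$. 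The resulting integration current $\widetilde T_{kj}$ on $U$ agrees with $(\rho_k \alpha_{kj}) \wedge [Y_{kj}, \mu_{kj}]$ on $\Int K_k$ and vanishes away from $\Supp \rho_k$, so it really is the extension by zero. As $\{\Supp \rho_k\}$ is locally finite and each $k$ contributes only finitely many pieces, $\{\widetilde T_{kj}\}_{k,j}$ is a locally finite family, so $\sum_{k,j} \widetilde T_{kj}$ is a locally finite sum of integration currents on $U$. Finally I would identify it with $\ell$ by pairing against an arbitrary $\eta \in PS_c(U)$: only finitely many $k$ meet $\Supp \eta$, and for those one gets $\big(\sum_j \widetilde T_{kj}\big)(\eta) = (\ell|_{V_k})(\rho_k \eta) = \ell(\rho_k \eta)$, using the identity $(\rho_k T)(\eta) = T(\rho_k \eta)$ for $\wedge$‑multiplication by the pws function $\rho_k$ (Def.~\ref{def:polyhedral_current}), the extension \eqref{eq:extension_ell}, and that $E(U) \to E(V_k)$ is dual to extension by zero; summing yields $\ell\big(\sum_k \rho_k \eta\big) = \ell(\eta)$ since $\sum_k \rho_k \eta$ is a finite sum equal to $\eta$.

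The one genuinely delicate point is the middle step: making sure that multiplying by $\rho_k$ and extending by zero produces an integration current on \emph{all of} $U$, not merely on $V_k$. This is exactly why one refines to relatively compact $V_k$ (to force $\Supp \rho_k$ compact and cut each local sum down to finitely many pieces) and inserts the compact polyhedral collar $K_k \supseteq \Supp \rho_k$ inside $V_k$ so that the cut‑down pieces $Y_{kj} \cap K_k$ are closed pwl subspaces of $U$. Everything else — local finiteness of the assembled family and the pairing identity $\ell = \sum_{k,j} \widetilde T_{kj}$ — is routine bookkeeping once the decomposition $\eta = \sum_k \rho_k \eta$ of a compactly supported test form is in hand.
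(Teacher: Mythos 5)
Your argument is correct and follows the same partition-of-unity strategy as the paper, but it is more careful than the paper's own proof at exactly the point you call out as ``the one genuinely delicate point.'' The paper's proof is terse: it takes a pws partition of unity $(\rho_i)$ subordinate directly to the covering $(U_i)$ supplied by the definition of $P$, writes $\ell_i = \sum_{j\in J_i} \alpha_j\wedge[Y_j,\mu_j]$, and simply asserts that $\ell = \sum_{i\in I}\sum_{j\in J_i}(\rho_i\alpha_j)\wedge[Y_j,\mu_j]$ is the desired locally finite sum of integration currents. It does not address the fact that each $Y_j$ is a priori only a closed pwl subspace of $U_i$ rather than of $U$, so that $(\rho_i\alpha_j)\wedge[Y_j,\mu_j]$ is literally an integration current on $U$ only after one exhibits a closed pwl subspace of $U$ carrying the (closed in $U$) support $\Supp(\rho_i\alpha_j|_{Y_j})$. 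Your extra steps --- refining to a locally finite cover by relatively compact opens $V_k$ so that $\Supp\rho_k$ is compact and each $\rho_k\cdot(\ell|_{V_k})$ becomes a finite sum, then sandwiching $\Supp\rho_k$ inside a compact polyhedral set $K_k\subseteq V_k$ and replacing $Y_{kj}$ by $Y_{kj}\cap K_k$ --- make that extension-by-zero step explicit and rigorous (using that a locally finite intersection of closed pwl subspaces is a closed pwl subspace, and that a compact subset of the Hausdorff space $U$ is closed). The final bookkeeping via $\eta = \sum_k\rho_k\eta$ and \eqref{eq:extension_ell} is the same in both. In short, same route; your version fills in a justification the paper elides, at the price of the refinement and finiteness reductions that the paper skips.
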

\begin{proof}
Given $\ell\in P(U)$, choose an open covering $U = \bigcup_{i\in I}U_i$ such that each $\ell_i := \ell\vert_{U_i}$ is a locally finite sum of integration currents. (Such a covering exists by definition.) Say
$$\ell_i = \sum_{j\in J_i} α_j\wedge [Y_j, μ_j].$$
Let $(ρ_i)_{i\in I}$ be a pws partition of unity that is subordinate to the covering, cf. Prop. \ref{prop:part_of_1_pws}. Then
$$\ell = \sum_{i\in I} \sum_{j\in J_i} (ρ_iα_j)\wedge [Y_j, μ_j]$$
where the sum is locally finite as required.
\end{proof}

Let now $C\subseteq \mbR^r$ be a polyhedral subset. Given $T\in P(C)$, we say that a polyhedral complex structure $\mcC$ for $C$ is \emph{subordinate to $T$} if we can write
$$T = \sum_{σ\in \mcC} α_σ\wedge [σ, μ_σ]$$
for some choice of weights $μ_σ$ for each $σ\in \mcC$ and smooth forms $α_σ\in A(σ)$. Every $T\in P(C)$ admits a subordinate polyhedral complex structure. Moreove, having $\mcC$ and the $μ_σ$ fixed, the coefficients $α_σ$ are uniquely determined.

We also need the boundary integrals from \cite{Mih_trop_inter}*{§2.4}. Let $σ\subseteq \mbR^r$ be a polyhedron with $\dim σ = n$ and let $τ\subset σ$ be a facet. (Recall that this means that $τ\subset σ$ is a face of codimension $1$.) Let $μ\in \det N_σ$ be a weight for $σ$ and let $ν\in \det N_τ$ be a weight for $τ$. (Recall that $μ$ and $ν$ are only defined up to sign.) It is possible to write $μ = n_{σ,τ}\wedge ν$ where $n_{σ,τ}\in N_σ\setminus N_τ$ points in direction of $σ$. Such a vector $n_{σ,τ}$ is called a \emph{normal vector} for $(τ,ν)\subset (σ,μ)$. Its image in $N_σ/N_τ$ is uniquely determined.

For $w\in N_σ\oplus N_σ$, there is a contraction (interior derivative)
$$A(σ) \lr A(σ),\ \ \ α\longmapsto (α, w)$$
that is characterized by the following properties. It is $C^\infty(σ)$-linear in $α$ and $\mbR$-linear in $w$, it satisfies the Leibniz rule (assuming $α$ homogeneous)
\begin{equation}\label{eq:contraction_Leibniz}
(α\wedge β, w) = (α, w) + (-1)^{\deg α}α\wedge (β, w),
\end{equation}
and it satisfies the relations
\begin{equation}\label{eq:contraction_basic}
(d'φ, (v_1,v_2)) = \frac{\partial φ}{\partial v_1},\ \ \ (d''φ, (v_1,v_2)) = \frac{\partial φ}{\partial v_2},\ \ \ φ\in C^\infty(σ),\ v_1,v_2\in N_σ.
\end{equation}
We use the following shorthand from now on,
$$v' = (v,0),\ \ \ v'' = (0, v),\quad v\in N_σ.$$
Coming back to our facet inclusion $(τ, ν)\subset (σ, μ)$, for any $α\in A_c^{n-1,n}(σ)$, the $(n-1,n-1)$-form $(α, n_{σ, τ}'')\vert_τ$ depends only on $ν$, not on the chosen normal vector. We define
\begin{equation}\label{eq:def_boundary_integral_facet}
\int_{\partial'_τ[σ,µ]} α := -\int_{[τ,ν]} (α, n_{σ,τ}'')\vert_{τ}
\end{equation}
which does not even depend on $ν$ anymore. We put
\begin{equation}\label{eq:def_boundary_integral}
\int_{\partial' [σ,µ]} α := \sum_{τ\subset σ\ \text{facet}}\int_{\partial'_τ[σ,µ]} α.
\end{equation}
Given $β\in A_c^{n,n-1}(σ)$, we similarly define
\begin{equation}\label{eq:def_boundary_integral_smooth_II}
\int_{\partial''_τ[σ,µ]} β := \int_{[τ,ν]} (β, n_{σ, τ}')\vert_τ,\ \ \ \int_{\partial''[σ, μ]} β := \sum_{τ\subset σ\ \text{facet}}\int_{\partial''_τ[σ, μ]} β.
\end{equation} 
\begin{prop}[\cite{CLD}*{Lemme 1.5.7}, Stokes' Theorem]\label{prop:stokes_polyhedron}
Let $(σ,µ)$ be a weighted polyhedron of dimension $n$, and let $α\in A^{n-1,n}_c(σ)$ and $β\in A^{n,n-1}_c(σ)$. Then
$$\int_{[σ,µ]}d'α = \int_{\partial'[σ,µ]} α,\ \ \ \int_{[σ,µ]} d''β = \int_{\partial''[σ,µ]} β.$$
\end{prop}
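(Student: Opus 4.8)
The plan is to deduce both identities from the classical Stokes theorem for polyhedra with corners, after unwinding Lagerberg's formalism. As a first reduction, it suffices to treat the $d'$-identity. Indeed, the $C^\infty(\sigma)$-algebra automorphism of $A(\sigma)$ exchanging the two copies of $\Omega^1(\sigma)$ intertwines $d'$ with $d''$ and the contractions $(\cdot,v')$ with $(\cdot,v'')$ (both are characterized by \eqref{eq:contraction_Leibniz}--\eqref{eq:contraction_basic}), it intertwines restriction to faces with itself, and — after accounting for the signs it produces on top-degree forms by reordering the factors $d'x_i\leftrightarrow d''x_i$ in \eqref{eq:def_integral_R_n} — it carries $\int_{[\sigma,\mu]}$ and $\int_{\partial'[\sigma,\mu]}$ to $\int_{[\sigma,\mu]}$ and $\int_{\partial''[\sigma,\mu]}$. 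The slight asymmetry between the sign conventions in \eqref{eq:def_boundary_integral_facet} and in \eqref{eq:def_boundary_integral_smooth_II} is calibrated precisely so that this automorphism turns the first asserted identity into the second.

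Next I would reduce to the case that $\sigma\subseteq\mbR^n$ is $n$-dimensional and $\mu=\mu_{\mr{std}}$. Choose an affine linear bijection $f:\rho\overset{\sim}{\to}\sigma$ with $\rho\subseteq\mbR^n$ as in Def.~\ref{def:integral}\ref{item:integral_on_weighted_polyhedron}. Since $d'$, restriction to faces, and the contractions all commute with $f^*$, and since the facets, normal vectors and induced facet-weights of $(\sigma,\mu)$ pull back under $f$ to those of $(\rho,\mu_{\mr{std}})$ by the very definition of image and preimage weights (Def.~\ref{def:weight_polyhedron}), the transformation rule \eqref{eq:transformation_rule_integral} shows that $f^*$ multiplies both sides of the identity by the same scalar $(\deg f)^{-1}$. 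This is a routine bookkeeping step.

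In the reduced situation the identity becomes transparent through the isomorphism $A^{n-1,n}(\sigma)\iso\Omega^{n-1}(\sigma)$, $\alpha\mapsto\bar\alpha$, obtained by dividing out the canonical generator $d''x_1\wedge\cdots\wedge d''x_n$ of $\Omega^n(\sigma)$, together with the analogous $A^{n,n}(\sigma)\iso\Omega^n(\sigma)$. Under it, $d'$ becomes the usual exterior derivative $\Omega^{n-1}(\sigma)\to\Omega^n(\sigma)$ by \ref{item:def_smooth_forms_differential_operators}; $\int_{[\sigma,\mu_{\mr{std}}]}$ becomes the usual integral of a compactly supported $n$-form on $\sigma$, up to the universal sign $(-1)^{\binom n2}$ comparing the ordering in \eqref{eq:def_integral_R_n} with $d'x_1\wedge\cdots\wedge d'x_n\wedge d''x_1\wedge\cdots\wedge d''x_n$; and for a facet $\tau\subset\sigma$, formulas \eqref{eq:contraction_basic} and \eqref{eq:def_boundary_integral_facet} identify $\int_{\partial'_\tau[\sigma,\mu_{\mr{std}}]}\alpha$ with the integral over $\tau$ of $\bar\alpha|_\tau$, where $\tau$ carries the boundary orientation determined by $n_{\sigma,\tau}$ through $\mu_{\mr{std}}=n_{\sigma,\tau}\wedge\nu$. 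Granting these identifications, the $d'$-identity is exactly the classical Stokes theorem $\int_\sigma d\bar\alpha=\int_{\partial\sigma}\bar\alpha$ for the $n$-dimensional polyhedron $\sigma$. Alternatively, one argues by hand: take a pws partition of unity subordinate to a simplicial subdivision of $\sigma$ (interior facets cancel in pairs, since the two cells meeting along such a facet contribute via opposite normal vectors), and reduce to a single monomial $\phi\,d'x_I\wedge d''x_1\wedge\cdots\wedge d''x_n$ supported either near a relative interior point of $\sigma$, where both sides vanish by compact support and the fundamental theorem of calculus, or near a relative interior point of a single facet, which after an affine change of coordinates lies in $\{x_n=0\}$ with $\sigma\subseteq\{x_n\ge 0\}$; there integration in the $x_n$-variable produces precisely the right-hand side.

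The one genuine difficulty is the sign and orientation bookkeeping concentrated in the third step — in particular the universal sign $(-1)^{\binom n2}$ and the matching of the boundary orientation of $\tau$ with the sign appearing in $\mu_{\mr{std}}=n_{\sigma,\tau}\wedge\nu$. Once these are pinned down on a single monomial on the standard simplex, everything else is formal.
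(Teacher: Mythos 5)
The paper gives no proof of this proposition; it is imported verbatim from Chambert-Loir--Ducros \cite{CLD}*{Lemme 1.5.7}. Your reconstruction --- using the involution swapping the two $\Omega^1$-factors to deduce the $d''$-identity from the $d'$-one (the opposite sign in \eqref{eq:def_boundary_integral_smooth_II} relative to \eqref{eq:def_boundary_integral_facet} is indeed exactly what makes this swap work, absorbing the $(-1)^n$ from reordering top-degree monomials and the $(-1)^{n-1}$ from reordering on the facet), pulling back along an affine bijection to reduce to $\sigma\subseteq\mbR^n$ with standard weight, and then collapsing $A^{n-1,n}(\sigma)\iso\Omega^{n-1}(\sigma)$ so the claim becomes classical Stokes on a polyhedron --- is the natural argument and appears correct. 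The only substantive content is the sign bookkeeping in the first and third steps, and you have correctly identified those pressure points (the $(-1)^{\binom n 2}$ from reordering $d'x_1\wedge\cdots\wedge d'x_n\wedge d''x_1\wedge\cdots\wedge d''x_n$, and the match between $\mu_{\mathrm{std}}=n_{\sigma,\tau}\wedge\nu$ and the boundary orientation). One small wrinkle in the optional localization argument at the end: multiplying the smooth $\alpha$ by a \emph{pws} partition of unity gives pws pieces, and $d'_P$ of those picks up contributions along the break loci of the bump functions, so one should either subdivide $\sigma$ into cells and apply Stokes cell-by-cell (with the internal-facet cancellation you describe, no partition of unity needed) or use a genuinely $C^\infty$ partition of unity extended from a neighborhood of $\sigma$ in $\mbR^n$. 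Since your primary argument via the isomorphism with $\Omega^{n-1}(\sigma)$ is already complete, this is a cosmetic fix rather than a gap.
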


The boundary integral extends to pwl spaces and pws forms in the following way. Let $(C,μ)\subseteq \mbR^r$ be a weighted purely $n$-dimensional polyhedral subset and $η\in PS_c^{n-1,n}(C)$. Pick any polyhedral complex structure $\mcC$ that is subordinate to $μ$ and $η$ and define
$$\int_{\partial'[C,μ]} η = \sum_{σ\in \mcC_n} \int_{\partial'[σ,μ\vert_σ]} η\vert_σ.$$
This is well defined, i.e. independent of the choice $\mcC$, and extends to a definition for arbitrary $n$-dimensional weighted pwl spaces,
\begin{equation}
\int_{\partial'[X,μ]}:PS^{n-1,n}_c(X) \to \mbR.
\end{equation}
A similar definition is made for $\int_{\partial''[X,μ]}$. Stokes' Theorem carries over in the form
$$\int_{[X,µ]}d'_Pα = \int_{\partial'[X,µ]} α,\ \ \ \int_{[X,µ]} d''_Pβ = \int_{\partial''[X,µ]} β$$
for $α,β$ of compact support and in suitable bidegrees.

There is a fiber integration formalism that works as follows. Assume $f:σ\to \mbR^r$ is an affine linear map from a polyhedron. Its image $f(σ)$ is a polyhedron as well. Let $K := \ker(f:N_σ\to N_{f(σ)})$, put $d = \dim K$, and let $δ$ be a weight for $K$. Then fiber integration is the unique map $f_{δ,*}:PS_c(σ)\to PS(f(σ))$ that has the following two properties. First, if $α\in PS^{d,d}_c(σ)$, then $f_{δ,*}α\in PS^{0,0}(f(σ))$ is the pws function on $f(σ)$ that is pointwise given by
\begin{equation}
\label{eq:def_fiber_int_polyhedron}
(f_{δ,*}α)(y) := \int_{[f^{-1}(y), δ]} α\vert_{f^{-1}(y)},\ \ \ y\in f(σ).
\end{equation}
Strictly speaking, this only makes sense if the fiber $f^{-1}(y)$ is $d$-dimensional which is the case at least for $y\in f(σ)^\circ$. If the dimension of $f^{-1}(y)$ is $<d$, then the expression \eqref{eq:def_fiber_int_polyhedron} is defined to vanish. The second property of fiber integration is the projection formula
\begin{equation}\label{eq:proj_formula_fiber_int_polyhedron}
f_{δ,*}(α\wedge f^*β) = (f_{δ,*}α)\wedge β,\ \ \ \ β\in PS(f(σ)).
\end{equation}
In particular, fiber integration is bihomogeneous of bidegree $(-d,-d)$. Its significance in the theory is that it provides a construction of the pushforward of polyhedral currents. To this end, we first say more about fiber weights.

\begin{construction}\label{constr:fiber_weight}
Let $f:σ→f(σ)$ and $K$ be as before. Assume we are given two out of the following three: A weight $μ \in \det N_σ$ for $σ$, a weight $ν \in \det N_{f(σ)}$ for $f(σ)$ and/or a weight $δ\in \det K$ for $K$. Then there is a unique way to define the third weight such that the three weights are related by $μ = \wt{ν}\wedge δ$ (up to sign), where $\wt{ν}\in \bigwedge^{\dim f(σ)} N_σ$ is any lift of $ν$. If this relation holds, then we also denote $δ$ by $μ/ν$.

When phrased in terms of Haar measures as in Def. \ref{def:weight_polyhedron}, then this means that the measure on $K^\vee = M_σ/M_{f(σ)}$ is the quotient Haar measure.
\end{construction}

Let $f:X\to Y$ be a map of pwl spaces and let $\ell \in E(X)$ be such that $\Supp(\ell)\to Y$ is proper. Then, for every test form $η\in PS_c(Y)$, the intersection $\Supp(\ell)\cap f^{-1}(\Supp(η))$ is compact. Thus the construction in \eqref{eq:extension_ell} applies and allows to define the pushforward $f_*\ell \in E(Y)$ by
\begin{equation}\label{eq:push_forward_current}
(f_*\ell)(η) := \ell(f^*η).
\end{equation}

\begin{prop}\label{prop:push_of_polyhedral}
Let $f:X→Y$ be a map of pwl spaces and let $T\in P^{p,q}(X)$ be a polyhedral current such that $\Supp T → Y$ is proper. Then $f_*T \in E(Y)$ is again a polyhedral current.
\end{prop}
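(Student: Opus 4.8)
The plan is to make the statement local and finite on $Y$, and then to push forward the resulting integration currents one cell at a time by fibre integration. Since $P_Y$ is a sheaf and being a polyhedral current is a local condition, I may first replace $Y$ by an open polyhedral neighbourhood of an arbitrary point (Lem.~\ref{lem:nice_neighborhoods}); so assume $Y\subseteq\mbR^s$ is a polyhedral subset, in particular paracompact, and fix $y\in Y$. Pick a compact polyhedral neighbourhood $L\ni y$ and put $V:=L^\circ$. Because $\Supp(T)\to Y$ is proper, the set $S:=\Supp(T)\cap f^{-1}(L)$ is compact; I fix a paracompact open neighbourhood $U$ of $S$ in $X$ (Lem.~\ref{lem:paracompact_neighborhood}) and a cutoff $λ\in PS^{0,0}_c(X)$ with $\Supp(λ)\subseteq U$ and $λ\equiv 1$ on a neighbourhood of $S$. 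By Lem.~\ref{lem:paracompact_polyhedral_current} I may write $T\vert_U=\sum_{i\in I}α_i\wedge[Y_i,μ_i]$ as a locally finite sum of integration currents. As $\Supp(λ)$ is compact, only finitely many indices, say $i\in I_0$, satisfy $\Supp(λ)\cap\Supp(α_i\wedge[Y_i,μ_i])\neq\emptyset$, and for those $i$ the form $λα_i$ has compact support on $Y_i$.

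Now I evaluate $f_*T$ over $V$. For $η\in PS_c(V)$ one has $\Supp(f^*η)\subseteq f^{-1}(V)\subseteq f^{-1}(L)$, so $\Supp(T)\cap\Supp(f^*η)\subseteq S$, and since $λ\equiv 1$ near $S$ the extension convention \eqref{eq:extension_ell} gives
$$(f_*T)(η)=T(f^*η)=T(λf^*η)=\sum_{i\in I_0}\int_{[Y_i,μ_i]}(λα_i)\wedge(f\vert_{Y_i})^*η.$$
For each $i\in I_0$ I choose a polyhedral complex structure on $Y_i$ subordinate to $μ_i$, to $λα_i$ and to $f\vert_{Y_i}$, so that $f$ is affine linear on each cell; only the finitely many top-dimensional cells $σ$ meeting $\Supp(λα_i)$ contribute to $\int_{[Y_i,μ_i]}$, the others giving $0$. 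For such a $σ$ the image $f(σ)\subseteq\mbR^s$ is again a polyhedron; put $K_σ:=\ker(N_σ\to N_{f(σ)})$, choose a weight $δ_σ$ for $K_σ$, and let $ν_σ$ be the weight on $f(σ)$ determined (up to sign) by $μ_i\vert_σ=\wt{ν_σ}\wedge δ_σ$ as in Construction~\ref{constr:fiber_weight}. The projection formula \eqref{eq:proj_formula_fiber_int_polyhedron} for fibre integration, together with the Fubini theorem for $f_{δ_σ,*}$ (cf. \cite{Mih_trop_inter}*{§2}), then gives
$$\int_{[σ,\,μ_i\vert_σ]}(λα_i)\vert_σ\wedge(f\vert_σ)^*η=\int_{[f(σ),\,ν_σ]}\bigl(f_{δ_σ,*}((λα_i)\vert_σ)\bigr)\wedge η,$$
where $f_{δ_σ,*}((λα_i)\vert_σ)$ is a piecewise smooth form of compact support on $f(σ)$. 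Summing up, $(f_*T)\vert_V$ is a finite sum of integration currents of the form $f_{δ_σ,*}((λα_i)\vert_σ)\wedge[f(σ),ν_σ]$, hence a polyhedral current; as $y$ was arbitrary, $f_*T\in P(Y)$.

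The main point is the last displayed identity, i.e.\ that the pushforward of a single integration current $(λα_i)\vert_σ\wedge[σ,μ_i\vert_σ]$ along the affine linear map $f\vert_σ$ is exactly the integration current over $f(σ)$ of the fibre-integrated form, with the correct matching of the weights $μ_i\vert_σ$, $ν_σ$ and $δ_σ$ dictated by Construction~\ref{constr:fiber_weight}; this in particular makes the answer independent of the auxiliary choice of $δ_σ$, although that independence is not logically needed here since $f_*T$ is already defined intrinsically by \eqref{eq:push_forward_current}. Everything else — existence of the paracompact neighbourhood $U$ and of the cutoff $λ$, existence of a polyhedral complex structure subordinate to all the given data, and the fact that $f(σ)$ is again a polyhedron — is routine point-set and polyhedral bookkeeping and presents no real difficulty.
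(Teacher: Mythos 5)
Your proof is correct and follows essentially the same strategy as the paper's: reduce to $Y$ polyhedral via Lemma~\ref{lem:nice_neighborhoods}, localize so that the relevant part of $T$ has compact support, and then push forward cell-by-cell via the projection formula for fiber integration, matching the weights as in Construction~\ref{constr:fiber_weight}. The only cosmetic difference is in the localization: the paper first uses $ρ\,f_*T = f_*(f^*(ρ)T)$ with a partition of unity on $Y$ to assume $\Supp(T)$ compact outright, then makes $X$ polyhedral by another partition of unity; you instead fix a point $y$, use properness to get a compact set $S$, and insert a single cutoff $λ$ via the extension convention~\eqref{eq:extension_ell} — both routes end at the same formula~\eqref{eq:push_forward_polyhedral_set}.
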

\begin{proof}
The property of $f_*T$ being polyhedral can be checked on an open covering of $Y$. In light of Lem. \ref{lem:nice_neighborhoods}, we may assume $Y$ to be a polyhedral set. By its very definition, $ρf_*T = f_*(f^*(ρ)T)$ for every pws function $ρ$ on $Y$. Writing $1 = \sum_{i\in I} ρ_i$ (locally finite) on $Y$ with each $ρ_i$ compactly supported, we may thus assume $\Supp(T)$ compact. Then we may replace $X$ by a paracompact open neighborhood of $\Supp(T)$ (Lem. \ref{lem:paracompact_neighborhood}) and, after the choice of a suitable partition of unity on $X$, assume $X$ to be a polyhedral set (Lem. \ref{lem:nice_neighborhoods}).

Let $\mcX$ and $\mcY$ be polyhedral complex structures for $X$ and $Y$ that are subordinate to both $f$ and $T$, say
$$T = \sum_{σ\in \mcX} α_σ\wedge [σ, μ_σ].$$
For each $σ$, pick a weight $ν_σ$ for $f(σ)$. The projection formula \eqref{eq:proj_formula_fiber_int_polyhedron} implies that
\begin{equation}
\label{eq:push_forward_polyhedral_set}
f_*T = \sum_{σ\in \mcX} f_{μ_σ/ν_σ,*}(α_σ)\wedge [f(σ), ν_σ],
\end{equation}
the sum being finite by the compact support assumption on $T$. This completes the proof.
\end{proof}

We note in passing that the individual summands in \eqref{eq:push_forward_polyhedral_set} are independent of the choices $ν_σ$.

\subsection{Flatness}
\label{ss:flatness}

Let $(X,µ)$ and $(Y,ν)$ be weighted pwl spaces that are purely of dimensions $n$ and $m$, respectively, and $f:X→Y$ a pwl map. We have seen that pws forms admit pullbacks for $f$ while polyhedral currents admit pushforwards. As $X$ is weighted, every pws form $η\in PS(X)$ defines a polyhedral current, namely $η\wedge [X,µ]$. Assuming that $\Supp η$ is proper over $Y$, one may ask for the existence of a pws form $f_{μ/ν,*}η\in PS(Y)$ such that $f_*(η\wedge [X,µ]) = (f_{μ/ν,*}η)\wedge [Y, ν]$. Note that $f_{μ/ν,*}η$ is uniquely determined if it exists.
\begin{defn}\label{def:flat}
The map $f$ is \emph{flat} if the pws form $f_{μ/ν,*}η$ exists for every $η\in PS_c(X)$. The form $f_{μ/ν,*}η$ is then called the \emph{pushforward} or the \emph{fiber integral} of $η$ along $f$.
\end{defn}
We remark that if $f$ is flat, then $f_{μ/ν,*}η$ exists for every $η$ with support proper over $Y$. This can be shown by the same technique as during the proof of Prop. \ref{prop:push_of_polyhedral}.

We next give a different definition that will turn out to be equivalent.
\begin{defn}\label{def:fiber_weight}
Assume for a moment that $X$ and $Y$ are polyhedral sets. The map $f$ is said to admit \emph{well defined fiber weights} if $\dim f^{-1}(y) \leq n-m$ for all $y\in Y$ and if the following condition is satisfied. Choose polyhedral complex structures $\mcX$ and $\mcY$ for $X$ and $Y$ that are subordinate to $µ$, $ν$ and $f$. Given ${\ob{σ}}\in \mcY_m$ and $σ\in \mcX_n$ with $f(σ) = {\ob{σ}}$, Constr. \ref{constr:fiber_weight} defines a fiber weight $μ_σ/ν_{\ob{σ}}$ on $\ker(f:N_σ\to N_{\ob{σ}})$. The condition is now that for every $τ\in \mcX$ with $\dim τ - \dim f(τ) = n - m$, the following sum is independent of the choice of ${\ob{σ}}$, where ${\ob{σ}}\in \mcY_m$ denotes any maximal dimensional polyhedron with $f(τ)\subseteq {\ob{σ}}$,
\begin{equation}\label{eq:fiber_weight_summed}
(μ/ν)_{τ,{\ob{σ}}} := \sum_{σ\in \mcX_n,\ τ\subseteq σ,\ f(σ) = {\ob{σ}}} μ_σ/ν_{\ob{σ}}.
\end{equation}
Here, $\ker(N_τ\to N_{f(τ)}) = \ker(N_σ \to N_{\ob{σ}})$ for all occurring $σ$ because of our dimension assumptions and the above sum is understood as fiber weight for $f$ on $τ$. In case \eqref{eq:fiber_weight_summed} is independent of $\ob{σ}$, then we denote this weight by $(μ/ν)_τ$ or simply $μ/ν$.

One checks that the above condition is independent of the choice of $\mcX$ and $\mcY$. In fact, it depends only on $(X,µ)$ and $(Y,ν)$ up to pwl isomorphism. We may hence extend it to maps $f:X \to Y$ of general weighted pwl spaces $(X,µ)$ and $(Y,ν)$. More precisely, we say that $f$ admits \emph{well defined fiber weights} if $\dim f^{-1}(y) \leq n-m$ for all $y\in Y$ and if for every $x\in X$ the above condition holds on polyhedral open neighborhoods of $x$ and $f(x)$.
\end{defn}

\begin{ex}\label{ex:push_forward_does_not_preserve_pws}
\begin{enumerate}[wide, labelindent=0pt, labelwidth=!, label=(\arabic*), topsep=4pt, itemsep=4pt]
\item Assume that $\dim X = \dim Y$. A map of weighted pwl spaces $f:(X,µ)\to (Y,ν)$ is said to be \emph{generically of degree $λ \in \mbR_{\geq 0}$} if $f_*[X,µ] = λ [Y,ν]$ and we write $\deg f = λ$ in this case. (This is the usual concept of degree in tropical geometry, cf. \cite{ST}*{(1.2)}. Note that the definition does not imply fibers to be $0$-dimensional. In fact, if all fibers of $f$ are positive dimensional, then $f$ is generically of degree $0$ by definition.) With this terminology, $f$ has well defined fiber weights if and only if all its fibers are $0$-dimensional or empty and if for each $x\in X$, the induced map of weighted polyhedral fans $f_x:X_x→Y_{f(x)}$ is generically of some degree $\deg_x(f)$. In case $X$ and $Y$ are $1$-dimensional, this coincides with the definition of harmonic maps of metrized graphs in \cite{ABBR}*{Def. 2.6}.

\item Consider $\mbR^2$ with coordinates $x_1,x_2$ and the following map of pwl spaces of dimension $1$,
\begin{equation}\label{eq:key_example}
f: X = \{x_1 = 0\} \sqcup \{x_2 = 0\} → Y = \{x_1 \cdot x_2 = 0\}.
\end{equation}
All occurring sets here are endowed with their standard weights, denoted by $μ$ resp. $ν$ again. Then the fiber $f^{-1}\big((0,0)\big)$ has no well defined weight. The map is also not flat above the origin. Namely consider the pws function $ϕ$ which is $1$ on $\{x_1 = 0\}$ and $0$ on $\{x_2 = 0\}$. Then $f_*(ϕ\wedge [X, μ])$ is the coordinate axis $[\{x_1 = 0\}, ν\vert_{\{x_1 = 0\}}]$ and is not pws. Relatedly, there seems to be no natural way to pullback the Dirac measure $[\{(0,0)\}, \mr{std}]$ from $\{x_1\cdot x_2 = 0\}$. In particular, having only the relation $f_*[X,µ] = \deg(f)[Y,ν]$ in the first example is insufficient.

\item A flat map (resp. a map that has well defined fiber weights) need not have fibers of exactly dimension $n-m$. Consider for example the polyhedron $σ = \{0\leq x_2 \leq x_1\}\subseteq \mbR^2$ and the projection map $f:σ\to \mbR,\ (x_1,x_2)\mapsto x_1$. Then $f$ is flat and admits well defined fiber weights.
\end{enumerate}
\end{ex}

\begin{prop}\label{prop:flatness_characterization}
Let $(X, μ)$ and $(Y, ν)$ be weighted pwl spaces of dimensions $n$ and $m$. A pwl map $f:(X,µ)→(Y,ν)$ is flat if and only if it admits well defined fiber weights.
\end{prop}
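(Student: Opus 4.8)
The plan is to argue locally: both flatness of $f$ and the existence of well-defined fiber weights are local conditions on $X$ and $Y$, so I may assume that $X$ and $Y$ are polyhedral sets and fix polyhedral complex structures $\mcX$, $\mcY$ subordinate to $μ$, $ν$ and $f$. The first step I would record is that \emph{either} hypothesis forces every $σ\in\mcX_n$ to map onto a cell $f(σ)\in\mcY_m$, so that $\dim f^{-1}(y)\leq n-m$ for all $y\in Y$. For well-defined fiber weights this is immediate from the definition together with subordination of $\mcY$. For flatness: if some $σ_0\in\mcX_n$ had $\dim f(σ_0)<m$, I would pick $η\in PS_c(X)$ supported in the (open) relative interior $σ_0^\circ$ with $f_{μ_{σ_0}/ν,*}(η\vert_{σ_0})\neq 0$ — possible since this fiber integration map is not identically zero; then $η\wedge[X,µ]=η\vert_{σ_0}\wedge[σ_0,µ_{σ_0}]$ and \eqref{eq:push_forward_polyhedral_set} exhibits $f_*(η\wedge[X,µ])$ as a nonzero polyhedral current supported on the $(<m)$-dimensional set $f(σ_0)$, which can never equal $g\wedge[Y,ν]$ for a piecewise smooth form $g$, contradicting flatness.

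With this in hand, for a fixed $η\in PS_c(X)$ (refining $\mcX$ so that each $η\vert_σ$ is smooth) I would expand the push forward via \eqref{eq:push_forward_polyhedral_set} as
$$f_*(η\wedge[X,µ]) = \sum_{{\ob{σ}}\in\mcY_m} g^η_{\ob{σ}}\wedge[{\ob{σ}},ν\vert_{\ob{σ}}],\qquad g^η_{\ob{σ}} := \sum_{σ\in\mcX_n,\ f(σ)={\ob{σ}}} f_{µ_σ/ν\vert_{\ob{σ}},*}(η\vert_σ)\in PS({\ob{σ}}).$$
Since the top-dimensional supports of the summands are the pairwise distinct cells ${\ob{σ}}\in\mcY_m$ and piecewise smooth forms are continuous, this current equals $g\wedge[Y,ν]$ for some piecewise smooth $g$ on $Y$ if and only if the family $(g^η_{\ob{σ}})_{{\ob{σ}}\in\mcY_m}$ glues, i.e. if and only if $g^η_{{\ob{σ}}_1}$ and $g^η_{{\ob{σ}}_2}$ agree on ${\ob{σ}}_1\cap{\ob{σ}}_2$ for all ${\ob{σ}}_1,{\ob{σ}}_2\in\mcY_m$. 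Thus $f$ is flat if and only if this gluing holds for every $η\in PS_c(X)$.

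The core of the argument is to identify this gluing condition with the fiber-weight identity \eqref{eq:fiber_weight_summed}. Fix ${\ob{σ}}_1,{\ob{σ}}_2\in\mcY_m$ and a common face ${\ob{δ}}\in\mcY$, and evaluate $g^η_{{\ob{σ}}_i}$ at a point $y_0$ in the relative interior of ${\ob{δ}}$. For $σ\in\mcX_n$ with $f(σ)={\ob{σ}}_i$, the piecewise smooth form $f_{µ_σ/ν,*}(η\vert_σ)$ is given in the interior of ${\ob{σ}}_i$ by the fiber integral \eqref{eq:def_fiber_int_polyhedron}; using that it is continuous up to the boundary and that fiber integrals over fibers of dimension $<n-m$ vanish, its value at $y_0$ is the fiber integral over $(f\vert_σ)^{-1}(y_0)=(f\vert_τ)^{-1}(y_0)$, where $τ:=(f\vert_σ)^{-1}({\ob{δ}})\in\mcX$ is the face of $σ$ over ${\ob{δ}}$ and satisfies $\dim τ-\dim f(τ)=n-m$. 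Grouping the cells $σ\in\mcX_n$ with $f(σ)={\ob{σ}}_i$ according to this face $τ$, and using additivity of the integral in the weight (and the projection formula \eqref{eq:proj_formula_fiber_int_polyhedron} to reduce to $η$ of bidegree $(n-m,n-m)$), I obtain
$$g^η_{{\ob{σ}}_i}\vert_{{\ob{δ}}}(y_0) = \sum_{\substack{τ\in\mcX,\ f(τ)={\ob{δ}}\\ \dim τ-\dim{\ob{δ}}\,=\,n-m}} \int_{\big[(f\vert_τ)^{-1}(y_0),\ (µ/ν)_{τ,{\ob{σ}}_i}\big]} η\vert_τ,$$
in which $(µ/ν)_{τ,{\ob{σ}}_i}$ is exactly the summed fiber weight of \eqref{eq:fiber_weight_summed}. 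Since $η$ and $y_0\in{\ob{δ}}^\circ$ are arbitrary and $η$ may be chosen concentrated near a single point of one fiber $(f\vert_τ)^{-1}(y_0)$, the equality $g^η_{{\ob{σ}}_1}\vert_{{\ob{δ}}}=g^η_{{\ob{σ}}_2}\vert_{{\ob{δ}}}$ for all such $η$ is equivalent to $(µ/ν)_{τ,{\ob{σ}}_1}=(µ/ν)_{τ,{\ob{σ}}_2}$ for every contributing $τ$; letting ${\ob{δ}}$, ${\ob{σ}}_1$ and ${\ob{σ}}_2$ range over all faces and all pairs of maximal cells, the gluing condition becomes precisely the statement that \eqref{eq:fiber_weight_summed} is independent of ${\ob{σ}}$ for every $τ\in\mcX$ with $\dim τ-\dim f(τ)=n-m$, i.e. that $f$ admits well-defined fiber weights.

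I expect the main obstacle to lie in the third step, specifically in the claim that the fiber integral $f_{µ_σ/ν,*}(η\vert_σ)$ extends continuously to a boundary face of $f(σ)$ with value computed by integrating over the corresponding face-fibers (lower-dimensional fibers contributing nothing). Pinning this down — together with verifying that the available test forms $η$ genuinely separate the candidate fiber weights and that the bidegree reduction via the projection formula is harmless — is where the genuine work sits; the remaining steps are bookkeeping with \eqref{eq:push_forward_polyhedral_set} and the definition of polyhedral currents.
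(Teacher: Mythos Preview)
Your proposal is correct and follows essentially the same route as the paper's proof: localize to polyhedral sets, establish the dimension bound $\dim f^{-1}(y)\leq n-m$, expand $f_*(\eta\wedge[X,\mu])$ cell by cell via \eqref{eq:push_forward_polyhedral_set}, and identify the gluing condition for the resulting family $(g^\eta_{\ob{\sigma}})_{\ob{\sigma}\in\mcY_m}$ with the fiber-weight identity \eqref{eq:fiber_weight_summed}. The paper's proof is organized identically, culminating in the restriction formula \eqref{eq:restriction_flatness}, and uses the same bump-form argument to separate the fiber weights in the converse direction.

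The ``main obstacle'' you flag --- that $f_{\mu_\sigma/\nu,*}(\eta\vert_\sigma)$ extends continuously to a boundary face $\ob{\tau}$ of $f(\sigma)$ with value given by fiber integration over $\tau=(f\vert_\sigma)^{-1}(\ob{\tau})$, vanishing when $\dim\tau-\dim\ob{\tau}<n-m$ --- is not in fact an obstacle. It is built into the paper's definition of fiber integration around \eqref{eq:def_fiber_int_polyhedron}: the map $f_{\delta,*}$ is declared to take values in $PS(f(\sigma))$, with the pointwise formula covering all $y\in f(\sigma)$ and the convention that degenerate fibers contribute zero. The paper accordingly dispatches this step in one line (the sentence preceding \eqref{eq:restriction_flatness}). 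Your reduction to bidegree $(n-m,n-m)$ via the projection formula is not needed for this and does not appear in the paper, but it does no harm.
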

\begin{proof}
Def. \ref{def:fiber_weight} is local on $X$ and $Y$. Similarly, for a compactly supported pws form $α\in PS_c(X)$, the pushforward $f_*(α\wedge [X, μ])$ may be computed locally with the help of partitions of unity (cf. the proof of Prop. \ref{prop:push_of_polyhedral}). Applying Lem. \ref{lem:nice_neighborhoods}, we are reduced to the case where $X$ and $Y$ are polyhedral sets.

Choose polyhedral complex structures $\mcX$ resp. $\mcY$ that are subordinate to $μ$, $ν$ and $f$. In particular, the datum $ν$ is equivalent to that of weights $ν_{\ob{σ}}$ for every ${\ob{σ}}\in \mcY_m$. As an auxiliary datum, we also choose weights $ν_ρ$ for all $ρ\in \mcY_{<m}$. We obtain the general formula
$$f_*(α\wedge [X,μ]) = \sum_{{\ob{σ}}\in \mcY}\ \sum_{σ\in \mcX_n,\ f(σ) = {\ob{σ}}} f_{μ_σ/ν_{\ob{σ}}, *}(α\vert_σ)\wedge [{\ob{σ}},ν_{\ob{σ}}].$$
Assume that there is some $σ\in \mcX_n$ with $\dim f(σ) < m$. Its interior $σ^\circ$ is open in $X$. There always exists some bump form $α\in A_c(σ^\circ) \subseteq PS_c(X)$ such that $f_{μ_σ/ν_{\ob{σ}},*}(α) \neq 0$. This shows that $f$ can only be flat if $\dim f^{-1}(y) \leq n-m$ for all $y\in Y$.

Suppose that this dimension condition holds. Then $$f_*(α\wedge [X, μ]) = \sum_{{\ob{σ}}\in \mcY_m} β_{\ob{σ}}\wedge [{\ob{σ}},ν_{\ob{σ}}]$$
for certain (uniquely determined) $β_{\ob{σ}}\in PS({\ob{σ}})$ and the question is for when these paste to a pws form, i.e. satisfy
$$β_{{\ob{σ}}_1}\vert_{\ob {τ}} = β_{{\ob{σ}}_2}\vert_{\ob {τ}}\quad \forall {\ob{σ}}_1,{\ob{σ}}_2\in \mcY_m,\ \ob {τ} = {\ob{σ}}_1\cap {\ob{σ}}_2.$$
These restrictions are determined as follows. If $σ \in \mcX_n$, $f(σ) = {\ob{σ}}$ and $\ob {τ}\subseteq {\ob{σ}}$ a face, then $τ = f^{-1}(\ob {τ}) \cap σ$ is a face of $σ$. One has $f_{μ_σ/ν_{\ob{σ}},*}(α\vert_σ)\vert_{\ob {τ}} = 0$ if $\dim τ - \dim \ob{τ}< n - m$ by definition of the fiber integral. If however $\dim τ - \dim \ob{τ}= n-m$, then
$$f_{μ_σ/ν_{\ob{σ}},*}(α\vert_σ)\vert_{\ob {τ}} = f_{μ_σ/ν_{\ob{σ}},*}(α\vert_{τ}).$$
Here, we have used that $μ_σ/ν_{\ob{σ}}$ defines a fiber weight for $τ\to \ob{τ}$ because $\ker(N_τ\to N_{\ob{τ}}) = \ker(N_σ\to N_{\ob{σ}})$ for dimension reasons. Hence
\begin{equation}
\label{eq:restriction_flatness}
β_{\ob{σ}}\vert_{\ob {τ}} = \sum_{τ\in f^{-1}(\ob {τ}),\ \dim τ - \dim \ob{τ} = n-m} f_{(μ/ν)_{τ, \ob{σ}},*}(α\vert_{τ})
\end{equation}
with the weights $(μ/ν)_{τ,\ob{σ}}$ from \eqref{eq:fiber_weight_summed}. Assuming that $f$ admits well defined fiber weights this is independent of ${\ob{σ}}$. Hence $f$ is flat.

Assume conversely that flatness is given. For each $τ\in \mcX$ with the property $\dim τ - \dim f(τ) = n - m$, pick some $y\in f(τ)^\circ$ and choose a compactly supported $(n-m,n-m)$-form $α$ with $\Supp (α)\cap f^{-1}(y)\subseteq τ^\circ$ and such that $\int_{τ\cap f^{-1}(y)} α \neq 0$. Then the independence of $(μ/ν)_{τ,{\ob{σ}}}$ from ${\ob{σ}}$ is implied by the independence of $β_{\ob{σ}}\vert_{\ob{τ}}$ from ${\ob{σ}}$ in \eqref{eq:restriction_flatness}.
\end{proof}

\begin{prop}\label{prop:pull_back_polycurrent_flat}
Let $f:(X,µ)→(Y,ν)$ be a flat map of pure-dimensional weighted pwl spaces and let $T\in P(Y)$ be a polyhedral current. Define a linear form $f^*T \in E(X)$ by
\begin{equation}\label{eq:def_pullback_flat}
(f^*T)(α) = T(f_{μ/ν,*}(α)),\ \ \ \ α\in PS_c(X).
\end{equation}
Then $f^*T$ is again a polyhedral current. Moreover, $f^*T$ satisfies the projection formula
\begin{equation}\label{eq:projection_formula_flat_pullback}
f_*(α\wedge f^*T) = f_{μ/ν,*}(α) \wedge T,\ \ \ \ α\in PS_c(X).
\end{equation}
\end{prop}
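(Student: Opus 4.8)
The plan is to check in turn that \eqref{eq:def_pullback_flat} defines an element of $E(X)$, that this element is a polyhedral current, and that it satisfies the projection formula \eqref{eq:projection_formula_flat_pullback}; the last point will be essentially formal. First, since fiber integration $f_{μ/ν,*}$ is bihomogeneous of bidegree $(-(n-m),-(n-m))$, if $T$ is of bidegree $(a,b)$ and dimension $d$ then $T(f_{μ/ν,*}α)$ vanishes unless $α\in PS^{p,q}_c(X)$ with $p=d-a+(n-m)$ and $q=d-b+(n-m)$, so $f^*T$ is of the same bidegree and codimension as $T$; moreover $\Supp f_{μ/ν,*}(α)\subseteq f(\Supp α)$ is compact, so $T(f_{μ/ν,*}α)$ is a legitimate value (in the sense of \eqref{eq:extension_ell} when $\Supp T$ is not compact), and $f^*T\in E(X)$. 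Both remaining assertions are local on $X$ and $Y$: using that $E$ and $P$ are sheaves (Lem.~\ref{lem:sheaf_property_E}), that $(f^*T)\vert_U=(f\vert_U)^*T$ for open $U\subseteq X$ and $(f^*T)\vert_{f^{-1}(V)}=(f\vert_{f^{-1}(V)})^*(T\vert_V)$ for open $V\subseteq Y$ (because $f_{μ/ν,*}$ sends a form supported in an open set to a form supported in the image of that set), Lem.~\ref{lem:nice_neighborhoods} reduces us to the case where $X$ and $Y$ are polyhedral sets.

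In that case choose polyhedral complex structures $\mcX$ and $\mcY$ for $X$ and $Y$ that are subordinate to $μ$, $ν$, $f$ and to $T$, and write $T=\sum_{\ob{σ}\in\mcY}γ_{\ob{σ}}\wedge[\ob{σ},ρ_{\ob{σ}}]$ (a locally finite sum) with $γ_{\ob{σ}}\in A(\ob{σ})$. For $β\in PS_c(X)$ of the relevant bidegree,
\[
(f^*T)(β)=\sum_{\ob{σ}\in\mcY}\int_{[\ob{σ},ρ_{\ob{σ}}]}γ_{\ob{σ}}\wedge(f_{μ/ν,*}β)\vert_{\ob{σ}}.
\]
The restriction of the pws form $f_{μ/ν,*}β$ to a cell $\ob{σ}$ is computed by restricting first along a maximal cell of $\mcY$ containing $\ob{σ}$ and then applying \eqref{eq:restriction_flatness}; it equals $\sum_{σ'}f_{(μ/ν)_{σ'},*}(β\vert_{σ'})$, the sum running over cells $σ'\in\mcX$ with $f(σ')=\ob{σ}$ and $\dim σ'=\dim\ob{σ}+(n-m)$, with $(μ/ν)_{σ'}$ the fiber weight of Def.~\ref{def:fiber_weight}. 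This is the one place where flatness of $f$ enters, since it is precisely what makes the expression independent of the chosen maximal cell. Substituting this in and using the projection formula \eqref{eq:proj_formula_fiber_int_polyhedron} for fiber integration on polyhedra together with \eqref{eq:push_forward_polyhedral_set} (applied to $f\colon σ'\to\ob{σ}$ and evaluated at the constant function $1$), each summand $\int_{[\ob{σ},ρ_{\ob{σ}}]}γ_{\ob{σ}}\wedge f_{(μ/ν)_{σ'},*}(β\vert_{σ'})$ becomes $\pm\int_{[σ',θ_{σ'}]}(f\vert_{σ'})^*(γ_{\ob{σ}})\wedge β\vert_{σ'}$, where $θ_{σ'}$ is the weight on $σ'$ with $θ_{σ'}/ρ_{\ob{σ}}=(μ/ν)_{σ'}$ in the sense of Constr.~\ref{constr:fiber_weight}. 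Hence
\[
f^*T=\sum_{\ob{σ}\in\mcY}\ \sum_{σ'}\ \pm\,(f\vert_{σ'})^*(γ_{\ob{σ}})\wedge[σ',θ_{σ'}],
\]
the signs coming from \eqref{eq:def_wedge_pws_form}. The cells $σ'$ occurring here form a subfamily of $\mcX$, hence this sum is locally finite and $f^*T$ is a polyhedral current; reverting the localization proves it in general.

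For the projection formula, let $α\in PS_c(X)$ and $η\in PS_c(Y)$. Since $\Supp(α\wedge f^*T)\subseteq\Supp α$ is compact, $f_*(α\wedge f^*T)$ is defined, and unwinding the definitions of $f_*$ \eqref{eq:push_forward_current}, of the $\wedge$-product \eqref{eq:def_wedge_pws_form}, and of $f^*T$ gives
\[
f_*(α\wedge f^*T)(η)=(α\wedge f^*T)(f^*η)=\pm\,(f^*T)(α\wedge f^*η)=\pm\,T\big(f_{μ/ν,*}(α\wedge f^*η)\big).
\]
The projection formula $f_{μ/ν,*}(α\wedge f^*η)=f_{μ/ν,*}(α)\wedge η$ for the flat map $f$ --- immediate from \eqref{eq:proj_formula_fiber_int_polyhedron} checked on a subordinate complex --- then rewrites the right-hand side as $\pm\,T(f_{μ/ν,*}(α)\wedge η)=\pm\,(f_{μ/ν,*}(α)\wedge T)(η)$, and the Koszul signs from \eqref{eq:def_wedge_pws_form} cancel, giving \eqref{eq:projection_formula_flat_pullback}.

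I expect the main work to lie in the second paragraph: correctly describing the restriction of $f_{μ/ν,*}β$ to cells of $\mcY$ of arbitrary codimension via \eqref{eq:restriction_flatness} (which is stated there only for faces of maximal cells, so one must pass through such a cell and then invoke flatness for independence of the choice), and bookkeeping the composite fiber weights $θ_{σ'}$ and the various Koszul signs. The reduction in the first paragraph and the projection formula are routine; the latter in particular is a purely formal consequence of the definitions and of the projection formula for fiber integration.
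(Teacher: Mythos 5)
Your proof is correct. For the first assertion (that $f^*T$ is polyhedral), you follow essentially the same route as the paper: localize via Lemma \ref{lem:nice_neighborhoods} and the sheaf property, pass to subordinate polyhedral complex structures, and compute the restriction of $f_{μ/ν,*}β$ to cells of $\mcY$ of arbitrary codimension by passing through a maximal cell and applying \eqref{eq:restriction_flatness}, which is precisely where flatness enters. The only cosmetic difference is that the paper reduces by linearity to a single term $T = β\wedge[\ob{τ},ε]$ while you carry the full sum; the resulting explicit formula is the same as \eqref{eq:pull_back_formula_flat}.

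Where you genuinely diverge from the paper is the projection formula. The paper establishes it by returning to the explicit formula \eqref{eq:pull_back_formula_flat}, pushing forward term by term via \eqref{eq:push_forward_polyhedral_set}, and invoking \eqref{eq:proj_formula_fiber_int_polyhedron} on each cell, then reassembling. You instead give a purely formal argument: unwind \eqref{eq:push_forward_current}, \eqref{eq:def_wedge_pws_form} and \eqref{eq:def_pullback_flat} to reduce everything to the single identity $f_{μ/ν,*}(α\wedge f^*η)=f_{μ/ν,*}(α)\wedge η$, which is itself an immediate consequence of the definition of $f_{μ/ν,*}$ (or of \eqref{eq:proj_formula_fiber_int_polyhedron} on a subordinate complex, as you note). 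This is cleaner: it shows the projection formula is a formal consequence of $f^*T$ being a well-defined polyhedral current together with the projection formula for fiber integration of forms, with no need to re-enter the explicit polyhedral decomposition. Your sign-bookkeeping is also correct — the Koszul sign $(-1)^{\deg α\deg β}$ appearing when you commute $α$ past the coefficient of $f^*T$ matches the one from $f_{μ/ν,*}(α)$ past the coefficient of $T$, because $\deg f_{μ/ν,*}(α)$ and $\deg α$ differ by the even number $2(n-m)$, so the two signs cancel as you claim.
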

\begin{proof}
We first prove that $f^*T$ is polyhedral. After similar localization arguments as in previous proofs, we may assume that $X$ and $Y$ are (pure-dimensional, weighted) polyhedral sets. Let $\mcX$ and $\mcY$ be respective polyhedral complex structures that are subordinate to $μ$, $ν$, $f$ and $T$. Using the linearity in $T$, we may assume that $T = β\wedge [\ob {τ}, ε]$ for some $\ob {τ}\in \mcY$ and $β\in PS(\ob {τ})$. Let $\ob{σ}\in \mcY_m$ be any maximal dimensional polyhedron with $\ob {τ}\subseteq \ob{σ}$. By definition, for every $α\in PS_c(U)$,
\begin{equation}\label{eq:pull_back_description_flat}
T(f_{μ/ν,*}α) = \sum_{σ\in \mcX_n,\ f(σ) = \ob{σ}} \int_{[\ob {τ},ε]} β\wedge f_{μ_σ/ν_{\ob{σ}},*}(α\vert_σ)\vert_{\ob {τ}}.
\end{equation}
Fix one such $σ$ and put $τ = f^{-1}(\ob {τ})\cap σ$ which is a face of $σ$. If $τ$ satisfies $\dim τ - \dim \ob {τ} < n-m$, then $f_{μ_σ/ν_{\ob{σ}},*}(α\vert_σ)\vert_{\ob{τ}} = 0$. Otherwise we have that $\dim τ - \dim \ob {τ} = n- m$ and $\ker(N_τ\to N_{\ob{τ}}) = \ker(N_σ\to N_{\ob{σ}})$. In this case,
$$f_{μ_σ/ν_{\ob{σ}},*}(α\vert_σ)\vert_{\ob{τ}} = f_{μ_σ/ν_{\ob{σ}},*}(α\vert_τ).$$
Reordering the sum in \eqref{eq:pull_back_description_flat}, we obtain
\begin{equation}\label{eq:pull_back_description_flat_II}
\begin{aligned}
T(f_{μ/ν,*}α) &\ = \sum_{τ\in f^{-1}(\ob {τ}),\ \dim τ - \dim \ob {τ} = n-m} \int_{[\ob{τ}, ε]} β\wedge (f_{(μ/ν)_τ, *}α\vert_{τ})\\
&\ = \sum_{τ\in f^{-1}(\ob {τ}),\ \dim τ - \dim \ob {τ} = n-m} \int_{[τ, ε\wedge (μ/ν)_τ]} f^*β\wedge α\vert_{τ}.
\end{aligned}
\end{equation}
The second equality here is by the projection formula for fiber integration \eqref{eq:proj_formula_fiber_int_polyhedron}.
In other words, the pullback is polyhedral as claimed,
\begin{equation}\label{eq:pull_back_formula_flat}
f^*(T) = \sum_{τ\in f^{-1}(\ob {τ}),\ \dim τ - \dim \ob {τ} = n-m} f^*β\wedge [τ, ε\wedge (μ/ν)_τ].
\end{equation}
We turn to the projection formula which may similarly be proved under the assumption that $X$ and $Y$ are polyhedral sets. Continuing from \eqref{eq:pull_back_formula_flat}, we have
$$f_*(α\wedge f^*T) = \sum_{τ\in f^{-1}(\ob {τ}),\ \dim τ - \dim \ob {τ} = n-m} f_*(α \wedge f^*β\wedge [τ, ε\wedge (μ/ν)_τ]).$$
By \eqref{eq:push_forward_polyhedral_set}, this expression equals
\begin{equation}\label{eq:proof_projection_flat}
\sum_{τ\in f^{-1}(\ob {τ}),\ \dim τ - \dim \ob {τ} = n-m} f_{(μ/ν)_τ,*}(α \wedge f^*β\vert_τ) \wedge [\ob{τ}, ε].
\end{equation}
Fiber integration satisfies the projection formula by definition \eqref{eq:proj_formula_fiber_int_polyhedron}, meaning for each occurring $τ$ we have the following identity in $PS(\ob{τ})$:
$$f_{(μ/ν)_τ,*}(α \wedge f^*β\vert_τ) = f_{(μ/ν)_τ,*}(α\vert_τ) \wedge β.$$
Substituting this in \eqref{eq:proof_projection_flat} we obtain
\begin{equation}
\begin{split}
f_*(α\wedge f^*T) & = \left(\sum_{τ\in f^{-1}(\ob {τ}),\ \dim τ - \dim \ob {τ} = n-m} f_{(μ/ν)_τ,*}(α\vert_τ)\right) \wedge β \wedge [\ob{τ}, ε] \\
& = f_{μ/ν,*}(α)\wedge T
\end{split}
\end{equation}
which is what we wanted to prove.
\end{proof}

\begin{defn}\label{def:faithfully_flat}
A flat map $f:(X,μ)\to (Y,ν)$ of weighted pwl spaces is called \emph{faithfully flat} if no fiber is completely degenerate, i.e. no fiber satisfies $\dim f^{-1}(y) < n - m$. In particular, $f$ is surjective.
\end{defn}

\begin{prop}\label{prop:flat_maps_faithful}
Let $f:(X,μ)\to (Y,ν)$ be a faithfully flat map of weighted pwl spaces.
\begin{enumerate}[leftmargin=*, label=(\arabic*), topsep=4pt, itemsep=4pt]
\item The pullback map of polyhedral currents, $f^*:P(Y)\to P(X)$, is injective.

\item If $g:(Y,ν)\to (Z,δ)$ is a map of weighted pwl spaces such that $g\circ f$ is flat, then $g$ itself is flat.
\end{enumerate}
\end{prop}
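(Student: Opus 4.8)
The plan is to reduce both statements to the following surjectivity property, which I expect to contain the real content: \emph{if $f\colon(X,\mu)\to(Y,\nu)$ is faithfully flat, then $f_{\mu/\nu,*}\colon PS_c(X)\to PS_c(Y)$ is surjective.} Granting it, part~(1) follows from the projection formula \eqref{eq:projection_formula_flat_pullback}: if $f^*T=0$, then $f_{\mu/\nu,*}(\alpha)\wedge T=f_*(\alpha\wedge f^*T)=0$ for every $\alpha\in PS_c(X)$, so, since each $\rho\in PS^{0,0}_c(Y)$ equals some $f_{\mu/\nu,*}(\alpha)$, we get $\rho\wedge T=0$ for all such $\rho$. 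For an arbitrary test form $\eta$, taking $\rho\equiv1$ on $\Supp\eta$ gives $T(\eta)=(\rho\wedge T)(\eta)=0$, so $T=0$ and $f^*$ is injective. For part~(2), given $\eta\in PS_c(Y)$ pick $\alpha\in PS_c(X)$ with $f_{\mu/\nu,*}(\alpha)=\eta$; then $(f_{\mu/\nu,*}\alpha)\wedge[Y,\nu]=f_*(\alpha\wedge[X,\mu])$ since $f$ is flat (Def.~\ref{def:flat}), and hence, using functoriality of the push forward of currents (legitimate since $\alpha\wedge[X,\mu]$ is compactly supported) and the flatness of $g\circ f$,
\[
g_*(\eta\wedge[Y,\nu])=g_*\bigl(f_*(\alpha\wedge[X,\mu])\bigr)=(g\circ f)_*(\alpha\wedge[X,\mu])=(g\circ f)_{\mu/\delta,*}(\alpha)\wedge[Z,\delta].
\]
This writes $g_*(\eta\wedge[Y,\nu])$ as $(\text{pws form})\wedge[Z,\delta]$, so $g_{\nu/\delta,*}(\eta)$ exists; as $\eta$ was arbitrary, $g$ is flat.

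To prove the surjectivity property I would argue locally: by a pws partition of unity on $Y$ and compactness of supports, it suffices that every $y\in Y$ has an open neighborhood $V$ such that each $\eta\in PS_c(V)$ is in the image of $f_{\mu/\nu,*}$. Fix $y$ and subordinate polyhedral complex structures on $X$ and $Y$. Faithful flatness forces $\dim f^{-1}(y')=n-m$ for all $y'$ (combine Def.~\ref{def:faithfully_flat} with the bound in Def.~\ref{def:fiber_weight} and Prop.~\ref{prop:flatness_characterization}), so, $X$ being pure of dimension $n$, some top cell $\sigma$ meets $f^{-1}(y)$ in dimension $n-m$; one checks that then $\dim\ker(N_\sigma\to N_{f(\sigma)})=n-m$ and that $f(\sigma)=\ob{\sigma}$ is a top cell carrying a positive fiber weight $\mu_\sigma/\nu_{\ob{\sigma}}$. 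Equidimensionality of affine maps over relative interiors makes $f$, near a suitable point, a trivial $(n-m)$-fibration $O\cong V\times D$ with $O\subseteq X$ open, $V$ an open neighborhood of $y$, and $D$ an open $(n-m)$-cell with weight $\mu_\sigma/\nu_{\ob{\sigma}}$. Choosing on the $D$-factor a bump $(n-m,n-m)$-form $\beta$ of compact support and total mass $1$, the form $\alpha:=f^*\eta\wedge\beta$ is compactly supported, and the projection formula for fiber integration \eqref{eq:proj_formula_fiber_int_polyhedron} gives $f_{\mu/\nu,*}(\alpha)=\eta$.

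The main obstacle is this last step, producing the local trivialization: the cell $\sigma$ need not meet $f^{-1}(y)$ in its relative interior, and the fiber of $f$ inside a single cell may drop dimension over boundary points — but the hypothesis that \emph{every} fiber has the full dimension $n-m$ is exactly what excludes pathologies of the type of Example~\ref{ex:push_forward_does_not_preserve_pws}(2) and makes enough trivializations available. I note that part~(1) alternatively admits a direct proof via \eqref{eq:pull_back_formula_flat}: writing $T=\sum_{\ob{\tau}}\beta_{\ob{\tau}}\wedge[\ob{\tau},\varepsilon_{\ob{\tau}}]$ in a subordinate complex, faithful flatness gives, for each $\ob{\tau}$ with $\beta_{\ob{\tau}}\neq0$, a cell $\tau$ with $f(\tau)=\ob{\tau}$, $\dim\tau-\dim\ob{\tau}=n-m$ and $(\mu/\nu)_\tau\neq0$; the corresponding summand $f^*\beta_{\ob{\tau}}\wedge[\tau,\varepsilon_{\ob{\tau}}\wedge(\mu/\nu)_\tau]$ of $f^*T$ is not cancelled by any other, so $f^*T=0$ forces $f^*\beta_{\ob{\tau}}\vert_\tau=0$, whence $\beta_{\ob{\tau}}=0$ because $f$ maps $\tau$ linearly onto $\ob{\tau}$.
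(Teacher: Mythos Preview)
Your overall strategy coincides with the paper's: both reduce (1) and (2) to a surjectivity property of $f_{\mu/\nu,*}$ and then invoke the projection formula \eqref{eq:projection_formula_flat_pullback}. The paper packages this as the construction of a single $\omega\in PS^{n-m,n-m}(X)$, with proper support over $Y$, such that $f_{\mu/\nu,*}(\omega)=1$; then $\alpha=f^*\eta\wedge\omega$ is a preimage of any given $\eta$, and your deductions of (1) and (2) go through verbatim from there.

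The genuine gap is your local trivialization. The claim that $f$ is, near a suitable point, a trivial fibration $O\cong V\times D$ with $V$ an \emph{open neighborhood of $y$} fails whenever $y$ lies in the $(m{-}1)$-skeleton of $\mcY$. Take for instance $Y$ the standard tropical line in $\mbR^2$, $X=Y\times\mbR$, and $f$ the projection: this is faithfully flat, but over the vertex $y=0$ every top cell of $X$ maps onto a single ray of $Y$, so no product neighborhood over a full neighborhood of $y$ exists. The paper sidesteps this by not asking for a trivialization. It picks a maximal-dimensional polyhedron $\tau\subseteq f^{-1}(y)$ (available by faithful flatness), a bump form $\omega_0\in A^{n-m,n-m}_c(\tau^\circ)$ with $\int_{[\tau,\mu/\nu]}\omega_0>0$, and extends it to a compactly supported $\omega_y$ on $X$ whose support avoids $f^{-1}(y)\setminus\tau^\circ$. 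Then $(f_{\mu/\nu,*}\omega_y)(y)>0$; the key point is that $f_{\mu/\nu,*}\omega_y$ is automatically pws on $Y$ \emph{because $f$ is flat}, hence continuous, hence positive on an open $U_y\ni y$. A partition of unity subordinate to $(U_y)_{y\in Y}$ then assembles the $\omega_y$ into a global $\omega$ with $f_{\mu/\nu,*}(\omega)=1$. So the flatness of $f$ itself is what replaces your unavailable product chart.
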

\begin{proof}
The two statements hold for $Y$ if and only if they hold for all maps $f^{-1}(U_i)\to U_i$ from an open covering $Y = \bigcup_{i\in I} U_i$. Applying Lem. \ref{lem:nice_neighborhoods}, we may assume that $Y$ is a polyhedral set for the rest of the proof. We first show the following claim: There exists a pws $(n-m,n-m)$-form $ω$ on $X$, with proper support over $Y$, such that $f_{μ/ν,*}(ω) = 1$ (constant function).

Assume for a moment that, for each point $y\in Y$, we have constructed a pws $(n-m,n-m)$-form $ω_y$ on $X$ with proper support over $Y$ such that $(f_{μ/ν,*}ω_y)(y) > 0$. By continuity, $y$ has an open neighborhood $U_y$ on which $f_{μ/ν,*}ω_y$ is pointwise $>0$. Using that $Y$ is paracompact, let $(ρ_y)_{y\in Y}$ be a subordinate pws partition of unity (cf. Prop. \ref{prop:part_of_1_pws}). By the projection formula \eqref{eq:projection_formula_flat_pullback}, $\wt{ω} = \sum_{y\in Y} f^*ρ_y \cdot ω_y$ has the property that $f_{μ/ν,*}(\wt{ω})$ is pointwise $>0$. Then $ω = \wt{ω}/f^*f_{μ/ν,*}(\wt{ω})$ has the desired property $f_{μ/ν,*}(ω) = 1$. Thus we are reduced to constructing the $ω_y$.

By faithful flatness, there exists a point $x\in f^{-1}(y)$ with the property $\dim_x f^{-1}(y) = n-m$. Replacing $X$ by a polyhedral open neighborhood of $x$, we may assume $X$ to be a polyhedral set. Let $\mcX$ and $\mcY$ be polyhedral complex structures for $X$ and $Y$ that are subordinate to $μ$, $ν$ and $f$. Also assume $\{y\}\in \mcY$. Pick a polyhedron $τ\in f^{-1}(\{y\})$ of maximal dimension $n - m$. Let $ω_0 \in A^{n-m, n-m}_c(τ^\circ)$ be any smooth form with compact support contained in its interior and such that $\int_{[τ,μ/ν]} ω_0 > 0$. The set $\bigcup_{τ'\in f^{-1}(\{y\}),\ τ'\neq τ} τ'$ is closed because it is itself a polyhedral set. (This uses that $τ$ has maximal dimension $n-m$.) It moreover equals $f^{-1}(y) \setminus τ^\circ$. There exists a pws $(n-m, n-m)$-form $\wt{ω}_0$ on $X$ with compact support away from $(f^{-1}(y) \setminus τ^\circ)$ and such that $\wt{ω}_0\vert_{τ^\circ} = ω_0$. Put $ω_y := \wt{ω}_0\vert_X$, which has compact support and satisfies $ω_y\vert_{f^{-1}(y)} = ω_0$. By construction, $(f_{μ/ν,*}ω_y)(y) = \int_{[τ,μ/ν]} ω_0 > 0$ and our claim is proven.

We are now ready to prove the proposition. For statement (1), the form $ω$ and the projection formula \eqref{eq:projection_formula_flat_pullback} allow to define a section
$$P(X) \lr P(Y),\quad T\longmapsto f_{μ/ν,*}(ω\wedge T)$$
for the pullback $f^*$:
$$f_{μ/ν,*}(ω\wedge f^*T) = f_{μ/ν,*}(ω) \wedge T = T,$$
hence $f^*$ is injective. Statement (2) in turn is deduced as follows. For every $η\in PS_c(Y)$,
$$g_*(η\wedge [Y, ν]) = g_*(f_{μ/ν,*}(ω)\wedge η \wedge [Y, ν]) = (g\circ f)_*(ω\wedge f^*η \wedge [X,μ])$$
is pws by the flatness of $g\circ f$. Hence $g$ is flat.
\end{proof}

\subsection{Smooth forms}
\label{ss:smooth_forms}

Recall the following notation for spaces with functions from \S\ref{ss:pwl_spaces}. Given a subsheaf $Λ\subseteq C^0_X$ of the sheaf of continuous functions on some topological space $X$ and a subset $i:Y\hookrightarrow X$, we denote by $Λ\vert_Y$ the image $\mr{Im}(i^{-1}Λ\to C^0_Y)$. We also write $Λ(Y) = Λ\vert_Y(Y)$.

\begin{defn}\label{def:sheaf_of_linear_functions}
A \emph{sheaf of linear functions} on a pwl space $X$ is a subsheaf of $\mbR$-vector spaces $L\subseteq Λ_X$ with the following three properties.
\begin{enumerate}[wide, labelindent=0pt, labelwidth=!, label=(\arabic*), topsep=4pt, itemsep=4pt]
\item It contains the locally constant functions.

\item For every point $x\in X$, there exists a polyhedral neighborhood $K$ and $f= (f_1,\ldots,f_r)\in L(K)^r$ such that $f:K\to \mbR^r$ has finite fibers.

\item For every polyhedral set $K\subseteq X$, the restriction $L\vert_K$ is locally finitely generated in the following sense. There is a polyhedral complex structure $\mcK$ for $K$ such that for every open subset $U\subseteq K$, for every $ϕ\in L(U)$, for every $σ\in \mcK$ and every polyhedron $τ\subseteq σ\cap U$, the restriction $ϕ\vert_τ$ is affine linear. In this situation, we call $\mcK$ \emph{subordinate} to $L\vert_K$.
\end{enumerate}
\end{defn}

Property (1) is just for convenience. Property (2) is equivalent to demanding that there exists a locally finite covering $X = \bigcup_{i\in I}K_i$ by polyhedral sets such that, for each $i$, there is some a tuple $f = (f_1,\ldots,f_r)\in L(K_i)^r$ such that $f:K_i\to \mbR^r$ is injective. Property (3) is imposed to make sure that the boundary calculus of polyhedral currents works nicely, cf. Ex. \ref{ex:non_standard_R} below. Note that one consequence of (3) is that whenever $K\subseteq X$ is a compact polyhedral set, then $L(K)$ is finite dimensional.

A \emph{linear} map $f:(X,L_X)→(Y,L_Y)$ of pwl spaces with linear functions  is a pwl map $f:X→Y$ such that the natural map $f^{-1}(L_Y) \to Λ_X$ factors through $L_X$. In other words, we consider pwl spaces with linear functions as a full subcategory of spaces with functions.

\begin{ex}
\begin{enumerate}[wide, labelindent=0pt, labelwidth=!, label=(\arabic*), topsep=4pt, itemsep=4pt]
\item Let $\mr{Aff}$ be the sheaf of affine linear functions on $\mbR^r$ in the usual sense. Then $(\mbR^r, \mr{Aff})$ is a pwl space with linear functions. We always view $\mbR^r$ as endowed with this standard structure and suppress $\mr{Aff}$ in the notation. In particular, a linear map $x:(X,L)→\mbR^r$ from a pwl space with linear functions $(X,L)$ is the same as an $r$-tuple $(x_1,\ldots,x_r)\in L(X)^r$ of linear functions.

\item Let $(X, L)$ be a pwl space with linear functions and let $Y\subseteq X$ be a pwl subspace. Then $(Y, L\vert_Y)$ is a pwl space with linear functions.
\end{enumerate}
\end{ex}

\begin{defn}\label{def:smooth_forms_pwl}
Let $(X,L)$ be a pwl space with linear functions. A \emph{smooth $(p,q)$-form} on $X$ is a pws form that is locally of the form $x^*(α)$ for a linear map $x:U→\mbR^r$ and a smooth $(p,q)$-form $α$ on $\mbR^r$. We write $A^{p,q}$ or $A^{p,q}_X$ for the resulting sheaf of smooth $(p,q)$-forms on $X$. Smooth forms are stable under the $\wedge$-product and the polyhedral derivatives $d'_P,d''_P$. We denote these simply by $d'$ and $d''$ for smooth forms.
\end{defn}
Given a linear map $f:(X,L_X)→(Y,L_Y)$ and a smooth form $α\in A(Y)$, the pullback $f^*α$ is defined as the pullback of $α$ viewed as pws form. It is again smooth since $f^*(x^*α) = (x\circ f)^*α$ for any tuple $x\in L_Y(Y)^r$ and any smooth form $α$ on $\mbR^r$.

It is seen just as in the piecewise smooth case that there exist smooth partitions of unity on paracompact spaces.
\begin{prop}[cf. Prop. \ref{prop:part_of_1_pws}]\label{prop:part_of_1_smooth}
Let $(X, L)$ be a paracompact pwl space with linear functions. Given an open covering $X = \bigcup_{i\in I} U_i$, there exists a family $(ρ_i)_{i\in I}$ of nonnegative smooth functions, with $\mr{Supp}(ρ_i)\subseteq U_i$ for every $i\in I$, and such that the sum $\sum_{i\in I} ρ_i$ is locally finite and equals $1$.
\end{prop}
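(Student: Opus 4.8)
The proof runs parallel to that of Proposition~\ref{prop:part_of_1_pws} (that is, \cite{CLD}*{Prop. 3.3.6}); the only point requiring a genuinely new argument is the construction of the local bump functions, which must now be smooth in the sense of Definition~\ref{def:smooth_forms_pwl} rather than merely piecewise smooth. So the plan is: first establish the \emph{local bump lemma} --- for every $x\in X$ and every open $U\ni x$ there is a non-negative smooth function $ψ\in A^{0,0}(X)$ with $ψ(x)>0$ and $\Supp ψ$ a compact subset of $U$ --- and then feed this into the usual paracompactness machinery exactly as in the cited reference.

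To prove the local bump lemma, I would use property~(2) of the sheaf of linear functions together with the argument in the proof of Lemma~\ref{lem:nice_neighborhoods} to find a compact polyhedral neighborhood $K\subseteq U$ of $x$ equipped with $f=(f_1,\dots,f_r)\in L(K)^r$ such that $f\colon K\to\mbR^r$ has finite fibers. After replacing $K$ by a smaller compact polyhedral neighborhood of $x$ that avoids the finitely many other points of $f^{-1}(f(x))$, we may assume $f^{-1}(f(x))\cap K=\{x\}$. By compactness of $K$ and continuity of $f$, for any neighborhood $W$ of $x$ there is then a ball $B\subseteq\mbR^r$ centered at $f(x)$ with $f^{-1}(B)\cap K\subseteq W$; choose such a $B$ with $f^{-1}(B)\cap K\subseteq K^\circ$, and let $b\ge 0$ be a standard smooth bump function on $\mbR^r$, supported in $B$, with $b(f(x))>0$. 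Since $f(K)$ is a polyhedral subset of $\mbR^r$, the restriction $b\vert_{f(K)}$ is a smooth form, so $ψ_0:=f^{*}\bigl(b\vert_{f(K)}\bigr)$ is a smooth function on $K$ --- it is the pull-back of a smooth form on $\mbR^r$ along the linear map $f$ --- with support equal to the compact set $f^{-1}(B)\cap K\subseteq K^\circ$. Extending $ψ_0$ by zero produces $ψ\in A^{0,0}(X)$: smoothness of $ψ$ is checked on the open cover $\{K^\circ,\ X\setminus\Supp ψ_0\}$, on which $ψ$ equals $ψ_0\vert_{K^\circ}$ respectively $0$.

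With the local bump lemma in hand, the rest is the standard construction. Since $X$ is paracompact and locally compact Hausdorff, one refines $\{U_i\}$ to a locally finite open cover $\{V_α\}$ with each $\overline{V_α}$ compact and contained in some $U_{i(α)}$, and applies the shrinking lemma to obtain an open cover $\{W_α\}$ with $\overline{W_α}\subseteq V_α$. For each $α$ one covers the compact set $\overline{W_α}$ by finitely many open subsets of $V_α$, applies the local bump lemma on each, and sums the resulting functions to get a non-negative smooth $ψ_α$ supported in $V_α$ with $ψ_α>0$ on $\overline{W_α}$. Then $ψ:=\sum_α ψ_α$ is locally finite by local finiteness of $\{V_α\}$, hence smooth, and everywhere strictly positive since the $W_α$ cover $X$. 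As in the piecewise-smooth case, $ρ_α:=ψ_α/ψ$ is again smooth (smoothness is local and stable under dividing by a nowhere-vanishing smooth function, which reduces to the elementary fact that $1/φ\in C^\infty$ for a nowhere-zero $φ\in C^\infty$ on an open subset of some $\mbR^N$), non-negative, with $\Supp ρ_α\subseteq V_α\subseteq U_{i(α)}$ and $\sum_α ρ_α=1$. Finally $ρ_i:=\sum_{α\,:\,i(α)=i}ρ_α$ is a locally finite --- hence smooth --- sum with $\Supp ρ_i\subseteq U_i$ and $\sum_i ρ_i=1$.

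The main obstacle is the local bump lemma: in the piecewise-smooth setting one can build bump functions directly on polyhedral charts, whereas here one must route through an ordinary smooth bump function on $\mbR^r$ and pull it back along the finite-fiber map $f$ provided by property~(2) --- and to extend the pull-back by zero one needs $x$ isolated in its $f$-fiber, which is why the shrinking step above is necessary. Once that is set up, every remaining step is formally identical to the proof of Proposition~\ref{prop:part_of_1_pws}.
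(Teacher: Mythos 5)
Your proof is correct and, modulo filling in details, takes exactly the approach the paper has in mind: the paper's one-line justification is that the argument for Prop.~\ref{prop:part_of_1_pws} (imported from \cite{CLD}*{Prop.~3.3.6}) carries over, and that is what you carry out, correctly isolating the one genuinely new ingredient --- the local bump lemma, where axiom~(2) of Def.~\ref{def:sheaf_of_linear_functions} provides a finite-fiber chart $f\in L(K)^r$ along which to pull back an ordinary smooth bump on $\mbR^r$. One small step worth tightening: showing $\psi_\alpha/\psi$ is smooth does not literally reduce to the fact that $1/\varphi\in C^\infty$ on an open subset of $\mbR^N$, because in Def.~\ref{def:smooth_forms_pwl} a smooth function is locally $x^*\alpha$ with $\alpha\in C^\infty(\mbR^r)$ defined on \emph{all} of $\mbR^r$, and such $\alpha$ may well vanish off $x(U)$ even though $\psi = x^*\alpha$ does not; locally near a point $u$ one must first modify $\alpha$ by a cutoff so that it is nonvanishing on all of $\mbR^r$ (equivalently, post-compose $\psi$ with a smooth $h\colon\mbR\to\mbR$ agreeing with $t\mapsto 1/t$ near $\psi(u)$, using that $h\circ(x^*\alpha)=x^*(h\circ\alpha)$) so as to stay within the defining format $y^*\beta$ with $\beta\in C^\infty(\mbR^r)$.
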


We endow each $A_c(U)$, $U\subseteq X$ open, with the topology from \cite{GK}*{§6.1}: A sequence (resp. net) $(α_i)_{i\in I}$ converges to $α$ if and only if the following conditions hold. There exist finitely many compact polyhedral sets $K_1,\ldots,K_r\subseteq X$ with presentations $K_j\iso \bigcup_{k\in K_j} σ_{jk} \subseteq \mbR^{r_j}$ as finite unions of polyhedra such that
\begin{enumerate}[leftmargin=*, label=(\arabic*), topsep=4pt, itemsep=4pt]
\item The supports $\Supp α_i,\ \Supp α$ are all contained in $K_1\cup\ldots\cup K_r$.
\item Each restriction $α_i\vert_{σ_{jk}},\ i\in I,\ j\in \{1,\ldots,r\},\ k\in K_j$ is smooth.
\item The sequences (nets) $(α_i\vert_{σ_{jk}})_{i\in I}$ converge to $α\vert_{σ_{jk}}$ in the Schwartz sense, i.e. all higher partial derivatives of all coefficients of $α_i\vert_{σ_{jk}}$ converge uniformly.
\end{enumerate}

\begin{defn}\label{def:currents_pwl_space}
Let $(X,L)$ be a purely $n$-dimensional pwl space with linear functions.

\begin{enumerate}[wide, labelindent=0pt, labelwidth=!, label=(\arabic*), topsep=4pt, itemsep=4pt]
\item The sheaf of \emph{$(p,q)$-currents} $D^{p,q}$ on $X$ is defined as
$$D^{p,q}(U) := \Hom_{\mr{cont}}(A^{n-p,n-q}_c(U),\mbR).$$
The restriction maps $D^{p,q}(U)\to D^{p,q}(V)$ here are given as the duals to the inclusions $A_c(V)\to A_c(U)$ for $V\subset U$. Using smooth partitions of unity (Prop. \ref{prop:part_of_1_smooth}), the sheaf property of $D^{p,q}$ can be shown just as in the case of $E$ (Lem. \ref{lem:sheaf_property_E}). We write $D = \bigoplus_{p,q} D^{p,q}$ for the sheaf of all currents. Elements of $D^{p,q}$ are also said to be of \emph{bidegree $(p,q)$}. Their degree is $p + q$.

\item It is endowed with partial derivatives $d'$ and $d''$ which are defined by duality as
\begin{equation}\label{eq:degree_convention_currents}
(dT)(η) = (-1)^{\deg T + 1}T(dη),\ \ \ d\in \{d',d''\}.
\end{equation}
Here, $T$ is assumed to be homogeneous. There is an associative pairing
$$A^{p_1,q_1}\times D^{p_2,q_2} → D^{p_1+p_2,q_1+q_2},\ (α\wedge T)(η) := (-1)^{\deg α\deg T} T(α\wedge η).$$
In light of the degree convention \eqref{eq:degree_convention_currents}, this wedge product satisfies the Leibniz rule (assuming $α$ and $T$ homogeneous)
\begin{equation}\label{eq:Leibniz_current}
d(α\wedge T) = dα \wedge T + (-1)^{\deg α} α\wedge dT,\ \ \ d\in \{d', d''\}.
\end{equation}

\item The support $\mr{Supp} (T)$ of a current $T\in D(U)$ is defined as the intersection of all closed subsets $Z\subseteq U$ with $T\vert_{U\setminus Z} = 0$. Given a linear map $f:(X, L_X)→(Y,L_Y)$ of pure-dimensional pwl spaces with linear functions and a current $T\in D(X)$ with $\mr{Supp}(T)$ proper over $Y$, there is a \emph{pushforward} $f_*(T)$ that is uniquely characterized by the projection formula
\begin{equation}\label{eq:projection_current}
(f_*T)(η) = T(f^*η),\ \ \ η\in A_c(Y).
\end{equation}
This pushforward satisfies $d(f_*T) = f_*(dT)$, $d\in \{d',d''\}$, because we have $d(f^*η) = f^*(dη)$ for all test forms $η$.
\end{enumerate}
\end{defn}

Every polyhedral current defines a current. It is easily checked that this realizes $P_X(U)$ as a subset of $D_X(U)$. In particular, this defines derivatives $d'T$ and $d''T$ of any $T\in P_X(U)$. On the other hand, one may also extend the polyhedral derivatives $d'P,\ d''_P$ (cf. \S\ref{ss:pws_polyehdra} (18)) from pws forms to polyhedral currents. Namely, one defines
\begin{equation}\label{eq:def_polyhedral_derivatives_current}
d_P(α\wedge [Z, ε]) = (d_Pα)\wedge [Z, ε],\quad d_p\in \{d_P', d_P''\}
\end{equation}
which extends in a well defined way to all polyhedral currents.

The derivatives $d'$ and $d''$ need not agree with $d'_PT$ and $d''_PT$, see e.g. \cite{Mih_trop_inter}*{Ex. 2.10}. We next study those polyhedral currents $T$ such that $d'T$ and $d''T$ are again polyhedral. Their characterization through the balancing condition carries over from $\mbR^n$, cf. \cite{Mih_trop_inter}.

\begin{defn}\label{def:balanced_polyhedral_current}
A polyhedral current $T\in P(X)$ is \emph{balanced} if the following condition is met. Assume first that $X$ is a polyhedral set with a polyhedral complex structure $\mcX$ that is subordinate to $T$ and $L$. This means we can write $T = \sum_{σ\in \mcX} α_σ\wedge [σ,μ_σ]$. Then for all $U\subseteq X$ open, $τ\in \mcX$ and $ϕ\in L(U)$ with $ϕ\vert_{τ\cap U}$ constant,
\begin{equation}\label{eq:balanced_polyhedral_current}
\sum_{τ\subset σ\in \mcX\ \text{a facet}} \frac{\partial ϕ\vert_{σ\cap U}}{\partial n_{σ,τ}}α_σ\vert_{τ\cap U} = 0.
\end{equation}
The vectors $n_{σ,τ}\in N_σ$ here are normal vectors with respect to the weights $μ_σ$ and $μ_τ$. That is, they have the two properties that they point from $N_τ$ in the direction of $σ$ and satisfy $μ_σ = μ_τ\wedge n_{σ,τ}$ (up to sign).

The balancing condition is independent of the choice of $\mcX$. For general $X$, we demand that $T$ is balanced on the interior of all open polyhedral sets contained in $X$ (Lem. \ref{lem:nice_neighborhoods}).
\end{defn}

\begin{ex}\label{ex:non_standard_R}
\begin{enumerate}[wide, labelindent=0pt, labelwidth=!, label=(\arabic*), topsep=4pt, itemsep=4pt]
\item The balanced polyhedral currents on $\mbR^n$ of bidgree $(p,q,0)$ are precisely the pws forms of bidegree $(p,q)$ (cf. \cite{Mih_trop_inter}*{Ex. 3.8}). Here, pws $(p,q)$-forms are identified with polyhedral $(p,q,0)$-currents by $α\mapsto α\wedge [\mbR^n, μ_{\mr{std}}]$.

Furthermore, polyhedral currents of bidegree $(n,q)$ or $(p, n)$ (in the sense of currents) are always balanced for degree reasons.

\item Let now $X = (\mbR,\ \mbR + \mbR\cdot x + \mbR\cdot \max\{0,x\})$ be the real line with a non-standard sheaf of linear functions ($x$ denotes the usual coordinate). Then a polyhedral current of the form $ψ\wedge [\mbR, μ_{\mr{std}}]$, where $ψ$ is a pws function on $\mbR$, is balanced if and only if $ψ(0) = 0$. The principle here is that there are more linear functions on $X$ than there are on $\mbR$, so there are more $ϕ$ in \eqref{eq:balanced_polyhedral_current} against which one has to check balancedness, so there are less balanced polyhedral currents. This principle also motivates condition (3) of Def. \ref{def:sheaf_of_linear_functions}. For example, only $0\in P^{0,0,0}(\mbR)$ satisfies condition \eqref{eq:balanced_polyhedral_current} with respect to all pwl functions on $\mbR$.
\end{enumerate}
\end{ex}

\begin{prop}\label{prop:balanced_equiv_polyhedral_deriv}
Let $(X,L)$ be a pwl space with linear functions and let $T\in P(X)$ be a polyhedral current. Then the following statements are equivalent.
\begin{enumerate}[leftmargin=*, label=(\arabic*), topsep=4pt, itemsep=4pt]
\item $T$ is balanced.
\item Both derivatives $d'T$ and $d''T$ are again polyhedral.
\item At least one of the derivatives $d'T$ and $d''T$ is again polyhedral.
\item For every compact polyhedral set $K\subseteq X$ and every tuple of linear functions $f\in L(K)^r$, the pushforward $f_*(T\vert_{K\setminus f^{-1}(f(\partial K))})$ is balanced.
\end{enumerate}
\end{prop}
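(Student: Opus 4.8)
The plan is to reduce the statement to a local claim over a polyhedral set and then import the corresponding facts over $\mbR^n$ from \cite{Mih_trop_inter}. Conditions (1), (2), (3) are local on $X$: balancedness is by definition tested on interiors of open polyhedral subsets (Def. \ref{def:balanced_polyhedral_current}), $P$ is a sheaf, and $d',d''$ are local operators. So for $(1)\Leftrightarrow(2)\Leftrightarrow(3)$ I may use Lem. \ref{lem:nice_neighborhoods} and Def. \ref{def:sheaf_of_linear_functions}(2) to assume that $X$ is an open polyhedral set embedded in some $\mbR^s$ via an injective tuple in $L(X)^s$, and fix a polyhedral complex $\mcK$ subordinate to $T$ and to $L$, writing $T=\sum_{\sigma\in\mcK}\alpha_\sigma\wedge[\sigma,\mu_\sigma]$. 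Condition (4) is not obviously local, so I will instead deduce it from (1) directly and, conversely, use it together with locality to recover (1).

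The core is a differentiation formula. Expanding $d(\alpha_\sigma\wedge[\sigma,\mu_\sigma])$ by the Leibniz rule, applying Stokes' Theorem (Prop. \ref{prop:stokes_polyhedron}) to the polyhedral-derivative terms, rewriting the boundary integrals via \eqref{eq:def_boundary_integral_facet} and the contraction-Leibniz rule \eqref{eq:contraction_Leibniz}, and regrouping the facet contributions by the facet $\tau$, one gets for $d\in\{d',d''\}$ a decomposition
$$dT \;=\; P_d \;+\; \sum_{\tau}R_\tau^d,$$
where $P_d$ is manifestly polyhedral (it collects the terms $d_P\alpha_\sigma\wedge[\sigma,\mu_\sigma]$, cf. \eqref{eq:def_polyhedral_derivatives_current}, together with the facet terms that depend only on the tangential restriction $\eta|_\tau$ of a test form), and the \emph{normal residue} $R_\tau^{d'}$ is the current $\eta\mapsto\pm\sum_{\tau\subset\sigma}\int_{[\tau,\nu_\tau]}\alpha_\sigma|_\tau\wedge(\eta,n_{\sigma,\tau}'')|_\tau$ (with $n_{\sigma,\tau}'$ in place of $n_{\sigma,\tau}''$ for $d=d''$). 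As in \cite{Mih_trop_inter}, one checks two things: first, such a finite sum of normal-type currents is polyhedral if and only if it is zero, since an integration current supported on $\tau$ pairs with $\eta$ only through $\eta|_\tau$ and hence cannot depend on the normal contractions $(\eta,n_{\sigma,\tau}'')|_\tau$; second, once the tangential terms have been absorbed into $P_d$, the residue $R_\tau^d$ vanishes precisely when $\sum_{\tau\subset\sigma}\tfrac{\partial\phi|_\sigma}{\partial n_{\sigma,\tau}}\alpha_\sigma|_\tau=0$ for every $\phi\in L$ with $\phi|_\tau$ constant, i.e. precisely \eqref{eq:balanced_polyhedral_current}. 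Since this condition does not distinguish $d'$ from $d''$, the current $dT$ is polyhedral for one choice of $d$ iff for both iff $T$ is balanced; this is $(1)\Leftrightarrow(2)\Leftrightarrow(3)$.

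For $(1)\Rightarrow(4)$: restricting a balanced polyhedral current to an open set keeps it balanced, so $T|_{K\setminus f^{-1}(f(\partial K))}$ is balanced, and it suffices to see that pushing a balanced polyhedral current $S$ forward along a linear map $f$ with $\Supp S$ proper over the target yields a balanced current. But $f_*$ commutes with $d'$ (Def. \ref{def:currents_pwl_space}), $f_*$ sends polyhedral currents to polyhedral currents (Prop. \ref{prop:push_of_polyhedral}, using properness — which holds here since $\Supp(T|_{K\setminus f^{-1}(f(\partial K))})\subseteq K$ is compact), and $d'S$ is polyhedral by the equivalence just proved; hence $d'(f_*S)=f_*(d'S)$ is polyhedral, so $f_*S$ is balanced. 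For $(4)\Rightarrow(1)$ it suffices, by locality, to verify \eqref{eq:balanced_polyhedral_current} at a given cell $\tau$ near a given point $x$ for a given $\phi\in L$ defined near $x$: choose a compact polyhedral neighborhood $K$ of $x$ on which $\phi$ is defined and which contains a neighborhood of the relevant part of $\tau$, pick an injective tuple $f_0\in L(K)^{r_0}$, and apply (4) to the injective tuple $f=(f_0,\phi)\colon K\to\mbR^{r_0+1}$. Since $f$ is injective, it is a pwl isomorphism onto its image (Lem. \ref{lem:uniqueness_pwl_str_compact}), balancedness transports along it, $f^{-1}(f(\partial K))=\partial K$, and $f^*(\mr{Aff}|_{f(K)})$ is the $\mbR$-span of $f_0$, $\phi$ and the locally constant functions; so the balancedness of $f_*(T|_{K\setminus f^{-1}(f(\partial K))})$ gives the balancing relation for $\phi$ at $\tau$. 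Letting $x$, $\tau$, $\phi$ vary yields (1).

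The main obstacle is the differentiation formula, and specifically the two claims about normal residues above: that a sum of normal-type currents is polyhedral only if it vanishes, and the combinatorial bookkeeping identifying its vanishing with \eqref{eq:balanced_polyhedral_current}. This is exactly where hypothesis (3) of Def. \ref{def:sheaf_of_linear_functions} is used essentially — without enough linear functions $\phi$ to test against, a nonzero normal residue could escape detection, cf. Ex. \ref{ex:non_standard_R}(2). The $\mbR^n$-versions of these claims are what \cite{Mih_trop_inter} provides, so the remaining work is to transfer them along the local embeddings and to package the pushforward reformulation (4).
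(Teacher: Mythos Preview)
Your proposal is correct and follows essentially the same approach as the paper: localize to a polyhedral set, compute $dT - d_PT$ via Stokes, split off the polyhedral part, and identify the balancing condition with the vanishing of the normal residues; the arguments for $(1)\Leftrightarrow(4)$ likewise match. The paper carries out explicitly the coordinate computation you sketch (see \eqref{eq:current_to_check_polyhedralness}--\eqref{eq:formula_deriv_balanced}), and, as Rmk.~\ref{rmk:balanced_polyhedral_currents} explains, this argument must indeed be \emph{repeated} in the $(X,L)$ setting rather than reduced to \cite{Mih_trop_inter}, which your closing paragraph correctly anticipates.
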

We briefly explain the meaning of (4). First, $\partial K\subseteq K$ denotes the boundary of $K$ in $X$. It is a closed subset of $K$ and hence compact. Thus $f(\partial K)$ is compact and hence also closed. The restricted map
$$f:(K\setminus f^{-1}(f(\partial K))) \to \mbR^r\setminus f(\partial K)$$
is then a proper map of pwl spaces and it makes sense to take the pushforward of a polyhedral current on $K\setminus f^{-1}(f(\partial K))$. Moreover, $K\setminus f^{-1}(f(\partial K))$ is an open subset of $K\setminus \partial K$ and in particular open in $X$. Thus the restriction $T\vert_{K\setminus f^{-1}(f(\partial K))}$ is defined. In summary, the expression $f_*(T\vert_{K\setminus f^{-1}(f(\partial K))})$ in (4) is a polyhedral current on $\mbR^r\setminus f(\partial K)$.

The relevance of (4) is that the balanced polyhedral currents on an open subset $U\subseteq \mbR^r$ are precisely the $δ$-forms on $U$ from \cite{Mih_trop_inter}. This perspective will become relevant in \S\ref{ss:tropical_delta_forms} below.

\begin{proof}[Proof of the equivalence of (1), (2) and (3).]
All three properties are local on $X$. Using Lem. \ref{lem:nice_neighborhoods}, we may hence suppose that $X$ is a polyhedral set in the following. Using (2) of Def. \ref{def:sheaf_of_linear_functions}, we may furthermore suppose the existence of $x_1,\ldots,x_r\in L(X)$ such that the corresponding map $X\to \mbR^r$ has finite fibers.

First assume that $T$ is of tridegree $(p,q,c)$ and choose a presentation $T = \sum_{σ\in \mcX^c} α_σ\wedge [σ,μ_σ]$ in some polyhedral complex structure $\mcX$ on $X$. We also assume $\mcX$ to be subordinate to $L$ as in Def. \ref{def:sheaf_of_linear_functions} (3). Then every test form $η\in A_c^{n-p-c,n-q-c}(X)$ has the property that all restrictions $η\vert_{σ}$, $σ\in \mcX$ are smooth. Stokes' formula for polyhedra (Prop. \ref{prop:stokes_polyhedron}) implies
\begin{equation}\label{eq:current_to_check_polyhedralness}
(d'T - d'_PT)(η) = \sum_{τ\in \mcX^{c+1}}\int_{[τ,μ_τ]} \sum_{σ\in \mcX^c,\ τ\subset σ} (α_σ\wedge η, n_{σ,τ}'')\vert_τ.
\end{equation}
(Here, $(μ_τ)_{τ\in \mcX^{c+1}}$ is a fixed auxiliary choice of weights for the polyhedra of codimension $c+1$ and the $n_{σ,τ}$ are with respect to $μ_σ$ and $μ_τ$.) We now perform the same arguments as in \cite{Mih_trop_inter}*{around (3.8)}. Fix some $τ\in \mcX^{c+1}$, our aim being to show that $T$ is balanced in $τ$ if and only if the $τ$-contribution in \eqref{eq:current_to_check_polyhedralness} is polyhedral. We make some choices and introduce some notation before proving this. After reordering, we may assume that $x_1,\ldots,x_{\dim τ}$ embed $τ$ into $\mbR^{\dim τ}$. Let $z$ be a linear combination of $x_1,\ldots,x_r$ such that $z\vert_{τ}$ is constant and such that $z\vert_σ$ is nonconstant for all $τ\subset σ\in \mcX^c$. Then every $α_σ$ can be uniquely expressed as
$$α_σ = α_σ^{(1)} + d'z \wedge α_σ^{(2)} + d''z \wedge α_σ^{(3)} + d'z\wedge d''z \wedge α_σ^{(4)}$$
with the $α_σ^{(j)}$ all $C^\infty$-linear combinations of $d'x_I \wedge d''x_J$, $I,J\subseteq \{1,\ldots,\dim τ\}$. We can similarly expand any given test form $η$. Working locally near a point in $τ$, i.e. shrinking $X$ if necessary, we may assume $η$ to be of the form
$$η = (x_1,\ldots,x_{\dim τ}, y_1, \ldots, y_s)^*ω$$
where $y_1,\ldots,y_s \in L(X)$ are suitable further linear functions. We can (and will) furthermore assume that the restrictions $y_i\vert_τ$ are constant. (Modifying the $y_i$ by linear combinations of the $x_1,\ldots,x_{\dim τ}$ amounts to composition with a linear automorphism of $\mbR^{\dim τ + s}$.) Write
$$η = η^{(1)} + \sum_{i = 1}^s \big(d'y_i\wedge η_i^{(2)} + d''y_i \wedge η_i^{(3)}\big) + \sum_{i, j = 1}^s d'y_i \wedge d''y_j \wedge η_{ij}^{(4)}$$
where the $η^{(j)}$ are all $C^\infty(X)$-linear combinations of monomials $d'x_I \wedge d''x_J$ where $I, J\subseteq \{1,\ldots,\dim τ\}$.

The crucial observation in this setting is that
$$dz\vert_τ = dy_i\vert_τ = 0,\quad d\in\{d',d''\}$$
because we assumed $z\vert_τ$ and the $y_i\vert_τ$ to be constant. Using the properties \eqref{eq:contraction_Leibniz} and \eqref{eq:contraction_basic} for $(\ ,n''_{σ,τ})$, we see that for each $σ\in \mcX^c$ containing $τ$,
\begin{multline}\label{eq:wedge_explicit}
(α_σ\wedge η, n''_{σ,τ})\vert_τ = (α_σ^{(1)}\wedge η^{(1)}, n''_{σ,τ})\vert_τ\ + \\
\left.\left(\frac{\partial z\vert_σ}{\partial n_{σ,τ}} α^{(3)}_σ \wedge η^{(1)} + (-1)^{\deg α}\sum_j \frac{\partial y_j\vert_σ}{\partial n_{σ,τ}} α^{(1)}_σ \wedge η^{(3)}_j\right)\right\vert_τ.
\end{multline}
The products $α_σ^{(1)}\wedge η^{(1)}$ are $(\dim τ, \dim τ + 1)$-forms that are $C^\infty(σ)$-linear combinations of monomials $d'x_I\wedge d''x_J$, $I,J\subseteq \{1,\ldots,\dim τ\}$. Noting that there are only the $\dim τ$ many functions $x_1,\ldots,x_{\dim τ}$ available to form such monomials, we have $α_σ^{(1)}\wedge η^{(1)} = 0$.

Assume now that the balancing condition holds for $T$. Then summing \eqref{eq:wedge_explicit} over all $σ\in \mcX^c$ containing $τ$ and using the balancing condition for the $α_σ^{(1)}\wedge η_j^{(3)}$-summands shows that the $τ$-contribution to \eqref{eq:current_to_check_polyhedralness} is
\begin{equation}\label{eq:formula_deriv_balanced}
\int_{[τ,μ_τ]} \left(\sum_{τ\subset σ \in \mcX^c} \frac{\partial z\vert_σ}{\partial n_{σ,τ}} α_σ^{(3)}\right) \wedge η,
\end{equation}
which defines a polyhedral current in $η$. Assume conversely that the balancing condition does not hold in $τ$, say it fails for $y_1$ meaning
$$θ = \sum_{τ\subset σ\in \mcX^c}\frac{\partial y_1\vert_σ}{\partial n_{σ,τ}} α_σ\vert_τ \neq 0.$$
Pick any compactly supported smooth form $η^{(3)}_1$ as above with the additional property $\Supp η^{(3)}_1 \cap τ' = \emptyset$ for all $τ\neq τ'\in \mcX^{c+1}$ and such that furthermore $0 \neq \int_{[τ,μ_τ]} θ \wedge η^{(3)}_1$. Put $η = d''y_1\wedge \ob{η_1}$ and view it as compactly supported smooth form on $X$. Then
$$(d'T - d'_PT)(η) = (-1)^{\deg α}\int_{[τ,μ_τ]} θ \wedge η^{(3)}_1 \neq 0.$$
The difference $d'T - d'_PT$ has support in codimension $c+1$, but $η\vert_ρ = 0$ for every polyhedron $ρ\subseteq \Supp T$ of codimension $\geq c+1$ by construction, so the current $d'T - d'_PT$ cannot be polyhedral.

These arguments showed that a trihomogeneous polyhedral current $T$ is balanced if and only if $d'T$ is polyhedral. They apply symmetrically to $d''T$, proving the equivalence of (1), (2) and (3) for trihomogeneous $T$. The extension to not necessarily trihomogeneous $T$ is by a simple induction over dimensions of supports, we refer to Step (4) of the proof of \cite{Mih_trop_inter}*{Thm. 3.3}.
\end{proof}

\begin{proof}[Proof of the equivalence with (4).]
(4) follows immediately from (1) because $f_*(dT) = df_*(T)$, $d\in \{d',d''\}$, and because the pushforward of polyhedral currents is polyhedral.

For the converse, we first note that we may assume $T$ to be of some tridegree $(p,q,c)$ because $T$ is balanced if and only if each of its trihomogeneous components is. Let $τ\in \mcX^c$ as above, let $U\subseteq X$ be open and let $ϕ\in L(U)$ be a linear function with constant restriction $ϕ\vert_{τ\cap U}$ that we would like to check the balancing condition \eqref{eq:balanced_polyhedral_current} for. Working locally, we may assume $ϕ$ to be defined on all of $X$. Take $K = \bigcup_{τ\subseteq σ \in \mcX_n} σ$ and $f = (x_1,\ldots,x_r, ϕ)$. Then $τ^\circ\subset K\setminus \partial K$ and $τ = f^{-1}(f(τ))$ because each restriction $f\vert_σ$, $σ\in \mcX$, is injective. Comparing the balancing condition \eqref{eq:balanced_polyhedral_current} for $τ$ and $ϕ$ in $T$, and for $f(τ)$ and the last coordinate on $\mbR^{r+1}$ in $f_*T$ finishes the proof.
\end{proof}

\begin{rmk}\label{rmk:balanced_polyhedral_currents}
We would have liked to prove Prop. \ref{prop:balanced_equiv_polyhedral_deriv} by reduction to the case of $\mbR^r$ in \cite{Mih_trop_inter} instead of repeating an argument from there. However, one cannot check if a current is polyhedral by looking at all its linear pushforwards.

For example, consider the polyhedral set $X = σ_1 \cup σ_2 \cup σ_3\subseteq \mbR^2$ with
$$σ_1 = \{ (x, 0) \mid x \leq 0\},\quad σ_2 = \{(x,x) \mid x \geq 0\},\quad σ_3 = \{(x,-x) \mid x \geq 0\}$$
and endow it with the sheaf of linear functions $L$ that is generated by projection $p_1$ to the first coordinate. Let $φ$ be a smooth function on $\mbR_{>0}$ that is $L^1$ for the Lebesgue measure. Consider the following functional on pws $(1,1)$-forms on $X$:
\begin{equation}\label{eq:example_L_1_current}
T = (φ\circ p_1) \wedge [σ_2, μ] - (φ\circ p_1) \wedge [σ_3, μ]
\end{equation}
where $μ$ is the inverse image under $p_1$ of the standard weight on $\mbR$ and where the meaning of the notation is just as in \eqref{eq:def_polyhedral_current}. Restricting $T$ to $A^{1,1}_c(X)$ defines a current in the sense of \ref{def:currents_pwl_space}. Moreover, the restriction $T\vert_{X\setminus \{(0,0)\}}$ is polyhedral but $T$ itself is only polyhedral if $φ$ extends to a smooth function on $\mbR_{\geq 0}$.

The essential property of $T$ is that $p_{1,*}(T) = 0$ because of the opposite signs in \eqref{eq:example_L_1_current}. Since $p_1$ is (up to affine linear transformation) the only linear function defined near $(0,0)$, one deduces from this that $f_*(T\vert_{K\setminus f^{-1}(f(\partial K))})$ is polyhedral for every compact polyhedral set $K\subseteq X$ and every linear map $f \in L(K)^r$.
\end{rmk}

\subsection{Tropical Spaces}

We now combine the concepts of weighted spaces and of sheaves of linear functions.

\begin{defn}\label{def:trop_space}
A \emph{tropical space} is a triple $(X,µ,L)$ consisting of a weighted pwl space $(X,µ)$ with linear functions $L$ such that the fundamental cycle $[X,µ]$, viewed as current in the sense of Def. \ref{def:currents_pwl_space} (with respect to $L$), is closed,
$$d'[X,µ] = d''[X,µ] = 0.$$
\end{defn}

\begin{cor}[to Prop. \ref{prop:balanced_equiv_polyhedral_deriv}]\label{cor:equiv_char_trop_space}
Let $(X,µ,L)$ be a purely $n$-dimensional weighted pwl-space with linear functions. The following are equivalent.
\begin{enumerate}[leftmargin=*, label=(\arabic*), topsep=4pt, itemsep=4pt]
\item $(X,µ,L)$ is a tropical space.
\item $[X,μ]$ is balanced with respect to $L$.
\item For every purely $n$-dimensional compact polyhedral set $K\subseteq X$ and every linear map $f:K→\mbR^r$, the pushforward $f_*[K,µ]\vert_{\mbR^r\setminus f(\partial K)}$ is balanced.
\end{enumerate}
\end{cor}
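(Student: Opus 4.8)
The plan is to apply Proposition \ref{prop:balanced_equiv_polyhedral_deriv} to the fundamental cycle $T = [X,\mu]$, which with respect to any polyhedral complex structure subordinate to $\mu$ and $L$ is a polyhedral current of tridegree $(0,0,0)$. Conditions (2) and (3) of the corollary are then conditions (1) and (4) of that proposition for this $T$ (the latter after a harmless restriction to purely $n$-dimensional $K$ and a rewriting of the localized push forward), while condition (1) of the corollary is the a priori stronger assertion that $d'[X,\mu]$ and $d''[X,\mu]$ \emph{vanish}, rather than merely being polyhedral as in condition (2) of the proposition. Two elementary observations, both special to fundamental cycles, bridge these gaps.

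For the equivalence of (1) and (2): if $(X,\mu,L)$ is a tropical space then $d'[X,\mu] = 0$ is in particular a polyhedral current, so $[X,\mu]$ is balanced by Proposition \ref{prop:balanced_equiv_polyhedral_deriv}, (3)$\Rightarrow$(1). Conversely, suppose $[X,\mu]$ is balanced; by Proposition \ref{prop:balanced_equiv_polyhedral_deriv}, (1)$\Rightarrow$(2), both $d'[X,\mu]$ and $d''[X,\mu]$ are again polyhedral, and I would upgrade this to their vanishing as follows. The polyhedral derivative of a constant function is zero, so $d'_P[X,\mu] = (d'_P 1)\wedge[X,\mu] = 0$; hence by the identity \eqref{eq:current_to_check_polyhedralness} (valid for every test form, before any balancing is assumed) the current $d'[X,\mu] = d'[X,\mu] - d'_P[X,\mu]$ is supported on the codimension-one skeleton of $X$. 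On the other hand, as a current $d'[X,\mu]$ is homogeneous of bidegree $(1,0)$, and the only piece $P^{p,q}_d$ of the grading of polyhedral currents that lands in $D^{1,0}$ is $P^{1,0}_n$; so a polyhedral current of this bidegree is of the form $\sum_\sigma\gamma_\sigma\wedge[\sigma,\nu_\sigma]$ with each $\sigma$ of dimension $n$, and its support has codimension $0$ unless every $\gamma_\sigma\vert_\sigma$ vanishes. Comparing the two support statements forces $d'[X,\mu] = 0$, and symmetrically $d''[X,\mu] = 0$, so $(X,\mu,L)$ is a tropical space.

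For the equivalence of (2) and (3): given a purely $n$-dimensional compact polyhedral set $K\subseteq X$ and a linear map $f\colon K\to\mbR^r$ (the same datum as a tuple $f\in L(K)^r$), the set $K\setminus f^{-1}(f(\partial K))$ is contained in $K\setminus\partial K$ and is open in $X$; since $X$ is pure of dimension $n$, the restriction of $[X,\mu]$ to it is just the fundamental cycle of that open set, and unwinding the definition of push forward gives $f_*\big([X,\mu]\vert_{K\setminus f^{-1}(f(\partial K))}\big) = f_*[K,\mu]\vert_{\mbR^r\setminus f(\partial K)}$. So condition (3) of the corollary is precisely the restriction to purely $n$-dimensional $K$ of condition (4) of Proposition \ref{prop:balanced_equiv_polyhedral_deriv} applied to $T=[X,\mu]$. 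The direction (2)$\Rightarrow$(3) is then immediate from (1)$\Rightarrow$(4) of that proposition. For (3)$\Rightarrow$(2) I would revisit the proof of (4)$\Rightarrow$(1) there and note that, specialized to the fundamental cycle, it tests balancedness only using $f = (x_1,\dots,x_r,\phi)$ on the closed star $K=\bigcup_{\tau\subseteq\sigma\in\mcX_n}\sigma$ of a codimension-one face $\tau$, which is purely $n$-dimensional; hence the purely $n$-dimensional case of condition (4) already suffices to conclude that $[X,\mu]$ is balanced.

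I do not expect a genuine obstacle: the corollary is essentially a re-indexing of Proposition \ref{prop:balanced_equiv_polyhedral_deriv}. The one point requiring a little care is the first equivalence, where one must observe that for the fundamental cycle ``the derivative is polyhedral'' in fact forces ``the derivative is zero'' — this is exactly why the balancing condition and the closedness condition of Definition \ref{def:trop_space} coincide. The rest is bookkeeping: reconciling the two manifestly equal ways of writing the localized push forward, and checking that restricting attention to purely $n$-dimensional $K$ loses nothing.
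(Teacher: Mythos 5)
Your proof is correct, and two of the three equivalences coincide with the paper's argument: for $(1)\Rightarrow(2)$ you say exactly what the paper does (a closed current is in particular polyhedral, so the proposition gives balancedness), and for $(2)\Leftrightarrow(3)$ you quote the equivalence $(1)\Leftrightarrow(4)$ of Prop.~\ref{prop:balanced_equiv_polyhedral_deriv} and justify, as the paper implicitly does, that restricting to purely $n$-dimensional $K$ loses nothing because the polyhedral set $K=\bigcup_{\tau\subseteq\sigma\in\mcX_n}\sigma$ appearing in the proof of $(4)\Rightarrow(1)$ is automatically pure of dimension $n$.

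Where you genuinely diverge is in $(2)\Rightarrow(1)$. The paper revisits the explicit formula \eqref{eq:formula_deriv_balanced} from the proof of Prop.~\ref{prop:balanced_equiv_polyhedral_deriv} and observes that for $T=[X,\mu]$ the coefficients $\alpha_\sigma=1$ force $\alpha_\sigma^{(3)}=0$, so the residual term $d'T-d'_PT$ vanishes outright. You instead give a support-and-degree argument: \eqref{eq:current_to_check_polyhedralness} shows $d'[X,\mu]=d'[X,\mu]-d'_P[X,\mu]$ is supported in codimension $\geq 1$, while any nonzero polyhedral current realized in $D^{1,0}$ must lie in $P^{1,0}_n$ (the only tridegree $(p,q,d)$ with $0\leq q\leq p\leq d\leq n$ mapping to current bidegree $(1,0)$) and hence have $n$-dimensional support; the tension forces the current to vanish. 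Both arguments ultimately lean on the internal mechanics of the proposition's proof (you use \eqref{eq:current_to_check_polyhedralness}, the paper uses \eqref{eq:formula_deriv_balanced}), but yours trades the explicit decomposition of $\alpha_\sigma$ for the trigrading of $P$ and the injectivity of $P\to D$, which is a touch more conceptual. Both are valid; yours makes slightly heavier implicit use of the fact that the trigrading of polyhedral currents is well-defined and that $P\to D$ is injective (so that ``polyhedral of current bidegree $(1,0)$'' really does force tridegree $(1,0,0)$), whereas the paper's computation avoids appealing to these structural facts.
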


Note that $f_*[K,µ]\vert_{\mbR^r\setminus f(\partial K)}$ lies in tridegree $(0,0,r-n)$. (For example, if $\dim f(K)<n$, then $f_*[K,μ] = 0$ by definition of the pushforward.) Requiring it to be balanced simply means that it is a tropical cycle of dimension $n$. Assume that $X$ is a polyhedral set and $\mcX$ a polyhedral complex structure that is subordinate to $μ$ and $L$. The balancing condition \eqref{eq:balanced_polyhedral_current} in this special case reads as
\begin{equation}\label{eq:balanced}
\sum_{τ\subset σ\in \mcX_n} \frac{\partial ϕ\vert_σ}{\partial n_{σ,τ}} = 0
\end{equation}
for every $τ\in \mcX_{n-1}$ and $ϕ\in L(U)$ with $ϕ\vert_{τ\cap U}$ constant.

\begin{proof}[Proof of Cor. \ref{cor:equiv_char_trop_space}.]
The equivalence of (2) and (3) is a special case of Prop. \ref{prop:balanced_equiv_polyhedral_deriv}. Moreover, condition (1) implies (2): If $d[X, μ] = 0$, then $d[X, μ]$ is in particular polyhedral, hence $[X,μ]$ is balanced again by Prop. \ref{prop:balanced_equiv_polyhedral_deriv}. (Here and in the following, $d\in \{d',d''\}$ can be any of the two differentials. Denote by $d_P\in \{d'_P, d''_P\}$ the polyhedral derivative of the same bidegree.)

It is left to show that (2) implies (1). The polyhedral derivative $d_P[X,μ]$ vanishes by definition. The difference $d[X,μ] - d_P[X,μ]$ has the form \eqref{eq:current_to_check_polyhedralness}. Assuming (2), i.e. assuming that $[X, μ]$ is balanced, this difference is again polyhedral and a sum of polyhedral currents of the form \eqref{eq:formula_deriv_balanced}. In our special situation, $α_σ = 1$ (constant function on $σ$) for all $σ$, and the vanshing of \eqref{eq:formula_deriv_balanced} for all $η$ is precisely the balancing condition \eqref{eq:balanced}.
\end{proof}

Let $α\in A(X)$ be a smooth form. Note that this notion depends only on $L$, not on $μ$. It defines a current $α\wedge [X,μ]$, which depends only on $α$ viewed as pws form. A crucial observation is now that the derivatives of $α$ as smooth form and of $α$ as current coincide,
\begin{equation}\label{eq:derivative_smooth_form_tropical_space}
d(α\wedge [X,μ]) = dα \wedge [X,μ],\ \ \ d\in \{d',d''\}.
\end{equation}
Namely, assuming $α$ homogeneous,
$$d(α\wedge [X,μ])(η) = (-1)^{\deg α + 1}\int_{[X,µ]} α\wedge d η = \int_{[X,µ]} d α\wedge η - d (α\wedge η)$$
and $\int_{[X,μ]} d(α\wedge η) = 0$ by the tropical space property. This observation essentially spells out that we have built Stokes' Theorem into our definition of tropical space.

\begin{ex}\label{ex:trop_spaces}
\begin{enumerate}[wide, labelindent=0pt, labelwidth=!, label=(\arabic*), topsep=4pt, itemsep=4pt]
\item Let $(X, μ)$ be a purely $1$-dimensional weighted pwl space, i.e. a metrized graph in the sense of \cite{ABBR}. This $1$-dimensional situation is special because every pwl function restricted to every $0$-dimensional polyhedron (i.e. to a point) is constant. Condition \eqref{eq:balanced} holds for a pwl function $ϕ$ on $X$ with respect to $μ$ if and only if it is harmonic in the sense of \cite{ABBR}. In particular, a sheaf of linear functions $L$ makes $(X,μ)$ into a tropical space if it consists only of such harmonic functions. Moreover, there is a unique maximal such choice, namely the sheaf of all harmonic functions.

\item \label{item:ex_L_bar} Let $(X,µ,L)$ be a tropical space. In analogy with the previous example, we call a pwl function $ϕ$ on $X$ harmonic if $(X,μ,L + \mbR ϕ)$ is still a tropical space. Enlarging $L$ by all such harmonic $ϕ$ defines a new sheaf of linear functions $\ob{L}$. It is the unique maximal sheaf $\ob{L}\subset Λ_X$ that contains $L$ and has the property that $(X,μ,\ob{L})$ is a tropical space. The next example shows that $\ob{L}$ continues to be an additional datum and is not determined by $(X,µ)$ in general.

\item The standard tropical space structure on $\mbR^n$ is $(\mbR^n, μ_{\mr{std}}, \mr{Aff})$, where $μ_{\mr{std}}$ is the standard weight and $\mr{Aff}$ the sheaf of affine linear functions. Note that $\mr{Aff}$ is already maximal in the sense of  \ref{item:ex_L_bar}, meaning a pwl function $φ$ on $\mbR^n$ is harmonic with respect to $\mr{Aff}$ if and only if it is itself affine linear. Assume now that $n\geq 2$. Then there exist pwl automorphisms $f:\mbR^n\to \mbR^n$ that are not affine linear and still satisfy $|\det(f)| = 1$ everywhere. The determinant condition implies $f^{-1}(μ_{\mr{std}}) = μ_{\mr{std}}$. By functoriality of all involved definitions, $(\mbR^n, μ_{\mr{std}}, f^{-1}(\mr{Aff}))$ is again a tropical space, but $f^{-1}(\mr{Aff}) \neq \mr{Aff}$.

\item Tropical spaces generalize tropical cycles in the following way. Consider a purely $n$-dimensional polyhedral subset $X\subseteq \mbR^r$ with weight $μ$. Then $(X, μ)$ is a tropical cycle in the classical sense if and only if $(X, μ, \mr{Aff}\vert_X)$ is a tropical space. In particular, all notions that are derived from that of tropical cycles, such as abstract tropical varieties \cite{AR} or the tropical spaces from \cites{Mikhalkin, Mikhalkin_Rau} are examples of tropical spaces in our sense.
\end{enumerate}
\end{ex}

\subsection{$δ$-Forms}
\label{ss:tropical_delta_forms}

We finally extend the formalism of $δ$-forms from $\mbR^n$ to tropical spaces.

\begin{lem}\label{lem:faithfulness_lemma}
Let $X$ be a pwl space, $T \in P(X)$ a polyhedral current and $f:X→\mbR^r$ a pwl map with finite fibers. Then $T$ is uniquely determined by all pushforwards $f_*(T\vert_{K\setminus f^{-1}(f(\partial K))})$, for $K\subseteq X$ a compact polyhedral set.
\end{lem}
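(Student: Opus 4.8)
The plan is first to reduce to a vanishing statement by linearity: since $T\mapsto f_*\big(T\vert_{K\setminus f^{-1}(f(\partial K))}\big)$ is additive in $T$, it suffices to show that $T=0$ as soon as all of these push forwards vanish. As vanishing of a current is local, I would fix a point $x_0\in\Supp T$ and derive a contradiction, so that the whole argument takes place inside a compact polyhedral neighborhood of $x_0$.

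Concretely, I would choose a compact polyhedral neighborhood $L$ of $x_0$ (Lem. \ref{lem:nice_neighborhoods}) and, on its interior, a presentation of $T$ as a finite sum of integration currents $\sum_i\alpha_i\wedge[\sigma_i,\mu_i]$ whose underlying polyhedra, completed by their faces, form a finite polyhedral complex $\mathcal L$ on $L$ subordinate to $T$ and to $f$, so that each $f\vert_{\sigma_i}$ is linear. Since the coefficients of such a presentation are determined by the current and $x_0\in\Supp T$, not all $\alpha_i$ vanish on the interior of $L$; I would let $d$ be the largest dimension of a cell $\sigma_i$ with nonzero coefficient, fix a $d$-dimensional such cell $\sigma_0$, and pick a point $p$ in the relative interior of $\sigma_0$, lying in the interior of $L$, at which $\alpha_0$ is nonzero (possible since the relative interior of $\sigma_0$ is dense in $\sigma_0$ and $\alpha_0$ does not vanish identically on $\sigma_0\cap\mathrm{int}(L)$).

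The key localization comes next. Using that $f$ has finite fibers and $X$ is Hausdorff, and that $\mathcal L$ is finite, I would shrink to a compact polyhedral neighborhood $K$ of $p$ with $K\subseteq\mathrm{int}(L)$, with $f^{-1}(f(p))\cap K=\{p\}$, and such that $K$ meets no cell of $\mathcal L$ other than $\sigma_0$ and the cells strictly containing $\sigma_0$; the latter have dimension $>d$, hence zero coefficient by maximality of $d$, so $T\vert_K=\alpha_0\wedge[\sigma_0\cap K,\mu_0]$ with $\sigma_0\cap K$ contained in the relative interior of $\sigma_0$. Now put $K^\ast:=K\setminus f^{-1}(f(\partial K))$, which (as in the discussion after Prop. \ref{prop:balanced_equiv_polyhedral_deriv}) is open in $X$ and, by the first condition on $K$, still contains $p$; thus $T\vert_{K^\ast}=\alpha_0\wedge[\sigma_0\cap K^\ast,\mu_0]$. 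Here finiteness of fibers enters a second time: $f\vert_{\sigma_0}$ must be injective, for a nontrivial kernel of its linear part would make $f^{-1}(f(p))$ positive-dimensional. Hence $f$ induces a pwl isomorphism from $\sigma_0\cap K^\ast$ onto a $d$-dimensional polyhedral set $\rho\ni f(p)$, and by the push forward formula \eqref{eq:push_forward_polyhedral_set} (with no fiber-integration term, the relevant kernel being $0$) we get $f_*(T\vert_{K^\ast})=\beta\wedge[\rho,\nu]$ with $\beta$ the transport of $\alpha_0$ under this isomorphism. Since $\alpha_0$ is nonzero at $p$, $\beta$ is nonzero near $f(p)$, so pairing against a suitable bump form shows $f_*(T\vert_{K^\ast})\neq0$, contradicting the hypothesis. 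Therefore $T=0$.

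I expect the main obstacle to be precisely that a pwl map with finite fibers need not be locally injective — for instance $x\mapsto|x|$ on $\mathbb R$ — so that one cannot simply invert $f$ near $x_0$. The way around it is the twofold use of finiteness of fibers in the plan: separating the finite fiber over the test point $p$ so that $K^\ast$ genuinely contains $p$, and then forcing injectivity of $f$ on the unique top-dimensional cell surviving inside $K$. The remaining ingredients — openness of $K^\ast$, compatibility of the localization with the (locally finite) presentation of $T$, and the concrete nonvanishing of $\beta\wedge[\rho,\nu]$ — are routine. Note that Remark \ref{rmk:balanced_polyhedral_currents} shows that a hypothesis of this kind (finite fibers, together with $T$ being polyhedral on all of $X$) is genuinely needed.
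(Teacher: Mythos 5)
Your proof is correct and amounts to essentially the same argument as the paper's: the shared key step is that finiteness of fibers forces $f$ to be injective on each cell of a subordinate complex, and one then chooses $K$ (you shrink around an interior point of a top-dimensional cell with nonvanishing coefficient; the paper takes $K$ to be the star of an arbitrary cell $\sigma$ and shows $\alpha_\sigma=0$ directly) so that the cell in question exhausts $K\cap f^{-1}(f(\sigma))$ and the pushforward isolates its coefficient. One minor slip: it is your \emph{second} condition on $K$, namely $f^{-1}(f(p))\cap K=\{p\}$, not the first, that ensures $p\in K^{\ast}$.
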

\begin{proof}
We need to see that if $f_*(T\vert_{K\setminus f^{-1}(f(\partial K))}) = 0$ for all $K$ as in the lemma, then $T = 0$. This can be checked locally on $X$, so we may assume $X$ to be a polyhedral set (Lem. \ref{lem:nice_neighborhoods}). Let $\mcX$ and $\mcY$ be polyhedral complex structures for $X$ resp. $f(X)$ that are subordinate to $f$. Assume furthermore that $\mcX$ consists of compact polyhedra and is subordinate to $T$. Let $(μ_σ)_{σ\in \mcX}$ and $(ν_ρ)_{ρ\in \mcY}$ be choices of weights. Write $T = \sum_{σ\in \mcX} α_σ\wedge [σ,μ_σ]$, our aim being to show $α_σ = 0$ for all $σ\in \mcX$.

Given $σ$, consider
$$K = \bigcup_{τ\in \mcX,\ σ\subseteq τ} τ$$
which is a compact neighborhood of $σ^\circ$ in $X$. Since $\mcX$ is subordinate to $f$ and since $f$ has finite fibers, each restriction $f\vert_τ$ is injective and hence $K \cap f^{-1}(f(σ)) = σ$. In particular $σ^\circ \subseteq K\setminus f^{-1}(f(\partial K))$. Consider the polyhedral complex $\mcK = \{τ\in \mcX \mid τ\subseteq K\}$ which satisfies $|\mcK| = K$. Write $f_*(\sum_{τ\in \mcK} α_τ\wedge [τ, μ_τ]) = \sum_{ρ\in \mcY} β_ρ\wedge [ρ, ν_ρ]$. By assumption on $T$, we have $β_ρ\vert_{ρ\setminus f(\partial K)} = 0$ for all $ρ$. From our choice of $K$, we find that
$$α_σ = ν_{f(σ)}/f(μ_σ)\cdot f^*(β_{f(σ)}) = 0$$
as required.
\end{proof}

By \emph{refinement} of a linear map $f:X\to \mbR^r$, we mean a pair $(g,p)$ consisting of a linear map $g:X\to \mbR^s$ and an affine linear map $p:\mbR^s\to \mbR^r$ such that $f = p\circ g$. A typical example is obtained by adding further linear functions $f'$ and considering $(f\times f', p_1)$.

Recall from \cite{Mih_trop_inter} that the balanced polyhedral currents on an open subset $U\subseteq \mbR^r$ are called $δ$-forms. We denote them by $B(U)$. Recall further that $δ$-forms are endowed with a trihomogeneous $\wedge$-product and differential operators $d'$, $d''$, $d'_P$, $d''_P$, $\partial'$ and $\partial''$. Recall that the first two ($d'$ and $d''$) agree with the derivatives as currents. The next two ($d'_P$ and $d''_P$) are the polyhedral derivatives, and the last two are defined by
$$\partial' = d'_P - d',\quad \partial'' = d''_P - d''.$$
We refer to \cite{Mih_trop_inter} for more details.

\begin{prop}\label{prop:exist_realization}
Consider an $n$-dimensional tropical space $(X, μ, L)$, a linear map $f:X→\mbR^r$ and a $δ$-form $γ \in B(\mbR^r)$. Then there is a unique polyhedral current $f^\star γ\in P(X)$ such that for all compact polyhedral sets $K\subseteq X$ and all refinements $g:K→\mbR^s$ with finite fibers, $f = p \circ g$, the following identity holds,
\begin{equation}\label{eq:realization}
g_*\big((f^\star γ)\vert_{K\setminus g^{-1}(g(\partial K))}\big) = g_*(K) \wedge p^*γ,\ \ \ \text{away from }g(\partial K).
\end{equation}
\end{prop}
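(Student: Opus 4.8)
The plan is to deduce uniqueness from Lem.~\ref{lem:faithfulness_lemma}, then to construct $f^\star\gamma$ locally as a ``germ-wise pull-back'' along a finite-fibred refinement $g$ of the $\delta$-form $g_*(K)\wedge p^*\gamma$ on $\mathbb{R}^s$, and to glue. For \emph{uniqueness}: the statement is local on $X$, so by Def.~\ref{def:sheaf_of_linear_functions}(2) one may assume $X$ polyhedral and pick $h\in L(X)^t$ with finite fibres; then $g_0:=f\times h:X\to\mathbb{R}^{r+t}$ is a refinement of $f$ with finite fibres. If $T_1,T_2\in P(X)$ both satisfy \eqref{eq:realization}, then applying it to all compact polyhedral $K\subseteq X$ with $g=g_0|_K$ shows that $g_{0*}\big((T_i)|_{K\setminus g_0^{-1}(g_0(\partial K))}\big)$ is independent of $i$, whence $T_1=T_2$ by Lem.~\ref{lem:faithfulness_lemma} applied to $g_0$.

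For \emph{existence}, by uniqueness and the sheaf property of $P$ it suffices to produce, for each $x\in X$, a polyhedral current on an open neighbourhood of $x$ satisfying \eqref{eq:realization} for all compact polyhedral $K$ inside it. Fix $x$ and a finite-fibred refinement $g$ of $f$ on a compact polyhedral neighbourhood of $x$; using that $g^{-1}(g(x))$ is finite and $X$ Hausdorff, I shrink this to a compact polyhedral $K\ni x$ with $g^{-1}(g(x))\cap K=\{x\}$, so that $x\in W:=K\setminus g^{-1}(g(\partial K))$ and $g|_W:W\to V:=\mathbb{R}^s\setminus g(\partial K)$ is proper with finite fibres. Since $(X,\mu,L)$ is a tropical space, $Z:=g_*(K)|_V=g_*([X,\mu]|_W)$ is a tropical cycle on $V$ by Cor.~\ref{cor:equiv_char_trop_space}(3), so $\omega:=Z\wedge p^*\gamma$ is a $\delta$-form on $V$ --- the $\wedge$-product of the cycle $Z$ with the affine-linear pull-back $p^*\gamma\in B(\mathbb{R}^s)$, cf.~\cite{Mih_trop_inter}. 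Now choose polyhedral complex structures $\mathcal{W}$ on $W$ and $\mathcal{V}$ on $V$ subordinate to $g$, $\mu$, $L$, $Z$ and $\omega$, fine enough that $g(\sigma)\in\mathcal{V}$ for all $\sigma\in\mathcal{W}$, so that each $g|_\sigma:\sigma\os{\sim}{\to}g(\sigma)$ is an affine-linear isomorphism. For a cell $\rho\in\mathcal{V}$ carrying a component $\omega_\rho$ of $\omega$ and a point $y_0$ in its relative interior, the cells $\sigma\in\mathcal{W}$ with $g(\sigma)=\rho$ correspond bijectively to $g^{-1}(y_0)$ via $\sigma\mapsto(g|_\sigma)^{-1}(y_0)$. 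Because $X$ is a tropical space, each tangent fan $X_x$, $x\in g^{-1}(y_0)$, is balanced, $g$ restricts to a finite-fibred linear map on it, and $Z$ agrees near $y_0$ with $\sum_{x}g_*[X_x,\mu]$; by bilinearity of the $\wedge$-product, $\omega$ agrees near $y_0$ with $\sum_x g_*[X_x,\mu]\wedge(p^*\gamma)_{y_0}$, and taking $\rho$-components gives a decomposition $\omega_\rho=\sum_{\sigma\in\mathcal{W},\,g(\sigma)=\rho}\omega_{\rho,\sigma}$ into polyhedral currents on $\rho$, with $\omega_{\rho,\sigma}$ the contribution of $x=(g|_\sigma)^{-1}(y_0)$. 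I then set
\[
 f^\star\gamma|_W:=\sum_{\sigma\in\mathcal{W}}(g|_\sigma)^*\big(\omega_{g(\sigma),\sigma}\big),
\]
a locally finite sum of integration currents, hence a polyhedral current on $W$.

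To finish I would verify: (a) $f^\star\gamma|_W$ is independent of $\mathcal{W},\mathcal{V}$, since each $\omega_{\rho,\sigma}$ is read off from the germs of $g$ and of $[X,\mu]$ along $\sigma^\circ$, so refinements subdivide the cells compatibly; (b) \eqref{eq:realization} for this $K$ and $g$: the push-forward formula \eqref{eq:push_forward_polyhedral_set} applied to the isomorphisms $g|_\sigma$ gives $g_*\big((g|_\sigma)^*\omega_{g(\sigma),\sigma}\big)=\omega_{g(\sigma),\sigma}$, and summing over $\sigma$ with fixed image recovers $\omega_\rho$, so $g_*(f^\star\gamma|_W)=\sum_\rho\omega_\rho=\omega=(g_*(K)\wedge p^*\gamma)|_V$; (c) \eqref{eq:realization} for every other finite-fibred refinement on a possibly smaller $K$, by passing to the common refinement $g\times g'$ and using the projection formula for linear maps to see that the construction is compatible with refinement, so that the identities for $g$, $g'$ and $g\times g'$ are equivalent; (d) the local currents agree on overlaps by the uniqueness statement and hence glue to a global $f^\star\gamma\in P(X)$, whereupon \eqref{eq:realization} for an arbitrary compact polyhedral $K$ and refinement follows from a pws partition of unity (Prop.~\ref{prop:part_of_1_pws}) subordinate to a cover of $K$ by sufficiently small compact polyhedral sets.

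The main obstacle is that $g$ is in general \emph{not} flat onto $\big(|Z|,\mathrm{wt}(Z)\big)$ --- Ex.~\ref{ex:push_forward_does_not_preserve_pws}(2) exhibits exactly such a $g$ on a tropical space --- so Prop.~\ref{prop:pull_back_polycurrent_flat} is unavailable and the germ-wise construction above is genuinely needed; establishing the coherence of the decomposition $\omega_\rho=\sum_\sigma\omega_{\rho,\sigma}$, together with its behaviour under refinement of $g$ (where the balancedness of the tangent fans of $X$, i.e. the tropical-space hypothesis, is used decisively), is the technical heart of the argument. The bookkeeping of the loci $g^{-1}(g(\partial K))$ in step (d) is a secondary, purely technical nuisance.
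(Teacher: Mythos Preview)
Your overall strategy coincides with the paper's: uniqueness via Lem.~\ref{lem:faithfulness_lemma}; local construction by pushing forward the star of each cell (your ``tangent fan $X_x$'' is the paper's $K_\tau=\bigcup_{\tau\subseteq\sigma}\sigma$ for $\tau$ the cell with $x\in\tau^\circ$), wedging with $p^*\gamma$, and extracting the unique coefficient over $g(\tau)$ --- unique precisely because $g^{-1}(g(\tau))\cap K_\tau=\tau$; refinement-independence via the common refinement and the projection formula. Your phrasing through a decomposition $\omega_\rho=\sum_\sigma\omega_{\rho,\sigma}$ is equivalent but more roundabout: the paper simply sets $\alpha_\tau:=g^*\beta_\tau$ where $\beta_\tau$ is the $g(\tau)$-coefficient of $g_*(K_\tau)\wedge p^*\gamma$, which is exactly your $\omega_{g(\tau),\tau}$ pulled back along $g|_\tau$.

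There is, however, a genuine gap in step~(d). A pws partition of unity $(\rho_i)$ on $K$ does not deliver \eqref{eq:realization} for a general $(K,g,p)$: multiplication by $\rho_i$ does not commute with $g_*$ (unless $\rho_i$ is pulled back from the target), so you cannot split the desired identity into $\rho_i$-weighted pieces and invoke the local case. The paper circumvents this by constructing $T_{K,g,p}$ directly on all of $K\setminus\partial K$, not just near a single $x$, and then verifying \eqref{eq:realization} for every compact $H\subseteq K$ with the same $g$; the case $H=K$ is then already included. Alternatively, one can argue pointwise in the \emph{target}: for $y_0\notin g(\partial K)$, cover the finite fibre $g^{-1}(y_0)\cap K$ by disjoint small compact polyhedral $K_i\subseteq K^\circ$; near $y_0$ one has $g_*(K)=\sum_i g_*(K_i)$ and $g_*(T|_K)=\sum_i g_*(T|_{K_i})$, each $g_*(K_i)$ is a tropical cycle near $y_0$ (Cor.~\ref{cor:equiv_char_trop_space}(3)), so bilinearity of $\wedge$ together with the local identity on each $K_i$ yields \eqref{eq:realization} near $y_0$.
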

Here and in the following, for compact polyhedral sets $K\subseteq X$, we write $g_*(K)$ as shorthand for $g_*[K_n, μ\vert_{K_n}]$, where $K_n\subseteq K$ denotes the purely $n$-dimensional locus. Note that the complement $K\setminus g^{-1}(g(\partial K))$ is purely $n$-dimensional, so $g(K)\vert_{\mbR^r\setminus g(\partial K)}$ is a tropical cycle by Cor. \ref{cor:equiv_char_trop_space}. The product in \eqref{eq:realization} is then that of $δ$-forms on $\mbR^s\setminus g(\partial K)$.

The situation is somewhat analogous to that of pullback on the level of Chow groups for (not necessarily flat) morphisms of smooth projective varieties. Our choice of notation ($f^\star$ instead of $f^*$) is meant to emphasize this point. We will see later however that $f^\star(γ) = f^*(γ)$ whenever $f$ is flat.

\begin{proof}
Lem. \ref{lem:faithfulness_lemma} already implies that there is at most one such $f^\star γ$ and our task is to construct it. Let $(K,g,p)$ be as in the statement of the proposition. As an intermediate step, we first produce a polyhedral current $T_{K,g,p}$ on the open set $K\setminus \partial K$ that satisfies the characterizing identity \eqref{eq:realization} for all compact polyhedral sets $H\subseteq K$ and the refinement $(g\vert_H,p)$.

Pick any polyhedral complex structure $\mcK$ for $K$ with the following properties. It is subordinate to $g$, $μ$ and $\partial K$, and is such that $g(\mcK)$ is contained in a polyhedral complex structure subordinate to $p^*γ$. Fix weights $μ_σ$ for all polyhedra $σ\in \mcK$; the current $T_{K,g,p}$ will be of the form
$$T_{K,g,p} = \sum_{σ\in \mcK,\ σ\not\subset \partial K} α_σ\wedge [σ,μ_σ].$$

Given $τ\in \mcK$ with $τ\not\subset \partial K$, set $\mcK_τ = \{σ\in \mcK,\ τ\subseteq σ\}$ and $K_τ = |\mcK_τ|$. One may write (not necessarily uniquely)
$$g_*(K_τ)\wedge p^*γ = \sum_{σ \in \mcK_τ} β_σ \wedge [g(σ), g(μ_σ)]\ \ \text{away from}\ g(\partial K_τ).$$
(Use the fan displacement rule \cite{Mih_trop_inter}*{Prop. 4.21} to compute the $\wedge$-product, for example.) The coefficient $β_τ$ is unique, however, because $g^{-1}(g(τ))\cap K_τ = τ$ by construction. Then we put $α_τ := g^*β_τ$. Varying $τ$, this defines some $T_{K,g,p}\in P(K)$.

Checking our desired property is straightforward. Let $H\subseteq K$ be a compact polyhedral set. Let $\mcK'$ be a refinement of $\mcK$ and let $\mcY$ be a polyhedral complex structure for $g(K)$ such that the following properties hold: The pair $\mcK'$ and $\mcY$ is subordinate to $g$. Moreover, $\mcK'$ is subordinate to $H$ and $\partial H$ (boundary in $X$). Fixing auxiliary weights $(ν_τ)_{τ\in \mcY}$, we may write
$$g_*(H)\wedge p^*γ = \sum_{τ \in \mcY} β_τ \wedge [τ, ν_τ]\ \ \text{away from}\ g(\partial H).$$
Let $τ\in \mcY$, $τ\not\subset g(\partial H)$ be a polyhedron and let $f^{-1}(τ) = \{τ_1,\ldots,τ_m\}\subseteq \mcK'$. Then we have $τ_i\not\subset \partial H$ for all $i = 1,\ldots,m$. Put $K_i = \bigcup_{σ\in \mcK',\ τ_i\subseteq σ} σ$ which is a neighborhood of $τ_i^\circ$ in $H$. Then $g_*(H) = \sum_{i = 1}^m g_*(K_i)$ on a neighborhood of $τ^\circ$. Moreover, for each $τ_i$ there is some $τ_i'\in \mcK$ of the same dimension with $τ_i\subseteq τ_i'$, and $K_i$ coincides with the $K_{τ_i'}$ from above on an open neighborhood of $τ_i^\circ$. One deduces from this that
$$β_τ \wedge [τ,ν_τ] = g_*(T_{K, g, p}\vert_{H\setminus g^{-1}(g(\partial H))})$$
on a neighborhood of $τ^\circ$, which is the claimed property of $T_{K,g,p}$.

Our next claim is that $T$ satisfies \eqref{eq:realization} also for all other refinements $h$ of $f$ on $K$. We may assume that $h$ refines $g$, say $g = q\circ h$, by passing to the common refinement $(g,h)$ if necessary and using the projection formula \cite{Mih_trop_inter}*{Prop. 4.2}. Subdividing $\mcK$, we may assume it to be subordinate to $h$ as well. Note that $q(h(\partial K_τ)) = g(\partial K_τ)$ for every $τ$. The projection formula, \cite{Mih_trop_inter}*{Prop. 4.2} applies and yields
$$q_*(h_*K_τ \wedge q^*p^*γ) = g_*K_τ \wedge p^*γ,\ \ \ \text{away from $g(\partial K_τ)$}.$$
Using that $g\vert_{τ}$ is injective, we have that $q\vert_{h(τ)}$ is injective. This means that the multiplicities of $h(τ)$ in $h_*K_τ\wedge q^*p^*γ$ and $h_*(T_{K,g,p}\vert_{K_τ \setminus h^{-1}(h(\partial K_τ))})$ coincide. In other words, $T_{K,h,p\circ q} = T_{K,g,p}$.

Finally, the axioms for sheaves on linear functions ensure that every point $x\in X$ has a polyhedral neighborhood $K$ that admits a refinement $(g,p)$ as above. Lem. \ref{lem:faithfulness_lemma} ensures that the $T_{K,g,p}$ glue to a polyhedral current $T$. The proof moreover shows that $T$ may be computed in \emph{any} chart $(K,g,p)$ and then satisfies \eqref{eq:realization} for this chart. Thus we have given an explicit construction of $f^\star γ = T$ and the proof is complete. 
\end{proof}

\begin{prop}\label{prop:derivatives_delta_form}
Let $(X, μ, L)$ be a tropical space, $f:X\to \mbR^r$ a linear map and $γ\in B(\mbR^r)$ a $δ$-form. Then
$$d(f^\star(γ)) = f^\star(dγ),\ \ \ d_P(f^\star(γ)) = f^\star(d_Pγ),\ \ \ d\in \{d',d''\},\ d_P\in \{d'_P, d''_P\}.$$
\end{prop}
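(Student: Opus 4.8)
The plan is to reduce the statement to the characterizing identity \eqref{eq:realization} of Prop.~\ref{prop:exist_realization} and the compatibility of the four differential operators with the operations used in that identity, namely pullback along affine linear maps and push forward of polyhedral currents. By Lem.~\ref{lem:faithfulness_lemma}, a polyhedral current on $X$ is determined by the push forwards $h_*\big(({-})\vert_{K\setminus h^{-1}(h(\partial K))}\big)$ for $K\subseteq X$ a compact polyhedral set and $h$ a refinement of $f$ with finite fibers; so it suffices to check that $d(f^\star\gamma)$ and $f^\star(d\gamma)$ (resp.\ the $d_P$-versions) have the same such push forwards, and then the two polyhedral currents must agree.

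First I would fix $(K,h,p)$ with $f = p\circ h$, $h$ of finite fibers, and compute $h_*\big((d f^\star\gamma)\vert_{K\setminus h^{-1}(h(\partial K))}\big)$. Here I use two facts established (or stated) earlier: push forward of polyhedral currents commutes with $d'$ and $d''$, which holds because $d(h^*\eta) = h^*(d\eta)$ on test forms (see the last sentence of Def.~\ref{def:currents_pwl_space}(3)), and restriction to an open subset commutes with $d$. Hence
\begin{equation*}
h_*\big((d f^\star\gamma)\vert_{K\setminus h^{-1}(h(\partial K))}\big) = d\, h_*\big((f^\star\gamma)\vert_{K\setminus h^{-1}(h(\partial K))}\big) = d\big(h_*(K)\wedge p^\star\gamma\big)
\end{equation*}
away from $h(\partial K)$, by \eqref{eq:realization}. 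Now $h_*(K)\vert_{\mbR^s\setminus h(\partial K)}$ is a tropical cycle, i.e.\ a $\delta$-form of tridegree $(0,0,\ast)$, so it is closed: $d(h_*(K)) = 0$. By the Leibniz rule for $d$ on $\delta$-forms on $\mbR^s$ together with closedness, $d(h_*(K)\wedge p^\star\gamma) = \pm\, h_*(K)\wedge d(p^\star\gamma) = \pm\, h_*(K)\wedge p^\star(d\gamma)$, where the last equality is the compatibility of $d$ with affine linear pullback of $\delta$-forms on Euclidean space (recorded in \cite{Mih_trop_inter}, and a special case of what we are proving). Being careful with the sign: since $p$ is affine linear, $p^\star\gamma$ has the same degree as $\gamma$, and since $h_*(K)$ is of bidegree $(0,0)$, the Leibniz sign is trivial, so actually $d(h_*(K)\wedge p^\star\gamma) = h_*(K)\wedge p^\star(d\gamma)$. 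But $h_*(K)\wedge p^\star(d\gamma)$ is, by \eqref{eq:realization} applied to the $\delta$-form $d\gamma$, exactly $h_*\big((f^\star(d\gamma))\vert_{K\setminus h^{-1}(h(\partial K))}\big)$ away from $h(\partial K)$. Since this holds for all $(K,h,p)$, Lem.~\ref{lem:faithfulness_lemma} forces $d(f^\star\gamma) = f^\star(d\gamma)$.

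For the polyhedral derivatives $d'_P, d''_P$ the argument is the same in structure but even simpler: $d_P$ acts piecewisely on coefficients, commutes with push forward and with restriction to opens, and $d_P(h_*(K)) = 0$ since $h_*(K)$ has constant (here $1$) coefficients on each facet; then $d_P$ of a product $h_*(K)\wedge p^\star\gamma$ only sees $p^\star\gamma$, and $d_P$ commutes with $p^\star$. So the same chain of equalities yields $d_P(f^\star\gamma) = f^\star(d_P\gamma)$, and Lem.~\ref{lem:faithfulness_lemma} concludes. The main obstacle I anticipate is purely bookkeeping: making sure that ``push forward commutes with $d$ and $d_P$'' and ``restriction to the open set $K\setminus h^{-1}(h(\partial K))$ commutes with $d$ and $d_P$'' are legitimately available for \emph{polyhedral} currents (not just currents), and that the Leibniz signs genuinely cancel because the relevant factor $h_*(K)$ sits in bidegree $(0,0)$; once those are in place the proof is a two-line computation plus an appeal to Prop.~\ref{prop:exist_realization} and Lem.~\ref{lem:faithfulness_lemma}.
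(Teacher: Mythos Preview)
Your approach is essentially the same as the paper's: verify the characterizing identity \eqref{eq:realization} for the derivative and conclude via uniqueness. There is, however, one genuine gap in the $d'$/$d''$ case. To invoke Lem.~\ref{lem:faithfulness_lemma}, you need $d(f^\star\gamma)$ to be a \emph{polyhedral} current; a priori it is only a current. Rmk.~\ref{rmk:balanced_polyhedral_currents} shows precisely that a non-polyhedral current can have the same linear push forwards as a polyhedral one, so matching push forwards alone does not force equality. The paper fills this gap by first observing that the push forwards $h_*\big((f^\star\gamma)\vert_{K\setminus h^{-1}(h(\partial K))}\big) = h_*(K)\wedge p^*\gamma$ are $\delta$-forms on $\mbR^s\setminus h(\partial K)$, hence balanced; then condition (4) of Prop.~\ref{prop:balanced_equiv_polyhedral_deriv} implies $f^\star\gamma$ is balanced, and condition (2) of that same proposition gives that $d(f^\star\gamma)$ is polyhedral. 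Only then can Lem.~\ref{lem:faithfulness_lemma} be applied. This is a one-line fix, but it is not ``purely bookkeeping'': it is exactly the content of Prop.~\ref{prop:balanced_equiv_polyhedral_deriv}.

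For $d'_P,d''_P$ your argument is complete as stated, since the polyhedral derivative of a polyhedral current is polyhedral by definition \eqref{eq:def_polyhedral_derivatives_current}. One minor point: the commutation of push forward with $d'_P,d''_P$ holds specifically for maps with finite fibers (as the paper notes), which is the case here since $h$ has finite fibers.
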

\begin{proof}
The characterizing identity \eqref{eq:realization} and Prop. \ref{prop:balanced_equiv_polyhedral_deriv} together imply that $f^\star(γ)$ is balanced. Again by Prop. \ref{prop:balanced_equiv_polyhedral_deriv}, the derivatives $d(f^\star(γ))$ and $d_P(f^\star(γ))$ are hence polyhedral. Our task is to show the identities (with $d\in \{d',d''\}$ and $d_P\in \{d'_P, d''_P\}$),
\begin{equation}\label{eq:push_forward_derivatives}
\begin{aligned}
g_*(d(f^\star(γ))\vert_{K\setminus g^{-1}(g(\partial K))}) &\ = g_*(K)\wedge p^*(dγ)\vert_{\mbR^r\setminus g(\partial K)}\\
g_*(d_P(f^\star(γ))\vert_{K\setminus g^{-1}(g(\partial K))}) &\ = g_*(K)\wedge p^*(d_Pγ)\vert_{\mbR^r\setminus g(\partial K)}
\end{aligned}
\end{equation}
for all compact polyhedral sets $K\subseteq X$ and all refinements $(g,p)$ of $f\vert_K$ with finite fibers.

For all linear maps of spaces with linear functions, $d'$ and $d''$ commute with pushforward, cf. Def. \ref{def:currents_pwl_space} (3). Moreover, for all pwl maps with finite fibers, $d'_P$ and $d''_P$ commute with pushforward. Thus the two left hand sides in \eqref{eq:push_forward_derivatives} equal
\begin{equation}\label{eq:push_forward_derivatives_II}
d(g_*f^\star(γ))\vert_{K\setminus g^{-1}(g(\partial K))}\quad \text{and}\quad d_P(g_*f^\star(γ))\vert_{K\setminus g^{-1}(g(\partial K))},
\end{equation}
respectively. Applying the definition of $f^\star(γ)$, these equal
\begin{equation}\label{eq:push_forward_derivatives_III}
d(g_*(K)\wedge p^*(γ))\vert_{K\setminus g^{-1}(g(\partial K))}\quad \text{and}\quad d_P(g_*(K)\wedge p^*γ)\vert_{K\setminus g^{-1}(g(\partial K))}.
\end{equation}
Finally, the four differential operators $d'$, $d''$, $d'_P$ and $d''_P$ for $δ$-forms on $\mbR^r$ all satisfy the Leibniz rule, all annihilate tropical cycles with constant coefficients such as $g_*(K)$, and all commute with $p^*$. Using these properties, \eqref{eq:push_forward_derivatives} follows from the equality with \eqref{eq:push_forward_derivatives_III}.
\end{proof}

\begin{prop}\label{prop:wedge_well_defined}
Let $(X, μ, L)$ be a tropical space and let $f_i:X\to \mbR^{r_i}$, $i = 1,\ldots,4$, be linear maps as well as $γ_i\in B(\mbR^{r_i})$. Let
$$(p_1,p_2):\mbR^{r_1+r_2}\lr \mbR^{r_1}\times \mbR^{r_2},\quad (p_3,p_4):\mbR^{r_3+r_4}\lr \mbR^{r_3}\times \mbR^{r_4}$$
denote the projection maps. Assume furthermore that $f_1^\star(γ_1) = f_3^\star(γ_3)$ and $f_2^\star(γ_2) = f_4^\star(γ_4)$. Then
\begin{equation}\label{eq:wedge_well_defined}
(f_1,f_2)^\star(p_1^*γ_1 \wedge p_2^*γ_2) = (f_3,f_4)^\star(p_3^*γ_3\wedge p_4^*γ_4).
\end{equation}
\end{prop}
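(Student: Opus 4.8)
The plan is to verify \eqref{eq:wedge_well_defined} locally on $X$ and then, working in a single chart, to transport the identity into one $\mbR^s$, where associativity and (graded-)commutativity of the $\wedge$-product of $δ$-forms from \cite{Mih_trop_inter} finish the job. Write $T = (f_1,f_2)^\star(p_1^*γ_1\wedge p_2^*γ_2)$ and $T' = (f_3,f_4)^\star(p_3^*γ_3\wedge p_4^*γ_4)$; both are polyhedral currents on $X$, so it suffices to prove $T|_V = T'|_V$ for $V$ in an open cover of $X$. First I would fix $x\in X$ and, using Def. \ref{def:sheaf_of_linear_functions}~(2), choose a compact polyhedral neighborhood $K$ of $x$ together with $g\in L(K)^s$ such that $g:K→\mbR^s$ has finite fibers. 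Enlarging the tuple $g$ by the finitely many components of $f_1,f_2,f_3,f_4$ — which does not destroy the finite-fiber property — I may assume $g$ refines every $f_i|_K$, say $f_i|_K = q_i\circ g$ with $q_i:\mbR^s→\mbR^{r_i}$ affine linear. Putting $q_{12} = (q_1,q_2)$ and $q_{34} = (q_3,q_4)$, the pair $(g,q_{12})$ refines $(f_1,f_2)|_K$, the pair $(g,q_{34})$ refines $(f_3,f_4)|_K$, and $p_j\circ q_{12} = q_j$, $p_j\circ q_{34} = q_j$ for the coordinate projections. With $V := K^\circ$ the interior of $K$ in $X$, I would invoke Lem. \ref{lem:faithfulness_lemma} on the pwl space $V$ for the polyhedral current $(T-T')|_V$ and the finite-fiber map $g|_V$: it reduces the claim to showing $g_*\big((T-T')|_{K'\setminus g^{-1}(g(\partial K'))}\big) = 0$ for every compact polyhedral $K'\subseteq V$. (For such $K'\subseteq V$, with $V$ open, the boundary $\partial K'$ computed in $V$ agrees with the one computed in $X$, so the characterizing identity \eqref{eq:realization} applies to the charts $(K',g|_{K'},-)$.)

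Fixing such a $K'$ and working away from the closed set $g(\partial K')$, I would set $Z := g_*(K')|_{\mbR^s\setminus g(\partial K')}$, which is a tropical cycle by Cor. \ref{cor:equiv_char_trop_space}, and $A_i := (q_i^*γ_i)|_{\mbR^s\setminus g(\partial K')}\in B(\mbR^s\setminus g(\partial K'))$. Applying \eqref{eq:realization} to $T$ with the chart $(K',g|_{K'},q_{12})$, and using that affine-linear pull back of $δ$-forms on $\mbR$-spaces is functorial and a homomorphism for the $\wedge$-product (\cite{Mih_trop_inter}), so that $q_{12}^*(p_1^*γ_1\wedge p_2^*γ_2) = (p_1\circ q_{12})^*γ_1\wedge(p_2\circ q_{12})^*γ_2 = q_1^*γ_1\wedge q_2^*γ_2$, I obtain $g_*\big(T|_{K'\setminus g^{-1}(g(\partial K'))}\big) = Z\wedge A_1\wedge A_2$, and symmetrically $g_*\big(T'|_{K'\setminus g^{-1}(g(\partial K'))}\big) = Z\wedge A_3\wedge A_4$. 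Applying \eqref{eq:realization} instead to the currents $f_1^\star(γ_1)$ and $f_3^\star(γ_3)$ with the charts $(K',g|_{K'},q_1)$ and $(K',g|_{K'},q_3)$, the hypothesis $f_1^\star(γ_1) = f_3^\star(γ_3)$ yields $Z\wedge A_1 = Z\wedge A_3$; in the same way $f_2^\star(γ_2) = f_4^\star(γ_4)$ gives $Z\wedge A_2 = Z\wedge A_4$.

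The remaining and only conceptual step is the elementary identity in $B(\mbR^s\setminus g(\partial K'))$: from $Z\wedge A_1 = Z\wedge A_3$ and $Z\wedge A_2 = Z\wedge A_4$ one must deduce $Z\wedge A_1\wedge A_2 = Z\wedge A_3\wedge A_4$. After decomposing the $γ_i$ into trihomogeneous components I may assume all forms homogeneous. Since $Z$ is a tropical cycle, its tridegree is of the form $(0,0,*)$, so it commutes with every $δ$-form under $\wedge$ (the $\wedge$-product being graded-commutative, \cite{Mih_trop_inter}); together with associativity this gives
$$Z\wedge A_1\wedge A_2 = (Z\wedge A_1)\wedge A_2 = (Z\wedge A_3)\wedge A_2 = A_3\wedge(Z\wedge A_2) = A_3\wedge(Z\wedge A_4) = Z\wedge A_3\wedge A_4.$$
Hence $g_*\big((T-T')|_{K'\setminus g^{-1}(g(\partial K'))}\big) = 0$ for every compact polyhedral $K'\subseteq V$, so $T|_V = T'|_V$ by Lem. \ref{lem:faithfulness_lemma}, and since $x\in X$ was arbitrary we conclude $T = T'$.

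I do not expect a genuinely hard step here. The two points that need care are (a) producing a single finite-fiber chart $g$ that simultaneously refines all four maps $f_i$, so that everything can be compared inside one $\mbR^s$, and (b) the bookkeeping of the ``away from $g(\partial K')$'' caveats and of boundaries computed in $V$ versus in $X$. The substantive input is the observation that, for $δ$-forms on $\mbR^s$, a relation $Z\wedge A = Z\wedge A'$ for a cycle $Z$ may be wedged on both sides with a further $δ$-form $B$ to give $Z\wedge A\wedge B = Z\wedge A'\wedge B$; this is where associativity and (partial) commutativity of the $\wedge$-product from \cite{Mih_trop_inter} genuinely enter, and it is exactly the sort of statement that would fail for the cruder degree relation $f_*[X,µ] = \deg(f)[Y,ν]$ alone, cf. Ex. \ref{ex:push_forward_does_not_preserve_pws}.
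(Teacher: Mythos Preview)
Your proof is correct and follows essentially the same approach as the paper: reduce to a single chart $g$ refining all four $f_i$, apply the characterizing identity \eqref{eq:realization} to transport everything into $B(\mbR^s\setminus g(\partial K'))$, and then finish with associativity and graded-commutativity of the $\wedge$-product. The only cosmetic differences are that you invoke Lem.~\ref{lem:faithfulness_lemma} explicitly (the paper instead appeals directly to the uniqueness in Prop.~\ref{prop:exist_realization}, which is the same thing), and in the final chain of equalities you commute the degree-zero cycle $Z$ past $A_3$, whereas the paper swaps $A_2$ and $A_3$ at the cost of an explicit sign $(-1)^{\deg(γ_2)\deg(γ_3)}$.
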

\begin{proof}
By definition of the two sides in \eqref{eq:wedge_well_defined}, we are required to check the following. Whenever $K$ is a compact polyhedral subset of $X$ and $g:K\to \mbR^s$ a linear map together with four maps $q_i:\mbR^s\to \mbR^{r_i}$ such that $(g, q_i)$ refines $f_i\vert_K$ for $i = 1,\ldots,4$, then
$$g_*(K) \wedge (q_1,q_2)^*(p_1^*γ_1\wedge p_2^*γ_2) = g_*(K) \wedge (q_3,q_4)^*(p_3^*γ_3\wedge p_4^*γ_4)\quad \text{on $\mbR^s\setminus g(\partial K)$.}$$
We have the identities
$$\begin{aligned}
(q_1,q_2)^*(p_1^*γ_1\wedge p_2^*γ_2) &\ = q_1^*γ_1 \wedge q_2^*γ_2\\
(q_3,q_4)^*(p_3^*γ_3\wedge p_4^*γ_4) &\ = q_3^*γ_3 \wedge q_4^*γ_4.
\end{aligned}$$
Moreover, the assumption of the proposition implies that
$$g_*(K)\wedge q_1^*(γ_1) = g_*(K)\wedge q_3^*(γ_3),\quad g_*(K)\wedge q_2^*(γ_2) = g_*(K)\wedge q_4^*(γ_4)$$
away from $g(\partial K)$. Since all involved operations are trihomogeneous with respect to the trigrading on polyhedral currents, we may assume all $γ_i$ trihomogeneous in the following. Then associativity and graded-commutativity of the $\wedge$-product yield (away from $g(\partial K)$)
$$\begin{aligned}
(g_*(K)\wedge q_1^*(γ_1))\wedge q_2^*(γ_2) &\ = (g_*(K) \wedge q_3^*(γ_3))\wedge q_2^*(γ_2)\\
										   &\ = (-1)^{\deg(γ_3)\deg(γ_2)} (g_*(K) \wedge q_2^*(γ_2)) \wedge q_3^*(γ_3)\\
										   &\ = (-1)^{\deg(γ_3)\deg(γ_2)} (g_*(K) \wedge q_4^*(γ_4)) \wedge q_3^*(γ_3)\\
										   &\ = (g_*(K)\wedge q_3^*(γ_3))\wedge q_4^*(γ_4).
\end{aligned}$$
which finishes the proof.
\end{proof}

\begin{defn}\label{def:delta_form_tropical_space}
\begin{enumerate}[wide, labelindent=0pt, labelwidth=!, label=(\arabic*), topsep=4pt, itemsep=4pt]
\item A \emph{$δ$-form} on $(X,µ,L)$ is a polyhedral current on $X$ which is locally of the form $f^\star γ$ for a linear map $f$, mapping to $\mbR^r$ say, and a $δ$-form $γ\in B(\mbR^r)$. We write $B$ or $B_X$ for the sheaf of $δ$-forms on $X$.

\item A $δ$-form $ω$ has \emph{tridegree $(p,q,c)$} if it has this tridegree as polyhedral current. Since the product operation $g_*K\wedge -$ in \eqref{eq:realization} is trihomogeneous, this is equivalent to demanding that locally $ω = f^\star γ$ with $γ$ of tridegree $(p,q,c)$. We write $B^{p,q,c}$ or $B^{p,q,c}_X$ for the sheaf of $δ$-forms of tridegree $(p,q,c)$ and obtain the decomposition $B = \bigoplus_{p,q,c} B^{p,q,c}$.

\item The derivatives $d'ω$, $d''ω$ of a $δ$-form $ω\in B$ are the derivatives as current. The polyhedral derivatives $d'_Pω$, $d''_Pω$ are the derivatives as polyhedral current. The boundary derivatives $\partial'ω$, $\partial''ω$ are defined by the identities
$$\partial' = d'_P - d',\quad \partial'' = d''_P - d''.$$
By Prop. \ref{prop:derivatives_delta_form}, if $ω$ is (locally) given as $f^\star(γ)$, then its derivatives are (locally) given by
$$f^\star(d'γ),\ f^\star(d''γ),\ f^\star(d'_Pγ),\ f^\star(d''_Pγ),\ f^\star(\partial'γ)\ \text{and}\ f^\star(\partial''γ).$$

\item The wedge product $ω_1\wedge ω_2$ of two $δ$-forms on $X$ is defined as follows. If $ω_i$ is locally given as $f_i^\star(γ_i)$, then $ω_1\wedge ω_2$ is locally $(f_1,f_2)^\star(p_1^*γ_1\wedge p_2^*γ_2)$. This local construction glues and is well defined by Prop. \ref{prop:wedge_well_defined}.

\item Finally, the integral $\int_X ω$ of a compactly supported $δ$-form $ω$ on $(X, μ, L)$ of top bidegree $(\dim X, \dim X)$ is defined as its integral as polyhedral current.
\end{enumerate}
\end{defn}


\begin{prop}[Stokes' Theorem]\label{prop:stokes_delta_pwl}
Assume that $α\in B_c^{n-1,n}(X)$ and $β\in B_c^{n,n-1}(X)$ are compactly supported $δ$-forms of the indicated bidegree on a tropical space $(X, μ, L)$ that is purely of dimension $n$. Then
$$\int_{X} d'α = \int_X d''β = 0.$$
\end{prop}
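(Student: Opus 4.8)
The plan is to reduce to the case where $X$ is a polyhedral set with a subordinate polyhedral complex structure, and then to invoke the version of Stokes' Theorem for $\delta$-forms on $\mathbb{R}^r$ together with the fact that $\int_{[X,\mu]} d(\alpha\wedge\eta) = 0$ for smooth forms, which is essentially the defining property of a tropical space (cf. \eqref{eq:derivative_smooth_form_tropical_space}). More precisely, I would first treat $\int_X d''\beta$ (the case $\int_X d'\alpha$ being symmetric). Since $\alpha$ and $\beta$ have compact support, using Lem.~\ref{lem:nice_neighborhoods} I can cover $\Supp\beta$ by finitely many polyhedral open sets $U_i$, choose a subordinate piecewise smooth (or smooth, via Prop.~\ref{prop:part_of_1_smooth}) partition of unity $(\rho_i)$, and write $\beta = \sum_i \rho_i\beta$. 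Each $\rho_i\beta$ is a compactly supported $\delta$-form on $U_i$, so by additivity of the integral it suffices to prove the statement when $X$ itself is a polyhedral set and $\beta \in B_c^{n,n-1}(X)$.

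On a polyhedral set, by Def.~\ref{def:delta_form_tropical_space} and the sheaf axioms for $L$, after a further refinement of the covering I may assume $\beta = f^\star(\gamma)$ for a linear map $f:X\to\mathbb{R}^r$ with finite fibers and a $\delta$-form $\gamma\in B^{n,n-1}(\mathbb{R}^r)$; here I can take $f$ itself to have finite fibers by property (2) of Def.~\ref{def:sheaf_of_linear_functions}. Now I compute using the characterizing identity \eqref{eq:realization} for the chart $(K,f,\id)$ with $K$ a suitable compact polyhedral neighborhood of $\Supp\beta$ chosen so that $\Supp\beta\cap \partial K=\emptyset$ (so that $K\setminus f^{-1}(f(\partial K))$ still contains $\Supp\beta$, and the restriction to this open set does not change the integral). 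By Prop.~\ref{prop:derivatives_delta_form}, $d''\beta = f^\star(d''\gamma)$, and $d''\gamma$ is again a $\delta$-form on $\mathbb{R}^r$. Pushing forward along $f$ and using \eqref{eq:realization} for both $\beta$ and $d''\beta$, together with the compatibility of $d''$ with push-forward (Def.~\ref{def:currents_pwl_space}(3)) and the fact that the push-forward $f_*(K)$ is a tropical cycle with constant coefficients (annihilated by $d''$, with $d''_P f_*(K) = 0$ as well), I get
$$\int_X d''\beta = \int_{f(K)} f_*(d''\beta) = \int_{\mathbb{R}^r} f_*(K)\wedge d''\gamma .$$
The last expression is $\int_{\mathbb{R}^r} d''\big(f_*(K)\wedge\gamma\big)$ up to the Leibniz term involving $d'' f_*(K) = 0$, and this vanishes by Stokes' Theorem for $\delta$-forms on $\mathbb{R}^r$ (the compactly supported case of the boundary calculus recalled around Prop.~\ref{prop:stokes_polyhedron} and its extension to $\delta$-forms in \cite{Mih_trop_inter}), since $f_*(K)\wedge\gamma$ is a compactly supported $\delta$-form of bidegree $(n, n-1)$ on $\mathbb{R}^r$ whose support avoids $f(\partial K)$.

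The main obstacle I anticipate is bookkeeping around the boundary terms: I must arrange that the compact polyhedral set $K$ is chosen with $\Supp\beta$ in its interior and disjoint from $f^{-1}(f(\partial K))$, so that restricting to $K\setminus f^{-1}(f(\partial K))$ is harmless on the level of integrals, and simultaneously so that the identity \eqref{eq:realization} is available; one also has to check that the passage from $\beta$ on $X$ to $f_*(K)\wedge\gamma$ on $\mathbb{R}^r$ genuinely preserves the integral, which follows from the definition $\int_X T = T(1)$ together with the projection formula $(f_*T)(\eta) = T(f^*\eta)$ applied to a bump function that is $1$ near the (compact) support. Once this is set up, the actual vanishing is immediate from the $\mathbb{R}^r$-case. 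The argument for $\int_X d'\alpha = 0$ is identical with $d'$ and $\partial'$ replacing $d''$ and $\partial''$ throughout.
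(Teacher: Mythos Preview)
Your chart-pushforward strategy is workable in spirit but both more laborious than needed and contains a concrete slip. The paper's proof is a two-line bump-function argument carried out directly on $X$: pick a smooth $\lambda\in A^{0,0}_c(X)$ that is identically $1$ on a neighborhood of $\Supp\alpha$; then $\int_X d'\alpha = (d'\alpha)(\lambda) = \pm\,\alpha(d'\lambda) = 0$, because Prop.~\ref{prop:derivatives_delta_form} says that $d'$ on $\delta$-forms agrees with the current derivative, and $d'\lambda$ vanishes on $\Supp\alpha$. No reduction to $\mathbb{R}^r$, no partitions of unity, no boundary bookkeeping.

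The gap in your write-up is the parenthetical inference ``$\Supp\beta\cap\partial K=\emptyset$ (so that $K\setminus f^{-1}(f(\partial K))$ still contains $\Supp\beta$\,\ldots)''. This implication is false: you need $f(\Supp\beta)\cap f(\partial K)=\emptyset$, which is strictly stronger than $\Supp\beta\cap\partial K=\emptyset$ when $f$ is not injective. Consequently the identity \eqref{eq:realization} does not directly hand you $f_*(\beta)=f_*(K)\wedge\gamma$ on all of $\mathbb{R}^r$, and your application of Stokes on $\mathbb{R}^r$ to $f_*(K)\wedge\gamma$ is not justified as stated. A cleaner repair, if you want to keep the pushforward route, is to bypass \eqref{eq:realization} entirely: since $\beta$ is balanced (Prop.~\ref{prop:balanced_equiv_polyhedral_deriv}) and $f$ is linear, $f_*\beta$ is again balanced on $\mathbb{R}^r$, hence a $\delta$-form there, and $\int_X d''\beta=\int_{\mathbb{R}^r}d''(f_*\beta)=0$ by Stokes on $\mathbb{R}^r$. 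This also sidesteps your second soft spot, namely that after multiplying by a partition of unity $\rho_i\beta$ need not be globally of the form $f^\star(\gamma)$ on $U_i$ (smooth functions are only \emph{locally} presentable). But once you have gone this far, you are essentially reproving the paper's argument in the special case $X=\mathbb{R}^r$ and then pulling it back; doing it directly on $X$ is shorter.
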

\begin{proof}
This is just a formality after Prop. \ref{prop:derivatives_delta_form}: Let $λ\in A^{0,0}_c(X)$ be a smooth function that is constantly $1$ on an open neighborhood of $\Supp α$. Then
$$\int_X d'α = [d'α](λ) = -[α](d'λ) = 0$$
because the derivative $d'α$ is precisely the derivative as current and because $d'λ\vert_{\Supp(α)} = d'_P(λ)\vert_{\Supp(α)}\equiv 0$. A similar argument applies to $β$.
\end{proof}

We next address the existence of pullback maps for $δ$-forms. Given a linear map $f:X \to Y$ of tropical spaces, it is natural to expect a pullback through $f^*(φ^\star γ) := (φ\circ f)^\star γ$. This fails, however, because it not only depends on $φ^\star(γ)$ but on the whole datum $(φ, γ)$:
\begin{ex}\label{ex:pull_back_fail} 
Consider again the map $f$ from Equation \eqref{eq:key_example} and consider on $\mbR^2$ the $δ$-form (a tropical cycle in fact)
\begin{equation}\label{eq:def_example}
γ = \{ x_1 = 0\} - \{x_2 = 0\},
\end{equation}
where each axis is endowed with the standard weight. Because of the opposite signs in \eqref{eq:def_example}, $\{x_1\cdot x_2 = 0\} \wedge γ = 0$, but $f^\star γ = (- \{0\})\sqcup \{0\}$
is a difference of Dirac measures.
\end{ex}

\begin{prop}\label{prop:pullback_delta_form}
Let $f:(X, μ, L_X)\to (Y, ν, L_Y)$ be a flat linear map of tropical spaces. The pullback as polyhedral current $f^*ω$ of any $δ$-form $ω\in B(Y)$ is again a $δ$-form. More precisely, if $ω$ has the presentation $ω = φ^\star(γ)$, then $f^*(ω) = (φ\circ f)^\star(γ)$.
\end{prop}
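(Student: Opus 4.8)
The plan is to reduce the claim to a statement on $\mathbb{R}^r$ via the characterizing property \eqref{eq:realization} of the $\star$-pullback and the projection formula \eqref{eq:projection_formula_flat_pullback} for flat maps of pwl spaces. First I would note that the assertion is local on both $X$ and $Y$, so by Lem.~\ref{lem:nice_neighborhoods} and Def.~\ref{def:delta_form_tropical_space}~(1) I may assume $Y$ is a polyhedral set, $\omega = \varphi^\star(\gamma)$ globally on $Y$ for a linear map $\varphi\colon Y\to\mathbb{R}^r$ and $\gamma\in B(\mathbb{R}^r)$, and (shrinking $X$) that $X$ is a polyhedral set as well. The goal is then the identity $f^*(\varphi^\star\gamma) = (\varphi\circ f)^\star(\gamma)$ of polyhedral currents on $X$, where the left side is the flat pullback from Prop.~\ref{prop:pull_back_polycurrent_flat} and the right side is the $\star$-pullback from Prop.~\ref{prop:exist_realization}.

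The key step is to verify that $f^*(\varphi^\star\gamma)$ satisfies the defining identity \eqref{eq:realization} for $(\varphi\circ f)^\star\gamma$. So let $K\subseteq X$ be a compact polyhedral set and $(g,p)$ a refinement of $(\varphi\circ f)\vert_K$ with $g\colon K\to\mathbb{R}^s$ having finite fibers and $\varphi\circ f = p\circ g$ on $K$. I must compute $g_*\big((f^*\varphi^\star\gamma)\vert_{K\setminus g^{-1}(g(\partial K))}\big)$ and show it equals $g_*(K)\wedge p^*\gamma$ away from $g(\partial K)$. The mechanism is: on a slightly shrunk compact neighborhood I can find a common refinement $h = g\times (\varphi\circ f)$ (or just $g$ already, since $g$ refines $\varphi\circ f$), so that $p^*\gamma$ is pulled back through the polyhedral cycle $g_*(K)$; meanwhile the flat pullback $f^*\varphi^\star\gamma$ is, by \eqref{eq:projection_formula_flat_pullback}, characterized by $f_*(\alpha\wedge f^*\varphi^\star\gamma) = f_{\mu/\nu,*}(\alpha)\wedge\varphi^\star\gamma$. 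Combining this projection formula with the defining property of $\varphi^\star\gamma$ tested against the compact polyhedral set $f(K)$ (or the relevant neighborhood), and using that fiber integration commutes with the linear push-forwards in the appropriate way (cf. the proof of Prop.~\ref{prop:exist_realization} and the commutation of $d', d'', d_P', d_P''$ and $\star$-pullbacks with push-forward established in Prop.~\ref{prop:derivatives_delta_form}), one identifies the two currents after applying $g_*$. By Lem.~\ref{lem:faithfulness_lemma} (finite fibers of $g$), agreement of all such push-forwards forces $f^*(\varphi^\star\gamma) = (\varphi\circ f)^\star\gamma$. In particular $f^*(\omega)$ is locally of the form $(\varphi\circ f)^\star\gamma$, hence a $\delta$-form by Def.~\ref{def:delta_form_tropical_space}~(1).

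The main obstacle I expect is the bookkeeping around boundaries: the identity \eqref{eq:realization} only holds away from $g(\partial K)$, and one must make sure that when passing between $K\subseteq X$ and its image $f(K)\subseteq Y$ (itself needing a compact polyhedral neighborhood, realized via a refinement of $\varphi$), the various "bad loci" $g(\partial K)$, $f(\partial K)$ and their $\varphi$-images line up so that the projection formula \eqref{eq:projection_formula_flat_pullback} can legitimately be applied on the relevant open subset. The cleanest route is probably to first treat the case where $\gamma$ is trihomogeneous (all operations being trihomogeneous, this loses nothing), choose one adapted polyhedral complex structure on $K$ subordinate simultaneously to $g$, $f$, $\mu$, $\partial K$ and to the chosen data defining $\varphi^\star\gamma$ on the $f$-image, write everything out in terms of weighted polyhedra and fiber weights $\mu/\nu$ as in the proof of Prop.~\ref{prop:pull_back_polycurrent_flat}, and then match coefficients polyhedron by polyhedron — the flatness hypothesis being exactly what guarantees the fiber weights are well-defined (Prop.~\ref{prop:flatness_characterization}) so that both sides are honest polyhedral currents and the local matching makes sense.
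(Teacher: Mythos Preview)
Your first approach---verifying the characterizing identity \eqref{eq:realization} via the projection formula \eqref{eq:projection_formula_flat_pullback}---has a gap you do not identify. The projection formula relates $f_*$ on $X$ with multiplication on $Y$, but the identity \eqref{eq:realization} you need to verify involves $g_*$ for a map $g:K\to\mathbb{R}^s$ with finite fibers on $X$. Since $f$ has positive relative dimension in general, $g$ does not factor through $Y$, and there is no direct mechanism linking $g_*(f^*\varphi^\star\gamma)$ to the defining data of $\varphi^\star\gamma$ on $Y$. The obstacle is not ``bookkeeping around boundaries''; it is that the test maps $g$ on $X$ and the test maps $h$ on $Y$ live in different worlds, and the projection formula alone does not bridge them.

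Your second approach---match coefficients polyhedron by polyhedron after choosing subordinate complex structures---is exactly what the paper does, and you are right that flatness enters through well-defined fiber weights. What you are missing is the concrete structural input that makes the matching succeed: for a codimension-$c$ polyhedron $\sigma\in\mathcal{X}^c$ with $d:=\dim\sigma-\dim f(\sigma)=n-m$, one chooses additional linear coordinates $s=(s_1,\dots,s_d)\in L_X(X)^d$ so that $(s,\varphi\circ f)$ is injective on $\sigma$, and then shows that near $(s,\varphi\circ f)(\sigma^\circ)$ the pushed-forward cycle $(s,\varphi\circ f)_*(X_\sigma,\mu)$ is the \emph{product} $(\mathbb{R}^d, s(\mu/\nu))\times\varphi_*(Y_\tau,\nu)$. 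This local product structure (the paper's \eqref{eq:local_structure_flat}) is the heart of the argument: it lets you read off the $\sigma$-coefficient of $(\varphi\circ f)^\star\gamma$ as the pullback of the $\tau$-coefficient of $\varphi^\star\gamma$, which is precisely the $\sigma$-coefficient of $f^*(\varphi^\star\gamma)$ given by \eqref{eq:pull_back_formula_flat}. The case $d<n-m$ is handled separately by observing that $\tau=f(\sigma)$ then has codimension $<c$ in $Y$, so it contributes nothing to either side.
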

\begin{proof}
Say $\dim X = n$ and $\dim Y = m$. The claim is local on $X$ and $Y$, so we may assume them to be polyhedral sets and further that $ω = φ^\star(γ)$ with $φ$ having finite fibers. We may also assume the existence of a linear map with finite fibers $ψ$ on $X$. The claim is also linear in $ω$, so we may assume it to be of tridegree $(p,q,c)$. Let $\mcX$ and $\mcY$ denote polyhedral complex structures for $X$ resp. $Y$ that are subordinate to their tropical space structures and to $f$. We also require them to be subordinate to $ψ$ resp. $φ$ and impose that $φ(\mcY)$ is part of a polyhedral complex subordinate to $γ$. Then $\mcX$ will also be subordinate to both $f^*(ω)$ and $(φ\circ f)^\star(γ)$ and the confirmation of their equality becomes a matter of comparing their coefficients for every $σ\in \mcX^c$.

Given such $σ$, set $τ = f(σ)$. The definition of flatness (Def. \ref{def:flat}) implies that $d = \dim σ - \dim τ \leq n - m$ and we treat the two cases $d < n-m$ and $d = n-m$ separately.

If $d < n-m$, then $\codim(τ) < c$, so $τ^\circ \cap \Supp(φ^\star(γ)) = \emptyset$. It follows that the coefficient of $σ$ in $f^*ω$ vanishes. (Alternatively, one can apply formula \eqref{eq:pull_back_formula_flat} to see this vanishing.)

Assume $d = n-m$ from now on. Put $X_σ = \bigcup_{σ\subseteq ρ\in \mcX} ρ$ and $Y_τ = \bigcup_{τ\subseteq ρ\in \mcY} ρ$, which are polyhedral neighborhoods of $σ^\circ$ and $τ^\circ$. Also choose a tuple of linear functions $s = (s_1, \ldots, s_d)\in L_X(X)^d$ such that $(s, φ\circ f)\vert_σ$ is injective. Write $(\ob{X_σ}, \ob{μ}) = (s, φ\circ f)_*(X_σ, μ)$ and $(\ob{Y_τ}, \ob{ν}) = φ_*(Y_τ, ν)$. Consider the commutative diagram
\begin{equation}\label{eq:diagram_flat_pull_back}
\xymatrix{
X_σ \ar[r]^{(s, φ\circ f)} \ar[d]_f & \ob{X_σ} \ar[d]^{p_2} & \subseteq \mbR^d\times \mbR^r\\
Y_τ \ar[r]^{φ} & \ob{Y_τ} & \subseteq \mbR^r.}
\end{equation}
For every $ρ\in \mcX$ that contains $σ$, we have $\dim ρ - \dim f(ρ) \leq d$ by flatness. But we also have $\dim ρ - \dim f(ρ) \geq \dim σ - \dim τ = d$, so we in fact have equality $\dim ρ - \dim f(ρ) = d$. It follows that the two polyhedral sets $\mbR^d \times \ob{Y_τ}$ and $\ob{X_σ}$ agree on a neighborhood of $(s, φ\circ f)(σ^\circ)$. Also taking weights into account and using the flatness, we find that
\begin{equation}\label{eq:local_structure_flat}
(\ob{X_σ}, \ob{μ})\cap U = \big((\mbR^d, s(μ/ν)) \times (\ob{Y_τ}, \ob{ν})\big)\cap U
\end{equation}
for some open neighborhood $U$ of $(s, φ\circ f)(σ^\circ)$.
We are now equipped to compare the $σ$-contributions to $f^*ω$ and $(φ\circ f)^\star(γ)$. Assume $α\wedge [φ(τ), ε]$ is the $φ(τ)$-contribution to $\ob{Y_τ}\wedge γ$ near $φ(τ^\circ)$. By \eqref{eq:local_structure_flat}, the contribution of $(s, φ\circ f)(σ)$ to $\ob{X_σ} \wedge p_2^*γ$ is then $p_2^*(α) \wedge [σ, s(μ/ν)\wedge ε]$. We obtain that the $τ$-contribution to $φ^\star(γ)$ is $φ^*(α)\wedge [τ, φ^{-1}(ε)]$ while the $σ$-contribution to $(s, φ\circ f)^\star(γ)$ is $(s, φ\circ f)^*(p_2^*α) \wedge [σ, μ/ν \wedge f^{-1}(ε)]$. This last expression equals $f^*(φ^*α)\wedge [σ, μ/ν\wedge f^{-1}(ε)]$, which is the $σ$-contribution to $f^*ω$. This is exactly what we needed to show.
\end{proof}

\begin{ex}\label{ex:delta_forms}
\begin{enumerate}[wide, labelindent=0pt, labelwidth=!, label=(\arabic*), topsep=4pt, itemsep=4pt]
\item The $δ$-forms on $\mbR^r$ of tridegree $(p,q,0)$ are precisely the currents of the form $η\wedge [\mbR^r,μ_{\mr{std}}]$, where $η\in PS^{p,q}(\mbR^r)$ is a piecewise smooth form. If $f:X\to \mbR^r$ is a linear map from a tropical space and $η$ as before, then $f^\star(η\wedge [\mbR^r, μ_{\mr{std}}]) = f^*η\wedge [X,μ]$. By definition, all $δ$-forms of tridegree $(p,q,0)$ on $X$ are locally of this form. In particular, we have a natural way to view $B^{p,q,0} \subseteq PS^{p,q}$ as subsheaf. Since not all pws forms on $X$ need to come via pullback along linear functions from some $\mbR^r$, not even locally, the inclusion is strict in general. Note that for every pws form $η$ on $X$, the polyhedral current $η\wedge [X, μ]$ is balanced by the tropical space property. In particular, being a $δ$-form is in general a strictly stronger property than being balanced.

\item Given a pwl function $ϕ\in Λ(X)$, one may define the associated \emph{Weil divisor} $\mr{div}(ϕ)\in P^{0,0,1}(X)$ in the usual way, see e.g. \cite{AR}*{\S6}: Working locally, assume that $X = |\mcX|$ for a polyhedral complex $\mcX$ that is subordinate to $ϕ$ and the tropical space structure. Then $\mr{div}(ϕ)$ will also be subordinate to $\mcX$. Given a face $τ\in \mcX^1$, pick an auxiliary weight $ν\in \det N_τ$ and (locally) a linear $ϕ_τ\in L$ with $ϕ_τ\vert_τ = ϕ\vert_τ$. The existence of such $ϕ$ is ensured by the axioms of sheaves of linear functions, cf. Def. \ref{def:sheaf_of_linear_functions}. The multiplicity of $τ$ in $\mr{div}(ϕ)$ is then
\begin{equation}\label{eq:corner_locus_defn}
\sum_{τ\subseteq σ\in \mcX_n}\frac{\partial (ϕ-ϕ_τ)\vert_σ}{\partial n_{σ,τ}} \wedge [τ, ν].
\end{equation}
In particular, $ϕ$ is harmonic if and only if $\mr{div}(ϕ) = 0$. Moreover, the Weil divisor may also be described as $\mr{div}(ϕ) = d'd''(ϕ\wedge [X,μ])$. Thus if $ϕ$ is a $δ$-form, then $\mr{div}(ϕ)$ is one as well.

\item It would be useful to also have a pushforward for $δ$-forms. Given a flat linear map $f:X\to Y$ of tropical spaces and some $γ\in B_c(X)$, one may consider $f_*(γ)$, which is a polyhedral current and satisfies the projection formula with respect to $f^*$. It is always balanced but need not be a $δ$-form. Simple examples for this can be constructed from the identity map $f:(X,μ,\ob L) \to (X,μ, L)$ when $\ob L \neq L$, but this is possibly not the only obstruction.
\end{enumerate}
\end{ex}

\section{Skeletons}

Fix a complete non-archimedean field $k$ together with an additive valuation $v:k\to \mbR\cup\{\infty\}$. We allow $v$ to be trivial. By \emph{analytic space over $k$} or simply \emph{analytic space}, we mean a Hausdorff $k$-analytic space in the sense of Berkovich \cite{Ber1}. (These are the \emph{good} analytic spaces from \cite{Ber2}.)

For an analytic space $X$ and $x\in X$, we denote by $\mcO_{X,x}$ the local ring in $x$. We write $v_x:\mcO_{X,x}\to \mbR\cup\{\infty\}$ for the valuation associated to $x$. Our notation for the boundary of $X$ is $\partial X$, cf. \cite{Ber2}*{§1.5.4} for its definition.

\subsection{Skeletons and Tropicalization}
We work with skeletons and tropicalizations in the sense of Ducros \cites{Duc_early, Duc_local} and Chambert-Loir--Ducros \cite{CLD}*{§2}. When we write $\mbG_m$ or $\mbG_m^r$, we always mean the analytic torus over $k$. An \emph{affine linear map} $\mbG_m^s\to \mbG_m^r$ is a morphism of the form
$$(t_1, \ldots, t_s) \mapsto (λ_i \prod_{j = 1}^s t_j^{n_{ij}})_{i = 1,\ldots,r}$$
where $(n_{ij})\in M_{r\times s}(\mbZ)$ and $(λ_1,\ldots,λ_r)\in (k^\times)^r$. The \emph{tropicalization map} for the torus is
$$t:\mbG_m^r\lr \mbR^r,\ \ \ x\longmapsto (v_x(t_1),\ldots,v_x(t_r)),$$
where $t_1,\ldots,t_r$ denote the standard coordinates. There is a standard way to define a continuous section for $t$, cf. \cite{CLD}*{(2.2.4)}, whose image is the \emph{standard skeleton} $\Sigma(\mbG_m^r)\subseteq \mbG_m^r$. We write $\Sigma(S)\subseteq \Sigma(\mbG^r_m)$ for the image of some subset $S\subseteq \mbR^r$ in the standard skeleton.

A \emph{moment map} or tuple of \emph{toric coordinates} for an analytic space $X$ is a morphism $f\colon X \to \mbG_m^r$ for some $r$. We call
$$t_f := t\circ f\colon X \lr \mbR^r$$
the \emph{tropicalization map} for $f$. We denote its image by $T'(X,f)$ and call it the \emph{tropicalization of $X$} with respect to $f$. A \emph{refinement} of a tuple of toric coordinates is a pair $(g,p)$ consisting of toric coordinates $g$, say with target $\mbG_m^s$, and an affine linear map $p:\mbG_m^s\to \mbG_m^r$ such that $f = p\circ g$.

\begin{thm}[\cite{Duc_local}*{Théorème 3.2}]\label{thm:trop_is_polyhedral}
Assume that $X$ is a compact $k$-analytic space of dimension $n$ and $f:X\to \mbG_m^r$ a tuple of toric coordinates. Then $T'(X,f)$ is a polyhedral set with $\dim T'(X,f)\leq n$. The image of the boundary $t_f(\partial X)$ is contained in a polyhedral set of dimension $\leq n - 1$. It is itself a polyhedral set if $X$ is affinoid.
\end{thm}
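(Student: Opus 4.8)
The plan is to reduce the statement to the case where $X$ is affinoid and then treat that case by the tropicalization theory of affinoid subvarieties of tori (Bieri--Groves, Berkovich, Gubler), the essential extra ingredient being a local ``piecewise-monomial'' normal form for $t_f$ coming from Temkin's reductions of germs, equivalently from Ducros's calculus of polytopes in Berkovich spaces.

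\emph{Step 1 (reduction to the affinoid case).} Since $X$ is compact it admits a finite cover $X = \bigcup_{i=1}^N X_i$ by affinoid domains with $\dim X_i \le n$. As $T'(X,f) = \bigcup_i t_f(X_i)$ and a finite union of polyhedral sets is polyhedral, it suffices to prove the assertions for each $X_i$; and since the relative boundary satisfies $\partial X \cap X_i \subseteq \partial X_i$ (\cite{Ber2}*{\S1.5.5}), we get $\partial X \subseteq \bigcup_i \partial X_i$, so $t_f(\partial X) \subseteq \bigcup_i t_f(\partial X_i)$ and the affinoid case of the boundary statement implies the general one. Assuming now $X = \mathcal M(A)$ affinoid, we may replace $A$ by $A_\red$ (this changes neither $t_f$, nor $\partial X$, nor $\dim X$) and then pass to the irreducible components of $X$ (finitely many, each closed, hence $\partial X_j = X_j \cap \partial X$ since closed immersions are boundaryless), so that $A$ becomes a reduced domain of dimension $d := \dim X \le n$. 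Finally $|f_i|$ is bounded above and below on the compact $X$, so $f$ factors through a polyannulus $P \subseteq \mbG_m^r$; letting $B \subseteq A$ be the affinoid subalgebra topologically generated by $f_1^{\pm1},\dots,f_r^{\pm1}$, the map $X \to \mathcal M(B)$ is surjective (whence $\dim \mathcal M(B) \le d$), $t_f$ factors through it, and $\mathcal M(B) \hookrightarrow P$ is a closed immersion. Thus we are reduced to the case that $X$ is a closed analytic subvariety of a polyannulus in $\mbG_m^r$ of dimension $d \le n$ with $f$ the inclusion.

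\emph{Step 2 (the affinoid case).} For polyhedrality, the approach is a local normal form for $t_f$: for $x \in X$, a Noether-normalization fibration $\varphi\colon V \to \mathbb D^d$ of a small affinoid neighbourhood $V$ of $x$, together with the Newton polygons of integral equations satisfied by the $f_i$ over the polydisc algebra, should exhibit $t_f(V)$ near $t_f(x)$ as a finite union of polyhedra cut out by affine-linear inequalities in $v_x(f_1),\dots,v_x(f_r)$ whose coefficients are valuations of the coefficients of those integral equations; this is the place where one invokes Temkin's description of the graded reduction $\widetilde{\mathcal H(x)}$ (equivalently, Ducros's polytopes). Compactness of $X$ then lets these locally finite local pieces assemble into a polyhedral set, namely $T'(X,f)$. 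For the dimension bound one notes that $T'(X,f)$ is the closure of the images of the (dense) Abhyankar points, and that at an Abhyankar point $x$ the local dimension of the image is governed by $\trdeg_{\widetilde k}\widetilde{\mathcal H(x)}\le d$; hence $\dim T'(X,f)\le d\le n$. Equivalently, $T'(X,f)\subseteq \operatorname{Trop}(Z)$ where $Z$ is the Zariski closure of $X$ in the torus, and $\operatorname{Trop}(Z)$ is polyhedral of dimension $\dim Z = d$ by Bieri--Groves.

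\emph{Step 3 (the boundary) and the main obstacle.} Here the plan is to induct on $d$: in the local normal form of Step 2, $\partial X$ lies in the locus where the fibration $\varphi$ degenerates, which is covered by compact $k$-analytic spaces of dimension $\le d-1$; applying the induction hypothesis together with Step 1 to these, and keeping track of polyhedrality in the affinoid case, yields that $t_f(\partial X)$ is a polyhedral set of dimension $\le d-1\le n-1$. The step I expect to be the main obstacle is Step 2: although $T'(X,f)$ is visibly a compact subset of the polyhedral set $\operatorname{Trop}(Z)$, a compact subset of a polyhedral set need not be polyhedral, so one genuinely needs the local piecewise-monomial normal form for $t_f$ — this is the technical core of \cite{Duc_local}*{Théorème 3.2}, and essentially everything else is bookkeeping with polyhedral sets, dimensions and Berkovich boundaries.
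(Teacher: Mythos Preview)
The paper does not give its own proof of this statement: it is quoted directly from Ducros \cite{Duc_local}*{Théorème 3.2} and used as a black box throughout \S3. There is therefore nothing in the paper to compare your proposal against.

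As a sketch of how Ducros's argument goes, your outline is broadly on target, and you correctly flag that the technical core is the local piecewise-monomial normal form (your Step~2), which you do not actually carry out. That is fine as a literature summary, but it is not an independent proof.

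One genuine issue in your reductions: in Step~1 you replace the affinoid $X=\mcM(A)$ by $\mcM(B)$, where $B$ is the affinoid subalgebra generated by the $f_i^{\pm1}$, and declare yourself reduced to a closed subvariety of a polyannulus. This is harmless for the polyhedrality and dimension of $T'(X,f)$, since $t_f$ factors through $\mcM(B)$. But it is \emph{not} harmless for the boundary statement: the map $X\to\mcM(B)$ need not take $\partial X$ into $\partial\mcM(B)$, and conversely $\partial\mcM(B)$ has nothing a priori to do with the original $\partial X$. So the boundary claim cannot be reduced in this way; you must treat $\partial X$ for the original affinoid. Your Step~3 seems to implicitly acknowledge this (it invokes the Noether normalization $\varphi$ on the original $X$), but then the sentence ``we are reduced to the case that $X$ is a closed analytic subvariety of a polyannulus'' at the end of Step~1 is misleading and should be restricted to the first two assertions of the theorem only. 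The actual mechanism Ducros uses for $\partial X$ is closer to what the present paper exploits later (see the proof of the claim around \eqref{eq:polyhedra_containment_boundary}): one covers $\partial X$ by finitely many fibers $\varphi^{-1}(\eta_s)$ of maps $\varphi:X\to\mbA^1$ over Gauss points, each such fiber being a compact $\mcH(\eta_s)$-analytic space of dimension $\le n-1$, and then applies the polyhedrality statement over the larger base field.
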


Ducros also studied the local geometry of $t_f$ in terms of the $\mbR$-graded residue field $\wt{\mcH(x)}$ of $\mcO_{X,x}$ defined by Temkin \cites{Tem1, Tem2}. We briefly recall that an $\mbR$-graded field is, by definition, an $\mbR$-graded ring in which every nonzero \emph{homogeneous} element is invertible. We refer to \cite{Duc_local}*{\S0} for more background and further related notions. Here, we only recall that the graded residue field $\wt{\mcH(x)}$ is defined as
\begin{equation}\label{eq:recap_graded_residue_field}
\wt{\mcH(x)} = \bigoplus_{r\in \mbR} \mcO_{X,x,v\geq r} / \mcO_{X,x,v > r}
\end{equation}
where $\mcO_{X,x,v \geq r}$ resp. $\mcO_{X,x,v >r}$ denotes the subgroup of those functions whose valuation in $x$ is $\geq r$ resp. $>r$. The graded field $\wt{\mcH(x)}$ extends the graded residue field $\wt k$ of $k$ and we write
$$d(x) := \trdeg(\wt{\mcH(x)} / \wt k).$$
Given $m\in \mcO_{X,x}$ with $v_x(m) \neq \infty$, we denote by $\wt{m} \in \wt{\mcH(x)}$ the graded reduction
$$\wt{m} := \big(m \text{ mod } \mcO_{X,x,v >v_x(m)}\big).$$
It is a homogeneous unit of degree $v_x(m)$. If $v_x(m) = \infty$, then we put $\wt{m} = 0$.

Coming back to our toric coordinates $f = (f_1,\ldots,f_r)$, we may now for each $x\in X$ consider the graded subfield $\wt k( \wt {f_x})\subseteq \wt {\mcH(x)}$ generated by the graded reductions $\wt{f_{1,x}},\ldots,\wt{f_{r,x}}$ of the germs of the $f_i$ in $x$.
\begin{thm}[\cite{Duc_local}*{Théorème 3.4}]\label{thm:trop_local_structure}
Assume $X$ is a $k$-analytic space and $f:X\to \mbG_m^r$ a tuple of toric coordinates. Let $x\in X\setminus \partial X$ be any point away from the boundary. Then $x$ has an affinoid neighborhood $U$ in $X$ such that for every affinoid neighborhood $V$ of $x$ in $U$, the tropicalization $T'(V, f)$ is pure-dimensional at $t_f(x)$ with
$$\dim_{t_f(x)} T'(V, f) = \trdeg(\wt k(\wt{f_x}) / \wt k).$$
\end{thm}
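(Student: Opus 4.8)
I only sketch the strategy; the result is due to Ducros \cite{Duc_local}. Since the statement is local on $X$ and $x\notin\partial X$, I would first shrink $X$ to an affinoid neighbourhood $U=\mcM(A)$ of $x$ in which $x$ is an interior point; it then suffices to treat an arbitrary affinoid subneighbourhood $V\subseteq U$ of $x$, for which $T'(V,f)$ is a polyhedral set by Thm.~\ref{thm:trop_is_polyhedral}, so that ``$\dim_x$'' and pure-dimensionality refer to the germ of $T'(V,f)$ at the point $t_f(x)\in\mbR^r$. Write $e:=\trdeg(\wt k(\wt{f_x})/\wt k)$; the goal is $\dim_xT'(V,f)=e$ together with purity, for every such $V$.

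The plan is to control this germ through Temkin's graded reduction. The toric coordinates reduce to homogeneous units $\wt{f_{1,x}},\dots,\wt{f_{r,x}}$ of $\wt{\mcH(x)}$ which generate the graded subfield $\wt k(\wt{f_x})$, the residue field of a point $\xi$ of the graded torus $\Spec\wt k[t_1^{\pm1},\dots,t_r^{\pm1}]$, namely the image of the graded reduction of $x$ under the morphism induced by $f$. I would then prove two complementary facts. For the upper bound $\dim_xT'(V,f)\le e$, one checks that, the point $x$ being interior, every germ of path in $T'(V,f)$ issuing from $t_f(x)$ comes from the graded reduction and is therefore constrained to the tropicalization of the closure of $\xi$, which has dimension $\trdeg(\wt k(\wt{f_x})/\wt k)=e$ by the graded Bieri--Groves estimate; this is exactly the step in which $x\notin\partial X$ is essential, as the boundary would contribute spurious directions. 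For the reverse inequality one invokes graded Noether normalization: after a monomial automorphism of $\mbG_m^r$, a subtuple of $\wt{f_{1,x}},\dots,\wt{f_{r,x}}$ forms a graded transcendence basis of $\wt k(\wt{f_x})$ over $\wt k$, and the corresponding coordinates can then be deformed independently near $x$, producing $e$ linearly independent directions in $T'(V,f)$ at $t_f(x)$. Together these give $\dim_xT'(V,f)=e$; pure-dimensionality at $t_f(x)$ comes from the same graded-reduction analysis, every local branch of $T'(V,f)$ there being governed by the transcendence degree $e$.

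I expect the reverse inequality to be the main obstacle: producing honest points of $V$ near $x$ whose $f$-tropicalizations sweep out a full $e$-dimensional germ at $t_f(x)$. The key mechanism is that algebraically independent graded reductions impose no relation among the values $v_y(f_i)$, so that the corresponding coordinates really can be moved independently near the interior point $x$; making this precise is a Noether-normalization argument in the graded-affinoid setting, and it simultaneously delivers the purity of dimension. The upper bound and the remaining bookkeeping with graded reductions are comparatively routine, and for the complete argument I refer to \cite{Duc_local}.
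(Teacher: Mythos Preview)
The paper does not give its own proof of this theorem: it is quoted verbatim from Ducros \cite{Duc_local}*{Th\'eor\`eme 3.4} and used as a black box, with no proof environment following the statement. Your proposal is therefore not to be compared against anything in the paper; it is a sketch of how one might reprove Ducros's result. As such it is reasonable in spirit---the graded-reduction viewpoint via Temkin is indeed what drives Ducros's argument, and you correctly identify the role of the interiority hypothesis and of graded Noether normalization---but since the paper simply cites the result, the appropriate response is to do the same and refer to \cite{Duc_local} rather than attempt an independent proof.
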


Assume now that $X$ is compact and purely $n$-dimensional. We denote by $T(X,f) \subseteq T'(X,f)$ the $n$-dimensional locus, defined as the union of all $n$-dimensional polyhedra contained in $T'(X,f)$. Write $T = T(X,f)$ in the following. Chambert-Loir--Ducros \cite{CLD} use degrees of maps from $X$ to tori to endow $T$ with weights, which relies on the following basic fact.
\begin{lem}[\cite{CLD}*{§2.4.3}]\label{lem:automatic_finite_flatness}
Let $q:X→\mbG_m^n$ be a map and $w\in \Sigma(\mbG_m^n)$ such that $\partial X \cap q^{-1}(w) = \emptyset$. Then there exists an open neighborhood $w\in U$ such that $q^{-1}(U)→U$ is finite flat.
\end{lem}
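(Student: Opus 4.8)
The plan is to reproduce the argument of \cite{CLD}*{§2.4.3}, in three stages: reduce to a boundaryless morphism, establish finiteness near $w$, and then get flatness for free. First I would shrink the target. Since $X$ is compact, its boundary $\partial X$ is closed, hence compact, so $q(\partial X)$ is a compact — and therefore closed — subset of $\mbG_m^n$; by hypothesis $w\notin q(\partial X)$, so $U_0:=\mbG_m^n\setminus q(\partial X)$ is an open neighbourhood of $w$ with $q^{-1}(U_0)\cap\partial X=\emptyset$. Because $\mbG_m^n$, and hence its open subset $U_0$, has empty boundary over $k$, Berkovich's relation $\Int(Y/k)=\Int(Y/Z)\cap\psi^{-1}(\Int(Z/k))$ for a morphism $\psi\colon Y\to Z$ shows that the restricted morphism $q\colon q^{-1}(U_0)\to U_0$ is boundaryless; from now on I replace $X$ by $q^{-1}(U_0)$ and $\mbG_m^n$ by $U_0$. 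Next I would observe that the fibre $q^{-1}(w)$ is finite: as $w$ lies in the standard skeleton it is an Abhyankar point of $\mbG_m^n$ with $d(w)=n$, and for any $x\in q^{-1}(w)$ the graded field extension $\wt{\mcH(w)}\subseteq\wt{\mcH(x)}$ induced by $q$, together with additivity of transcendence degree, gives $d(x)=n+\trdeg(\wt{\mcH(x)}/\wt{\mcH(w)})$; since $X$ is purely $n$-dimensional, $d(x)\le\dim_x X=n$, forcing $\trdeg(\wt{\mcH(x)}/\wt{\mcH(w)})=0$ for every such $x$. Hence $q^{-1}(w)$, viewed as an $\mcH(w)$-analytic space, is $0$-dimensional, and being compact it is a finite set of points $\{y_1,\dots,y_m\}$.

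Next I would upgrade this to finiteness over a neighbourhood of $w$. By Berkovich's criterion that a morphism which is boundaryless at a point isolated in its fibre is finite near that point \cite{Ber2}, each $y_j$ has open neighbourhoods $\mcV_j\ni y_j$ and $\mcU_j\ni w$ with $\mcV_j\to\mcU_j$ finite and $q^{-1}(w)\cap\mcV_j=\{y_j\}$. Setting $\mcV=\big(\bigcup_j\mcV_j\big)\cap q^{-1}\big(\bigcap_j\mcU_j\big)$ one gets $q^{-1}(w)\subseteq\mcV$ and $\mcV\to\bigcap_j\mcU_j$ finite. Using once more that $X$ is compact, $q(X\setminus\mcV)$ is closed and does not contain $w$, so $U_1:=\big(\bigcap_j\mcU_j\big)\setminus q(X\setminus\mcV)$ is an open neighbourhood of $w$ with $q^{-1}(U_1)\subseteq\mcV$; hence $q\colon q^{-1}(U_1)\to U_1$ is finite.

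Finally, for flatness I would use that $w$ is an Abhyankar point of maximal degree $n$ in the regular, purely $n$-dimensional space $\mbG_m^n$, so that $\mcO_{\mbG_m^n,w}$ is a regular local ring of dimension $\dim_w\mbG_m^n-d(w)=0$, i.e.\ a field. Every module over it is flat, so $q$ is flat at each of the finitely many points of $q^{-1}(w)$. The flat locus of the finite (hence locally finitely presented) morphism $q\colon q^{-1}(U_1)\to U_1$ is open and thus contains an open neighbourhood of $q^{-1}(w)$; running the same compactness argument as before yields an open $U\subseteq U_1$ containing $w$ with $q^{-1}(U)$ inside the flat locus, so that $q^{-1}(U)\to U$ is finite and flat, as wanted. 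I expect the main technical point to be the passage in the middle paragraph from finiteness near each point of the fibre to finiteness over an entire neighbourhood of $w$ — the bookkeeping with the $\mcV_j$ and the essential use of compactness of $X$ to ensure $q(X\setminus\mcV)$ is closed; the two black-box inputs from Berkovich's theory (the interior/relative-interior identity and the inner-plus-quasi-finite finiteness criterion) must also be invoked with some care in the good-space setting.
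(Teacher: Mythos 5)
The paper does not supply a proof of this lemma --- it cites Chambert-Loir--Ducros \cite{CLD}*{§2.4.3}, and the generalization Lem.~\ref{lem:automatic_finite_flatness_II} is later said to be ``proved the same way'' --- so there is no in-paper argument to compare against. Your overall strategy (reduce to a boundaryless morphism; bound graded transcendence degrees to see the fibre over $w$ is discrete, hence finite by compactness; upgrade to finiteness over a neighbourhood of $w$; deduce flatness from the fact that $\mcO_{\mbG_m^n,w}$ is a field together with openness of the flat locus for finite morphisms) is sound and captures the content of the cited reference.

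There is, however, a genuine slip. In the first paragraph you ``replace $X$ by $q^{-1}(U_0)$'', which is an open and in general non-compact subspace of $X$, and then the second paragraph invokes ``once more that $X$ is compact'' to conclude that $q(X\setminus\mcV)$ is closed. That step is essential to your patching argument and literally fails for the replaced $X$: an open subset of a compact space need not carry closed subsets to closed images. The fix is easy and preserves the rest of your argument: do not perform the replacement; only record that $q$ has no boundary on $q^{-1}(U_0)$ (which is all the reduction step buys you), and carry out the patching inside the original compact $X$, where $X\setminus\mcV$ is compact and $w\notin q(X\setminus\mcV)$ because $q^{-1}(w)\subseteq\mcV$. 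Alternatively, the point-by-point bookkeeping can be bypassed entirely by a single invocation of \cite{Ber2}*{Prop.~3.1.4, equivalence of (a) and (c)} --- a topologically proper map with finite boundaryless fibre over $w$ is finite over a neighbourhood of $w$ --- which is precisely how the present paper uses this input in the proof of Prop.~\ref{prop:reg_sequence_statement}. Also note that your final ``same compactness argument'' is unnecessary: once $q\colon q^{-1}(U_1)\to U_1$ is finite it is a closed map, so removing $q\bigl(q^{-1}(U_1)\setminus W\bigr)$ from $U_1$, with $W$ the open flat locus, already yields the desired $U$ without appealing to compactness.
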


\begin{defn}[Weights on $T(X,f)$ after \cite{CLD}*{§3.5}]\label{def:weights_on_trop}
Choose a polyhedral complex structure $\mcT$ on $T$ such that $t_f(\partial X)$ is contained in its $(n-1)$-skeleton. For $σ\in \mcT_n$, set $X(σ^\circ):=t_f^{-1}(σ^\circ)$. Choose a surjective affine linear morphism $q:\mbG_m^r→\mbG_m^n$ such that $q\vert_σ$ is injective. Here, $q:\mbR^r\to \mbR^n$ also denotes the induced map on tropicalizations. The composite map
\begin{equation}\label{eq:fin_flat_trop}
q\circ f:X(σ^\circ)→\mbG_m^n(q(σ^\circ))
\end{equation}
is then finite flat of some degree $d_{σ,q}$ over a neighborhood of $\Sigma(q(σ^\circ))$ by Lem. \ref{lem:automatic_finite_flatness}. We endow $σ$ with weight
$µ_σ := d_{σ,q} \cdot q^{-1}(µ_{\mr{std}})$.
\end{defn}
The independence of $μ_σ$ from the choice of $q$ has been proved in \cite{GJR}. Namely, $μ_σ$ is invariant under extension of the base field, so we may assume $k$ to be nontrivially valued. Then the independence from $q$ is \cite{GJR}*{Prop. 4.6}, combined with \cite{GJR}*{(4.1.2)} and \cite{GJR}*{Rmk. 4.9}.

\begin{thm}[\cite{CLD}*{Thm. 3.6.1}]\label{thm:trop_is_trop}
$(T(X,f), µ)$ is an $n$-dimensional tropical cycle away from $t_f(\partial X)$.
\end{thm}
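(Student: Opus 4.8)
\textbf{Proof proposal for Theorem \ref{thm:trop_is_trop}.}

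The plan is to deduce the balancing condition for $(T(X,f),\mu)$ from the divisor case, which is essentially \cite{CLD}*{4.6.6}, by a slicing argument that reduces codimension-one behavior of the weighted polyhedral set to the known one-dimensional statement. First I would fix a polyhedral complex structure $\mathcal{T}$ on $T = T(X,f)$ as in Def. \ref{def:weights_on_trop}, with $t_f(\partial X)$ in its $(n-1)$-skeleton, and let $\tau \in \mathcal{T}_{n-1}$ be a face not contained in $t_f(\partial X)$. The balancing condition \eqref{eq:balanced} to be checked asks that for every affine linear $\varphi$ with $\varphi|_\tau$ constant, one has $\sum_{\tau \subset \sigma \in \mathcal{T}_n} \partial(\varphi|_\sigma)/\partial n_{\sigma,\tau} = 0$, where the normal vectors $n_{\sigma,\tau}$ are taken with respect to the weights $\mu_\sigma$ and an auxiliary weight $\mu_\tau$.

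The key step is to choose, near an interior point $w \in \tau^\circ$, a surjective affine linear morphism $q : \mathbb{G}_m^r \to \mathbb{G}_m^{n-1}$ such that $q|_\tau$ is injective and such that $q$ also maps each adjacent $\sigma$ to an $(n-1)$-dimensional polyhedron (a generic choice works since each $\sigma$ has dimension $n$ and we only need injectivity on a single codimension-one face's worth of directions). By Lem. \ref{lem:automatic_finite_flatness}, after shrinking we get a finite flat morphism $q\circ f : t_f^{-1}(U) \to \mathbb{G}_m^{n-1}(U')$ over a neighborhood of $\Sigma(q(\tau^\circ))$. Base changing along $\Sigma(q(\tau^\circ))$ (or a suitable analytic point/disc mapping into it), the fiber $Y := (q\circ f)^{-1}$ of a generic point of the skeleton is a compact curve over a complete extension field, and the remaining coordinate of $f$ that $q$ forgot restricts to a moment map $g : Y \to \mathbb{G}_m$. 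The tropicalization $T(Y,g)$ is then the link of $\tau$ inside $T$, i.e. the $1$-dimensional fan of rays corresponding to the edges $\sigma \supset \tau$, with multiplicities; the compatibility of the weights $\mu_\sigma$ with restriction to fibers is exactly the content of the independence-of-$q$ statement from \cite{GJR} cited after Def. \ref{def:weights_on_trop}, together with \cite{GJR}*{(4.1.2)}. The balancing of $T(Y,g)$ as a $1$-dimensional weighted set — equivalently, the statement that the sum of primitive slopes weighted by the $\mu_\sigma$ vanishes — is precisely the curve case, which is \cite{CLD}*{4.6.6} (the divisor intersection / harmonicity statement for the tropicalization of a curve). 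Applying this at $w$ for each $\varphi$ (testing against the finitely many coordinate directions suffices) yields \eqref{eq:balanced}.

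Finally I would package this: purity of $T$ in dimension $n$ away from $t_f(\partial X)$ is immediate from its definition as the $n$-dimensional locus together with Thm. \ref{thm:trop_local_structure}; the weights $\mu_\sigma$ glue correctly along $(n-1)$-faces again by the $\mathrm{GJR}$ independence statement; and balancing is what we just proved. The main obstacle I expect is the reduction to the curve case: one must verify carefully that, after choosing $q$ and passing to a fiber $Y$ over a point of the skeleton $\Sigma(q(\tau^\circ))$, (i) the fiber $Y$ is genuinely a compact analytic curve with $\partial Y$ not meeting the relevant point (so that Lem. \ref{lem:automatic_finite_flatness} and the curve tropicalization theory apply — this uses that $\tau \not\subseteq t_f(\partial X)$), and (ii) the weights on $T(Y,g)$ agree with the restrictions of the $\mu_\sigma$. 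Point (ii) is where one genuinely needs the results of Gubler--Jell--Rabinoff rather than just \cite{CLD}, since the original definition in \cite{CLD} does not a priori come with the compatibility under slicing that makes the induction on dimension work. Granting these, the balancing in codimension one follows formally from the already-known one-dimensional (metrized graph) balancing, and the theorem is proved.
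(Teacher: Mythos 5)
You have constructed an argument for a statement that the paper does not actually prove: Thm.~\ref{thm:trop_is_trop} is stated as a citation of Chambert-Loir--Ducros \cite{CLD}*{Thm.\ 3.6.1} and no proof (or proof sketch) appears anywhere in the paper; the proof of Cor.~\ref{cor:skeleton_is_tropical} explicitly says \emph{``the first statement is proved by Chambert-Loir--Ducros in their already cited \cite{CLD}*{Thm.\ 3.6.1}.''} So there is no ``paper's approach'' to compare against, and the task here is really to assess your reconstruction of the Chambert-Loir--Ducros argument.

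Your slicing strategy has a genuine circularity problem as written. You cite \cite{CLD}*{4.6.6} as supplying the $1$-dimensional base case, but within the paper's usage that proposition is the homogeneity statement for tropicalizations of vanishing loci of regular sequences (the prototype of Prop.~\ref{prop:reg_sequence_statement}), not the balancing/harmonicity property of tropicalizations of curves. The correct base case is exactly the $n=1$ instance of the theorem you are trying to prove. To avoid circularity you would need to either invoke the metrized-graph harmonicity results of Chinburg--Rumely, Zhang and Thuillier for Berkovich curves, or give an independent proof in dimension one; as written, your induction has no anchor. There is a second, smaller issue: after projecting by a generic $q:\mathbb{R}^r \to \mathbb{R}^{n-1}$, the fibers over $\Sigma(q(\tau^\circ))$ are curves and the residual coordinates number $r-(n-1)$, not one; to see the full link of $\tau$, including enough directions to test every affine linear $\varphi$ vanishing on $N_\tau$, you must tropicalize the fiber by all residual coordinates, not by a single $g:Y\to\mathbb{G}_m$. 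You also correctly flag, but do not resolve, the compatibility of the Gubler--Jell--Rabinoff weights with slicing (your point (ii)); this is the substantive technical issue, and leaving it unproved leaves the balancing computation incomplete. Finally, it is worth saying that \cite{CLD} do not establish Thm.~3.6.1 by slicing to the curve case; their argument proceeds directly via the finite-flat-degree description of the weights (Lem.~\ref{lem:automatic_finite_flatness} and their Lemme~3.5.7), showing that the sum of weighted normal vectors across a facet $\tau$ vanishes because the total degree of the finite flat moment map is locally constant around $\tau$. That approach avoids both the base-case problem and the weight-compatibility issue, because it never needs to pass to a fiber.
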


\begin{rmk}\label{rmk:comparison_of_weight_definitions}
The discussion so far extends to not necessarily compact $X$ if we instead assume that $t_f$ is a compact map. In particular, Chambert-Loir--Ducros' definition turns the tropicalization of the analytification of a closed subvariety $(X^{\mr{alg}})^{\mr{an}}\subseteq \mbG_m^r$ into a tropical cycle. It is \cite{Gub_forms_currents}*{Proposition 7.11} that their weights agree with the ones defined through initial degenerations by Speyer \cite{Speyer_thesis}, at least if $v(k^\times)\neq \{0\}$.
\end{rmk}

We next introduce the skeleton $\Sigma(X,f)$, whose definition is due to Ducros, cf. \cites{Duc_early, Duc_local}. The definition is moreover local on $X$, so we may drop the compactness assumption on $X$. First consider the closed subspace
\begin{equation}\label{eq:def_skeleton}
Σ'(X,f) = \bigcup_{q:\mbG_m^r→\mbG_m^n\ \text{affine linear}} (q\circ f)^{-1} (\Sigma(\mbG_m^n)).
\end{equation}
By \cite{Duc_local}*{(0.13)}, the standard skeleton $\Sigma(\mbG_m^n)$ has the description
\begin{equation}\label{eq:alternative_description_std_skeleton}
\Sigma(\mbG_m^n) = \{x\in \mbG_m^n \mid \trdeg(\wt k(\wt{t_1},\ldots,\wt{t_n}) / \wt k) = n\}.
\end{equation}
This identity immediately implies the alternative description of the skeleton as
\begin{equation}\label{eq:skeleton_characterization_residue_fields}
Σ'(X,f) = \{x\in X\mid \trdeg(\wt k(\wt {f_x})/\wt k) = n\}.
\end{equation}
Recall that points $x\in X$ with $d(x) = n$ are called \emph{Abhyankar} points. Thus, the set of Abhyankar points may be thought of as the union of all (local) skeletons.

Moreover, we see that $\Sigma'(X, f) = \emptyset$ if $r<n$. If $r\geq n$, then \eqref{eq:skeleton_characterization_residue_fields} shows that it suffices to take the union in \eqref{eq:def_skeleton} only over the finitely many projection maps.

Ducros' \cite{Duc_local}*{Théorème 5.1} states that $\Sigma' := \Sigma'(X,f)$ is naturally a pwl space of dimension $\leq n$. Its pwl structure is uniquely characterized by the property that for every compact analytic domain $U\subseteq X$ and $g\in \mcO^\times_X(U)$, the intersection $U\cap \Sigma'\subseteq \Sigma'$ is a closed pwl subspace and the value function $v(g):s\mapsto v_s(g)$ is pwl. More precisely, varying such $U$ and $g$ defines the sheaf of pwl functions on $\Sigma'$. In particular, if $t_f$ is a proper map, the restriction of $t_f$ to $\Sigma'(X, f)$ defines a pwl map
\begin{equation}\label{eq:map_skeleton_to_trop}
t_f: \Sigma'(X, f) \lr T'(X,f).
\end{equation}

\begin{prop}\label{prop:skeleton_pure_dim}
Assume that $X$ and $f$ are as before. The pwl space $\Sigma'(X, f)\setminus \partial X$ is purely of dimension $n$.
\end{prop}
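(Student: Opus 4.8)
The plan is to establish the local statement: for every $x\in\Sigma'(X,f)$ with $x\notin\partial X$ one has $\dim_x\Sigma'(X,f)=n$. This implies the proposition, since $\partial X$ is closed, so $\Sigma'(X,f)\setminus\partial X$ is an open pwl subspace of $\Sigma'(X,f)$ and its local dimension at such an $x$ equals $\dim_x\Sigma'(X,f)$. As the assertion is local on $X$, and since $X\setminus\partial X$ is again purely $n$-dimensional, has empty boundary, and carries $\Sigma'(X,f)\setminus\partial X$ as its $f$-skeleton (by the residue-field description \eqref{eq:skeleton_characterization_residue_fields}), I may and do assume $\partial X=\emptyset$ from now on. The inequality $\dim_x\Sigma'(X,f)\le n$ is then part of \cite{Duc_local}*{Théorème 5.1}.

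For the reverse inequality, fix $x\in\Sigma'(X,f)$. By \eqref{eq:skeleton_characterization_residue_fields}, $\trdeg(\wt k(\wt{f_x})/\wt k)=n$, so I may choose $J\subseteq\{1,\dots,r\}$ with $|J|=n$ such that the graded reductions $\wt{f_{j,x}}$, $j\in J$, are algebraically independent over $\wt k$. Let $q=q_J\colon\mbG_m^r\to\mbG_m^n$ be the associated coordinate projection and set $g:=q\circ f\colon X\to\mbG_m^n$. Then $w:=g(x)$ lies in the standard skeleton $\Sigma(\mbG_m^n)$ by \eqref{eq:alternative_description_std_skeleton}, and $x$ belongs to the pwl subspace $\Sigma_J':=g^{-1}(\Sigma(\mbG_m^n))$, which is one of the pieces making up $\Sigma'(X,f)$ in \eqref{eq:def_skeleton}. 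Since $\partial X=\emptyset$, Lem. \ref{lem:automatic_finite_flatness} applies to $g$ and $w$ and produces an open neighbourhood $U\ni w$ in $\mbG_m^n$ such that $g^{-1}(U)\to U$ is finite flat, hence an open map of underlying topological spaces.

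The key step is to propagate openness to the skeleton. Writing $\Sigma:=\Sigma(\mbG_m^n)$, I claim the induced map $g\colon g^{-1}(U)\cap\Sigma_J'\to\Sigma\cap U$ is again open: indeed $g^{-1}(U)\cap\Sigma_J'=g^{-1}(\Sigma\cap U)$, and for an open subset $V=V'\cap g^{-1}(\Sigma\cap U)$ with $V'\subseteq g^{-1}(U)$ open one checks directly that $g(V)=(\Sigma\cap U)\cap g(V')$, which is open in $\Sigma\cap U$ since $g(V')$ is open in $U$. Now the tropicalization map identifies $\Sigma$ with $\mbR^n$ as pwl spaces (\cite{CLD}*{(2.2.4)}), so $\Sigma\cap U$ is a nonempty open subset of $\mbR^n$, in particular purely $n$-dimensional. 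Given any open neighbourhood $N$ of $x$ contained in the open neighbourhood $g^{-1}(U)\cap\Sigma_J'$ of $x$ in $\Sigma_J'$, the image $g(N)$ is a nonempty open subset of $\Sigma\cap U$, so $\dim g(N)=n$; since a pwl map does not raise dimension, $\dim N\ge\dim g(N)=n$. Hence $\dim_x\Sigma_J'\ge n$, and therefore $\dim_x\Sigma'(X,f)\ge n$, which together with the upper bound completes the argument.

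The step I expect to require the most care is making Lem. \ref{lem:automatic_finite_flatness} available and transferring its conclusion to the skeleton: one must reduce to $\partial X=\emptyset$ (which rests on the standard fact that Berkovich's relative interior $X\setminus\partial X$ has empty boundary) in order to know $\partial X\cap g^{-1}(w)=\emptyset$, and then one must pass from openness of the finite flat analytic map $g^{-1}(U)\to U$ to openness of its restriction over $\Sigma(\mbG_m^n)$. Once these two points are in place, the dimension count is purely formal.
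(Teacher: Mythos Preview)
Your argument is essentially correct and takes a genuinely different route from the paper's. The paper argues by contradiction: assuming $\dim_x\Sigma'(X,f)<n$, it finds an affinoid $V$ with $\Sigma'(V,f)=\emptyset$ but $\dim T'(V,f)=n$, and then uses Ducros' Thm.~\ref{thm:trop_local_structure} to cover $V$ by affinoids with low-dimensional tropicalization, forcing a contradiction. Your proof bypasses Thm.~\ref{thm:trop_local_structure} entirely: you pick a coordinate projection $g=q_J\circ f$ with $g(x)\in\Sigma(\mbG_m^n)$, invoke Lem.~\ref{lem:automatic_finite_flatness} to make $g$ finite flat (hence open) near $g(x)$, and deduce that the skeleton surjects openly onto an $n$-dimensional piece of $\Sigma(\mbG_m^n)$. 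This is more direct and makes the geometric reason visible: the skeleton literally sits as a finite cover over an open piece of $\mbR^n$. The paper's route, on the other hand, establishes along the way the useful characterization of $\Sigma'(X,f)\setminus\partial X$ as the locus where all small tropicalizations are $n$-dimensional.

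One point to tighten: Lem.~\ref{lem:automatic_finite_flatness} is stated in the paper under the standing hypothesis that $X$ is \emph{compact} (and purely $n$-dimensional); without compactness the fiber $g^{-1}(w)$ may be infinite and the conclusion fails. After you replace $X$ by $X\setminus\partial X$ you have lost compactness, so the lemma does not apply as written. The fix is routine and you should make it explicit: pass to an affinoid neighborhood $V$ of $x$ (so $V$ is compact and purely $n$-dimensional), note that $x\notin\partial V$ since $V$ is a neighborhood of $x$ in $X$ and $x\notin\partial X$, and shrink $V$ so that $g^{-1}(w)\cap V=\{x\}$ (possible because the fiber in the original affinoid is finite). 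Then Lem.~\ref{lem:automatic_finite_flatness} applies to $g|_V$ and the rest of your argument goes through verbatim with $V$ in place of $X$. Your openness transfer to the skeleton and the dimension count via pwl maps are fine.
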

\begin{proof}
By \eqref{eq:skeleton_characterization_residue_fields} and Thm. \ref{thm:trop_local_structure}, $\Sigma'(X, f)\setminus \partial X$ equals the set of those points $x\in X\setminus \partial X$ such that $T'(U, f)$ is $n$-dimensional for all affinoid neighborhoods $U$ of $x$. Moreover, we already mentioned that $\dim \Sigma'(X, f)\leq n$ (\cite{Duc_local}*{Théorème 5.1}).

We now prove the corollary by contradiction. If $\dim_x\Sigma'(X, f) < n$, then there exists an affinoid neighborhood $U$ of $x$ such that $\dim \Sigma'(U, f) < n$. Then $S = t_f(\Sigma'(U, f)) \cup t_f(\partial U)$ is a polyhedral set of dimension $\leq n-1$ and we may find an affinoid domain $V\subseteq U\setminus t_f^{-1}(S)$ that has the properties $\Sigma(V, f) = \emptyset$ and $\dim T'(V, f) = n$. We claim that this is impossible.

By assumption, every point $x\in V$ satisfies $\trdeg(\wt k(\wt{f_x}) / \wt k) < n$. Thus by \cite{Duc_local}*{Théorème 3.4}, any $x\in V$ has an affinoid neighborhood $V_x$ in $V$ with $\dim T'(V_x, f) < n$. Since $V$ is compact, finitely many such $V_x$ cover it and we obtain $\dim T'(V, f) < n$, in contradiction with our assumptions. This proves the claim and the proposition.
\end{proof}

We note two further consequences of \eqref{eq:skeleton_characterization_residue_fields} in conjunction with Thm. \ref{thm:trop_local_structure}. These can also be found in \cite{GJR}*{Prop. 3.12 (7) and Thm. 3.15}. First, the set $\Sigma'(X,f)\setminus \partial X$ depends only on the continuous map $t_f$. Second, if $q:\mbG_m^r\to \mbG_m^n$ is generic for $\Sigma'(X, f)$, then
\begin{equation}\label{eq:skeleton_away_from_boundary}
\Sigma'(X,f)\setminus \partial X = \Sigma'(X,q\circ f)\setminus \partial X.
\end{equation}
Here, generic means that $q\circ t_f$ has discrete fibers in $\Sigma'(X, f)$.

\begin{defn}\label{def:skeleton_as_tropical_space}
\begin{enumerate}[wide, labelindent=0pt, labelwidth=!, label=(\arabic*), topsep=4pt, itemsep=4pt]
\item The \emph{skeleton} of $f$ in $X$ is defined as the $n$-dimensional locus of $\Sigma'(X,f)$. It is denoted by $\Sigma(X,f)$. By Prop. \ref{prop:skeleton_pure_dim}, we always have
$$\Sigma'(X,f)\setminus \partial X = \Sigma(X,f)\setminus \partial X.$$

\item The skeleton becomes a weighted pwl space by the following definition. Working locally, we may assume $\Sigma(X,f)$ to be a polyhedral set. Choose a polyhedral complex structure for $\Sigma(X,f)$ such that $\Sigma(X,f) \cap\partial X$ is contained in its $(n-1)$-skeleton. For each $n$-dimensional polyhedron $σ$, pick a morphism $q:\mbG_m^r→\mbG_m^n$ such that $(q\circ f)\vert_σ$ is injective. Denoting by $d_{σ,q}$ the degree of $X$ over $\Sigma((q\circ f)(σ^\circ))\subseteq \Sigma(\mbG_m^n)$ (Lem. \ref{lem:automatic_finite_flatness}), endow $σ$ with the weight
$$μ_σ = d_{σ,q} \cdot t_{q\circ f}^{-1}(μ_{\mr{std}}).$$
This is independent of $q$ again by \cite{GJR}*{Prop. 4.6}.

\item Endow $\Sigma(X,f)$ with the sheaf of linear functions $L$ generated by all $t_{f_i},\ f =(f_1,\ldots,f_r)$.
\end{enumerate}
\end{defn}

\begin{rmk}\label{rmk:independence_weights}
The independence of $μ$ from $q$ in (2) also has the remarkable consequence that $(\Sigma(X,f), μ)$ does not depend on $f$ in the following sense. Whenever $g:X\to \mbG_m^s$ are further toric coordinates, then the weight of $\Sigma(X, f)$ as subspace of $\Sigma(X, f\times g)$ is also $μ$. Namely for any $n$-dimensional $σ\subseteq \Sigma(X,f)$ as above, the generic map $q:\mbG_m^{r+s}\to \mbG_m^n$ may be chosen to factor through $\mbG_m^r$. The argument more generally applies to any refinement $g$ of $f = p\circ g$. 
\end{rmk}

\begin{ex}\label{ex:dependence_on_functions}
We emphasize that the sheaf of linear functions in (3) is really an additional datum and cannot be constructed from the underlying weighted pwl space. We gave an abstract example in \ref{ex:trop_spaces} (3). Here is one of a different nature that involves an analytic space. Let $X = \mbA^1\setminus \{0, 1\}$ with coordinate $x$ and consider $f = x(x-1)$ and $g = (x, x-1)$. As pwl spaces, $\Sigma(X, f)$ and $\Sigma(X, g)$ are both the union of the three rays leading from the Gauss point of the unit disk around $0$ toward the three classical points $0$, $1$ and $\infty$. The weights on the three rays are $\ell^{-1}(μ_{\mr{std}})$, where $\ell = t_x$ or $t_{x-1}$ depending on the ray, and where $μ_{\mr{std}}$ is the standard weight on $\mbR$. There are strictly more linear functions on $\Sigma(X, g)$ than on $\Sigma(X, f)$.
\end{ex}

We finally discuss the relation of skeletons with tropicalizations. Assume again that $X$ is compact. One always has $t_f(\Sigma'(X,f))\subseteq T'(X,f)$ but the inclusion might be strict. For example, $\Sigma'(X,f) = \emptyset$ if $r < n$. For the $n$-dimensional loci however, we always have $t_f(\Sigma(X,f)) = T(X,f)$. The definitions of weights are compatible, providing
\begin{equation}\label{eq:rel_trop_skeleton}
T(X,f) = t_{f,*}\Sigma(X,f).
\end{equation}
\begin{cor}\label{cor:skeleton_is_tropical}
\begin{enumerate}[wide, labelindent=0pt, labelwidth=!, label=(\arabic*), topsep=4pt, itemsep=4pt]
\item Assume $X$ compact or, more generally, that $t_f$ is a compact map. Then $T(X,f)$ is a tropical cycle away from $t_f(\Sigma(X,f)\cap \partial X)$.

\item The triple $(\Sigma(X,f)\setminus \partial X, μ, L)$ is a tropical space.
\end{enumerate}
\end{cor}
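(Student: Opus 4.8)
The plan is to deduce part~(2) from Chambert-Loir--Ducros' Theorem~\ref{thm:trop_is_trop} via the intrinsic characterization of tropical spaces in Cor.~\ref{cor:equiv_char_trop_space}, and then to read off part~(1) by pushing forward the fundamental cycle of the skeleton along $t_f$. So for part~(2) it suffices, by Cor.~\ref{cor:equiv_char_trop_space}, to show that for every purely $n$-dimensional compact polyhedral set $K\subseteq \Sigma(X,f)\setminus\partial X$ and every linear map $h\colon K\to\mbR^{r'}$ the pushforward $h_*[K,\mu]$ is balanced away from $h(\partial K)$. Since the sheaf of linear functions on $\Sigma(X,f)$ is, by Def.~\ref{def:skeleton_as_tropical_space}, generated by the coordinate tropicalizations $t_{f_1},\dots,t_{f_r}$ (hence, being a sheaf of linear functions, by these together with the locally constant functions), on each connected component of $K$ the map $h$ is an affine linear function of $t_f|_K$; as balancedness is local and translation invariant, I may assume $h=p\circ(t_f|_K)$ for an affine linear $p\colon\mbR^r\to\mbR^{r'}$. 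Then $h_*[K,\mu]=p_*\big((t_f|_K)_*[K,\mu]\big)$, and since $p_*$ preserves polyhedrality and commutes with $d'$ and $d''$ (Prop.~\ref{prop:push_of_polyhedral}), it carries balanced polyhedral currents to balanced ones; so the task reduces to proving that $(t_f|_K)_*[K,\mu]$ is balanced away from $t_f(\partial K)$.

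The heart of the matter is then a localization. Fix $w\in\mbR^r\setminus t_f(\partial K)$; the fibre $t_f^{-1}(w)\cap K$ is a finite set $\{x_1,\dots,x_m\}$ lying in $K\setminus\partial K\subseteq\Sigma(X,f)\setminus\partial X$, so each $x_i$ lies in the interior of $K$ and, being off $\partial X$, admits affinoid neighbourhoods in whose relative interior it lies. Arguing as in the proof of Prop.~\ref{prop:skeleton_pure_dim} --- cutting out a Laurent-type domain by generic inequalities on the coordinates $t_{f_j}$ around their common value at the $x_i$, and using Thm.~\ref{thm:trop_local_structure} together with \eqref{eq:skeleton_characterization_residue_fields} to control Abhyankar points of the boundary --- I would produce a compact analytic domain $V\subseteq X$ such that $\{x_1,\dots,x_m\}\subseteq V\setminus\partial V$; such that $V\cap\Sigma(X,f)$ has the same germ as $K$ at each $x_i$; such that $\Sigma(V,f)\cap t_f^{-1}(w)=\{x_1,\dots,x_m\}$ (which is automatic once $V$ is small, since $\Sigma(V,f)=\Sigma(X,f)\cap V$ --- the graded residue field $\wt{\mcH(x)}$ is intrinsic to the point $x$ --- and $K$ is a neighbourhood of each $x_i$ in $\Sigma(X,f)$); and such that $\partial V\cap t_f^{-1}(w)=\emptyset$, so $w\notin t_f(\partial V)$. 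Granting such a $V$, the identity \eqref{eq:rel_trop_skeleton} for $V$ gives $T(V,f)=t_{f,*}[\Sigma(V,f),\mu]$, and this current agrees with $(t_f|_K)_*[K,\mu]$ on a neighbourhood of $w$ (both only see the common germs of $\Sigma(V,f)$ and $K$ along the finite fibre $\{x_i\}$, and the weights coincide by Rmk.~\ref{rmk:independence_weights}); since $w\notin t_f(\partial V)$ and $t_f(\partial V)$ is closed, Thm.~\ref{thm:trop_is_trop} shows $T(V,f)$ is a tropical cycle near $w$, hence $(t_f|_K)_*[K,\mu]$ is balanced near $w$. As $w$ was arbitrary, part~(2) follows.

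For part~(1), by part~(2) and Def.~\ref{def:trop_space} we have $d'[\Sigma(X,f)\setminus\partial X,\mu]=d''[\Sigma(X,f)\setminus\partial X,\mu]=0$ as currents. The map $t_f$ is linear, and since $\Sigma(X,f)$ is closed in the compact (or $t_f$-compact) space $X$, the restriction $t_f\colon\Sigma(X,f)\setminus\partial X\to\mbR^r\setminus t_f(\Sigma(X,f)\cap\partial X)$ is proper; pushing forward and using that $t_{f,*}$ commutes with $d'$ and $d''$ and preserves polyhedrality shows $t_{f,*}[\Sigma(X,f)\setminus\partial X,\mu]$ is a closed, hence balanced, polyhedral current on $\mbR^r\setminus t_f(\Sigma(X,f)\cap\partial X)$. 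Since $\Sigma(X,f)\cap\partial X$ has dimension $\le n-1$ in $\Sigma(X,f)$ (Def.~\ref{def:skeleton_as_tropical_space}), the currents $[\Sigma(X,f)\setminus\partial X,\mu]$ and $[\Sigma(X,f),\mu]$ agree, and the preimage of $\mbR^r\setminus t_f(\Sigma(X,f)\cap\partial X)$ under $t_f|_{\Sigma(X,f)}$ already lies in $\Sigma(X,f)\setminus\partial X$; so by \eqref{eq:rel_trop_skeleton} this pushforward equals the restriction of $T(X,f)$ to $\mbR^r\setminus t_f(\Sigma(X,f)\cap\partial X)$, proving part~(1).

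I expect the construction of the analytic domain $V$ in the localization step to be the only real obstacle: one must realize a prescribed compact piece of the skeleton inside a domain whose tropicalized boundary avoids the point $w$ while picking up no spurious Abhyankar points over $w$ --- precisely the boundary bookkeeping already carried out in the proof of Prop.~\ref{prop:skeleton_pure_dim} and, at the source, in \cite{Duc_local} and \cite{CLD}.
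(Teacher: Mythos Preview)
Your approach is correct and uses the same two ingredients as the paper---Chambert-Loir--Ducros' balancing theorem (Thm.~\ref{thm:trop_is_trop}) and the characterization in Cor.~\ref{cor:equiv_char_trop_space}---but you invert the logical order. The paper first obtains (1) directly: Thm.~\ref{thm:trop_is_trop} already gives that $T(X,f)$ is a tropical cycle away from $t_f(\partial X)$, and the sharper statement ``away from $t_f(\Sigma(X,f)\cap\partial X)$'' is a one-line remark that CLD's proof only invokes Lem.~\ref{lem:automatic_finite_flatness}, whose hypothesis concerns $\partial X$ solely over Abhyankar points, i.e.\ over the skeleton. Then (2) follows from Cor.~\ref{cor:equiv_char_trop_space}~(3) applied with (1) in hand for small affinoid domains. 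Your route instead establishes (2) by the localization argument you describe and then reads off (1) by pushing forward the closed fundamental current---a clean deduction that the paper does not spell out.

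The localization step you flag as ``the only real obstacle'' is exactly the content the paper suppresses when it writes that (2) ``then follows from Cor.~\ref{cor:equiv_char_trop_space}~(3)'': one must match a compact polyhedral piece $K\subseteq\Sigma(X,f)\setminus\partial X$ with the skeleton of some compact analytic domain $V$ whose tropical boundary avoids the chosen point $w$. Your sketch of how to produce $V$ is sound, and once you observe that it suffices to take for each $x_i$ an affinoid neighbourhood in $X\setminus\partial X$ small enough that its skeleton agrees with the germ of $K$ at $x_i$ (Thm.~\ref{thm:trop_local_structure}), the step is routine. In short: same proof, different packaging; the paper's order has the mild advantage that (1) comes for free from the cited literature without any new localization.
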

\begin{proof}
The first statement is proved by Chambert-Loir--Ducros in their already cited \cite{CLD}*{Thm. 3.6.1}. (Their proof ultimately relies on Lem. \ref{lem:automatic_finite_flatness}, so only $\Sigma(X,f)\cap \partial X$ intervenes.) The statement on $\Sigma(X,f)$ then follows from Cor. \ref{cor:equiv_char_trop_space} (3). 
\end{proof}

For later reference, we end this section with a generalization of Lem. \ref{lem:automatic_finite_flatness}, which is proved the same way.
\begin{lem}[\cite{CLD}*{§2.4.3}]\label{lem:automatic_finite_flatness_II}
Let $π:X→Y$ be a map of compact $k$-analytic spaces with both $X$ and $Y$ purely of the same dimension $n$, let $y\in Y$ be Abhyankar. Assume that $Y$ is reduced and that $\partial X \cap π^{-1}(y) = \emptyset$. Then there is an open neighborhood $y\in U$ such that $π^{-1}(U)→U$ is finite flat.
\end{lem}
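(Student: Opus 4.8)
The plan is to follow the proof of Lemma~\ref{lem:automatic_finite_flatness} — first finiteness of $\pi$ over a neighbourhood of $y$, then flatness — and to check that each part goes through with the present hypotheses ($y$ Abhyankar, $Y$ reduced, $X$ and $Y$ pure of dimension $n$, $\partial X\cap\pi^{-1}(y)=\emptyset$) in place of $Y=\mbG_m^n$, $y\in\Sigma(\mbG_m^n)$.

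First I would show the fibre $\pi^{-1}(y)$ is finite. For $x\in\pi^{-1}(y)$ the morphism $\pi$ induces an isometric embedding $\mcH(y)\hookrightarrow\mcH(x)$, hence a graded extension $\wt{\mcH(y)}\subseteq\wt{\mcH(x)}$. Since $Y$ is pure of dimension $n$ and $y$ is Abhyankar, $\trdeg(\wt{\mcH(y)}/\wt k)=n$; since $X$ is pure of dimension $n$, Ducros' bound gives $d(x)=\trdeg(\wt{\mcH(x)}/\wt k)\le\dim_x X=n$. Hence $d(x)=n$ and, by additivity of graded transcendence degree in towers, $\trdeg(\wt{\mcH(x)}/\wt{\mcH(y)})=0$. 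So every point of the compact $\mcH(y)$-analytic space $\pi^{-1}(y)$ has graded residue field of transcendence degree $0$ over $\wt{\mcH(y)}$, which forces $\dim\pi^{-1}(y)=0$, i.e. $\pi^{-1}(y)$ is finite.

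Next, that $\pi$ is finite over an open $U\ni y$ is verbatim the corresponding step of Lemma~\ref{lem:automatic_finite_flatness}: from $\mr{Int}(X/k)=\mr{Int}(X/Y)\cap\pi^{-1}(\mr{Int}(Y/k))$ and $\partial X\cap\pi^{-1}(y)=\emptyset$, the fibre $\pi^{-1}(y)$ lies in the relative interior $\mr{Int}(X/Y)$; combined with compactness of $\pi$ and finiteness of $\pi^{-1}(y)$, the standard finiteness criterion for Berkovich spaces yields such a $U$ with $\psi:=\pi|_{\pi^{-1}(U)}\colon\pi^{-1}(U)\to U$ finite, where $\pi^{-1}(U)$ is pure of dimension $n$ and $U$ is reduced and pure of dimension $n$. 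For flatness near $y$: $\mcF:=\psi_*\mcO_{\pi^{-1}(U)}$ is coherent over $\mcO_U$, and $\psi$ is flat on the Zariski-open locus where $\mcF$ is locally free. On a reduced equidimensional affinoid neighbourhood $\mcM(A)$ of $y$, writing $\mcF=\wt M$, every minimal prime $\mfp$ of $A$ has $A_\mfp$ a field (reduced Artinian local), so $M_\mfp$ is free; thus the non-free locus of $M$ is a Zariski-closed subset of $\mcM(A)$ missing all generic points, hence of dimension $\le n-1$. Since $d(y)=n$, the point $y$ cannot lie on this locus, so $\mcF$ is free near $y$; shrinking $U$, $\psi$ is finite flat.

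The only new bookkeeping relative to Lemma~\ref{lem:automatic_finite_flatness} is the transcendence-degree computation in the first step and the observation that the Abhyankar point $y$ cannot lie on the lower-dimensional non-flat locus; the finiteness criterion is identical to before. I expect the main obstacle to be the first step — passing from ``every fibre point is Abhyankar'' to ``the fibre is $0$-dimensional'', which rests on identifying $\dim$ with the maximal graded transcendence degree over the base, i.e. on Ducros' dimension theory.
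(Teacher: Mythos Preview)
Your proposal is correct and follows the same three-step approach (finite fiber via graded transcendence degree, finiteness over a neighborhood via the relative-interior criterion, flatness via local freeness of $\psi_*\mcO$ on the reduced base) as the argument in \cite{CLD}*{\S2.4.3}; the paper itself does not give a separate proof but simply cites that reference and notes it is proved exactly as Lemma~\ref{lem:automatic_finite_flatness}. One small remark: the equality $\mr{Int}(X/k)=\mr{Int}(X/Y)\cap\pi^{-1}(\mr{Int}(Y/k))$ you use is Temkin's result for good $k$-analytic spaces (the inclusion $\supseteq$ is \cite{Ber2}*{Prop.~1.5.5~(i)}, and equality is in \cite{Tem2}), so it would be worth citing rather than stating as if immediate.
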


\subsection{Regular Immersions}
Let $X$ be a purely $n$-dimensional affinoid analytic space and $Z\subseteq X$ a Zariski closed subspace. We assume that $Z$ equals the vanishing locus $V(f_1,\ldots,f_r)$ of a regular sequence of length $r$. In particular, $\dim Z = n-r$. Denote by $U = X\setminus V(f_1\cdots f_r)$ the open subspace where all $f_i$ are invertible. For any constant $C\in \mbR$, we write $X_{\geq C}$ for the Weierstrass domain where $v(f_i)\geq C$ for all $i$. Similarly for $U_{\geq C}$.

\begin{prop}\label{prop:reg_sequence_statement}
Let $g: X\to \mbG_m^s$ be a set of toric coordinates and put
$$h = (f,g): X \lr \mbA^r\times \mbG_m^s.$$
There exists a constant $C$ such that the natural map
$$T'(U_{\geq C}, h)\to [C,\infty)^r\times \mbR^s$$
induces an isomorphism of weighted polyhedra
$$T(U_{\geq C}, h) \iso [C,\infty)^r \times T(Z, g).$$
Here, $[C, \infty)^r$ is endowed with the standard weight from $\mbR^r$.
\end{prop}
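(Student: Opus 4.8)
The plan is to reduce the statement to a local computation on $Z$ via Temkin's graded reduction functor, exactly in the spirit of Chambert--Loir--Ducros' treatment of divisors (\cite{CLD}*{4.6.6}), which is the $r=1$ case. First I would choose the constant $C$ large enough that the following holds: for every Abhyankar point $x$ of $U_{\geq C}$ lying in the $n$-dimensional locus of the tropicalization, the graded reductions $\wt{f_{1,x}}, \ldots, \wt{f_{r,x}}$ are algebraically independent over $\wt{k}(\wt{g_x})$ inside $\wt{\mcH(x)}$. This is where the regular-sequence hypothesis enters: since $f_1, \ldots, f_r$ is a regular sequence cutting out $Z$ of codimension $r$, the ``distance function'' point of view shows that on the locus where all $v(f_i)$ are large, the point $x$ is close (in the Berkovich sense) to $Z$, and the germ of $x$ looks like a product of a germ on $Z$ with the torus directions recorded by the $f_i$. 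Concretely, I would use Thm. \ref{thm:trop_local_structure}: $\dim_x T'(U_{\geq C}, h) = \trdeg(\wt{k}(\wt{f_x}, \wt{g_x})/\wt k)$, and this equals $n$ precisely when the $\wt{f_{i,x}}$ contribute $r$ independent elements on top of a point of $T(Z,g)$ (which has dimension $\leq n - r$). A compactness argument (cover $U_{\geq C}$ by finitely many affinoids, use Thm. \ref{thm:trop_is_polyhedral} and properness of $t_h$) produces a uniform $C$.

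Next I would establish the set-theoretic identity $T(U_{\geq C}, h) = [C,\infty)^r \times T(Z,g)$. The inclusion ``$\subseteq$'' follows from the independence statement above together with the observation that the last $s$ coordinates of $t_h(x)$ only see $t_g$ of a point specializing to $Z$; the inclusion ``$\supseteq$'' follows because every point of $T(Z,g)$ lifts to an Abhyankar point of $Z$, which can then be deformed in the $r$ torus directions (multiplying by elements of $\mbG_m^r$ near the relevant point is realized analytically inside $U_{\geq C}$) to produce Abhyankar points of $U_{\geq C}$ with the desired tropicalization, and the $n$-dimensionality is exactly the independence we arranged. One has to be a little careful that this ``product'' structure holds at the level of the \emph{polyhedral} structure, not just as a set: here I would invoke that $T'$ is a polyhedral set (Thm. \ref{thm:trop_is_polyhedral}) and that the projection $T'(U_{\geq C},h) \to [C,\infty)^r$ induced by the first $r$ coordinates is, near the $n$-dimensional locus, a trivial polyhedral fibration with fiber $T(Z,g)$, which reduces to the graded-field computation fiberwise.

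Finally I would match the weights. By Def. \ref{def:weights_on_trop}, the weight on a top-dimensional polyhedron $σ \subseteq T(U_{\geq C},h)$ is $d_{σ,q} \cdot q^{-1}(μ_{\mr{std}})$ for a generic affine-linear $q : \mbG_m^{r+s} \to \mbG_m^n$ with $q|_σ$ injective. Writing $σ = \rho \times σ'$ with $\rho \subseteq [C,\infty)^r$ full-dimensional and $σ' \subseteq T(Z,g)$ top-dimensional, I would choose $q$ to be the product of the identity on the first $r$ factors with a generic $q' : \mbG_m^s \to \mbG_m^{n-r}$ adapted to $σ'$. Then $q \circ h = (f, q' \circ g)$, and by the product structure the finite flat degree $d_{σ,q}$ of $U_{\geq C}$ over the relevant skeleton piece equals the finite flat degree $d_{σ',q'}$ of $Z$ over $\Sigma((q'\circ g)(σ'^\circ))$ — this is because $V(f_1 - c_1, \ldots, f_r - c_r) \cap U_{\geq C}$ is, for $(c_1,\ldots,c_r)$ in the torus over the point of $\rho$, identified with an open piece of $Z$ after the obvious translation, compatibly with the maps to the torus. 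Since $q^{-1}(μ_{\mr{std}}) = μ_{\mr{std}}^{(r)} \otimes (q')^{-1}(μ_{\mr{std}})$ under the product decomposition, this gives $μ_σ = μ_{\mr{std}}|_\rho \otimes μ_{σ'}$, which is exactly the asserted weight on $[C,\infty)^r \times T(Z,g)$.

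The main obstacle is the first step: producing the uniform constant $C$ and, more substantially, proving that the germ of $U_{\geq C}$ at an Abhyankar point near $Z$ genuinely splits (in the graded-reduction sense) as a torus-germ in the $f$-directions times a germ on $Z$. This is the higher-codimension analog of \cite{CLD}*{4.6.6} and, while Ducros' local structure theory (Thm. \ref{thm:trop_local_structure}) and the regular-sequence hypothesis make it plausible, carefully controlling the interaction of the $r$ valuations $v(f_i)$ with the residue field extension — i.e. showing no ``collapsing'' occurs for large $C$ — is the technical heart of the argument. Everything downstream (the product decomposition of the polyhedron and the multiplicativity of flat degrees) is then essentially bookkeeping.
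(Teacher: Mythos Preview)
Your overall strategy---reduce to a finite--flat degree computation over a product-shaped torus skeleton---is the same as the paper's, and you correctly point to \cite{CLD}*{4.6.6} as the model. But the crucial step, matching weights, has a real gap.

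You write that for $(c_1,\ldots,c_r)$ over a point of $\rho$, the fiber $V(f_1-c_1,\ldots,f_r-c_r)\cap U_{\geq C}$ is ``identified with an open piece of $Z$ after the obvious translation''. There is no such translation: the $f_i$ are not coordinates on a group and $V(f_i-c_i)$ is simply a different subvariety from $Z=V(f_1,\ldots,f_r)$. So the degree comparison $d_{\sigma,q}=d_{\sigma',q'}$ cannot be read off this way. The paper replaces this by the following mechanism: consider $\bar h=(f,q\circ g):X\to \mbA^r\times\mbG_m^{n-r}$ (note the target $\mbA^r$, not $\mbG_m^r$). Over points of $\Sigma(q(\sigma)^\circ)$ one knows $\mcO_{X,z}$ is Cohen--Macaulay of Krull dimension $r$ (because the regular sequence $f_1,\ldots,f_r$ has length $r$ there), while the local rings of $\mbA^r\times\mbG_m^{n-r}$ are regular of the same dimension; Miracle Flatness then forces $\bar h$ to be flat near $0\times\Sigma(q(\sigma)^\circ)$, and finiteness comes from \cite{Ber2}*{Prop.~3.1.4}. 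Now the degree of $\bar h$ is locally constant on the connected set $\{0\}\cup(C,\infty)^r$ times $q(\sigma)^\circ$, so the degree at $0$ (which is the weight on $T(Z,g)$) equals the degree over $(C,\infty)^r$ (which is the weight on $T(U_{\geq C},h)$). This continuity-of-degree argument is what your ``translation'' was trying to accomplish.

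Two smaller points. First, your graded-reduction sketch for the set-theoretic equality is plausible but not needed: once $\bar h$ is finite flat of the same degree over both pieces, the set-theoretic product structure and the weight identity come simultaneously. Second, you do not control $t_h(\partial X\cap X_{\geq C})$; the paper spends a separate claim showing this image lands in $[C,\infty]^r\times\Delta$ for a polyhedral $\Delta$ of dimension $<n-r$, which is necessary before one can speak of weights on the $n$-dimensional locus at all.
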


It might be possible that there is an induction argument that deduces Prop. \ref{prop:reg_sequence_statement} from the case $r = 1$, which is the case that has been proved by Chambert-Loir--Ducros \cite{CLD}*{Prop. 4.6.6}. However, the space $U_{\geq C}$ that intervenes is not affinoid anymore so such an induction does not seem to be straightforward. Instead, our proof will essentially repeat the arguments from \cite{CLD}*{Prop. 4.6.6}. It is also similar to \cite{Liu}*{Step 6 of Proof of Thm. 5.8} where the algebraic smooth situation was considered.

\begin{proof} Let $T' = T'(Z,g)$, $T = T(Z, g)$ and $V = t_g^{-1}(T')$. Then $V$ is an analytic domain in $X$ and contains $Z$. The map $Z\to X$ is a closed immersion and hence without boundary, cf. \cite{Duc_families}*{1.3.21 (1)}. Thus $V$ is a neighborhood of $Z$ by \cite{Duc_families}*{1.3.21 (3)}. The map $t_f:X \to (-\infty, \infty]^r$ is proper because $X$ is compact Hausdorff, so there is some $C_1 > 0$ with $X_{\geq C_1}\subseteq V$. It follows that
\begin{equation}\label{eq:polyhedra_containment}
t_h(X_{\geq C_1}) \subseteq [C_1,\infty]^r\times T'.
\end{equation}
Next, we claim that there exist a compact polyhedral set $\Delta \subseteq T'$ of dimension $<n-r$ and a constant $C_2 > 0$ such that
\begin{equation}\label{eq:polyhedra_containment_boundary}
t_h(\partial X \cap X_{\geq C_2}) \subseteq [C_2,\infty]^r\times \Delta.
\end{equation}
\begin{proof}[Proof of the claim.] The following arguments are taken from \cite{CLD}*{Proof of Prop. 4.6.6}. First, the boundary $\partial X$ is contained in a finite union of sets of the form $φ^{-1}(η_s)$, where $φ:X\to \mbA^1$ is some morphism and $η_s\in \mbA^1$ the Gauss point of the disk of radius $s\in \mbR$ around $0$, cf. \cite{Duc_local}*{Lemme 3.1}. It hence suffices to find, for each such pair $(φ,s)$, a polyhedral set $\Delta_{φ,s}$ and a constant $C_{φ,s}$ such that
\begin{equation}
\label{eq:polyhedra_containment_simplified}
t_h(φ^{-1}(η_s) \cap X_{\geq C_{φ,s}}) \subseteq [C_{φ,s},\infty]^r\times \Delta_{φ,s}.
\end{equation}
Indeed, we may then take $\Delta$ as a finite union of suitable $\Delta_{φ,s}$ and $C_2$ as the maximum of the corresponding $C_{φ,s}$. So fix $φ$ and $s$ as above from now on. If $z\in φ^{-1}(η_s) \cap Z$, then
$$\mr{trdeg}(\wt{\mcH(z)} / \wt{\mcH(η_s)}) \leq n-r-1$$
because $\trdeg(\wt{\mcH(η_s)} / \wt{k}) = 1$ and $\dim Z = n-r$. The fiber $φ^{-1}(η_s)\cap Z$ is thus an affinoid $\mcH(η_s)$-analytic space of dimension $\leq n-r-1$. It follows from Thm. \ref{thm:trop_is_polyhedral} that $\Delta_{φ,s} := t_g(φ^{-1}(η_s)\cap Z)$ is a polyhedral set of dimension $\leq n-r-1$. Then $V_{φ,s} := t_g^{-1}(\Delta_{φ,s})$ is an analytic domain of $X$ that contains $φ^{-1}(η_s)\cap Z$. Moreover, $φ^{-1}(η_s)\cap Z \to φ^{-1}(η_s)$ is a closed immersion, hence without boundary, so $V_{φ, s}$ is a neighborhood of $φ^{-1}(η_s)\cap Z$ by \cite{Duc_families}*{1.3.21 (3)} again. Since $φ^{-1}(η_s)$ is a closed subset of $X$, hence compact, the restricted map $t_f\vert_{φ^{-1}(η_s)}:φ^{-1}(η_s) \to (-\infty,\infty]^r$ is proper. Thus there exists $C_{φ,s}$ such that $X_{\geq C_{φ,s}} \cap φ^{-1}(η_s) \subseteq V_{φ,s}$ and our claim is proved.
\end{proof}

Let $\Delta$ and $C_2$ be as in the claim and put  $C = \max\{C_1, C_2\}$. We claim that there is an equality
\begin{equation}\label{eq:to_show_prop_reg_seq}
T(U_{\geq C}, h) = [C,\infty)^r\times T
\end{equation}
as weighted purely $n$-dimensional polyhedral sets. This claim is local over $T$ in the following sense. Choose a rational polyhedral complex structure $\mcT$ for $T'$ that is also subordinate to $\Delta$. For $σ\in \mcT_{n-r}$, the inverse image $X(σ) = t_g^{-1}(σ)\subseteq X$ is an affinoid domain and the same holds for the intersection $Z(σ) = Z \cap X(σ)$. Also, \eqref{eq:polyhedra_containment} and \eqref{eq:polyhedra_containment_boundary} hold with the same set $\Delta$ and the same constant $C$ for $Z(σ)\subseteq X(σ)$. We may hence assume that $T' = σ$ and that $σ^\circ \cap \Delta = \emptyset$. Then it is enough to show \eqref{eq:to_show_prop_reg_seq} after pushforward under any generic (for $σ$) surjection $q:\mbR^s\to \mbR^{n-r}$. Let $q:\mbG_m^s\to \mbG_m^{n-r}$ realize such a map and consider the product map
\begin{equation}\label{eq:flatness_reg_immersion}
\overbar h = (f, q\circ g): X → \mbA^r\times \mbG_m^{n-r}.
\end{equation}
We claim that $\overbar h$ is finite flat over a neighborhood of $0\times \Sigma(q(σ)^\circ)$, where $0 = (0,\ldots,0)\in \mbA^r$ and where $\Sigma(q(σ)^\circ) \subseteq \Sigma(\mbG_m^{n-r})$ via the standard section.

In order to prove the finiteness, we observe that $q\circ g\vert_Z :Z\to \mbG_m^{n-r}$ is a morphism of analytic spaces of the same dimension. Thus all fibers of $q\circ g\vert_Z$ over $\Sigma(q(σ))$ are finite. Moreover, $(q\circ g)(\partial Z) \cap \Sigma(q(σ)^\circ) = \emptyset$ by construction. Since $\partial Z = \partial X \cap Z$, we equivalently know $\overbar{h}(\partial X)\cap (0\times \Sigma(q(σ)^\circ) = \emptyset$. In this situation, \cite{Ber2}*{Prop. 3.1.4, equivalence of (a) and (c)} states that $\overbar{h}$ is finite over a neighborhood of $0\times \Sigma(q(σ)^\circ)$.

We turn to the proof of flatness. Every point $z\in (q\circ g)^{-1}(\Sigma(q(σ)^\circ))$ is an Abhyankar point of $Z$. In this situation, \cite{Duc_families}*{Example 3.2.10} states that the local ring $\mcO_{X,z}$ has Krull dimension $r = \dim_z X - \dim_z Z$. By assumption, the germs $f_{1,z},\ldots, f_{r,z}$ form a regular sequence in $\mcO_{X,z}$. It follows that $\mcO_{X, z}$ is Cohen--Macaulay. The local rings of $\mbA^r\times \mbG_m^{n-r}$ are regular and the local rings $\mcO_{\mbA^r\times \mbG_m^{n-r}, (0,y)}$, for $y\in \Sigma(\mbG_m^{n-r})$, are of the same Krull dimension $r$. Furthermore, we already noted that $\overbar h$ is finite over a neighborhood of $0\times \Sigma(q(σ)^\circ)$. It now follows from ``miracle flatness'' \cite{Stacks}*{Tag 00R4} that
$$\overbar{h}^*: \mcO_{\mbA^r\times \mbG_m^{n-r}, \overbar{h}(z)} \lr \mcO_{X, z}$$
is flat for every $z\in (q\circ g)^{-1}(\Sigma(q(σ)^\circ))$. By openness of flatness for finite maps \cite{Ber2}*{Prop. 3.2.8}, $\overbar{h}^{-1}$ is flat over an open neighborhood of $\Sigma(0\times q(σ)^\circ)$. Since $0\times q(σ)^\circ$ is connected, we obtain that $\bar h$ is finite flat of some uniform degree $e$ over an open neighborhood of $0\times \Sigma(q(σ)^\circ)$.

Now consider the restricted map $\overbar h: U_{\geq C} → \mbG_m^r\times \mbG_m^{n-r}$. It is finite and flat above $\Sigma((C,\infty)^r\times q(σ)^{\circ})$ of some degree $e'$ by Lem. \ref{lem:automatic_finite_flatness}. Since $0\times q(σ)^\circ$ lies in the closure of $(C,\infty)^r\times q(σ)^\circ$, we obtain $e = e'$. 

By the definition of tropicalization, the weight of the polyhedron $σ$ in $T(Z, g)$ is $e\cdot q^{-1}(μ_{\mbR^{n-r}})$. The weight on $[C,\infty)^r\times σ$ in $T(U_{\geq C}, f\times g)$ is $e\cdot (\mr{id}_{\mbR^r}\times q)^{-1}(μ_{\mbR^n}) = μ_{\mbR^r}\wedge e\cdot q^{-1}(μ_{\mbR^{n-r}})$. Thus
$$T(U_{\geq C}, f\times g) = [C, \infty)^r \times T(Z, g)$$
as weighted polyhedral sets, which is precisely what we needed to verify.
\end{proof}

\subsection{Dominant Morphisms}
The main result of this section is Thm. \ref{thm:skeleton_flat}, which states that for a (locally on $X$) \emph{dominant} map $π:X\to Y$ of pure-dimensional analytic spaces, the skeletons in $X$ lie flat over the skeletons of $Y$. \emph{Flatness} here is meant in the sense of weighted pwl spaces, cf. Def. \ref{def:flat}.

\begin{lem}[also cf. \cite{GJR}*{Lem. 4.4}]\label{lem:irred_comp_decomp_of_skeleton}
Let $X = \mcM(A)$ be an affinoid analytic space that is purely of dimension $n$. Let $\mfp_1,\ldots,\mfp_r$ denote the minimal prime ideals of $A$, put $X_i = \mcM(A/\mfp_i)$ and set $e_i = \mr{len}_{A_{\mfp_i}} A_{\mfp_i}.$ Let $μ_i$ denote the weight on $\Sigma(X_i, f)$. Then for any set of toric coordinates $f:X\to \mbG_m^r$,
\begin{equation}\label{eq:irred_comp_decomp_of_skeleton}
(\Sigma(X,f), μ) = \coprod_{i = 1}^r (\Sigma(X_i, f), e_i \cdot μ_i).
\end{equation}
Here, the right hand side is viewed as a subset of $X$ along the inclusions $X_i \to X$.
\end{lem}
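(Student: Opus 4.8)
The statement is local on $\Sigma(X,f)$, so I would fix an $n$-dimensional polyhedron $σ\subseteq \Sigma(X,f)$ — chosen fine enough to be contained in the open stratum away from $\partial X$ and from all the $\Sigma(X_i,f)$-skeletons of dimension $<n$ — and compare multiplicities on $σ$. The set-theoretic equality $\Sigma(X,f)\setminus \partial X = \bigcup_i \Sigma(X_i,f)\setminus \partial X_i$ is essentially immediate from the graded-residue-field description \eqref{eq:skeleton_characterization_residue_fields}: a point $x\in X$ lies on some component $X_i$, and the local ring, hence the graded residue field $\wt{\mcH(x)}$, only sees the components through $x$; so $d(x) = n$ on $X$ iff $d(x) = n$ on some $X_i$. (One must check that every Abhyankar point of $X$ lies on exactly one $X_i$ — equivalently, the $X_i$ are disjoint over the Abhyankar locus — which again follows because an Abhyankar point $x$ with $\dim_x X = n$ has $\dim_x Z < n$ for any proper Zariski-closed $Z$, so cannot lie in $X_i\cap X_j$ for $i\neq j$ as that intersection has dimension $<n$.)

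\textbf{Computing the weights.} With the point-set equality in hand, the content is the weight identity $μ_σ = e_i\cdot μ_{i,σ}$ when $σ\subseteq \Sigma(X_i,f)$. I would unwind Definition \ref{def:skeleton_as_tropical_space}(2): pick a surjective affine linear $q:\mbG_m^r\to \mbG_m^n$ with $(q\circ f)\vert_σ$ injective, so that both $q\circ f:X(σ^\circ)\to \mbG_m^n(q(σ^\circ))$ and $q\circ f:X_i(σ^\circ)\to \mbG_m^n(q(σ^\circ))$ are finite flat over a neighborhood of $\Sigma(q(σ^\circ))$ by Lemma \ref{lem:automatic_finite_flatness} (using that $\partial X\cap (q\circ f)^{-1}(\Sigma(q(σ^\circ))) = \emptyset$ by our choice of $σ$, and that the $X_j$ for $j\neq i$ do not meet this fiber). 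Then $μ_σ = d_{σ,q}\, t_{q\circ f}^{-1}(μ_{\mr{std}})$ and $μ_{i,σ} = d^{(i)}_{σ,q}\, t_{q\circ f}^{-1}(μ_{\mr{std}})$, so everything reduces to the degree identity $d_{σ,q} = e_i\cdot d^{(i)}_{σ,q}$. This is a purely commutative-algebra statement about a finite flat morphism of affinoid spaces: over a suitable affinoid neighborhood $\mcM(B)$ of a point of $\Sigma(q(σ^\circ))$, the pull-back $A' = A\hat\otimes_{\mbG_m^n} B$ is finite flat over $B$, with generic rank $d_{σ,q}$; passing to the component $\mcM(A'/\mfp_i A')$ one sees rank $d^{(i)}_{σ,q}$, and the length $e_i$ of $A_{\mfp_i}$ over itself accounts exactly for the multiplicity of the $i$-th component in $\mr{Spec}(A'\otimes_B \kappa)$ for the generic point of a suitable Berkovich "generic" stratum. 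Concretely: rank is additive over the components of $A'$ counted with the multiplicities $\mr{len}_{A'_{\mfq}}A'_{\mfq}$ over minimal primes $\mfq$ lying over $\mfp_i$, and by going-down for the flat map $B\to A'$ together with flatness of $A\to A'$ these local lengths all equal $e_i$.

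\textbf{Main obstacle.} The genuinely delicate point is the bookkeeping of the multiplicities $e_i = \mr{len}_{A_{\mfp_i}}A_{\mfp_i}$ in the finite-flat base change: one needs that for the finite flat morphism $q\circ f$ near $\Sigma(q(σ^\circ))$, the "local degree contributed by $X_i$" is precisely $e_i$ times the honest degree of the (reduced) component $X_i\to\mbG_m^n$, uniformly in a neighborhood. I would handle this by base-changing to a non-trivially valued field (weights are invariant under such base change, as already used after Definition \ref{def:weights_on_trop}), so that $\Sigma(q(σ^\circ))$ contains classical-type points of $\mbG_m^n$, reducing the degree computation to fibers over rigid points where it becomes the statement that the length of the total fiber ring is $\sum_i e_i\cdot(\text{length of the fiber of }X_i)$ — which is just additivity of length along the filtration of $A$ by the $\mfp_i$-primary pieces, plus flatness to propagate it off the special fiber. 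The analytic input (finiteness over a neighborhood, openness of flatness) is exactly Lemma \ref{lem:automatic_finite_flatness} and \cite{Ber2}*{Prop. 3.2.8}, already in the toolbox; the cited \cite{GJR}*{Lem. 4.4} presumably does essentially this, and I would either invoke it directly or reproduce this short argument.
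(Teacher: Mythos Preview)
Your proof is correct and follows the same two-step structure as the paper: first the set-theoretic equality via the observation that Abhyankar points avoid the lower-dimensional intersections $X_i\cap X_j$, then the weight comparison by computing degrees of the finite flat map $q\circ f$ over a neighborhood of $\Sigma(q(σ^\circ))$.

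The paper's execution of the second step is more direct than yours, however. Rather than base-changing to a non-trivially valued field to access classical points and argue fiberwise, the paper simply uses that $\mbG_m^n$ is \emph{reduced}: the base affinoid algebra $B$ is therefore an integral domain, and for a finite flat $B\to C$ with $C$ having a unique minimal prime $\mfp$ one has the one-line identity
$$[C:B] = \dim_{\mr{Frac}(B)} C_\mfp = \mr{len}_{C_\mfp}C_\mfp \cdot \dim_{\mr{Frac}(B)}\mr{Frac}(C/\mfp),$$
which gives $d_{σ,q}=e_i\cdot d^{(i)}_{σ,q}$ immediately (the identification $\mr{len}_{C_\mfp}C_\mfp=e_i$ following, as you note, from flatness of the affinoid localization $A\to C$ together with reducedness of $C/\mfp_i C$). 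Your route through rigid fibers and additivity of length would reach the same conclusion but is heavier than necessary; the generic-point computation over $\mr{Frac}(B)$ is all that is needed.
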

\begin{proof}
The intersections $X_i \cap X_j$, $i\neq j$, are of dimension $\leq n-1$. It follows from the characterization \eqref{eq:skeleton_characterization_residue_fields} that no point of $X_i\cap X_j$ lies on $\Sigma'(X,f)$. This already implies the identity \eqref{eq:irred_comp_decomp_of_skeleton} for underlying sets. The claim on weights follows from the fact that $\mbG_m^n$ is reduced. Namely let $σ\subseteq \mbR^n$ be a compact polyhedron and let $\mcM(B)\subseteq \mbG_m^n$ be an affinoid domain with $\mcM(B)\cap \Sigma(\mbG_m^n) = \Sigma(σ)$. Assume that $B \to C$ is a finite flat map to a $k$-affinoid algebra $C$ that has a unique minimal prime ideal $\mfp\subseteq C$. Then
$$[C:B] = \dim_{\mr{Frac}(B)} C_\mfp = \mr{len}_{C_\mfp}C_\mfp \cdot \dim_{\mr{Frac}(B)} \mr{Frac}(C/\mfp)$$
and $\dim_{\mr{Frac}(B)} \mr{Frac}(C/\mfp)$ is the degree of $\mcM(C/\mfp) \to \mcM(B)$ above $σ$.
\end{proof}

\begin{lem}\label{lem:finite}
Let $π:X\to Y$ be a map of purely $n$-dimensional analytic spaces that is finite flat of degree $d$. Let $f:Y\to \mbG_m^r$ be a tuple of toric coordinates. Then $π$ restricts to a faithfully flat map of weighted pwl spaces
$$\Sigma(X, f\circ π) \lr \Sigma(Y, f)$$
that is of degree $d$ in the sense that $π_*\Sigma(X, f\circ π) = d\cdot \Sigma(Y, f)$.
\end{lem}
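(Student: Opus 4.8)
The plan is to reduce the statement to a single local computation of push‑forward degrees, combining Chambert--Loir--Ducros' definition of the skeleton weights with the multiplicativity of degrees under composition of finite flat maps. First I would identify the two skeletons. Since $\pi$ is finite, $\wt{\mcH(x)}$ is finite (hence algebraic) over $\wt{\mcH(\pi(x))}$ for every $x\in X$, so $d(x)=d(\pi(x))$, and the graded subfield of $\wt{\mcH(x)}$ generated over $\wt k$ by the reductions $\wt{(f\circ\pi)_x}$ is literally $\wt k(\wt{f_{\pi(x)}})\subseteq\wt{\mcH(\pi(x))}$, hence of the same transcendence degree over $\wt k$. By the characterization \eqref{eq:skeleton_characterization_residue_fields} this gives $\Sigma'(X,f\circ\pi)=\pi^{-1}(\Sigma'(Y,f))$, and as a finite surjection is dimension‑preserving, also $\Sigma(X,f\circ\pi)=\pi^{-1}(\Sigma(Y,f))$, both purely $n$‑dimensional; that $\pi$ restricts to a \emph{pwl} map follows from Ducros' characterization of the pwl structure on skeletons (\cite{Duc_local}*{Théorème 5.1}), since $v(g\circ\pi)=v(g)\circ\pi$ for $g\in\mcO_Y^\times$ and $\pi$ is proper. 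Thus $\pi$ restricts to a finite surjective map of purely $n$‑dimensional weighted pwl spaces $\Sigma(X,f\circ\pi)\to\Sigma(Y,f)$, whose weights I denote $\mu$ and $\nu$ respectively.

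Because $\dim X=\dim Y$ and all fibers of $\pi$ are finite, Prop.~\ref{prop:flatness_characterization} together with Example~\ref{ex:push_forward_does_not_preserve_pws}(1) reduces flatness to the well‑definedness of the fiber weights \eqref{eq:fiber_weight_summed}, and faithful flatness (Def.~\ref{def:faithfully_flat}) is then automatic from surjectivity. Next I would compute $\pi_*[\Sigma(X,f\circ\pi),\mu]$, which exists as a polyhedral current (Prop.~\ref{prop:push_of_polyhedral}): after choosing compatible polyhedral complex structures $\mcX$, $\mcY$ subordinate to $\pi$, each top cell $\sigma\in\mcX_n$ maps affine‑linearly and \emph{bijectively} (a pwl map with finite fibers is injective on each cell) onto a top cell $\pi(\sigma)\in\mcY_n$, and \eqref{eq:push_forward_polyhedral_set} gives $\pi_*[\sigma,\mu_\sigma]=\deg(\pi|_\sigma)\,[\pi(\sigma),\nu_{\pi(\sigma)}]$ with $\deg(\pi|_\sigma)=\mu_\sigma/(\pi|_\sigma)^{-1}(\nu_{\pi(\sigma)})$ (Def.~\ref{def:weight_polyhedron}). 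Hence the coefficient of a fixed top cell $\ob{\sigma}\in\mcY_n$ in $\pi_*[\Sigma(X,f\circ\pi),\mu]$ equals $\sum_{\sigma:\,\pi(\sigma)=\ob{\sigma}}\deg(\pi|_\sigma)$, and, localizing around the relative interior of a cell $\tau$ so that the only top cells present are those of its star, this same number is the quantity in \eqref{eq:fiber_weight_summed} whose $\ob{\sigma}$‑independence is demanded by flatness. So both the degree formula $\pi_*\Sigma(X,f\circ\pi)=d\cdot\Sigma(Y,f)$ and flatness follow from the single claim: $\sum_{\sigma:\,\pi(\sigma)=\ob{\sigma}}\deg(\pi|_\sigma)=d$ for every top cell $\ob{\sigma}$ of $\Sigma(Y,f)$.

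To prove that claim I would fix $\ob{\sigma}$ and a surjective affine linear $q\colon\mbG_m^r\to\mbG_m^n$ with $q\circ t_f|_{\ob{\sigma}}$ injective, as in Def.~\ref{def:weights_on_trop}. Then $q\circ f\colon Y(\ob{\sigma}^\circ)\to\mbG_m^n(q(\ob{\sigma})^\circ)$ is finite flat of some degree $d_Y$ over a neighborhood of $\Sigma(q(\ob{\sigma})^\circ)$ (Lem.~\ref{lem:automatic_finite_flatness}), with $\nu_{\ob{\sigma}}=d_Y\cdot t_{q\circ f}^{-1}(\mu_{\mr{std}})$; base‑changing along $\pi$ and composing, $q\circ f\circ\pi$ is finite flat of degree $d\cdot d_Y$ over the same neighborhood. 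Using the \emph{same} $q$ for each top cell $\sigma$ of $\Sigma(X,f\circ\pi)$ with $\pi(\sigma)=\ob{\sigma}$ gives $\mu_\sigma=d_{X,\sigma}\cdot t_{q\circ f\circ\pi}^{-1}(\mu_{\mr{std}})$, whence $\deg(\pi|_\sigma)=d_{X,\sigma}/d_Y$; so it remains to see $\sum_\sigma d_{X,\sigma}=d\cdot d_Y$. Over a generic $w\in\Sigma(q(\ob{\sigma})^\circ)$ the fiber $(q\circ f\circ\pi)^{-1}(w)$ consists of Abhyankar points (being finite over the Abhyankar point $w$), so by \eqref{eq:skeleton_characterization_residue_fields} and \eqref{eq:skeleton_away_from_boundary} these all lie on the cells of $\Sigma(X,f\circ\pi)$ over $\ob{\sigma}$, and the fiber length distributes over these cells exactly as the $d_{X,\sigma}$, summing to the total degree $d\cdot d_Y$. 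Independence of the weights from $q$ is \cite{GJR}*{Prop. 4.6}, and the boundary plays no role since $\partial X=\pi^{-1}(\partial Y)$ for the finite map $\pi$.

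The main obstacle is precisely this last bookkeeping step in the third paragraph: matching the generic fiber length of the composite finite flat map over a point of the standard skeleton with the sum of the Chambert--Loir--Ducros cell weights $d_{X,\sigma}$. This forces one to control, via the Abhyankar‑point description, exactly which preimages land on $\Sigma(X,f\circ\pi)$ and on which cell, and to verify that the auxiliary analytic domains $Y(\ob{\sigma}^\circ)$ and $X(\ob{\sigma}^\circ)=\pi^{-1}(Y(\ob{\sigma}^\circ))$ capture precisely the relevant preimages; everything else is formal manipulation of weights and of the push‑forward formula \eqref{eq:push_forward_polyhedral_set}.
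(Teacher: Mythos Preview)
Your overall strategy is the same as the paper's, and your computation of the global degree identity $\sum_{\sigma:\pi(\sigma)=\ob\sigma}\deg(\pi|_\sigma)=d$ via multiplicativity of degrees for the composite $q\circ f\circ\pi$ is correct and essentially what the paper does in one line. However, there is a genuine gap in your reduction of flatness to this global claim.

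The global identity only gives $\pi_*[\Sigma(X,f\circ\pi),\mu]=d\cdot[\Sigma(Y,f),\nu]$. Flatness (via Ex.~\ref{ex:push_forward_does_not_preserve_pws}(1)) requires the \emph{local} statement: for each cell $\tau\in\mcX$ and each $\ob\sigma\in\mcY_n$ containing $\pi(\tau)$, the sum $\sum_{\sigma\supset\tau,\,\pi(\sigma)=\ob\sigma}\deg(\pi|_\sigma)$ is independent of $\ob\sigma$. Your phrase ``localizing around the relative interior of a cell $\tau$'' suggests restricting to the star of $\tau$, but after such a restriction the sum need no longer equal $d$: some of the degree may be carried by preimages of $\pi(\tau)$ lying outside the star of $\tau$. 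The global claim does not imply its local analogue; this is precisely the phenomenon illustrated in Ex.~\ref{ex:push_forward_does_not_preserve_pws}(2), where $f_*[X,\mu]=[Y,\nu]$ holds but flatness fails.

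The paper closes this gap by localizing on the \emph{analytic} space rather than the pwl space: given $x\in\Sigma(X,f\circ\pi)$ with image $y$, one uses properness and finiteness of fibers to find affinoid neighborhoods $x\in U$, $y\in V$ with $\pi^{-1}(y)\cap U=\{x\}$ and $\pi|_U\colon U\to V$ finite flat of the local degree $d_x$. Then the very same degree computation, applied to $U\to V$, yields $\pi_*\Sigma(U,f\circ\pi)=d_x\cdot\Sigma(V,f)$, which is exactly the well-defined fiber weight condition at $x$. Your ``main obstacle'' (the bookkeeping of fiber lengths versus cell weights) is in fact routine; the actual missing step is this analytic localization that makes the degree argument apply pointwise.
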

\begin{proof}
First note that $\Sigma'(X, f\circ π) = π^{-1}(\Sigma'(Y,f))$ by definition, cf. \eqref{eq:def_skeleton}. The restriction of $π$ to $\Sigma'(X, f\circ π)$ is pwl because $v(g\circ π) = π\circ v(g)$ for every invertible holomorphic function $g$ on (an affinoid domain of) $Y$. The fibers of $π$ are finite, so $π$ restricts to a map on the $n$-dimensional loci $\Sigma(X, f\circ π) \to \Sigma(Y,f)$. We apply the definition of weights, cf. Def. \ref{def:skeleton_as_tropical_space}: Working locally on $Y$, we may assume both skeletons to be polyhedral sets. Let $\mcX$ and $\mcY$ be respective polyhedral complex structures that are subordinate to $π$. Let $\ob{σ}\in \mcY_n$ and let $q:\mbG_m^r\to \mbG_m^n$ be a generic projection for $\ob{σ}$. Denote by $d_{\ob{σ},q}$ resp. $d_{σ,q}$ the degree of $q\circ f$ near $\ob{σ}$ (resp. of $q\circ f \circ π$ near $σ\in π^{-1}(σ)\subseteq \mcX_n$). Then, denoting the weight on a polyhedron $τ$ by $μ_τ$,
$$\begin{aligned}
μ_{\ob{σ}} &\ = d_{\ob{σ},q} \cdot t_{q\circ f}^{-1}(μ_{\mr{std}})\\
&\ = d^{-1} \sum_{σ\in \mcX_n,\ π(σ) = \ob{σ}} d_{σ, q} \cdot t_{q\circ f}^{-1}(μ_{\mr{std}})\\
&\ = d^{-1} \sum_{σ\in \mcX_n,\ π(σ) = \ob{σ}} π(μ_σ).
\end{aligned}$$
This shows $π_*\Sigma(X, f\circ π) = d\cdot \Sigma(Y, f)$.

We would like to apply Prop. \ref{prop:flatness_characterization} to show flatness. (Also see Ex. \ref{ex:push_forward_does_not_preserve_pws} (1) for the specialization to relative dimension $0$.) This criterion is local on $Y$, so we may assume $Y$ (and \emph{a forteriori} $X$) to be affinoid. Let $x\in \Sigma(X, f\circ π)$ be any point and put $y = π(x)$. Using the topological properness of $X\to Y$ and the fact that $π^{-1}(y)$ is finite, we find affinoid neighborhoods $y \in V$ and $x\in U$ such that $π$ restricts to a finite map $U\to V$ and such that $π^{-1}(y)\cap U = x$. By the previous arguments, $π_*\Sigma(U, f\circ π) = d_x\cdot \Sigma(V, f)$ for the local degree $d_x$. This means $\Sigma(X, f\circ π)\to \Sigma(Y, f)$ has well defined fiber weights in the sense of Def. \ref{def:fiber_weight} (also see Ex. \ref{ex:push_forward_does_not_preserve_pws} (1)) and is hence flat. It is moreover surjective, hence faithfully flat because $\Sigma(X, f\circ π)$ and $\Sigma(Y, f)$ have the same dimension. The proof is complete.
\end{proof}

\begin{lem}\label{lem:bc_for_skeletons}
Let $X$ be a purely $n$-dimensional analytic space and denote by $f:X\to \mbG_m^r$ a tuple of toric coordinates. Let $K/k$ be a complete valued extension and denote by $X_K$ the base change of $X$ to $K$. Then the projection $π:X_K\to X$ restricts to a faithfully flat map of weighted pwl spaces
\begin{equation}\label{eq:restriction_bc}
\Sigma(X_K, f) \lr \Sigma(X,f).
\end{equation}
\end{lem}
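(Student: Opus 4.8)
The plan is to reduce to the finite flat case (Lemma \ref{lem:finite}) via a limit argument, following the standard philosophy that base change by $K/k$ is an inverse limit of finite situations — except that here $K$ need not be finite or even algebraic over $k$, so instead I would use the \emph{local} structure of skeletons and the compatibility of weights with base change already invoked in Def. \ref{def:weights_on_trop}. First I would observe that $\Sigma'(X_K, f) = π^{-1}(\Sigma'(X, f))$ on the nose: indeed, by \eqref{eq:skeleton_characterization_residue_fields} a point lies in the skeleton iff its graded residue field has the right transcendence degree over the graded base field, and for $x'\in X_K$ over $x\in X$ one has $\wt{k}(\wt{f_x})\subseteq\wt{K}(\wt{f_{x'}})\subseteq\wt{\mcH(x')}$ with $\trdeg(\wt{\mcH(x')}/\wt K) \le \trdeg(\wt{\mcH(x)}/\wt k) \le n$, while for the image of the standard section (and more generally for Abhyankar points mapping to Abhyankar points) equality forces equality throughout. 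The map $π$ restricted to $\Sigma'(X_K,f)$ is pwl because $v(g\circ π) = v(g)\circ π$ for invertible holomorphic $g$, exactly as in the proof of Lemma \ref{lem:finite}. Passing to $n$-dimensional loci (the fibers of $π$ over Abhyankar points are finite, since $d(x')=d(x)=n$ and $x'$ lies above $x$), we get a pwl map $\Sigma(X_K,f)\to\Sigma(X,f)$.

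\textbf{Weights and flatness.} The core of the argument is to compute weights and verify the fiber-weight condition of Def. \ref{def:fiber_weight}. Working locally on $X$, I would assume $X=\mcM(A)$ affinoid, choose a polyhedral complex structure $\mcY$ on $\Sigma(X,f)$ subordinate to $μ$ and to $\partial X$, and pick for each $\ob σ\in\mcY_n$ a generic projection $q:\mbG_m^r\to\mbG_m^n$ with $(q\circ f)\vert_{\ob σ}$ injective. By Lemma \ref{lem:automatic_finite_flatness}, $q\circ f:t_{q\circ f}^{-1}(\Sigma(q(\ob σ)^\circ))\to\mbG_m^n$ is finite flat of some degree $d_{\ob σ,q}$, and after base change $q\circ f\circ π$ is finite flat \emph{of the same degree} over the same neighborhood (finite flatness and its degree are preserved by the flat base change $A\to A\widehat\otimes_k K$; this is where I use that $π$ is faithfully flat at the level of affinoid algebras). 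Now apply Lemma \ref{lem:finite} to this finite flat map after base change: the skeleton of $q\circ f\circ π$ in $t_{q\circ f\circ π}^{-1}(\cdots)_K$ lies faithfully flat of degree $d_{\ob σ,q}$ over $\Sigma(q(\ob σ)^\circ)$, hence its fibers are $0$-dimensional with well-defined fiber weights summing to $d_{\ob σ,q}$ — but this skeleton is precisely $π^{-1}$ of the corresponding piece of $\Sigma(X,f)$, i.e. the part of $\Sigma(X_K,f)$ lying over $\ob σ$ (using genericity of $q$ and \eqref{eq:skeleton_away_from_boundary}). Unwinding Def. \ref{def:skeleton_as_tropical_space}(2), the weight of a polyhedron $σ\subseteq\Sigma(X_K,f)$ with $π(σ)=\ob σ$ is $d_{σ,q}\cdot t_{q\circ f}^{-1}(μ_{\mr{std}})$ where $\sum_{π(σ)=\ob σ} d_{σ,q} = d_{\ob σ,q}$, which gives both $π_*\Sigma(X_K,f)=\Sigma(X,f)$ (degree $1$) and, by the criterion of Ex. \ref{ex:push_forward_does_not_preserve_pws}(1), flatness with well-defined fiber weights. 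Since $π$ is surjective and source and target have the same dimension $n$, the map is faithfully flat (Def. \ref{def:faithfully_flat}). It restricts correctly to a linear map of tropical spaces because $t_{f_i\circ π} = t_{f_i}\circ π$ for each $i$.

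\textbf{Main obstacle.} The delicate point is the claim that finite flatness of $q\circ f$ over a neighborhood of $\Sigma(q(\ob σ)^\circ)$ persists, with the same degree, after the (generally infinite-dimensional, non-algebraic) base change $k\rightsquigarrow K$, and that the resulting skeleton over $\ob σ$ is exactly $π^{-1}(\ob σ\cap\Sigma(X,f))$ with no spurious extra components. Finite flatness and degree are handled by the fact that $-\widehat\otimes_k K$ is exact on the relevant Banach modules and that $q\circ f$ restricted to the Weierstrass/affinoid domain $t_{q\circ f}^{-1}(\Sigma(q(\ob σ)^\circ))$ is genuinely finite locally free of rank $d_{\ob σ,q}$; the identification of the skeleton is the set-theoretic statement $\Sigma'(X_K,f)=π^{-1}\Sigma'(X,f)$ established at the start together with the fact (from \cite{GJR}, already used for well-definedness of the weights) that $μ$ is independent of the chosen generic projection and is unchanged under base field extension. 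I would be careful to note that no compactness hypothesis on $X$ is needed since everything is local, and that one does not even need $\partial X=\emptyset$ for this lemma (the skeleton is only claimed pure-dimensional away from $\partial X$, and the map and its fiber weights are checked on polyhedral neighborhoods of interior Abhyankar points).
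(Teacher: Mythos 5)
There is a genuine gap, and the route you take is quite different from the paper's. The paper first handles the case where $K/k$ is finite (viewing $X_K$ as a $k$-analytic space and applying Lem.~\ref{lem:finite}, then comparing the $k$- and $K$-skeletons and weights on $X_K$), and then reduces the general case to the finite one by invoking Ducros' theorem (\cite{Duc_local}*{Th\'eor\`eme 5.1(2)}) that there is a finite separable $k_0/k$ after which $\Sigma(X_{K_0},f)\to\Sigma(X_{k_0},f)$ is a \emph{homeomorphism} for \emph{every} complete $K_0/k_0$. With that in hand, one draws the commuting square relating $k$, $k_0$, $K$, $K_0 = $ a finite extension of $K$ containing $k_0$; the two horizontal (finite-extension) maps and the left vertical isomorphism give flatness of the composite $\Sigma(X_{K_0},f)\to\Sigma(X,f)$, and the cancellation law Prop.~\ref{prop:flat_maps_faithful}(2) then peels off the right vertical map. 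Your proposal never passes through Ducros' geometricity statement and instead tries to verify fiber weights directly via generic finite-flat projections to tori.

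The decisive error in your argument is the opening claim that $\Sigma'(X_K,f) = \pi^{-1}(\Sigma'(X,f))$ ``on the nose.'' Only the inclusion $\pi(\Sigma'(X_K,f))\subseteq\Sigma'(X,f)$ holds, and this is what the paper actually uses, deduced simply from $\Sigma(\mbG_{m,K}^n)\to\Sigma(\mbG_m^n)$. The reverse inclusion fails: over an Abhyankar $x\in X$ the fiber $\pi^{-1}(x)=\mcM(\mcH(x)\widehat\otimes_k K)$ is a positive-dimensional $K$-analytic space most of whose points are \emph{not} Abhyankar over $K$. Concretely, take $k$ trivially valued, $X=\mbG_m^1$, $f=t$, and $K$ with residue field containing an element $\wt u$ transcendental over $\wt k$ (lifted to $u\in K$ with $v(u)=0$). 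Then the classical point $t=u$ in $X_K$ lies over the Gauss point of the unit circle in $X$, which is Abhyankar, yet the classical point itself has $d=0$; so it lies in $\pi^{-1}(\Sigma'(X,f))$ but not in $\Sigma'(X_K,f)$. Your chain $\wt k(\wt{f_x})\subseteq\wt K(\wt{f_{x'}})$ together with $\trdeg(\wt k(\wt{f_x})/\wt k)=n$ does \emph{not} yield $\trdeg(\wt K(\wt{f_{x'}})/\wt K)=n$, because algebraic independence over $\wt k$ can break over $\wt K$.

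This false identity propagates: it is what your argument leans on to identify the base-changed skeleton $\Sigma(U_K,q\circ f\circ\pi)$ with ``the part of $\Sigma(X_K,f)$ lying over $\ob\sigma$,'' and to conclude surjectivity and faithful flatness. In fact surjectivity of $\Sigma(X_K,f)\to\Sigma(X,f)$ is precisely the delicate point — the skeleton can a priori \emph{shrink} under base change, and Ducros' theorem is exactly the tool showing that after a finite separable enlargement of $k$ this no longer happens. Your computation that the degree $d_{\ob\sigma,q}$ of $q\circ f$ is preserved under $\widehat\otimes_k K$ is correct and in the right spirit, and could plausibly be turned into a direct argument for surjectivity and fiber weights, but as written it glosses over the identification of skeletons after base change and does not substitute for the paper's reduction. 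You should also be cautious about the parenthetical ``one does not even need $\partial X=\emptyset$'': the lemma as stated in the paper indeed does not assume this, but your argument in the weight paragraph relies on polyhedral complex structures subordinate to $\partial X$ and on Lem.~\ref{lem:automatic_finite_flatness} which requires avoidance of the boundary, so some care with boundary points is still needed.
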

\begin{proof}
First note that $\Sigma(\mbG_{m,K}^n)$ maps to $\Sigma(\mbG_m^n)$ under the natural projection map, so $π$ restricts to a map $\Sigma'(X_K,f)\to \Sigma'(X,f)$ by definition of these sets as union of inverse images of $\Sigma(\mbG_{m,K}^n)$ resp. $\Sigma(\mbG_m^n)$, cf. \eqref{eq:def_skeleton}. This restriction is pwl by the same arguments as before.

Next, consider the case where $K/k$ is a finite extension. Then one may view $X_K$ also as $k$-analytic space and we write $\Sigma_k(X_K, f)$ for the skeleton as $k$-analytic space. The map $X_K\to X$ is finite flat of degree $[K:k]$. So by Lem. \ref{lem:finite}, $π$ restricts to a faithfully flat map $\Sigma_k(X_K, f)\to \Sigma(X, f)$.

For every point $x \in X_K$, we have
$$\trdeg(\wt k(\wt {f_x})/\wt k) = \trdeg(\wt K(\wt {f_x})/\wt K)$$
because $K/k$ is finite. Thus by \eqref{eq:skeleton_characterization_residue_fields}, $\Sigma(X_K, f)$ and $\Sigma_k(X_K, f)$ agree as sets. They then agree as pwl spaces with linear functions because this part of the definition does not depend on the ground field. Concerning weights, we have that $\mbG_{m,K}^n\to \mbG_m^n$ is finite flat of degree $[K:k]$. One deduces directly from the definition (cf. Def. \ref{def:skeleton_as_tropical_space} (2)) that the weights on $\Sigma_k(X_K, f)$ and $\Sigma(X_K, f)$ are related by
$$μ_{\Sigma_k(X_K, f)} = [K:k]\cdot μ_{\Sigma(X_K, f)}.$$
This proves the lemma for $K/k$ finite.

We come back to the general situation. The claim of the lemma is local on $X$, so we may assume $X$ compact. Then \cite{Duc_local}*{Théorème 5.1 (2)} states that there is a finite separable extension $k_0/k$ with the property that for all complete extensions $K_0/k_0$, the natural map $\Sigma(X_{K_0}, f)\to \Sigma(X_{k_0},f)$ is a homeomorphism. In this case, it is also an isomorphism of weighted pwl spaces with linear functions. Namely, the tropical weights are defined in terms of degrees of finite flat maps and those degrees are invariant under base field extension.

So pick $k_0$ as above and let $K_0$ be a finite extension of $K$ containing $k_0$. Consider the diagram
\begin{equation}\label{eq:diagram_flatness}
\xymatrix{
\Sigma(X_{K_0},f) \ar[r] \ar[d] & \Sigma(X_K,f) \ar[d]\\
\Sigma(X_{k_0},f) \ar[r] & \Sigma(X,f).\\
}
\end{equation}
The horizontal maps are faithfully flat because $k_0/k$ and $K_0/K$ are finite (our previous case). The left vertical map is an isomorphism, in particular faithfully flat. Flatness is stable under composition, so $\Sigma(X_{K_0}, f)$ is flat over $\Sigma(X, f)$. The final statement now follows from the cancellation law in Prop. \ref{prop:flat_maps_faithful} (2).
\end{proof}

\begin{rmk}\label{rmk:flatness_tropical}
Finite morphisms are inner by \cite{Ber1}*{Cor. 2.5.13}. Thus in the situation of Lem. \ref{lem:finite}, $π^{-1}(\partial Y) = \partial X$ and hence $π$ restricts to a faithfully flat map $π:\Sigma(X, f\circ π)\setminus \partial X \to \Sigma(Y, f) \setminus \partial Y$ of tropical spaces that is of degree $d$.
\end{rmk}

\begin{lem}\label{lem:fin_dominant_covering}
Let $π:X\to Y$ be a map of purely $n$-dimensional analytic spaces and let $f:Y\to \mbG_m^r$ be a tuple of toric coordinates. Then $π$ restricts to a flat map of tropical spaces
\begin{equation}\label{eq:restrict_dominant_general}
\Sigma(X,f\circ π)\setminus \partial X \lr \Sigma(Y,f)\setminus \partial Y.
\end{equation}
\end{lem}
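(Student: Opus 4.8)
The plan is to reduce, locally on $X$, to the finite flat case already handled in Lem. \ref{lem:finite}; the two analytic ingredients will be Lem. \ref{lem:automatic_finite_flatness_II} and the fact that the local ring $\mcO_{X,x}$ has Krull dimension $0$ at a point $x$ lying on a skeleton.

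First I would perform the reductions. Flatness of a pwl map is local on source and target (Def. \ref{def:fiber_weight}, Prop. \ref{prop:flatness_characterization}), and $\pi^{*}t_{f_i}=t_{f_i\circ\pi}$ is among the generators of the sheaf of linear functions on $\Sigma(X,f\circ\pi)$, so $\pi$ is a linear map and I may assume $X=\mcM(A)$ and $Y=\mcM(B)$ affinoid. Decomposing $X$ and $Y$ into reduced irreducible components and applying Lem. \ref{lem:irred_comp_decomp_of_skeleton}, the skeleton $\Sigma(X,f\circ\pi)$ is the disjoint union of the skeletons of the components, with weights rescaled by locally constant functions, and likewise for $\Sigma(Y,f)$. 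Since flatness may be checked on the clopen pieces of such a decomposition, locally on the target, and is unaffected by rescaling weights by locally constant functions, this reduces to $A,B$ integral affinoid domains with $\pi(X)\subseteq Y$. If $\pi$ is not dominant, the Zariski closure of $\pi(X)$ in $Y$ has dimension $<n$, so $\Sigma(X,f\circ\pi)=\emptyset$ by \eqref{eq:skeleton_characterization_residue_fields} and there is nothing to prove; so I assume $\pi$ dominant.

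Next, fixing $x_0\in\Sigma(X,f\circ\pi)\setminus\partial X$ and $y_0=\pi(x_0)$, I would note that $y_0\in\Sigma'(Y,f)$ by \eqref{eq:skeleton_characterization_residue_fields} and $v(g\circ\pi)=v(g)\circ\pi$, so $y_0$ is Abhyankar in $Y$ with $d(y_0)=n=\dim Y$. Since $x_0$ is Abhyankar in $X$ with $d(x_0)=n=\dim_{x_0}X$, the ring $\mcO_{X,x_0}$ has Krull dimension $0$ by \cite{Duc_families}*{Example 3.2.10} (exactly as used in the proof of Prop. \ref{prop:reg_sequence_statement}); hence its quotient $\mcO_{X,x_0}/\mfm_{y_0}\mcO_{X,x_0}=\mcO_{\pi^{-1}(y_0),x_0}$ is Artinian and $x_0$ is isolated in $\pi^{-1}(y_0)$. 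I then choose an affinoid neighbourhood $U\ni x_0$, purely of dimension $n$, with $x_0\in\mathrm{int}_X(U)$ and $U\cap\pi^{-1}(y_0)=\{x_0\}$; then $x_0\notin\partial(U/k)$ because $\partial(U/k)\subseteq\partial(U/X)\cup\partial X$, so $\partial(U/k)\cap\pi^{-1}(y_0)=\emptyset$, and Lem. \ref{lem:automatic_finite_flatness_II} applied to $\pi\vert_U\colon U\to Y$ gives a neighbourhood over which $\pi\vert_U$ is finite flat. Shrinking to an affinoid $V'\ni y_0$ of pure dimension $n$ and setting $U'=U\cap\pi^{-1}(V')$, the map $\pi\vert_{U'}\colon U'\to V'$ is finite flat of some degree, and $U'$ is a neighbourhood of $x_0$.

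Finally I would invoke Lem. \ref{lem:finite} and Rmk. \ref{rmk:flatness_tropical}: $\pi\vert_{U'}$ restricts to a faithfully flat map of tropical spaces $\Sigma(U',(f\circ\pi)\vert_{U'})\setminus\partial U'\to\Sigma(V',f\vert_{V'})\setminus\partial V'$. On $W:=\mathrm{int}_X(U')\setminus\partial X\ni x_0$, the characterisation \eqref{eq:skeleton_characterization_residue_fields} and Prop. \ref{prop:skeleton_pure_dim} identify $W\cap(\Sigma(U',(f\circ\pi)\vert_{U'})\setminus\partial U')$ with $W\cap\Sigma(X,f\circ\pi)$ (with matching weights, both defined via local degrees of finite flat maps), and since $\Sigma(V',f\vert_{V'})\setminus\partial V'\subseteq\mathrm{int}_Y(V')$ coincides there with $\Sigma(Y,f)\setminus\partial Y$, this is carried into $\Sigma(Y,f)\setminus\partial Y$. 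So $\pi$ does restrict to a map $\Sigma(X,f\circ\pi)\setminus\partial X\to\Sigma(Y,f)\setminus\partial Y$, and near the arbitrary $x_0$ it agrees with the restriction of a flat map; flatness being local, it is flat throughout. I expect the main obstacle to be the control of the analytic fibre $\pi^{-1}(y_0)$ — one needs $x_0$ isolated in it to shrink $X$ around $x_0$ and meet the boundary hypothesis of Lem. \ref{lem:automatic_finite_flatness_II} — which is where it is essential that $x_0$ lies on a skeleton.
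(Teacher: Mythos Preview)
Your approach is essentially the paper's: localize to affinoids, reduce to integral domains via Lem.~\ref{lem:irred_comp_decomp_of_skeleton}, show $x_0$ is isolated in its fibre so as to apply Lem.~\ref{lem:automatic_finite_flatness_II}, then conclude by Lem.~\ref{lem:finite}. Two points deserve tightening.

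First, the step ``$\mcO_{\pi^{-1}(y_0),x_0}$ Artinian $\Rightarrow$ $x_0$ isolated'' is not valid as stated: the Gauss point $\eta$ of a disc has $\mcO_{\mbD,\eta}$ a field yet $\eta$ is not isolated. What is missing is $d_{\mcH(y_0)}(x_0)=0$, which does hold here because $d_k(x_0)=d_k(y_0)=n$. The paper sidesteps this by arguing directly that the entire fibre $\pi^{-1}(y_0)$ is discrete whenever $y_0$ is Abhyankar with $d(y_0)=n=\dim X$ (equivalently, $\dim\pi^{-1}(y_0)\leq \dim X - d_k(y_0)=0$); this avoids both the local-ring identification $\mcO_{\pi^{-1}(y_0),x_0}=\mcO_{X,x_0}/\mfm_{y_0}\mcO_{X,x_0}$ (which involves completed tensor products and is not obviously the naive quotient) and the isolatedness deduction.

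Second, you never establish that $y_0\notin\partial Y$, which is needed for the map to land in $\Sigma(Y,f)\setminus\partial Y$ and for your claim that $\Sigma(V',f)\setminus\partial V'$ agrees near $y_0$ with $\Sigma(Y,f)\setminus\partial Y$. The paper cites \cite{Ber2}*{Prop.~1.5.5~(ii)} for $\pi(X\setminus\partial X)\subseteq Y\setminus\partial Y$; you should do the same.
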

\begin{proof}
Just like in the proof of Lem. \ref{lem:finite}, $π$ restricts to a pwl (even linear) map $\Sigma'(X, f\circ π) \to \Sigma'(Y, f)$. Every $y\in \Sigma'(Y, f)$ is Abhyankar, so the fiber of $π$ over such $y$ is discrete. It follows that $π$ restricts to a map on $n$-dimensional loci $\Sigma(X, f\circ π) \to \Sigma(Y,f)$. Moreover, $π(X\setminus \partial X)\subseteq Y\setminus \partial Y$ by \cite{Ber2}*{Prop. 1.5.5 (ii)}, so \eqref{eq:restrict_dominant_general} is defined. (Recall for the application of the cited result that our analytic spaces are good and Hausdorff.)

The claim on flatness is local on both $\Sigma(X, f\circ π)$ and $\Sigma(Y,f)$, so we may assume $X$ and $Y$ to be affinoid, say $X = \mcM(B)$ and $Y = \mcM(A)$. Let $x\in \Sigma(X, f\circ π) \setminus \partial X$ and put $y = π(x)$. Localizing on $X$ and $Y$ while using the topological properness of $X\to Y$, we may assume that $\{x\} = π^{-1}(y)$. Lem. \ref{lem:irred_comp_decomp_of_skeleton} allows to replace $X$ resp. $Y$ by the irreducible components containing $x$ resp. $y$ and to assume further that $A$ and $B$ are integral domains. With these assumptions, Lem. \ref{lem:automatic_finite_flatness_II} yields that $π$ is finite and flat of some degree $d_x$ over a neighborhood of $y$. Now Lem. \ref{lem:finite} applies and completes the proof.
\end{proof}

\begin{prop}\label{prop:fin_maps_flat_on_skeletons}
Let $π:X→Y$ be a finite map of analytic spaces that are purely of the same dimension $n$ and let $g:X\to \mbG_m^r$ denote toric coordinates on $X$. For every point $y\in Y$, there exist an open neighborhood $y\in U$ and toric coordinates $f$ on $U$ such that $π$ restricts to a flat map of tropical spaces
\begin{equation}\label{eq:fin_map_flat_skeleton}
\Sigma(π^{-1}(U), g \times (f\circ π))\setminus \partial X \lr \Sigma(U, f)\setminus \partial Y.
\end{equation}
In fact, there exist $f$ and $U$ such that for every open neighborhood $y\in U'\subset U$ and every refinement $f'$ of $f\vert_{U'}$, $π$ restricts to a flat map
\begin{equation}\label{eq:fin_map_flat_skeleton_refined}
\Sigma(π^{-1}(U'), g \times (f'\circ π))\setminus \partial X \lr \Sigma(U, f')\setminus \partial Y.
\end{equation}
\end{prop}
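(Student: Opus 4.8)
The plan is to choose the toric coordinates $f$ so that, after pulling back along $\pi$, the extra coordinates $g$ become redundant on the skeleton; the statement then follows from Lemma~\ref{lem:fin_dominant_covering}. Since flatness and the remaining assertions are local around $y$, I would first pass to an affinoid neighborhood $\mcM(A)$ of $y$; because $\pi$ is finite, $\pi^{-1}(\mcM(A)) = \mcM(B)$ with $B$ finite over $A$. Write $g = (g_1,\dots,g_r)$ with $g_i \in B^\times$. Each $g_i$ is integral over $A$, so it satisfies a monic $P_i \in A[T]$ with $P_i(g_i)=0$. I would take $f$ to be the tuple of those coefficients of the $P_i$ that do not vanish at $y$ and shrink $\mcM(A)$ so that (i) these become invertible, and (ii) each coefficient that does vanish at $y$ is so small on $\mcM(A)$ that it lies strictly above the Newton polygon of the retained ones; combined with the fact that $v_x(g_i)$ stays bounded on $\mcM(B)$ (as $g_i$ is a unit), this forces $v_x(g_i)$, for every $x\in\pi^{-1}(\mcM(A))$, to equal one of finitely many affine-linear expressions in the valuations at $\pi(x)$ of the retained coefficients, i.e.\ in $t_{f\circ\pi}(x)$. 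Finally take $U$ to be the interior of $\mcM(A)$, equipped with $f|_U$.

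The heart of the argument is then the identity $\Sigma(\pi^{-1}(U), g\times(f\circ\pi)) = \Sigma(\pi^{-1}(U), f\circ\pi)$ away from $\partial X$. The inclusion $\supseteq$ is formal. For the reverse, the previous paragraph shows that $t_{g_i}$ is, on each cell of a suitable polyhedral structure, an affine function of $t_{f\circ\pi}$, so the tropicalization $t_{g\times(f\circ\pi)}(V)$ of any affinoid $V\subseteq\pi^{-1}(U)$ is contained in finitely many graphs of piecewise linear maps over $t_{f\circ\pi}(V)$, each of dimension at most $\dim t_{f\circ\pi}(V)$; feeding this into Theorems~\ref{thm:trop_is_polyhedral} and~\ref{thm:trop_local_structure} yields, for $x\notin\partial X$,
\[ \trdeg\!\big(\wt{k}(\wt{g_x},\wt{(f\circ\pi)_x})/\wt{k}\big)=n \;\Longleftrightarrow\; \trdeg\!\big(\wt{k}(\wt{(f\circ\pi)_x})/\wt{k}\big)=n, \]
which is the asserted equality by~\eqref{eq:skeleton_characterization_residue_fields}. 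By Remark~\ref{rmk:independence_weights} the two skeletons moreover carry the same weights (since $g\times(f\circ\pi)$ refines $f\circ\pi$), hence agree as weighted pwl spaces.

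On the other hand, $\pi$ restricts to a flat map $\Sigma(\pi^{-1}(U), f\circ\pi)\setminus\partial X \to \Sigma(U, f)\setminus\partial Y$ directly by Lemma~\ref{lem:fin_dominant_covering} applied to $\pi|_{\pi^{-1}(U)}\colon\pi^{-1}(U)\to U$ and the coordinates $f$ — here the image really lands in the target, because $t_{f_\ell\circ\pi} = t_{f_\ell}\circ\pi$ forces $\pi(x)\in\Sigma(U,f)$ as soon as $x$ lies in the source skeleton. Combining with the previous paragraph produces the desired flat map~\eqref{eq:fin_map_flat_skeleton}; it is linear, hence a morphism of tropical spaces, since $\pi^{*}t_{f_\ell} = t_{f_\ell\circ\pi}$ is one of the generators of the sheaf of linear functions on $\Sigma(\pi^{-1}(U), g\times(f\circ\pi))$. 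For the refined statement, given an open $U'\subseteq U$ and a refinement $f'$ of $f|_{U'}$, each component of $t_{f}$ is a $\mbZ$-affine function of those of $t_{f'}$ on $U'$, so $t_{g_i}$ is still piecewise linear in $t_{f'\circ\pi}$ on $\pi^{-1}(U')$; the argument above then gives $\Sigma(\pi^{-1}(U'),g\times(f'\circ\pi)) = \Sigma(\pi^{-1}(U'),f'\circ\pi)$ as weighted pwl spaces away from $\partial X$, and Lemma~\ref{lem:fin_dominant_covering} applied to $\pi^{-1}(U')\to U'$ with $f'$ finishes the proof.

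The step I expect to be the main obstacle is the Newton-polygon input of the first paragraph: one has to pin down precisely which coefficients of the $P_i$ to retain so that $f\circ\pi$ truly governs the tropicalization of $g$ near $\pi^{-1}(y)$, and check that this remains true after passing to refinements. The surrounding technical points — arranging that the retained coefficients are units and the discarded ones negligibly small on $U$ — are routine consequences of shrinking $U$ and of the boundedness of functions on an affinoid.
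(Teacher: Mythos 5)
Your proposal is correct and follows the same basic strategy as the paper: take $f$ to be (essentially) the coefficients of monic relations $P_i\in A[T]$ for the $g_i$, establish the key identity $\Sigma(\pi^{-1}(U), g\times(f\circ\pi)) = \Sigma(\pi^{-1}(U), f\circ\pi)$ as weighted pwl spaces away from $\partial X$ (with weight-agreement from Rmk.~\ref{rmk:independence_weights}), and conclude by Lem.~\ref{lem:fin_dominant_covering}, observing that the same chain of arguments applies to any refinement $(U',f')$. The differences are in how the key identity is obtained and in the preliminary reductions, and here the paper's route is noticeably leaner than yours. The paper begins by noting that there is nothing to prove when $y$ is not Abhyankar (finiteness of $\pi$ pushes this down to $X$), and then, since an Abhyankar point lies on a unique irreducible component, uses Lem.~\ref{lem:irred_comp_decomp_of_skeleton} to reduce to $A$ an integral domain. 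With that reduction, ``nonzero coefficient of $P_i$'' is the right notion: any nonzero $c\in A$ is automatically invertible in the stalk $\mathcal{O}_{Y,y}$ (a field, by \cite{Duc_families}*{Ex.\ 3.2.10}), so after shrinking $U$ all retained coefficients are units, and the discarded ones are literally zero — there is nothing left to make ``negligible''. The equality of skeletons is then one line: each $\wt{g_{i,x}}$ is algebraic over $\wt{k}(\widetilde{f_{\pi(x)}})$ because the graded residue field extension $\wt{k}(\wt{g_x}, \widetilde{(f\circ\pi)_x})/\wt{k}(\widetilde{f_{\pi(x)}})$ is finite, so by \eqref{eq:skeleton_characterization_residue_fields} both skeletons have the same underlying set. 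Your version, by contrast, keeps ``vanishing at $y$ but not identically zero'' coefficients in play, and must arrange by a Newton-polygon estimate that their valuations lie strictly above the polygon of the retained ones — this is doable (the retained coefficients are units, hence have bounded valuation on an affinoid, while a coefficient vanishing at $y$ has arbitrarily large valuation after shrinking), but it is exactly the technical delicacy you yourself flag, and it is entirely sidestepped by the $A$-integral reduction. Both routes amount to the same Newton-polygon fact in the end, but the paper packages it through the graded residue field formalism, which carries over to refinements with no extra work, whereas your version has to re-check negligibility after each refinement $f'$.
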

\begin{proof}
First note that if $d(y) < n$, then also $d(x) < n$ for every point $x$ of $π^{-1}(y)$ by the finiteness of $π$. So there is nothing to prove for such $y$ and we assume $y$ Abhyankar from now on.

Let $U = \mcM(A)$ be an affinoid neighborhood of $y$. Then also $V = π^{-1}(U)$ is affinoid, say $V = \mcM(B)$, and $B$ is a finite $A$-algebra. The point $y$ lies on a unique irreducible component of $U$ since its graded residue field has maximal transcendence degree. In light of Lem. \ref{lem:irred_comp_decomp_of_skeleton}, we may assume $A$ integral.

Consider for each component $g_i$ of $g$ some monic polynomial $p_i\in A[T]$ with $p_i(g_i) = 0$. Let $f:U\to \mbA^{s}$ be the tuple of nonzero coefficients of all $p_i$. Then all components $f_j$ of $f$ are invertible in $y$ because $\mcO_{Y,y}$ is a field (use that $A$ is an integral domain and \cite{Duc_families}*{Ex. 3.2.10}). Shrinking $U$ further, we may assume all $f_j$ to be invertible on $U$. Then, with $V = π^{-1}(U)$ as before,
\begin{equation}\label{eq:skeleton_equality_finite_case}
\Sigma(V, g \times (f\circ π)) = \Sigma(V,f\circ π)
\end{equation}
because all graded field extensions $\wt k(\wt{g_x}, \wt{(f\circ π)_x})/\wt k( \wt{f_{π(x)}})$ are finite. Equality \eqref{eq:skeleton_equality_finite_case} even holds in the sense of weighted pwl spaces, because weights are invariant under refinement of coordinates. In this situation, Lem. \ref{lem:fin_dominant_covering} to $(U, f)$ and all refinements $(U', f')$ as in the proposition and the proof is complete.
\end{proof}

\begin{rmk}
Equality \eqref{eq:skeleton_equality_finite_case} only holds as weighted pwl spaces (and not as tropical spaces) because the linear functions $v(g_i)$ may not admit an expression as linear combination of the $v(f_j)\circ π$.
\end{rmk}

\begin{prop}\label{prop:flatness_in_explicit_rel_dim_1}
Let $A$ be a purely $n$-dimensional $k$-affinoid algebra and $g_1,\ldots,g_r\in A[t]$ polynomials. Put $Y = \mcM(A)$ and let $X \subseteq \mbA^1_Y$ be the open subspace where all $g_i$ are invertible. Denote by $π:X\to Y$ the natural map. Put $g = (g_1,\ldots,g_r)$.

Then for every point $x\in \Sigma(X,g)\setminus \partial X$, there exist open neighborhoods $U$ of $y = π(x)$ and $V\subseteq π^{-1}(U)$ of $x$ as well as toric coordinates $f$ for $U$ such that $π$ restricts to a flat map of tropical spaces
$$\Sigma(V, g \times (f\circ π))\setminus \partial X \lr \Sigma(U, f)\setminus \partial Y.$$
In fact, $U$, $V$ and $f$ may be chosen such that for every open subset $U'\subseteq U$ and every refinement $f'$ of $f\vert_{U'}$, the map $π$ restricts to a flat map
$$\Sigma(V\cap π^{-1}(U'), g \times (f'\circ π))\setminus \partial X \lr \Sigma(U', f')\setminus \partial Y.$$
\end{prop}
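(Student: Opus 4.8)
The plan is to realize $\pi$, near $x$, as the composition of a finite flat map onto a $\mathbb{G}_m$-bundle over $Y$ with the projection of that bundle to $Y$, and then to invoke the flatness statements already available for finite maps (Lem.~\ref{lem:finite}) and for product projections.

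First I would make some reductions. Since $x\in\Sigma(X,g)\setminus\partial X$ we have $d(x)=n+1$; as $X$ is open in $\mathbb{A}^1_Y$ the morphism $\pi$ has relative dimension $\le 1$ at $x$, which forces $y:=\pi(x)$ to be Abhyankar in $Y$. Passing to an affinoid neighborhood of $y$ and using Lem.~\ref{lem:irred_comp_decomp_of_skeleton} to replace $Y$ by the (unique) irreducible component through $y$ — and $X$ by the corresponding component of $X\subseteq\mathbb{A}^1_Y$, which carries the same multiplicity, so that the ratios of weights, hence flatness, are unchanged — we may assume $A$ an integral domain; then $\mathcal{O}_{Y,y}$ is a field by \cite{Duc_families}*{Example~3.2.10}. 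Writing $g_i=\sum_j a_{ij}t^j$, each $a_{ij}$ is thus invertible at $y$ or else maps to $0$ in $\mathcal{O}_{Y,y}$ and so vanishes near $y$; after shrinking $Y$ to an affinoid $U\ni y$ we may assume $g_i=\sum_{j\in J_i}a_{ij}t^j$ with all $a_{ij}$ ($j\in J_i$) invertible on $U$. If every $g_i$ were constant then $\Sigma(X,g)$ would be empty, because then all the graded reductions $\widetilde{g_{i,z}}$ would lie in $\widetilde{\mathcal{H}(\pi(z))}$, of transcendence degree $\le n$; so some $g_i$ is nonconstant, and $\prod_i a_{i,\deg_t g_i}$ is a unit on $U$.

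Next I would set up the finite flat map. Put $h:=\prod_i g_i$, a polynomial in $t$ over $\mathcal{O}_U$ with unit leading coefficient, so that $h-s$ is monic up to a unit in $t$ over $\mathcal{O}_U[s]$ and the $U$-morphism $h\times\mathrm{id}:\mathbb{A}^1_U\to\mathbb{A}^1_U$ is finite flat of degree $D:=\deg_t h=\sum_i\deg_t g_i$. As $X=\{h\ne 0\}$ is the preimage of $\mathbb{G}_m\times U\subseteq\mathbb{A}^1_U$, the restriction
$$\psi:=h\times\pi:X\lr\mathbb{G}_m\times U$$
is finite flat of degree $D$ and satisfies $\mathrm{pr}_U\circ\psi=\pi$. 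I would then choose toric coordinates $f$ on $U$ with $y\in\Sigma(U,f)$ — possible since $y$ is Abhyankar — taking $f$ moreover to contain all the coefficients $a_{ij}$, which is needed below. By Lem.~\ref{lem:finite} and Rmk.~\ref{rmk:flatness_tropical}, $\psi$ restricts to a faithfully flat map of tropical spaces
$$\Sigma\big(X,(h)\times(f\circ\pi)\big)\setminus\partial X\ \lr\ \Sigma(\mathbb{G}_m\times U,\ s\times f)\setminus\partial(\mathbb{G}_m\times U),$$
$s$ being the coordinate on the $\mathbb{G}_m$-factor. By the product description of skeletons of torus bundles (\cite{Duc_local}), $\Sigma(\mathbb{G}_m\times U,\,s\times f)$ is canonically $\mathbb{R}\times\Sigma(U,f)$ as a tropical space, with $\mathrm{pr}_U$ corresponding to the projection to the second factor; and that projection is flat, all of its fiber weights being the standard weight on the $\mathbb{R}$-factor (Def.~\ref{def:fiber_weight}, Ex.~\ref{ex:push_forward_does_not_preserve_pws}). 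Composing the two maps — flatness is stable under composition, as used in the proof of Lem.~\ref{lem:bc_for_skeletons} — shows that $\pi$ restricts to a flat map $\Sigma(X,(h)\times(f\circ\pi))\setminus\partial X\to\Sigma(U,f)\setminus\partial Y$. The identical argument with $f$ replaced by any refinement $f'$ of $f|_{U'}$, $U'\subseteq U$ open, gives flatness of $\Sigma(X,(h)\times(f'\circ\pi))\setminus\partial X\to\Sigma(U',f')\setminus\partial Y$: the map $\psi$ is unchanged and Lem.~\ref{lem:finite} applies to the coordinates $s\times f'$.

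It remains to replace the single coordinate $h=\prod_i g_i$ by the full tuple $g=(g_1,\dots,g_r)$. One always has $\Sigma(X,(h)\times(f'\circ\pi))\subseteq\Sigma(X,g\times(f'\circ\pi))$, since $h$ is a monomial in the $g_i$, with equal weights by Rmk.~\ref{rmk:independence_weights}; and since flatness of a map depends only on the underlying weighted pwl spaces and not on the sheaf of linear functions, it is enough to see that these two skeletons agree on a neighborhood of $x$, for all the refinements $f'$ in question. Using that the $a_{ij}$ lie in $f'$, one checks that for $z$ in either skeleton the reductions $\widetilde{g_{i,z}}$ lie in $\widetilde{\mathcal{H}(\pi(z))}[\omega]$ for a generator $\omega$ of $\widetilde{\mathcal{H}(z)}/\widetilde{\mathcal{H}(\pi(z))}$, and that some $\widetilde{g_{i,z}}$ has positive $\omega$-degree; whence $z$ lies in $\Sigma(X,(h)\times(f'\circ\pi))$ as soon as $\pi(z)\in\Sigma(U',f')$. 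Thus the inclusion of skeletons is an equality precisely on the locus mapping into $\Sigma(U',f')$, and the crux is to choose $f$, and the neighborhood $V$ of $x$, so that $\pi\big(\Sigma(X,g\times(f'\circ\pi))\cap V\big)\subseteq\Sigma(U',f')$ — i.e. so that the skeleton of $X$ does not, near $x$, run over the Abhyankar points of $Y$ not detected by $f'$. This is the one genuinely non-formal point; I expect it to be the main obstacle, and it is where Ducros' local structure theory for skeletons of relative curves (\cites{Duc_early,Duc_local}) must enter, through the choice of $f$. Once it is in place, $V$ is taken to be such a neighborhood of $x$ and the proposition follows from the flatness established for $\Sigma(X,(h)\times(f'\circ\pi))$.
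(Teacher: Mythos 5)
Your proposal takes a genuinely different route from the paper's: instead of the explicit construction of the quotient pwl space $\Sigma'$ in \eqref{eq:descript_skeleton_A1} (with the base-change reductions to value group $\mathbb R$ and rational Gauss points, the Newton polygon analysis, and Lem.~\ref{lem:pwl_equiv_rel}), you factor $\pi$ near $x$ as $\pi = \mathrm{pr}_U\circ\psi$ with $\psi = h\times\pi$ finite flat, and invoke Lem.~\ref{lem:finite} and the product structure of $\Sigma(\mathbb G_m\times U, s\times f)$. That part (your step labelled (a)) is correct and appealingly clean, and would indeed bypass a fair amount of the paper's machinery if it could be carried through.

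However there is a genuine gap, and it is exactly the one you flag yourself. Your argument reduces flatness of $\Sigma(V, g\times(f'\circ\pi))\to\Sigma(U',f')$ to two assertions: (i) $\pi$ maps $\Sigma(V, g\times(f'\circ\pi))$ into $\Sigma(U',f')$, and (ii) on this locus the skeleton with the single coordinate $h=\prod g_i$ agrees with the one using the full tuple $g$. Assertion (i) is the heart of the matter and is left open in your proposal; the paper devotes the bulk of its proof to it, constructing $f$ from nonzero coefficients of auxiliary polynomials $p_{i,j}$ encoding the algebraic dependence of the pairs $\widetilde{g_{i,x}}, \widetilde{g_{j,x}}$ over $\widetilde{\mathcal H(y)}$, and choosing $V$ via the explicit valuation inequality $v_z(p_{i,j}(g_i,g_j)) > \deg_{z,i,j}(p_{i,j})$ in \eqref{eq:def_V}. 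Your choice of $f$ (containing the $a_{ij}$ with $y\in\Sigma(U,f)$) is not enough for this; without the $p_{i,j}$ there is no reason for the skeleton near $x$ to lie above $\Sigma(U',f')$. Moreover your sketch for (ii) — that for $z$ in either skeleton the $\widetilde{g_{i,z}}$ lie in $\widetilde{\mathcal H(\pi(z))}[\omega]$ for a transcendence generator $\omega$, with some $\widetilde{g_{i,z}}$ of positive $\omega$-degree — overstates what holds in general: the graded extension $\widetilde{\mathcal H(z)}/\widetilde{\mathcal H(\pi(z))}$ need not be purely transcendental (it can be a finite extension of $\widetilde{\mathcal H(\pi(z))}(\omega)$), and the $\widetilde{g_{i,z}}$ need not be polynomial in any chosen $\omega$. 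The paper addresses precisely these points via the base change to an $\mathbb R$-valued extension and to a field over which $x$ becomes a rational Gauss point, after which it can write $g_i = \lambda_i\prod_j(t-a_{ij})^{n_{ij}} + r_i$ and control the reductions by hand. Until some version of that analysis (or a substitute for it) is supplied, the proof is incomplete.
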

\begin{proof}
Applying Lem. \ref{lem:irred_comp_decomp_of_skeleton} to both $X$ and $Y$, we may without loss of generality assume that $A$ is an integral domain.

Fix $x$ and $y=π(x)$ as in the statement of the proposition. Every pair $\wt{g_{i,x}}$, $\wt{g_{j,x}}$, $i \neq j$, is algebraically dependent over $\wt {\mcH(y)}$ because $X\to Y$ is of relative dimension $1$. For each pair $i\neq j$, pick a nontrivial polynomial $\wt {p_{i,j}}\in \wt{\mcH(y)} [R,S]$ such that $\wt {p_{i,j}}(\wt{g_{i,x}}, \wt{g_{j,x}}) = 0$. Let $p_{i,j}\in A[R, S]$ denote any choice of polynomial that coefficient by coefficient lifts $\wt {p_{i,j}}$. Let $f:Y\to \mbA^s$ be given by the tuple of all nonzero coefficients of all these $p_{i,j}$, ordered in some arbitrary fixed way.

It follows from $\mr{trdeg}(\wt{\mcH(x)}/\wt {\mcH(y)}) \leq 1$ and $\mr{trdeg}(\wt {\mcH(x)}/\wt k) = n+1$ that $\mr{trdeg}(\wt {\mcH(y)}/\wt k) = n$. In particular, $y$ is Abhyankar. By our assumption for $A$ to be an integral domain and by \cite{Duc_families}*{Ex. 3.2.10}, the local ring $\mcO_{Y,y}$ is a field so all $f_i\in \mcO_{Y,y}$ are invertible. Let $U$ be the open neighborhood of $y$ in $Y\setminus \partial Y$ defined by the condition that all $f_i$ are invertible.

\emph{Claim: There exists an open neighborhood $V\subseteq π^{-1}(U)$ of $x$ such that for all $z\in V$, every pair $(\wt{g_{i,z}}, \wt{g_{j,z}})$ is algebraically dependent over $\wt k(\wt {f_{π(z)}})$.}

Such a neighborhood $V$ may be constructed as follows. For every point $z\in π^{-1}(U)$ and every pair $(i,j)$, we define a notion of degree $\deg_{z,i,j}$ on $\mcO(U)^\times[R, S]$ (set of nonzero polynomials all of whose nonzero coefficients are invertible functions on $U$) by
\begin{equation}\label{eq:degree_z_i_j}
\deg_{z, i, j}\left(\sum_{a, b \in \mbZ} λ_{a,b} R^a S^b\right) := \min_{a,b\in \mbZ} \left\{v_z(λ_{a,b} \,g_i^a\, g_j^b)\right\} \in \mbR.
\end{equation}
Then we define $V$ as the set of all $z\in π^{-1}(U)$ such that for all $i,j$ we have
\begin{equation}\label{eq:def_V}
v_z(p_{i,j}(g_i, g_j)) > \deg_{z,i,j} (p_{i,j}).
\end{equation}
Observe first that $x\in V$ by construction and that \eqref{eq:def_V} is an open condition. Hence $V$ is an open neighborhood of $x$. Further observe that if \eqref{eq:def_V} holds for a point $z$, then at least two monomials of $p_{i,j}$ attain the minimal valuation $\deg_{z,i,j}(p_{i,j})$ in $z$. One of them has to be nonconstant and hence
\begin{equation}\label{eq:alg_dependence_V}
\sum_{a,b\in \mbZ,\ v_z(λ_{a,b} g_i^a g_j^b) = \deg_{z,i,j}(p_{i,j})} \wt{λ_{a,b,π(z)}} R^a S^b \in \wt k(\wt{f_{π(z)}})[R, S]
\end{equation}
defines a nontrivial algebraic relation of $(\wt{g_{i,z}}, \wt{g_{j,z}})$ over $\wt k(\wt{f_{π(z)}})$. This shows that $V$ fulfills the requirements of the above claim.

We claim that with these choices $π(\Sigma(V, g \times f)) \subseteq \Sigma(U, f).$ To prove this, first note that $\partial U = \partial V = \emptyset$, so Prop. \ref{prop:skeleton_pure_dim} applies. Consider any point $z\in \Sigma'(V, g\times f)$. Using \eqref{eq:skeleton_characterization_residue_fields}, we pick elements
$$g_{i_1},\ldots,g_{i_d},f_{j_1},\ldots,f_{j_m},\quad d+m = n+1,$$
whose graded reductions in $z$ form a transcendence basis of $\wt {\mcH(z)}$ over $\wt k$ and make this choice with maximal possible $m$. We need to see that $m = n$. But if $m < n$, the definition of $V$ ensures the existence of a relation
$$p_{i_1,i_2}(\wt{g_{i_1, z}}, \wt{g_{i_2, z}}) = 0\ \ \ \mr{in}\ \wt {\mcH(z)}$$
with coefficients from $\{\wt{f_{1,z}},\ldots, \wt{f_{s,z}}\}$. Since the graded reductions $\{\wt{g_{i_1,z}}, \wt{g_{i_2,z}}\}$ are algebraically independent of the reductions of $\{f_{j_1,z},\ldots,f_{j_m,z}\}$, there has to be some coefficient $f_k$ of $p_{i_1,i_2}$ such that $\wt{f_{k,z}}$ is algebraically independent of $\{\wt{f_{j_1,z}},\ldots,\wt{f_{j_m,z}}\}$, in violation of our maximality assumption. This proves the claim. Note that the same arguments work for every refinement $f'$ of $f\vert_{U'}$ on some open subset $U'\subseteq U$.

It is clear that the so-defined map $π:\Sigma(V, g\times f)\to \Sigma(U, f)$ is pwl, even linear. We claim that it is also flat.

Let $(K,v)$ be a complete valued extension field of $k$ with $v(K^\times) = \mbR$. Lem. \ref{lem:bc_for_skeletons} says that $\Sigma(U_K, f) \to \Sigma(U,f)$ and $\Sigma(V_K, g\times f)\to \Sigma(V, g\times f)$ are faithfully flat. If we can show that 
$$\Sigma(V_K, g\times f) \lr \Sigma(U_K, f)$$
is flat, then the cancellation law of Prop. \ref{prop:flat_maps_faithful} will imply flatness of the map $\Sigma(V, g\times f)\to \Sigma(U,f)$. In other words, we can and will henceforth assume that $k$ has value group $\mbR$.

The given point $x$ is an Abhyankar point of the fiber $V_y = π^{-1}(y)$, which is homeomorphic to an open subspace of $\mbA^1_{\mcH(y)}$. Since the value group of $k$ is $\mbR$, every such point is the Gauss point of a disk of some positive radius in the algebraic closure $\ob {\mcH(y)}$. Shrinking $U$ (and $V$) if necessary, there exists a finite flat map $U'\to U$ together with a point $y'\mapsto y$ such that this disk is rational over $\mcH(y')$. Applying Lem. \ref{lem:fin_dominant_covering} and Prop. \ref{prop:flat_maps_faithful} as above allows to replace $U$ by $U'$ without loss of generality. In other words, we assume $x$ to be the Gauss point of a rational disk in $\mbA^1_{\mcH(y)}$ in the following. By density of $\mcO_{Y,y}$ in $\mcH(y)$, there is a linear change of coordinates for $\mcO_{Y,y}[t]$ such that $x$ becomes the Gauss point for the disk of radius $1$ around $0$ in $\mbA^1_{\mcH(y)}$. Possibly shrinking $U$, we assume this coordinate change to be defined over all of $U$. With that choice of $t$, we have $v_x(t) = 0$ and $κ(x) = κ(y)(\wt t)$. Here, $κ(x)$ and $κ(y)$ denote the residue fields of $\mcH(x)$ resp. $\mcH(y)$.

Consider some polynomial $p = c_dt^d + c_{d-1}t^{d-1}+ \ldots + c_0\in k[t]$.
The Gauss point valuation $v_x$ has the concrete description
$$v_x(p) = \min\{v(c_d),\ldots,v(c_0)\}.$$
If $v_x(p) = 0$, then the graded reduction $\wt p\in κ(x)$ is
\begin{equation}\label{eq:graded_reduction_explicit}
\wt p = \sum_{\text{$i$ s.t. $v(c_i) = 0$}} \wt{c_i}\, \wt t.
\end{equation}
Coming back to our situation, after multiplying the $g_i$ by scalars from $k^\times$, which does not change $\Sigma(V, g\times f)$, we may assume that $v_x(g_i) = 0$ for all $i$ and obtain graded reductions $\wt{g_i} \in κ(x)$ as in \eqref{eq:graded_reduction_explicit}.

Again extending $\mcH(y)$ with the help of a finite flat map $U'\to U$ (possibly shrinking $U$), we may without loss of generality assume that the $\wt{g_i}$ factor as a product of linear polynomials. Thus we may write
\begin{equation}\label{eq:product_decomp_g_s}
g_i = λ_i \prod_{j = 1}^{e_i} (t - a_{ij})^{n_{ij}} + r_i\ \ \ \mr{in}\ \mcO_{V,x}
\end{equation}
with $λ_i\in \mcO^\times_{U,y},\ r_i\in \mcO_{V,x},\ a_{ij}\in \mcO_{U,y}$ and $n_{ij}\in \mbZ_{>0}$, subject to the conditions $v_y(λ_i) = 0$, $v_y(a_{ij}) \geq 0$ and $v_x(r_i) > 0$ for all $i$ and $j$. Shrinking $V$ and $U$, we may assume that all $a_{ij}$ and $λ_i$ are defined on $U$, that all $r_i$ are defined on $V$, and that
$$v(r_i) > v(λ_i) + \sum_{j = 1}^{e_i} n_{ij} \cdot v(t-a_{ij})$$
holds for all $i$ everywhere on $V$. Put $h_i := g_i - r_i$ and $h = (h_1,\ldots,h_r)$. Then there is an equality of tropical spaces
\begin{equation}\label{eq:ident_trop_sp}
\Sigma(V, g\times f) = \Sigma(V, h\times f).
\end{equation}
Namely, \eqref{eq:ident_trop_sp} holds as pwl spaces because the graded reductions of $h$ and $g$ agree on $V$. The weights on the two spaces agree because they do not depend on the tropical coordinates (Rmk. \ref{rmk:independence_weights}). The sheaves of linear functions agree because $v(g_i) = v(h_i)$ on $V$. 

We are now reduced to showing that $π:\Sigma(V, h\times f) \to \Sigma(U, f)$ is flat. The only properties of $U$, $V$ and $f$ that we will use for this will be that $π\big(\Sigma(V, h\times f)\big) \subseteq \Sigma(U,f)$. So the argument will also show the statement about refinements $f'$ of the restrictions $f\vert_{U'}$ to open subsets $U'\subseteq U$.

\emph{Convention on notation}: In the following, we simply write $\Sigma(\mbA^1_U, h\times f)$ or $\Sigma(\mbA^1_{\mcH(y)}, h\times f)$ instead of the more precise notation
$\Sigma(\mbA^1_U \setminus V(h), h\times f)$ and $\Sigma(\mbA^1_{\mcH(y)} \setminus V(h(y)), h\times f)$.

Let $z \in \mbA^1_{\mcH(y)}$ be a point in the fiber above some $y\in \Sigma(U, f)$. This point lies in $\Sigma(\mbA^1_U, h\times f)$ if and only if
$$\mr{trdeg}(\wt k (\wt {h_z}, \wt {f_z}) / \wt k) = n+1.$$
The graded reductions $\wt{f_z}$ are contained in $\wt{\mcH(y)}$ (viewed as subring of $\wt{\mcH(z)}$) and contain a transcendence basis of $\wt{\mcH(y)}$ over $\wt k$. The transcendence degree $\mr{trdeg}(\wt \mcH(y) / \wt k)$ is $n$ because $y\in \Sigma(U, f)$. Thus $z\in \Sigma(\mbA^1_U, h\times f)$ if and only if
$$\mr{trdeg}(\wt{\mcH(y)} ( \wt {h_z}) / \wt {\mcH(y)}) = 1.$$
In other words, we have seen that
\begin{equation}\label{eq:skeleton_comparison}
\mbA^1_{\mcH(y)} \cap \Sigma(\mbA^1_U, h\times f) = \Sigma(\mbA^1_{\mcH(y)}, h(y)).
\end{equation}

The description of this set in terms of the graph structure on $\mbP^1_{\mcH(y)}$ is well known; it is the union of all edges in the tree of Berkovich $\mbP^1_{\mcH(y)}$ on which some $h_i(y)$ has nonzero slope. We formulate this more precisely. Given $a\in \mcH(y)$ and $z\in \mbR$, there is the disk $D(a,z) = \{α\in \mcH(y)\mid v(α-a) \geq z\}$ of $v$-radius $z$, centered at $a$. Let $η_{a,z}\in \mbA^1_{\mcH(y)}$ denote its Gauss point. The path
$$ϕ_a:\mbR\lr \mbA^1_{\mcH(y)},\ \ z\longmapsto η_{a,z}$$
is a homeomorphism onto its image; it is the path connecting the two points $a$ and $\infty$ in $\mbP^1(\mcH(y))$. Then $v(h_i)\circ ϕ_a$ is pwl for each $i$. Its slope outside the finitely many break points is given by
\begin{equation}
(v(h_i)\circ ϕ_a)'(z) = \sum_{j = 1,\ v(a - a_{ij}(y)) \geq z}^{e_i} n_{ij}.
\end{equation}
The pwl space $\Sigma(\mbA^1_{\mcH(y)}, h(y))$ is the union over all $i$ and all $1\leq j\leq e_i$ of the paths $ϕ_{a_{ij}}(\mbR)$.

Our aim is to describe the union of all these $\Sigma(\mbA^1_{\mcH(y)}, h(y))$ when varying $y\in \Sigma(U,f)$. Note for this that the union $ϕ_{b_1}(\mbR)\cup\ldots\cup ϕ_{b_u}(\mbR)$ of finitely many paths is, as pwl space, the quotient $(\coprod_{i = 1}^u \mbR)/\sim$ with equivalence relation $(b_i, z_1) \sim (b_j, z_2)$ if $D(b_i,z_1) = D(b_j,z_2)$. (In particular, $z_1 = z_2$.) This construction is fully encoded by all pairwise distances $v(b_i - b_j)$. Note also that the weights on the right hand side of \eqref{eq:skeleton_comparison} agree with the standard weight on $\mbR$ via the $ϕ_a$. This motivates considering the weighted pwl space
\begin{equation}\label{eq:descript_skeleton_A1}
\Sigma' = \left(\coprod_{i,\ 1\leq j\leq e_i} \mbR\times \Sigma(U,f)\right)/\sim
\end{equation}
with the following definition. As a topological space, $\Sigma'$ is the indicated quotient for the following equivalence relation: $(z_1, y_1)_{i_1,j_1} \sim (z_2, y_2)_{i_2, j_2}$ if and only if $y_1 = y_2 = y$, $z_1 = z_2 = z$, and $v(a_{i_1,j_1}(y) - a_{i_2,j_2}(y)) \geq z$. This quotient can also be constructed inductively by gluing in one further copy of $\mbR\times \Sigma(X, f)$ in each step. For each step, the locus of gluing is a closed pwl subspace of the two spaces in question, because it can be described in terms of the pwl function $z$ (the vertical coordinate from $\mbR$) and the pwl functions $y\mapsto v(a_{i_1,j_1}(y) - a_{i_2, j_2}(y))$ on $\Sigma(U, f)$. The pwl structure on $\Sigma'$ is obtained by successive application of the following lemma.
\begin{lem}\label{lem:pwl_equiv_rel}
Let $X_1$ and $X_2$ be two pwl spaces, let $Y_i \subseteq X_i$ be two closed pwl subspaces, and let $φ:Y_1\iso Y_2$ be a pwl isomorphism. Let
$$ι:X_1\sqcup X_2 \lr X := X_1\sqcup_{φ} X_2$$ be the topological space obtained by identifying $Y_1$ and $Y_2$ along $φ$. Define a sheaf of continuous functions $Λ$ on $X$ by
\begin{equation}\label{eq:sheaf_gluing_pwl}
Λ(W) = \{φ \mid φ\circ ι:ι^{-1}(W)\lr \mbR \text{ is pwl}\}.
\end{equation}
Then $(X, Λ)$ is a pwl space and $ι$ a pwl map. If furthermore $X_1$ and $X_2$ are weighted and purely of the same dimension $n$, and if the restriction of $φ$ to the $n$-dimensional locus of $Y_1$ is an isometry, then there is a unique weight on $X$ such that $ι\vert_{X_1}$ and $ι\vert_{X_2}$ are locally isometric outside a closed pwl subspace of dimension $\leq n-1$.
\end{lem}
The proof of this lemma is obvious and thus omitted. Coming back to the definition of $\Sigma'$, we endow each copy $\mbR\times \Sigma(U, f)$ in \eqref{eq:descript_skeleton_A1} with the product weight. The isometry condition of the lemma is satisfied, making $\Sigma'$ into a weighted pwl space.
Let $ι_{i,j}:\mbR\times \Sigma(U,f)\to \Sigma'$ denote the quotient map from the $(i,j)$-th copy of $\mbR\times \Sigma(U, f)$. Each $ι_{i,j}$ is injective and there is a well defined map
\begin{equation}\label{eq:embedding_skeleton_A1}
ϕ:\Sigma'\lr \Sigma(\mbA^1_U,h\times f),\quad ι_{i,j}(z,y)\longmapsto η_{a_{ij}(y),z} \in \mbA^1_{\mcH(y)}.
\end{equation}

Note that $\mr{pr}_2:\Sigma' \to \Sigma(U, f)$ is flat. The well defined fiber weights from Def. \ref{def:fiber_weight} are simply the standard weights on the fibers $\mbR$ of each copy $ι_{i,j}:\mbR\times \Sigma(U,f) \to \Sigma(U,f)$ in \eqref{eq:descript_skeleton_A1}. Thus the proof of Prop. \ref{prop:flatness_in_explicit_rel_dim_1} is complete if we can prove the following final claim.

\emph{Claim: The map $ϕ$ from \eqref{eq:embedding_skeleton_A1} is an isomorphism of weighted pwl space.}

The previous fiberwise analysis already showed that $ϕ$ is a bijection of sets. Establishing that it is a pwl isomorphism is now easier for the set-theoretic inverse $ϕ^{-1}$ because we have an explicit pwl atlas (in the sense of \cite{Duc_local}*{(0.25.3)}) for $\Sigma'$ and only need to see that all its pwl functions pullback to pwl functions on $\Sigma(\mbA^1_U, h\times f)$.

The atlas we mentioned is the following. For each affinoid domain $K\subseteq U$, the intersection $K\cap \Sigma(U, f)$ is a closed pwl subspace and any $g\in \mcO_U(K)$ defines a pwl function $v(g)$ on $K\cap \Sigma(U, f)$. The pairs $(K, v(g))$ give an atlas of $\Sigma(U, f)$. An atlas of $\Sigma'$ is now given by the images $ι_{i,j}\big(\mbR\times (K\cap \Sigma(U, f))\big)$ together with the functions $\mr{pr}_1$ and $v(g)\circ \mr{pr}_2$.

For each of the pairs $(i,j)$, the image of $ι_{i,j}$ is the closed pwl subspace $\Sigma(\mbA^1_U, (t-a_{ij})\times f)$. Moreover, the two compositions $\mr{pr}_1\circ ϕ^{-1} = v(t-a_{ij})$ and $v(g\circ π)\circ \mr{pr_2} \circ ϕ^{-1} = v(g)$ are pwl functions on $\Sigma(\mbA^1_U, h\times f)$. So we see that $ϕ^{-1}$ is a bijective pwl map, in particular continuous. Since $ϕ^{-1}$ is also proper, it is a homeomorphism.

%
%

We next claim that $ϕ$ is an isomorphism of weighted pwl spaces. This claim is local on $U$, so we may assume $\Sigma(U,f)$ to be a polyhedral set. Let $\mcX$ and $\mcY$ be polyhedral complex structures for $\Sigma'$ (resp. $\Sigma(U,f)$) that are subordinate to $π$. Given a maximal dimensional polyhedron $σ\in \mcX_{n+1}$ and a point $x\in σ^\circ$, there are some $i,j$, a closed interval $I\subset \mbR$ and some $τ\in \mcY_n$ such that $ι_{i,j}$ maps $I\times τ$ isomorphically onto a neighborhood of $x$ in $σ^\circ$. 
Working locally near $y = π(x)\in \Sigma(U,f)$, let $q:\mbG_m^s\to \mbG_m^n$ be such that $q\circ f$ is finite flat near $y$, say of degree $e$. Then $(t-a_{ij})\times (q\circ f)$ is finite flat of the same degree $e$ near $x$, which means that the weight of $σ$ coincides with the product weight on $I\times τ$. This finishes the proof of the claim and of the proposition.
\end{proof}

Combining this proposition with the generically finite case (Prop. \ref{prop:fin_maps_flat_on_skeletons}) we obtain our most general result.

\begin{thm}\label{thm:skeleton_flat}
Let $π:X\to Y$ be a map of analytic spaces that are purely of dimensions $\dim Y = m$ and $\dim X = n$. Assume that $π$ is locally on $X$ dominant in the sense that it takes Abhyankar points to Abhyankar points. Let $g:X\to \mbG_m^r$ be a tuple of toric coordinates.

Then for every point $x\in \Sigma(X,g)\setminus \partial X$, there are open neighborhoods $U$ of $y = π(x)$ and $V\subseteq π^{-1}(U)$ of $x$ as well as toric coordinates $f:U\to \mbG_m^s$ such that $π$ restricts to a flat map of tropical spaces
$$\Sigma(V,g\times (f\circ π))\setminus \partial X \lr \Sigma(U,f)\setminus \partial Y.$$
In fact, $U$, $V$ and $f$ may be chosen such that for every open subset $U'\subseteq U$ and every refinement $f'$ of $f\vert_{U'}$, the map $π$ restricts to a flat map
$$\Sigma(V\cap π^{-1}(U'), g\times (f'\circ π)) \setminus\partial X \lr \Sigma(U', f') \setminus \partial Y.$$
\end{thm}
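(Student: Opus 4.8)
The plan is to reduce the general case to the two special cases already established: the generically finite case (Prop.~\ref{prop:fin_maps_flat_on_skeletons}) and the explicit relative dimension one case (Prop.~\ref{prop:flatness_in_explicit_rel_dim_1}). The key observation is that a dominant map of pure dimensional analytic spaces can be factored, locally around a point, as a composite of maps that are finite and maps of relative dimension one, and that flatness of maps of tropical spaces is preserved under composition (as used repeatedly in the proofs of Lem.~\ref{lem:bc_for_skeletons} and Lem.~\ref{lem:fin_dominant_covering}). So first I would localize: fix $x\in \Sigma(X,g)\setminus \partial X$ and $y = \pi(x)$, and using the Abhyankar property note that $d := d(x) - d(y) = n - m$ is the relative dimension at $x$. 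Replacing $X$ and $Y$ by affinoid neighborhoods of $x$ and $y$, and using that $\pi$ is topologically proper onto its image after shrinking, we may assume $\pi^{-1}(y) = \{x\}$ and (via Lem.~\ref{lem:irred_comp_decomp_of_skeleton} applied to both $X$ and $Y$) that $X$ and $Y$ are integral.

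Second, I would construct the factorization. By Noether normalization for the map $\pi$ near $x$ — concretely, by choosing functions $t_1,\ldots,t_d\in \mcO_X(X)$ whose graded reductions $\wt{t_{i,x}}$ are a transcendence basis of $\wt{\mcH(x)}$ over $\wt{\mcH(y)}$, which exist because $\trdeg(\wt{\mcH(x)}/\wt{\mcH(y)}) = d$ — one obtains, after shrinking, a factorization
$$X \overset{\alpha}{\lr} \mbA^d_Y \overset{\mr{pr}}{\lr} Y$$
where $\alpha$ is quasi-finite near $x$, hence (again shrinking and using topological properness together with \cite{Ber2}*{Prop.~3.1.4}) finite near $x$. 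The projection $\mbA^d_Y\to Y$ factors further as a tower of $d$ affine line projections $\mbA^d_Y\to \mbA^{d-1}_Y\to \cdots \to \mbA^1_Y\to Y$. Each step $\mbA^{j}_{Y}\to \mbA^{j-1}_Y$ is of the form treated in Prop.~\ref{prop:flatness_in_explicit_rel_dim_1} (with the affinoid base being an affinoid subdomain of $\mbA^{j-1}_Y$, which is again $k$-affinoid), and $\alpha$ is of the form treated in Prop.~\ref{prop:fin_maps_flat_on_skeletons}. So we have written $\pi$, near $x$, as a composite of $d+1$ maps each of which induces, on suitable skeletons, a flat map of tropical spaces.

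Third comes the bookkeeping of toric coordinates along the tower, which is where I expect the main obstacle to lie. Prop.~\ref{prop:fin_maps_flat_on_skeletons} and Prop.~\ref{prop:flatness_in_explicit_rel_dim_1} each take \emph{some} toric coordinates on the source and produce toric coordinates $f$ on the base together with the crucial ``stability under refinement'' clause: the conclusion holds for every refinement $f'$ of $f\vert_{U'}$. This refinement stability is exactly what makes the composition go through. Starting from the bottom of the tower, Prop.~\ref{prop:flatness_in_explicit_rel_dim_1} (or the generically finite Prop.~\ref{prop:fin_maps_flat_on_skeletons} for the top step $\alpha$) produces coordinates on $\mbA^{j-1}_Y$; these then serve as the ``given toric coordinates $g$'' input to the proposition governing the next step down, which in turn outputs coordinates further down, and one must arrange that the $g$-part accumulated from the upper steps (including the original $g$ on $X$, transported via the finite map $\alpha$) is consistently a refinement of what the lower propositions require. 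Concretely, one threads the coordinates $g$ on $X$ through $\alpha$ as in the proof of Prop.~\ref{prop:fin_maps_flat_on_skeletons} (equation \eqref{eq:skeleton_equality_finite_case}), then feeds the resulting coordinates on $\mbA^d_Y$ into Prop.~\ref{prop:flatness_in_explicit_rel_dim_1} for the top affine line projection, and so on down to $Y$; at each stage the refinement clause guarantees that enlarging the coordinate tuple by the contributions already produced above does not destroy flatness. Finally, flatness of the composite of the $d+1$ flat maps of tropical spaces follows since flat maps of weighted pwl spaces compose (the fiber integral of a composite is the composite of fiber integrals, so $(\pi_2\circ\pi_1)_{\mu/\delta,*} = \pi_{2,*}\circ\pi_{1,*}$ preserves piecewise smoothness), and the refinement statement in the theorem is inherited from the refinement statements at each level. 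The one genuinely delicate point to verify carefully is that the boundary loci match up — that $\Sigma(V,g\times(f\circ\pi))\setminus\partial X$ is carried into the source skeleton of the first map of the tower and that the intermediate ``$\setminus\partial$'' conditions are compatible — but since finite maps are inner (Rmk.~\ref{rmk:flatness_tropical}) and the affine line projections have empty relative boundary over the locus in question, one checks as in Lem.~\ref{lem:fin_dominant_covering} that $\pi(X\setminus\partial X)\subseteq Y\setminus\partial Y$ and that all intermediate skeleton restrictions are to the boundary-free loci, so the composition is legitimate throughout.
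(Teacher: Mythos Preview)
Your approach matches the paper's: factor $\pi$ locally as a finite map $X\to \mbA^{n-m}_Y$ followed by the projection $\mbA^{n-m}_Y\to Y$, apply Prop.~\ref{prop:fin_maps_flat_on_skeletons} to the finite piece, reduce the projection to the relative-dimension-one case by induction (equivalently, your explicit tower), and apply Prop.~\ref{prop:flatness_in_explicit_rel_dim_1}; the refinement clauses in both propositions are precisely what lets the composite go through. Two points to correct, though.

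First, the claim that you may arrange $\pi^{-1}(y)=\{x\}$ is false when $n>m$: the fiber over the Abhyankar point $y$ has dimension $n-m$, not zero, so no amount of shrinking makes it a single point. Fortunately you never use this claim. What you actually need (and argue for) is that $x$ is isolated in the fiber of the \emph{finite} map $\alpha:X\to\mbA^d_Y$, and that follows because $\alpha(x)$ is Abhyankar in the $n$-dimensional space $\mbA^d_Y$. The paper cites \cite{Duc_dimension}*{Th\'eor\`eme 4.6} for the factorization rather than building it from a transcendence basis, but your construction is fine; combined with Lem.~\ref{lem:automatic_finite_flatness_II} it yields finiteness of $\alpha$ after shrinking, as in the paper.

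Second, and more substantively, Prop.~\ref{prop:flatness_in_explicit_rel_dim_1} requires the input coordinates $g_i$ to be \emph{polynomials} in $A[t]$, whereas the coordinates you thread down the tower (coming from Prop.~\ref{prop:fin_maps_flat_on_skeletons} and from higher levels) are merely affinoid functions. The paper handles this in one line: since $\Sigma(X,g)\setminus\partial X$ as a tropical space depends only on the tropicalization map $t_g$, and $X$ is compact, one may replace the $g_i$ by polynomial approximations in $A[T]$ before invoking Prop.~\ref{prop:flatness_in_explicit_rel_dim_1}. You should insert this approximation step at each affine-line stage.
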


\begin{proof}
First note that $f(X\setminus \partial X)\subseteq (Y\setminus \partial Y)$ by \cite{Ber2}*{Prop. 1.5.5 (ii)}. So under the given assumptions, $y = π(x)$ is an Abhyankar point of $Y\setminus \partial Y$. The claimed flatness is local on both $X$ and $Y$, so we may assume $X$ and $Y$ to be $k$-affinoid, say $X = \mcM(B)$ and $Y = \mcM(A)$. By Lem. \ref{lem:irred_comp_decomp_of_skeleton}, we may also assume that both $A$ and $B$ are integral domains. By \cite{Duc_dimension}*{Théorème 4.6}, there exists a factorization
$$X\overset{π_0}{\lr} \mbA^{n-m}_Y \overset{π_1}{\lr} Y$$
such that $x$ is isolated in its fiber $F := π_0^{-1}(π_0(x))$. After shrinking $X$, we may assume that $F$ is finite. Since $\mbA^{n-m}_Y$ is purely of dimension $n$, the image $π_0(x)$ has to be an Abhyankar point of $\mbA^{n-m}_Y$. Since $x\notin \partial X$ by assumption, the intersection $F \cap \partial X$ does not contain $x$. After replacing $X$ by an affinoid neighborhood of $x$ that is disjoint from $F\cap \partial X$, we may assume that $F \subseteq X\setminus \partial X$. Then $π_0$ is finite over a neighborhood of $π_0(x)$ by Lem. \ref{lem:automatic_finite_flatness_II}. Applying Prop. \ref{prop:fin_maps_flat_on_skeletons}, we find an affinoid neighborhood $U$ of $π_0(x)$ and toric coordinates $f_0$ on $U$ such that
$$\Sigma(π_0^{-1}(U), g\times (f_0\circ π_0)) \lr \Sigma(U, f_0)$$
is flat, and such that the analogous flatness even holds for all refinements of $(U, f_0)$.

In this way, we are from now on reduced to the case of an affinoid domain $X\subset \mbA^{n-m}_Y$. By induction on $n-m$, we only need to treat the case $n = m + 1$. Since $\Sigma(X, g)\setminus \partial X$ as a tropical space depends only on the tropicalization map $t_g$ (see the comment before \eqref{eq:skeleton_away_from_boundary} as well as Rmk. \ref{rmk:independence_weights}) and since $X$ is compact, we may replace $g$ by an approximation by polynomials in $A[T]$. Then Prop. \ref{prop:flatness_in_explicit_rel_dim_1} applies and finishes the proof.
\end{proof}

\section{$δ$-Forms on Non-Archimedean Spaces}

We next define and study $δ$-forms on analytic spaces. Everything here relies on the tropical space property of skeletons, which only holds away from boundaries. For this reason, we mostly restrict to boundaryless spaces. 

\subsection{$δ$-Forms}
We use a blunt notion of restriction for polyhedral currents. Let $H \subseteq K\subseteq \mbR^m$ be two polyhedral subsets and let $T\in P(K)$ be a polyhedral current. Let $\mcK$ be a polyhedral complex structure on $K$ that is also subordinate to $H$ and $T$, say $T = \sum_{σ\in \mcK} α_σ\wedge [σ,μ_σ]$. The \emph{(polyhedral) restriction} of $T$ to $H$ is defined as
$$T\vert_H := \sum_{σ\in \mcK,\ σ\subseteq H} α_σ\wedge [σ,μ_σ] \in P(H).$$
It is independent of the choice of $\mcK$. Given any other presentation of $K$ as polyhedral set, $K \iso K' \subseteq \mbR^r$, the image of $H$ is again a polyhedral set. (Being a closed pwl subspace is a transitive property.) The definition of $T\vert_H$ is then independent of the presentation of $K$. Thus, whenever $Y\subseteq X$ is a subset of a pwl space $(X, Λ_X)$ such that $(Y, Λ_X\vert_Y)$ is also a pwl space, the above considerations apply locally on $Y$ and together define a restriction map
\begin{equation}
P(X) \to P(Y),\ T\mapsto T\vert_Y.
\end{equation}

Beware that the following confusion might arise. Assume $(X,μ)$ is a weighted pwl space. Then any pws form $α\in PS(X)$ defines a polyhedral current $α\wedge [X,μ]$. The restrictions of $α$ as pws form and as polyhedral current cannot be compared in general. For example, $(φ\wedge [\mbR, μ])\vert_{\{0\}} = 0$ for dimension reasons. We will take care that this ambiguity does not arise in the following and sometimes write $T\vert^P_H$ instead of $T\vert_H$, indicating the polyhedral character of the restriction.

A special case in which the two notions agree however is when $Y\subseteq X$ is purely of the same dimension and endowed with the restriction weights $μ\vert_Y$. In formulas: $(α\wedge [X, μ])\vert_Y = α\vert_Y \wedge [Y, μ\vert_Y]$. For example, $Y\subset X$ might be an open pwl subspace. Then the restriction $T\vert^P_Y$ also agrees with the restriction of $T$ as current.

\begin{lem}[Restriction Lemma]\label{lem:restriction_lemma}
\begin{enumerate}[wide, labelindent=0pt, labelwidth=!, label=(\arabic*), topsep=4pt, itemsep=4pt]
\item Let $(\Sigma, μ, L)$ be a tropical space of dimension $n$ and let $f\in L(\Sigma)^r$ be a tuple of linear functions. Let $\Sigma' \subseteq \Sigma$ be the union of all $n$-dimensional polyhedra $σ\subset \Sigma$ such that $f\vert_σ$ is injective. Then $(\Sigma', μ' = μ\vert_{\Sigma'}, L' = \mbR + \mr{Span}\{f_1,\ldots,f_r\})$ is a tropical space as well.

\item Let additionally $γ\in B(\mbR^r)$ be a $δ$-form. Then
\begin{equation}\label{eq:restriction_lemma}
f^\star(γ)\vert_{\Sigma'} = (f\vert_{\Sigma'})^\star(γ).
\end{equation}
\end{enumerate}
\end{lem}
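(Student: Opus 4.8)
\textbf{Proof plan for Lemma \ref{lem:restriction_lemma}.}

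The plan is to prove (1) first, then deduce (2) by comparing local chart descriptions of both sides. For (1): the set $\Sigma'$ is a finite (locally finite) union of $n$-dimensional polyhedra $σ$ on which $f\vert_σ$ is injective, hence is a closed pwl subspace of $\Sigma$ of pure dimension $n$; it carries the restriction weight $μ' = μ\vert_{\Sigma'}$, which makes sense because $\Sigma'\subseteq \Sigma$ is pure of the same dimension (the remark following Def. \ref{def:weight_polyhedron}). The sheaf $L' = \mbR + \mr{Span}\{f_1,\ldots,f_r\}$ is a subsheaf of $Λ_{\Sigma'}$ containing the locally constant functions; axiom (2) of Def. \ref{def:sheaf_of_linear_functions} holds because by construction $f\vert_{\Sigma'}$ has finite fibers on a neighborhood of each point (each $σ$ meeting the point embeds under $f$), and axiom (3) holds since any polyhedral complex structure on a polyhedral subset of $\Sigma'$ that is subordinate to $μ$ and refines the given decomposition is subordinate to $L'$ as well. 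So $(\Sigma', μ', L')$ is a weighted pwl space with linear functions, and to see it is a tropical space I use the characterization Cor. \ref{cor:equiv_char_trop_space}(3): for a compact purely $n$-dimensional polyhedral $K\subseteq \Sigma'$ and a linear map $h:K\to \mbR^N$ (which is automatically a linear map on $K$ viewed inside $\Sigma$, since $L'\subseteq \mbR + (Λ_\Sigma\text{-linear combinations of }f_i)\subseteq \ob{L}_\Sigma$ — more precisely $h$ factors through some linear combination of the $f_i$ plus locally constant functions), the pushforward $h_*[K, μ']\vert_{\mbR^N\setminus h(\partial K)}$ is balanced because it agrees with $h_*[K, μ\vert_K]\vert_{\mbR^N\setminus h(\partial K)}$, which is balanced by applying Cor. \ref{cor:equiv_char_trop_space}(3) to the tropical space $(\Sigma, μ, L)$ together with the fact that $h$ extends to a linear map on a neighborhood of $K$ in $\Sigma$ via the $f_i$.

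For (2): both sides of \eqref{eq:restriction_lemma} are polyhedral currents on the tropical space $(\Sigma', μ', L')$, and by Lem. \ref{lem:faithfulness_lemma} a polyhedral current on $\Sigma'$ is determined by its pushforwards $g_*(\,\cdot\,\vert_{K\setminus g^{-1}(g(\partial K))})$ over compact polyhedral $K\subseteq \Sigma'$ and pwl maps $g:K\to \mbR^s$ with finite fibers. So I fix such a $K$ and, using axiom (2) of Def. \ref{def:sheaf_of_linear_functions} for $L'$ together with the fact that the $f_i\vert_K$ already separate points of each maximal polyhedron, I may choose $g$ to be a refinement of $f\vert_K$ (say $g = (f\vert_K, g')$ with $g'$ extra linear functions needed to get finite fibers, and $f = p\circ g$ for the obvious projection $p$), and then $g$ is also a refinement of $f\vert_{\Sigma'}$ in the sense of Prop. \ref{prop:exist_realization}. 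Now the defining identity \eqref{eq:realization} for $(f\vert_{\Sigma'})^\star(γ)$ on the tropical space $\Sigma'$ gives
$$g_*\big((f\vert_{\Sigma'})^\star(γ)\vert_{K\setminus g^{-1}(g(\partial K))}\big) = g_*[K, μ'\vert_K] \wedge p^*γ \quad\text{away from } g(\partial K).$$
It remains to show the left-hand side also equals $g_*\big(f^\star(γ)\vert_{\Sigma'}\vert_{K\setminus g^{-1}(g(\partial K))}\big)$; since $K\subseteq \Sigma'$, the iterated restriction $f^\star(γ)\vert_{\Sigma'}\vert_K$ is just $f^\star(γ)\vert_K$ (polyhedral restriction is transitive, as noted in the text), so this reduces to the defining identity \eqref{eq:realization} for $f^\star(γ)$ on $\Sigma$ applied to the very same chart $(K, g, p)$ — here I use that $K\subseteq \Sigma'\subseteq \Sigma$ is still a compact polyhedral set in $\Sigma$, that $g$ still has finite fibers and refines $f\vert_K$, and that $g_*[K, μ\vert_K] = g_*[K, μ'\vert_K]$ because $μ'\vert_K = μ\vert_K$ by definition of the restriction weight. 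Hence both sides have the same pushforward in every chart, and by Lem. \ref{lem:faithfulness_lemma} they coincide.

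The main obstacle I anticipate is bookkeeping rather than a deep point: one must be careful that a chart $(K, g, p)$ valid for the tropical space $\Sigma'$ (i.e. $g\in (L')^s$ up to the refinement construction, $g$ finite-fibered on $K$) simultaneously serves as a chart for $\Sigma$ in the sense of Prop. \ref{prop:exist_realization} — this needs that linear functions on $\Sigma'$ coming from $L'$ genuinely extend to linear functions on a neighborhood in $\Sigma$, which is true because $L'$ is generated by (restrictions of) the global linear functions $f_1,\ldots,f_r\in L(\Sigma)$. Once that compatibility is pinned down, the identity of pushforwards is immediate from $\eqref{eq:realization}$ on the two spaces, and the transitivity of polyhedral restriction (which is explicit in the text just before the Restriction Lemma) closes the argument.
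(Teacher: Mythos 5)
Your argument for both parts takes a compact polyhedral $K\subseteq\Sigma'$ and invokes Cor.~\ref{cor:equiv_char_trop_space}(3), resp.\ the defining identity \eqref{eq:realization} for $f^\star\gamma$ on $\Sigma$, ``for the same chart.'' This does not work, because the boundary $\partial_\Sigma K$ of $K$ computed in $\Sigma$ is in general strictly larger than $\partial_{\Sigma'}K$: every point of $K$ at which an $n$-dimensional polyhedron of $\Sigma$ protrudes out of $\Sigma'$ lies in $\partial_\Sigma K$ no matter how $K$ is chosen inside $\Sigma'$. Applying the tropical-space criterion for $\Sigma$ therefore only gives balancedness of $h_*[K,\mu]$ away from $h(\partial_\Sigma K)$, and \eqref{eq:realization} for $f^\star\gamma$ on $\Sigma$ only controls the pushforward away from $g(\partial_\Sigma K)$ --- both strictly smaller regions than where the corresponding statement for $\Sigma'$ must hold. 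Concretely, take $\Sigma=\{x_1x_2=0\}\subseteq\mbR^2$ with its standard tropical-cycle structure and $f=(x_1)$; then $\Sigma'=\{x_2=0\}$, and the origin lies in $\partial_\Sigma K$ for \emph{every} compact polyhedral $K\subseteq\Sigma'$ containing it, so your charts never control either side of \eqref{eq:restriction_lemma} there. This is not mere bookkeeping.

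The paper's proof avoids the boundary mismatch. For (1) it checks the balancing of $\Sigma'$ at each facet $\tau$ directly, and the crux --- which your argument omits --- is that if $\phi\in L'$ has $\phi\vert_\tau$ constant and $\sigma\supset\tau$ is an $n$-dimensional neighbour in $\Sigma$ with $\phi\vert_\sigma$ non-constant, then necessarily $\sigma\subseteq\Sigma'$ (because $\phi$ together with $n-1$ of the $f_i$ embedding $\tau$ then embed $\sigma$). Hence in the balancing sum for $\Sigma$ at $\tau$, which a priori runs over all $\sigma\supset\tau$ in $\Sigma$, every non-zero term already comes from $\sigma\subseteq\Sigma'$, so the balancing conditions for $\Sigma$ and $\Sigma'$ coincide term by term. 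For (2) the paper takes $K$ compact polyhedral in $\Sigma$ (not $\Sigma'$), so that $\partial_\Sigma K$ can be arranged to avoid the polyhedron whose coefficient is being compared, sets $K'=K\cap\Sigma'$, uses the identity $f_*(K,\mu)=f_*(K',\mu\vert_{K'})$ (valid because $f$ collapses every $n$-dimensional polyhedron of $K\setminus K'$), and then applies the projection formula. To close the gap you would have to reorganize your argument along these lines, which in effect reproduces the paper's proof.
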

\begin{proof}
(1) The pwl space $\Sigma'$ is purely of dimension $n$ by definition and $L'$ a sheaf of linear functions. Working locally with a fine enough polyhedral complex structure on $\Sigma'$, we need to see that for every polyhedron $τ\subseteq \Sigma'$ of codimension $1$ and every linear combination $ϕ$ of the $f_i$ with $ϕ\vert_τ$ constant, the balancing condition \eqref{eq:balanced} is satisfied.

Let $σ\subseteq \Sigma$ be $n$-dimensional and such that $τ\subset σ$ is a facet. We claim that if $ϕ\vert_σ$ is nonconstant, then $σ\subseteq \Sigma'$. Indeed, since $τ\subseteq \Sigma'$, there exist indices $i_1,\ldots,i_{n-1}$ such that $(f_{i_1},\ldots,f_{i_{n-1}})\vert_τ$ is injective. It follows that if $ϕ\vert_σ$ is nonconstant, then $(f_{i_1},\ldots,f_{i_{n-1}}, ϕ)\vert_σ$ is injective and hence $σ\subseteq \Sigma'$. (This argument used that $ϕ\vert_τ$ is constant.)

By assumption, the balancing condition \eqref{eq:balanced} is satisfied for $τ$ and $ϕ$ when $τ$ is viewed in $\Sigma$. But $(\partial ϕ\vert_σ)/(\partial n_{σ,τ}) \neq 0$ only for $σ\subseteq \Sigma'$ by the previous argument, so the balancing condition also holds in $\Sigma'$. This proves statement (1).

(2) The second statement follows from the projection formula. Namely, let $K\subseteq \Sigma$ be any $n$-dimensional compact polyhedral set, put $K' = K\cap \Sigma'$. Then there is an equality $f_*(K, μ\vert_K) = f_*(K', μ\vert_{K'})$. Moreover by (1), $f_*(K', μ\vert_{K'})$ is a tropical cycle away from $f(\partial K')$, the boundary being taken for $K'\subseteq \Sigma'$. Given $g\in L(K)^s$ with finite fibers, the projection formula reads
\begin{equation}\label{eq:projection_identity}
p_{1,*}\big( (f,g)_*(K,μ)\wedge p_1^*(γ)\big) = f_*(K', μ\vert_{K'}) \wedge γ
\end{equation}
away from $f(\partial K')$. Statement (2) is then obtained as follows. Assume $γ$ is of tridegree $(p,q,c)$. Working locally with fine enough polyhedral complex structures, let $τ\subseteq \Sigma'$ be of codimension $c$. Take $K$ such that $τ^\circ \cap \partial K = \emptyset$ and $f^{-1}(f(τ))\cap K' = τ$. (For example, take $K = \bigcup_{τ\subseteq σ} σ$, where $σ$ runs through the polyhedra of the complex structure.) Then the coefficients of $(f,g)(τ)$ and $f(τ)$ in \eqref{eq:projection_identity} being equal means that the coefficients of $τ$ in $(f,g)^\star(p_1^*(γ))$ and $f^\star(γ)$ agree, cf. the defining property \eqref{eq:realization}.
\end{proof} 

Let $U\subseteq X$ be an open subset of a purely $n$-dimensional analytic space with $\partial U = \emptyset$. Let $f$ and $g$ be tuples of toric coordinates on $U$ and assume that $g$ refines $f$. We claim that the inclusion of skeletons
$$\Sigma(U,f) \subseteq \Sigma(U, g)$$
is of the type considered in Lem. \ref{lem:restriction_lemma} (with respect to $t_f$). Indeed, assume $x\in \Sigma(U, f)$. Then for every affinoid neighborhood $V$ of $x$,
$$\dim t_f(\Sigma(V, g)) \geq \dim t_f(\Sigma(V, f)) = n$$
by \eqref{eq:rel_trop_skeleton}, so $x$ lies in the pwl subspace of $\Sigma(U, g)$ defined by $t_f$. Conversely, if $\dim t_f(\Sigma(V, g)) = n$ for all affinoid neighborhoods $V$ of $x$, then $x\in \Sigma(U, f)$ by Thm. \ref{thm:trop_local_structure} and \eqref{eq:skeleton_characterization_residue_fields}. This proves the claim.

Assume now that $g:U\to \mbG_m^r$ and that $γ\in B(\mbR^r)$. Then we define a polyhedral current $t_g^\star(γ)\vert_{\Sigma(U,f)}$ by restriction from $\Sigma(U,g)$. Lem. \ref{lem:restriction_lemma} implies that if $(h, g = p\circ h)$ is a refinement of $g$, then
$$t_h^\star(p^*(γ))\vert_{\Sigma(U,f)} = t_g^\star(γ)\vert_{\Sigma(U,f)}.$$
In particular, we obtain a well defined restriction $t_g^\star(γ)\vert_{\Sigma(U,f)}$ also in the case that $g$ does not refine $f$: Choose locally any refinement $(h,g = p\circ h)$ of $g$ that also refines $f$ and put locally
\begin{equation}\label{eq:def_realization_on_skeleton}
t_g^\star(γ)\vert_{\Sigma(U,f)} := t_h^\star(p^*γ)\vert_{\Sigma(U,f)}.
\end{equation}
Let $X$ be a purely $n$-dimensional analytic space. By \emph{skeleton} in $X$, we mean a locally closed subset $\Sigma \subseteq X$ which is locally a closed pwl subspace of some $\Sigma(U,f)$. All such are skeletons in the sense of Ducros' \cite{Duc_local}*{Def. (4.6)}. (We actually suspect that every skeleton in Ducros' sense is locally contained in some $\Sigma(U,f)$. This is true at least if the skeleton in Ducros' sense is purely $n$-dimensional.) There is no natural tropical space structure on a skeleton $\Sigma$, e.g. $\Sigma$ might not even be pure-dimensional. However, the above logic still applies and yields a well defined restriction $t_g^\star(γ)\vert_\Sigma$.

\begin{defn}\label{def:delta_form_non_arch}
Let $X$ be an analytic space with $\partial X = \emptyset$, purely of some dimension $n$.
\begin{enumerate}[wide, labelindent=0pt, labelwidth=!, label=(\arabic*), topsep=4pt, itemsep=4pt]
\item A \emph{$δ$-form} on $X$ is the datum of a polyhedral current $ω_\Sigma$ for every skeleton $\Sigma \subseteq X$ such that the following two conditions are satisfied. First, if $\Sigma \subseteq \Sigma'$, then $ω_{\Sigma} = (ω_{\Sigma'})\vert_\Sigma$. Second, every point $x\in X$ has an open neighborhood $U$, toric coordinates $f:U\to \mbG_m^r$ and a $δ$-form $γ\in B(\mbR^r)$ such that for all $\Sigma \subseteq U$,
$$ω_\Sigma = t_f^\star(γ)\vert_{\Sigma}.$$
Given a $δ$-form $ω = (ω_\Sigma)_\Sigma$, we usually write $ω\vert_\Sigma := ω_\Sigma$. We write $t_f^\star(γ)$ for the $δ$-form $\Sigma \mapsto t_f^\star(γ)\vert_\Sigma$.

\item By their local definition, $δ$-forms on $X$ form a sheaf which we denote by $B$ or $B_X$. There is a natural trigrading $B = \bigoplus_{p,q,c} B^{p,q,c}$ that comes from the trigrading of $δ$-forms on $\mbR^r$.

\item Recall that, for any tropical space $(\Sigma, μ, L)$, the operators $\wedge$, $d'$, $d''$, $d'_P$, $d''_P$, $\partial'$ and $\partial''$ can all be defined in terms of charts, cf. Def. \ref{def:delta_form_tropical_space} (3) and (4). It follows that they all commute with formation of the restriction as in \eqref{eq:restriction_lemma}. Thus we may endow $B$ with $\wedge$, $d'$, $d''$, $d'_P$, $d''_P$, $\partial'$ and $\partial''$ by
$$(t_f^\star α) \wedge (t_f^\star β) := t_f^\star (α\wedge β),\ \ \ d'(t_f^\star α) := t_f^\star (d'α),\ \ \ \partial'(t_f^\star α) := t_f^\star(\partial' α),\ \ldots$$
\end{enumerate}
\end{defn}

\begin{rmk}\label{rmk:def_delta_form}
\begin{enumerate}[wide, labelindent=0pt, labelwidth=!, label=(\arabic*), topsep=4pt, itemsep=4pt]
\item Among the above operators, at least $d'_P$ and $d''_P$ have a simple description: They coincide with the polyhedral derivative of polyhedral currents,
$$(d_P'ω)\vert_\Sigma = d'_P(ω\vert_\Sigma),\ \ \ (d_P''ω)\vert_\Sigma = d''_P(ω\vert_\Sigma).$$
This follows from an analogous description for $δ$-forms on $\mbR^r$. Another explicit situation is the $\wedge$-product with a $δ$-form $η$ of tridegree $(p,q,0)$. Then each $η\vert_{\Sigma(U,f)}$ has the form $η_{\Sigma(U,f)}\wedge [\Sigma(U,f),μ]$ for some pws $(p,q)$-form $η_{\Sigma(U,f)}$, cf. Ex. \ref{ex:delta_forms}, and
$$(η\wedge ω)\vert_{\Sigma(U,f)} = η_{\Sigma(U,f)} \wedge \left(ω\vert_{\Sigma(U,f)}\right).$$
Note that the pws forms $η_{\Sigma(U,f)}$ form a compatible family for varying $U$ and $f$. This will be discussed in detail below, cf. §\ref{ss:pws_forms}.

\item One might equivalently define $δ$-forms on $X$ by only considering open subsets $U\subseteq X$ and skeletons of the form $\Sigma(U,f)$. However, the above formalism also allows to consider e.g. $ω\vert_{\Sigma(K,f)}$, where $K\subseteq X$ is an affinoid domain and, in particular, has boundary. This will be convenient for integration theory later.
\end{enumerate}
\end{rmk}

\begin{lem}\label{lem:delta_vanishing}
Let $X$ be an analytic space with $\partial X = \emptyset$, purely of dimension $n$. Let $f:X\to \mbG_m^r$ denote toric coordinates and let $γ\in B(\mbR^r)$. The following are equivalent.
\begin{enumerate}[leftmargin=*]
\item The $δ$-form $ω = t_f^\star(γ)\in B(X)$ vanishes.
\item For every open subset $U\subseteq X$ and every $g:U\to \mbG_m^s$, the restriction $ω\vert_{\Sigma(U,g)}$ vanishes.
\item For every affinoid domain $K\subseteq X$ and every $g:K\to \mbG_m^s$, the restriction $ω\vert_{\Sigma(K,g)}$ vanishes.
\item Same as (2) or (3), but only for $g$ refining $f\vert_U$ resp. $f\vert_K$.
\item For every affinoid domain $K$ and every refinement $(g,f\vert_K = p\circ g)$ of $f\vert_K$, the intersection $T(K,g) \wedge p^*(γ)$ vanishes away from $t_g(\Sigma(K,g)\cap \partial K)$.
\end{enumerate}
\end{lem}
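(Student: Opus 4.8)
The statement is a chain of equivalences between ``global vanishing'' of a $\delta$-form built as a pullback and various ``local/computational vanishing'' conditions. The guiding principle is that the content of the whole definition (Def.~\ref{def:delta_form_non_arch}) is local, and that restriction of polyhedral currents to skeletons is computed chart-wise via the defining projection-formula identity \eqref{eq:realization}. So the plan is to run the cycle $(1)\Rightarrow(2)\Rightarrow(3)\Rightarrow(4)\Rightarrow(5)\Rightarrow(1)$, with the only genuinely substantial link being $(5)\Rightarrow(1)$; everything else is essentially unwinding definitions.

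First I would observe that $(1)\Rightarrow(2)$ is immediate: by Def.~\ref{def:delta_form_non_arch}(1), the $\delta$-form $\omega = t_f^\star(\gamma)$ is, by definition, the assignment $\Sigma \mapsto t_f^\star(\gamma)\vert_\Sigma$, so $\omega=0$ means exactly that every $\omega\vert_\Sigma$ vanishes, in particular for $\Sigma=\Sigma(U,g)$ for any open $U$ and any toric coordinates $g$ on $U$. The implication $(2)\Rightarrow(4)$ is trivial (it weakens the hypothesis), and $(2)\Rightarrow(3)$ follows because for an affinoid domain $K\subseteq X$ the skeleton $\Sigma(K,g)$ (allowing boundary) still makes sense via the restriction formalism of Rmk.~\ref{rmk:def_delta_form}(2), and the restriction $\omega\vert_{\Sigma(K,g)}$ is computed chart-wise. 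Similarly $(3)\Rightarrow(4)$ and $(4)$ in the open vs.~affinoid forms are interchangeable by the same locality argument (an affinoid domain is covered, after passing to its interior, by skeletons $\Sigma(U,g)$, and conversely every point of $\Sigma(U,g)$ lies on $\Sigma(K,g)$ for a small affinoid $K$). The link $(4)\Leftrightarrow(5)$ is the unwinding of the definition of $t_g^\star$: by Prop.~\ref{prop:exist_realization}, for a refinement $(g, f\vert_K=p\circ g)$ with finite fibers and the compact polyhedral pieces of $\Sigma(K,g)$, the current $t_g^\star(p^*\gamma)\vert_{\Sigma(K,g)}$ is the \emph{unique} polyhedral current whose pushforward under $t_g$ (away from $t_g(\partial K)$) equals $t_g(\Sigma(K,g))\wedge p^*\gamma = T(K,g)\wedge p^*\gamma$; hence this restriction vanishes iff $T(K,g)\wedge p^*\gamma$ vanishes away from $t_g(\Sigma(K,g)\cap\partial K)$. (One uses \eqref{eq:rel_trop_skeleton}, $T(K,g)=t_{g,*}\Sigma(K,g)$, to identify the pushforward with the tropicalization.) For general refinements $(g,p)$ not of finite-fiber type, Lem.~\ref{lem:faithfulness_lemma} together with the compatibility \eqref{eq:def_realization_on_skeleton} reduces to the finite-fiber case.

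The main obstacle is $(5)\Rightarrow(1)$: one must pass from vanishing of all the ``pushed-down'' tropical intersections $T(K,g)\wedge p^*\gamma$ back to vanishing of the polyhedral current $t_f^\star(\gamma)\vert_\Sigma$ on an \emph{arbitrary} skeleton $\Sigma\subseteq X$, not just on the $\Sigma(K,g)$ that occur tautologically. Here is the plan for that step. It suffices to check vanishing locally on $\Sigma$, so by Def.~\ref{def:delta_form_non_arch} of skeleton we may assume $\Sigma$ is a closed pwl subspace of some $\Sigma(U,g_0)$ for an open $U$ and toric coordinates $g_0$; by the compatibility condition (first axiom of Def.~\ref{def:delta_form_non_arch}(1)) $\omega\vert_\Sigma = (\omega\vert_{\Sigma(U,g_0)})\vert_\Sigma$, so it is enough to show $\omega\vert_{\Sigma(U,g_0)}=0$. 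Shrinking $U$ to an affinoid domain $K$ and replacing $g_0$ by $g_0\times f$ (a refinement of $f\vert_K$), and then by a further refinement $g$ of $g_0\times f$ with finite fibers on each compact polyhedral piece, the defining identity \eqref{eq:realization}/\eqref{eq:def_realization_on_skeleton} writes $\omega\vert_{\Sigma(K,g)}$ as the unique polyhedral current pushing down to $T(K,g)\wedge p^*\gamma$ under $t_g$; condition (5) gives that this pushforward vanishes away from $t_g(\Sigma(K,g)\cap\partial K)$, and by Lem.~\ref{lem:faithfulness_lemma} (since $t_g$ has finite fibers on the relevant polyhedral pieces) we conclude $\omega\vert_{\Sigma(K,g)}=0$, hence $\omega\vert_{\Sigma(K,g_0)}=0$ by restriction (Lem.~\ref{lem:restriction_lemma}), hence $\omega\vert_\Sigma=0$. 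The care needed is bookkeeping around boundaries of the affinoid domains $K$: the tropical space property, and thus the balancedness needed to invoke Lem.~\ref{lem:faithfulness_lemma} and Prop.~\ref{prop:exist_realization}, only holds away from $\partial K$, so one works on the interior $K\setminus\partial K$ (which is open in $X$ and boundaryless) and uses that $X$ is covered by such interiors. I expect this last point — keeping track of which identities hold ``away from $t_g(\partial K)$'' and assembling the local vanishing into a global one — to be the only place requiring genuine attention; the rest is formal.
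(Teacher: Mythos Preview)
Your outline is correct and follows the paper's approach closely. One point in your $(5)\Rightarrow(1)$ step (and in the backward half of your $(4)\Leftrightarrow(5)$) needs tightening. You write that $\omega\vert_{\Sigma(K,g)}$ is ``the unique polyhedral current pushing down to $T(K,g)\wedge p^*\gamma$ under $t_g$,'' but this is not what the characterizing identity \eqref{eq:realization} and Lem.~\ref{lem:faithfulness_lemma} actually say. A single pushforward does not determine a polyhedral current (fibers can cancel); what determines it is the family of pushforwards $t_{g,*}(T\vert_{H\setminus t_g^{-1}(t_g(\partial H))})$ for \emph{all} compact polyhedral subsets $H$. Condition (5) applied only to the fixed affinoid $K$ gives you just one such pushforward, and only away from $t_g(\Sigma(K,g)\cap\partial K)$.

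The paper closes this gap by observing that the intersections $\Sigma(H,g)=H\cap\Sigma(K,g)$, for affinoid domains $H\subseteq K\setminus\partial K$, contain a basis for the topology of $\Sigma(K\setminus\partial K,g)$; applying condition (5) to each such $H$ gives $T(H,g)\wedge p^*\gamma=0$ away from $t_g(\partial H)$, and \emph{this} family of vanishings is what Lem.~\ref{lem:faithfulness_lemma} requires to conclude $\omega\vert_{\Sigma(K\setminus\partial K,g)}=0$. Your closing remark about ``using that $X$ is covered by such interiors'' is in the right spirit but is not quite the fix: the point is to vary the affinoid $H$ \emph{inside} a fixed $K\setminus\partial K$, not merely to cover $X$ by interiors of affinoids.
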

\begin{proof}
The equivalence of (1) -- (4) is immediate, because every skeleton is locally contained in a skeleton of the form $\Sigma(U,g)$ resp. $\Sigma(K,g)$ with $g$ refining $f\vert_U$ resp. $f\vert_K$.

Now assume $ω = 0$ and let $(K,(g,p))$ be as in (5). In particular we assume $ω\vert_{\Sigma(K,g)} = 0$. Since $g$ is assumed to refine $f$ along $p$, the restriction satisfies (by definition)
$$ω\vert_{\Sigma(K\setminus \partial K, g)} = (t_g\vert_{\Sigma(K\setminus \partial K, g)})^\star(p^*(γ)).$$
The defining property \eqref{eq:realization} of $t_g^\star$ on the right hand side implies now that
$$t_{g,*}(ω\vert_{\Sigma(K, g)}) = T(K,g) \wedge p^*(γ)\ \ \ \text{away from }t_g(\Sigma(K,g)\cap \partial K).$$
So (1) implies (5). 

Assume conversely that (5) holds. Our aim is to prove (1), meaning that for every skeleton $\Sigma \subseteq X$, the restriction $ω\vert_\Sigma$ vanishes. Fix any such $\Sigma$. Since $X$ is good, every point $x\in \Sigma$ has an affinoid neighborhood $K$ and toric coordinates $(g,p)$ refining $f$ such that $\Sigma \cap K \subseteq \Sigma(K, g)$. Thus we may directly assume $\Sigma = \Sigma(K\setminus \partial K,g)$ for such $K$ and $g$ and need to show
\begin{equation}\label{eq:to_show_vanishing}
ω\vert_{\Sigma(K\setminus \partial K, g)} = 0.
\end{equation}
Intersections of the form $H\cap \Sigma(K,g) = \Sigma(H,g)$, for varying affinoid domains $H\subseteq K\setminus \partial K$, contain a basis for the topology of $\Sigma(K\setminus \partial K,g)$. For every $H$, assumption (5) implies that
$$T(H, g) \wedge p^*(γ) = 0\quad\text{away from $t_g(\partial H)$}.$$
This form agrees with $t_{g,*}(ω\vert_{\Sigma(H, g)})$ (away from $t_g(\partial H)$) by definitions (compare \eqref{eq:realization}). Then Lem. \ref{lem:faithfulness_lemma} applies and yields $ω\vert_{\Sigma(K\setminus \partial K, g)} = 0$, which is the required statement \eqref{eq:to_show_vanishing}.
\end{proof}

\begin{thm}\label{thm:pull_backs_exist}
Let $π:X\to Y$ be a morphism of pure-dimensional analytic spaces without boundaries. There is a unique pullback map
$$π^*:π^{-1}B_Y \to B_X$$
that has the following property. For every open subset $U\subseteq Y$, every tuple of toric coordinates $f:U\to \mbG_m^r$ and every $δ$-form $γ\in B(\mbR^r)$,
\begin{equation}\label{eq:def_pullback}
π^*(t_f^\star(γ)) = t_{f\circ π} ^\star(γ) \in B_X(π^{-1}(U)).
\end{equation}
Put differently, for every triple $U$, $f$ and $γ$ as above, if $t_f^\star(γ) = 0$, then also $t_{f\circ π}^\star(γ) = 0$.
\end{thm}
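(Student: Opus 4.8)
The plan is to reduce the construction of $\pi^*$ to the well-definedness assertion in the last sentence, and then to prove that assertion using the local structure theorem for dominant maps of skeletons, Thm.~\ref{thm:skeleton_flat}, together with the vanishing criterion Lem.~\ref{lem:delta_vanishing}. First I would observe that since $B_Y$ is generated as a sheaf by $\delta$-forms of the shape $t_f^\star(\gamma)$ for toric coordinates $f$ on open subsets, formula \eqref{eq:def_pullback} forces the definition of $\pi^*$: locally write $\omega = t_f^\star(\gamma)$ and set $\pi^*\omega := t_{f\circ\pi}^\star(\gamma)$. The only thing to check for this to be a well-defined map of sheaves is exactly the displayed consequence: if two local presentations agree, i.e. if $t_f^\star(\gamma)$ happens to vanish, then $t_{f\circ\pi}^\star(\gamma)$ vanishes too. (Compatibility under the $\delta$-form axioms of Def.~\ref{def:delta_form_non_arch}, i.e. restriction to skeletons, then follows since $t_{f\circ\pi}^\star(\gamma)$ is built the same way.)

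So the heart of the matter is: given $t_f^\star(\gamma) = 0$ on $Y$, show $t_{f\circ\pi}^\star(\gamma) = 0$ on $X$. By Lem.~\ref{lem:delta_vanishing} (equivalence of (1) and (2)), it suffices to show that for every skeleton of the form $\Sigma(V, g\times(f\circ\pi))$, with $V\subseteq X$ open and $g$ toric coordinates on $V$, the restriction $t_{f\circ\pi}^\star(\gamma)\vert_{\Sigma(V,g\times(f\circ\pi))}$ vanishes; and since these are generated locally, we may work near a single Abhyankar point $x$ of $X$. Here the map $\pi$ need not be dominant in general — but $t_{f\circ\pi}^\star(\gamma)$ only depends on the image $\pi(X)$ through the tropicalization $t_f$, so I would first replace $Y$ by (the closure of) the image, or more precisely restrict attention to the dominant part: if $\pi$ is not locally dominant near $x$, then $\pi(x)$ is not Abhyankar, hence $y=\pi(x)$ lies below the top-dimensional skeleton of $Y$ and one checks directly (using \eqref{eq:skeleton_characterization_residue_fields} and Thm.~\ref{thm:trop_local_structure}) that $x$ does not lie on $\Sigma(V, g\times(f\circ\pi))$ for the relevant $g$, so there is nothing to prove. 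Thus we reduce to the case where $\pi$ is locally dominant at $x$, and Thm.~\ref{thm:skeleton_flat} applies: after shrinking, there are neighborhoods $U\ni y$, $V\ni x$ and toric coordinates $h:U\to\mbG_m^s$ refining $f\vert_U$ such that $\pi$ induces a \emph{flat} linear map of tropical spaces $p:\Sigma(V, g\times(h\circ\pi)) \to \Sigma(U,h)$.

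Now flatness is exactly what lets us transport the vanishing. By Prop.~\ref{prop:pullback_delta_form}, for a flat linear map $p$ of tropical spaces the polyhedral-current pullback $p^*$ sends $\delta$-forms to $\delta$-forms and satisfies $p^*(\phi^\star\eta) = (\phi\circ p)^\star\eta$. Applying this with $\phi = t_h$ (a tuple of linear functions on $\Sigma(U,h)$) and $\eta$ the appropriate component of $\gamma$ pulled back along the affine map $\mbR^s\to\mbR^r$ witnessing that $h$ refines $f$: since $t_f^\star(\gamma)=0$ means (Lem.~\ref{lem:delta_vanishing}, (1)$\Rightarrow$(2)) that $t_h^\star(p^*\gamma)\vert_{\Sigma(U,h)}=0$, flatness of $p$ gives $p^*\bigl(t_h^\star(p^*\gamma)\vert_{\Sigma(U,h)}\bigr)=0$; and by the compatibility $p^*\circ t_h^\star = t_{h\circ p}^\star = t_{h\circ\pi}^\star$ (restricted to the skeleton over $V$) this is precisely $t_{h\circ\pi}^\star(p^*\gamma)\vert_{\Sigma(V,g\times(h\circ\pi))} = 0$. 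Finally, using that $h$ refines $f$ and the restriction-compatibility from Lem.~\ref{lem:restriction_lemma}, this equals $t_{f\circ\pi}^\star(\gamma)\vert_{\Sigma(V, g\times(f\circ\pi))}$, which is what we needed. Reassembling over a covering of $X$ and invoking Lem.~\ref{lem:delta_vanishing} once more yields $t_{f\circ\pi}^\star(\gamma)=0$ globally.

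The main obstacle I anticipate is bookkeeping rather than conceptual: making sure the refinement data (the affine linear map $p:\mbR^s\to\mbR^r$ with $f = p\circ h$) is threaded consistently through Thm.~\ref{thm:skeleton_flat}, Prop.~\ref{prop:pullback_delta_form} and Lem.~\ref{lem:restriction_lemma}, and in particular that the flat map of tropical spaces produced by Thm.~\ref{thm:skeleton_flat} is compatible with the \emph{linear-function} structure (not merely the weighted pwl structure) so that Prop.~\ref{prop:pullback_delta_form} genuinely applies — this is the point of building $h$ as a refinement of $f$ rather than an arbitrary set of toric coordinates, and the fact (Rmk.~\ref{rmk:independence_weights}, and the refinement statement in Thm.~\ref{thm:skeleton_flat}) that flatness is available for \emph{all} refinements is exactly what makes the argument go through. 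A secondary point to handle carefully is the non-dominant locus: one should phrase the reduction so that it is genuinely local on $X$ and uses only that $\pi$ maps $X\setminus\partial X$ into $Y\setminus\partial Y$ (\cite{Ber2}*{Prop.~1.5.5 (ii)}) together with the residue-field characterization of skeletons.
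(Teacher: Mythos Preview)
Your treatment of the dominant case is essentially the paper's Step 1: once $\pi$ takes Abhyankar points to Abhyankar points, Thm.~\ref{thm:skeleton_flat} produces a flat map of tropical spaces and Prop.~\ref{prop:pullback_delta_form} transports the vanishing. That part is fine.

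The gap is in the non-dominant case. Your claim that ``if $\pi$ is not locally dominant near $x$, then $x$ does not lie on $\Sigma(V, g\times(f\circ\pi))$'' is false. Membership in $\Sigma(V, g\times(f\circ\pi))$ means $\trdeg\bigl(\wt k(\wt{g_x},\wt{(f\circ\pi)_x})/\wt k\bigr)=\dim X$; this is a condition on $x$ in $X$ and says nothing about $\pi(x)$ being Abhyankar in $Y$. For a concrete example, take a closed immersion $\pi:Z\hookrightarrow Y$ of codimension $\ge 1$: every Abhyankar point of $Z$ lies on some $\Sigma(V,g)$ (take $g$ toric coordinates on $V\subseteq Z$), yet none of these points is Abhyankar in $Y$. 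So there is genuinely something to prove here. Your alternative ``replace $Y$ by the closure of the image'' amounts to proving the theorem for the closed immersion $Z\hookrightarrow Y$: you need that $t_f^\star(\gamma)=0$ on $Y$ implies $t_{f\vert_Z}^\star(\gamma)=0$ on $Z$, which is exactly the statement in question for $\pi=\text{inclusion}$.

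This closed-immersion step is where the paper does real work that your proposal is missing. The paper first factors $\pi$ locally near an Abhyankar point as a dominant map followed by a closed immersion (Lem.~\ref{lem:dominant_factorization}), then handles the closed immersion $Z\hookrightarrow Y$ by reducing (via irreducible components and induction on codimension) to the case of a Cartier divisor $Z=V(u)$. At that point it invokes Prop.~\ref{prop:reg_sequence_statement} to identify $T(K_{\infty>v(u)>C},\,u\times g\times f)$ with $(C,\infty)\times T(Z\cap K,\,g\times f)$, and reads off the vanishing of $T(Z\cap K,g\times f)\wedge p_2^*\gamma$ from the assumed vanishing on $Y\setminus Z$ via the trivial identity $((C,\infty)\times T)\wedge p_2^*\gamma=(C,\infty)\times(T\wedge\gamma)$. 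In other words, the homogeneity of tropicalizations near a regular hypersurface is the missing tool; Thm.~\ref{thm:skeleton_flat} alone does not see the closed-immersion direction because it only produces flat maps between skeletons of the \emph{same} relative type, not a comparison between skeletons of $Z$ and skeletons of $Y$.
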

\begin{proof}
The pullback may be defined through \eqref{eq:def_pullback} as soon as the independence of the choice of local presentation through $f$ and $γ$ is verified. This is precisely the second statement. Our proof is effectively by factoring $π$ locally around Abhyankar points into a dominant map followed by a closed immersion. The dominant map will be treated with Thm. \ref{thm:skeleton_flat} and Prop. \ref{prop:pullback_delta_form}, the closed immersion with Prop. \ref{prop:reg_sequence_statement}.

\emph{Step 1: Reduction to closed immersions.} Assume that $f:Y\to \mbG_m^r$ and $γ\in B(\mbR^r)$ are such that $t_f^\star(γ) = 0$. (We may and will assume $U = Y$ for the proof.) Let $\Sigma \subseteq X$ be any skeleton. We may check the vanishing of $t_{f\circ π}^\star(γ)\vert_\Sigma$ locally on $\Sigma$, so we may assume that $π$ is of the form $π = \ob{π}\vert_{\ob{X}\setminus \partial \ob{X}}$ where $\ob{π}:\ob{X}\to \ob{Y}$ is a morphism of $k$-affinoids. Lem. \ref{lem:irred_comp_decomp_of_skeleton} allows us to assume both $\ob X$ and $\ob Y$ to be integral.

\begin{lem}\label{lem:dominant_factorization}
Let $π:\ob X\to \ob Y$ be a morphism of integral $k$-affinoid spaces. Let $x\in \ob X\setminus \partial \ob X$ be an Abhyankar point in the interior of $\ob X$. Then there exist
\begin{itemize}[leftmargin=*]
\item an affinoid neighborhood $U \subseteq \ob Y$ of $y = π(x)$,
\item an affinoid neighborhood $V\subseteq π^{-1}(U)$ of $x$,
\item and a Zariski closed subset $Z\subseteq U$, which we endow with its reduced $k$-analytic space structure,
\end{itemize}
such that $π\vert_V$ factors through a map $V\to Z$ which takes Abhyankar points to Abhyankar points.
\end{lem}
\begin{proof}
By assumption, $\ob X$ is integral and $x$ Abhyankar, so the local ring $\mcO_{\ob X, x}$ is a field. The ring homomorphism $\mcO_{\ob Y,y}\to \mcO_{\ob X,x}$ is local, so factors over $\mcO_{\ob Y,y}/\mfm_y$. The local ring $\mcO_{\ob Y, y}$ is noetherian, see \cite{Duc_families}*{\S2.1.3 (4)}. Let $U'\subseteq \ob Y$ be an affinoid neighborhood of $y$ such that there exist functions $f_1,\ldots,f_r \in \mcO_{\ob Y}(U')$ with $\mfm_y = (f_1,\ldots, f_r)\mcO_{Y, y}$. Set $V' = π^{-1}(U')$ and $Z' = V(f_1,\ldots,f_r)$. 

By construction, $y\in Z'$ and $x\in V'$. Let $V'_x\subseteq V'$ be the unique (because $x$ is Abhyankar) irreducible component containing $x$. Then $V'_x$ is affinoid and the map $V'_x\to V'$ is an isomorphism over an open neighborhood of $x$. Moreover, $V'_x$ is integral and hence the natural map
$$\mcO_{V'_x}(V'_x) \lr \mcO_{V'_x, x} = \mcO_{\ob X, x}$$
is injective.

It follows from this injectivity that the functions obtained by pullback, $π^*(f_1),\ldots, π^*(f_r)\in \mcO_{V'_x}(V'_x)$, all vanish. Thus $V'_x \to U'$ factors through $Z'$. The image $π(V'_x)$ is again irreducible; let $Z''\subseteq U'$ be its Zariski closure which is contained in $Z'$. Thus we obtain a map of integral affinoid $k$-analytic spaces $V'_x\to Z''$ together with an Abhyankar point $x\in V'_x \setminus \partial V'_x$ such that $\mcO_{Z'', y}$ is a field where $y = π(x)$. In particular, $V'_x \to Z''$ is naively flat at $x$ in the sense of \cite{Duc_families}*{\S4.1}. Because $x$ is an inner point, by \cite{Duc_families}*{Thm. 8.3.4}, $V'_x\to Z''$ is flat at $x$.

By openness of the flat locus \cite{Duc_families}*{Thm. 10.3.2}, $V'_x\to Z''$ is flat on an open neighborhood of $x$. Moreover, since $x$ is Abhyankar and by \cite{Duc_families}*{\S1.4.14 (4)}, we have $δ := d(y) = \dim(\ob X) - \dim φ^{-1}(y)$. By semi-continuity of the fiber dimension \cite{Duc_families}*{\S1.4.13}, there is an open neighborhood of $x$ on which the fiber dimension of $π$ is $\leq δ$. It is also $\geq δ$ on an open neighborhood by that same semi-continuity because $\overline{\{x\}}{}^{\mr{Zar}} = V'_x$. Thus there exists an affinoid neighborhood $V''\subseteq V'_x$ of $x$ in $V$ such that $π\vert_{V''}$ is flat of constant fiber dimension $δ$. By \cite{Duc_families}*{Thm. 9.2.1} with $Γ = \mbR$ (viewed as additive group), $π(V'')\subseteq Z''$ is an analytic domain. It contains $y$ by construction and even is a neighborhood of $y$. (In other words, it satisfies $y\notin \partial (π(V''))$ which follows from $x\notin \partial V''$ by \cite{Ber1}*{Prop. 3.1.3 (ii)}.) In addition, the dimension relation
$$\dim(π(V'')) + δ = \dim(V'')$$
from \cite{Duc_families}*{\S1.4.14 (3)} implies that $\dim π(V'') = \dim V'' - δ$. Since $Z''$ is irreducible and $π(V'')\subseteq Z''$ an analytic domain, this also implies that $Z''$ is purely of dimension $\dim V'' - δ$.

Let $U\subseteq U'$ be an affinoid neighborhood of $y$ that satisfies the property $Z := U\cap Z'' \subseteq π(V'')$. Let $V\subseteq V'' \cap π^{-1}(U)$ be an affinoid neighborhood of $x$. Then $π\vert_V:V\to U$ factors through $Z$. Moreover, $V$ and $Z$ are pure-dimensional and the fiber dimension of $π\vert_V$ is constant and equal to $\dim V - \dim Z$. By \cite{Duc_families}*{\S1.4.14 (4)} as before, $π\vert_V$ takes Abhyankar points of $V$ to Abhyankar points of $Z$ as desired.
\end{proof}

We come back to the proof of Thm. \ref{thm:pull_backs_exist}. After applying Lemma \ref{lem:dominant_factorization} to the given map $π:\ob{X}\to \ob{Y}$ and the given Abhyankar point $x\in \ob X \setminus \partial \ob X$, as well as after replacing $(\ob X, \ob Y)$ by the resulting $(V, U)$, we may assume that there is a factorization of $π$ as $\ob X \to \ob Z\hookrightarrow \ob Y$ where $\ob{X}\to \ob{Z}$ takes Abhyankar points to Abhyankar points and where $\ob Z\hookrightarrow \ob Y$ is a closed immersion. In this setting, Thm. \ref{thm:skeleton_flat} applies: Every skeleton in $X$ is (locally) contained in a skeleton that is flat over a skeleton in $Z = \ob{Z}\setminus \partial \ob{Z}$. By Prop. \ref{prop:pullback_delta_form}, we are now reduced to showing $t_{f\vert Z}^\star(γ) = 0$.

\emph{Step 2: The closed immersion $Z \to Y$.} A further application of Lem. \ref{lem:irred_comp_decomp_of_skeleton} implies the vanishing of $t_{f\vert \ob{Y}_i\setminus \partial \ob{Y}_i}^\star(γ)$ for every irreducible component (with reduced structure sheaf) $\ob{Y}_i \subseteq \ob{Y}$. Replacing $\ob{Y}$ by any of the $\ob{Y}_i$, subject to $\ob{Z}\subseteq \ob{Y}_i$, allows to assume $\ob{Y}$ integral.

Assuming $\ob{Y} \neq \ob{Z}$ (otherwise we would be done), let $u\in \mfp$ be any nonzero element. Then $V(u)$ is a Cartier divisor containing $\ob{Z}$. Assume we can show $t_{f\vert V(u)}^\star(γ) = 0$. Then we may repeat the above process of replacing $V(u)$ by any of its irreducible components (with reduced structure sheaf) and conclude by induction on the codimension of $Z$. Thus we only need to treat the case $\ob{Z} = V(u)$.

Every skeleton on $Z$ is locally contained in a skeleton that has the form $\Sigma(K\cap \ob{Z}, g\vert_{K\cap \ob{Z}})$ with $K\subseteq \ob{X}$ an affinoid domain and $g:K\to \mbG_m^s$ toric coordinates. Restricting to such pairs $(K\cap \ob{Z}, g\vert_{K\cap \ob{Z}})$ in Lem. \ref{lem:delta_vanishing} (5) suffices to prove the vanishing of $t_{f\vert Z}^\star(γ)$. Thus we have to show that, for all $(K, g)$ as above,
\begin{equation}\label{eq:vanishing_in_trop_charts}
T(K \cap \ob{Z}, g\times f)\wedge p_2^*(γ) = 0
\end{equation}
away from $t_{g\times f}(\partial K \cap \Sigma(\ob{Z} \cap K, g\times f))$. By Prop. \ref{prop:reg_sequence_statement}, which is due to Chambert-Loir--Ducros in this divisorial case, there is $C > 0$ such that
$$T(K_{\infty > v(u) > C}, u\times g \times f) = (C,\infty)\times T(\ob{Z} \cap K, g\times f).$$
Our assumption on $f$ and $γ$ is the vanishing $ t_f^\star(γ) = 0$ in $B(Y\setminus Z)$, which implies
\begin{equation}\label{eq:vanishing_prod_str}
((C,\infty)\times T(\ob{Z}\cap K, g\times f)) \wedge p_3^*(γ) = 0
\end{equation}
away from $t_{u\times g\times f}(\partial (K_{\infty > v(u)> C}))$. Trivially, for every tropical cycle $T$ and every $δ$-form $γ$, one has
$$((C,\infty) \times T) \wedge p_2^*γ = (C,\infty)\times (T\wedge γ).$$
Identity \eqref{eq:vanishing_prod_str} is of this form. The vanishing in \eqref{eq:vanishing_in_trop_charts} follows once we note that if $t\in T(K\cap \ob{Z}, g\times f) \setminus t_{g\times f}(\partial K\cap \ob{Z})$, then there is $C_t>C$ such that
$$(C_t,\infty)\times \{t\} \cap t_{u \times g\times f}(\partial K_{\infty > v(u) >C'}) = \emptyset.$$
(Use the properness of the map
$$t_{u\times g\times f}: \partial K_{v(u) \geq C} \lr [C, \infty]\times \mbR^s \times \mbR^r$$
for this statement.) This concludes the proof of the theorem.
\end{proof}

It is clear that the pullback map $π^*:π^{-1}B_Y \to B_X$ from the theorem commutes with all the operators $\wedge$, $d'$, $d''$, $d'_P$, $d''_P$, $\partial'$ and $\partial''$ since these are all computed in charts. In the case of immersions $i:X\hookrightarrow Y$, we also write $ω\vert_X$ instead of $i^*ω$ and consider this as the restriction of $ω$ to $X$.

\subsection{Examples of $δ$-Forms}\label{ss:pws_forms}
We discuss smooth and piecewise smooth forms, as well as the $δ$-forms from \cite{GK}.

Recall first the definition of smooth forms from \cite{CLD}. These may be defined on any $k$-analytic space $X$, possibly with boundary and not necessarily pure-dimensional. First, Chambert-Loir--Ducros consider a presheaf $Q$ of presented forms. This is purely formal: $Q(U)$ is the set of pairs $(f,η)$ consisting of toric coordinates $f:U\to \mbG_m^r$ and a smooth form $η \in A(\mbR^r)$. They then introduce the following equivalence relation $\sim$ on $Q$. Pairs $(f_1,η_1)$ and $(f_2,η_2)$ are equivalent if
\begin{equation}\label{eq:equiv_rel_smooth}
p_1^*(η_1)\vert_{T'(K,f _1\times f_2)} = p_2^*(η_2)\vert_{T'(K, f_1\times f_2)}
\end{equation}
for every affinoid domain $K\subseteq U$. Note that in particular $(f,η)\sim (g, p^*η)$ for every refinement $(g, f = p\circ g)$. Sheafifying $Q/\!\sim$ defines the sheaf $A_X$ of smooth forms on $X$. The class of $(f,η)$ is denoted by $t_f^*η$.

Smooth forms are generalized by pws forms, which are defined by the same procedure: Consider again a presheaf of pairs $(f,η)$ as above, but this time allow $η\in PS(\mbR^r)$ to be pws. Endow this presheaf with literally the same equivalence relation \eqref{eq:equiv_rel_smooth}. The sheafification of the quotient is the sheaf of pws forms, denoted by $PS$ or $PS_X$. The class of $(f,η)$ is again denoted as $t_f^*η$.

A (possibly) different notion is obtained by G-sheafifying smooth forms. More precisely, one defines a G-pws form $ω$ on an analytic space $X$ to be the datum of a G-covering $X = \bigcup_{i\in I} X_i$ together with a tuple $ω = (ω_i)_{i\in I}$ of smooth forms $ω_i\in A(X_i)$ such that $ω_i\vert_{X_i\cap X_j} = ω_j\vert_{X_i \cap X_j}$ for all $i,j$; up to refinement of the datum. These again form a sheaf which we call $GPS$.

Every pws form defines a G-pws form and it follows from definitions that this provides an inclusion $PS\subseteq GPS$. Assuming that $v(k^\times)\neq \{0\}$ and $X$ strict, \cite{GK}*{Prop. 8.4} shows that $PS = GPS$. (The proposition is for pws functions, but its proof carries over to pws forms. Also recall here that if $v$ is nontrivial, then every $k$-analytic space $X$ with $\partial X = \emptyset$ is strict \cite{Temkin_lecture}*{Ex. 4.2.4.2}.)

Assume now that $\partial X = \emptyset$ and that $X$ is purely of dimension $n$. Recall that $PS^{p,q}(\mbR^r) = B^{p,q,0}(\mbR^r)$, cf. \cite{Mih_trop_inter}. Then there is a well defined map of sheaves $PS_X\to B_X,\ t_f^*η\mapsto t_f^\star(η)$. Namely, $η\vert_{T'(K,f)} = 0$ certainly implies $η\vert_{T(K,f)} = 0$ because $T(K,f)\subseteq T'(K,f)$ and the second kind of vanishing characterizes the vanishing of $δ$-forms, cf. Lem. \ref{lem:delta_vanishing}.
It follows from the mentioned identity $PS^{p,q}(\mbR^r) = B^{p,q,0}(\mbR^r)$ that the map $PS^{p,q}_X \to B_X^{p,q,0}$ is surjective.

\begin{prop}\label{prop:pws_are_delta_of_codim_0}
The pws forms $PS^{p,q}(X)$ are precisely the $δ$-forms $B^{p,q,0}(X)$. In other words, the map $PS^{p,q}_X\to B^{p,q,0}_X$ is also injective.
\end{prop}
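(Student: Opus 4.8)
The surjectivity of $PS^{p,q}_X\to B^{p,q,0}_X$ is already established, so the task is injectivity: if $ω\in PS^{p,q}(X)$ is a pws form with $t_f^\star(η) = 0$ in $B_X$ whenever $ω = t_f^*η$ locally, then $ω = 0$ as a pws form. Since both $PS_X$ and $B_X$ are sheaves and the map is local, I would work locally on $X$ and assume $ω = t_f^*η$ for a single tuple of toric coordinates $f\colon U\to \mbG_m^r$ and a pws form $η\in PS^{p,q}(\mbR^r)$, with the hypothesis that $t_f^\star(η) = 0\in B(U)$. I must conclude that $t_f^*η = 0$ as a pws form, which by the definition of $PS_X$ via the equivalence relation \eqref{eq:equiv_rel_smooth} means showing $η\vert_{T'(K,f)} = 0$ for every affinoid domain $K\subseteq U$ (here I am using that any other local presentation is compared to $(f,η)$ via a common refinement, and pulling back along refinements is compatible with both maps).

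\textbf{Key steps.} The crux is thus: \emph{$t_f^\star(η) = 0$ in $B(U)$ forces $η\vert_{T'(K,f)} = 0$ for all affinoid $K$}. The hypothesis, via Lemma \ref{lem:delta_vanishing} (5), tells us that for every affinoid domain $K\subseteq U$ and every refinement $(g, f\vert_K = p\circ g)$ of $f\vert_K$, the product $T(K,g)\wedge p^*(η)$ vanishes away from $t_g(\Sigma(K,g)\cap \partial K)$. Taking $g = f\vert_K$ itself (the trivial refinement), this says $T(K,f)\wedge η$ vanishes away from $t_f(\Sigma(K,f)\cap\partial K)$. Now for a $δ$-form of tridegree $(p,q,0)$, namely $η\wedge[\mbR^r,μ_{\mr{std}}]$, the product with the tropical cycle $T(K,f)$ is computed by restriction of coefficients: $T(K,f)\wedge (η\wedge[\mbR^r,μ_{\mr{std}}]) = η\vert_{T(K,f)}\wedge [T(K,f),μ]$, where $η\vert_{T(K,f)}$ denotes the piecewise-smooth restriction of $η$ to the polyhedral set $T(K,f)$. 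So the hypothesis becomes: $η\vert_{T(K,f)} = 0$ (as a pws form on $T(K,f)$) away from $t_f(\Sigma(K,f)\cap\partial K)$, for every affinoid $K$. The remaining work is to upgrade ``$η$ restricted to the $n$-dimensional locus $T(K,f)$ vanishes'' to ``$η$ restricted to all of $T'(K,f)$ vanishes,'' and to remove the ``away from the boundary'' caveat.

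\textbf{The main obstacle and how to handle it.} The principal difficulty is precisely the discrepancy between $T(K,f)$ (the pure $n$-dimensional locus) and $T'(K,f)$ (the full tropicalization, which may have lower-dimensional pieces). To bridge this I would exploit that $η$, being pws of bidegree $(p,q)$ with $p,q\leq n$, is pulled back from $\mbR^r$ and that $T'(K,f)$ is itself a polyhedral set; a pws $(p,q)$-form restricted to a polyhedral set of dimension $<\max(p,q)$ is automatically zero, and more generally the restriction of $η$ to any polyhedron $τ\subseteq T'(K,f)$ is determined by the restriction of $η$ to a nearby top-dimensional polyhedron whenever $τ$ lies in the closure of the $n$-dimensional locus. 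The key geometric input is that $X$ is pure of dimension $n$ with $\partial X = \emptyset$, so by Prop. \ref{prop:skeleton_pure_dim} the skeleton $\Sigma(U,f)$ is pure of dimension $n$ and $t_f(\Sigma(U,f)) = T(U,f)$; moreover every point of $T'(K,f)$ that matters is approximated by points of $T(K',f)$ for suitable affinoid $K'$, since as $K$ ranges over affinoid domains the sets $T(K,f)$ exhaust $T(U,f)$ and $\bigcup_K T(K,f) = T(U,f)$ is dense in $T'(U,f)$ at least in the relevant sense. To kill the boundary caveat, I would use that $\partial U = \emptyset$ allows $K$ to be shrunk around any point so that $t_f(\Sigma(K,f)\cap\partial K)$ avoids any prescribed point of $T(U,f)$; running over a cofinal system of such $K$ then gives $η\vert_{T(U,f)} = 0$ everywhere, and a further degree/dimension argument extends this to $η\vert_{T'(U,f)} = 0$. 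Having established $η\vert_{T'(K,f)} = 0$ for all affinoid $K\subseteq U$, the definition of the equivalence relation defining $PS_X$ yields $t_f^*η = 0$, completing the proof.
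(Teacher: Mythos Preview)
Your setup is correct and matches the paper: reduce to a single chart $ω = t_f^*η$ with $t_f^\star(η)=0$, and aim for $η\vert_{T'(K,f)}=0$ for every affinoid $K\subseteq U$. You also correctly extract from Lem.~\ref{lem:delta_vanishing} that $η\vert_{T(K,f)}=0$ away from $t_f(\Sigma(K,f)\cap\partial K)$, and your shrinking argument to remove the boundary caveat is fine.

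The genuine gap is the passage from $T(K,f)$ to $T'(K,f)$ for general $(p,q)$. Your proposed bridge relies on two claims: that a pws $(p,q)$-form vanishes on polyhedra of dimension $<\max(p,q)$ (true), and that every polyhedron $τ\subseteq T'(K,f)$ of dimension $\geq\max(p,q)$ lies in the closure of the $n$-dimensional locus $T(K,f)$. The second claim is false in general: $T'(K,f)$ can have components of intermediate dimension that are not contained in the closure of $T(K,f)$. For instance, if $f$ factors through a torus of dimension $<n$, then $T(K,f)=\emptyset$ while $T'(K,f)$ is a nonempty polyhedral set. More generally, a point $x\in K$ with $\trdeg(\wt k(\wt f_x)/\wt k)<n$ contributes a locally lower-dimensional piece to $T'(K,f)$ (Thm.~\ref{thm:trop_local_structure}) that need not touch the $n$-dimensional locus. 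On such a piece a pws $(p,q)$-form with $\max(p,q)<n$ can be nonzero, so your density argument does not close.

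The paper sidesteps this completely. It first treats $(p,q)=(n,n)$, where the dimension argument is clean: the support of $η\vert_{T'(K,f)}$ lies in $(T'(K,f)\setminus T(K,f))\cup t_f(\partial K)$, a polyhedral set of dimension $\leq n-1$, and an $(n,n)$-form supported there is zero. For general $(p,q)$ it does \emph{not} argue on $T'(K,f)$ directly; instead it wedges $t_f^*(η)$ with an arbitrary pws $(n-p,n-q)$-form $α$ on $U$, applies the $(n,n)$ case to conclude $t_f^*(η)\wedge α = 0$, and then invokes \cite{CLD}*{Prop.~3.2.8} (extended to pws forms) to deduce $t_f^*(η)=0$. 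That nondegeneracy statement is the key input you are missing.
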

\begin{proof}
Let $U\subseteq X$ be an open subset, let $f:U\to \mbG_m^r$ be toric coordinates and let $η\in PS(\mbR^r)$ be such that $t_f^\star(η) = 0$. Our task is to show $t_f^*(η) = 0$.

Assume first that $(p,q) = (n,n)$. The vanishing of $t_f^\star(η)$ implies that for every affinoid domain $K\subseteq U$, the restriction $η\vert_{T(K, f)\setminus t_f(\partial K)}$ vanishes (Lem. \ref{lem:delta_vanishing}). This means that the support of $η\vert_{T'(K, f)}$ is contained in the complement
$$(T'(K, f) \setminus T(K, f)) \cup t_f(\partial K)$$
which is contained in a polyhedral set of dimension $\leq n-1$. Since $η$ is of bidegree $(n,n)$ by assumption, this implies $η\vert_{T'(K,f)} = 0$. Since $K$ was arbitrary, we obtain $t_f^*(η) = 0$ and the proposition is proved for $(n,n)$-forms.

For a pws form $η$ of general bidegree $(p,q)$, the previous argument implies that $t_f^*(η)\wedge α = 0$ for every pws $(n-p, n-q)$-form $α$ on $U$. By \cite{CLD}*{Prop. 3.2.8}, whose proof extends without change to pws forms, this implies $t_f^*(η) = 0$.
\end{proof}

In summary, we obtain maps $A^{p,q}_X \hookrightarrow PS^{p,q}_X \overset{\sim}{\to} B^{p,q,0}_X$ where the second one is an isomorphism. In case $v(k^\times)\neq \{0\}$, there is also a sheaf $A^w$ of so-called weakly smooth forms \cite{GJR}. It has the property that $A_X\subseteq A^w_X \subseteq PS_X$. The embedding $A^w_X\to PS_X$ and the isomorphism $PS_X\overset{\sim}{\to} B^{\bullet, \bullet, 0}_X$ from Prop. \ref{prop:pws_are_delta_of_codim_0} are compatible with derivatives, $\wedge$-product and pullback. More precisely, the boundary differentials $\partial', \partial''$ vanish on smooth and weakly smooth forms while $\wedge, d'_P$ and $d''_P$ are computed polyhedron-by-polyhedron in all definitions.

Finally, we mention without formal proof that the sheaf of $δ$-forms defined by Gubler--Künnemann, cf. \cite{GK}, embeds into the sheaf of $δ$-forms we have defined here. Recall that it is assumed in \cite{GK} that $k$ is algebraically closed, that $v(k^\times) \neq \{0\}$ and that $X$ is the analytification of an algebraic variety over $\Spec k$. Let $B_X^{GK}$ denote the sheaf of $δ$-forms on $X$ defined there. The precise relation with our definition now is that there exists an injective map $B^{GK}_X\to B_X$ which is given on generators by $t_f^*(γ) \mapsto t_f^\star(γ)$.

The existence of the map $B_X^{GK}\to B_X$ relies on two statements. First, condition (5) of Lem. \ref{lem:delta_vanishing} implies that $δ$-forms may also be defined in terms of tropicalizations (instead of in terms of skeletons). Second, the complicated equivalence relation involving all morphisms $f:X'\to X$ from \cite{GK}*{Def. 4.4} that goes into the definition of $B_X^{GK}$ is already implied by an (a priori weaker) equivalence relation in the style of Lem. \ref{lem:delta_vanishing}. This is the content of Thm. \ref{thm:pull_backs_exist}.

\subsection{Integration}

In this section, $X$ denotes a purely $n$-dimensional analytic space over $k$ with $\partial X = \emptyset$.

\begin{lem}\label{lem:support_lemma}
Let $(\Sigma, μ, L)$ be an $n$-dimensional tropical space, let $f \in L(\Sigma)^r$ be a linear map, let $γ\in B^{p,q}(\mbR^r)$ be of bidegree $(p,q)$ and let $ω = t_f^\star(γ)$ be the corresponding $δ$-form on $\Sigma$.
\begin{enumerate}[leftmargin=*]
\item If $\dim f(K) < \max\{p,q\}$ for every compact polyhedral set $K\subseteq \Sigma$, then $ω = 0$.

\item Assume $\max\{p,q\} = n$ and let $i:\Sigma' \subseteq \Sigma$ be the tropical space defined by $f$ as in Lem. \ref{lem:restriction_lemma}. Then $\Supp(ω)\subseteq \Sigma'$. More precisely,
$$ω = i_*(t_{f\vert \Sigma'}^\star(γ)).$$
\end{enumerate}
\end{lem}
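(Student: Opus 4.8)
The plan is to prove the two assertions of Lemma \ref{lem:support_lemma} by reduction to explicit tropical charts, using the characterization of vanishing of $δ$-forms in Lemma \ref{lem:delta_vanishing}. Both statements are local on $\Sigma$, so I would first reduce to the case where $\Sigma$ is a compact polyhedral set with a polyhedral complex structure $\mcK$ that is subordinate to $μ$, $L$, and $f$, and further (using linearity of $γ\mapsto t_f^\star(γ)$) assume $γ$ is trihomogeneous of tridegree $(p,q,c)$. By the defining property \eqref{eq:realization} of $f^\star$, it suffices to understand, for every compact polyhedral $K\subseteq \Sigma$ and every refinement $g:K\to \mbR^s$ with finite fibers, the product $g_*(K)\wedge p^*(γ)$ where $f = p\circ g$; the $δ$-form $ω$ vanishes (resp. is supported on a subspace) precisely when this product does.

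For part (1), the hypothesis $\dim f(K) < \max\{p,q\}$ for all $K$ combined with Lemma \ref{lem:support_lemma}-type reasoning on $\mbR^r$ (or rather the elementary support/dimension bound for $δ$-forms on $\mbR^r$, cf. the discussion around bidegrees and dimension in \S2, e.g.\ $PS^{p,q}_X = 0$ for $\max\{p,q\}>\dim X$ together with Prop.\ \ref{prop:pws_are_delta_of_codim_0} and the trigrading) forces $p^*(γ)\vert_{f(K)} = 0$: a $δ$-form of bidegree $(p,q)$ on $\mbR^r$ restricts to zero on any polyhedral subset of dimension $<\max\{p,q\}$, because its coefficients are smooth $(p,q)$-forms which vanish on such a subset. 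Pulling back along a refinement $g$ with finite fibers does not raise dimension, so $g_*(K)\wedge p^*(γ)$ is supported on $g(K)$, which has dimension $\le \dim f(K) + (\text{fiber contribution}) $; one checks the relevant polyhedra all have dimension $< \max\{p,q\}$, hence the product vanishes. Then Lemma \ref{lem:delta_vanishing} (5) $\Rightarrow$ (1) gives $ω = 0$.

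For part (2), the key observation is that when $\max\{p,q\} = n$, the form $p^*(γ)$ on $\mbR^r$ restricted to (the push-forward of) $K$ is supported in dimension exactly $n$, and $n$-dimensional polyhedra $σ\subseteq \Sigma$ can only contribute to $g_*(K)\wedge p^*(γ)$ if $g\vert_σ$ — equivalently $f\vert_σ$, after passing to the refinement — is injective, i.e.\ if $σ\subseteq \Sigma'$ in the notation of Lemma \ref{lem:restriction_lemma}. Indeed, if $f\vert_σ$ is not injective then $\dim f(σ) < n$, and an $n$-dimensional component of $p^*(γ)$ cannot meet $f(σ)$ in top dimension, so the coefficient of $σ$ in $g_*(K)\wedge p^*(γ)$ is zero. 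This shows $\Supp(ω)\subseteq \Sigma'$. To upgrade this to the identity $ω = i_*(t_{f\vert\Sigma'}^\star(γ))$, I would use the Restriction Lemma \ref{lem:restriction_lemma}(2): $f^\star(γ)\vert_{\Sigma'} = (f\vert_{\Sigma'})^\star(γ)$, combined with the fact that a polyhedral current supported on a closed pure-dimensional pwl subspace $\Sigma'$ of the same dimension equals the push-forward of its restriction to $\Sigma'$ (this is the special case noted in \S4.1 that for $Y\subseteq X$ of the same pure dimension with the restricted weight, $(α\wedge[X,μ])\vert_Y = α\vert_Y\wedge[Y,μ\vert_Y]$, extended to currents supported on $Y$). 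Putting these together, $ω = i_*(ω\vert_{\Sigma'}) = i_*((f\vert_{\Sigma'})^\star(γ)) = i_*(t_{f\vert\Sigma'}^\star(γ))$.

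The main obstacle I anticipate is part (2): one must be careful that $\Sigma'$ as defined in Lemma \ref{lem:restriction_lemma} — the union of $n$-dimensional polyhedra on which $f$ is injective — is genuinely a \emph{closed} pwl subspace carrying a tropical space structure (already supplied by Lemma \ref{lem:restriction_lemma}(1)), and that ``supported on $\Sigma'$'' for a polyhedral current of the right tridegree really does mean it is the push-forward of a polyhedral current on $\Sigma'$. The subtlety is that a priori $ω$ could have components on faces of lower-dimensional polyhedra adjacent to $\Sigma'$ but not contained in it; the tridegree constraint $\max\{p,q\}=n$ (so $ω$ lives in tridegree $(p,q,0)$ on the $n$-dimensional locus) together with the fact that a $(p,q,0)$-form is a pws form wedge the fundamental class rules this out, since such a form is determined by its top-dimensional data. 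I would make this precise by working in a chart and directly comparing coefficients of $σ\in\mcK^0 = \mcK_n$ on both sides, which is routine once the setup is in place.
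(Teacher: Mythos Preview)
Your approach to part (2) is essentially the paper's: apply part (1) to the open complement $\Sigma\setminus\Sigma'$ (where $\dim f(K) < n = \max\{p,q\}$ by definition of $\Sigma'$) to get $\Supp(ω)\subseteq\Sigma'$, then invoke the Restriction Lemma~\ref{lem:restriction_lemma}~(2). Your worry about whether a polyhedral current supported on $\Sigma'$ equals the push-forward of its polyhedral restriction is legitimate but easily resolved for any tridegree, not just $(p,q,0)$; your claim that $\max\{p,q\}=n$ forces tridegree $(p,q,0)$ is false (e.g.\ a bidegree-$(n,n)$ form can have a Dirac component of tridegree $(0,0,n)$), but fortunately this claim is not needed.

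The real gap is in part (1). Your key assertion --- that ``a $δ$-form of bidegree $(p,q)$ on $\mbR^r$ restricts to zero on any polyhedral subset of dimension $<\max\{p,q\}$, because its coefficients are smooth $(p,q)$-forms'' --- is wrong. A $δ$-form $γ$ of bidegree $(p,q)$ decomposes into tridegree-$(\tilde p,\tilde q,c)$ pieces with $\tilde p = p-c$, $\tilde q = q-c$, and the coefficient forms $α_τ$ live in bidegree $(\tilde p,\tilde q)$, not $(p,q)$. For instance, the Dirac $δ_0\in B^{r,r}(\mbR^r)$ has a non-zero polyhedral restriction to $\{0\}$. More to the point, what must vanish is not a restriction of $γ$ but the $\wedge$-product $g_*(K)\wedge p^*(γ)$, and this product is defined via stable intersection: without putting things in generic position you cannot read off vanishing from a naive dimension count, since $\Supp(g_*(K))\cap\Supp(p^*(γ))$ can be large even when the stable intersection is zero.

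The paper supplies exactly the missing ingredient: the fan displacement rule \cite{Mih_trop_inter}*{Prop.~4.18}. One replaces $γ$ by a generic translate $\varepsilon w + γ$; then $\Supp(\varepsilon w + γ)\cap f(K)$ has dimension $<\max\{p,q\}-c = \max\{\tilde p,\tilde q\}$. For each $n$-dimensional $σ\subseteq (f,g)(K)$ and each $τ$ in a complex subordinate to $γ$, the restriction factors as $p_1^*(α_τ)\vert_{σ\cap p_1^{-1}(τ)} = (p_1\vert_{σ\cap p_1^{-1}(τ)})^*(α_τ\vert_{f(σ)\cap τ})$, and now $α_τ\vert_{f(σ)\cap τ}=0$ because $α_τ$ is a $(\tilde p,\tilde q)$-form on a polyhedron of dimension $<\max\{\tilde p,\tilde q\}$. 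This is the dimension argument you were reaching for, but it only becomes valid after the generic displacement.
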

\begin{proof}
(1) By the characterizing identity \eqref{eq:realization}, our task is to show that $(f,g)_*(K,μ\vert_K) \wedge p_1^*(γ) = 0$ away from $(f,g)(\partial K)$ for every compact polyhedral set $K\subseteq \Sigma$ and every linear map $g:K\to \mbR^s$ with finite fibers. By the fan displacement rule \cite{Mih_trop_inter}*{Prop. 4.18}, it is equivalent to show the vanishing of
\begin{equation}\label{eq:vanishing_to_show_aoe}
(f,g)_*(K, μ\vert_K) \wedge (εv + p_1^*(γ))\ \ \ \text{(away from $(f,g)(\partial K)$)}
\end{equation}
for all generic directions $v$ and sufficiently small $ε>0$. (The notation here means to translate $p_1^*(γ)$ by $εv$.) We may restrict attention to all translations $p_1^*(εw + γ)$ because $p_1^*(γ)$ is translation invariant for $\ker(p_1)$. Assume now that $γ$ is purely of codimension $c$, i.e. of tridegree $(\wt p, \wt q, c)$ with $p = \wt p + c$, $q = \wt q + c$. Then a generic translation $εw+γ$ intersects the polyhedral set $f(K)$ in dimension $<\max\{p, q\} - c = \max\{\wt p, \wt q\}$. Write $γ = \sum_{τ\in \mcC^c} α_τ\wedge [τ, μ_τ]$ for some polyhedral complex $\mcC$ and with $α_τ$ of bidegree $(\wt p, \wt q)$. We obtain that if $σ \subseteq (f,g)(K) \setminus (f,g)(\partial K)$ is an $n$-dimensional polyhedron such that $N_σ$ and $N_{f^{-1}(τ)}$ are in generic position for some $τ\in \mcC^c$, then
$$p_1^*(α_τ)\vert_{σ\cap f^{-1}(τ)} = (p_1\vert_{σ\cap f^{-1}(τ)})^*(α_σ\vert_{f(σ) \cap τ}) = 0$$
for dimension reasons. This implies the vanishing of \eqref{eq:vanishing_to_show_aoe}.

(2) By definition of $\Sigma'$, one has $\dim f(K) < n$ for all $K\subseteq \Sigma\setminus \Sigma'$. Then the statement
$$t_{f\vert \Sigma'}^\star(γ) = t_f^\star(γ)\vert_{\Sigma'}$$
from the Restriction Lemma \ref{lem:restriction_lemma} (2) specializes to $ω = i_*(t_{f\vert_{\Sigma'}}^\star)(γ)$ as claimed.
\end{proof}

\begin{cor}\label{cor:support_delta_form}
Let $ω\in B^{p,q}(X)$ be a $δ$-form.

\begin{enumerate}[leftmargin=*]
\item The support $\mr{Supp}(ω)$ is disjoint from all $x\in X$ with $d(x) < \max\{p,q\}$. In particular, if $Z\subseteq X$ is Zariski closed with $\dim(Z) < \max\{p,q\}$, then $Z\cap \mr{Supp}(ω) = \emptyset$.

\item Assume $\max\{p,q\} = n$ and that $ω = t_f^\star(γ)$. Then $\Supp ω \subseteq \Sigma(X, f)$ and for every refinement $g$ of $f$,
$$ω\vert_{\Sigma(X,g)} = i_*(ω\vert_{\Sigma(X,f)}).$$
Here $i:\Sigma(X,f)\to \Sigma(X,g)$ denotes the inclusion map.
\end{enumerate}
\end{cor}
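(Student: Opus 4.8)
The plan is to reduce Corollary \ref{cor:support_delta_form} entirely to the statements already established for tropical spaces, namely Lemma \ref{lem:support_lemma} (together with the Restriction Lemma \ref{lem:restriction_lemma}) and to the characterization of vanishing of $δ$-forms in Lemma \ref{lem:delta_vanishing}. The key point is that all notions involved --- support, restriction to skeletons, and the push forward along an inclusion of skeletons --- are local on $X$ and are tested against skeletons of the form $\Sigma(U,g)$ with $U$ open, so the problem immediately localizes and we may work one skeleton at a time.

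For statement (1), I would argue as follows. Let $x\in X$ with $d(x) < \max\{p,q\}$ and pick an open neighborhood $U$ of $x$ with toric coordinates $f:U\to \mbG_m^r$ and $γ\in B(\mbR^r)$ representing $ω$ on $U$ (Definition \ref{def:delta_form_non_arch}). We must show $ω$ vanishes near $x$, i.e. $ω\vert_\Sigma = 0$ for every skeleton $\Sigma$ near $x$. By Lemma \ref{lem:delta_vanishing}, and since every skeleton is locally contained in one of the form $\Sigma(K\setminus\partial K, g)$, it suffices to check the vanishing of $t_f^\star(γ)\vert_{\Sigma(K,g)}$ for affinoid $K\ni x$ small enough and $g$ refining $f\vert_K$. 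Now the condition $d(x) < \max\{p,q\}$ means $\trdeg(\wt k(\wt{f_x})/\wt k)$ is small, hence by Theorem \ref{thm:trop_local_structure} we can shrink $K$ so that $\dim t_f(K') < \max\{p,q\}$ for all affinoid $K'\subseteq K$; then on each $\Sigma(K,g)$ the first part of Lemma \ref{lem:support_lemma}, applied to the linear map $t_f$ on the tropical space $\Sigma(K\setminus\partial K,g)$, gives $ω\vert_{\Sigma(K,g)}=0$. The ``in particular'' about Zariski closed $Z$ is then immediate: every point of $Z$ has $d(x)\le \dim Z < \max\{p,q\}$, so $Z\cap\Supp(ω)=\emptyset$ by what was just proved, and this is exactly the phenomenon exploited in Theorem \ref{thm:pull_backs_exist} to make restrictions $ω\vert_Z$ well-defined.

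For statement (2), assume $\max\{p,q\}=n$ and $ω=t_f^\star(γ)$ globally on $X$. Both claims are again local on $X$, so fix $x$ and shrink to an affinoid $K$ carrying $f$ and a refinement $g$ of $f\vert_K$, and work with the tropical space $\Sigma:=\Sigma(K\setminus\partial K, g)$. By \eqref{eq:def_realization_on_skeleton} we have $ω\vert_\Sigma = (t_f\vert_\Sigma)^\star(γ)$ (after refining $g$ so that it also refines $f$, which we may), so this is now literally the situation of Lemma \ref{lem:support_lemma} (2) with the linear map $t_f$ on $\Sigma$: the tropical subspace $\Sigma'\subseteq\Sigma$ cut out by $t_f$ in the sense of Lemma \ref{lem:restriction_lemma} is precisely the intersection $\Sigma(K\setminus\partial K, f)\cap\Sigma$, by the argument given just before Definition \ref{def:delta_form_non_arch} (comparing $\dim t_f(\Sigma(V,g))$ with $n$ via \eqref{eq:rel_trop_skeleton} and \eqref{eq:skeleton_characterization_residue_fields}). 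Lemma \ref{lem:support_lemma} (2) then yields $\Supp(ω\vert_\Sigma)\subseteq \Sigma(X,f)$ and $ω\vert_{\Sigma(K,g)} = i_*(ω\vert_{\Sigma(K,f)})$; letting $K$ and $g$ vary and using condition (1) of Definition \ref{def:delta_form_non_arch} glues these into $\Supp(ω)\subseteq\Sigma(X,f)$ and $ω\vert_{\Sigma(X,g)} = i_*(ω\vert_{\Sigma(X,f)})$ for any refinement $g$ of $f$.

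The main obstacle is purely bookkeeping rather than conceptual: one must be careful that the local representative $(f,γ)$ of $ω$ on $U$ is the \emph{same} $f$ appearing in (2), and that when passing to a refinement $g$ one correctly identifies the tropical subspace $\Sigma'$ of Lemma \ref{lem:support_lemma} with $\Sigma(U,f)\cap\Sigma(U,g)$ --- this identification is exactly the paragraph preceding Definition \ref{def:delta_form_non_arch}, which must be invoked. A secondary subtlety is that support is defined via all skeletons $\Sigma\subseteq X$, not just those of the form $\Sigma(U,g)$; but since every skeleton is by definition locally a closed pwl subspace of some $\Sigma(U,g)$, and the polyhedral restriction is transitive, this causes no difficulty. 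No genuinely new estimate or construction is needed; everything reduces to the tropical-space lemmas already proved.
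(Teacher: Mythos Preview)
Your proposal is correct and follows essentially the same approach as the paper: both reduce to Lemma \ref{lem:support_lemma} after obtaining an affinoid neighborhood $K$ of $x$ with $\dim T'(K,f)\le d(x)$ (the paper cites \cite{CLD}*{(2.3.3)} for this, you use Theorem \ref{thm:trop_local_structure}, which amounts to the same thing). Your write-up simply spells out the bookkeeping that the paper compresses into one sentence, including the identification of the $\Sigma'$ from Lemma \ref{lem:support_lemma} with $\Sigma(U,f)$ via the paragraph preceding Definition \ref{def:delta_form_non_arch}.
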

\begin{proof}
Assume that $g:U\to \mbG_m^s$ is any set of toric coordinates near $x$. Then, by \cite{CLD}*{(2.3.3)}, there is an affinoid neighborhood $x\in K \subseteq U$ with $\dim T'(K,g) \leq d(x)$. Both statement (1) and (2) now follow from Lem. \ref{lem:support_lemma}.
\end{proof}

\begin{enumerate}[wide, labelindent=0pt, labelwidth=!, label=(\arabic*), topsep=4pt, itemsep=4pt]
\item Our next aim is to give a definition of the integral of a compactly supported $(n,n)$-form. Consider first $ω\in B^{p,q}_c(X)$ with $p=n$ or $q=n$. Let $\Sigma \subseteq X$ be any skeleton such that $\Supp ω \subseteq \Sigma$. Then Cor. \ref{cor:support_delta_form} implies that for every containing skeleton $i:\Sigma \subseteq \Sigma'$ one has
\begin{equation}\label{eq:well_def_integral}
i_*(ω\vert_{\Sigma}) = ω\vert_{\Sigma'}.
\end{equation}
Note that such a $\Sigma$ always exists. Namely pick a covering $\Supp ω \subseteq \bigcup_{i\in I} K_i^\circ$ by a finite union of the interiors of affinoids $K_i$ such that one may write $ω\vert_{K_i^\circ} = t_{f_i}^\star(γ_i)$. Then $\Supp ω \subseteq \Sigma = \bigcup_{i\in I} \Sigma(K_i, f_i).$

\begin{defn}\label{def:integral_delta_form}
Given $ω\in B^{n,n}_c(X)$, set
$$\int_{X} ω := \int_{\Sigma} ω\vert_\Sigma$$
where $\Sigma\subseteq X$ is any skeleton with $\Supp ω \subseteq \Sigma.$ This is well defined by Cor. \ref{cor:support_delta_form}.
\end{defn}

\item Next comes an equivalent definition through partitions of unity. For a pwl space $\Sigma$, a pws partition of unity $1 = \sum_{i\in I} λ_i$ on $\Sigma$ and a compactly supported polyhedral current $T$ of integrable degree on $\Sigma$, meaning that $T = \sum_{σ\in \mcS} α_σ\wedge [σ,μ_σ]$ with $α_σ\in A^{\dim σ,\dim σ}(σ)$, it is clear that
\begin{equation}\label{eq:integral_partition_of_unity}
\int_{\Sigma} T = \sum_{i\in I} \int_{\Sigma} λ_iT.
\end{equation}
One obtains that for $ω\in B_c^{n,n}(X)$ as before and any smooth or pws partition of unity $(λ_j)_{j\in J}$,
$$\int_X ω = \sum_{j\in J} \int_{X} λ_j ω.$$
Indeed, if $\Sigma$ is any skeleton, then $(λ_j)_{j\in J}\vert_\Sigma$ is a pws partition of unity and \eqref{eq:integral_partition_of_unity} applies. The usual application is, of course, with $(λ_j)_{j\in J}$ subordinate to an open covering $X = \bigcup_{i\in I} U_i$ such that $ω\vert_{U_i} = t_{f_i}^\star(γ_i)$ for suitable $(f_i,γ_i)$. Note that even if the $λ_i$ are smooth, the restriction $λ_i\vert_{\Sigma(U_i, f_i)}$ need only be piecewise smooth.

\item There is also an easy inclusion-exclusion type formula. Let again $\Sigma$ and $T$ be as in (2). Consider an expression $\Sigma = \bigcup_{i\in I} \Sigma_i$ as a \emph{finite} union of closed pwl subspaces $\Sigma_i$. Then
\begin{equation}\label{eq:inclusion_exclusion}
\int_{\Sigma} T = \sum_{\emptyset \neq J\subseteq I} (-1)^{|J|+1} \int_{\bigcap_{j\in J} \Sigma_j} T\vert_{\bigcap_{j\in J} \Sigma_j}
\end{equation}
by a simple counting argument. (The restrictions of $T$ are as polyhedral currents.) Coming back to our analytic space $X$, let $\Supp ω \subseteq \bigcup_{i\in I} K_i$ be a $G$-covering and let $\Sigma_i\subseteq K_i,\ i\in I$ be skeletons with $\Supp ω \cap K_i\subseteq \Sigma_i$. Then
$$\int_X ω = \sum_{J\subseteq I} (-1)^{|J| + 1} \int_{\bigcap_{i\in J} \Sigma_i} ω\vert_{\bigcap_{i\in J} \Sigma_i}.$$
Note, however, that there is still no definition of ``$ω\vert_{K_i}$'', since our theory of $δ$-forms only works in the absence of boundary. In other words, the definition of $ω\vert_{\bigcap_{i\in J} \Sigma_i}$ still relies on data from an open neighborhood of $\bigcap_{i\in J} K_i$ in $X$.

\item Finally, if $f:\Sigma \to \Sigma'$ is any pwl map and $T\in P_c(\Sigma)$ can be integrated (i.e. lies in $\bigoplus_{i\geq 0} P^{d,d}_d(\Sigma)$), then $\int_\Sigma T = \int_{\Sigma'} f_*T$. Thus integrals of $δ$-forms may be computed in tropical charts. The following is a special case. Assume that $K$ is an $n$-dimensional affinoid space, that $f:K\to \mbG_m^s$ is a tuple of toric coordinates and that $ω = t_f^\star(γ)\in B_c^{n,n}(K\setminus \partial K)$ has the additional property that $\Supp γ \subset \mbR^r \setminus t_f(\partial K \cap \Sigma(K, f))$. Then
$$\int_K ω = \int_{\mbR^r} T(K,f)\wedge γ.$$
\end{enumerate}

Chambert-Loir--Ducros \cite{CLD} endow the vector spaces $A_c(U)$, $U\subseteq X$ open, with an analog of the Schwartz topology. (The definition is recalled at the beginning of the proof of Prop. \ref{prop:delta_form_as_current} below.) They then define the sheaf of currents
$$D(U) := \Hom_{\mr{cont}}(A_c(U), \mbR).$$
It comes with differential operators $d',d''$ by duality,
\begin{equation}\label{eq:derivative_currents_CLD}
(d'T)(η) = (-1)^{\deg T + 1}(d'η),\ \ \ (d''T)(η) = (-1)^{\deg T + 1}(d''η).
\end{equation}
($T$ and $η$ are assumed to be homogeneous here.) By the next proposition, $δ$-forms may be understood as a specific kind of currents.

\begin{prop}\label{prop:delta_form_as_current}
Associate to $ω\in B^{p,q}(X)$ the linear functional
$$[ω]:A_c^{n-p,n-q}(X) \lr \mbR,\ η\longmapsto \int_X ω\wedge η.$$
Then $[ω]$ is a current, i.e. continuous for the Schwartz space topology on its source. The resulting map
$$B(X) \lr D(X),\ ω\longmapsto [ω]$$
is injective and commutes with $d'$, $d''$. In particular, Stokes' Theorem holds: For all $α\in B_c^{n-1,n}(X)$ and $β\in B_c^{n,n-1}(X)$,
$$\int_X d'α = \int_X d''β = 0.$$
\end{prop}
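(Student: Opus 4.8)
The plan is to prove Proposition \ref{prop:delta_form_as_current} in three stages: continuity of $[\omega]$, injectivity of $\omega \mapsto [\omega]$, and compatibility with $d', d''$ (from which Stokes' Theorem is an immediate corollary).

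\emph{Continuity.} First I would recall the Schwartz-type topology on $A_c^{n-p,n-q}(U)$ from \cite{CLD} (it is the same one recalled before Def.~\ref{def:currents_pwl_space} for piecewise linear spaces): a net $(\eta_i)$ converges to $\eta$ if the supports are uniformly contained in finitely many compact polyhedral sets $K_1,\ldots,K_r$ with fixed polyhedral presentations, the restrictions to each face are smooth, and all higher partial derivatives of all coefficients converge uniformly. To check that $\eta \mapsto \int_X \omega \wedge \eta$ is continuous, I would work locally: pick a finite open cover of $\Supp \omega$ by open subsets $U_i$ with $\omega\vert_{U_i} = t_{f_i}^\star(\gamma_i)$, a subordinate piecewise smooth partition of unity $(\rho_i)$, and reduce via $\int_X \omega\wedge\eta = \sum_i \int_X \omega\wedge (\rho_i\eta)$ to the case $\omega = t_f^\star(\gamma)$ on a single affinoid chart. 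Using item (4) before Prop.~\ref{prop:delta_form_as_current} (integrals compute in tropical charts) one has $\int_X \omega\wedge\eta = \int_{\mathbb{R}^s} T(K,f)\wedge\gamma\wedge t_{f,*}(\eta\text{-part})$; then the fact that the pushforward $t_{f,*}$ is continuous for the Schwartz topology on $\mathbb{R}^s$ (fiber integration against the finitely many fibers, a standard fact proved in \cite{CLD}), together with the fact that wedging against the fixed $\delta$-form $T(K,f)\wedge\gamma$ and integrating is a continuous functional on Schwartz space $A_c(\mathbb{R}^s)$ (this is essentially the defining continuity of $\delta$-forms as currents on $\mathbb{R}^s$, cf.\ \cite{Mih_trop_inter}), gives continuity of $[\omega]$.

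\emph{Injectivity.} Suppose $[\omega] = 0$, i.e.\ $\int_X \omega\wedge\eta = 0$ for all $\eta\in A_c^{n-p,n-q}(X)$. Again this is a local statement, so I may assume $\omega = t_f^\star(\gamma)$ on an open $U$ with $f:U\to\mathbb{G}_m^r$. By Lem.~\ref{lem:delta_vanishing} (the equivalence of (1) with (5)), it suffices to show that for every affinoid domain $K\subseteq U$ and every refinement $(g, f\vert_K = p\circ g)$ of $f\vert_K$ with finite fibers, the product $T(K,g)\wedge p^*(\gamma)$ vanishes away from $t_g(\Sigma(K,g)\cap\partial K)$. Fixing such $(K,g)$, I would use item (4) before Prop.~\ref{prop:delta_form_as_current} to rewrite, for test forms $\eta$ pulled back from $\mathbb{R}^s$ with support away from $t_g(\partial K\cap\Sigma(K,g))$, the identity $0 = \int_K \omega\wedge\eta = \int_{\mathbb{R}^s} \bigl(T(K,g)\wedge p^*\gamma\bigr)\wedge\eta$. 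Since $T(K,g)\wedge p^*\gamma$ is a $\delta$-form on the open set $\mathbb{R}^s\setminus t_g(\partial K\cap\Sigma(K,g))$, and since such a $\delta$-form is determined by its pairings against all compactly supported smooth test forms there — this is precisely the injectivity of the map $B(\text{open in }\mathbb{R}^s)\to D(\text{open in }\mathbb{R}^s)$ established in \cite{Mih_trop_inter} — we conclude $T(K,g)\wedge p^*\gamma = 0$ away from $t_g(\partial K\cap\Sigma(K,g))$, as required.

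\emph{Compatibility with $d',d''$ and Stokes.} For $\omega\in B^{p,q}(X)$ homogeneous, I must show $[d'\omega] = d'[\omega]$ in $D(X)$, i.e.\ $\int_X d'\omega\wedge\eta = (-1)^{\deg\omega+1}\int_X \omega\wedge d'\eta$ for all test forms $\eta\in A_c$. By a partition-of-unity reduction (items (2) and (3) above) this localizes to a single skeleton $\Sigma = \Sigma(U,f)$ containing the supports, where $\omega\vert_\Sigma$ is a polyhedral current and the derivative $d'\omega$ is by definition the derivative as current, so $\int_\Sigma d'(\omega\vert_\Sigma)\wedge\eta\vert_\Sigma = (-1)^{\deg\omega+1}\int_\Sigma \omega\vert_\Sigma\wedge d'_P(\eta\vert_\Sigma)$; here one uses that $\eta$ is \emph{smooth}, so $d'\eta\vert_\Sigma = d'_P(\eta\vert_\Sigma)$ (its boundary differential $\partial'$ vanishes), and the Leibniz rule plus the fact that $\int_\Sigma d'_P(\omega\vert_\Sigma\wedge\eta\vert_\Sigma) = 0$ by Stokes for polyhedral currents (Prop.~\ref{prop:stokes_polyhedron} extended to $\delta$-forms, Prop.~\ref{prop:stokes_delta_pwl}). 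The same argument handles $d''$. Finally, Stokes' Theorem $\int_X d'\alpha = \int_X d''\beta = 0$ follows by choosing $\lambda\in A_c^{0,0}(X)$ equal to $1$ on a neighborhood of $\Supp\alpha$ and computing $\int_X d'\alpha = [d'\alpha](\lambda) = -[\alpha](d'\lambda) = 0$, since $d'\lambda$ vanishes on $\Supp\alpha$ — exactly as in the proof of Prop.~\ref{prop:stokes_delta_pwl}.

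The main obstacle I expect is the \emph{continuity} step: one must match the abstract Schwartz topology of \cite{CLD} (phrased via tropical charts $t_f^*\eta$ up to the equivalence relation \eqref{eq:equiv_rel_smooth}) with the chart-by-chart estimates needed to see that $\eta\mapsto\int_{\mathbb{R}^s} T(K,f)\wedge\gamma\wedge t_{f,*}\eta$ is continuous, carefully tracking that finitely many affinoid charts and one partition of unity suffice and that the piecewise smooth (not merely smooth) partition functions do not spoil Schwartz convergence after restriction to skeletons. Once continuity of fiber integration $t_{f,*}$ and of pairing with a fixed $\delta$-form on $\mathbb{R}^s$ are invoked from \cite{CLD} and \cite{Mih_trop_inter} respectively, the remaining bookkeeping is routine.
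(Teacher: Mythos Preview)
Your overall plan tracks the paper's, and your injectivity argument via Lem.~\ref{lem:delta_vanishing} and item~(4) is a perfectly valid fleshing-out of what the paper only sketches. Two places need correction.

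\emph{Continuity.} You propose to cover $\Supp\omega$ finitely, but $\omega$ need not be compactly supported. The correct move (and what the paper does) is to use the definition of convergence $(\eta_i)\to\eta$: the supports are uniformly contained in finitely many fixed affinoid charts $(K_j,f_j)$ presenting all $\eta_i,\eta$. One then covers $\bigcup_j K_j$ by finitely many further affinoids $(H_l,g_l)$ presenting $\omega$, forms the combined skeleton $\Sigma=\bigcup_{j,l}\Sigma(K_j\cap H_l, f_j\times g_l)$, and observes that $\eta_i|_\Sigma\to\eta|_\Sigma$ in the polyhedral Schwartz sense while $\omega|_\Sigma$ is a fixed polyhedral current; convergence of $\int_\Sigma\omega|_\Sigma\wedge\eta_i|_\Sigma$ is then immediate. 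No detour through $t_{f,*}$ to $\mbR^s$ is needed.

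\emph{Compatibility with $d',d''$.} This is where your sketch has a genuine gap. You reduce by a partition of unity to a single skeleton $\Sigma=\Sigma(U,f)$ and want to invoke Prop.~\ref{prop:stokes_delta_pwl}, but that proposition requires the integrand to be a $\delta$-form on the tropical space $\Sigma$. After the partition of unity, the cut-off function $\lambda$ (and in general the test form $\eta$) is smooth on $X$ but need not be presentable in the coordinates $t_f$; consequently $\lambda\alpha|_\Sigma$ (with $\alpha=\omega\wedge\eta$) is only a polyhedral current on $\Sigma$, not a $\delta$-form, and your claim ``$d'\eta|_\Sigma=d'_P(\eta|_\Sigma)$ since $\eta$ is smooth'' is not justified: $\eta|_\Sigma$ need not be smooth in the sense of Def.~\ref{def:smooth_forms_pwl} for the sheaf $L$ generated by $t_{f_1},\ldots,t_{f_r}$. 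The paper confronts this head-on: it observes that $\lambda\alpha|_\Sigma$ is nonetheless \emph{balanced} (trivially, because in bidegree $(n-1,n)$ the condition~\eqref{eq:balanced_polyhedral_current} is vacuous), so its current-derivative $d'(\lambda\alpha|_\Sigma)$ is polyhedral by Prop.~\ref{prop:balanced_equiv_polyhedral_deriv} and $\int_\Sigma d'(\lambda\alpha|_\Sigma)=0$ by the definition of the current derivative. The remaining work is to verify the compatibility $(d'(\lambda\alpha))|_\Sigma = d'(\lambda\alpha|_\Sigma)$, which the paper does by passing to a finer skeleton $\Sigma(V,g)$ on which $\lambda$ \emph{is} presentable, using Cor.~\ref{cor:support_delta_form}~(2) to compare via $i_*$, and the explicit formula~\eqref{eq:formula_deriv_balanced}. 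Your sketch invokes Prop.~\ref{prop:stokes_delta_pwl} without establishing its hypotheses; either spell out that $\Sigma$ must be enlarged to $\Sigma(U,f\times g\times h)$ with $g,h$ presenting $\eta,\lambda$ (this does work), or follow the paper's balancedness argument.
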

\begin{proof}
Let $(η_i)_{i\in I} \to η$ be a convergent net of compactly supported smooth $(n-p,n-q)$-forms on $X$ in the sense of \cite{CLD}*{(4.1.1)}. By definition this means that there are finitely many affinoid domains $K_j \subseteq X$ such that
\begin{enumerate}[leftmargin=*]
\item The supports of $η$ and all $η_i$ are covered by $\bigcup_{j\in J} K_j$.
\item There are toric coordinates $f_j$ for $K_j$ such that all restrictions $η_i\vert_{K_j}$ as well as $η\vert_{K_j}$ are presentable in $f_j$, say
$$η_i\vert_{K_j} = t_{f_j}^*(α_{ij}),\ \ \ η\vert_{K_j} = t_{f_j}^*(α_j).$$
\item For each $j$, the $α_{ij}$ and all their higher derivatives converge to the respective derivative of $α_j$ on each polyhedron in $T'(K_j,f_j)$.
\end{enumerate}
Note that the third condition is intrinsic to the family $(η_i)_{i\in I}$ as soon as the first two conditions are satisfied. Namely given a smooth form $t_f^*(α)$ on any (compact) analytic space $K$, the restriction $α\vert_{T'(K,f)}$ is uniquely determined.

We need to see that $[ω](η_i) \to [ω](η)$. Pick any finite covering
$$\Supp η \cup \bigcup_{i\in I} \Supp η_i \subseteq \bigcup_{l\in L} (H_l\setminus \partial H_l)$$
by the interiors of affinoid domains $H_l$ that tropicalize $ω$, say $ω\vert_{H_l\setminus \partial H_l} = t_{g_l}^\star(γ_l)$, where $g_l$ is defined on $H_l$. Then $ω\wedge η_i$ and $ω\wedge η$ are all supported on
$$\Sigma := \bigcup_{j\in J,\ l\in L} \Sigma(K_j\cap H_l, f_j \times g_l).$$
Moreover, the $η_i\vert_\Sigma$ and $η\vert_\Sigma$ are, polyhedron-by-polyhedron, simply the pullbacks of the $α_{ij}$ along the tropicalization maps. Hence $(η_i\vert_\Sigma)_{i\in I} \to η\vert_\Sigma$ in the sense of (1)--(3) above. Since simply $(ω\wedge η_i)\vert_\Sigma  = ω\vert_\Sigma \wedge η_i\vert_\Sigma$, where $ω\vert_\Sigma$ is a polyhedral current and $η_i\vert_\Sigma$ a pws form, the convergence $[ω](η_i) \to [ω](η)$ follows.

Having established that $[ω]$ is a current, we turn to the properties of the map $B(X)\to D(X)$. Injectivity is clear: If $ω\neq ω'$, then by definition there exists some skeleton $\Sigma$ with $ω\vert_\Sigma \neq ω'\vert_\Sigma$. One then finds some test form $η$ with $[ω](η) \neq [ω'](η)$; we omit the details.

Finally, we show $d[ω] = [dω]$ for $d\in \{d',d''\}$. By definition of the derivative of currents, $(d[ω])(η) = [dω](η) - \int_X d(ω\wedge η)$. So we need to prove that $\int_X d'α = \int_X d''β = 0$ for $α\in B^{n-1,n}_c(X)$ and $β\in B^{n,n-1}_c(X)$. We give the argument for $α$, the case of $β$ being identical.

Let $U\subseteq X$ be an open subset such that one may write $α\vert_U = t_f^\star(γ)$. In particular $\Supp α\vert_U \subseteq \Sigma := \Sigma(U, f)$ by Cor. \ref{cor:support_delta_form}. By a partition of unity argument, the statement to prove is that $\int_U d'(λα) = 0$ for every smooth function $λ$ on $U$ with compact support. Even though $\Supp λα \subseteq \Sigma$, the restriction $λα\vert_\Sigma$ need not lie in $B(\Sigma)$ because $λ$ need not be presentable in the toric coordinates $t_f$. However, it is still a balanced polyhedral current of bidegree $(n-1,n)$ because the balancing condition \eqref{eq:balanced_polyhedral_current} is trivially satisfied for polyhedral currents of bidegree $(p,n)$ or $(n,q)$. Its derivative $d'(λα\vert_\Sigma)$ as current on $\Sigma$ is a polyhedral current by Prop. \ref{prop:balanced_equiv_polyhedral_deriv} and we have (by definition of the derivative as current)
$$\int_\Sigma d'(λα\vert_\Sigma) = - (λα\vert_\Sigma)(d'1) = 0.$$
Our claim is now that $(d'(λα))\vert_\Sigma = d'(λα\vert_\Sigma)$. This is a local statement which does not require the condition for $λ$ to have compact support. Let $V\subseteq U$ be open and $g$ a refinement of $f\vert_V$ such that $λ\vert_{\Sigma(V,g)}$ is smooth. Then the statement
$$(d'(λα))\vert_{\Sigma(V,g)} = d'(λα\vert_{\Sigma(V,g)})$$
is known by Prop. \ref{prop:stokes_delta_pwl}. The left hand side is known to agree with the current $i_*((d'(λα))\vert_{\Sigma(V, f)})$ where the pushforward is along the inclusion $i:\Sigma(V, f)\to \Sigma(V, g)$, cf. Cor. \ref{cor:support_delta_form}. Similarly, on the right hand side, $λα\vert_{\Sigma(V,g)} = i_*(λα\vert_{\Sigma(V,f)})$. In this situation, the explicit description of the derivative of balanced polyhedral currents \eqref{eq:formula_deriv_balanced} shows that also
$$i_*(d'(λα\vert_{\Sigma(V,f)})) = d'(λα\vert_{\Sigma(V,g)}).$$
Using that $U$ is covered by such $V$, the proof is complete.
\end{proof}

\section{Intersection Theory}
\subsection{Green Forms}
\label{ss:Green_currents}

Throughout this section, $X$ continues to denote a purely $n$-dimensional $k$-analytic space with $\partial X = \emptyset$.

\begin{lem}\label{lem:Green_form_L1_property}
Let $Z\subseteq X$ be Zariski closed and purely of codimension $r$. Set $U = X\setminus Z$ and let $g\in B^{p,q}(U)$ be a $δ$-form with $\min\{p,q\} < r$. Then $A_c^{n-p,n-q}(X) = A_c^{n-p,n-q}(U)$ and the map
$$[g](η) := \int_{U} g\wedge η$$
defines a current of bidegree $(p,q)$ on $X$.
\end{lem}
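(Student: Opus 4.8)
The statement has two parts: the identification of test-form spaces, and the continuity of the functional $[g]$. The first part, $A_c^{n-p,n-q}(X) = A_c^{n-p,n-q}(U)$, is the assertion that every compactly supported smooth $(n-p,n-q)$-form on $U$ extends (by zero) to such a form on $X$. This is not automatic since $U$ is only open in $X$; the point is that $\max\{n-p,n-q\} > n-r$, so by Cor. \ref{cor:support_delta_form} (1) the support of any $δ$-form — in particular any smooth form — of bidegree $(n-p,n-q)$ already avoids the Zariski closed set $Z$ of dimension $n-r$. Thus a compactly supported smooth form on $U$ has support that is closed in $X$ (being disjoint from $Z$, it is already closed in $X$), hence defines a compactly supported smooth form on $X$, and conversely. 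I would spell this out in one short paragraph, invoking Cor. \ref{cor:support_delta_form} for both the smooth forms on $U$ and — in the other direction — restriction of forms on $X$.

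\textbf{Continuity of $[g]$.} Given the first part, the functional $η \mapsto \int_U g\wedge η = \int_X g\wedge η$ is at least well-defined on $A_c^{n-p,n-q}(X)$, with the integral making sense because $g\wedge η \in B_c^{n,n}(U) = B_c^{n,n}(X)$ (using again that its support avoids $Z$). For continuity, I would take a convergent net $(η_i)_{i} \to η$ in the Schwartz topology on $A_c^{n-p,n-q}(X)$ and reduce, exactly as in the proof of Prop. \ref{prop:delta_form_as_current}, to a situation on a fixed skeleton. Concretely: cover the (common, by the topology's definition) support $K_1\cup\cdots\cup K_r$ of the $η_i$ and $η$ by finitely many interiors of affinoid domains $H_l$ on which $g$ is presented, $g\vert_{H_l\setminus\partial H_l} = t_{g_l}^\star(γ_l)$; then all the products $g\wedge η_i$ and $g\wedge η$ are supported on the single skeleton $\Sigma = \bigcup_{j,l}\Sigma(K_j\cap H_l, f_j\times g_l)$, and on $\Sigma$ the pairing is $g\vert_\Sigma \wedge η_i\vert_\Sigma$ with $g\vert_\Sigma$ a fixed polyhedral current and $η_i\vert_\Sigma$ a net of piecewise smooth forms converging polyhedron-by-polyhedron in the Schwartz sense. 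Integration of a polyhedral current against a Schwartz-convergent net of pws coefficient forms is continuous, so $[g](η_i)\to [g](η)$.

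\textbf{Main obstacle.} The only genuinely delicate point is the $L^1$-type integrability hidden in ``$\int_U g\wedge η$'' — a priori the polyhedral current $g\vert_\Sigma$ might have coefficients blowing up near the locus where $\Sigma$ approaches $Z$. This is precisely what the hypothesis $\min\{p,q\} < r$ rules out: writing $g$ locally as $t_f^\star(γ)$ with $γ$ of tridegree $(\tilde p, \tilde q, c)$, the inequality forces $c < r$, hence the polyhedra carrying $g$ have codimension $< r$ while $Z$ sits in codimension $r$, so $\Sigma\cap\Supp(g\vert_\Sigma)$ is bounded away from (the preimage of) $Z$ on every compact polyhedral piece, and the coefficients of $g\vert_\Sigma$ are honest smooth functions there. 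I would therefore open the continuity argument by recording this local normal form and the resulting fact that $\Supp(g)\cap\overline{t_f^{-1}(\text{nbhd of }Z)}$ is empty in the relevant charts, so that all integrals below are finite sums of integrals of smooth forms over compact polyhedra; after that the argument is a verbatim adaptation of Prop. \ref{prop:delta_form_as_current}, and I would say so rather than repeat it. The claim ``current of bidegree $(p,q)$'' then follows since $[g]$ kills $A_c^{n-p',n-q'}$ for $(p',q')\neq(p,q)$ for bidegree reasons.
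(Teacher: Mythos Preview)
Your first part and the overall strategy for continuity are correct and match the paper: show $\Supp η\subseteq U$ via Cor.~\ref{cor:support_delta_form}, then argue as in Prop.~\ref{prop:delta_form_as_current} on a single skeleton. But you misidentify the delicate point, and your ``Main obstacle'' paragraph does not fill the actual gap.

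The issue is not an $L^1$ blow-up of $g$, nor the codimension of the polyhedra carrying $g$. Indeed, your claim that $\Supp(g\vert_\Sigma)$ is bounded away from $Z$ is false in general (take $g$ a nowhere-vanishing function on $U$), and $Z$ does not lie in any skeleton, so speaking of its codimension there is meaningless. The real obstacle is that the affinoid domains $K_j\subseteq X$ which tropicalize the net $(η_i)$ may meet $Z$; you therefore cannot cover $K_1\cup\cdots\cup K_r$ by interiors of affinoids $H_l\subseteq U$ on which $g$ is presented, as your continuity paragraph asserts. What must be shown is that the \emph{closure in $X$} of $\bigcup_i\Supp η_i$ still lies in $U$ --- a uniform statement, not implied by each individual $\Supp η_i\subseteq U$.

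The paper's fix uses the $η_i$, not $g$: for $z\in Z$ one has $d(z)\leq n-r$, so there is an affinoid neighborhood $H_z$ with $\dim t_{f_j}(H_z\cap K_j)\leq n-r<\max\{n-p,n-q\}$ for every $j$. By Lem.~\ref{lem:support_lemma} (applied to the $η_i$, which are all tropicalized by the \emph{same} $f_j$), every $η_i$ vanishes on $H_z$; hence the union of supports avoids a fixed neighborhood of $Z$. Once this uniform bound is in place, the cover by $H_l\subseteq U$ exists and the remainder is indeed verbatim Prop.~\ref{prop:delta_form_as_current}. Replace your codimension-of-$g$ argument with this uniform vanishing of the $η_i$ near $Z$.
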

\begin{proof}
By Cor. \ref{cor:support_delta_form}, every $η\in A^{n-p,n-q}(X)$ has $\Supp η \subseteq U$, hence the claimed equality $A_c^{n-p,n-q}(X) = A_c^{n-p, n-q}(U)$. Let now $(η_i)_{i\in I} \to η$ be a convergent net of $η_i\in A_c^{n-p,n-q}(X)$. By definition, cf. Proof of Prop. \ref{prop:delta_form_as_current}, there exist finitely many affinoid domains $K_j\subseteq X$ with toric coordinates $f_j$ that tropicalize all $η_i\vert_{K_j}$ and $η\vert_{K_j}$. Just as in the proof of Cor. \ref{cor:support_delta_form}, every point $z\in Z$ has an affinoid neighborhood $H_z$ with $\dim t_{f_j}(H_z\cap K_j) \leq r$. It follows from Lem. \ref{lem:support_lemma} that also the closure in $X$ of $S = \Supp η \cup \bigcup_{i\in I} \Supp η_i$ is contained in $U$. From here on, the proof of the lemma is exactly as in the proof of Prop. \ref{prop:delta_form_as_current}.
\end{proof}

\begin{defn}\label{def:Green}
\begin{enumerate}[wide, labelindent=0pt, labelwidth=!, label=(\arabic*), topsep=4pt, itemsep=4pt]
\item Let $Z\subseteq X$ be a Zariski closed subspace that is purely of codimension $r$. Its functional of integration is
$$δ_Z(η) := \int_Z η,\quad η\in B_c^{n-r,n-r}(X).$$

\item A \emph{Green current for $Z$} is an $(r-1,r-1)$-current $g$ on $X$ such that the following residual term is a $δ$-form,
$$ω(Z,g):=δ_Z + d'd''g \in B^{r,r}(X).$$

\item A \emph{Green $δ$-form for $Z$} is a Green current for $Z$ of the form $g = [γ]$ for some $δ$-form $γ\in B(X\setminus Z)$ of bidegree $(r-1,r-1)$. For divisors, Green forms are also called \emph{Green functions}.
\end{enumerate}
\end{defn}

The motivation for considering Green $δ$-forms instead of merely Green currents is the wish to define a $\star$-product. This requires the possibility to restrict the current to a Zariski closed subspace, see \eqref{eq:star_prod_Cartier}.

Recall that the existence of Green currents for cycles in complex geometry follows (essentially) from the fact that the de Rham complex is quasi-isomorphic to the complex of currents. Such an argument is unavailable for Berkovich spaces currently and the question for existence of Green currents is open in general. We next collect some known results which will all be related to Cartier divisors (Poincaré--Lelong formula).

A continuous metric on a line bundle $L$ over an analytic space $X$ is the datum of a map of sheaves
\begin{equation}\label{eq:def_metric}
v:L\lr C(-, \mbR\cup\{\infty\})
\end{equation}
such that $v(fs) = v(f) + v(s)$ for every open subset $U\subseteq X$, every $f\in \mcO_X(U)$ and every $s\in L(U)$. This is the same notion as in \cite{CLD}*{Def. 6.2.2} except that we write our valuations additively. Given a continuous function $ϕ$, Chambert-Loir--Ducros show that $ϕ$ defines an element of $D^{0,0}(X)$. Thus it makes sense to take the derivative $d'd''(ϕ)$ as current \eqref{eq:derivative_currents_CLD}. The curvature $c_1(L, v)\in D^{1,1}(X)$ is then defined by the property $c_1(L, v)\vert_U = d'd''(v(s))$ whenever $U\subseteq X$ is an open subset and $s\in L(U)$ a generator \cite{CLD}*{(6.4.1)}.

Gubler--Künnemann \cite{GK} made an analogous definition but view $ϕ$, $d'd''(ϕ)$ and $c_1(L, v)$ as $δ$-currents, i.e. as elements of $\Hom_{\mr{cont}}(B_c^{\mr{GK},n-1,n-1}(X), \mbR)$, where $B^{\mr{GK}}(X)$ denotes their space of $δ$-forms \cite{GK}*{\S6}.

\begin{thm}[Chambert-Loir--Ducros \cite{CLD}*{Thm. 4.6.5}, Gubler--Künnemann \cite{GK}*{Thm. 7.2}]\label{thm:Poincare_Lelong_GK}
Let $(L,v)$ be a line bundle with continuous metric on $X$ and $s$ be an invertible meromorphic section of $L$ with divisor $D = \mr{div}(s)$. Then
\begin{equation}\label{eq:PL_divisors}
δ_{D} + d'd''[v(s)] = c_1(L,v).
\end{equation}
In particular, if $c_1(L,v)$ is a $δ$-form, then $v(s)$ is a Green function for $D$.
\end{thm}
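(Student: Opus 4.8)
The plan is to treat the identity \eqref{eq:PL_divisors} as a reformulation of the classical Poincar\'e--Lelong formula and to derive the final clause from the definition of Green current. Both sides of \eqref{eq:PL_divisors} are sections of the sheaf $D^{1,1}$, so the identity is local on $X$ and I may assume $X$ affinoid and $L$ trivial with a fixed generator $s_0\in L(X)$. By the definition of the curvature current, $c_1(L,v)=d'd''[v(s_0)]$, and the given meromorphic section may be written $s=f\,s_0$ with $f$ meromorphic and $D=V(s)=\mr{div}(f)$; since $v(s)=v(f)+v(s_0)$ by the defining property \eqref{eq:def_metric} of a metric and $d'd''$ is linear, \eqref{eq:PL_divisors} reduces to the assertion
\[
\delta_{\mr{div}(f)}+d'd''[v(f)]=0 \qquad\text{in } D^{1,1}(X),
\]
which (with the additive convention, i.e.\ $v$ corresponding to $-\log|\cdot|$) is exactly the Poincar\'e--Lelong formula for meromorphic functions on Berkovich spaces, \cite{CLD}*{Thm. 4.6.5}. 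To invoke it, I only have to match the constituents: the sheaf of currents $D(U)=\Hom_{\mr{cont}}(A_c(U),\mbR)$ and its differentials \eqref{eq:derivative_currents_CLD} are literally those of \cite{CLD}, the notion of continuous metric and of $c_1(L,v)$ agrees with theirs, and $\delta_{\mr{div}(f)}$ as defined here restricts, along $A_c^{n-1,n-1}(X)\subseteq B_c^{n-1,n-1}(X)$, to their cycle-of-integration functional. Alternatively, the $\delta$-current form of the identity is \cite{GK}*{Thm. 7.2}, transported along the embedding of Gubler--K\"unnemann's $\delta$-forms into ours from \S\ref{ss:pws_forms}.

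For the final statement, suppose $c_1(L,v)$ is a $\delta$-form, meaning it lies in the image of the injection $B^{1,1}(X)\hookrightarrow D^{1,1}(X)$ of Prop. \ref{prop:delta_form_as_current}. Put $g:=[v(s)]\in D^{0,0}(X)$. Then \eqref{eq:PL_divisors} reads $\omega(D,g):=\delta_D+d'd''g=c_1(L,v)\in B^{1,1}(X)$, so $g$ is a Green current for $D$ in the sense of Def. \ref{def:Green}~(2), with associated form $\omega(D,g)=c_1(L,v)$. To upgrade this to a Green $\delta$-form --- i.e.\ to conclude that $v(s)$ is a Green function --- it remains to see that the restriction of $v(s)$ to $U:=X\setminus D$ is a $\delta$-form of bidegree $(0,0)$; by Prop. \ref{prop:pws_are_delta_of_codim_0} this means $v(s)|_U$ must be piecewise smooth. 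On $U$ the section $s$ is a generator, $v(s)|_U$ is a continuous function, and $d'd''[v(s)|_U]=c_1(L,v)|_U$ is a $\delta$-form. Once piecewise smoothness is known, Lem. \ref{lem:Green_form_L1_property} identifies $g$ with the current attached to the $\delta$-form $v(s)|_U\in B^{0,0}(U)$, so $v(s)$ is indeed a Green function for $D$.

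The step that is not bookkeeping --- and that I expect to be the main obstacle --- is the regularity claim: a continuous function $\psi$ on $U=X\setminus D$ whose current-theoretic derivative $d'd''[\psi]$ is a $\delta$-form is automatically piecewise smooth. I would prove this by restriction to skeletons. For any skeleton $\Sigma=\Sigma(W,f)\subseteq U$ the restriction $\psi|_\Sigma$ is continuous and $d'd''(\psi|_\Sigma)$, computed as a current on the tropical space $\Sigma$, equals the polyhedral current $c_1(L,v)|_\Sigma$; pushing forward along a tropicalization map with finite fibres and using the characterisation of polyhedrality of derivatives (Prop. \ref{prop:balanced_equiv_polyhedral_deriv} and its counterpart over $\mbR^r$ in \cite{Mih_trop_inter}) forces $\psi|_\Sigma$ to be piecewise smooth on $\Sigma$. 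One then checks that these restrictions are compatible for varying $\Sigma$ and, using that $\psi$ is $G$-locally a pullback along a tropicalization map together with the description of piecewise smooth forms through tropicalizations in \S\ref{ss:pws_forms}, that they assemble to a piecewise smooth function on $U$. (Part of this regularity is already implicit in the cited works of Chambert-Loir--Ducros and Gubler--K\"unnemann.) With this lemma in hand the proof is complete.
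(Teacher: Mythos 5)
This theorem is stated here with an attribution to Chambert-Loir--Ducros and Gubler--Künnemann; the paper does not give its own proof, so there is no internal argument to compare against. Your reduction of the main identity \eqref{eq:PL_divisors} to \cite{CLD}*{Thm.~4.6.5} via triviality of $L$ on affinoid charts, $c_1(L,v)=d'd''[v(s_0)]$ and $v(s)=v(f)+v(s_0)$ is the intended reading and is fine; likewise the dictionary with the $\delta$-current formulation of \cite{GK}*{Thm.~7.2}.

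Where the proposal has a genuine problem is in how you try to establish the regularity that the ``In particular'' clause requires. You correctly observe that, to conclude $v(s)$ is a Green function in the sense of Def.~\ref{def:Green}~(3), one needs $v(s)\vert_{X\setminus D}$ to lie in $B^{0,0}(X\setminus D)$, which by the degree constraint $c=0$ and Prop.~\ref{prop:pws_are_delta_of_codim_0} means $v(s)\vert_{X\setminus D}$ must be piecewise smooth. But the route you sketch for this regularity does not work: Prop.~\ref{prop:balanced_equiv_polyhedral_deriv} is an equivalence for currents that are \emph{already known to be polyhedral} --- it characterises when the derivatives of a polyhedral current are again polyhedral --- and gives you nothing about a current that is merely continuous. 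Knowing that $\psi\vert_{\Sigma}$ is polyhedral is essentially the conclusion you want, not a hypothesis you have. What you actually need is an elliptic-regularity statement in the tropical setting (continuous $(0,0)$-current with balanced polyhedral $d'd''$ is piecewise smooth), and that is not something the paper provides nor something Prop.~\ref{prop:balanced_equiv_polyhedral_deriv} delivers.

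In fact the paper's own usage sidesteps the issue: in the sentences following the theorem, Green functions are always produced from metrics that are \emph{smooth} or \emph{pws by construction} (``smooth metric'', ``G-pwl metric'', then \cite{GK}*{Prop.~8.4} to get pws), so $v(s)$ is pws a priori and the $\delta$-form property of $v(s)\vert_{X\setminus D}$ is immediate, with $c_1(L,v)$ being a $\delta$-form a consequence rather than the starting hypothesis. The ``In particular'' is therefore best read with this implicit restriction; as a bare implication from ``$c_1(L,v)$ is a $\delta$-form'' it overstates what follows formally, and your instinct that a nontrivial regularity input is missing is correct --- but your proposed way of supplying it is circular, and you should instead either assume the metric pws (as the paper in effect does) or prove the regularity by a separate argument that does not invoke Prop.~\ref{prop:balanced_equiv_polyhedral_deriv}.
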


In order to construct Green functions for $D$ in the sense of Def. \ref{def:Green} from this, one needs to choose $v$ such that $c_1(L, v)$ is a $δ$-form. To this end, we call a metric $v$ on $L$ smooth, pws, G-pws, resp. G-locally linear (G-pwl) if the function $v(s)$ has that property whenever $s\in L(U)$ is a local generator of $L$. The precise definition of G-pwl will not play a role here, we refer to \cite{GJR}*{\S5} and \cite{CLD}*{(6.2.7)}. (The property G-pwl is simply called piecewise linear there. Our differing terminology was chosen to preserve the parallel with pws and G-pws.)

By a partition of unity argument, every line bundle $L$ on a paracompact $k$-analytic space has a smooth metric $v$, cf. \cite{CLD}*{Prop. 6.2.6}. Then its curvature satisfies $c_1(L,v) \in A^{1,1}(X)$, i.e. is a smooth $(1,1)$-form. In particular, Thm. \ref{thm:Poincare_Lelong_GK} establishes the existence of smooth Green functions for Cartier divisors on paracompact spaces.

If $k$ is nontrivially valued and $X$ paracompact, then every line bundle also has a G-pwl metric \cite{CLD}*{6.2.13}. Moreover, by \cite{GK}*{Prop. 8.4} any such metric is pws. The curvature current $c_1(L,v)$ then lies in $B^{0,0,1}(X)$, i.e. is a $δ$-form of tridegree $(0, 0, 1)$ in our sense.

In order to obtain Green currents for higher codimension cycles, Gubler--Künnemann also define a $\star$-product against metrized divisors: Assume that $(L, v)$ is a metrized line bundle such that $c_1(L,v) \in B^{GK, 1,1}(X)$ is a $δ$-form in their sense. Then, if $(L,s,D)$ is as in Thm. \ref{thm:Poincare_Lelong_GK} and if $(Z,g)$ is a cycle with Green current that intersects $D$ properly,
\begin{equation}\label{eq:star_prod_Cartier}
v(s)\vert_Z + c_1(L,v)\wedge g
\end{equation}
is a Green current for $D\cap Z$ in their sense \cite{GK}*{Prop. 11.4}. These definitions are made whenever $k$ is algebraically closed, $v(k^\times)\neq \{0\}$ and $X$ is algebraic. Applying Thm. \ref{thm:Poincare_Lelong_GK} and this $\star$-product construction provides Green currents for complete intersection cycles in such settings. They moreover show that the $\star$-product is associative and commutative \cite{GK}*{§11}.

We now come to our main result. It is an analog of Thm. \ref{thm:Poincare_Lelong_GK} for higher codimension cycles and motivated by the following example from complex geometry.

\begin{ex}[\cite{GS}*{Ex. 1.4}]\label{ex:complex_geom}
Consider $\mbC^r$ with coordinates $z_1,\ldots,z_g$ and the following function on $\mbC^r\setminus \{0\}$:
$$ϕ(z_1,\ldots,z_r) := \log ( |z_1|^2+\ldots+|z_r|^2).$$
Then $(-1)^rϕ(\partial \bar \partial ϕ)^{r-1}$ is a Green current for $\{0\}\subset \mbC^r$.
\end{ex}

The analog of $|z_1|^2 + \ldots + |z_r|^2$ in the non-archimedean setting is given by taking minimums. Let $f_1,\ldots,f_r \in \mcO_X(X)^r$ be a tuple of functions and put $U = X \setminus V(f_1\cdots f_r)$ as well as $Z = V(f_1,\ldots,f_r)$. Consider the pws function $ϕ_U := \min \{v(f_1),\ldots,v(f_r)\} \in PS(U)$. We claim that it extends to a pws function $ϕ$ on all of $X\setminus Z$. To this end, given some $x \in X\setminus Z$, we consider the set
$$I = \{1\leq i \leq r \mid f_i(x) \neq 0\}.$$
Then $x$ has an open neighborhood $W$ such that
$$ϕ_U\vert_{U\cap W} = \min \{v(f_i)\vert_{U\cap W} \mid i\in I\}.$$
The function $ϕ_{I, W} := \min \{v(f_i)\vert_W \mid i\in I\}$ is thus a pws function on $W$ that extends $ϕ_U$; it has the tropical presentation
\begin{equation}\label{eq:extending_phi}
ϕ_{I,W} = t_{f_I}^*(\min \{x_i \mid i\in I\})
\end{equation}
where $x_i$, $i\in I$, denote the coordinate functions on $\mbR^I$. Performing this construction at all $x\in X\setminus Z$ defines the desired extension $ϕ\in PS(X\setminus Z)$. Note that $ϕ$ is, in particular, a $δ$-form.

\begin{thm}\label{thm:Green_form_complete_inter}
Let $X$ be a purely $n$-dimensional $k$-analytic space such that $\partial X = \emptyset$ and let $Z\subseteq X$ be a complete intersection of codimension $r$, say equal to $V(f_1,\ldots,f_r)$. Set $ϕ:= \min\{v(f_1),\ldots,v(f_r)\}$ and define
\begin{equation}\label{eq:def_gamma}
γ := (-1)^{r-1}ϕ(d'd'' ϕ)^{r-1} \in B^{r-1,r-1}(X\setminus Z).
\end{equation}
Then for every $δ$-form $η\in B_c^{n-r,n-r}(X)$,
\begin{equation}\label{eq:poincare_lelong_higher_codim}
\int_{X\setminus Z} γ\wedge d'd''η = - \int_{Z} η\vert_Z.
\end{equation}
In particular, $γ$ satisfies the following identity of currents,
$$δ_Z + d'd'' [γ] = 0,$$
and is hence a Green $δ$-form for $Z$.
\end{thm}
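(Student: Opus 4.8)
The strategy is to reduce the global statement \eqref{eq:poincare_lelong_higher_codim} to a purely local, tropical computation and then to invoke the corresponding identity for $\delta$-forms on $\mathbb{R}^r$. The non-trivial content is entirely contained in the local model: for the standard tropical space $\mathbb{R}^r$ with coordinates $x_1,\dots,x_r$, the function $\psi := \min\{x_1,\dots,x_r\}$ is piecewise linear, $d'd''\psi$ is a (balanced) $(1,1,1)$-$\delta$-form supported on the corner locus of $\psi$, and the key claim is the identity of $\delta$-currents
\begin{equation}\label{eq:local_model_claim}
(-1)^{r-1}\psi(d'd''\psi)^{r-1}\wedge d'd''\Theta \;=\; -\,\delta_{\{0\}}\wedge \Theta
\end{equation}
for all compactly supported test $\delta$-forms $\Theta$ of the appropriate bidegree, where $\delta_{\{0\}} = [\{0\},\mathrm{std}]$. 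Equivalently, $(-1)^{r-1}\psi(d'd''\psi)^{r-1}$ is a Green $\delta$-form for the origin in $\mathbb{R}^r$. This is a tropical-geometric statement about the self-intersection of the hyperplane-like fan $\mathrm{div}(\min\{x_i\})$: iteratively, $(d'd''\psi)^{k}$ is (up to sign and multiplicity) the fundamental cycle of the $k$-th skeleton stratum, $(d'd''\psi)^{r-1}$ is supported on the union of the coordinate rays with appropriate weights, and multiplying by $\psi$ and applying $d'd''$ once more produces exactly the Dirac mass at $0$. I expect this to follow from the calculus of corner loci in \cite{Mih_trop_inter} together with an induction on $r$, the base case $r=1$ being the classical statement that $v(f)$ is a Green function for a Cartier divisor (Thm.~\ref{thm:Poincare_Lelong_GK}) translated into the language of $\delta$-forms.

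Granting the local model \eqref{eq:local_model_claim}, the global argument proceeds as follows. First I would fix $\eta\in B_c^{n-r,n-r}(X)$ and observe, using a partition of unity (Prop.~\ref{prop:part_of_1_smooth}) subordinate to a covering of $\Supp\eta$ by opens $U_\alpha$ on which $\eta\vert_{U_\alpha}=t_{g_\alpha}^\star(\eta_\alpha)$ and simultaneously $\varphi\vert_{U_\alpha\setminus Z}$ admits a tropical presentation $\varphi\vert_{U_\alpha} = t_{f_{I_\alpha}}^*(\min\{x_i : i\in I_\alpha\})$ as in \eqref{eq:extending_phi}, that it suffices to prove the identity when $\gamma$ and $d'd''\eta$ are both pulled back from a single $\mathbb{R}^N$ via a common refinement $h_\alpha = (f_\alpha, g_\alpha)$ of toric coordinates. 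On the locus where all $f_i$ are invertible, $\varphi$ globally equals $\min\{v(f_1),\dots,v(f_r)\}$, so $\gamma = t_f^\star\bigl((-1)^{r-1}\psi(d'd''\psi)^{r-1}\bigr)$ there; away from that locus, on the piece where only the coordinates indexed by $I\subsetneq\{1,\dots,r\}$ are nonzero, the point $Z\cap U_\alpha = \emptyset$ and $\gamma$ is a genuine (codimension-$r$, but with fewer than $r$ coordinates active) $\delta$-form that contributes $0$ to the right-hand side — this is where one uses the support estimate Cor.~\ref{cor:support_delta_form} and Lem.~\ref{lem:support_lemma} to see the residual current is supported exactly on $Z$. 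Then the integral $\int_{X\setminus Z}\gamma\wedge d'd''\eta$ is computed in the tropical chart $\Sigma(K_\alpha\cap K_\beta, h_\alpha\times h_\beta)$ via the compatibility of integration with push-forward along tropicalization maps (item (4) after Def.~\ref{def:integral_delta_form}), reducing it to $\int_{\mathbb{R}^N}$ of the pulled-back local-model identity, which gives $-\int_Z\eta\vert_Z$ after pushing forward. The restriction $\eta\vert_Z$ is well-defined by Thm.~\ref{thm:pull_backs_exist} applied to the closed immersion $Z\hookrightarrow X$, and one checks the chart computation is compatible with this restriction using Prop.~\ref{prop:reg_sequence_statement} (the product structure $(C,\infty)^r\times T(Z,g)$ near $Z$), exactly as in the proof of Thm.~\ref{thm:pull_backs_exist}.

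The last sentence of the theorem — that $\delta_Z + d'd''[\gamma] = 0$ as currents on $X$, so $\gamma$ is a Green $\delta$-form — is then a formal consequence. Indeed $[\gamma]$ is a well-defined current on $X$ by Lem.~\ref{lem:Green_form_L1_property} (with $p=q=r-1 < r$, so $\min\{p,q\}<r$ is satisfied), and for $\eta\in A_c^{n-r,n-r}(X)$ one has, by the definition of the derivative of a current \eqref{eq:derivative_currents_CLD} and Prop.~\ref{prop:delta_form_as_current},
\begin{equation*}
(d'd''[\gamma])(\eta) \;=\; [\gamma](d'd''\eta) \;=\; \int_{X\setminus Z}\gamma\wedge d'd''\eta \;=\; -\int_Z \eta\vert_Z \;=\; -\,\delta_Z(\eta),
\end{equation*}
which is precisely $\delta_Z + d'd''[\gamma] = 0$. (Here the sign works out because $\deg(d'[\gamma]) = \deg[\gamma]+1 = 2r-1$ is odd and $\deg[\gamma]=2r-2$ is even, so the two sign factors in \eqref{eq:derivative_currents_CLD} combine to $+1$.) Finally, since $\gamma\in B^{r-1,r-1}(X\setminus Z)$ by construction, $[\gamma]$ is of the required form and $\gamma$ is a Green $\delta$-form for $Z$ in the sense of Def.~\ref{def:Green}.

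\textbf{Main obstacle.} The crux is \eqref{eq:local_model_claim}: establishing that the iterated Monge–Ampère-type expression $(-1)^{r-1}\psi(d'd''\psi)^{r-1}$ on $\mathbb{R}^r$ is a Green $\delta$-form for the origin. One must carefully track the combinatorics of the corner-locus operator $\mathrm{div}(\cdot) = d'd''(\cdot\wedge[\mathbb{R}^r,\mu_{\mathrm{std}}])$ applied repeatedly to (pullbacks of restrictions of) $\psi$, including the weights that appear on each skeleton stratum, and verify that no lower-dimensional or spurious contributions survive — in particular that the boundary derivatives $\partial'$, $\partial''$ cancel appropriately at each stage so that $d'd''$ of the final expression collapses to the single Dirac mass. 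I would expect an induction on $r$ using the slicing $\psi = \min\{x_1, \min\{x_2,\dots,x_r\}\}$ and the projection formula for $\delta$-forms \cite{Mih_trop_inter}*{Prop. 4.2}, with the archimedean prototype in Ex.~\ref{ex:complex_geom} as a guide, but making the bookkeeping of multiplicities rigorous in the tropical setting is the genuinely technical part.
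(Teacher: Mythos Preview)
Your local model claim \eqref{eq:local_model_claim} is false, and this is a genuine conceptual error rather than a technicality. On $\mathbb{R}^r$ with $\psi=\min\{x_1,\dots,x_r\}$, the paper's Lemma~\ref{lem:special_form} computes $(-1)^{r-1}\psi(d'd''\psi)^{r-1}=x\cdot\Delta$, where $\Delta\subset\mathbb{R}^r$ is the diagonal and $x$ is its coordinate. Since $x$ is affine linear on $\Delta$, one has $d'd''(x\cdot\Delta)=0$. Testing against any compactly supported smooth function $\Theta$ on $\mathbb{R}^r$ gives $\int_\Delta x\,\Theta''(x)\,dx=0$, not $-\Theta(0)$. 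Your base case $r=1$ already shows the problem: $\psi=x_1$ is linear on $\mathbb{R}$, so it is not a Green function for $\{0\}$. The analogy with $v(f)$ on the analytic space breaks down because $Z$ sits ``at infinity'' ($x_i\to+\infty$) in the tropical picture, not at the origin.

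The paper's proof exploits exactly this. Since $d'd''\gamma=0$, Stokes' theorem gives $\int_{X\setminus Z}\gamma\wedge d'd''\eta=0$ for any $\eta$ with compact support in $X\setminus Z$; all the content comes from the behaviour as $\phi\to\infty$. One introduces a cutoff $\psi=\rho\circ\phi$ with $\rho$ smooth, $\rho\equiv 0$ on $(-\infty,2C)$, $\rho\equiv 1$ on $(3C,\infty)$, decomposes $\eta=\psi\eta+(1-\psi)\eta$, and handles $(1-\psi)\eta$ by Stokes. For $\psi\eta$, Prop.~\ref{prop:reg_sequence_statement} gives the product structure $T(K\cap U\cap\{\phi>C\},f\times g)=(C,\infty)^r\times T(K\cap Z,g)$, and since $\Supp\gamma\subset\Sigma(U,f)$ sits over the diagonal $\Delta$, the integral reduces to the one-dimensional identity $\int_\Delta x\rho''(x)\,dx=-1$ multiplied by $\int_Z\eta\vert_Z$. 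This is the actual ``Poincar\'e--Lelong'' step, and it has no analogue purely on $\mathbb{R}^r$. For general $\delta$-form test forms (as opposed to smooth ones) there is an additional technical layer: the test form $\eta$ need not be tropically presented by the same coordinates $g_i$ that present the partition-of-unity cutoffs $\lambda_i$, so one cannot simply push forward to a single $\mathbb{R}^d$. The paper handles this via Lemma~\ref{lem:key_Green_current}, which establishes the product identity \eqref{eq:key_push_forward_identity} uniformly over varying local presentations of $\lambda$.
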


Specializing to the case $r = 1$, this theorem implies a Poincaré--Lelong formula for metrized line bundles in the sense of Thm. \ref{thm:Poincare_Lelong_GK}. We formulate it here for pws metrics which is the situation where all occurring objects have been defined. The result is more general than the pws metric case of Thm. \ref{thm:Poincare_Lelong_GK} in that it extends the validity of \eqref{eq:PL_divisors} to all $δ$-forms.

\begin{cor}\label{cor:Poincare_Lelong_general}
Let $X$ be a purely $n$-dimensional $k$-analytic space such that $\partial X = \emptyset$. Let $(L, v)$ be a line bundle with pws metric on $X$ and let $s$ be an invertible meromorphic section of $L$. Let $D = \mr{div}(s)$ be its divisor and let $ϕ = v(s\vert_{X\setminus \Supp(D)})$ be its valuation. Then $c_1(L, v)$ is a $δ$-form and for every $η \in B_c^{n-1,n-1}(X)$,
\begin{equation}\label{eq:PL_extended}
\int_{X\setminus Z} ϕ \wedge d'd''η + δ_D(η) = \int_X c_1(L, v) \wedge η.
\end{equation}
\end{cor}
\begin{proof}
Identity \eqref{eq:PL_extended} can be checked locally on $X$. We can thus assume that $L$ is trivial and that there are regular sections $f,g\in \mcO_X(X)$ such that $D = \mr{div}(f) - \mr{div}(g)$. In this case, $ϕ = (v(f) - v(g) + ψ)\vert_{X\setminus \Supp(D)}$ for a pws function $ψ$ on $X$, and $c_1(L, v) = d' d'' ψ$. The corollary now follows directly from definitions and from \eqref{eq:poincare_lelong_higher_codim}.
\end{proof}

We come to the proof of Thm. \ref{thm:Green_form_complete_inter}. We first explain how to obtain \eqref{eq:poincare_lelong_higher_codim} for smooth test forms $η$. The full proof is more technical and will be given afterwards; its central idea however will be the same. The next lemma explains why the form $γ = (-1)^{r-1}ϕ(d'd'' ϕ)^{r-1}$ occurs.

\begin{lem}\label{lem:special_form}
Consider the pwl function $φ := \min\{x_1,\ldots,x_r\} \in B^{0,0,0}(\mbR^r)$. Then
$$(-1)^{r-1}φ (d'd''φ)^{r-1} = x\cdot \Delta,$$
where $\Delta \subseteq \mbR^r$ is the diagonal $\{x_1 = \ldots = x_r\}$ (with standard weight) and $x$ any of the coordinates functions.
\end{lem}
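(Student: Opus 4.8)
The strategy is to exploit the translation invariance of $\varphi$ along the diagonal and thereby reduce $(d'd''\varphi)^{r-1}$ to a standard self-intersection of a tropical hyperplane. Write $v_0 = (1,\ldots,1)\in\mbR^r$, so that $\Delta = \mbR v_0$, and let $\pi\colon\mbR^r\to\mbR^{r-1}$, $x\mapsto(x_1-x_r,\ldots,x_{r-1}-x_r)$, which is a surjective linear map of tropical spaces with $\ker\pi=\Delta$, hence flat. Set $h:=\min\{0,y_1,\ldots,y_{r-1}\}\in B^{0,0,0}(\mbR^{r-1})$. Then $\varphi = x_r + h\circ\pi$, and since $x_r$ is affine linear it is annihilated by $d'd''$; since $d'd''$ moreover commutes with the flat pullback $\pi^*$ (Prop.~\ref{prop:derivatives_delta_form}, Prop.~\ref{prop:pullback_delta_form}), this gives $d'd''\varphi = \pi^*(d'd''h)$.

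First I would compute $d'd''h$ by the corner locus formula \eqref{eq:corner_locus_defn} on the fan of linearity regions $\{y_i\le y_j\ \forall j\}$ of $h$ (with $y_0:=0$): each codimension-one cone, where the minimum defining $h$ is attained exactly twice, receives multiplicity $-1$ with respect to its standard weight, so $d'd''h = -L$, where $L\subseteq\mbR^{r-1}$ denotes the standard tropical hyperplane (the effective codimension-one fan cycle supported on the non-differentiability locus of $h$, with all weights $1$). Since flat pullback commutes with the $\wedge$-product, this yields
\begin{equation*}
(d'd''\varphi)^{r-1} = \pi^*\big((-L)^{\wedge(r-1)}\big) = (-1)^{r-1}\,\pi^*\big(L^{\wedge(r-1)}\big).
\end{equation*}
Here $L^{\wedge(r-1)}$ is a fan cycle of dimension $0$, hence supported at the origin, and its multiplicity is the degree of a stable intersection of $r-1$ tropical hyperplanes in $\mbR^{r-1}$, which equals $1$ by tropical Bézout; thus $L^{\wedge(r-1)} = [\{0\},\mu_{\mr{std}}]$. (Alternatively one proves this standard fact by induction, replacing one factor at each step by a generic coordinate hyperplane via the fan displacement rule of \cite{Mih_trop_inter}.) Finally $\pi^{-1}(0)=\Delta$, and a direct check with Constr.~\ref{constr:fiber_weight} — the lift $e_1\wedge\cdots\wedge e_{r-1}$ of $\mu_{\mr{std}}$ on $\mbR^{r-1}$ satisfies $e_1\wedge\cdots\wedge e_{r-1}\wedge v_0 = e_1\wedge\cdots\wedge e_r$ — shows that the fiber weight of $\pi$ along $\Delta$ is the one generated by $v_0$, i.e.\ the standard weight, so $\pi^*[\{0\},\mu_{\mr{std}}] = [\Delta,\mu_{\mr{std}}]$. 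Hence $(d'd''\varphi)^{r-1} = (-1)^{r-1}[\Delta,\mu_{\mr{std}}]$.

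Combining, the two signs $(-1)^{r-1}$ cancel and $(-1)^{r-1}\varphi(d'd''\varphi)^{r-1} = \varphi\wedge[\Delta,\mu_{\mr{std}}]$. By \eqref{eq:def_wedge_pws_form}, the $\wedge$-product of the pws function $\varphi$ with this integration current is $(\varphi|_\Delta)\wedge[\Delta,\mu_{\mr{std}}]$, and on $\Delta = \{x_1=\cdots=x_r\}$ one has $\varphi = \min\{x_1,\ldots,x_r\} = x_1 = \cdots = x_r = x$ for any coordinate $x$; therefore the expression equals $x\cdot[\Delta,\mu_{\mr{std}}] = x\cdot\Delta$, as claimed. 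The main obstacle — and the only nontrivial input — is the self-intersection identity $L^{\wedge(r-1)} = [\{0\},\mu_{\mr{std}}]$; everything else is sign and weight bookkeeping (flatness of $\pi$, the sign in $d'd''h=-L$, and the normalization of the fiber weight), together with a routine corner-locus computation.
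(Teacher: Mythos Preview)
Your argument is correct, but it follows a different route from the paper's own proof. The paper works directly on $\mbR^r$: it introduces the polyhedral fan $\mcC=\{\sigma_I\}_I$ subordinate to $\varphi$, then uses the fan displacement rule inductively to show that $\Supp(d'd''\varphi)^k\subseteq|\mcC^{\ge k}|$; for $k=r-1$ this forces the support into the diagonal, and a direct coefficient check gives $(-1)^{r-1}$. Your approach instead factors through the projection $\pi$ along $\Delta$, recognises $d'd''\varphi$ as the pullback of $-L$ (the standard tropical hyperplane in $\mbR^{r-1}$), and then invokes the tropical B\'ezout identity $L^{\wedge(r-1)}=[\{0\},\mu_{\mr{std}}]$. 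This is more conceptual and makes the coefficient $(-1)^{r-1}$ transparent, at the cost of importing slightly more machinery (flat pullback and its compatibility with $\wedge$ and $d'd''$; the self-intersection of $L$). The paper's argument is more self-contained but leaves the coefficient as ``an easy computation''. Two small remarks: since you are working entirely inside $B(\mbR^r)$ and $B(\mbR^{r-1})$, the compatibilities you need (pullback along a linear surjection commutes with $\wedge$ and with $d',d''$) are already basic properties of $\delta$-forms on $\mbR^n$ from \cite{Mih_trop_inter}, so you need not pass through the tropical-space formalism; and your weight check $\pi^*[\{0\},\mu_{\mr{std}}]=[\Delta,v_0]$ indeed matches the paper's convention that the ``standard weight'' on $\Delta$ is the one with generator $v_0=(1,\dots,1)$.
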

\begin{proof}
For each $\emptyset \neq I\subseteq \{1,\ldots,r\}$, consider the polyhedron
\begin{equation}\label{eq:min_special}
σ_I = \{ x\in \mbR^r \mid φ(x) = x_i\text{ for all }i\in I\}.
\end{equation}
These polyhedra satisfy $σ_I\cap σ_J = σ_{I\cup J}$ and hence make up a polyhedral complex $\mcC$. Furthermore, $φ\vert_{σ_I}$ is linear for every $I$, so $\mcC$ is subordinate to $φ$. Finally, $\dim σ_I = r + 1 - |I|$. 

Then $\mcC^{\geq 1}$ is subordinate to the corner locus $d'd''φ$, see e.g. \cite{GK}*{Def. 1.10} for the formulas. Applying the fan displacement rule (see e.g. \cite{Mih_trop_inter}*{Prop. 4.21}) successively shows that $\mcC^{\geq k}$ is subordinate to $(d'd''φ)^k$. Specialized to $k = r-1$, we see that $\Supp (d'd''φ)^{r-1} \subseteq \Delta$. It is now only left to show that its coefficient is $(-1)^{r-1}$, which is an easy computation.
\end{proof}

Thus we see that $\Supp γ \subseteq \{ x \mid v(f_i(x)) = v(f_j(x)) \text{ for all }i,j\}$ is contained in $U = X\setminus V(f_1\cdots f_r)$ and has the presentation
\begin{equation}\label{eq:presentation_gamma}
γ = t_f^\star(x\cdot \Delta).
\end{equation}

Denote by $ρ:\mbR\to \mbR$ any smooth function with $ρ(x) = 0$ for $x < 2C$ and $ρ(x) = 1$ for $x > 3C$ for some constant $C>0$. Let $T$ be some pwl space of dimension $n-r$ and $η\in PS^{n-r,n-r}_c(T)$. Then
\begin{equation}\label{eq:PL_key_identity}
\int_{T} η = - \int_{Δ\times T} p_1^*(x) \wedge d_P'd_P''(p_1^*ρ \wedge p_2^*η).
\end{equation}
Namely $d_P'η = d_P''η = 0$ for dimension reasons, so the integrand on the right hand side equals $p_1^*(x) \wedge d_P'd_P''(p_1^*ρ) \wedge p_2^*η$. Applying Fubini's Theorem and the identity $\int_\mbR x ρ''(x) dx = -1$ shows equality of both sides.

Our aim is to reduce \eqref{eq:poincare_lelong_higher_codim} (for a smooth compactly supported $(n-r,n-r)$-form $η$) to \eqref{eq:PL_key_identity} which will rely on Prop. \ref{prop:reg_sequence_statement}. Pick any finite covering by the interiors of affinoid domains, $\Supp η \subseteq \bigcup_{i\in I} K_i^\circ$, such that each restriction $η\vert_{K_i} = t_{g_i}^\star(ω_i)$ is presentable in toric coordinates. The inclusion-exclusion formula \eqref{eq:inclusion_exclusion} states that
\begin{equation}\label{eq:integral_decomp_Z}
\int_Z η\vert_Z = \sum_{J\subseteq I} (-1)^{|J|+1} \int_{\Sigma(Z \cap K_J, g_{i(J)})} t_{g_{i(J)}}^*(ω_{i(J)})
\end{equation}
where $K_J = \bigcap_{j\in J} K_j$ and $i(J)\in J$ a fixed arbitrary choice. Let $C > 0$ be a constant such that for all $J$,
\begin{equation}\label{eq:factorization}
T(U \cap K_J \cap  \{ ϕ > C\}, g_{i(J)}) = (C,\infty)^r \times T(Z\cap K_J, g_{i(J)})
\end{equation}
as in Prop. \ref{prop:reg_sequence_statement}. Choose a smooth function $ρ:\mbR\to \mbR$ with $ρ(x) = 0$ for $x<2C$ and $ρ(x) = 1$ for $x>3C$ and consider the pws function $ψ = ρ \circ ϕ$ on $U$. It extends as a pws function to all of $X$ and we obtain a decomposition $η = ψη + (1-ψ)η$. Then $(1-ψ)η$ lies in $B_c^{n-r,n-r}(X\setminus Z)$. A twofold application of Stokes' Theorem (Prop. \ref{prop:delta_form_as_current}) together with the vanishing $d'd''γ = t_f^\star(d'd''(x\Delta)) = 0$ and the relation $\Supp(γ)\subseteq U$ provides
\begin{equation}\label{eq:Green_application_Stokes}
\int_X γ \wedge d'd''\big((1-ψ)η\big) = \int_X d'd''γ \wedge (1-ψ)η = 0.
\end{equation}
Furthermore, $δ_Z\big((1-ψ)η\big) = 0$ because $Z \cap \Supp ((1-ψ)η) = \emptyset$. This proves \eqref{eq:poincare_lelong_higher_codim} for the test form $(1-ψ)η$ and it remains to consider the case of $ψη$. By definitions and Cor. \ref{cor:support_delta_form} (2),
\begin{equation}\label{eq:simple_proof}
\int_U γ\wedge d'd''(ψη)
= \sum_{J\subseteq I} (-1)^{|J|+1} \int_{\Sigma(U\cap K_J, f\times g_{i(J)})} γ\wedge d'd''(ψη)\vert_{\Sigma(U\cap K_J, f\times g_{i(J)})}.
\end{equation}

The choice of $C$ has the property that, for every $j$ and $ε >0$, there is the upper bound $\dim t_{g_j}(K_{j, \geq C+ε}) \leq \dim Z = n-r$. Moreover, $η$ is an $(n-r,n-r)$-form. It follows that
$$d'η\vert_{K_j \cap \{ϕ > C \}} = d''η\vert_{K_j\cap \{ϕ >C\}} = 0.$$
Since $\Supp ψ \subseteq \{ϕ >C\}$, we obtain that $d'd''(ψη) = (d'd''ψ)\wedge η$. The right hand side of \eqref{eq:simple_proof} is hence
\begin{equation}\label{eq:simple_proof_almost}
\sum_{J\subseteq I} (-1)^{|J|+1} \int_{\Sigma(U\cap K_J, f\times g_{i(J)})} γ \wedge d'd''(ψ) \wedge t_{g_{i(J)}}^*ω_{i(J)}.
\end{equation}
The space $U\cap K_J$ has boundary, so the restriction $γ\vert_{\Sigma(U\cap K_J, f\times g_{i(J)})}$ cannot be computed directly. However, after possibly enlarging $C$, we may assume that $t_{f\times g_{i(J)}}(U\cap \partial K_J) \subseteq \mbR^r \times P$ for a polyhedral set $P$ of dimension $\leq n-r-1$, see \eqref{eq:polyhedra_containment_boundary}. The restriction $ω_{i(J)}\vert_P$ vanishes for degree reasons. One may then deduce that the $J$-summand of \eqref{eq:simple_proof_almost} equals
\begin{equation}\label{eq:subtle_for_delta}
(-1)^{|J|+1}\int_{\Delta \times T(Z\cap K_J, f\times g_{i(J)})} p_1^*(xρ''(x)) \wedge p_2^*(ω_{i(J)}).
\end{equation}
By \eqref{eq:PL_key_identity}, this is
$$(-)^{|J|}\int_{T(Z \cap K_J, g_{i(J)})} ω_{i(J)},$$
which equals the negative of the $J$-summand of \eqref{eq:integral_decomp_Z}. The proof is complete.

The remainder of this section is devoted to the full proof of Thm. \ref{thm:Green_form_complete_inter}. The complication in the general case is that identity \eqref{eq:integral_decomp_Z} cannot be formulated so easily. Namely it was silently used for \eqref{eq:integral_decomp_Z} that the restriction of $η$ to the skeleton $\Sigma'(Z\cap K_J, g_{i(J)})$ is contained in the purely $(n-r)$-dimensional locus and moreover given by the pullback of $ω_{i(J)}$ as pws form along the pwl map $t_{g_{i(J)}}$. In particular, only the affinoid $Z\cap K_J$ and its toric coordinates $g_{i(J)}$ intervene. For a general $δ$-form however, we would have to compute $η\vert_{\Sigma'(Z\cap K_J, g_{i(J)})}$ by also using all the other charts $(K_j^\circ, g_j)$, and this gets more complicated.

In a similar vein, the vanishing argument that brought us from \eqref{eq:simple_proof_almost} to \eqref{eq:subtle_for_delta} does not apply if $ω_{i(J)}$ is not pws.

So instead of the inclusion-exclusion formula, we approach the problem with a partition of unity argument that keeps us in the interiors $K_j^\circ$.

\begin{proof}[Proof of Thm. \ref{thm:Green_form_complete_inter}.]
We begin with a simple observation. Let $K$ be an affinoid space, $g:K\to \mbG_m^d$ toric coordinates and $z\in \Sigma(K \setminus \partial K, g)$. Then $z$ has an affinoid neighborhood $K'$ in $K$ such that $t_g^{-1}(t_g(z)) \cap \Sigma(K',g) = \{z\}$. In particular, since $z\notin \partial K'$, also $t_g(z)\notin t_g(\Sigma(K', g) \cap \partial K')$. Indeed, the set $B = t_g^{-1}(t_g(z)) \cap \Sigma(K, g) \setminus \{z\}$ is finite and hence closed in $K$. Then any affinoid neighborhood $K'$ of $z$ in $K \setminus B$ has the required property.

Let $η\in B_c^{n-r,n-r}(X)$ and consider some $z\in \Supp (η\vert_Z)$. Since $η$ has bidegree $(n-r, n-r)$, this point is Abhyankar in $Z$ by Cor. \ref{cor:support_delta_form} (1). Let $K$ be an affinoid neighborhood of $z$ in $X$ such that there are toric coordinates $g:K\to \mbG_m^d$ and a $δ$-form $ω\in B^{n-r, n-r}(\mbR^d)$ with $η\vert_{K\setminus \partial K} = t_g^\star(ω)$. In particular, $z$ lies on $\Sigma(K\cap Z,g)$ by Cor. \ref{cor:support_delta_form} (2). By the above observation, we may choose $K$ such that $t_g^{-1}(t_g(z)) \cap \Sigma(K\cap Z, g) = \{z\}$.

As $\Supp η$ is compact, we only need to apply this construction to finitely many $z$ to obtain a finite family $(K_i, d_i, g_i, ω_i)_{i\in I}$ of affinoid domains $K_i$ in $X$, toric coordinates $g_i:K_i \to \mbG_m^{d_i}$ and $δ$-forms $ω_i\in B^{n-r, n-r}(\mbR^{d_i})$ with the properties
\begin{enumerate}[leftmargin=*, label=(\arabic*), topsep=4pt, itemsep=4pt]
\item $η\vert_{K_i\setminus \partial K_i} = t_{g_i}^\star(ω_i)$
\item $\Supp η\vert_Z \subseteq \bigcup_{i\in I} V_i$ where $V_i = K^\circ_i\setminus t_{g_i}^{-1}(t_{g_i}(\Sigma(K_i\cap Z, g_i) \cap \partial K_i)).$
\end{enumerate}
Let $(λ_i)_{i\in I}$ be a smooth partition of unity for a neighborhood (in $X$) of $\Supp η\vert_Z$ that is subordinate to all $V_i$. By this we mean that $\Supp λ_i \subseteq V_i$ for all $i\in I$ and that $ε = \sum_{i\in I} λ_i$ satisfies $ε \equiv 1$ on a neighborhood of $\Supp (η\vert_Z)$. Then the $λ_iη$ have the crucial property that $\Supp (λ_iη\vert_Z) \subseteq \Sigma(K_i\cap Z, g_i)$ lies above $T(K_i\cap Z, g_i) \setminus t_{g_i}(\Sigma(K_i \cap Z) \cap \partial K_i)$. This is the locus in $T(K_i\cap Z, g_i)$ which is a tropical cycle (Cor. \ref{cor:skeleton_is_tropical}).

\begin{lem}[Key local computation]\label{lem:key_Green_current}
Write $(K,d,g,ω,λ)$ in place of any of the above quintuples $(K_i, d_i, g_i, ω_i, λ_i)$. There exists a constant $C > 0$ such that, for every $δ$-form $α\in B(\mbR^r)$, there is the identity of polyhedral currents on $(C,\infty)^r\times \mbR^d$
\begin{equation}\label{eq:key_push_forward_identity}
t_{f\times g, *}\big( (t_f^\star(α)\wedge λη)\vert_{\Sigma(K\cap U\cap \{ϕ > C\}, f\times g)}\big) = \big(α\vert_{(C,\infty)^r}\big) \times t_{g,*}\big(λη\vert_{\Sigma(K\cap Z, g)}\big).
\end{equation}
\end{lem}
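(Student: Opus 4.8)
The identity \eqref{eq:key_push_forward_identity} is a localized and relative version of the unweighted computation already carried out in the smooth case via Prop.~\ref{prop:reg_sequence_statement}, Lem.~\ref{lem:special_form} and the presentation \eqref{eq:presentation_gamma}. The plan is first to reduce the left hand side to a statement about tropical cycles on $\mbR^r\times\mbR^d$, using the definition of $t_f^\star$, the presentation $γ = t_f^\star(x\cdot\Delta)$ (more precisely its variant for arbitrary $α$), and the compatibility of $δ$-forms with tropicalization charts (Lem.~\ref{lem:delta_vanishing} and the last bullet point before Prop.~\ref{prop:delta_form_as_current}, i.e.\ $\int_Kω = \int_{\mbR^r}T(K,f)\wedge γ$ and its current-valued analog). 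Concretely, since $\Supp(λη)\subseteq K^\circ$ lies over $T(K\cap Z, g)\setminus t_g(\Sigma(K\cap Z,g)\cap\partial K)$ by the choice of $(K,g,ω,λ)$, the product $t_f^\star(α)\wedge λη$ is a $δ$-form of top bidegree supported on the skeleton $\Sigma(K\cap U, f\times g)$; its restriction to $\Sigma(K\cap U\cap\{ϕ>C\}, f\times g)$ may then be pushed forward to $\mbR^r\times\mbR^d$ along $t_{f\times g}$, and by the defining property \eqref{eq:realization} of $t_f^\star$ this push forward equals the $\wedge$-product of the tropical cycle $T(K\cap U\cap\{ϕ>C\}, f\times g)$ with $p_1^*(α)$ and with the push forward of $λη$.

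Second, I would invoke Prop.~\ref{prop:reg_sequence_statement} to choose the constant $C>0$ so that
\[
T(U\cap K\cap\{ϕ>C\},\ f\times g) = (C,\infty)^r\times T(Z\cap K,\ g)
\]
as weighted polyhedral sets; here I must also enlarge $C$ if necessary so that $t_{f\times g}(U\cap\partial K)\subseteq \mbR^r\times P$ with $\dim P\le n-r-1$, using \eqref{eq:polyhedra_containment_boundary}, which guarantees that the boundary contribution does not meet the support of $λη$ over the relevant locus. With this product structure in hand, the computation becomes the elementary fact that for a tropical cycle $T$ and a $δ$-form $α$ one has $\big((C,\infty)^r\times T\big)\wedge p_1^*(α) = \big(α\vert_{(C,\infty)^r}\big)\times T$, which is exactly the trivial identity used to pass from \eqref{eq:simple_proof_almost} to \eqref{eq:subtle_for_delta} in the smooth case, now applied with $T = t_{g,*}(λη\vert_{\Sigma(K\cap Z,g)})$. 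Combining the two steps gives \eqref{eq:key_push_forward_identity}.

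The main obstacle I anticipate is bookkeeping of supports and boundaries: one has to check carefully that over the open region $\{ϕ>C\}$ the restriction $\big(t_f^\star(α)\wedge λη\big)\vert_{\Sigma(K\cap U\cap\{ϕ>C\}, f\times g)}$ is genuinely computed by the product structure from Prop.~\ref{prop:reg_sequence_statement} and that no contribution from $t_{f\times g}(U\cap\partial K)$ survives after intersecting with the support of $λη$. This is where the specific choice of $K$ — namely $t_g^{-1}(t_g(z))\cap\Sigma(K\cap Z,g)=\{z\}$ and $\Supp(λη\vert_Z)\subseteq V_i$ — is essential: it ensures $\Supp(λη\vert_Z)$ stays away from $t_g(\Sigma(K\cap Z,g)\cap\partial K)$, so that after enlarging $C$ the support of the left hand side of \eqref{eq:key_push_forward_identity} is bounded away from $\mbR^r\times P$. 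Once that is established, the remaining verification is the purely combinatorial product formula for tropical cycles on $\mbR^r\times\mbR^d$, which is routine given the $\wedge$-calculus of $δ$-forms on $\mbR^N$ from \cite{Mih_trop_inter}. I would also note that, since both sides of \eqref{eq:key_push_forward_identity} are linear and trihomogeneous in $α$, it suffices to treat trihomogeneous $α$, which simplifies the degree-counting in the vanishing argument.
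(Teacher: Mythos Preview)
Your proposal has a genuine gap at the step where you invoke the defining property \eqref{eq:realization} of $t_f^\star$. That property lets you compute the push forward of something of the form $t_{f\times g}^\star(\text{--})$ in tropical charts; but the form $t_f^\star(\alpha)\wedge\lambda\eta$ is \emph{not} of this type unless $\lambda$ happens to be presentable in the coordinates $g$ on all of $K$. You have $\eta = t_g^\star(\omega)$, so if $\lambda = t_g^*(\lambda')$ for some smooth $\lambda'$ on $\mbR^d$, then indeed $\lambda\eta = t_g^\star(\lambda'\omega)$ and your argument goes through exactly as written --- this is precisely what the paper singles out in the paragraph following the statement of the lemma as the easy case. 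The whole point of the lemma, however, is that $\lambda$ was chosen as a member of a partition of unity subordinate to the covering $(V_i)_{i\in I}$, and there is no reason for it to admit a global presentation in $g$. Your bookkeeping paragraph anticipates difficulties with boundaries and supports, but not this one; the conditions on $K$ you cite (the fiber $t_g^{-1}(t_g(z))\cap\Sigma(K\cap Z,g)=\{z\}$, and $\Supp(\lambda\eta\vert_Z)\subseteq V_i$) do not help with presentability of $\lambda$.

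The paper's remedy is a local patching over fibers of $t_g$. Fixing a point $t$ in the support of the right-hand side, for each $s$ in the finite fiber $F_t = t_g^{-1}(t)\cap\Sigma(K\cap Z,g)$ one chooses an affinoid neighborhood $H_s$ on which $\lambda$ \emph{does} admit a presentation $\lambda\vert_{H_s} = t_{h_s}^*(\lambda_s)$ via auxiliary coordinates $h_s:H_s\to\mbG_m^{e_s}$. On each $H_s$ one applies Prop.~\ref{prop:reg_sequence_statement} with the enlarged coordinates $f\times h_s\times g$, obtaining the analog of \eqref{eq:key_push_forward_identity} on an open set in $\mbR^r\times\mbR^{e_s}\times\mbR^d$. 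One then pushes forward along the projection $p_{13}$ forgetting the $h_s$-coordinates, verifies (Claims 1 and 2 in the paper's proof) that this push forward has proper support over a neighborhood $W_{t,s}$ of $t$ and avoids the boundary images, sums over $s\in F_t$, and shrinks to a common neighborhood $W_t$. Compactness of $\Supp(\lambda\eta)$ then yields a uniform constant $C$. None of this localization in the auxiliary $h_s$-directions appears in your proposal; without it, the appeal to \eqref{eq:realization} is unjustified.
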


Assume for a moment that $λ$ can be presented in the tropical coordinates $(g_1,\ldots, g_d)$, i.e. assume that there exists a smooth function $λ'$ on $\mbR^d$ such that $λ = t_g^*(λ')$. Then \eqref{eq:key_push_forward_identity} follows directly from Prop. \ref{prop:reg_sequence_statement} and the projection formula. The technical point of Lem. \ref{lem:key_Green_current} is that $λ$ admits such a presentation only locally.

Our proof below is of the following kind. For each point $s\in \Sigma(K\cap Z, f\times g)$, we pick a tropical presentation of $λ$ near $s$. By Prop. \ref{prop:reg_sequence_statement}, there is a $C>0$ such that this presentation is ``homogeneous in direction of each function $v(f_i)$'' as long as $v(f_i)>C$. Taking suitable pushforwards and sums then gives statement \eqref{eq:key_push_forward_identity}.

\begin{proof}
Both sides of \eqref{eq:key_push_forward_identity} are polyhedral currents on $(C,\infty)^r\times \mbR^d$, so the claimed identity may be obtained locally in the following sense. Write $T = T(K\cap Z, g)$ which agrees with $t_{g,*}(\Sigma(K\cap Z, g))$ by \eqref{eq:rel_trop_skeleton}. Fix a point $t$ in the support of $t_{g,*}(λη\vert_{\Sigma(K\cap Z,g)})$, which is some polyhedral current on $T$. Our aim is to find $C_t> 0$ and a neighborhood $W_t$ of $t$ such that \eqref{eq:key_push_forward_identity} holds on $(C_t, \infty)^r \times W_t$. Because $t_g(\Supp(λη\vert_{\Sigma(K\cap Z,g)}))$ is compact, we can then cover it by finitely many such $W_t$, take $C$ as the maximum of the corresponding $C_t$, and the proof is complete.

The fiber $F_t = t_g^{-1}(t) \cap \Sigma(K\cap Z, g)$ is a finite set that is disjoint from $\partial K$ by choice of $(K,g,λ)$. Each $s\in F_t$ has an affinoid neighborhood $H_s$ in $K$ such that $λ\vert_{H_s}$ has a presentation $t_{h_s}^*(λ_s)$ for some toric coordinates $h_s:H_s\to \mbG_m^{e_s}$. We choose a family of such triples $(H_s, h_s, λ_s)_{s\in F_t}$ subject to the following two conditions.
\begin{enumerate}[leftmargin=*, label=(\arabic*), topsep=4pt, itemsep=4pt]
\item $H_s \cap H_{s'} = \emptyset$ for all pairs $s,s'\in F_t$, $s\neq s'$ and
\item $t_{h_s\times g}^{-1}(t_{h_s\times g}(s)) \cap \Sigma(H_s\cap Z, h_s \times g) = \{s\}$.
\end{enumerate}
Put $\Sigma_s = \Sigma(H_s\cap Z, h_s\times g)$ and $T_s = T(H_s\cap Z, h_s\times g)$.
Condition (2) implies that $t_{h_s\times g}(s)\notin t_{h_s\times g}(\Sigma_s \cap \partial H_s)$. Then the defining identity of $δ$-forms \eqref{eq:realization} states that on a neighborhood $W_s\subseteq \mbR^{e_s + d}$ of the image $t_{h_s\times g}(s)$,
$$t_{h_s\times g,*}\big(λη\vert_{\Sigma_s}\big) = p_1^*λ_s \wedge p_2^*ω \wedge T_s.$$
Recall that $U = X\setminus V(f_1\cdots f_r)$. Prop. \ref{prop:reg_sequence_statement} states that there exists some $C_s > 0$ such that
\begin{equation}\label{eq:factorization_III}
T(H_s \cap U \cap \{ϕ >C_s\}, f\times h_s\times g) = (C_s, \infty)^r \times T_s.
\end{equation}
\emph{Claim 1: We may choose $W_s$ and $C_s$ to also satisfy
$$t_{f\times h_s\times g}\big(\Sigma(H_s\cap U \cap \{ϕ> C_s\}, f\times h_s\times g) \cap \partial H_s) \cap \big((C_s,\infty)^r\times W_s\big) = \emptyset.$$}
\begin{proof}[Proof of Claim 1.]
Let $q:\mbG_m^{e_s}\times \mbG_m^d\to \mbG_m^{n-r}$ be a projection that is generic for $T_s$ and satisfies
$$t_{q\circ (h_s, g)}(s) \notin t_{q\circ(h_s,g)}(\Sigma_s \cap \partial H_s).$$
It was established in \eqref{eq:flatness_reg_immersion} that $(f,q\circ (h_s,g)):H_s \to \mbA^r\times \mbG_m^{n-r}$ is finite flat over an open neighborhood $Q$ of $0\times q(h_s(s),g(s))$. For any such $Q$, the set $(f,q\circ (h_s,g))(\partial H_s)\cap Q$ is closed in $Q$ and does not meet $0\times q(h_s(s),g(s))$ because finite maps are closed and inner. Choose a compact neighborhood $P$ of $t_{q\circ (h_s,g)}(s)\in \mbR^{n-r}$ such that $0\times \Sigma(P) \subseteq Q$ and such that moreover $(0\times \Sigma(P)) \cap (f, q\circ (h_s,g))(\partial H_s) = \emptyset$. Here, $\Sigma(P) = \mr{trop}^{-1}(P)\cap \Sigma(\mbG_m^{n-r})$ denotes the subset of the standard skeleton of $\mbG_m^{n-r}$ that is obtained from the standard tropicalization map $\mr{trop}:\mbG_m^{n-r}\to \mbR^{n-r}$, and $0\times \Sigma(P)\subseteq \mbA^r\times \mbG_m^{n-r}$ denotes the product.

Due to the compactness of $P$, there exists a constant $C_s > 0$ such that also
$$\Sigma((C_s,\infty)^r \times P) \cap (f,q\circ (h_s,g))(\partial H_s) = \emptyset.$$
Replacing $W_s$ by $W_s \cap q^{-1}(P^\circ)$ proves the claim.
\end{proof}

Let $C_s$ and $W_s$ be chosen as in Claim 1 from now on. Identity \eqref{eq:realization} then applies to $t_{f\times h_s\times g}$ over $(C_s,\infty)^r\times W_s$ and we conclude that for any $α\in B(\mbR^r)$, the following identity of $δ$-forms on $(C_s,\infty)^r\times W_s$ holds:
\begin{multline}\label{eq:factorization_IV}
t_{f\times h_s\times g,*}\big(t_f^\star(α)\wedge λη\vert_{\Sigma(H_s\cap U\cap \{ϕ>C_s\}, f\times h_s \times g)}\big) \\ = \big(α\vert_{(C_s,\infty)^r}\big) \times \big(t_{h_s\times g,*}\left(λη\vert_{\Sigma_s}\right)\vert_{W_s}\big).
\end{multline}
Our original claim \eqref{eq:key_push_forward_identity} is an identity on an open subset of $\mbR^r\times \mbR^d$ while \eqref{eq:factorization_IV} is an identity on an open subset of $\mbR^r\times \mbR^{e_s}\times \mbR^d$. (The middle factor resulted from our need to present $λ\vert_{H_s}$.) Our next aim is to pushforward along the projection
\begin{equation}\label{eq:projection_s}
p_{13}:\mbR^r\times \mbR^{e_s} \times \mbR^d \lr \mbR^r\times \mbR^d.
\end{equation}
The next statement concerns the second projection $p_2:\mbR^{e_s}\times \mbR^d\to \mbR^d$.

\emph{Claim 2: There exists an open neighborhood $W_{t,s} \subseteq \mbR^d$ of our initially fixed point $t\in T$ such that the restriction of \eqref{eq:factorization_IV} to $(C_s, \infty)^r \times (W_s \cap p_{2}^{-1}(W_{t,s}))$ has proper support over $(C_s, \infty)^r\times W_{t,s}$.}
\begin{proof}[Proof of the claim.]
This claim only concerns the rightmost factor of \eqref{eq:factorization_IV}. The form $λη\vert_{\Sigma_s}$ actually has its support on $H_s \cap \Sigma(K\cap Z, g)$ by Cor. \ref{cor:support_delta_form} (2). Under $t_g$, this intersection maps with finite fibers to $\mbR^d$ because already $t_g\vert_{\Sigma(K\cap Z, g)}$ has finite fibers. This means that $t_{h_s\times g}(H_s\cap \Sigma(K\cap Z, g))$ is proper over $\mbR^d$ with respect to $p_2:\mbR^{e_s}\times \mbR^d \to \mbR^d$. A fortiori, the same holds for $\Supp t_{h_s\times g,*}(λη\vert_{\Sigma(H_s \cap Z, h_s \times g)})$.

Now we use our assumption (1) on $H_s$ for the first time. It implies that
$$t_{h_s\times g}(H_s \cap \Sigma(K\cap Z, g)) \cap p_2^{-1}(t) = t_{h_s\times g}(s).$$
By the established properness, this means we find a neighborhood $W_{t,s}$ of $t$ in $t_g(\Sigma(K\cap Z, g))$ such that
$$p_2^{-1}(W_{t,s}) \cap t_{h_s\times g}(H_s \cap \Sigma(K\cap Z, g)) \subseteq W_s.$$
Any such $W_{t,s}$ has the property of the claim, finishing the proof.
\end{proof}

Let $W_{t,s} \subseteq \mbR^d$ be chosen as in Claim 2 from now on. Let $A_s = B_s$ denote the identity of polyhedral currents on $(C_s,\infty)^r\times W_{t,s}$ obtained by pushforward of \eqref{eq:factorization_IV} along the projection from \eqref{eq:projection_s}
$$p_{13}:(C_s,\infty)^r\times (W_s\cap p_2^{-1}(W_{t,s}))\lr (C_s,\infty)^r \times W_{t,s}.$$
Recall that we defined $F_t = t_g^{-1}(t) \cap \Sigma(K\cap Z, g)$. Put $W'_t = \bigcap_{s\in F_t} W_{t,s}$ and $C_t = \max\{C_s,\ s\in F_t\}$. Taking sums and restrictions, we obtain an identity of polyhedral currents on $(C_t,\infty)^r\times W'_t$,
\begin{equation}\label{eq:AB}
\sum_{s\in F_t} A_s = \sum_{s\in F_t} B_s.
\end{equation}
By construction, the union $H_t = \bigsqcup_{s\in F_t} H_s$ is an affinoid neighborhood of $F_t$ and the (restriction of the) tropicalization map $t_g:H_t \cap \Sigma(K\cap Z, g) \to \mbR^d$ is a proper map with fiber $F_t$ over $t$. There hence exists an open neighborhood $W_t\subseteq W'_t$ of $t$ with the property $t_g^{-1}(W_t) \cap \Sigma(K\cap Z) \subseteq H_t$. The restriction of \eqref{eq:AB} to $(C_t, \infty)^r\times W_t$ is precisely \eqref{eq:key_push_forward_identity} restricted to $(C_t,\infty)^r\times W_t$.

Since $\Supp λη$ is compact, finitely many $W_t$ cover $\Supp t_{g,*}(λη\vert_{\Sigma(K\cap Z, g)})$. Taking the maximum of the corresponding $C_t$ provides the constant $C$ in Lemma \ref{lem:key_Green_current} and finishes its proof.
\end{proof}

We resume the proof of Thm. \ref{thm:Green_form_complete_inter}. For each $i\in I$, let $C_i$ be a constant for $(K_i, d_i, g_i, ω_i, λ_i)$ as in Lem. \ref{lem:key_Green_current}, put $C' = \max\{C_i,\ i\in I\}$. Recall that we have defined $ε = \sum_{i\in I} λ_i$. Choose $C> C'$ such that $ε\vert_{\Supp(η) \cap \{ϕ > C \}} = 1$ and, using Cor. \ref{cor:support_delta_form} (1), such that
\begin{equation}\label{eq:C_eta}
d'η\vert_{U\cap \{ϕ>C\}} = d''η\vert_{U\cap \{ϕ> C\}} = 0.
\end{equation}
We can now start to write out $\int_Z η\vert_Z$. Recall for this that $(λ_i)_{i\in I}$ has the property that $\Supp(λ_iη\vert_Z)\subseteq \Sigma(K^\circ_i\cap Z, g_i) \subseteq K^\circ_i\cap Z$.
\begin{equation}\label{eq:awesome_highlight_I}
\int_Z η\vert_Z = \int_Z εη\vert_Z = \sum_{i\in I} \int_{K_i^\circ\cap Z} λ_iη\vert_{K_i^\circ\cap Z}.
\end{equation}
We may apply the definition of the integral (Def. \ref{def:integral_delta_form}) to rewrite \eqref{eq:awesome_highlight_I} as
\begin{equation}\label{eq:awesome_highlight_II}
\begin{aligned}
\int_Z η\vert_Z &\ = \sum_{i\in I} \int_{\Sigma(K_i\cap Z, g_i)} λ_iη\vert_{\Sigma(K_i\cap Z, g_i)}\\
&\ = \sum_{i\in I} \int_{T(K_i\cap Z, g_i)} t_{g_i,*}\big(λ_iη \vert_{\Sigma(K_i\cap Z,g_i)} \big).
\end{aligned}
\end{equation}
Each term here is the integral over some compactly supported polyhedral current on $\mbR^{d_i}$. Pick a smooth function $ρ:\mbR\to \mbR$ with $ρ(x) = 0$ for $x<2C$ and $ρ(x) = 1$ for $x>3C$. Also define $ψ = ρ\circ \min\{x_1,\ldots,x_r\}$, which is a pws function (hence $δ$-form) on $\mbR^r$. By Fubini applied to each integral, the sum in \eqref{eq:awesome_highlight_II} equals
\begin{equation}\label{eq:awesome_highlight_III}
- \sum_{i\in I} \int_{(C, \infty)^r \times T(K_i\cap Z, g_i)}
\big(x ρ''(x)\cdot \Delta\big) \times \big(t_{g_i,*}\left(λ_iη \vert_{\Sigma(K_i\cap Z, g_i)} \right)\big)
\end{equation}
We next observe that there is an identity
\begin{equation}\label{eq:rewrite_rho}
ρ''(x)\Delta = d'd''(ψ)\wedge \Delta.
\end{equation}
Indeed, if $\mcC$ is a polyhedral complex that is subordinate to $\min\{x_1,\ldots,x_r\}$, then $\mcC$ is also subordinate to $ψ$. Due to its translation homogeneity, there exists a translation invariant polyhedral complex that is subordinate to $\min\{x_1,\ldots,x_r\}$, see \eqref{eq:min_special} for a possible choice. Then the tridegree $(0,0,1)$-part of $d'd''(ψ)$ is also supported on a translation invariant complex, hence has trivial $\wedge$-product with $\Delta$ by the fan displacement rule. The tridegree $(1,1,0)$-part of $d'd''(ψ)$ is a pws form however whose restriction to $\Delta$ is $ρ''(x)$. This shows \eqref{eq:rewrite_rho}.

We now apply Lem. \ref{lem:key_Green_current} with $α = xρ''(x)\Delta = x\Delta \wedge d'd''(ψ)$ to each summand of \eqref{eq:awesome_highlight_III}. Writing $ψ$ also for $t_f^\star(ψ)$, this implies that \eqref{eq:awesome_highlight_III} equals
\begin{equation}\label{eq:awesome_highlight_IV}
\begin{aligned}
- \sum_{i\in I}& \int_{\Sigma(K_i \cap U \cap \{ϕ > C\}, f\times g_i)} t_f^\star(x \Delta)\wedge d'd''(ψ)\wedge λ_iη\vert_{\Sigma(K_i\cap U\cap \{ϕ> C\}, f\times  g_i)}\\
&= -\sum_{i\in I} \int_{K^\circ_i\cap U \cap \{ϕ> C\}} λ_i\wedge t_f^\star(x\Delta)\wedge d'd''(ψ)\wedge η.
\end{aligned}
\end{equation}
Restricting the domain of integration to $K_i^\circ \cap \{ϕ>C\}$ is superfluous because $\Supp(λ_iη)\subseteq K_i^\circ$ and $\Supp(ψ) \subseteq \{ϕ>C\}$. Moreover, recall that $t_f^\star(x\Delta) = γ$ by \eqref{eq:presentation_gamma}. Hence \eqref{eq:awesome_highlight_IV} equals
\begin{equation}\label{eq:awesome_highlight_V}
\begin{aligned}
&-\sum_{i\in I}\int_{U} λ_i\wedge γ \wedge d'd''(ψ)\wedge η\\
&= - \int_{U} εγ \wedge d'd''(ψ)\wedge η.
\end{aligned}
\end{equation}
Recall next that $d'η = d''η = 0$ on $U\cap \{ϕ>C\}$ by choice of $C$, see \eqref{eq:C_eta}. So $d'd''(ψ)\wedge η = d'd''(ψη)$. Finally, recall that $ε$ has the property $ε\vert_{\Supp(ψη)} = 1$. The equality of \eqref{eq:awesome_highlight_I} with \eqref{eq:awesome_highlight_V} thus becomes
\begin{equation}\label{eq:awesome_highlight_VI}
\begin{aligned}
\int_Z η\vert_Z&\ = - \int_{U} εγ\wedge d'd''(ψη)\\
&\ = -\int_U γ\wedge d'd''(ψη).
\end{aligned}
\end{equation}
In this way, we have shown that the Poincaré--Lelong identity \eqref{eq:poincare_lelong_higher_codim} holds for the $δ$-form $ψη$. This leaves us to consider the complementary $δ$-form $η^c = (1-ψ)η$. But $\Supp η^c \subseteq X\setminus Z$ and we conclude with Stokes' Theorem as in \eqref{eq:Green_application_Stokes}.
\end{proof}

\subsection{Relation with Formal Models}
Assume from now on that $k$ is discretely and nontrivially valued with ring of integers $\mcO_k$. Let $π\in \mcO_k$ be a uniformizer and assume that $v$ is normalized with $v(π) = 1$.

A formal scheme $\mcX\to \Spf \mcO_k$ over $\mcO_k$ is called special (cf. \cite{Ber_vanishing_II}*{\S1}) if for every affine open formal subscheme $\Spf A \subseteq \mcX$, the $O_k$-algebra $A$ has the following property. There exists a finitely generated ideal of definition $\mfa \subseteq A$ such that $A/\mfa$ is of finite type over $\mcO_k$. (Note that every such ideal $\mfa$ will contain a power of $π$ because we have already assumed $\mcX$ to lie above $\Spf \mcO_k$.) Equivalently, $A$ is isomorphic to a quotient of some $\mcO_k\{T_1,\ldots,T_k\}[\![S_1,\ldots,S_\ell]\!]$ where the curly brackets denote restricted power series, cf. \cite{Ber_vanishing_II}*{Lem. 1.2}.

Let $\mcX\to \Spf \mcO_k$ be special. Berkovich \cite{Ber_vanishing_II}*{\S1} assigns to such $\mcX$ a generic fiber $X = \mcX^{\mr{an}}$ (denoted $\mcX_η$ in \emph{loc. cit.}.) Assuming $\mcX$ is separated, this is a paracompact, Hausdorff $k$-analytic space. The construction is moreover functorial in $\mcX$ and there is a specialization map on topological spaces that we denote by $\mr{sp}:|X| \to |\mcX|$. It is anticontinuous in the following sense. If $\mcZ\subseteq \mcX$ is closed, then $\mr{sp}^{-1}(\mcZ)$ is open. If $\mcU\subseteq \mcX$ is open, then $\mr{sp}^{-1}(\mcU)$ is a closed analytic domain and there is a natural (in $\mcU$) pullback of functions
$$\mcO_{\mcX}(\mcU)\lr \mcO_X(\mr{sp}^{-1}(\mcU)).$$
Assume $X = \mcX^{\mr{an}}$ in the following and let $\mcZ = V(\mcI)\subseteq \mcX$ be the closed formal subscheme defined by an ideal sheaf $\mcI$. Denote by $Z = \mcZ^{\mr{an}}$ its generic fiber. From the datum $(\mcX, \mcI)$, we define the distance function
\begin{equation}\label{eq:def_distance_function}
ϕ := ϕ_{\mcZ}:X \lr [0,\infty],\ x\longmapsto \min\big\{v(f(x)),\ f\in \mcI_{\mr{sp}(x)}\big\}.
\end{equation}
The restriction $ϕ\vert_{X\setminus Z}$ is pws (G-pwl in the sense of \cite{CLD}*{(6.2.7)} in fact) and $ϕ^{-1}(\infty) = Z$. But note that there is some loss of information, meaning that $\mcZ$ cannot be fully reconstructed from $ϕ$, because the definition of $ϕ$ only involves rank $1$ valuation rings. For example, $V(x^2,π^2)$ and $V(x^2,xπ,π^2)$ define different closed formal subschemes of $\Spf \mcO_k[\![x]\!]$ but have the same distance function.

We say that a special formal scheme $\mcX$ is purely of formal dimension $n$ if for every affine open formal subscheme $\Spf A\subseteq \mcX$, the spectrum $\Spec A$ is of Krull dimension $n$.

Motivated by Thm. \ref{thm:Green_form_complete_inter} on complete intersections, we make the following conjecture.

\begin{conj}\label{conj:Green_current}
Let $\mcX$ be a separated and flat special formal scheme over $\Spf\mcO_k$ that is purely of formal dimension $n+1$. Let $\mcZ \subseteq \mcX$ be a local complete intersection of codimension $r$ and set $ϕ = ϕ_{\mcZ}$. Then $γ := (-1)^{r-1}ϕ (d'd''ϕ)^{r-1}$ is a Green $δ$-form for $Z\setminus \partial Z$ on $X\setminus \partial X$.
\end{conj}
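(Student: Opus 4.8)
The plan is to reduce the local complete intersection case to the global one settled in Theorem~\ref{thm:Green_form_complete_inter} by localizing on $X\setminus\partial X$. Since $δ_{Z\setminus\partial Z}+d'd''[γ]$ is a current and currents form a sheaf, it suffices to verify the Poincaré--Lelong identity \eqref{eq:poincare_lelong_higher_codim} near each point of $X\setminus\partial X$. Two preliminaries are independent of the complete intersection hypothesis: first, $γ\in B^{r-1,r-1}(X\setminus Z)$, because $ϕ|_{X\setminus Z}$ is piecewise smooth, hence a $δ$-form by Prop.~\ref{prop:pws_are_delta_of_codim_0}, and $B$ is stable under $d'$, $d''$ and $\wedge$; second, $[γ]$ is a genuine current of bidegree $(r-1,r-1)$ on $X\setminus\partial X$ by Lem.~\ref{lem:Green_form_L1_property}, applied with $\min\{r-1,r-1\}=r-1<r$ (granting, as one should when formulating the conjecture, that $Z$ has pure codimension $r$ in $X$ — for instance by also requiring $\mcZ$ flat over $\mcO_k$ — so that Theorem~\ref{thm:Green_form_complete_inter} is applicable on charts).

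The key local input is a description of the distance function. Let $\mcU=\Spf A\subseteq\mcX$ be an affine open on which $\mcZ\cap\mcU=V(f_1,\dots,f_r)$ for a regular sequence $f_1,\dots,f_r\in A$, and let $W=\mr{sp}^{-1}(\mcU)$, a closed analytic domain in $X$. Then $ϕ_{\mcZ}|_{W}=\min\{v(f_1),\dots,v(f_r)\}$: one has $\mcI_{\mr{sp}(x)}=(f_1,\dots,f_r)\mcO_{\mcX,\mr{sp}(x)}$, and writing a germ $f\in\mcI_{\mr{sp}(x)}$ as $\sum g_if_i$ with $g_i$ germs pulled back from $\mcX$ — hence power-bounded, $v(g_i(x))\geq0$ — gives $v(f(x))\geq\min_i v(f_i(x))$, with equality for $f=f_i$. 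Consequently $Z\cap W=V(f_1,\dots,f_r)$, and on $W\setminus Z$ the form $γ$ coincides with the $δ$-form produced by Theorem~\ref{thm:Green_form_complete_inter} from the local regular sequence; by Lem.~\ref{lem:special_form} it has there the presentation $γ=t_{(f_1,\dots,f_r)}^\star(x\cdot\Delta)$ as in \eqref{eq:presentation_gamma}.

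Granting this, I would re-run the architecture of the proof of Theorem~\ref{thm:Green_form_complete_inter} with a single modification: the covering data $(K_i,d_i,g_i,ω_i)$ used there is chosen so that each affinoid $K_i$ lies inside some $W_{α(i)}=\mr{sp}^{-1}(\mcU_{α(i)})$ with $\mcZ\cap\mcU_{α(i)}$ a complete intersection, and $ϕ|_{K_i}=\min_j v(f_j^{(α(i))})$. The Key Local Computation (Lem.~\ref{lem:key_Green_current}) and Prop.~\ref{prop:reg_sequence_statement} are already stated for affinoid domains \emph{with boundary} relative to a complete intersection that need only be globally defined on that affinoid, so they apply verbatim on each $K_i$; and the partition-of-unity bookkeeping that assembles the local contributions into \eqref{eq:poincare_lelong_higher_codim} is unaffected by the fact that the defining sequences $f^{(α)}$ vary from chart to chart, since $γ$, $ϕ$ and $r$ remain global data.

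The obstacle — and the reason the statement is only conjectured — is precisely the choice of this covering. Because $\mr{sp}$ is anti-continuous, an affine open $\mcU\subseteq\mcX$ pulls back to the \emph{closed} analytic domain $\mr{sp}^{-1}(\mcU)$, and a point $x$ with $\mr{sp}(x)\in\mcU$ need not lie in its topological interior: already the Gauss point of $\mbP^{1,\an}$ fails to be interior to $\mr{sp}^{-1}$ of a standard affine chart of $\widehat{\mbP^1_{\mcO_k}}$. Hence one cannot in general cover $\Supp(η|_Z)$ by affinoid \emph{neighbourhoods} each contained in a single complete-intersection chart, which is what the above argument tacitly requires. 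Circumventing this seems to need either replacing $\mcX$ by an admissible blow-up chosen to make the relevant points interior while preserving the local complete intersection property near them, or recasting the assembly step in the $G$-topology, using the inclusion--exclusion formula \eqref{eq:inclusion_exclusion} over the admissible $G$-covering $\{\mcU_α^{\an}\}$ in place of a partition of unity and controlling the extra boundary loci that then appear in Prop.~\ref{prop:reg_sequence_statement}. Carrying out either of these rigorously is the technical heart of the matter.
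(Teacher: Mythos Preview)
The statement in question is a \emph{conjecture} in the paper (Conj.~\ref{conj:Green_current}), not a theorem; the paper explicitly does not prove it. What the paper does prove is the global complete intersection case (Cor.~\ref{cor:Green_current_formal_models}), which is a direct specialization of Thm.~\ref{thm:Green_form_complete_inter}. So there is no ``paper's own proof'' to compare against.

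Your write-up is not a proof either, and you are candid about this in the final paragraph. The reduction you sketch --- localize on $\mcX$ to affine opens where $\mcZ$ is a genuine complete intersection, then rerun the machinery of Thm.~\ref{thm:Green_form_complete_inter} chart by chart --- is the natural first attempt, and your identification of the distance function on $\mr{sp}^{-1}(\mcU)$ is correct. But the obstacle you name is exactly the reason the statement remains conjectural: $\mr{sp}^{-1}(\mcU)$ is a closed analytic domain, not an open one, and the proof of Thm.~\ref{thm:Green_form_complete_inter} genuinely needs affinoid \emph{neighborhoods} (the partition of unity $(λ_i)$ is chosen with supports in the interiors $K_i^\circ$, and Lem.~\ref{lem:key_Green_current} relies on this to avoid the boundary). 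Your Gauss-point example is apt. Neither of the two workarounds you propose --- an admissible blow-up or a $G$-topological inclusion--exclusion argument --- is carried out, and both would require substantial new work: the former must preserve the local complete intersection property and the distance function simultaneously, while the latter runs into precisely the boundary terms that the paper's proof was designed to avoid (cf.\ the discussion between \eqref{eq:subtle_for_delta} and the start of the full proof). In short, your proposal is a correct diagnosis of why the conjecture is open rather than a resolution of it.
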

The previous section's result settles the case of a complete intersection:
\begin{cor}[of Thm. \ref{thm:Green_form_complete_inter}]\label{cor:Green_current_formal_models}
Let $\mcX$ be as in Conj. \ref{conj:Green_current} and assume that $\mcZ = V(f_1,\ldots,f_r)$, for certain $f_i\in \mcO_{\mcX}(\mcX)$, is a complete intersection. Then $γ$ is a Green $δ$-form for $Z\setminus \partial Z$ on $X\setminus \partial X$.
\end{cor}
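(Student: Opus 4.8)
The plan is to deduce Corollary~\ref{cor:Green_current_formal_models} directly from Theorem~\ref{thm:Green_form_complete_inter} by checking that all of the latter's hypotheses are met in the formal-model setting, after passing to the boundaryless loci. First I would note that the interior $X\setminus\partial X$ is again a purely $(n+1)$-dimensional\footnote{or rather pure of the dimension of $X$; recall $\mcX$ has pure formal dimension $n+1$ so $X$ has dimension $n$ as analytic space after removing the special fiber---I will be careful with the dimension bookkeeping below.} good Hausdorff $k$-analytic space with empty boundary, so Theorem~\ref{thm:Green_form_complete_inter} is applicable to $X' := X\setminus\partial X$ once we know the relevant closed subspace is a complete intersection there. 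Since $\mcZ = V(f_1,\dots,f_r)$ with $f_i\in\mcO_\mcX(\mcX)$, the generic fiber $Z = \mcZ^{\mr{an}}$ is cut out inside $X$ by the pullbacks $f_i\in\mcO_X(X)$ via the map $\mcO_\mcX(\mcX)\to\mcO_X(X)$ recalled before the corollary; I would then argue that $f_1,\dots,f_r$ form a regular sequence in $\mcO_X$, or at least cut out a subspace of pure codimension $r$ with the complete-intersection property in the sense used in Theorem~\ref{thm:Green_form_complete_inter}, using the flatness of $\mcX$ over $\mcO_k$ together with the local structure of $\mcO_{X,x}$ (the local rings of the generic fiber of a flat special formal scheme; cf. the citations to \cite{Duc_families} used in the proof of Prop.~\ref{prop:reg_sequence_statement}).

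Next I would identify the distance function with the minimum appearing in Theorem~\ref{thm:Green_form_complete_inter}. Concretely, I claim that on $X\setminus Z$ one has
\begin{equation*}
ϕ_\mcZ = \min\{v(f_1),\dots,v(f_r)\}.
\end{equation*}
The inequality $ϕ_\mcZ \le \min_i v(f_i)$ is immediate since each $f_i$ lies in the stalk $\mcI_{\mr{sp}(x)}$. For the reverse inequality one uses that $\mcI$ is generated by $f_1,\dots,f_r$ as an ideal sheaf, so any $f\in\mcI_{\mr{sp}(x)}$ is a local combination $f = \sum_i a_i f_i$ with $a_i$ regular in a neighborhood of $\mr{sp}(x)$, whence $v(f(x)) \ge \min_i(v(a_i(x)) + v(f_i(x))) \ge \min_i v(f_i(x))$ because $v(a_i(x))\ge 0$ for functions on the formal model pulled back to $X$. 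This is exactly the kind of argument that makes $ϕ_\mcZ$ piecewise linear and shows it agrees with the $ϕ$ of Theorem~\ref{thm:Green_form_complete_inter}. I would also record that $ϕ_\mcZ^{-1}(\infty) = Z$, consistent with the theorem's hypotheses, and that $ϕ_\mcZ\vert_{X\setminus Z}$ is pws as already stated in the text preceding the corollary, so that the $δ$-form $γ = (-1)^{r-1}ϕ(d'd''ϕ)^{r-1}\in B^{r-1,r-1}(X\setminus Z)$ is well-defined.

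Having matched the data, I would invoke Theorem~\ref{thm:Green_form_complete_inter} for the space $X' = X\setminus\partial X$ and the complete intersection $Z' = Z\setminus\partial Z = Z\cap X'$ (note $Z\setminus\partial Z$ is exactly the vanishing locus of $f_1,\dots,f_r$ inside $X'$, since $\partial Z = Z\cap\partial X$ for a closed immersion, which is inner by \cite{Ber1}*{Cor.~2.5.13}). The conclusion of that theorem is precisely that for every $η\in B_c^{n-r,n-r}(X')$,
\begin{equation*}
\int_{X'\setminus Z'} γ\wedge d'd''η = -\int_{Z'} η\vert_{Z'},
\end{equation*}
i.e. $δ_{Z'} + d'd''[γ] = 0$ as currents on $X'$, which is exactly the assertion that $γ$ is a Green $δ$-form for $Z\setminus\partial Z$ on $X\setminus\partial X$. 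I expect the only genuine technical point---the ``main obstacle''---to be the verification that $f_1,\dots,f_r$ restrict to a regular sequence (equivalently, that $Z\hookrightarrow X'$ is a complete intersection of codimension $r$ in the precise sense Theorem~\ref{thm:Green_form_complete_inter} requires) on the generic fiber; this needs the flatness of $\mcX/\mcO_k$ together with a dimension count on the local rings $\mcO_{X,x}$ via \cite{Duc_families}, much as in the proof of Prop.~\ref{prop:reg_sequence_statement}. Everything else is a matter of unwinding definitions: matching $ϕ_\mcZ$ with $\min_i v(f_i)$, and restricting to the boundaryless loci.
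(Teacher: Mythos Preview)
Your proposal is correct and follows exactly the paper's approach, which is a one-liner: ``Indeed, $ϕ = \min\{v(f_1),\ldots,v(f_r)\}$ and the result is precisely Thm.~\ref{thm:Green_form_complete_inter}.'' The point you single out as the ``main obstacle''---that the complete-intersection property of $\mcZ$ in $\mcX$ passes to $Z$ in $X$---is taken for granted in the paper; it follows from the fact that regular sequences are preserved under flat base change (in particular under inverting $π$), so your caution there, while not misplaced, goes beyond what the paper deems necessary to spell out.
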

Indeed, $ϕ = \min\{v(f_1),\ldots,v(f_r)\}$ and the result is precisely Thm. \ref{thm:Green_form_complete_inter}. Our addition in this section is the following intersection number identity.
\begin{thm}\label{thm:int_num_identity}
Let $\mcX$ be as in Conj. \ref{conj:Green_current} and assume that $\mcZ \subseteq \mcX$ is a local complete intersection, purely of codimension $r$. Let $\mcY\subseteq \mcX$ be a Cohen--Macaulay closed formal subscheme, purely of formal dimension $r$ and flat over $\Spf \mcO_k$. Set $Y = \mcY^{\mr{an}}$. Assume that $\mcY\cap \mcZ$ is artinian. Then $\Supp(γ\vert_{Y\setminus \partial Y})$ is compact and
\begin{equation}\label{eq:int_number_identity}
\int_{Y\setminus \partial Y} γ = \len_{\mcO_k} \mcO_{\mcY \cap \mcZ}.
\end{equation}
\end{thm}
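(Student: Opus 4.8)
The plan is to reduce the statement to a tropical intersection number and then identify that number with the asserted length. First I would record the basic reductions. Since $\mcO_{\mcY\cap\mcZ}$ has finite length over $\mcO_k$ it is killed by some $\pi^M$, so $\mcY\cap\mcZ$ is concentrated on the special fibre and $Y\cap Z=\emptyset$; because $\mcY\hookrightarrow\mcX$ is boundaryless one also has $\partial Y=Y\cap\partial X$, so $Y\setminus\partial Y=Y\setminus\partial X$ and the pullback $\gamma|_{Y\setminus\partial Y}\in B^{r-1,r-1}(Y\setminus\partial Y)$ from Theorem \ref{thm:pull_backs_exist} is a genuine $\delta$-form, of top bidegree as $\dim Y=r-1$. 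By Lemma \ref{lem:special_form} and \eqref{eq:presentation_gamma} it is presented as $t_{f}^\star(x\cdot\Delta)$ with $f=(f_1,\dots,f_r)$ and $\Delta\subseteq\mbR^r$ the diagonal line; since $\Delta$ has dimension $1$ and $\dim Y=r-1$, the realization property \eqref{eq:realization} forces $\gamma|_{Y\setminus\partial Y}$ to be, on every skeleton, a polyhedral current supported in dimension $0$, and by Corollary \ref{cor:support_delta_form}(2) its support lies in $\Sigma(Y\setminus\partial Y,f)$ and hence is a discrete subset of $t_f^{-1}(\Delta)$.

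Next I would prove that this support is compact, which is where the artinian hypothesis enters. The $f_i$ lie in $\mcO_\mcY$, so $v(f_i)\geq 0$ on $Y$; and $\pi^M\in\mcI\mcO_\mcY$ locally, so $\phi=\min_i v(f_i)\leq M$ on $Y$. The open set $\{\phi>0\}\cap Y$ equals $\mr{sp}_Y^{-1}(\mr{Supp}(\mcY\cap\mcZ))$, a finite disjoint union of tubes over the finitely many points $p_1,\dots,p_m$ of the artinian $\mcY\cap\mcZ$, while on the locus $\{\phi=0\}$ the coefficient $x$ of $x\cdot\Delta$ (a coordinate, vanishing at the origin of $\mbR^r$) vanishes, so $\gamma|_Y$ carries no mass there. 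On each tube, $(f_1,\dots,f_r)$ is $\mfm_{p_j}$-primary in $\widehat{\mcO_{\mcY,p_j}}$, so the closed subtubes $\{v(f_i)\geq\varepsilon\ \forall i\}$ are compact analytic domains; combined with the bound $\phi\leq M$ and the discreteness above, $\mr{Supp}(\gamma|_{Y\setminus\partial Y})$ is finite, hence compact.

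Both sides of \eqref{eq:int_number_identity} are additive over $p_1,\dots,p_m$ (the integral because its support is finite and lies in disjoint tubes, the right-hand side by additivity of length), so I may assume $\mcY$ affine with $\mr{Supp}(\mcY\cap\mcZ)=\{p\}$. Choosing a generic $\varepsilon>0$ smaller than $\phi$ at each of the finitely many points of the support, set $K=\{y\in\mr{sp}_Y^{-1}(p):\phi(y)\geq\varepsilon\}$; by the previous paragraph this is a compact domain, contained in $U=X\setminus V(f_1\cdots f_r)$, with the support of $\gamma|_{Y\setminus\partial Y}$ in its interior and away from $t_f(\partial K)$. The integration-in-charts formula for $\delta$-forms then gives
\[
\int_{Y\setminus\partial Y}\gamma=\int_K\gamma|_K=\int_{\mbR^r}T(K,f)\wedge(x\cdot\Delta),
\]
the intersection of the tropical cycle $T(K,f)$ — pure of dimension $r-1$ away from $t_f(\partial K)$, with weights given by degrees of finite flat maps by Definition \ref{def:weights_on_trop} — with the diagonal line carrying the coefficient $x$.

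The remaining task, and the main obstacle, is to show this tropical intersection number equals $\len_{\mcO_k}(\mcO_{\mcY\cap\mcZ})_p$. Here I would follow the strategy of the proof of Theorem \ref{thm:Green_form_complete_inter}: because $\mcO_\mcY$ is Cohen--Macaulay of dimension $r$ and $\mcY\cap\mcZ$ is artinian, $f_1,\dots,f_r$ is an $\mcO_\mcY$-regular sequence, so Proposition \ref{prop:reg_sequence_statement} and the ``miracle flatness'' argument in its proof describe $\mcY$ over the formal polydisc near $\mcY\cap\mcZ$ as finite flat of degree $\len_{\mcO_k}(\mcO_{\mcY\cap\mcZ})_p$. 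Choosing a generic linear projection $q:\mbR^r\to\mbR^{r-1}$ whose restriction to $\Delta$ is injective and that is generic for $T(K,f)$, one writes $q_*\big(T(K,f)\wedge(x\cdot\Delta)\big)$ as a weighted sum of such finite flat degrees along the diagonal and identifies the resulting weighted sum $\sum_v m_v\,x(v)$ — over the finitely many points $v\in T(K,f)\cap\Delta$, $m_v$ the local intersection multiplicity — with the $\mcO_k$-Koszul Euler characteristic $\len_{\mcO_k}\!\big(\mcO_\mcY/(f_1,\dots,f_r)\big)_p$ by a telescoping computation that uses precisely the compatibility of tropical weights with $\mcO_k$-lengths of fibres (Definition \ref{def:weights_on_trop} and \cite{GJR}). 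Peeling off one coordinate at a time reduces the bottom of this to the divisor case $r=1$, where $Y$ is $0$-dimensional, $\gamma=v(f_1)$, and $\int_{Y\setminus\partial Y}v(f_1)=\sum_{y\in Y}w_y\,v(f_1(y))=\len_{\mcO_k}(\mcO_\mcY/f_1)$ is the classical formula for the degree of a divisor on a curve over $\mcO_k$; turning the tropical multiplicity sum into the length in the general case is the step I expect to require the most care.
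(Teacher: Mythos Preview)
Your outline follows the paper's strategy in broad strokes (reduce to local, present $\gamma$ as $t_f^\star(x\Delta)$, compute a tropical intersection, reduce to the $0$-dimensional case), but there are two genuine gaps.

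\textbf{Compactness.} You observe that $\Supp(\gamma|_Y)$ is a discrete subset of $t_f^{-1}(\Delta)\cap\Sigma(Y,f)$, that $\phi\leq M$, and that the closed subtubes $\{v(f_i)\geq\varepsilon\}$ are compact. But this does not give finiteness: you have not excluded that the support accumulates toward $\phi\to 0$, i.e.\ toward the origin of $\mbR^r$. The vanishing of the coefficient $x$ at the origin says nothing about accumulation of support points. The paper handles this with a separate argument (its Lemma~\ref{lem:support_lemma_artinian}): by approximating the $f_i$ with elements of an \emph{algebraizing} $\mcO_k$-algebra of formally finite type, one shows that $T(Y,f)$ agrees with a genuine polyhedral set on each cone $C_s$ around the diagonal; then $\Delta\wedge T(Y,f)$, being both $0$-dimensional and a fan near the origin, must vanish there. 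This algebraization step is essential and does not follow from the ingredients you list.

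\textbf{The main computation.} Your claim that miracle flatness ``describes $\mcY$ over the formal polydisc near $\mcY\cap\mcZ$ as finite flat of degree $\len_{\mcO_k}(\mcO_{\mcY\cap\mcZ})_p$'' is dimensionally off: the map $(f_1,\ldots,f_r):\mcY\to\Spf\mcO_k[\![z_1,\ldots,z_r]\!]$ goes from formal dimension $r$ to formal dimension $r+1$, so it cannot be finite. The paper instead makes a careful choice of generators (its Lemma~\ref{lem:choice_of_f_i}) so that $A/(f_1,\ldots,f_{r-1})$ is $\pi$-torsion free, and only then is $(f_1,\ldots,f_{r-1}):\Spf A\to\Spf\mcO_k[\![z_1,\ldots,z_{r-1}]\!]$ finite flat. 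The reduction to the $0$-dimensional case is then not a ``telescoping'' or projection-formula argument but a single application of Stokes' theorem to an explicit $\delta$-form $\omega$ supported on a triangle in the $2$-plane $P\subset\mbR^r$ spanned by the diagonal and a coordinate axis: the boundary contributions along the diagonal and along a vertical line produce respectively $\int_Y\gamma$ and $\int_{V(f_1,\ldots,f_{r-1})}v(f_0)$, the latter being the $0$-dimensional case. Your ``peeling off one coordinate at a time'' gestures in this direction, but the precise mechanism (the choice of $\omega$, the Newton-polygon bound \eqref{eq:bound_support} controlling the support of $\omega\wedge T$, and Proposition~\ref{prop:reg_sequence_statement} identifying the vertical slice with $T(V(f_1,\ldots,f_{r-1}),f_0)$) is the real content and is missing from your sketch.
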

The flatness of $\mcY/\mcO_k$ is necessary. Otherwise it could for example happen that $Y$ is empty even though $\mcY\cap \mcZ \neq \emptyset$.

\begin{proof}
The intersection $\mcY\cap \mcZ \subseteq \mcY$ is artinian and hence purely of codimension $r$ in $\mcY$. Moreover, it is locally defined by the vanishing of $r$ functions because $\mcZ \subseteq \mcX$ is a local complete intersection. Using that $\mcY$ is Cohen--Macaulay, this implies that $\mcZ\cap \mcY \subseteq \mcY$ is a local complete intersection \cite{Stacks}*{Tag 02JN}. Moreover, the formation of $ϕ$ and $γ$ commutes with restriction to $Y\setminus \partial Y$, so we may assume that $\mcY = \mcX$ and that $r = n+1$. In particular, $\mcX$ is Cohen--Macaulay from now on, $\mcZ$ is artinian with empty generic fiber, and $ϕ$ is a function on all of $X$.

Our first aim is to reduce further to the case $\mcX = \Spf A$ for a special complete local $\mcO_k$-algebra $A$. Write $X = D \sqcup D^c$ with $D = \mr{sp}^{-1}(\mcZ)$. Then $D$ is an open subset and $D^c$ an analytic domain of $X$ by the anticontinuity of $\mr{sp}$. The open subset $D$ agrees with the generic fiber $(\widehat{\mcX}_{/\mcZ})^{\mr{an}}$ of the formal completion of $\mcX$ along $\mcZ$ \cite{Ber_vanishing_II}*{Beginning of \S3}. The formal completion $\widehat{\mcX}_{/\mcZ}$ is a disjoint union of formal spectra of special complete local $\mcO_k$-algebras. Let $A$ be such an algebra. By the construction of $(-)^{\mr{an}}$ in \cite{Ber_vanishing_II}*{\S1}, $(\Spf A)^{\mr{an}}$ is a Zariski closed subspace of a (not necessarily $k$-rational) open polydisk. It is, in particular, boundary free. Thus we have $D\subseteq X\setminus \partial X$.

\emph{Claim: Assume that $\Supp(γ\vert_D)$ is compact. Then already $γ = γ\vert_D$, meaning that $\Supp(γ) \subset D$.}

To prove this, let $V = \Supp(γ\vert_D)^c$ be the complement in $X\setminus \partial X$. The assumption that $\Supp(γ\vert_D)$ is compact implies that $V$ is an open neighborhood of $D^c$. In order to show that $\Supp(γ) \subset D$, it suffices to show that for every skeleton $\Sigma \subset V$, the restriction $γ\vert_\Sigma$ vanishes.

Given such $\Sigma$, this is a statement about a polyhedral current on a pwl space. Moreover, we are given a decomposition
$$\Sigma = (\Sigma \cap D) \sqcup (\Sigma\cap D^c)$$
into an open and a complementary closed pwl subspace. Here, the intersection $\Sigma \cap D^c$ is a closed pwl subspace because it equals the locus where $ϕ = 0$. By assumption $γ\vert_{\Sigma \cap D} = 0$. By definition, $γ$ is a multiple of $ϕ$ and hence $γ\vert_{\Sigma \cap D^c} = 0$. Together, these two statements imply $γ\vert_\Sigma = 0$, which proves the claim.

%

In this way, we have reduced to the case that $\mcX = \Spf A$ for a special complete local $\mcO_k$-algebra $A$. Namely note that in this case $\partial X = \emptyset$, so the formulation of Thm. \ref{thm:int_num_identity} comprises the statement that $\Supp(γ)$ is compact and this was precisely the condition of the claim. In fact, this compactness is the very first thing we prove. Assume $\mcX = \Spf A$ and $\mcZ = V(f_0, \ldots, f_n)$ from now on.

\emph{Claim: $\Supp γ$ is compact.} By Cor. \ref{cor:support_delta_form}, $γ$ is supported on $\Sigma(X, f)$. Moreover, under $t_f$, this skeleton maps properly to $\mbR_{>0}^{n+1}$. Indeed, for every $C> 0$, the closed subset $K_C = t_f^{-1}([C, \infty)^{n+1})$ is an affinoid domain in $X$ and the map $\Sigma(K_C, f)\to \mbR_{>0}^{n+1}$ is proper. This allows us to consider $T(X, f)$ as tropical cycle on $\mbR_{>0}^{n+1}$ in the following. Our aim is to find some $C$ with $\Supp(γ)\subseteq \Sigma(K_C, f)$.

By definition of the restriction \eqref{eq:def_realization_on_skeleton}, $γ\vert_{\Sigma(X, f)}$ equals $t_f^\star(x\Delta)$ in the sense of \eqref{eq:realization}. Now for every compact polyhedral set $H\subseteq \Sigma(X, f)$, the pushforward $t_{f,*}(H \setminus t_f^{-1}(t_f(\partial H)))$ is a tropical cycle with nonnegative coefficients. The same property holds for $\Delta$. Thus $γ\vert_{\Sigma(X, f)}$ is a sum of Dirac measures with nonnegative coefficients. We then have $γ\vert_{\Sigma(X \setminus K_C, f)} = 0$ if and only if 
\begin{equation}\label{eq:vanishing_for_support_I}
t_{f,*}(γ\vert_{\Sigma(X \setminus K_C, f)}) = x\Delta \wedge T(X, f)\vert_{\mbR_{>0}^{n+1}\setminus [C,\infty)^{n+1}} = 0.
\end{equation}
Our aim is to find some $C>0$ such that \eqref{eq:vanishing_for_support_I} holds. That is, we would like to see that the set $\Supp(\Delta\wedge T(X, f)) \subset \Delta$ does not have the origin as accumulation point.

To this end, we establish a finiteness property of $T(X, f)$ along $\Delta$. For $s \geq 2$, denote by $C_s \subset \mbR^{n+1}_{>0}$ the open cone where $x_i < \sum_{j = 0}^n m_j x_j $ holds for all $i$ and all $(m_0,\ldots,m_n)\in \mbZ_{\geq 0}^{n+1}$ with $\sum_{j=0}^n m_j = s$.
\begin{lem}\label{lem:support_lemma_artinian}
For each $s\geq 2$, there is a polyhedral set $T_s\subseteq \mbR^{n+1}$ such that
$$T(X,f)\cap C_s = T_s \cap C_s.$$
\end{lem}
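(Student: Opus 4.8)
The plan is to first reduce the statement to a conicality assertion near the origin, and then to prove that assertion via the theory of graded reductions together with the finiteness coming from $\mcZ$ being artinian. For the reduction: recall that $t_f$ restricts to a proper map $\Sigma(K_C,f)\to\mbR^{n+1}_{\geq C}$ for every $C>0$, so $T(X,f)$ is already a genuine polyhedral set (a \emph{locally finite} union of polyhedra) on each region $[C,\infty)^{n+1}$; it can only fail local finiteness near the origin. Moreover $\mcO_{\mcZ}=A/(f_0,\dots,f_n)$ is an artinian local ring of finite length over $\mcO_k$, hence killed by some $\pi^M$, so $\pi^M=\sum_{i}g_if_i$ with $g_i\in A$; since $v(g_i(x))\geq 0$ for all $x\in X$ (functions in $A$ are power-bounded on the generic fibre), evaluating gives $\min_i v(f_i(x))\leq v(\pi^M(x))=M$, i.e. $T(X,f)\subseteq\{\min_i x_i\leq M\}$. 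In particular $T(X,f)\cap C_s\subseteq (0,sM]^{n+1}$ is bounded, and within $\overline{C_s}$ the locus $\{\min_i x_i=0\}$ shrinks to the single point $0$. Hence it suffices to find $\varepsilon>0$ such that $T(X,f)$ is $\mbR_{>0}$-invariant on $C_s\cap\{0<\max_i x_i<\varepsilon\}$: given this, the ``link'' $T(X,f)\cap\overline{C_s}\cap\{\max_i x_i=\varepsilon/2\}$ lies in the compact box $[\varepsilon/(2s),\varepsilon/2]^{n+1}$, hence is a finite union of polyhedra, and $T_s$ may be taken to be the cone over it together with the (again bounded, hence finite) part $T(X,f)\cap C_s\cap\{\max_i x_i\geq\varepsilon/2\}\subseteq[\varepsilon/(2s),sM]^{n+1}$.

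It remains to prove the conicality. A point $x\in X$ with $t_f(x)\in C_s\cap\{\max_i x_i<\varepsilon\}$ has all $v(f_i(x))$ positive, small, and mutually comparable; such a point is \emph{not} close to $\mcZ$, and the germ of $T(X,f)$ there reflects only the lowest-weight (``initial'') behaviour of the $f_i$. Concretely, by Theorem \ref{thm:trop_local_structure} and \eqref{eq:skeleton_characterization_residue_fields} the local structure of $T(X,f)$ at $x$ is controlled by the graded field $\wt k(\wt{f_{0,x}},\dots,\wt{f_{n,x}})$, and the relations satisfied by these graded reductions are the graded reductions of relations among $f_0,\dots,f_n$ in $A$ (weighted by the $v(f_i(x))$); these leading forms are homogeneous. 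The plan is to show that over $C_s\cap\{\max_i x_i<\varepsilon\}$, and for $\varepsilon$ small enough, $T(X,f)$ agrees with the tropicalization of the ``initial scheme'' cut out by these homogeneous leading forms, which is a polyhedral fan, so in particular $\mbR_{>0}$-invariant. Finiteness — that only finitely many such initial schemes occur, so that a uniform $\varepsilon$ works — follows from Noetherianity: since $f_0,\dots,f_n$ is a regular sequence in the Cohen--Macaulay local ring $A$, one has $\operatorname{gr}_{\mfa}A\cong(A/\mfa)[Y_0,\dots,Y_n]$ with $\mfa=(f_0,\dots,f_n)$, and $\mfm^N\subseteq\mfa$ for some $N$, so the weightings and leading forms relevant on $C_s$ range over a finite set of possibilities.

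The main obstacle is precisely this comparison: making rigorous, in the analytic (rather than algebraic) and discretely valued setting, that $T(X,f)$ ``stabilises to a fan'' as one approaches the origin inside $C_s$. I expect two workable routes. One is a direct Newton-polygon analysis: on the relevant part of $X$ one writes each $f_i$ in suitable coordinates and reads off $v(f_i(x))$ and $\wt{f_{i,x}}$ from the vertices/lowest terms, with the combinatorics controlled uniformly by $M$ and $N$; one then checks $\mbR_{>0}$-invariance of the resulting image by inspection. The other is to pass to an admissible blow-up of $\mcX$ along $\mcZ$ and work on the charts $A_i=\widehat{A[f_j/f_i\colon j\neq i]}$, where the functions $f_j/f_i$ are power-bounded, so that $v(f_i)$ becomes a free radial parameter and the remaining coordinates vary in a fixed polyhedral set coming from Theorem \ref{thm:trop_is_polyhedral}, after which $\mbR_{>0}$-invariance can be read off the product structure. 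Either way, the verification that the finitely many charts/initial forms assemble into an $\mbR_{>0}$-invariant set near the origin is where the real work lies; the reductions in the first paragraph are routine.
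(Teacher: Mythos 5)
Your approach is genuinely different from the paper's, and the reductions in your first paragraph are correct: properness of $t_f$ over $[C,\infty)^{n+1}$ gives polyhedrality away from the origin, the relation $\pi^M=\sum g_if_i$ (from $\mcZ$ artinian) gives $T(X,f)\subseteq\{\min_i x_i\leq M\}$, and it would indeed suffice to show $\mbR_{>0}$-invariance of $T(X,f)$ on $C_s\cap\{0<\max_i x_i<\varepsilon\}$ for some $\varepsilon$ and then take $T_s$ to be the cone over the link plus the bounded part. However, that conicality assertion is exactly where the proposal stops. You flag it yourself as "where the real work lies" and only sketch two candidate routes without carrying either through. Route one (reading off slopes and initial forms from Newton polygons, uniformly over the fan of valuation weightings) is left entirely informal. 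Route two (passing to the admissible blow-up of $\mcX$ along $\mcZ$) has the concrete obstacle that the blow-up charts $\Spf\widehat{A[f_j/f_i]}$ are still special formal schemes rather than affinoids, so Theorem \ref{thm:trop_is_polyhedral} does not apply directly, and the asserted product structure of the tropicalization with $v(f_i)$ as a free radial parameter is not established. So there is a genuine gap: the lemma is reduced to a plausible-but-unproved claim.

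The paper's proof avoids conicality altogether by a short algebraization argument that also shows why $C_s$ is defined the way it is. Choose a $\pi$-adic, flat, formally finite type $\mcO_k$-algebra $B$ whose completion at a maximal ideal is $A$, let $Y=\mcM(B[\pi^{-1}])$ (affinoid), and pick $g_0,\dots,g_n\in B$ with $g_i-f_i\in I^s$ where $I=(f_0,\dots,f_n)$. For $x$ with $t_f(x)\in C_s$ one then has $v(g_i(x)-f_i(x))\geq\min\{\sum_j m_jv(f_j(x)):\sum_j m_j=s\}>v(f_i(x))$, the strict inequality being exactly the defining condition of $C_s$, so $v(g_i(x))=v(f_i(x))$; exchanging the roles of $f$ and $g$ gives $T(X,f)\cap C_s=T(X,g)\cap C_s$, and $T(X,g)=T(Y,g)\cap\mbR^{n+1}_{>0}$ is polyhedral by Theorem \ref{thm:trop_is_polyhedral}. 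If you want to complete your own argument rather than switch to this one, the cleanest fix is to import precisely this truncation idea in place of the conicality claim: approximate the $f_i$ modulo $I^s$ by finite-type data and observe that $C_s$ is tuned so the two sets of functions tropicalize identically there.
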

\begin{proof}
Pick a $π$-adic, flat $\mcO_k$-algebra of formally finite type $B$ such that $A$ is the completion of $B$ along a maximal ideal $\mfm$. Write $Y = \mcM(B[π^{-1}])$ for the generic fiber of $\Spf B$, which is an affinoid analytic space. The ideal $I = (f_0,\ldots,f_n)\subset A$ is open because the $f_i$ form a regular sequence of maximal length. Choose $g_0,\ldots,g_n\in B$ with $g_i - f_i \in I^s$. Then $g_0,\ldots,g_n$ generate $I/I^2$, and hence $I$, by Nakayama's Lemma. It follows that they form a regular sequence in $A$ as well because $A$ is Cohen--Macaulay \cite{Stacks}*{Tag 02JN}. Localizing $B$, we may assume that $V(g_0,\ldots,g_n) = \{\mfm\}$. Then
$$T(X,g) = T(Y, g) \cap \mbR^{n+1}_{> 0}$$
for the simple reason that $\mr{sp}(y)\neq \mfm$, $y\in Y$, implies that $v(g_i(y)) = 0$ for at least one $i$.
Let $x\in X$ and assume that $v(f_i(x)) < \sum_{j = 0}^n m_jv(f_j(x))$ holds for all $i$ and all $m_j\geq 0$ with $\sum_{j = 0}^n m_j = s$. Then $v(g_i(x)) = v(f_i(x))$ because $g_i - f_i\in I^s$ for all $i$ and, consequently, $v(g_i(x)) < \sum_{j = 0}^n m_jv(g_j(x))$ for all such $i$ and $m_j$. The roles of the $f_i$ and $g_i$ may be exchanged in this argument, so we have
$$T(Y,g) \cap C_s = T(X, g)\cap C_s = T(X,f) \cap C_s.$$
Since $T(Y,g)$ is the tropicalization of an affinoid, it is a polyhedral set (Thm. \ref{thm:trop_is_polyhedral}) and the lemma is proved.
\end{proof}

For now, we only apply Lem. \ref{lem:support_lemma_artinian} with a single $s$, say $s = 2$. We obtain that $T(X,f)\cap C_s$ is a polyhedral fan (minus the origin) near the origin. The intersection $\Delta \wedge T(X,f)$ then has the same property. It is also $0$-dimensional, however, so has to vanish near the origin. This concludes the proof of the compact support property of $γ$.

\begin{proof}[Proof of the intersection number identity in case $n=0$.] Then $A$ is a finite flat $\mcO_k$-algebra and the proposition concerns $\len_{\mcO_k}A/fA$ for some regular element $f\in A$. Set $L:=A[π^{-1}]$ and write it as a product $L = \prod_i L_i$ with each $L_{i,\red}$ a field. Let $r_i := \len_{L_i} L_i$ and $d_i := [L_{i,\red}:k]$. Then $X$ equals $\coprod_{i\in I} \mcM(L_i)$ and $γ = v(f)$ is a function on this set. The skeleton $\Sigma(X,f)$ has underlying set $X$ itself where the point $\mcM(L_i)$ has weight equal to $\deg(\mcM(L_i)\to \mcM(k)) = r_id_i$. It follows that
\begin{equation}\label{eq:0_dim_intersection}
\int_X v(f) = \sum_i r_id_i v_i(f^{(i)})
\end{equation}
where $v_i$ is the unique extension of $v$ to $L_i$ and $f^{(i)}$ the $i$-component of $f$. (The extension $v_i$ takes values in $e_i^{-1}\mbZ$, where $e_i$ is the ramification index of $L_i/k$.) To compute the intersection-theoretic side, first note the following fact.
\begin{lem}\label{lem:lengths}
Let $Λ\subset L$ be any $f$-stable $\mcO_k$-lattice. Then
$$\len_{\mcO_k} Λ/fΛ = \len_{\mcO_k} A/fA.$$
\end{lem}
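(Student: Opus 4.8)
The statement is that the $\mcO_k$-length of $Λ/fΛ$ is independent of the choice of $f$-stable $\mcO_k$-lattice $Λ \subset L$, where $f \in L^\times$ (it is a regular element, hence invertible in the artinian ring $L$). The plan is to exploit the fact that any two such lattices are commensurable and that multiplication by $f$ is an $L$-linear automorphism of $L$, so it preserves the relevant additivity of lengths.

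First I would reduce to comparing two lattices $Λ_1, Λ_2$ with $Λ_2 \subseteq Λ_1$; the general case follows by the usual argument of introducing a common refinement $Λ_1 \cap Λ_2$ (which is again $f$-stable, being the intersection of two $f$-stable lattices) and $Λ_1 + Λ_2$. So suppose $Λ_2 \subseteq Λ_1$ are both $f$-stable. Since $f$ is a unit in $L$, multiplication by $f$ is an automorphism of $L$ as $\mcO_k$-module; it therefore maps the short exact sequence $0 \to Λ_2 \to Λ_1 \to Λ_1/Λ_2 \to 0$ isomorphically onto $0 \to fΛ_2 \to fΛ_1 \to fΛ_1/fΛ_2 \to 0$. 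In particular $\len_{\mcO_k}(Λ_1/Λ_2) = \len_{\mcO_k}(fΛ_1/fΛ_2)$, both being finite since $Λ_1/Λ_2$ is a finitely generated torsion $\mcO_k$-module.

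Next I would run the standard diagram chase. Consider the $\mcO_k$-module $Λ_1/fΛ_2$. On one hand it has the submodule $fΛ_1/fΛ_2$ with quotient $Λ_1/fΛ_1$, giving
\[
\len_{\mcO_k}(Λ_1/fΛ_2) = \len_{\mcO_k}(Λ_1/fΛ_1) + \len_{\mcO_k}(fΛ_1/fΛ_2).
\]
On the other hand it has the submodule $Λ_2/fΛ_2$ with quotient $Λ_1/Λ_2$, giving
\[
\len_{\mcO_k}(Λ_1/fΛ_2) = \len_{\mcO_k}(Λ_2/fΛ_2) + \len_{\mcO_k}(Λ_1/Λ_2).
\]
Combining these two identities with the equality $\len_{\mcO_k}(fΛ_1/fΛ_2) = \len_{\mcO_k}(Λ_1/Λ_2)$ from the previous step yields $\len_{\mcO_k}(Λ_1/fΛ_1) = \len_{\mcO_k}(Λ_2/fΛ_2)$, as desired. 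Applying this with $Λ_1 = Λ$ an arbitrary $f$-stable lattice and $Λ_2 = A$ (which is itself an $f$-stable $\mcO_k$-lattice in $L = A[π^{-1}]$ since $A$ is finite flat over $\mcO_k$ and $f \in A$) gives the claimed $\len_{\mcO_k}Λ/fΛ = \len_{\mcO_k}A/fA$.

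There is no serious obstacle here; the only point requiring a word of care is the finiteness of all the lengths involved, which holds because $L/Λ_i$ and $Λ_1/Λ_2$ are finitely generated torsion $\mcO_k$-modules over a discrete valuation ring, hence of finite length, and because $f$ being regular (equivalently, a unit in the artinian ring $L$) ensures $fΛ$ has the same $\mcO_k$-rank as $Λ$ so that $Λ/fΛ$ is torsion. The diagram chase itself is entirely routine.
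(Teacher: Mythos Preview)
Your proof is correct, but the paper takes a different and much shorter route: it simply observes that both lengths equal $v(\det_k f)$, where $f$ is viewed as a $k$-linear endomorphism of the finite-dimensional $k$-vector space $L$. This is the standard identity $\len_{\mcO_k}(Λ/fΛ) = v(\det f)$ for an injective endomorphism of a lattice over a DVR, and since the right-hand side is visibly independent of $Λ$, the lemma follows in one line.

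Your argument, by contrast, is the direct commensurability/additivity proof and avoids invoking the determinant formula. It is more elementary in that sense, and it is essentially the proof of the determinant identity itself (one recovers the determinant version by applying your argument to a basis-adapted chain of lattices). One minor point of phrasing: in your final sentence you apply the comparison directly to $Λ_1 = Λ$ and $Λ_2 = A$, but these need not be nested; you should instead go through $Λ \cap A$ as you indicated earlier in the paragraph. This is clearly what you intend, and the argument stands.
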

\begin{proof}
Both lengths equal $v(\det_k f)$, where $f$ is viewed as endomorphism of the finite $k$-vector space $L$.
\end{proof}
It follows that we may assume $A_{\red}$ to be normal, i.e. the maximal order in $L_{\mr{red}}$. Namely, every element of $L_\red$ that is integral over $\mcO_k$ lifts by Hensel's Lemma to an integral element of $L$ . Those may be adjoined to $A$ by Lem. \ref{lem:lengths} without changing the length in question. Then $A = \prod_iA_i$ for certain $\mcO_k$-orders $A_i\subseteq L_i$ and
\begin{equation}\label{eq:0_dim_case}
\len_{\mcO_k} A/fA = \sum_i \len_{\mcO_k} A_i/f^{(i)}A_i.
\end{equation}
Filtering each $A_i$ by ideals
$$0=N_{i,s(i)}\subseteq \ldots \subseteq N_{i,1}\subseteq N_{i,0} = A_i$$
such that each graded piece is a free $A_{i,\red}$-module shows that
$$\len_{\mcO_k} A/f = \sum_i \sum_{j=0}^{s(i)-1} \rk_{A_{i,\mr{red}}} N_{i,j}/N_{i,j+1} \cdot \len_{\mcO_k} A_{i,\red}/f^{(i)}A_{i,\red}$$
which is identical with the right hand side of \eqref{eq:0_dim_case} since
$$\sum_{j = 0}^{s(i)-1} \rk_{A_{i,\mr{red}}} N_{i,j}/N_{i, j+1} = r_i.$$
This proves Thm. \ref{thm:int_num_identity} for $n = 0$.
\end{proof}

\begin{proof}[Proof for general n.]
The idea is to reduce to the case $n=0$ by passing to $A/(f_1,\ldots,f_n)$. The argument requires a choice of the $f_0,\ldots,f_n$ that allows to control $T(X,f)$ to some degree. (Note that the two sides of \eqref{eq:int_number_identity} depend only on $\mcZ\subseteq \mcX$, not on the sequence $f_0,\ldots,f_n$.)

\begin{lem}\label{lem:choice_of_f_i}
There are generators $I = (f_0, \ldots, f_n)$ such that $A/(f_1,\ldots,f_n)$ is $π$-torsion free.
\end{lem}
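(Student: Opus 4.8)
The plan is to find generators $f_0,\ldots,f_n$ of $I$ such that the quotient $A/(f_1,\ldots,f_n)$ has no $\pi$-torsion, which by $\mcO_k$ being a DVR is equivalent to flatness of $A/(f_1,\ldots,f_n)$ over $\mcO_k$. Since $A$ is Cohen--Macaulay and $\mcZ = V(f_0,\ldots,f_n)$ is artinian of codimension $n+1$ (so the $f_i$ form a maximal regular sequence), any $n$ of them will form a regular sequence by \cite{Stacks}*{Tag 02JN}, and $A/(f_1,\ldots,f_n)$ will be a one-dimensional Cohen--Macaulay local ring. The issue is purely that this quotient could have embedded $\pi$-torsion. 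The key point is that $\pi$ itself, together with $f_1,\ldots,f_n$ for a generic choice, should form a regular sequence, which is exactly the statement that $A/(f_1,\ldots,f_n)$ is $\pi$-torsion free.

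First I would observe that it suffices to choose $f_1,\ldots,f_n$ such that $\pi, f_1,\ldots,f_n$ is a regular sequence in $A$, and then complete to a generating set of $I$ by adding a suitable $f_0$ (possible by Nakayama since $f_1,\ldots,f_n$ together with one more element can be chosen to generate $I/\mfm I$, using that $\dim_{\kappa} I/\mfm I = n+1$ as $I$ is generated by a regular sequence of length $n+1$ in a regular-modulo... well, in a Cohen--Macaulay local ring). To produce such $f_1,\ldots,f_n$, I would use prime avoidance: the quotient $\ob A := A/\pi A$ is a Cohen--Macaulay local ring of dimension $n$, and $I/\pi A$ is $\mfm/\pi A$-primary in it (its vanishing locus is still artinian), hence $I$ maps onto an ideal of $\ob A$ that contains a system of parameters. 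Working inside $I$, I would pick $f_1,\ldots,f_n \in I$ one at a time, at each stage choosing $f_{j}$ to avoid all minimal primes of $(\pi, f_1,\ldots,f_{j-1})$; this is possible because $I$ is not contained in any such minimal prime (each has height $j < n+1$, while $I$ is $\mfm$-primary, hence of height $n+1$), so a general $\mcO_k$-linear combination of a fixed generating set of $I$ will do by prime avoidance (applicable since $\mcO_k/\pi = \kappa$ may be finite, but the standard "graded prime avoidance" or the fact that $I \not\subseteq \bigcup \mfp_i$ for finitely many primes $\mfp_i$ with $I \not\subseteq \mfp_i$ still holds over any commutative ring). After $n$ steps, $\pi, f_1, \ldots, f_n$ is a regular sequence in $A$ (at each stage $(\pi, f_1,\ldots,f_j)$ has a non-zerodivisor mod the previous terms since $f_{j+1}$ avoids the associated primes, all of which are minimal as $A$ and its relevant quotients are Cohen--Macaulay, equivalently unmixed).

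Then I would conclude: since $\pi, f_1,\ldots,f_n$ is regular, so is any permutation (regular sequences in Noetherian local rings are permutable), in particular $f_1,\ldots,f_n,\pi$ is regular, which says precisely that $\pi$ is a non-zerodivisor on $A/(f_1,\ldots,f_n)$, i.e.\ $A/(f_1,\ldots,f_n)$ is $\pi$-torsion free. Finally, to adjoin $f_0$: the image of $I$ in $A/(f_1,\ldots,f_n)$ is still primary to the maximal ideal (as $V(I)$ is artinian), so choose $f_0 \in I$ whose image generates this one-dimensional primary ideal modulo its square — then $f_0, f_1, \ldots, f_n$ generate $I$ by Nakayama. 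The main obstacle I anticipate is bookkeeping the prime-avoidance step cleanly when the residue field $\kappa$ is finite: one must phrase the argument as "$I$ is not contained in any of finitely many primes that do not already contain $I$" (which is true over any ring, no infinitude of the field needed) rather than "a generic element works", and verify at each stage that the relevant primes to avoid are exactly the (finitely many) associated primes of $A/(\pi, f_1,\ldots,f_{j})$ — all minimal by Cohen--Macaulayness, hence finite in number — and that $I$ escapes each of them by the height count. This is routine but needs to be stated with care.
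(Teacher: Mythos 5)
Your approach is genuinely different from the paper's. The paper modifies a single given generator $f'_n$ by adding a linear combination $\sum_{j\in J} u_j f'_{i(j)}$ of the \emph{other} generators, chosen so that $f_n$ becomes a unit at every generic point of $\Spec A/(\pi)$; this automatically keeps $(f'_0,\ldots,f'_{n-1},f_n)=I$, and then one passes to $A/(f_n)$ and induces on $n$. You instead try to pick all of $f_1,\ldots,f_n\in I$ directly by iterated prime avoidance against the minimal primes of $(\pi,f_1,\ldots,f_{j-1})$, and only afterwards complete to a generating system by a single $f_0$.

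There is a genuine gap in the final completion step. Your prime-avoidance condition makes $\pi,f_1,\ldots,f_n$ a regular sequence, but it does \emph{not} ensure that the images of $f_1,\ldots,f_n$ in $I/\mfm I$ are $\kappa$-linearly independent. Since $\dim_\kappa I/\mfm I = n+1$, the existence of an $f_0$ with $(f_0,\ldots,f_n)=I$ forces that linear independence — by Nakayama, $I=(f_0,\ldots,f_n)$ iff the $\bar f_i$ span $I/\mfm I$ — and nothing in your construction guarantees it. Concretely, take $A=\mcO_k[\![x,y]\!]$, $I=\mfm=(\pi,x,y)$, $n=2$: your step 1 permits $f_1=x^2$ (which avoids the unique minimal prime $(\pi)$ of $(\pi)$), and step 2 permits $f_2=y$; then $\pi,x^2,y$ is regular and $A/(x^2,y)=\mcO_k[\![x]\!]/(x^2)$ is indeed $\pi$-torsion free, but $I/(\mfm I+(x^2,y))$ is $2$-dimensional (spanned by $\pi$ and $x$), so $I$ cannot be recovered by adjoining a single $f_0$. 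Your phrase ``choose $f_0$ whose image generates this one-dimensional primary ideal modulo its square'' silently assumes that the image of $I$ in $A/(f_1,\ldots,f_n)$ needs only one more generator, which is precisely what can fail.

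The gap is repairable without changing your overall strategy: at step $j$, also require $f_j \notin \mfm I + (f_1,\ldots,f_{j-1})$. This is one additional (generally non-prime) ideal to avoid; prime avoidance with at most two non-prime members of the union still applies, and $I\not\subseteq \mfm I+(f_1,\ldots,f_{j-1})$ holds by induction on the linear independence of the $\bar f_i$ (so that $\dim_\kappa I/(\mfm I+(f_1,\ldots,f_{j-1}))=n+2-j>0$ for $j\le n$). With this adjustment, $\bar f_1,\ldots,\bar f_n$ are independent in $I/\mfm I$, and your concluding Nakayama argument for $f_0$ goes through. The rest of your reasoning — permutability of regular sequences in a Noetherian local ring, the height computation of the primes to avoid, the finite-residue-field caveat — is correct as stated.
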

\begin{proof}
We assume $n\geq 1$, otherwise there is nothing to prove. Let $η_1,\ldots,η_s$ be the finitely many generic points of $\Spec A/(π)$ and let $f'_0,\ldots,f'_n$ be any regular sequence generating $I$. Because $A/I$ is artinian, we in particular also have $η_j\notin \Spec A/I$ for all $j$. It follows that there exists for each $j$ some $0\leq i(j)\leq n$ with $f'_{i(j),η_j}\in A_{η_j}^\times$. By prime ideal avoidance, there also exists for each $j$ some $u_j \in A$ with $u_j \in A_{η_j}^\times$ but $u_j\notin A_{η_k}^\times$, $k\neq j$.

Consider the indices $J = \{ 1 \leq j \leq s \mid f_n' \notin A_{η_j}^\times\}$ and put
$$f_n := f_n' + \sum_{j\in J} u_j f_{i(j)}.$$
Then $(f_0',\ldots,f_{n-1}', f_n) = I$ as well, so $f_0', \ldots, f_{n-1}', f_n$ is a regular sequence because $A$ is Cohen--Macaulay \cite{Stacks}*{Tag 02JN}. By construction, the quotient $A/(π, f_n)$ has Krull dimension $n-1$, so $π, f_n$ is a regular sequence in $A$, again by \cite{Stacks}*{Tag 02JN}. Thus $π$ is a nonzero divisor in $A/(f_n)$, meaning $A/(f_n)$ is flat over $\mcO_k$. Furthermore, $A/(f_n)$ is Cohen--Macaulay again \cite{Stacks}*{Tag 02JN}, so we may replace $A$ by $A/(f_n)$ and conclude by induction.
\end{proof}

Let $f_0,\ldots,f_n$ be generators as in Lem \ref{lem:choice_of_f_i}. We claim that the map
\begin{equation}\label{eq:map_psi}
ψ^*:\mcO_k[\![z_1,\ldots,z_n]\!]\lr A,\ z_i\longmapsto f_i
\end{equation}
is finite and flat. Indeed, the $π$-torsion freeness of $A/(f_1,\ldots,f_n)$ implies that $(π,f_1,\ldots,f_n)$ form a regular sequence in $A$. Then $A/(π,f_1,\ldots,f_n)$ is artinian for dimension reasons and, because $A$ is special over $\mcO_k$, finite over $\mcO_k/(π)$. The topology on $A$ agrees with the $(π,f_1,\ldots,f_n)$-adic one, so the finiteness of $ψ^*$ follows from Nakayama. The flatness now follows from the regularity of $\mcO_k[\![z_1,\ldots,z_n]\!]$ and ``Miracle Flatness'' \cite{Stacks}*{Tag 00R4}.

Let $B = (\Spf \mcO_k[\![z_1,\ldots,z_n]\!])^{\mr{an}}$ denote the open unit ball.
\begin{lem}\label{lem:finite_flat}
The map $ψ:X\to B$ obtained from \eqref{eq:map_psi} is finite and flat of degree $\deg(ψ^*)$. More precisely, if $K\subseteq B$ is an affinoid domain, then
\begin{equation}\label{eq:inv_image_psi}
ψ^{-1}(K) = \mcM(A\tensor_{\mcO_k[\![z_1,\ldots,z_n]\!]} \mcO_K(K)).
\end{equation}
\end{lem}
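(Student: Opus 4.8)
The plan is to deduce Lemma \ref{lem:finite_flat} from the algebraic finiteness and flatness of $\psi^*$ established just above, using Berkovich's construction of the analytification of a special formal scheme and the base-change behavior of Berkovich's generic fiber functor. First I would recall that $X = (\Spf A)^{\mathrm{an}}$ and $B = (\Spf \mcO_k[\![z_1,\ldots,z_n]\!])^{\mathrm{an}}$, and that by the construction in \cite{Ber_vanishing_II}*{\S1} both are obtained by writing $A$ (resp. $\mcO_k[\![z_1,\ldots,z_n]\!]$) as an increasing union of affinoid pieces cut out by the vanishing conditions $v(\mathfrak{a}) \geq c$ for the ideal of definition $\mathfrak{a}$, and then $X$ (resp. $B$) is the union of the corresponding affinoid spaces $X_c$ (resp. $B_c$). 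The map $\psi^*$ is continuous for the $(\pi, f_1, \ldots, f_n)$-adic topologies (since, as shown above, the topology on $A$ is the $(\pi,f_1,\ldots,f_n)$-adic one, and $f_i = \psi^*(z_i)$), so it induces compatible maps of affinoid pieces $X_c \to B_c$ for a cofinal system of $c$, and hence a morphism $\psi: X \to B$ of analytic spaces.

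Next I would verify the formula \eqref{eq:inv_image_psi} for an affinoid domain $K \subseteq B$. The key point is that $\psi^{-1}(K)$ is again an affinoid domain and its coordinate ring is $A \otimes_{\mcO_k[\![z_1,\ldots,z_n]\!]} \mcO_K(K)$. Since $\psi^*$ is finite, $A$ is a finite $\mcO_k[\![z_1,\ldots,z_n]\!]$-module, so $A \otimes_{\mcO_k[\![z_1,\ldots,z_n]\!]} \mcO_K(K)$ is a finite $\mcO_K(K)$-algebra, hence $k$-affinoid; one checks that its Berkovich spectrum sits inside $X$ and equals $\psi^{-1}(K)$. For $K$ ranging over the standard affinoid pieces $B_c$ and over rational domains inside them this is a direct unwinding of Berkovich's construction: $\psi^{-1}(B_c) = X_c$ by construction of $\psi$ (the affinoid piece $X_c$ is cut out by $v(\pi), v(f_1), \ldots, v(f_n) \geq c$, i.e. by pulling back the conditions defining $B_c$), and for a rational subdomain $K \subseteq B_c$ the preimage $\psi^{-1}(K)$ is the corresponding rational subdomain of $X_c$, whose ring is the completed tensor product, which for a finite morphism coincides with the ordinary tensor product. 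Since the affinoid domains of the form $\psi^{-1}(B_c)$ and rational subdomains thereof form a basis, and since finiteness and flatness are local on the target, this gives the formula and the finite-flatness of $\psi$ in general. The degree is computed on any single affinoid piece: $[\mcO_K(K') : \mcO_K(K)] = \deg(\psi^*)$ where $K' = \psi^{-1}(K)$, because tensoring the finite flat $\mcO_k[\![z_1,\ldots,z_n]\!]$-algebra $A$ of rank $\deg(\psi^*)$ with $\mcO_K(K)$ preserves the rank.

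The main obstacle I anticipate is the bookkeeping needed to identify $\psi^{-1}(K)$ precisely with $\mcM(A \otimes \mcO_K(K))$ rather than with the Berkovich spectrum of some completed tensor product: one must argue that because $A$ is module-finite over $\mcO_k[\![z_1,\ldots,z_n]\!]$, no completion is necessary, i.e. $A \otimes_{\mcO_k[\![z_1,\ldots,z_n]\!]} \mcO_K(K)$ is already complete for the relevant norm and is $k$-affinoid. This is where the finiteness of $\psi^*$ (Nakayama, established above via the fact that $A/(\pi, f_1, \ldots, f_n)$ is finite over $\mcO_k/(\pi)$) does the real work. A secondary point is to make sure the affinoid pieces $X_c$ of $X = (\Spf A)^{\mathrm{an}}$ and $B_c$ of $B$ are set up compatibly so that $\psi(X_c) \subseteq B_c$; this follows from $v(f_i(x)) = v((\psi^*z_i)(x))$ and $v(\pi) = 1$, so the defining inequalities match. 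Once these identifications are in place, finiteness and flatness of $\psi$ over every affinoid domain $K$ follow formally from the corresponding algebraic properties of $\psi^*$ together with \cite{Ber2}*{Prop. 3.2.8} for openness, completing the proof of the lemma.
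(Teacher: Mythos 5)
Your route is genuinely different from the paper's. The paper reduces to the case of a cyclic finite extension $A_0 \to A_0[T]/(h(T))$ with $h$ monic (by induction on generators), where the key algebraic fact is the isomorphism $A_0[T]/(h) \iso A_0\{T\}/(h)$; once this is in place, $\Spf A$ is a closed formal subscheme of the ``relative restricted disk'' $\Spf A_0\{T\}$, whose generic fiber is transparently $\mcM(k\{T\}) \times_{\mcM(k)} (\Spf A_0)^{\mr{an}}$, and \eqref{eq:inv_image_psi} drops out from the identity $\mcO_K(K)\{T\}/(h) = \mcO_K(K)[T]/(h)$. You instead work directly with Berkovich's exhaustion of $X$ and $B$ by affinoid pieces and argue by base change. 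Your observation that module-finiteness makes completion superfluous is exactly right and is the substantive content here. However, the step you flag yourself as ``the main obstacle'' --- identifying $\psi^{-1}(B_c)$ with $\mcM\bigl(A\otimes_{\mcO_k[\![z]\!]} \mcO_{B_c}(B_c)\bigr)$ --- is asserted as ``a direct unwinding of Berkovich's construction'' but never carried out, and this is the whole content of the lemma for the basic pieces. Two points in particular need care. First, $\psi^{-1}(B_c) = X_c$ as affinoids is not automatic: Berkovich's $X_c$ is cut out by $v(S_i)\geq c$ for generators $S_i$ of the maximal ideal (or some chosen ideal of definition), and $(\pi, f_1,\ldots,f_n)$ is \emph{an} ideal of definition but not obviously the one one would choose to present $A$; one must show $\{v(f_j)\geq c\}$ is bounded in $X$ (using $\mfm^M\subseteq(\pi,f_1,\ldots,f_n)$), so that it is in fact a Weierstrass domain of some bona fide $X_{c'}$. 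Second, even granting that, the identification of its affinoid ring with the algebraic tensor product still requires comparing a completed localization with $A\otimes_{\mcO_k[\![z]\!]}\mcO(B_c)$, which is where the finiteness must actually be deployed rather than just invoked. The paper's monic-polynomial reduction sidesteps both issues by making the comparison a one-line algebraic identity, at the cost of the extra induction; your approach is closer to the geometry but needs these two verifications spelled out to be a complete proof.
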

\begin{proof}
The property of $ψ$ being finite and flat of degree $\deg(ψ)$ follows once we show \eqref{eq:inv_image_psi}. This identity requires neither the flatness nor the fact that the base ring is $\mcO_k[\![z_1,\ldots,z_n]\!]$. We obtain it by induction from the following statement.

Let $A_0\to A$ be a finite map of special complete local $\mcO_k$-algebras. Assume $A$ is generated by one element over $A_0$, say $A \iso A_0[T]/(h(T))$ with some monic polynomial $h(T)\in A_0[T]$. Then the canonical map
\begin{equation}\label{eq:map_to_convergent_ps}
A_0[T]/(h(T)) \lr A_0\{T\}/(h(T))
\end{equation}
is an isomorphism. The generic fiber functor in \cite{Ber_vanishing_II} is then such that the inverse image $ψ^{-1}(K)$ of an affinoid domain $K\subseteq (\Spf A_0)^{\mr{an}}$ is the Zariski closed subspace
$$ψ^{-1}(K) = V(h(T)) \subset \mcM(k\{T\}) \times_{\mcM(k)} K$$
which equals $\mcM\big( \mcO_K(K)\{T\}/(h(T))\big)$ and the lemma is proved.
\end{proof}
The morphism $ψ:X \to B$ is compatible with tropicalizations in the sense that $p\circ t_f = t_z \circ ψ$, where $p:\mbR^{n+1}\to \mbR^n$ denotes projection to the last $n$ coordinates. Let $P$ be the plane of $(y,x,x,\ldots,x)\in \mbR^{n+1}$.

\emph{Claim: There exist $r,c \in \mbR_{> 0}$ such that
\begin{equation}\label{eq:bound_support}
(y,x,\ldots,x)\in \Supp(T(X,f)) \cap P\quad \Longrightarrow\quad y > \min\{rx, c\}.
\end{equation}}
\begin{proof}[Proof of the claim.]
This is a Newton polygon argument. The finiteness of $ψ$ implies that $f_0$ satisfies a relation of the form
$$Q(f_1,\ldots,f_n; S) := S^d + \sum_{i = 0}^{d-1} q_i(f_1,\ldots,f_n)S^i = 0,\ \ \ q_i\in \mcO_k[\![z_1,\ldots,z_n]\!].$$
For every $z\in X$, the Newton polygon of $Q(f_1(z),\ldots,f_n(z); S)$ has finitely many nonpositive slopes. Moreover, since $v(f_i(z)) > 0$ for all $i$ and all $z$, the slope is zero precisely over the interval $[i_0, d]$, where $i_0$ is the minimal index such that $q_{i_0} \in \mcO_k[\![z_1,\ldots,z_n]\!]^\times$. The valuation $v(f_0(z))$ is a priori known to be positive because $f_0\notin A^\times$. It follows that $-v(f_0(z))$ is one of the finitely many negative slopes of the mentioned Newton polygon.

Now restrict to the case $v(f_1(z)) = \ldots = v(f_n(z)) = x$ and let $0\leq i < i_0$. Let $c_i = v(q_i(0,\ldots,0))$ be the valuation of the constant term of $q_i$. Then there is the bound
\begin{equation}\label{eq:bound_valuation_coefficient}
v(q_i(f_1(z),\ldots,f_n(z))) \geq \min\{x, c_i\}.
\end{equation}
There are only finitely many possible shapes for the Newton polygon over the interval $[0,i_0]$, so \eqref{eq:bound_valuation_coefficient} implies the existence of a bound on the slopes as claimed.
\end{proof}

We now come to the main arguments. The idea is to reduce from general $n$ to $n = 0$ by an application of Stokes' Theorem for the triangle in $\mbR^{n+1}$ bounded by the following three lines in $P$. (The scalar $ρ>0$ will be specified later, a sketch of the situation may be found below.)
\begin{enumerate}[leftmargin=*, label=(\arabic*), topsep=4pt, itemsep=4pt]
\item $D = [0,ρ] \cdot (1,\ldots,1)$, the diagonal,
\item $H =[0,ρ]\cdot (0, 1,\ldots,1)$, a horizontal base line and
\item $V =(0,ρ,\ldots,ρ) + [0,ρ]\cdot (1,0,\ldots,0)$, a vertical line.
\end{enumerate}
The boundary term for $D$ will turn out to be  $\int_X γ$, the one for $H$ will vanish and the boundary term for $V$ will be $\int_{V(f_1,\ldots,f_n)} v(f_0)$, which we know from the case $n=0$ to agree with $\len_{\mcO_k} A/I$.
We consider a subdivision of the cone spanned by $D$ and $H$. Put
\begin{equation}\label{eq:cone_subdivision}
\begin{aligned}
\Theta^- & = \{(y,x,\ldots,x),\ 0 \leq x\leq ρ,\ 0 \leq y \leq x\},\\
\Theta^0 & = \{(y,x,\ldots,x),\ ρ\leq x,\ 0\leq y \leq ρ\},\\
\Theta^+ & = \{(y,x,\ldots,x),\ ρ\leq x,\ ρ\leq y \leq x\}.
\end{aligned}
\end{equation}
%
%
%
%

We write $T = T(X, f)$ from now on. Let $\Theta \in \{\Theta^-,\Theta^0,\Theta^+\}$ be any of the three regions. Combining the bound \eqref{eq:bound_support} on $\Supp(T)\cap P$ with Lem. \ref{lem:support_lemma_artinian}, the closure in $\mbR_{\geq 0}^{n+1}$ of $\Theta \cap \Supp(P\wedge T)$ (originally a subset of $\mbR_{>0}^{n+1}$) is a polyhedral set. In fact, this is immediate for $\Theta^+\cap \Supp(P\wedge T)$ which is evidently closed in $\mbR_{\geq 0}^{n+1}$. The intersection $\Theta^0 \cap \Supp(P\wedge T)$ is similarly closed in $\mbR^{n+1}_{\geq 0}$ which follows from the existence of the constant $c$ in \eqref{eq:bound_support}. For $\Theta^-\cap \Supp(P\wedge T)$, we use the existence of the linear term in \eqref{eq:bound_support} together with Lem. \ref{lem:support_lemma_artinian}. (Choose $s$ in that lemma large in relation to $1/r$.)

We next specify $ρ$. Note for this that $V(f_1,\ldots,f_n) \subseteq X$ is a finite set of points. We thus may choose $ρ>0$ large enough to satisfy two conditions. First, Prop. \ref{prop:reg_sequence_statement} implies we may assume
\begin{equation}\label{eq:choice_of_rho}
T\vert_{\mbR \times (ρ,\infty)^n} = T(V(f_1,\ldots,f_n), f_0)\times (ρ,\infty)^n.
\end{equation}
Second, we also pick $ρ$ such that $ρ > \max\{v(f_0)\vert_{V(f_1,\ldots,f_n)}\}$. Endow $P$ with weight $x\wedge y$. We consider the $δ$-form $ω = ω_0\wedge P$ where $ω_0$ is the pws form on $P$ given by
\begin{equation}
\begin{aligned}
ω_0\vert_{\Theta^-} &= yd''x - x d''y,\\
ω_0\vert_{\Theta^0} &= -ρd''y,\\
ω_0\vert_{\Theta^+} &= 0
\end{aligned}
\end{equation}
and $0$ outside of these domains. We claim that the support of $ω\wedge T$ is compact. First, concerning $\Theta^-$, the fan property of $T$ near the origin, Lem. \ref{lem:support_lemma_artinian}, implies an analogous fan property for $P\wedge T\vert_{C_s}$, $s\geq 2$. Since the form $xd''y - yd''x$ restricts to $0$ on any line through the origin, we obtain that $ω\wedge T\vert_{C_s}$ vanishes near the origin for every $s\geq 2$. Combining this with the bound $y > \min\{rx, c\}$ for all $(y,x,\ldots,x)\in \Supp(T)\cap P$, we obtain that $ω\wedge T$ has support away from $H$. Concerning $\Theta^0$, the form $-ρd''y$ restricts to $0$ on the polyhedral set from \eqref{eq:choice_of_rho}. This establishes the compact support of $ω\wedge T$.

Now we may apply Stokes' Theorem to obtain
$$0 = \int_{\mbR^{n+1}} d'(ω \wedge T) = \int_{\mbR^{n+1}} d'ω \wedge T.$$
Recall that $d'ω = d'_Pω - \partial'ω$. Note that in the case at hand $d'_Pω \wedge T = 0$. Namely the form $xd''y - yd''x$ on $\Theta^-$ has the peculiar property that its derivative $d'xd''y - d'yd''x$ restricts to $0$ on every line in $P$, while $-ρd''y$ from $\Theta^0$ restricts to $0$ on the polyhedral set from \eqref{eq:choice_of_rho}. It follows that
\begin{equation}\label{eq:stokes_vanishing}
0 = \int_{\mbR^{n+1}} \partial'ω \wedge T.
\end{equation}
We briefly recall how to compute the boundary $\partial'ω$ of a pws form $ω$ on $\mbR^r$. This formula follows e.g. from \eqref{eq:formula_deriv_balanced}.
\begin{ex}\label{ex:boundary_of_pws}
Let $\mcC$ be a polyhedral complex structure on $\mbR^r$ that is subordinate to $ω$. Endow each $σ\in \mcC_r$ with the standard weight of $\mbR^r$. For each $τ\in \mcC_{r-1}$, fix a weight $μ_τ$. Every such $τ$ is a facet of precisely two $σ_1, σ_2 \in \mcC_r$ and we choose the normal vectors $n_{σ_1,τ}$ and $n_{σ_2,τ}$ such that $n_{σ_2,τ} = - n_{σ_1,τ}$. Then
\begin{equation}\label{eq:boundary_of_pws_form}
-\partial'ω = \sum_{σ\in \mcC_r} (ω_σ, n_{σ,τ}'')\vert_τ \wedge [τ, μ_τ].
\end{equation}
\end{ex}
Applying this in our situation, we obtain
\begin{equation}\label{eq:boundary_of_pws_int_number}
\begin{aligned}
-\partial'ω & = xD - x \cdot H - y\cdot V\\
& + ρ \cdot [(ρ, \ldots, ρ) + \mbR_{\geq 0}\cdot (0, 1,\ldots,1)]\\
& - ρ\cdot [(0,ρ,\ldots,ρ) + \mbR_{\geq 0}\cdot (0, 1,\ldots,1)].
\end{aligned}
\end{equation}

All occurring lines here are endowed with the standard weights given by the coordinate functions.
The (supports of the) second and third summand in \eqref{eq:boundary_of_pws_int_number} do not intersect $\Supp T$ by choice of $ρ$ and by \eqref{eq:bound_support}. We have also already established that $H$ does not meet $\Supp (P\wedge T)$.

We may now combine our various statements to compute $\int_X γ$. We first have
\begin{equation}
\int_X γ = \int_{\mbR^{n+1}_{>0}} (x\cdot \Delta) \wedge T = \int_{\mbR^{n+1}_{>0}} (x\cdot D)\wedge T.
\end{equation}
The first equality here is by definition of $γ$, the second because $\Supp T$ does not meet the line $\mbR_{\geq ρ}\cdot (1,\ldots,1)$ by the choice of $ρ$ in \eqref{eq:choice_of_rho}.
Now we apply the vanishing \eqref{eq:stokes_vanishing} and the computation \eqref{eq:boundary_of_pws_int_number}. We obtain
\begin{equation}\label{eq:int_number_intermediary}
\int_{\mbR^{n+1}_{>0}} (x\cdot D)\wedge T = \int_{\mbR^{n+1}_{>0}} (y \cdot V) \wedge T.
\end{equation}
By the choice of $ρ$ in \eqref{eq:choice_of_rho} again, the $\wedge$-product $V\wedge T$ equals the product $T(V(f_1,\ldots,f_n), f_0) \times \{(ρ,\ldots,ρ)\}$. Thus
$$\int_{\mbR^{n+1}_{>0}} y \cdot (V\wedge T) = \int_{V(f_1,\ldots,f_n)} v(f_0)$$
which is $\mr{len}_{\mcO_k} (A/I)$ by our computation in the case $n = 0$. The proof of Thm. \ref{thm:int_num_identity} is now complete.
\end{proof}
\phantom{\qedhere}
\end{proof}

\section{$δ$-Forms on Lubin--Tate Spaces}

\subsection{Formal $\mcO_F$-modules}
Let $F$ be a non-archimedean local field with ring of integers $\mcO_F$, uniformizer $π$ and residue field $\mbF_q$.

\begin{defn}\label{def:formal_O_F}
Let $R$ be a $π$-adically complete $\mcO_F$-algebra. By \emph{formal $\mcO_F$-module} over $R$ we mean a pair $(X, ι)$ of the following kind.
\begin{enumerate}[leftmargin=*]
\item $X$ is a $1$-dimensional, commutative formal group over $R$.
\item $ι:\mcO_F\to \End(X)$ is a \emph{strict} $\mcO_F$-action, meaning that $ι(a)$ acts on $\Lie(X)$ by multiplication with $a$, for every $a\in \mcO_F$.
\end{enumerate}
\end{defn}

We often write $X$ instead of $(X, ι)$. A homomorphism of $φ:X\to Y$ of formal $\mcO_F$-modules $X$ and $Y$ is an $\mcO_F$-linear homomorphism of formal groups. It is called an \emph{isogeny} if it is finite and locally free. In this case, $\ker(φ)$ is a finite locally free group scheme over $R$ with $\mcO_F$-action. Its degree will be of the form $\deg(φ) = q^{\mr{ht}(φ)}$ for a unique integer $\mr{ht}(φ)$ called the \emph{height of $φ$}, cf. \cite{ARGOS_7}*{Lem. 2.1}. We say that $X$ is of height $h$ if $ι(π)$ is an isogeny of height $h$. Our interest will always lie with formal $\mcO_F$-modules that are of some finite height in this sense.

We next explain an equivalent description of formal $\mcO_F$-modules in terms of $π$-divisible groups. Our only motivation for this is that we will sometimes need to cite results from the literature on $π$-divisible groups.

Given a formal $\mcO_F$-module $X$ over $R$ of height $h$, we denote by $X[π^n]$ the $π^n$-torsion in $X$. This is a finite and locally free group scheme over $R$ of degree $q^{nh}$ together with an $\mcO_F$-action. Taking the colimit over $n$ defines an equivalence of categories
\begin{equation}\label{eq:equiv_formal_pi_divisible}
\begin{aligned}
\left\{\text{\parbox{3.5cm}{\center formal $\mcO_F$-modules\\over $R$ of height $h$}}\right\} \overset{\iso}{\lr} &\ 
\left\{\text{\begin{varwidth}{\textwidth} \center formal $π$-divisible groups\\with strict $\mcO_F$-action\\
of height $h$ and dimension $1$\end{varwidth}}\right\}\\
(X,ι) \longmapsto &\ \left(X[π^n], ι\vert_{X[π^n]}\right)_{n\geq 1}.
\end{aligned}
\end{equation}
We explain the right hand. First assume $F$ is $p$-adic. Then by \emph{$π$-divisible} group, we simply mean a $p$-divisible group in the usual sense. The notion of dimension and being formal for a $p$-divisible group also have their usual meanings. Moreover, strictness of an $\mcO_F$-action has been defined in Def. \ref{def:formal_O_F} (2). Finally, we take the height relative to $\mcO_F$ so as to be consistent with our terminology for formal $\mcO_F$-modules. That is, $\mr{ht}_{\mbZ_p} = [F:\mbQ_p] \cdot \mr{ht}$ where $\mr{ht}_{\mbZ_p}$ is the usual height for $p$-divisible groups. The fact that \eqref{eq:equiv_formal_pi_divisible} is an equivalence of categories now follows from the results in \cite{Messing}*{\S2}.

Consider now the case that $F$ is a local function field. In this case, $\mcO_F \iso \mbF_q[\![π]\!]$ and $π$-divisible groups are meant in the sense of \cite{Hartl_z_div}*{Def. 7.1}. Strictness means that the integer $d$ in \cite{Hartl_z_div}*{Def. 7.1 (iv)} equals $1$. Dimension and height have their usual meanings, while being formal is meant in the sense of \cite{Hartl_z_div}*{Def. 1.1}. The fact that \eqref{eq:equiv_formal_pi_divisible} is an equivalence is \cite{Hartl_z_div}*{Thm. 8.3}.

\begin{prop}[Rigidity, \cite{RZ_book}*{(2.1)} and \cite{Hartl_z_div}*{Thm. 9.8}]\label{prop:rigidity}
Let $R$ be an $\mcO_F$-algebra in which $π$ is nilpotent. Let $I\subseteq R$ be a nilpotent ideal and let $X$,  $Y$ be two formal $\mcO_F$-modules of height $h$ over $R$. Then
$$F\tensor_{\mcO_F}\Hom(X, Y) \overset{\iso}{\lr} F\tensor _{\mcO_F}\Hom(R/I\tensor_R X, R/I\tensor_R Y).$$
\end{prop}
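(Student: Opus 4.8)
\textbf{Proof plan for Proposition~\ref{prop:rigidity} (Rigidity).}
The plan is to reduce to the case where $I^2 = 0$, and then to prove the statement by a direct deformation-theoretic argument, exactly as in the $p$-divisible group case. First I would observe that a finite filtration $I \supseteq I^2 \supseteq \cdots \supseteq I^N = 0$ reduces the general nilpotent ideal to the square-zero case: if the proposition holds for square-zero ideals, then applying it successively to the pairs $(R/I^{k+1}, I^k/I^{k+1})$ and composing the resulting isomorphisms yields the general case. So from now on assume $I^2 = 0$.

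For the square-zero case, under the equivalence \eqref{eq:equiv_formal_pi_divisible} the formal $\mcO_F$-modules $X$, $Y$ correspond to formal $π$-divisible groups with strict $\mcO_F$-action, and a homomorphism of formal $\mcO_F$-modules is the same as an $\mcO_F$-linear homomorphism of the associated $π$-divisible groups. So it suffices to prove: for formal $π$-divisible groups $\mcG$, $\mcH$ over $R$ with $π$ nilpotent and $I\subseteq R$ square-zero, the reduction map $\Hom(\mcG,\mcH)\to \Hom(\mcG_0,\mcH_0)$ (with $\mcG_0 = R/I\tensor_R\mcG$, etc.) has kernel and cokernel killed by a power of $π$; passing to $F\tensor_{\mcO_F}(-)$ then gives the isomorphism. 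The classical input here is the crystalline/Grothendieck--Messing deformation theory: a homomorphism $\mcG_0\to\mcH_0$ lifts to $\mcG\to\mcH$ if and only if a certain obstruction class in a group built from $\Lie$ and its dual (an $I$-module, hence a module over $R/I$) vanishes, and the indeterminacy of a lift is measured by the same kind of $I$-module. Multiplication by $π$ is nilpotent on any such $I$-module since $π$ is nilpotent in $R$; more precisely, $[\mcG_0]$ being a $π$-divisible group means $π\colon\mcG_0\to\mcG_0$ is an isogeny, so $π^{\mr{ht}}$ kills the relevant Hom and Ext groups after reduction, which forces the kernel and cokernel of the reduction map to be annihilated by a fixed power of $π$. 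In the $p$-adic case this is the content of \cite{RZ_book}*{(2.1)} (ultimately Drinfeld's rigidity lemma via \cite{Messing}), and in the function field case it is \cite{Hartl_z_div}*{Thm.~9.8}; I would simply cite these, since the statement we need is literally what those references prove once the square-zero reduction is in place.

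The one genuinely bookkeeping step — and the place I expect the only mild friction — is checking that the square-zero dévissage is compatible with the $\mcO_F$-linear structure and with tensoring by $F$ over $\mcO_F$. This is routine: the functor $F\tensor_{\mcO_F}(-)$ is exact on $\mcO_F$-modules, so an inverse system of maps of $\mcO_F$-modules each of which becomes an isomorphism after $F\tensor_{\mcO_F}(-)$ composes to a map that is an isomorphism after $F\tensor_{\mcO_F}(-)$; and all lifting obstructions respect the $\mcO_F$-action because the $\mcO_F$-action on $\mcG$, $\mcH$ is part of the data and Messing/Hartl's theory is functorial. Hence no new ideas beyond the cited rigidity results are required, and the proof is short: reduce to $I^2=0$, invoke \cite{RZ_book}*{(2.1)} or \cite{Hartl_z_div}*{Thm.~9.8}, and conclude.
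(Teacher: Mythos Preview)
Your plan is sound as a sketch of how the cited references establish rigidity, but there is nothing to compare against: the paper does not give its own proof of this proposition. It is stated as a citation result, with the references \cite{RZ_book}*{(2.1)} and \cite{Hartl_z_div}*{Thm.~9.8} placed directly in the proposition header, and the text moves on immediately. So the intended ``proof'' is simply to invoke those sources. Your dévissage to the square-zero case followed by the Grothendieck--Messing/Hartl input is exactly the content of those references, and your write-up would serve as a reasonable expansion of the citation; but for the purposes of the paper, a bare reference is all that is expected.
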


Let $R$ be an $\mcO_F$-algebra in which $π$ is nilpotent and let $X$, $Y$ be two formal $\mcO_F$-modules of height $h$ over $R$. By \emph{quasi-homomorphism} from $X$ to $Y$ (resp. \emph{quasi-endomorphism} of $X$), we mean an element of
$$\Hom^0(X, Y) := F\tensor_{\mcO_F} \Hom(X, Y),\quad \End^0(X) := F\tensor_{\mcO_F}\End(X).$$
A quasi-homomorphism or quasi-endomorphism $φ$ is called a quasi-isogeny if some multiple $π^nφ$ is an isogeny. In this situation, we define the height of $φ$ as $\mr{ht}(φ) = \mr{ht}(π^nφ) - nh$. In our situation (formal $\mcO_F$-modules of height $h$) and if $R$ is noetherian, being a quasi-isogeny is equivalent to $φ\neq 0$ at every maximal ideal of $R$.

\subsection{An Intersection Problem}
Denote by $\breve F$ the completion of a maximal unramified extension of $F$ and let $\mcO_{\breve F}$ and $\mbF$ denote its ring of integers and residue field, respectively.  Fix $h\geq 1$ and let $\mbX$ be a formal $\mcO_F$-module of height $2h$ and dimension $1$ over $\mbF$. For each $n\geq 0$, denote by $\mcM_n$ the Lubin--Tate formal deformation spaces for level $n$. It represents the functor
$$(\Sch/\Spf \mcO_{\breve F})^{\mr{op}} \lr (\mr{Set}),\ S \longmapsto \left\{ (X,ρ,α) \right\}/\iso$$
of isomorphism classes of the following kind of quadruples:
\begin{enumerate}[leftmargin=*]
\item $X/S$ denotes a formal $\mcO_F$-module of height $2h$.
\item $ρ$ is a quasi-isogeny of height $0$ over the special fiber $\ob S = \mbF\tensor_{\mcO_{\breve F}} S$ to the fixed formal $\mcO_F$-module,
$$ρ:\ob S \times_S X \lr \ob S \times _{\Spec \mbF} \mbX.$$
\item $α$ is a Drinfeld level structure, cf. \cite{Drinfeld_elliptic}*{\S4},
$$α:(\mcO_F/π^n\mcO_F)^{\oplus 2h}→X[π^n].$$
\end{enumerate}
By a result of Drinfeld \cite{Drinfeld_elliptic}*{Prop. 4.3}, $\mcM_n$ is a regular local formal scheme of dimension $2h$ and flat over $\mcO_{\breve F}$. For example,
$$\mcM_0 \iso \Spf \mcO_{\breve F}[\![z_1,\ldots,z_{2h-1}]\!].$$
Moreover, also by \cite{Drinfeld_elliptic}*{Prop. 4.3},
\begin{equation}\label{eq:proj_maps}
π_{n+k,n}:\mcM_{n+k} \lr \mcM_n,\ (X,ρ,α)\longmapsto (X,ρ,π^kα).
\end{equation}
defines a finite flat morphism of degree
$$\big\vert\ker\big(GL_{2h}(\mcO_F/(π^{k+n})) \lr GL_{2h}(\mcO_F/(π^n))\big)\big\vert.$$
Let $D_F = \End^0_F(\mbX)$, which is a central division algebra of Hasse invariant $1/2h$ over $F$. Denote its ring of integers by $\mcO_{D_F}$. Then there is an action
$$\mcO_{D_F}^\times \times GL_{2h}(\mcO_F) \circlearrowright \mcM_n,\quad (γ,g)\cdot (X,ρ,α) = (X, γ\circ ρ, α\circ g).$$
The cycles we are interested in are constructed as follows. Let $E/F$ be an unramified quadratic extension with ring of integers $\mcO_E$. Choose an embedding $E \to \breve F$ and fix an action $ι_{\mbX}:\mcO_E→\End_{\mcO_F}(\mbX)$ that makes $\mbX$ into a strict formal $\mcO_E$-module. (Its height as $\mcO_E$-module is then $h$.) Let $\mcN_n$ denote the space of isomorphism classes of $(Y, ρ, α)$, where $Y$ is a strict formal $\mcO_E$-module of dimension $1$ and $\mcO_E$-height $h$, where
$$ρ:\ob S \times _S Y \lr \ob S\times _{\Spec \mbF} \mbX$$
is an $\mcO_E$-linear quasi-isogeny of height $0$ and where
$$α:\big(\mcO_E/π^n \mcO_E\big)^{\oplus h} \lr Y[π^n]$$
is a Drinfeld level structure as $\mcO_E$-module. There is a closed immersion (by \cite{RZ_book}*{Prop. 2.9}, which also holds if $F$ is a local function field) given by the forgetful map
\begin{equation}\label{eq:forget_map_LT}
\mcN_0\longhookrightarrow \mcM_0,\ (Y,ρ) \longmapsto (Y,ρ).
\end{equation}
From now on, we also fix an $\mcO_F$-linear identification $τ:\mcO_F^{2h}\iso \mcO_E^h$. Then \eqref{eq:forget_map_LT} lifts to closed immersions
$$\mcN_n\longhookrightarrow \mcM_n,\ (Y,ρ,α)\longmapsto (Y, ρ, α\circ τ).$$
\begin{defn}\label{def:int_number_LT}
For $(γ,g)\in \mcO_{D_F}^\times \times GL_{2h}(\mcO_F)$, we define
\begin{equation}\label{eq:int_number_LT}
\mr{Int}_n(γ,g) := \mr{len}_{\mcO_{\breve F}} \mcO_{\mcN_n \cap (γ,g)\cdot \mcN_n} \in \mbZ_{\geq 1} \cup \{\infty\}.
\end{equation}
\end{defn}
Note that $\mcN_n$ and $\mcM_n$ are both regular, local, and of formal dimensions $2 \dim \mcN_n = \dim \mcM_n$. We will later impose a condition that implies that the intersection $\mcN_n\cap (γ, g)\mcN_n$ is artinian, in which case \eqref{eq:int_number_LT} is the natural definition of the intersection number of $\mcN_n$ and $(γ,g)^*\mcN_n$ inside $\mcM_n$.

We recall some of the relevant group theory from Li's paper \cite{Li_LT}.
Let $D_E = \End^0_E(\mbX)$, which is a central simple algebra of Hasse invariant $1/h$ over $E$. The units of its ring of integers $\mcO_{D_E}^\times$ act on $\mcN_n$, as does $GL_h(\mcO_E)$. Naturally $D_E\subset D_F$, while an embedding $GL_h(E)\to GL_{2h}(F)$ is given through the choice of $τ$. Then $\mcN_n \to \mcM_n$ is $\mcO_{D_E}^\times \times GL_h(\mcO_E)$-equivariant. It follows that $\mr{Int}_n(γ,g)$ depends only on the class of $(γ,g)$ in
$$\mcO_{D_E}^\times \backslash \mcO_{D_F}^\times /\mcO_{D_E}^\times\ \times\ GL_h(\mcO_E)\backslash GL_{2h}(\mcO_F)/GL_h(\mcO_E).$$
To each element of either double quotient, one associates a group-theoretic invariant, the so-called \emph{invariant polynomial}, cf. \cite{Li_LT}*{Def. 1.1}. These are monic polynomials in $F[t]$ of degree $h$. We denote by $P_γ,P_g\in F[t]$ the invariant polynomial of $γ$ (resp. $g$). The element $γ$ is called \emph{regular semisimple} if $P_γ$ is irreducible and if $P_γ(0)P_γ(1) \neq 0$. (The second condition is implied by irreducibility if $h \geq 2$.) If $γ$ is regular semisimple, then the intersection $\mcN_0 \cap γ\mcN_0$ is artinian and hence $\mr{Int}_n(γ,g) \in \mbZ_{>0}$ for all $n$ and $g$. This follows a posteriori from Li's arguments, but can also be shown directly, cf. \cite{LM}*{Lem. 3.6}.

Write $K_n = \ker (GL_{2h}(\mcO_F)\to GL_{2h}(\mcO_F/π^n\mcO_F))$ and endow $GL_{2h}(F)$ with the Haar measure such that $\mr{vol}(K_n) = 1$. We are interested in the following statement.
\begin{thm}[Li \cite{Li_LT}*{Thm. 1.3}]\label{thm:LT_main}
There is a constant $c$, depending only on $F$ and $n$, such that for all $(γ,g)$ with $γ$ regular semisimple,
$$\mr{Int}_n(γ,g) = c \int_{K_n} |\mr{Res}(P_γ, P_{xg})|^{-1}_F dx.$$
Here, $\mr{Res}(P_γ,P_{xg})$ is the resultant of the two invariant polynomials in question.
\end{thm}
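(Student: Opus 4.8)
The plan is to combine the analytic machinery developed in §5 with the group-theoretic input from Q. Li's paper, using the specialization of Theorem~\ref{thm:int_num_identity} to the Lubin--Tate setting. First I would record that for $γ$ regular semi-simple the intersection $\mcN_n\cap (γ,g)\mcN_n$ is artinian (by \cite{LM}*{Lem.\ 3.6}), so that $\mcM_n$, $\mcN_n$ and the translate $(γ,g)\mcN_n$ fit the hypotheses of Thm.~\ref{thm:int_num_identity}: $\mcM_n$ is regular (hence Cohen--Macaulay) and flat over $\mcO_{\breve F}$ of formal dimension $2h$, and $(γ,g)\mcN_n$, being isomorphic to $\mcN_n$, is regular of dimension $h$, flat over $\mcO_{\breve F}$. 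Writing $\mcZ = (γ,g)\mcN_n$ and choosing a regular sequence $f_1,\dots,f_h\in \mcO_{\mcM_n}(\mcM_n)$ cutting out $\mcZ$ (possible after localizing, since $\mcM_n$ is local regular), Theorem~\ref{thm:int_num_identity} gives
\begin{equation*}
\mr{Int}_n(γ,g) = \len_{\mcO_{\breve F}}\mcO_{\mcN_n\cap (γ,g)\mcN_n} = \int_{N_n\setminus \partial N_n} h_n\vert_{N_n},
\end{equation*}
where $N_n$ is the generic fiber of $\mcN_n$ and $h_n = (-1)^{h-1}ϕ\,(d'd''ϕ)^{h-1}$ is the $δ$-form attached to $\mcZ$ as in \eqref{eq:def_mysterious_delta_form}. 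Equivalently, pulling back along the closed immersion $N_n\hookrightarrow M_n$ and using the translate, this is $\int_{N_n}(γ,g)^*h_n\vert_{N_n}$ for $h_n$ the form attached to $\mcN_n$ itself.

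Next I would make the distance function explicit. The key point, as indicated in §1.5, is that $ϕ_{\mcN_n}$ on $M_n$ has a concrete description in terms of Newton polygons of the universal formal $\mcO_F$-module: at a point $x\in M_n$ corresponding to a formal $\mcO_F$-module $X$ with quasi-isogeny $ρ$, the value $ϕ_{\mcN_n}(x)$ measures how close $ρ$ is to respecting an $\mcO_E$-structure, and this can be read off from the valuations of the coefficients of the relevant quasi-isogeny, i.e.\ from the Newton polygon data of $X[π^m]$ for large $m$. Concretely I would pass to infinite level (or to level $n$ large) where the universal object and the relevant quasi-isogenies admit coordinates, and express $t_f^\star$ of the diagonal cycle $x\cdot\Delta$ — which is what $h_n$ equals over the locus where $ϕ$ is finite, by Lem.~\ref{lem:special_form} and \eqref{eq:presentation_gamma} — as an integral over a tropicalized skeleton. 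The evaluation of this tropical integral is the content of Prop.~\ref{prop:evaluation_integrals} in the cited form; carrying it out amounts to integrating $v(f_0)$ over the $0$-dimensional intersection tropical cycle, which by the $n=0$ case of Thm.~\ref{thm:int_num_identity} is a sum of local lengths weighted by ramification and residue degrees.

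Then I would match the output of the tropical/analytic computation with the orbital integral on the right-hand side. The invariant polynomials $P_γ$ and $P_{xg}$ control, respectively, the position of the fixed cycle and its translate, and the resultant $\mr{Res}(P_γ,P_{xg})$ is precisely the quantity whose $F$-adic valuation records the Newton-polygon distance between the two $\mcO_E$-structures after applying $x\in K_n$. The integral $\int_{K_n}|\mr{Res}(P_γ,P_{xg})|_F^{-1}\,dx$ therefore decomposes, via the stratification of $K_n$ by the valuation of the resultant, into a sum over strata of (volume of stratum) $\times$ (length contribution), and this must be made to agree termwise with the tropical integral after the normalization $\mr{vol}(K_n)=1$; the constant $c$ absorbs the discrepancy between the chosen normalization of $δ$-form integration (ramification/residue-degree weights from \eqref{eq:0_dim_intersection}) and the counting measure on $K_n$. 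To keep the argument clean I would not reprove Li's group theory but rather cite \cite{Li_LT}*{Thm.\ 1.3} for the shape of the answer once the left-hand side has been identified with $\int_{N_n}(γ,g)^*h_n\vert_{N_n}$; the novelty is the analytic derivation of that identity for large $n$, not a new proof of the orbital-integral identity.

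The main obstacle I anticipate is the explicit description of $ϕ_{\mcN_n}$ and the evaluation of the resulting tropical integral: one must (i) identify the skeleton $\Sigma(N_n, (γ,g)^*f)$ and its weights with something governed by Newton polygons — which requires a careful analysis of the universal formal $\mcO_F$-module in coordinates at level $n$, exactly the place where Li's ``passage to higher level'' trick enters — and (ii) show that the combinatorics of that skeleton reproduces the stratification of $K_n$ by $v(\mr{Res}(P_γ,P_{xg}))$ with the correct multiplicities. Both steps are computational rather than conceptual, but they are the heart of the matter; the reduction via Thm.~\ref{thm:int_num_identity} and the bookkeeping of normalization constants are comparatively routine.
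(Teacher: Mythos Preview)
Your proposal correctly identifies the application of Thm.~\ref{thm:int_num_identity} to obtain $\mr{Int}_n(γ,g) = \int_{N_n}(γ,g)^*h_n\vert_{N_n}$, but it misses the essential first step and consequently does not give a proof. The paper does \emph{not} try to match this analytic integral directly with the orbital integral $\int_{K_n}|\mr{Res}(P_γ,P_{xg})|^{-1}dx$. Instead it first reduces Thm.~\ref{thm:LT_main} to the simpler Thm.~\ref{thm:LT_main_reformulation} by a purely algebraic projection-formula argument, equations \eqref{eq:projection_formula_drinfeld_projection}--\eqref{eq:proj_formula_application}: pushing $\mcN_{n+k}$ down to level $n$ and pulling $\mcN_n$ up to level $n+k$ expresses $\mr{Int}_n(γ,g)$ as a weighted sum over $K_n/K_{n+k}$ of intersection numbers at level $n+k$. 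This is where the integral over $K_n$ and the constant $c = [\mcM_{n+k}:\mcM_n]\cdot[\mcN_{n+k}:\mcN_n]^{-2}$ come from. The analytic machinery is then used only to prove Thm.~\ref{thm:LT_main_reformulation}: for $n\geq n_0$ one has $\mr{Int}_n(γ,g) = |\mr{Res}(P_γ,P_g)|^{-1}_F$, a single intersection number equal to a single resultant, with no integral over $K_n$ on either side.

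Your plan to ``stratify $K_n$ by the valuation of the resultant'' and match termwise has no counterpart on the analytic side, and your fallback of citing \cite{Li_LT}*{Thm.~1.3} is circular since that is the theorem being proved. What you would need to supply is (a) the projection-formula reduction above, and then (b) the explicit evaluation at high level: Prop.~\ref{prop:function_comparison_LT} identifies $(γ,g)^*ϕ_n$ with $v((γ,g)^-(α_n))$ for $n\geq n_0$, and Prop.~\ref{prop:evaluation_integrals} evaluates the resulting tropical integral as $|\mr{Res}(P_γ,P_g)|^{-1}$ via the elementary-divisor decomposition $(γ,g)^- = VSW$ and a B\'ezout-type identity on the tropicalization $T(N_n,β_n)$. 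Your sketch gestures at part of (b), but without (a) you cannot reach the statement of Thm.~\ref{thm:LT_main}.
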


For the proof of Thm. \ref{thm:LT_main}, Li first performs the following reduction steps. Recall that $π_{n+k,n}$ is our notation for the projection map in \eqref{eq:proj_maps}. We have already mentioned that they are flat with degrees given by
$$\begin{aligned}
[\mcN_{n+k}:\mcN_n]\ &= |GL_h(\mcO_E/π^{n+k}\mcO_E)| \cdot |GL_h(\mcO_E/π^n\mcO_E)|^{-1}\\
[\mcM_{n+k}:\mcM_n]\ &= |GL_{2h}(\mcO_F/π^{n+k}\mcO_F)| \cdot |GL_{2h}(\mcO_F/π^n\mcO_F)|^{-1}.
\end{aligned}$$
Li's idea is to first use the projection formula for the following two identities of cycles on $\mcM_{n+k}$ and $\mcM_n$:
\begin{equation}\label{eq:projection_formula_drinfeld_projection}
\begin{aligned}
π_{n+k,n,*}(\mcN_{n+k}) = & [\mcN_{n+k}:\mcN_n] \mcN_n,\\
π_{n+k,n}^*(\mcN_n) = & [\mcN_{n+k}:\mcN_n]^{-1}\sum_{x\in K_n/K_{n+k}} (1,x) \mcN_{n+k}.\end{aligned}
\end{equation}
Thus for all $k\geq 0$,
\begin{equation}\label{eq:proj_formula_application}
\begin{aligned}
\Int_n(γ,g) & = [\mcN_{n+k}:\mcN_n]^{-1}\langle π_{n+k,n,*}\mcN_{n+k},\ (γ,g)\mcN_n\rangle_{\mcM_n}\\
&= [\mcN_{n+k}:\mcN_n]^{-1} \langle \mcN_{n+k},\  (γ,g)π_{n+k,n}^*\mcN_n\rangle_{\mcM_{n+k}}\\
&= \frac{[\mcM_{n+k}:\mcM_n]}{[\mcN_{n+k}:\mcN_n]^2} \int_{K_n} \mr{Int}_{n+k}(γ,xg)\ dx.
\end{aligned}
\end{equation}
The second equality here merely rewrote the sum obtained from \eqref{eq:projection_formula_drinfeld_projection} as an integral, using $[K_n:K_{n+k}] = [\mcM_{n+k}:\mcM_n]$. The factor $[\mcM_{n+k}:\mcM_n]\cdot [\mcN_{n+k}:\mcN_n]^{-2}$ is independent of $k$ as long as $k\geq 1$ and will be equal to the constant $c$ in Thm. \ref{thm:LT_main}. In this way, Thm. \ref{thm:LT_main} is reduced to the following statement.

\begin{thm}[\cite{Li_LT}*{Thm. 4.1 combined with Thm. 5.20}]\label{thm:LT_main_reformulation}
Given regular semisimple $γ$, there exists $n_0$ such that for all $n\geq n_0$ and all $g$,
$$\mr{Int}_n(γ,g) = |\mr{Res}(P_γ,P_g)|^{-1}_F.$$
\end{thm}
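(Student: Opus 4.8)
The plan is to combine the intersection number identity of Thm.~\ref{thm:int_num_identity} with an explicit analytic computation on the generic fiber. First I would apply Thm.~\ref{thm:int_num_identity} to the situation at hand: the formal scheme $\mcX = \mcM_n$ is regular (hence Cohen--Macaulay) and flat over $\mcO_{\breve F}$ of dimension $2h$, the translate $(\gamma,g)\mcN_n$ is a regular local formal subscheme of dimension $h$, flat over $\mcO_{\breve F}$, and $\mcN_n$ itself is also regular of complementary dimension $h$. When $\gamma$ is regular semi-simple the intersection $\mcN_n\cap(\gamma,g)\mcN_n$ is artinian. Writing $\mcZ = (\gamma,g)\mcN_n = V(f_1,\ldots,f_h)$ as a complete intersection (possible since $\mcN_n$ is regular and the $f_i$ generate the ideal of a regular subscheme), with $\varphi = \varphi_{\mcZ}$ the distance function from \eqref{eq:intro_distance_function} and $h_n := (-1)^{h-1}\varphi(d'd''\varphi)^{h-1}$ the associated $\delta$-form, Thm.~\ref{thm:int_num_identity} applied with $\mcY = \mcN_n$ gives
$$\mr{Int}_n(\gamma,g) = \len_{\mcO_{\breve F}}\mcO_{\mcN_n\cap(\gamma,g)\mcN_n} = \int_{N_n} (\gamma,g)^* h_n\vert_{N_n},$$
where $N_n$ is the generic fiber of $\mcN_n$ and the pullback is the one from Thm.~\ref{thm:pull_backs_exist}. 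So the arithmetic problem is converted into the evaluation of an integral of a $\delta$-form over the Berkovich space $N_n$.

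The second, and genuinely new, step is to evaluate this integral for $n$ large. The key input will be an explicit description of the distance function $\varphi_{(\gamma,g)\mcN_n}$ on $N_n$ in terms of Newton polygons of formal $\mcO_F$-modules: a point $y\in N_n$ corresponds to a formal $\mcO_E$-module $Y$ over a valuation ring, the translated cycle $(\gamma,g)\mcN_n$ records a quasi-isogeny datum, and the distance $\varphi(y)$ measures how far $Y$ (with its $(\gamma,g)$-modified rigidification and level structure) is from genuinely lifting to $\mcM_n$ along $\mcN_n$. I expect that, after passing to level $n\ge n_0$, the relevant locus in $N_n$ (the support of $h_n\vert_{N_n}$, which is compact by Thm.~\ref{thm:int_num_identity}) becomes a disjoint union of pieces indexed by the cosets $K_n/K_{n+k}$ appearing implicitly, on each of which $\varphi$ is an explicit piecewise-linear function governed by the slopes of the characteristic polynomial $Q$ of $f_0$ over $\mcO_F[\![z_1,\ldots,z_{h-1}]\!]$, exactly as in the Newton polygon argument in the proof of Thm.~\ref{thm:int_num_identity} (the bound \eqref{eq:bound_support}). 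The integral $\int_{N_n} h_n\vert_{N_n}$ then reduces, via Prop.~\ref{prop:reg_sequence_statement} and the fiber-integration formula \eqref{eq:int_number_intermediary}, to a sum of local contributions, each of which is the valuation of a resultant-type expression. Matching these with the group-theoretic invariant polynomials $P_\gamma$, $P_g$ and their resultant $\mr{Res}(P_\gamma,P_g)$ is the crux: one must identify the Newton-polygon data of the analytic cycle with the combinatorics of the double coset $(\gamma,g)$.

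I would organize the remaining computation as follows. (i) Describe $N_n$ and the embedding $N_n\hookrightarrow M_n$ analytically, identifying the skeleton $\Sigma(N_n, f)$ relevant for $h_n$; this is where I invoke Thm.~\ref{thm:skeleton_flat} and Cor.~\ref{cor:support_delta_form} to localize the support. (ii) Compute $\varphi_{(\gamma,g)\mcN_n}$ explicitly on this skeleton via the deformation theory of formal $\mcO_E$-modules and the rigidity statement Prop.~\ref{prop:rigidity}, expressing it through the invariant polynomial $P_\gamma$. (iii) Use Prop.~\ref{prop:reg_sequence_statement} to put $T(N_n\cap\{\varphi>C\}, f\times g)$ in product form, reducing $\int_{N_n} h_n\vert_{N_n}$ to a $0$-dimensional integral as in the $n=0$ case of Thm.~\ref{thm:int_num_identity}; this $0$-dimensional integral is a sum of valuations $v(f_0)$ at the finitely many points of a fiber, which is $\len_{\mcO_{\breve F}}$ of an artinian quotient. (iv) Identify that length with $\lvert\mr{Res}(P_\gamma,P_g)\rvert_F^{-1}$ by a direct computation with the invariant polynomials, using that for $n\ge n_0$ the level structure decouples the relevant data. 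The main obstacle will be step (ii)--(iv): making the dictionary between the Newton polygon of $f_0$ on the Lubin--Tate tower and the resultant of the invariant polynomials completely explicit, and controlling how large $n_0$ must be so that the product decomposition \eqref{eq:choice_of_rho} and the decoupling of the level-$n$ structure both hold. This is precisely the point where Q.~Li's passage to higher level is essential, and I would follow the structure of \cite{Li_LT}*{\S5} for the group-theoretic bookkeeping while replacing his formal-geometric arguments with the integral computation above.
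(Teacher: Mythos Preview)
Your first step---applying Thm.~\ref{thm:int_num_identity} to convert $\mr{Int}_n(\gamma,g)$ into the integral $\int_{N_n}(\gamma,g)^*h_n\vert_{N_n}$---matches the paper exactly (eq.~\eqref{eq:analytic_reformulation}). From there, however, your outline diverges from the actual argument and misses the idea that makes the computation go through.

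First, a minor point: the cosets $K_n/K_{n+k}$ do not appear in the proof of Thm.~\ref{thm:LT_main_reformulation}. They were already used up in the reduction \eqref{eq:proj_formula_application} from Thm.~\ref{thm:LT_main} to Thm.~\ref{thm:LT_main_reformulation}; at this stage one works with a single fixed pair $(\gamma,g)$ and there is no further averaging or decomposition into coset pieces.

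Second, and this is the real gap, your steps (ii)--(iv) do not contain the mechanism that actually makes $\varphi$ computable. The paper does not analyze $\varphi$ on a skeleton via Prop.~\ref{prop:reg_sequence_statement}. The key move is algebraic: one decomposes $\gamma\otimes g\in\mcO_{D_F}\otimes M_{2h}(\mcO_F)^{\mathrm{op}}$ into its $\mcO_E$-linear and $\mcO_E$-Galois-linear parts $(\gamma,g)^+ + (\gamma,g)^-\sigma$ as in \eqref{eq:galois_decomposition}. Prop.~\ref{prop:function_comparison_LT} then proves that for $n\ge n_0$ one has the identity of functions
\[
(\gamma,g)^*\varphi_n \;=\; v\big((\gamma,g)^-(\alpha_n)\big)
\]
on $N_n$, where $\alpha_n$ is the universal level structure. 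The reason this holds only for large $n$ is a Newton-polygon bound on $a_n$ which forces $(\gamma,g)^*\varphi_n < a_0$ and thereby kills the first, moduli-theoretic condition in \eqref{eq:expression_phi_n_I}. This is precisely where the ``passage to higher level simplifies'' phenomenon lives, and it is invisible in your sketch.

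With that identification in hand, the integral is evaluated in Prop.~\ref{prop:evaluation_integrals} by writing $(\gamma,g)^-$ in elementary-divisor form $VSW$ with $S=\mathrm{diag}(S_1,\dots,S_h)$, invoking Li's result that $|\mr{Res}(P_\gamma,P_g)|_F^{-1}$ equals the reduced norm $r_1\cdots r_h$ of $(\gamma,g)^-$, and then comparing $\int (x\Delta)\wedge T(N_n,\widetilde S\beta_n)$ with $\int (x\Delta)\wedge T(N_n,\beta_n)$ via a tropical B\'ezout-type identity (Prop.~\ref{prop:bezout}). Prop.~\ref{prop:reg_sequence_statement} is not used in this final section at all; your proposed route through a product decomposition of $T(N_n\cap\{\varphi>C\},\,f\times g)$ would not produce the multilinearity in the $r_i$ that is needed.
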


The remainder of the article is devoted to the proof of Thm. \ref{thm:LT_main_reformulation}.

\begin{rmk}
The setting we introduced here is a simplified version of \cite{Li_LT}. Li also proves a generalization of Thm. \ref{thm:LT_main} to ramified quadratic extensions $E/F$ and to Hecke translates of $\mcN_n$.
\end{rmk}

\subsection{Analytic Reformulation}
Let $M_n = \mcM_n^{\mr{an}}$ and $N_n = \mcN_n^{\mr{an}}$ denote the generic fibers in the sense of \S5.2. By regularity and the fact that $\mcM_n$ is local, $\mcN_n$ is a complete intersection \cite{Stacks}*{Tag 0E9J}. Let
$$ϕ_n:= ϕ_{\mcN_n}:|M_n\setminus N_n|→ \mbR$$
be the distance function from \eqref{eq:def_distance_function} and let $h_n:=(-1)^{h-1}ϕ_n(d'd''ϕ_n)^{h-1}$ be the Green current for $N_n$ from Thm. \ref{thm:Green_form_complete_inter}. By Thm. \ref{thm:int_num_identity}, we have
\begin{equation}\label{eq:analytic_reformulation}
\Int_n(γ,g) = \int_{N_n}(γ,g)^*h_{n}\vert_{N_n}
\end{equation}
and our aim in the following is to evaluate the right hand side. (Strictly speaking, \eqref{eq:analytic_reformulation} computes the length of $\mcN_n\cap (γ,g)^{-1}\mcN_n$ but this agrees with $\mr{Int}_n(γ,g)$.) We always assume that $γ$ is regular semisimple, which is the case of interest. Let $C$ be any non-archimedean extension of $\breve F$ with ring of integers $\mcO_C$. The $C$-points of $M_n$ (resp. $N_n$) are then the $\mcO_C$-points of $\mcM_n$ (resp. $\mcN_n$).
\begin{lem}\label{lem:expressing_phi}
Let $x\in N_n(C)$ be a $C$-point and let $(Y, ρ, α)/\Spf \mcO_C$ be the corresponding point in $\mcN_n(\mcO_C)$. The value $(γ,g)^*ϕ_n(x) = ϕ_n((γ,g)(x))$ equals
\begin{equation}\label{eq:expression_phi_n_I}
\max\left\{ε > 0\ \left|\ \text{\begin{varwidth}{\textwidth}$(γρ)^{-1}ι_{\mbX}(\mcO_E)γρ$ acts by endomorphisms
on $\mcO_C/(π)^ε\tensor_{\mcO_C}Y$,\\ $α g τ^{-1}$ is $\mcO_E$-linear w.r.t. this $(γρ)^{-1}ι_{\mbX}(\mcO_E)(γρ)$-action\end{varwidth}}\right\}.\right.
\end{equation}
\end{lem}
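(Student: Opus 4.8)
\textbf{Proof plan for Lemma \ref{lem:expressing_phi}.} The plan is to unwind the definition of the distance function $ϕ_n$ in terms of the universal deformation problem defining $\mcM_n$ and $\mcN_n$, translate the condition ``$f\in\mcI_{\mr{sp}((γ,g)(x))}$'' into a deformation-theoretic statement, and match it with the two conditions in \eqref{eq:expression_phi_n_I}. Recall from \eqref{eq:def_distance_function} that for $y\in M_n\setminus N_n$ we have $ϕ_n(y)=\min\{v(f(y)):f\in\mcI_{\mr{sp}(y)}\}$, where $\mcI=\ker(\mcO_{\mcM_n}\to\mcO_{\mcN_n})$. Since $v(f(y))$ depends only on the image of $f$ in $\mcO_{M_n,y}/\mfm_y$ through the rank-$1$ valuation $v_y$, and since $y=(γ,g)(x)$ corresponds to a point of $\mcM_n(\mcO_C)$ (using that $C$-points of $M_n$ are $\mcO_C$-points of $\mcM_n$, as $\mcM_n$ is a formal scheme flat over $\mcO_{\breve F}$), the quantity $ϕ_n((γ,g)(x))$ has the following modular interpretation: it is the largest $ε$ such that the reduction of $y$ modulo $(π)^ε$ — i.e.\ the $\mcO_C/(π)^ε$-point of $\mcM_n$ obtained by base change — lies in the closed formal subscheme $\mcN_n$. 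Indeed, the generators of $\mcI_{\mr{sp}(y)}$ cut out $\mcN_n$ near $\mr{sp}(y)$, so all of them have valuation $\geq ε$ at $y$ precisely when $y\bmod(π)^ε$ factors through $\mcN_n$; this is where I would need to be slightly careful about the distinction between $\mcN_n$ and its ideal, but since $\mcM_n$ is regular and $\mcN_n$ is a complete intersection (hence $\mcI$ is generated by a regular sequence $f_1,\dots,f_h$ near $\mr{sp}(y)$), we get $ϕ_n(y)=\min_i v(f_i(y))=\max\{ε:\ y\bmod(π)^ε\in\mcN_n(\mcO_C/(π)^ε)\}$ cleanly.

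Next I would describe the moduli point $(γ,g)(x)$ explicitly. If $x\in N_n(C)$ corresponds to $(Y,ρ,α)\in\mcN_n(\mcO_C)$, then its image in $\mcM_n(\mcO_C)$ under the composite closed immersion $\mcN_n\hookrightarrow\mcM_n$ is the $\mcO_F$-module $Y$ (forgetting the $\mcO_E$-structure) together with the quasi-isogeny $ρ$ and the Drinfeld level structure $α\circ τ:(\mcO_F/π^n)^{2h}\to Y[π^n]$. Applying $(γ,g)\in\mcO_{D_F}^\times\times GL_{2h}(\mcO_F)$ gives the point $(Y,\ γ\circ ρ,\ α\circ τ\circ g)$ of $\mcM_n(\mcO_C)$. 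By definition of $\mcN_n$ as the locus inside $\mcM_n$ of deformations that admit a strict $\mcO_E$-action compatible with $ι_{\mbX}$ and for which the level structure is $\mcO_E$-linear (after the fixed identification $τ$), the reduction $(Y,γρ,ατg)\bmod(π)^ε$ lies in $\mcN_n$ if and only if: (i) the formal $\mcO_F$-module $\mcO_C/(π)^ε\otimes_{\mcO_C}Y$ carries an $\mcO_E$-action extending its $\mcO_F$-action and inducing the one on $\mbX$ via the quasi-isogeny $γρ$ — equivalently, the transported action $(γρ)^{-1}ι_{\mbX}(\mcO_E)(γρ)$, a priori only a family of quasi-endomorphisms, actually consists of genuine endomorphisms of $\mcO_C/(π)^ε\otimes_{\mcO_C}Y$; and (ii) the level structure $ατg\circ\tau^{-1}:(\mcO_E/π^n)^h\to Y[π^n]$ is $\mcO_E$-linear for this action. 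These are exactly the two conditions in \eqref{eq:expression_phi_n_I}, so taking the supremum over such $ε$ yields the claimed formula.

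The main obstacle I anticipate is the precise matching of condition (i) with the intrinsic description of the closed subscheme $\mcN_n\subseteq\mcM_n$: one must check that ``$\mcN_n$ modulo $(π)^ε$'' is genuinely the functor of deformations admitting the $\mcO_E$-structure, which requires invoking the closed-immersion result \cite{RZ_book}*{Prop.\ 2.9} (and its function-field analogue \cite{Hartl_z_div}*{Thm.\ 9.8}) together with the rigidity statement Prop.\ \ref{prop:rigidity} to know that the $\mcO_E$-action, if it exists on the truncation, is unique and is forced to be the transported one $(γρ)^{-1}ι_{\mbX}(\mcO_E)(γρ)$. Concretely: over the special fiber the quasi-isogeny $γρ$ transports $ι_{\mbX}(\mcO_E)$ to a subalgebra of $\End^0(Y_{\mbF})$, and rigidity propagates this to quasi-endomorphisms over any $\mcO_C/(π)^ε$; the subtle point is that being in $\mcN_n$ demands these quasi-endomorphisms be integral, and that there are no \emph{other} $\mcO_E$-structures to consider. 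A secondary technical point is compatibility of the level structure across $τ$ and $τ^{-1}$ — here one just needs to track definitions, since $τ:\mcO_F^{2h}\iso\mcO_E^h$ was fixed once and for all and the immersion $\mcN_n\hookrightarrow\mcM_n$ was defined using it. Once these two points are settled, the rest is a formal unwinding of the valuation $v_y$ through the $\mcO_C$-point, which I would not belabor.
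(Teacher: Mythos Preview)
Your proposal is correct and follows essentially the same route as the paper: identify $ϕ_n((γ,g)(x))$ as the maximal $ε$ for which the reduced point $(Y,γρ,α τ g)\bmod(π)^ε$ lands in $\mcN_n$, then unwind the moduli description of $\mcN_n$ inside $\mcM_n$. The paper's proof is terser on exactly the two points you flag---the uniqueness of the $\mcO_E$-structure (handled by rigidity) and why the maximum interpretation of $ϕ_n$ is valid---so your added detail is welcome rather than extraneous. One small remark: you write the level-structure condition as $α\,τ\,g\,τ^{-1}$ being $\mcO_E$-linear, whereas the lemma statement has $α\,g\,τ^{-1}$; the paper is silently identifying the $\mcN_n$-level structure $α$ with its image $α\circ τ$ in $\mcM_n$ (cf.\ \eqref{eq:point_mod_epsilon}), so your formula is the literal one and the two agree under this identification.
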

\begin{proof}
For every $\mcO_F$-algebra $R$ in which $π$ is nilpotent, the set $\mcN_n(R)$ equals all those $(X, ρ, α)\in\mcM_n(R)$ that satisfy the following two conditions.
\begin{enumerate}[wide, labelindent=0pt, labelwidth=!, label=(\arabic*), topsep=4pt, itemsep=4pt]
\item The fixed $\mcO_E$-action $ι_{\mbX}$ on $\mbX$ can always be pulled back to an action by quasi-endomorphisms
$$ρ^{-1}ι_{\mbX}(\mcO_E)ρ \subseteq \End^0(R/π\tensor_R X) = \End^0(X).$$
(The equality on the right hand side here is by Prop. \ref{prop:rigidity}.) The first condition is now that the stronger inclusion
\begin{equation}\label{eq:pullback_O_E_action}
ρ^{-1}ι_{\mbX}(\mcO_E)ρ \subseteq \End(X)
\end{equation}
holds. In fact, this is the defining property of $\mcN_0 \subset \mcM_0$, compare \eqref{eq:forget_map_LT}. Namely, the strictness condition for the resulting action of $\mcO_E$ on $X$ will be automatic because $E/F$ is unramified.
\item The level structure $α$ as formal $\mcO_F$-module defines a level structure as formal $\mcO_E$-module. That is, assuming condition (1), $α\circ τ^{-1}$ is $\mcO_E$-linear with respect to \eqref{eq:pullback_O_E_action}.
\end{enumerate}
Now $ϕ_n((γ, g)(x))$ equals the maximal valuation $ε \in v(C^\times)\subseteq \mbR$ such that the composition
\begin{equation}\label{eq:point_mod_epsilon}
\mcO_C/(π)^ε\tensor_{\mcO_C} (Y, γρ, αg): \Spec \mcO_C/(π)^ε \lr \Spec \mcO_C \lr \mcM_n
\end{equation}
factors through $\mcN_n$. (We can take a maximum here because $\mcN_n$ is defined by the vanishing of finitely many functions in $\mcM_n$.) Specializing the above description of $\mcN_n(\mcO_C/(π)^ε)$ to points of the form \eqref{eq:point_mod_epsilon} proves \eqref{eq:expression_phi_n_I}.
\end{proof}
From now on, we simply say that $γ^{-1}\mcO_Eγ$ acts on a point $(Y, ρ, α)$ instead of the more precise formulation
\begin{equation}\label{eq:terminology_O_E_acts}
(γρ)^{-1} ι_{\mbX}(\mcO_E) (γρ) \subseteq \End^0(R/πR\tensor_R Y) \cap \End(Y).
\end{equation}
The expression in \eqref{eq:expression_phi_n_I} is very complicated because already the first condition ($γ^{-1}\mcO_Eγ$ acting mod $(π)^ε$) is described by $h$ equations and then the evaluation of the second condition is dependent on the first since it assumes the existence of the $γ^{-1}\mcO_Eγ$-action. Our first aim is to simplify this for $n\gg 0$.

Let $a_n:N_n→\mbR$ be the distance function for the inclusion of the closed point $\Spec \mbF→\mcN_n$. Then $a_0$ is simply
\begin{equation}\label{eq:expression_phi_0}
a_0(Y,ρ) = \max\left\{0<ε\leq 1\ \left|\
\text{\begin{varwidth}{\textwidth} \center $ρ:\mcO_C/(π)^ε\tensor_{\mcO_C}X\to \mcO_C/(π)^ε\tensor_\mbF \mbX$\\
is an isomorphism\end{varwidth}}\right\}\right..
\end{equation}
Indeed, a morphism $\Spec \mcO_C/(π)^ε$ can only factor through $\Spec \mbF$ if $ε \leq 1$ because $\mbF = \mcO_{\breve F}/(π)$. Then \eqref{eq:expression_phi_0} just expresses the moduli description of $\Spec \mbF \to \mcN_n$. For $n\geq 1$, $a_n$ is described as follows. By \cite{Drinfeld_elliptic}*{Prop. 4.3 (3)}, the closed point $\Spec \mbF \to \mcN_n$ agrees with the locus where the level structure sections vanish. Choose any coordinate $\mcY \iso \Spf \mcO_{\mcN_n}[\![t]\!]$ for the universal formal $\mcO_E$-module $\mcY/\mcN_n$ to view the universal level structure $α_n = (α_{n,1},\ldots,α_{n,h})$ as a set of sections of $\mcO_{\mcN_n}$. Then $a_n = \min\{a_{n,i}\}$ with $a_{n,i} = v(α_{n,i})$. In fact, this definition does not require formal models. By the following lemma, one may take any choice (or local choices) of coordinates $(Y^h,0)\iso (\mbD^h_{N_n},0)$, where $Y$ is the generic fiber of $\mcY$ and $\mbD_{N_n}→N_n$ the relative open unit disk. The composition $N_n\overset{α_n}{\to} Y^h \to \mbD^h_{N_n}$ may be viewed as a tuple of functions to $\mbD$ and the minimum of their valuations is precisely $a_n$.
\begin{lem}\label{lem:disk_invariance}
Let $F:(\mbD,0)^h\iso (\mbD,0)^h$ be an automorphism of the open unit polydisk in $h$ variables over a non-archimedean field $C$ that fixes the origin. Let $a := \min\{v(t_1),\ldots,v(t_h)\}$, where the $t_i$ are the coordinate functions. Then $F^*a = a.$
\end{lem}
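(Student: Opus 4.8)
The statement is that an automorphism $F$ of the open unit polydisk $(\mbD,0)^h$ over $C$ fixing the origin preserves the function $a = \min\{v(t_1),\ldots,v(t_h)\}$. First I would reduce to a question about the skeleton. The function $a$ is a pws (indeed G-pwl) function on $\mbD^h$; more precisely, if $p:\mbD^h\to\mbD^h$ is the identity and $t:\mbD^h\to\mbG_m^h$ records the coordinates (on the locus where they are invertible), then $a$ is the pullback $t^*(\min\{x_1,\ldots,x_h\})$ extended by the recipe of \eqref{eq:extending_phi} across the coordinate hyperplanes. The identity $F^*a = a$ is a statement about continuous $\mbR$-valued functions on $\mbD^h$, so it can be checked pointwise. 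Since the Abhyankar points (equivalently, the points lying on skeletons) are dense in $\mbD^h$ and $a$, $F^*a$ are continuous, it suffices to check $F^*a(x) = a(x)$ for $x$ a point of the standard skeleton $\Sigma(\mbG_m^h)\cap \mbD^h$, i.e. for $x$ the Gauss point $\eta_{0,(z_1,\ldots,z_h)}$ of a polydisk of polyradius $(z_1,\ldots,z_h)$ with all $z_i>0$, where we then have $a(x) = \min\{z_1,\ldots,z_h\}$ and more generally $v_x(f)$ for $f\in C[\![t_1,\ldots,t_h]\!]$ converging on $\mbD^h$ is the minimum over monomials of $v(\text{coefficient}) + \sum n_i z_i$.

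Next I would make the automorphism explicit. Write $F = (F_1,\ldots,F_h)$ with $F_i\in C[\![t_1,\ldots,t_h]\!]$, convergent on $\mbD^h$, and $F_i(0) = 0$ since $F$ fixes the origin. Since $F$ is an automorphism fixing $0$, its linear part $L = (\partial F_i/\partial t_j)(0)$ is an invertible matrix over $\mcO_C$ — indeed over $\mcO_C^\times$ in the sense that $\det L\in\mcO_C^\times$ — because the inverse $F^{-1}$ is also of this shape and the chain rule at $0$ gives $L\cdot(DF^{-1})(0) = \mr{id}$ with both matrices having entries in $\mcO_C$ (coefficients of functions bounded by $1$ on $\mbD^h$). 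The key inequality to establish is $v_x(F_i)\ge a(x) = \min_j z_j$ for all $i$, with equality for at least one $i$; applying the same to $F^{-1}$ then yields $F^*a = a$. For the inequality $v_x(F_i)\ge \min_j z_j$: every monomial $c_\alpha t^\alpha$ appearing in $F_i$ has $|c_\alpha|\le 1$ (boundedness on the unit polydisk) and $\alpha\ne 0$, hence contributes $v(c_\alpha) + \sum_j \alpha_j z_j \ge \sum_j\alpha_j z_j \ge \min_j z_j$, so $v_x(F_i)\ge\min_j z_j$.

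The remaining — and main — point is that equality holds for some $i$, i.e.\ $\min_i v_x(F_i) = \min_j z_j$; this is where the invertibility of $L$ enters. Pick an index $j_0$ with $z_{j_0} = \min_j z_j$. Since $\det L\in\mcO_C^\times$, some entry $L_{i_0,j_0}$ is a unit, i.e.\ the linear monomial $L_{i_0,j_0}t_{j_0}$ appears in $F_{i_0}$ with $v(L_{i_0,j_0}) = 0$ and contributes exactly $z_{j_0}$ to the Gauss valuation. Any other monomial $c_\alpha t^\alpha$ of $F_{i_0}$ contributes $v(c_\alpha)+\sum_j\alpha_j z_j$; I would show this is $\ge z_{j_0}$ always, and argue that the monomials contributing \emph{exactly} $z_{j_0}$ are precisely among the linear ones $t_j$ with $z_j = z_{j_0}$, whose coefficients form (a sub-row of) a unit matrix and hence cannot all cancel — more carefully, the graded reduction of $F_{i_0}$ at $x$, which lives in the graded residue field $\wt{\mcH(x)}$, is the sum of the graded reductions of exactly those monomials achieving the minimum valuation, and its degree-$z_{j_0}$ part is $\sum_{j:\, z_j = z_{j_0}} \wt{L_{i_0,j}}\,\wt{t_j}$. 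Since the $\wt{t_j}$ for $z_j = z_{j_0}$ are algebraically independent homogeneous elements of $\wt{\mcH(x)}$ of the same degree $z_{j_0}$ and the coefficient row $(\wt{L_{i_0,j}})_{z_j = z_{j_0}}$ is nonzero for a suitable choice of $i_0$ (because the corresponding columns of the invertible matrix $\wt L$ over the residue field $\kappa(C)$ are linearly independent, so not all rows vanish on them), this graded reduction is nonzero, giving $v_x(F_{i_0}) = z_{j_0} = a(x)$. The only subtlety I anticipate is keeping the bookkeeping of "which monomials achieve the minimum" clean when several $z_j$ coincide with the minimum and when higher-degree monomials could a priori also hit valuation $z_{j_0}$ (they cannot, since $\sum_j\alpha_j z_j > z_{j_0}$ once $|\alpha|\ge 2$ and all $z_j>0$ — but one should note this uses $z_j>0$ strictly, which holds since $x$ is an \emph{interior} point of $\mbD^h$, and handle the boundary/coordinate-hyperplane points by the extension formula \eqref{eq:extending_phi} and continuity as in the density reduction above). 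With this, $\min_i v_x(F_i)\le a(x)$, combined with the previous paragraph's $\min_i v_x(F_i)\ge a(x)$, gives $F^*a = a$ at all Gauss points, hence everywhere by continuity and density.
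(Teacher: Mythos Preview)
Your argument is correct, but you are working much harder than necessary. The paper's proof is three lines: since $F$ maps $\mbD^h$ to itself and fixes $0$, each component $F_i$ lies in $\mcO_C[\![t_1,\ldots,t_h]\!]$ with vanishing constant term, whence the ultrametric inequality gives $v_x(F_i)\ge \min_j v_x(t_j)$ at every point $x$, i.e.\ $F^*a\ge a$; then applying the \emph{same} inequality to $F^{-1}$ gives $(F^{-1})^*a\ge a$, which after substituting $x\mapsto F(x)$ reads $a\ge F^*a$.

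You actually write the phrase ``applying the same to $F^{-1}$'' in your plan, but then abandon it and instead prove the equality $\min_i v_x(F_i)=\min_j z_j$ directly by analysing the Jacobian at $0$ and the graded reductions of the linear monomials. That analysis is correct (your point that $\det L\in\mcO_C^\times$ forces, for each set of columns, some row of $\bar L$ to be nonzero on those columns is fine), but it is entirely superfluous once you have the inequality in one direction and the observation that $F^{-1}$ has the same shape. Likewise, the density reduction to Gauss points is unnecessary: the inequality $F^*a\ge a$ holds pointwise on all of $\mbD^h$ by the ultrametric triangle inequality, with no need to restrict to Abhyankar points first.
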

\begin{proof}
Any such $F$ is given by a tuple $(F_1,\ldots,F_h)$ of integral power series $F_i\in \mcO_C[\![X_1,\ldots,X_h]\!]$. Since $F(0) = 0$, these have vanishing constant coefficient. Then $F^*a \geq a$. Applying the same argument to $F^{-1}$ finishes the proof.
\end{proof}

The ring $\mcO_{D_F} \tensor_{\mcO_F} M_{2h}(\mcO_F)^{\mr{op}}$ acts from the left on $\Hom_{\mcO_F}(\mcO_F^{2h}, \mbX)$. It contains $\mcO_E\tensor_{\mcO_F}\mcO_E$ as subring: The left copy stems from the inclusion $\mcO_E\subseteq \End_{\mcO_F}(\mbX)$, the right one from the fixed choice $τ:\mcO_F^{2h}\iso \mcO_E^h$. Write
$$\mcO_{D_F} \tensor_{\mcO_F} M_{2h}(\mcO_F)^{\mr{op}} = A \oplus A σ$$
for the decomposition into $\mcO_E$-linear and $\mcO_E$-Galois linear elements. Here, $σ\in M_{2h}(\mcO_F)^{\mr{op}}$ denotes coordinate wise Galois conjugation on $\mcO_E^h$. We obtain a decomposition
\begin{equation}\label{eq:galois_decomposition}
γ\tensor g = (γ,g)^+ + (γ,g)^- σ,\ \ \ (γ,g)^+,(γ,g)^-\in A.
\end{equation}
Assume that $(Y,ρ,α) \in \mcN_n(R)$, with $π$ nilpotent in $R$, has the additional property that $ρ^{-1}γρ \in \End^0(R/πR\tensor_R Y) \cap \End(Y)$. (In this case we say that $γ$ acts on $Y$.) Then $γ^{-1}\mcO_Eγ$ acts on $Y$ in the sense of \eqref{eq:terminology_O_E_acts}, and $αg$ is $\mcO_E$-linear for the $γ^{-1}\mcO_Eγ$-action if and only if $γαg$ is $\mcO_E$-linear for the standard $\mcO_E$-action. The latter can now be reformulated as $(γ,g)^- α = 0$. The condition that $γ$ acts (in the above sense) holds for the closed point $\Spec \mbF \to \mcN_0$ because $\End_{\mcO_F}(\mbX) = \mcO_{D_F}$, so we obtain
\begin{multline}\label{eq:expression_phi_n_II}
\min\{π_{n,0}^*(a_0),\ (γ,g)^*ϕ_n\}(Y,ρ,α)\\
= \max \left\{0 < ε\leq 1\ \left|\ 
\text{\begin{varwidth}{\textwidth} \center
$ρ:\mcO_C/(π)^ε\tensor_{\mcO_C} Y \to \mcO_C/(π)^ε \tensor_\mbF\mbX$ is an\\ isomorphism and $(γ,g)^-α = 0$ mod $(π)^ε$\end{varwidth}}\right\}.\right.
\end{multline}
Fix a coordinate $\mcY \iso \Spf \mcO_{\mcN_n}[\![t]\!]$ for the universal $\mcO_E$-module over $\mcN_n$. This provides a coordinate $\mbX \iso \Spf \mbF[\![t]\!]$ over the closed point also. The action of $(γ,g)^-$ on $\Hom_{\mcO_E}(\mcO_E^h, \mbX) \iso \mbX^h$ is then expressed through an $h$-tuple of power series $(γ,g)^-\in \mbF[\![t_1,\ldots,t_h]\!]^h$ with vanishing constant coefficients. We denote by $(γ,g)^- \in \mcO_{\breve F}[\![t_1,\ldots,t_h]\!]^h$ any choice of lift with vanishing constant coefficients. Then it makes sense to apply $(γ,g)^-$ to the universal level structure, providing a tuple of functions $(γ,g)^-(α_n) \in \mcO_{N_n}(N_n)^h$.

\begin{prop}\label{prop:function_comparison_LT}
Given regular semisimple $γ$, there exists $n_0$ such that for all $n\geq n_0$ and all $g$, there is an identity of continuous functions on $N_n$,
\begin{equation}\label{eq:function_identity_LT}
(γ,g)^*ϕ_n = v((γ,g)^-(α_n)).
\end{equation}
\end{prop}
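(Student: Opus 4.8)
The strategy is to compare the two sides pointwise on $C$-points for arbitrary non-archimedean extensions $C/\breve F$, using the moduli-theoretic descriptions of $ϕ_n$ from Lem. \ref{lem:expressing_phi} and of $a_n$ from the discussion preceding Lem. \ref{lem:disk_invariance}, and to show that for $n$ large the complicated expression \eqref{eq:expression_phi_n_II} degenerates so that only the level-structure condition $(γ,g)^-α = 0$ survives. Concretely, \eqref{eq:expression_phi_n_II} is a minimum of two quantities: the "isomorphism mod $(π)^ε$" condition (which is exactly $π_{n,0}^*(a_0) = π_{n,0}^*\big($distance to the closed point of $\mcN_0)$, hence $\geq a_n/(\text{ramification bound})$) and the "$(γ,g)^-α = 0$ mod $(π)^ε$" condition (which is precisely $v((γ,g)^-(α_n))$ by the definition of $α_n$ as a tuple of functions and by disk-invariance, Lem. \ref{lem:disk_invariance}). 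The point is that as $n\to\infty$ the level structure $α_n$ forces the $\mcO_E$-action hypothesis in Lem. \ref{lem:expressing_phi} to hold automatically to high order, so that the expression \eqref{eq:expression_phi_n_I} coincides with \eqref{eq:expression_phi_n_II}, and then the first term in the minimum becomes irrelevant compared to the second.

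The key steps, in order, would be: (1) Reduce to evaluating both sides at an arbitrary $C$-point $x = (Y,ρ,α)\in N_n(C)$ and use Lem. \ref{lem:expressing_phi} to express $(γ,g)^*ϕ_n(x)$ as the maximal $ε$ for which $γ^{-1}\mcO_Eγ$ acts on $Y$ mod $(π)^ε$ and $αgτ^{-1}$ is $\mcO_E$-linear mod $(π)^ε$. (2) Show that the level-structure $α_n$ at level $n$ witnesses, via Drinfeld's description \cite{Drinfeld_elliptic}*{Prop. 4.3 (3)} of the closed point as the vanishing locus of the level sections, that $a_n(x)\to\infty$ on any fixed compact piece — more precisely that $π_{n,0}^*(a_0)$ grows with $n$ relative to the regular-semisimple data attached to $γ$ — using that the intersection $\mcN_0\cap γ\mcN_0$ is artinian (so $v((γ,g)^-(α_0))$ is bounded on $N_0$, hence the "defect" $v((γ,g)^-(α_n)) = v((γ,g)^-(α_n))$ is pulled back along a map of bounded ramification while $a_n$ itself is unbounded). (3) Deduce that for $n\geq n_0$ (with $n_0$ depending only on $γ$), on the locus where $(γ,g)^*ϕ_n$ is finite one has $(γ,g)^*ϕ_n(x) = \min\{π_{n,0}^*a_0, (γ,g)^*ϕ_n\}(x)$, because the minimum with $π_{n,0}^*a_0$ changes nothing once $π_{n,0}^*a_0 > v((γ,g)^-(α_n))$ everywhere on $\Supp$ of the relevant $δ$-form. (4) Identify the right-hand side of \eqref{eq:expression_phi_n_II} with $v((γ,g)^-(α_n))$: the condition "$ρ$ is an isomorphism mod $(π)^ε$" is $\geq$ the bound from (2), and the condition "$(γ,g)^-α = 0$ mod $(π)^ε$" is exactly $v((γ,g)^-(α_n))(x)$ by the coordinate description of $α_n$ together with Lem. \ref{lem:disk_invariance} to see the value is independent of the chosen coordinate $\mcY\iso \Spf\mcO_{\mcN_n}[\![t]\!]$. (5) Combine (3) and (4) to get \eqref{eq:function_identity_LT}, and note both sides are continuous (for the left side, because $ϕ_n$ restricted to $X\setminus N_n$ is pws hence continuous, and the identity extends by continuity to a dense set and then everywhere, including where both sides are $+\infty$, i.e. on $N_n\cap (γ,g)^{-1}N_n$).

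The main obstacle I anticipate is Step (2)–(3): making precise and uniform (in $g$, with $n_0$ depending only on $γ$) the claim that the "isomorphism mod $(π)^ε$" term in the minimum eventually dominates the level term. This requires a quantitative handle on how fast the $γ^{-1}\mcO_Eγ$-action becomes integral on the universal $\mcO_E$-module as the level grows — effectively a rigidity statement with an explicit bound — which is where the regular-semisimplicity of $γ$ (equivalently, artinianness of $\mcN_0\cap γ\mcN_0$, cf. \cite{LM}*{Lem. 3.6}) must enter to guarantee the level term $v((γ,g)^-(α_0))$ is bounded on $N_0$, so that its pullback to $N_n$ grows only like the bounded ramification of $π_{n,0}$ while $a_n$ itself grows linearly in $n$. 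The comparison of these two growth rates, uniformly over the (compact) family of $g$'s modulo $K_{n_0}$, is the technical heart of the argument; everything else is bookkeeping with the moduli descriptions and an application of Lem. \ref{lem:disk_invariance}.
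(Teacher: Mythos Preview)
Your overall strategy --- reduce via \eqref{eq:expression_phi_n_II} to the identity $\min\{a_0,(γ,g)^*ϕ_n\} = \min\{a_0, v((γ,g)^-(α_n))\}$ and then show $v((γ,g)^-(α_n)) < π_{n,0}^*(a_0)$ for $n \gg 0$ --- is correct and is also the paper's. But Step~(2) contains a genuine error that makes the plan fail as written: the functions $a_n$ do not grow with $n$, they \emph{decrease to $0$}. Indeed $a_n = \min_i v(α_{n,i})$ is the smallest valuation among the $π^n$-torsion sections, and a Newton-polygon argument for the multiplication-by-$π$ series gives $a_1 \leq 1/(q^{2h}-1)$ and $a_n \leq q^{-2h} a_{n-1}$ for $n \geq 2$. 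Likewise $π_{n,0}^*(a_0)$ does not ``grow with $n$'': it is the fixed function $a_0$ on $N_0$ pulled back, with values in $(0,1]$. So your comparison of growth rates is backwards, and the appeal to artinianness of $\mcN_0\cap γ\mcN_0$ (via a putative bound on ``$v((γ,g)^-(α_0))$'', which is not even meaningful at level~$0$) does not furnish the needed inequality.

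What is missing is the actual mechanism. Regular semi-simplicity of $γ$ makes $(γ,g)^-$ invertible in $A[π^{-1}]$; viewing its image in $M_h(\mcO_{D_F})$, write $(γ,g)^- = VSW$ with $V,W$ units and $S = \diag(S_1,\ldots,S_h)$, $S_i \neq 0$. Set $r_i = q^{ν(S_i)}$ and $r = \max_i r_i$. Lem.~\ref{lem:disk_invariance} disposes of $V$ and $W$, and since $S_i$ mod $π$ has leading term of degree $r_i$ one gets $v((γ,g)^-(α_n)) \leq r\cdot a_n$ whenever $a_n < r^{-1}$. As $a_n$ decays geometrically, for $n-2 \geq \max_i ν(S_i)/2h$ this yields $v((γ,g)^-(α_n)) \leq r\cdot a_n \leq a_2 < a_1 \leq a_0$, which is exactly the inequality you need. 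Uniformity in $g$ then follows because the exponents $ν(S_i)$ are locally constant on the compact group $GL_{2h}(\mcO_F)$.
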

\begin{proof}
Equation \eqref{eq:expression_phi_n_II} and the fact that $(γ, g)^-$ was chosen to lift the action of $(γ,g)^-$ on $\Hom_{\mcO_E}(\mcO_E^h, \mbX)$ imply that for every $n\geq 0$,
$$\min\{a_0,\ (γ, g)^*ϕ_n\} = \min\{a_0,\ v((γ,g)^-(α_n))\}.$$
Thus the proposition follows once we show that there exists an $n_0$ such that for all $n \geq n_0$ and all $g$,
$$(γ, g)^*ϕ_n,\ v((γ,g)^-(α_n)) \leq a_0.$$
Our argument proves both inequalities at the same time. Namely, we prove the existence of some $n_0$ such that the following holds for all $n\geq n_0$ and all $g$. Let $x = (Y, ρ, α_n) \in \mcN_n(\mcO_C)$ be a $C$-point of $N_n$, where $C/\breve F$ is a non-archimedean extension as before. Let $ε_0 = a_0(x)\in v(C^\times)$ be the maximal exponent such that $ρ\mod (π)^{ε_0}$ is an isomorphism. Then, the maximal $0<ε\leq ε_0$ such that $(γ,g)^-(α_n) = 0 \mod (π)^ε$ has the property $ε < ε_0$.

\emph{Step 1. We have $a_1 \leq 1/(q^{2h}-1)$. Furthermore, for every $n\geq 2$, the inequality $a_n \leq q^{-2h}a_{n-1}$ holds.} Let $x = (Y, ρ, α_n) \in \mcN_n(\mcO_C)$ be as before and $n\geq 1$. After a choice of coordinate, multiplication by $π$ is described by a power series of the form
\begin{equation}\label{eq:mult_by_pi}
ι(π)(t) = πt + c_2t^2 + \ldots + c_{q^{2h}-1}t^{q^{2h}-1} + ut^{q^{2h}} + \mr{higher} \in \mcO_C [\![t]\!]
\end{equation}
with $u$ a unit and $c_2,\ldots, c_{q^{2h}-1}$ topologically nilpotent. The valuations of the $π$-torsion points $Y[π] \subset \mbD_C$ are the negatives of the slopes of the Newton polygon of \eqref{eq:mult_by_pi}, which is the polygon through
$$(0,\infty),\ (1,1),\ (2,v(c_2)),\ \ldots,\ (q^{2h}-1, v(c_{q^{2h}-1})),\ (q^{2h}, 0).$$
Since $α_1$ gives a trivialization of $Y[π]$, we find that $a_1(x)$ is the minimal of the negative slopes of that polygon. We in particular have the bound $a_1(x) \leq 1/(q^{2h}-1)$, which is the negative of the slope from $(1,1)$ to $(q^{2h}, 0)$. This proves the first claim.

The second claim is by the same logic. Given a point $\Spf \mcO_C \to Y$, say $t \mapsto y$, the valuations of the $π$-division points $ι(π)^{-1}(y)$ are the negatives of the nonzero slopes of the Newton polygon of $ι(π)(t) = y$ which is the polygon through
$$(0, v(y)),\ (1,1),\ (2,v(c_2)),\ \ldots,\ (q^{2h}-1, v(c_{q^{2h}-1})),\ (q^{2h}, 0).$$
The minimal possible valuation is thus $v(y)/q^{2h}$. Applying this argument to the basis of $Y[π^{n-1}]$ given by $α_{n-1}$, we obtain $a_n(x) \leq q^{-2h}a_{n-1}(x)$ as claimed.

\emph{Step 2. Deducing that $ε < ε_0$ (see above) for a given $g$ for $n\gg 0$.} The element $(γ,g)^-$ is invertible in $A[π^{-1}]$ because $γ$ is regular semisimple, cf. \cite{Li_LT}*{Prop. 5.22}. The ring $A$ moreover preserves the subspace $\Hom_{\mcO_E}(\mcO_E^h,\mbX)$ of $\Hom_{\mcO_F}(\mcO_F^{2h}, \mbX)$, providing a map $A\to M_h(\mcO_{D_F})$. We view $(γ,g)^{-}$ in this matrix ring. By the elementary divisor theorem, we may write
\begin{equation}\label{eq:decomp_element}
(γ,g)^- = VSW,\ \ \ V, W \in GL_h(\mcO_{D_F}),\ S = \diag(S_1,\ldots,S_h) \in M_h(\mcO_{D_F})
\end{equation}
with $S_1\cdots S_h \neq 0$. We fix lifts $\wt V,\wt W$ and $\wt S$ of $V, W$ and $S$ to $\mcO_{\breve F}[\![t_1,\ldots,t_h]\!]^h$. We assume that these lifts have vanishing constant coefficient and that $\wt S$ is again diagonal, $\wt S = \diag(\wt S_1, \ldots, \wt S_h)$. Note that  $\wt V$ and $\wt W$ are again invertible. By Lem. \ref{lem:disk_invariance}, neither $\wt V$ nor $\wt W$ change the minimal valuation of an $h$-tuple of topologically nilpotent functions. Writing $β_n = \wt W(α_n)$, we need to see that $\wt Sβ_n = 0$ modulo $(π)^ε$ implies $ε<ε_0$ for $n\gg 0$.

Let $ν:D_F^\times \to \mbZ$ be the division algebra valuation with normalisation $ν(π) = 2h$. Put $r_i = q^{ν(S_i)}$. Then
\begin{equation}\label{eq:ps_expansion_T}
S_i(t) \in \mbF^\times\cdot t^{r_i} + \mr{higher} \mod π,
\end{equation}
so for $v(y) < r_i^{-1}$, one has $v(\wt S_i(y)) = r_i v(y)$. Put $r = \max\{r_1,\ldots,r_h\}$. Then \eqref{eq:ps_expansion_T} implies that assuming $v(α_n) < r^{-1}$, we obtain
\begin{equation}\label{eq:generic_inequality}
v((γ,g)^-(α_n)) = v(\wt S β_n) \leq r v(β_n) = r v(α_n).
\end{equation}
If we choose $n\geq 2$ with $n-2 \geq \max\{ν(S_1),\ldots,ν(S_h)\}/2h$, then the bound from Step 1 gives
$$v(α_n) \leq q^{-2h(n-2)}v(α_2) \leq r^{-1} v(α_2) < r^{-1},$$
so \eqref{eq:generic_inequality} can be applied and specializes to
$$v((γ,g)^-(α_n)) \leq r v(α_n) \leq v(α_2) < v(α_1) \leq ε_0.$$
Any such $n$ is large enough in the sense of Step 2.

\emph{Step 3.} Finally, to obtain the statement uniformly in $g$, we observe that the valuations of the elementary divisors $ν(S_i)$ that occur during Step 2 are locally constant in $g$. The group $GL_{2h}(\mcO_F)$ is compact which then allows to choose $n_0$ uniformly. This completes the proof.
\end{proof}

\subsection{Evaluation of Integrals}

\begin{prop}\label{prop:evaluation_integrals}
(1) For all $n\geq 0$, the following integral identity holds,
\begin{equation}\label{eq:int_ident_1}
\int_{N_n} (-1)^{h-1}a_n (d'd''a_n)^{h-1} = 1.
\end{equation}
(2) Let $γ$ be regular semisimple and $n_0$ as in Prop. \ref{prop:function_comparison_LT}. Then for any $g$ and any $n\geq n_0$,
\begin{equation}\label{eq:int_ident_2}
\int_{N_n} (γ,g)^*h_n\vert_{N_n} = |\mr{Res}(P_γ,P_g)|_F^{-1}.
\end{equation}
\end{prop}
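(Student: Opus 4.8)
The plan is to prove both identities by reducing the integral of a $δ$-form to a computation on a tropical space (a skeleton), exploiting the explicit description of the distance functions $a_n$ and $(γ,g)^*ϕ_n$ established in the previous section.

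For part (1): by Theorem \ref{thm:int_num_identity} applied to $\mcX = \mcN_n$ and $\mcZ = \Spec\mbF \hookrightarrow \mcN_n$ the closed point (a complete intersection of codimension $h$ since $\mcN_n$ is regular local), we have $\int_{N_n}(-1)^{h-1}a_n(d'd''a_n)^{h-1} = \mr{len}_{\mcO_{\breve F}}\mcO_{\mcN_n\cap\Spec\mbF} = \mr{len}_{\mcO_{\breve F}}\mbF = 1$, using that $\mbF = \mcO_{\breve F}/(π)$. So part (1) is essentially immediate from the main intersection theorem, modulo checking that the hypotheses (flat, pure of formal dimension $h$, Cohen--Macaulay $\mcY = \mcN_n$, artinian intersection) are all satisfied — which they are, since $\mcN_n$ is regular local of dimension $h$ and the closed point has codimension $h$.

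For part (2): first I would apply Theorem \ref{thm:int_num_identity} again, this time with $\mcX = \mcM_n$, $\mcZ = \mcN_n = V(f_1,\ldots,f_h)$ (a complete intersection by regularity and \cite{Stacks}*{Tag 0E9J}), and $\mcY = (γ,g)\mcN_n$ (regular, hence Cohen--Macaulay, of complementary dimension $h$, flat over $\mcO_{\breve F}$; the intersection is artinian because $γ$ is regular semi-simple). This gives $\int_{N_n}(γ,g)^*h_n\vert_{N_n} = \mr{len}_{\mcO_{\breve F}}\mcO_{\mcN_n\cap(γ,g)\mcN_n} = \mr{Int}_n(γ,g)$, which then equals $|\mr{Res}(P_γ,P_g)|_F^{-1}$ exactly by Theorem \ref{thm:LT_main_reformulation}. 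Strictly speaking the analytic reformulation \eqref{eq:analytic_reformulation} already records this, so the content of part (2) is really a matter of assembling Theorem \ref{thm:Green_form_complete_inter}, Theorem \ref{thm:int_num_identity}, and Theorem \ref{thm:LT_main_reformulation}; alternatively one can give a direct analytic derivation. For the direct route I would use Proposition \ref{prop:function_comparison_LT}: for $n\geq n_0$ the function $(γ,g)^*ϕ_n$ equals $v((γ,g)^-(α_n))$, and by Lemma \ref{lem:special_form} / the presentation \eqref{eq:presentation_gamma}, $h_n = t_{f}^\star(x\cdot\Delta)$ for $f$ the coordinates cutting out $\mcN_n$. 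Restricting to $N_n$ and pushing forward to the skeleton of $N_n$ cut out by the $h$ functions $(γ,g)^-(α_n)$ reduces the integral to an integral of $x\cdot\Delta$ against a tropical cycle, which one evaluates by a Newton-polygon / slope computation identifying it with the resultant valuation $|\mr{Res}(P_γ,P_g)|_F^{-1}$; this is exactly the kind of computation Q. Li carries out in \cite{Li_LT} for Theorem \ref{thm:LT_main_reformulation}.

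The main obstacle is part (2), and within it the passage from the abstract length $\mr{Int}_n(γ,g)$ to the explicit resultant formula. If one takes the route through Theorem \ref{thm:LT_main_reformulation} this is cited and there is nothing further to do beyond verifying hypotheses; but if one wants the self-contained analytic evaluation, the hard part is controlling the skeleton $\Sigma(N_n, (γ,g)^-(α_n))$ and the associated tropical cycle well enough to extract the resultant — this requires the regular-semisimplicity of $γ$ (so that $(γ,g)^-$ is invertible in $A[π^{-1}]$, cf. \cite{Li_LT}*{Prop. 5.22}), the elementary-divisor decomposition \eqref{eq:decomp_element}, and a careful Newton-polygon analysis of the multiplication-by-$π$ series as in Step 1 of the proof of Proposition \ref{prop:function_comparison_LT}. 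Part (1), by contrast, is a clean corollary of Theorem \ref{thm:int_num_identity} with essentially no additional work.
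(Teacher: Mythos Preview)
Your treatment of part (1) is correct and matches the paper exactly: apply Theorem \ref{thm:int_num_identity} to $\mcX=\mcY=\mcN_n$ with $\mcZ=\Spec\mbF$ the closed point, which is $V(α_{n,1},\ldots,α_{n,h})$ by Drinfeld.

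For part (2), however, your primary route is circular in this paper's logic. Proposition \ref{prop:evaluation_integrals} is precisely what \emph{proves} Theorem \ref{thm:LT_main_reformulation} here; the sentence immediately following the paper's proof reads ``This finishes the proof of both Theorems \ref{thm:LT_main} and \ref{thm:LT_main_reformulation}.'' So invoking Theorem \ref{thm:LT_main_reformulation} (even via \eqref{eq:analytic_reformulation}) is not an admissible step. You could of course cite Li's external proof, but that defeats the paper's explicit goal of \emph{reproving} Li's formula analytically.

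Your alternative ``direct route'' points in the right direction but misses the actual mechanism. The paper's argument runs as follows. From Proposition \ref{prop:function_comparison_LT} and the elementary-divisor decomposition \eqref{eq:decomp_element}, one has $(γ,g)^*ϕ_n = v(\wt Sβ_n)$ with $β_n=\wt Wα_n$ and $\wt S=\mathrm{diag}(\wt S_1,\ldots,\wt S_h)$; moreover $v(\wt S_iβ_{n,i})=r_iv(β_{n,i})$ where $r_i=q^{ν(S_i)}$, and by \cite{Li_LT}*{Prop. 5.21} the resultant is $|\mr{Res}(P_γ,P_g)|_F^{-1}=r_1\cdots r_h$. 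The key reduction---which your sketch does not isolate---is that the problem becomes: knowing from part (1) that $\int (-1)^{h-1}v(β_n)(d'd''v(β_n))^{h-1}=1$, show that replacing each $v(β_{n,i})$ by $r_iv(β_{n,i})$ multiplies the integral by $r_1\cdots r_h$. The paper carries this out tropically: both integrals equal $\int (x\cdot\Delta)\wedge T$ for appropriate tropicalizations, the two tropicalizations are related by the linear map $F=\mathrm{diag}(r_1,\ldots,r_h)$ on $\mbR^h$, and a Stokes-type argument with the form $ω_0=ud''w-wd''u$ (the B\'ezout identity of Proposition \ref{prop:bezout}) gives the factor $r_1\cdots r_h$. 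The paper also remarks that one could instead convert both integrals back to lengths via Theorem \ref{thm:int_num_identity} and invoke multilinearity of Cartier-divisor intersection numbers; that alternative would also work and is closer in spirit to what you gesture at, but you do not articulate it. No Newton-polygon computation is needed at this stage: those estimates were used earlier (in Proposition \ref{prop:function_comparison_LT}) to establish the function identity, not to evaluate the integral.
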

\begin{proof}
By Thm. \ref{thm:int_num_identity}, the identity  \eqref{eq:int_ident_1} is equivalent to the statement that the closed point $\Spec \mbF \to \mcN_n$ agrees with the vanishing locus $V(α_{n,1},\ldots,α_{n,h})$, which is \cite{Drinfeld_elliptic}*{Prop. 4.3}.

For the proof of \eqref{eq:int_ident_2}, we fix some $(γ,g)$ and let $\wt V$, $\wt W$, $\wt S$, $(r_1,\ldots,r_h)$ and $β_n = \wt W α_n$ be as during Step 2 before. By \cite{Li_LT}*{Prop. 5.21}, the resultant term $|\mr{Res}(P_γ,P_g)|^{-1}$ equals the reduced norm $r_1\cdots r_h$ of $(γ,g)^-$. (Note for the application of \cite{Li_LT}*{Prop. 5.21} that, in our case, $|\det g| = |\mr{nrd}(g)| = 1$. Moreover, the element $\Delta (φ, τ)^{-1} \cdot γ \cdot g \cdot φ \cdot M_τ$ that occurs there is the element $γ\tensor g$ from \eqref{eq:galois_decomposition}, while the application of $(0\ I_h)$ projects to $(γ,g)^-$.) We have seen that, for $n\geq n_0$,
$$(γ,g)^*ϕ_n = v((γ,g)^-(α_n)) = v(\wt S β_n).$$
By \eqref{eq:analytic_reformulation}, the task is to show
\begin{equation}\label{eq:beta_I}
\int_{N_n} (-1)^{h-1} v(\wt S β_n) (d'd''v(\wt S β_n))^{h-1} = r_1\cdots r_h
\end{equation}
while we know from \eqref{eq:int_ident_1} and $v(β_n) = v(α_n)$ that
\begin{equation}\label{eq:beta_II}
\int_{N_n} (-1)^{h-1} v(β_n) (d'd''v(β_n))^{h-1} = 1.
\end{equation}
Recall that, by definition, $β_n = \wt W(α_n)$ and that $\wt W$ is an invertible family of power series with vanishing constant coefficients. Thus $β_n = (β_{n,1},\ldots,β_{n,h})$ forms a regular sequence on $\mcN_n$. To simplify further, we assume that the lifts $\wt S_i\in \mcO_{\breve F}[\![t]\!]$ have leading term in $\mcO_{\breve F}^\times \cdot t^{r_i}$, cf. \eqref{eq:ps_expansion_T}.
Then also $\wt S(β_n) = (\wt S_i(β_{n,i}))_{i = 1,\ldots,h}$ form a regular sequence and $v(\wt S_i(β_{n,i})) = r_i v(β_{n,i})$. At this point, one could wrap up the proof by first passing to formal model intersection numbers with Thm. \ref{thm:int_num_identity} for both \eqref{eq:beta_I} and \eqref{eq:beta_II} and then resorting to multilinearity of Cartier divisor intersection numbers. The following is a tropical variant of this argument which seems to be interesting in itself, cf. Prop. \ref{prop:bezout} below.

We view $β_n$ and $\wt S β_n$ as tuples of toric coordinates $N_n\to \mbG_m^h$ in the following. The tropicalization maps $t_{β_n}$ and $t_{\wt Sβ_n}$ are proper (same situation as before \eqref{eq:vanishing_for_support_I}) and the two integrals in \eqref{eq:beta_I} and \eqref{eq:beta_II} equal
$$\int_{\mbR^{h+1}_{>0}} (x\Delta) \wedge T(N_n, β_n)\quad \text{and}\quad \int_{\mbR^{h+1}_{>0}} (x\Delta)\wedge T(N_n, \wt S (β_n)).$$
Comparing \eqref{eq:beta_I} and \eqref{eq:beta_II}, our task is to establish the following identity:
\begin{equation}\label{eq:multi_linearity}
\int (x\cdot \Delta) \wedge T(N_n, \wt S (β_n)) = r_1\cdots r_h \int (x\cdot \Delta) \wedge T(N_n, β_n).
\end{equation}
With our simplified choice of $\wt S_i$,
$$T(N_n, \wt S (β_n)) = T(N_n, (β_{n,i}^{r_i})_{i = 1,\ldots,h}) = F_*T(N_n, β_n)$$
with $F = \diag(r_1,\ldots,r_h): \mbR^h\to \mbR^h$. By the projection formula \cite{Mih_trop_inter}*{Thm. 4.1 (3)},
$$\int (x\cdot \Delta)\wedge T(N_n, \wt S (β_n)) = \int F^*(x\cdot \Delta) \wedge T(N_n, β_n).$$
Put $u = (1,\ldots,1)$ and $w = F^{-1}(u) = (r_1^{-1},\ldots,r_h^{-1})$. Furthermore endow the line $\Delta_w = \mbR_{\geq 0}\cdot w$ with weight $w$ (view $w\in \det (N_{\Delta w}) = \mbR\cdot w$). Then $F^*(x\cdot \Delta) = ψ \cdot \Delta_w$ for some linear function $ψ$. It follows from the relation $F_*\circ F^* = \deg(F) = r_1\cdots r_h$ that $ψ(w) = r_1\cdots r_h$.

On the other hand, we may consider the plane $H$ spanned by $u$ and $w$. We endow it with weight $u\wedge w$, i.e. we identify it with $\mbR^2$ via $u$ and $w$. It supports a $δ$-form $ω = ω_0\wedge H$ where $ω_0$ is the pws form $ud''w - wd''u$ on the first quadrant $Q = \mbR_{\geq 0}\cdot u \times \mbR_{\geq 0}\cdot w$ and $0$ elsewhere. The derivative of $ω$ is
$$d'ω = d'_Pω - \partial'ω = (d'ud''w - d'wd''u)\vert_{Q} + u\Delta - w \Delta_w.$$
Just as in the proof of Thm. \ref{thm:int_num_identity}, $d'_Pω\vert_C = 0$ for every line $C\subseteq H$. We conclude that for every tropical hypersurface $T$ on an open subset $U\subseteq \mbR^h$ with $\Supp (T\wedge ω)$ compact,
\begin{equation}\label{eq:tropical_multilin}
0 = \int T\wedge d'ω = \int T\wedge \Delta - (r_1\cdots r_h )^{-1} \int T\wedge F^*\Delta.
\end{equation}
Our proof of \eqref{eq:multi_linearity} is complete if we can apply \eqref{eq:tropical_multilin} with $T = T(N_n, β_n)$. This requires us checking the compact support of $T(N_n, β_n) \wedge ω$. We know that the intersection of $\Supp T(N_n, β_n)$ with the cone generated by $u$ and $w$ is bounded because $v(β_n) \leq 1$. Thus we only need to check compactness near the origin. By Lem. \ref{lem:support_lemma_artinian}, $T(N_n, β_n)$ is a polyhedral fan near $0$ near the cone spanned by $u$ and $w$. Moreover, the form $ud''w - wd''u$ vanishes when restricted to any line through the origin. Thus, $\Supp(T(N_n, β_n) \wedge ω)$ is compact, \eqref{eq:tropical_multilin} applies, and the proof of Prop. \ref{prop:evaluation_integrals} is complete.
\end{proof}

This finishes the proof of both Theorems \ref{thm:LT_main} and \ref{thm:LT_main_reformulation}.

\begin{rmk}\label{rmk:conclude_LT}
The given proof relies on a comparison of functions on $M_n$ and $N_n$. Only rank $1$ valued fields intervene here. Li's proof is along the same lines, but always considers the intersection of formal schemes $\mcN_n\cap (γ,g)\mcN_n$. For example, he shows that, for regular semisimple $γ$, the intersection $\mcN_n \cap (γ,g)\mcN_n$ lies above the closed point $\Spec \mbF \subseteq \mcM_0$ for $n \gg 0$, cf. \cite{Li_LT}*{Prop. 3.11}. This is a stronger statement and also more difficult to prove than our comparison of functions in Prop. \ref{prop:function_comparison_LT}.
\end{rmk}

Lastly, we separately note the Bézout-like identity that underlies the above argument.

\begin{prop}\label{prop:bezout}
Let $C \subseteq \mbR^2$ be a tropical curve that has compact intersection with the first quadrant. Let $(u,w)$ denote coordinates on $\mbR^2$ and endow the rays $\Delta_u = \mbR_{\geq 0} (1,0)$ and $\Delta_w = \mbR_{\geq 0} (0,1)$ with their standard weight. Then
$$\int_{\mbR^2} (u\cdot \Delta_u) \wedge C = \int_{\mbR^2} (w\cdot \Delta_w)\wedge C.$$
\end{prop}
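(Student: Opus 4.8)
The plan is to reduce Proposition \ref{prop:bezout} to an application of Stokes' Theorem on a suitable auxiliary $\delta$-form supported on the whole plane $\mbR^2$, exactly mimicking the argument already used in the proofs of Thm.\ \ref{thm:int_num_identity} and Prop.\ \ref{prop:evaluation_integrals}. First I would endow the plane $\mbR^2$ with the weight $u\wedge w$ (the standard weight), and consider the piecewise smooth $1$-form $\omega_0 := u\, d''w - w\, d''u$ on the first quadrant $Q = \Delta_u\times\Delta_w$, extended by $0$ outside $Q$; let $\omega = \omega_0\wedge[\mbR^2,\mu_{\mr{std}}]$ be the corresponding $\delta$-form of bidegree $(0,1,0)$. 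The key computation is the corner-locus/boundary formula \eqref{eq:boundary_of_pws_form} (Ex.\ \ref{ex:boundary_of_pws}): since $d' \omega = d'_P\omega - \partial'\omega$ and $\omega$ is smooth on each face, one finds $d'_P\omega = d'u\, d''w - d'w\, d''u$ on $Q$ and $0$ elsewhere, while $-\partial'\omega$ picks up the two boundary rays $\Delta_u$ and $\Delta_w$ of $Q$ with multiplicities $u$ and $-w$ respectively (the two coordinate axes of the quadrant, with the normal-vector bookkeeping giving exactly the coefficients $u$ on $\Delta_u$ and $-w$ on $\Delta_w$). Hence
\begin{equation}\label{eq:bezout_key}
d'\omega = \big(d'u\, d''w - d'w\, d''u\big)\big\vert_Q + u\cdot\Delta_u - w\cdot\Delta_w.
\end{equation}

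Next I would apply Stokes' Theorem in the form of Prop.\ \ref{prop:stokes_delta_pwl} (or Prop.\ \ref{prop:delta_form_as_current}) to the compactly supported $\delta$-form $C\wedge\omega$ on $\mbR^2$. This gives $0 = \int_{\mbR^2} d'(C\wedge\omega) = \int_{\mbR^2} C\wedge d'\omega$, using that $C$ is a tropical cycle so $d'C = 0$ and the Leibniz rule holds. Feeding in \eqref{eq:bezout_key}, the term $\big(d'u\, d''w - d'w\, d''u\big)\big\vert_Q\wedge C$ vanishes: this is the polyhedral-derivative part $d'_P\omega\wedge C$, and $d'_P\omega$ restricts to $0$ on every line through the origin in $\mbR^2$ — exactly the peculiar property exploited in the proof of Thm.\ \ref{thm:int_num_identity} — so that on each maximal face of $C$ (a segment, which near the origin is part of a ray through $0$) the wedge product is $0$; away from the origin $\Supp(C\wedge\omega)$ is already compact and the same face-by-face vanishing applies. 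What remains is $0 = \int_{\mbR^2}(u\cdot\Delta_u)\wedge C - \int_{\mbR^2}(w\cdot\Delta_w)\wedge C$, which is the claimed identity.

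The one point that needs care — and the main obstacle — is verifying that $\Supp(C\wedge\omega)$ is genuinely compact, so that Stokes' Theorem applies. Since $\omega$ is supported on the closed first quadrant $Q$, and $C$ has compact intersection with $Q$ by hypothesis, the support of $C\wedge\omega$ is bounded away from infinity; the only question is behaviour near the origin. Here I would invoke the structure of $C$ as a tropical curve: near $0$, $C$ is a finite union of rays emanating from a neighbourhood, hence (after possibly subdividing) its maximal cells near $0$ lie on lines through the origin, and since $\omega_0 = u\,d''w - w\,d''u$ restricts to $0$ on any such line (the $1$-form $u\,dw - w\,du$ vanishes on the radial direction), $C\wedge\omega$ vanishes in a punctured neighbourhood of $0$. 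Extending by zero across $0$, the support is compact. Once this is established, all the remaining steps are formal consequences of the calculus of $\delta$-forms developed in \S\ref{ss:tropical_delta_forms} and \cite{Mih_trop_inter}, and the proof is complete. One should also double-check the sign and weight normalisations in \eqref{eq:bezout_key} against Ex.\ \ref{ex:boundary_of_pws}, identifying $\mbR^2$ with $\Delta_u\times\Delta_w$ via $u,w$ so that the standard weight $u\wedge w$ is the one used throughout; this is a routine bookkeeping matter.
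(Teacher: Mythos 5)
Your proof is essentially the paper's own argument (it is the Stokes computation extracted from the proof of Prop.\ \ref{prop:evaluation_integrals}), and the core steps --- the formula for $d'\omega$ via Ex.\ \ref{ex:boundary_of_pws}, the Leibniz rule with $d'C = 0$, and the vanishing of $d'_P\omega \wedge C$ --- are all right. However, the point you single out as ``the main obstacle'' is not actually an obstacle, and your proposed resolution of it rests on a false claim. The hypothesis says $\Supp(C)\cap Q$ is compact, and $\Supp(\omega)\subseteq Q$ since $\omega_0$ is defined to vanish outside $Q$; therefore $\Supp(C\wedge\omega)\subseteq \Supp(C)\cap Q$ is compact, full stop. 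The origin is an interior point of $Q$, not a boundary accumulation point, so there is nothing special to check there. Your claim that ``near $0$, $C$ is a finite union of rays emanating from a neighbourhood, hence its maximal cells near $0$ lie on lines through the origin'' is false for a general tropical curve: that would only be true if $0$ were a vertex of $C$, and $C$ may pass near or through $0$ along a single segment that is not radial, or not meet a neighbourhood of $0$ at all. (In the paper's Lubin--Tate application this fan structure near $0$ \emph{is} a genuine issue and is justified via Lem.\ \ref{lem:support_lemma_artinian}, because there the tropicalization lives on the open orthant $\mbR^h_{>0}$ and can accumulate toward the boundary; you may have inherited the worry from there, but Prop.\ \ref{prop:bezout} has a cleaner hypothesis that makes it moot.) Relatedly, you state that $d'_P\omega = d'u\,d''w - d'w\,d''u$ restricts to zero ``on every line through the origin''; in fact it restricts to zero on \emph{every} affine line in $\mbR^2$ (its restriction to a $1$-dimensional polyhedron with direction $(a,b)$ is $ab\,d't\,d''t - ba\,d't\,d''t = 0$), which is what you actually need, since the $1$-cells of $C$ need not pass through $0$. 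With those two corrections, the argument is complete and matches the paper.
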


\begin{bibdiv}
\begin{biblist}
\bibselect{math}
\end{biblist}
\end{bibdiv}
\end{document}